\numberwithin{equation}{chapter}
\numberwithin{section}{chapter}
\def\@tocline#1#2#3#4#5#6#7{\relax
  \ifnum #1>\c@tocdepth 
  \else
    \par \addpenalty\@secpenalty\addvspace{#2}%
    \begingroup \hyphenpenalty\@M
    \@ifempty{#4}{%
      \@tempdima\csname r@tocindent\number#1\endcsname\relax
    }{%
      \@tempdima#4\relax
    }%
    \parindent\z@ \leftskip#3\relax \advance\leftskip\@tempdima\relax
    \rightskip\@pnumwidth plus4em \parfillskip-\@pnumwidth
    #5\leavevmode\hskip-\@tempdima
      \ifcase #1
       \or\or \hskip 1em \or \hskip 2em \else \hskip 3em \fi%
      #6\nobreak\relax
      \dotfill
      \hbox to\@pnumwidth{\@tocpagenum{#7}}
    \par
    \nobreak
    \endgroup
  \fi}
\newtheorem{theorem}{Theorem}[chapter]
\newtheorem{corollary}[theorem]{Corollary}
\newenvironment{argument}{\emph{Argument.}}{\hfill \qed \medskip}
\newtheorem{lemma}[theorem]{Lemma}
\newtheorem{assumption}[theorem]{Assumption}
\newtheorem{definition}[theorem]{Definition}
\theoremstyle{remark}
\numberwithin{equation}{chapter}
\newcommand{\norm}[1]{\left\lVert #1 \right\rVert}
\newcommand{\1}{{\ensuremath {\mathds 1} }}
\newcommand{\Z}{\mathbb{Z}}
\newcommand{\R}{\mathbb{R}}
\newcommand{\N}{\mathbb{N}}
\newcommand{\C}{\mathbb{C}}
\newcommand{\E}{\mathcal{E}}
\newcommand{\cE}{\mathcal{E}}
\newcommand{\cG}{\mathcal{G}}
\newcommand{\cK}{\mathcal{K}}
\newcommand{\cW}{\mathcal{W}}
\newcommand{\cF}{\mathcal{F}}
\newcommand{\cT}{\mathcal{T}}
\newcommand{\cA}{\mathcal{A}}
\newcommand{\cB}{\mathcal{B}}
\newcommand{\bH}{\mathbb{H}}
\newcommand{\bG}{\mathbb{G}}
\newcommand{\bHt}{\widetilde{\mathbb{H}}}
\newcommand{\cP}{\mathcal{P}}
\newcommand{\cS}{\mathcal{S}}
\newcommand{\cI}{\mathcal{I}}
\newcommand{\cM}{\mathcal{M}}
\newcommand{\gS}{\mathfrak{S}}
\newcommand{\gH}{\mathfrak{H}}
\newcommand{\cU}{\mathcal{U}}
\newcommand{\dd}{\partial}
\newcommand{\sspan}{\mathrm{span}}
\newcommand{\wto}{\rightharpoonup}
\newcommand{\eps}{\varepsilon}
\def\XXint#1#2#3{{\setbox0=\hbox{$#1{#2#3}{\int}$}
     \vcenter{\hbox{$#2#3$}}\kern-.5\wd0}}
\newcommand{\tr}{\mathrm{Tr}}
\newcommand{\bA}{\mathbf{A}}
\newcommand{\bbA}{\mathbb{A}}
\newcommand{\bx}{\mathbf{x}}
\newcommand{\bt}{\mathbf{t}}
\newcommand{\bk}{\mathbf{k}}
\newcommand{\bp}{\mathbf{p}}
\newcommand{\bq}{\mathbf{q}}
\newcommand{\bX}{\mathbf{X}}
\newcommand{\by}{\mathbf{y}}
\newcommand{\bY}{\mathbf{Y}}
\newcommand{\bz}{\mathbf{z}}
\newcommand{\bZ}{\mathbf{Z}}
\newcommand{\bP}{\mathbf{P}}
\newcommand{\gF}{\mathfrak{F}}
\newcommand{\cN}{\mathcal{N}}
\newcommand{\mubf}{\boldsymbol{\mu}}
\newcommand{\ada}{a ^{\dagger}}
\newcommand{\im}{\mathrm{i}}
\newcommand{\sym}{\mathrm{sym}}
\newcommand{\asym}{\mathrm{asym}}
\newcommand{\cEh}{\mathcal{E}^{\rm H}}
\newcommand{\cEnls}{\mathcal{E}^{\rm NLS}}
\newcommand{\cEGP}{\mathcal{E}^{\rm GP}}
\newcommand{\cEMF}{\mathcal{E}^{\rm MF}}
\newcommand{\cEmix}{\mathcal{E}^{\rm mix}}
\newcommand{\Eh}{E^{\rm H}}
\newcommand{\Enls}{E^{\rm NLS}}
\newcommand{\EGP}{E^{\rm GP}}
\newcommand{\EMF}{E^{\rm MF}}
\newcommand{\Emix}{E^{\rm mix}}
\newcommand{\uh}{u^{\rm H}}
\newcommand{\unls}{u^{\rm NLS}}
\newcommand{\uGP}{u^{\rm GP}}
\newcommand{\uMF}{u^{\rm MF}}
\newcommand{\Mh}{\cM^{\rm H}}
\newcommand{\Mnls}{\cM^{\rm NLS}}
\newcommand{\MGP}{\cM^{\rm GP}}
\newcommand{\MMF}{\cM^{\rm MF}}
\newcommand{\htilde}{\widetilde{h}}
\newcommand{\Htilde}{\widetilde{H}}
\newcommand{\Psit}{\widetilde{\Psi}}
\newcommand{\eB}{E^{\rm Bog}}
\newcommand{\HB}{\mathbb{H}^{\rm Bog}}
\title[Scaling limits of bosonic ground states]{Scaling limits of bosonic ground states, from many-body to nonlinear Schr\"odinger}
\author[Nicolas Rougerie]{Nicolas Rougerie\\ 
\small Unit\'e de Math\'ematiques Pures et Appliqu\'ees, Ecole Normale Sup\'erieure de Lyon \& CNRS.}
\address{\small Ecole Normale Sup\'erieure de Lyon \& CNRS, UMPA, Lyon, France.}
\email{nicolas.rougerie@ens-lyon.fr}
\date{November 2020}
\begin{document}

%


\maketitle

\begin{abstract}
How and why could an interacting system of many particles be described as if all particles were independent and identically distributed ? This question is at least as old as statistical mechanics itself. Its quantum version has been rejuvenated by the birth of cold atoms physics. In particular the experimental creation of Bose-Einstein condensates leads to the following variant: why and how can a large assembly of very cold interacting bosons (quantum particles deprived of the Pauli exclusion principle) all populate the same quantum state ? 

In this text I review the various mathematical techniques allowing to prove that the lowest energy state of a bosonic system forms, in a reasonable macroscopic limit of large particle number, a Bose-Einstein condensate. This means that indeed in the relevant limit all particles approximately behave as if independent and identically distributed, according to a law determined by minimizing a non-linear Schr\"odinger energy functional. This is a particular instance of the justification of the mean-field approximation in statistical mechanics, starting from the basic many-body Schr\"odinger Hamiltonian. 
\end{abstract}

\setcounter{tocdepth}{2}

\tableofcontents

\noindent \textbf{Acknowledgments.} A collective thank you to all the colleagues who contributed to my understanding of the field through their writings and talks, and especially through private conversations. Special thanks to Laure Saint-Raymond, whose suggestion that I should write such a review provided the necessary motivation. Financial support was provided by the European Research Council (ERC) under the European Union's Horizon 2020 Research and Innovation Programme (Grant agreement CORFRONMAT No 758620).

The present version of the text has benefited from critical readings and/or remarks by Niels Benedikter, Christian Brennecke, Serena Cenatiempo, Christian Hainzl, Elliott Lieb, Phan Th\`anh Nam,  Sergio Simonella, Laure Saint-Raymond, Jakob Yngvason,  and three anonymous referees.

\chapter{Aims and scope}

This introductory chapter is perhaps long. Readers already acquainted with quantum statistical mechanics will probably want to skip to Section~\ref{sec:sca lim} after glancing at Section~\ref{sec:intro intro}, and very briefly at Sections~\ref{sec:basic stat} and~\ref{sec:second quant} to get familiar with the notation\footnote{I have tried to make it so that the glance need only be very brief. Perhaps you will instead prefer to jump to Section~\ref{sec:sca lim} immediately and go back as needed for notational issues.} I use. The first three sections are intended as a very basic introduction to newcomers in the field. 

\section{Introduction}\label{sec:intro intro}

We start with the basic mathematical description of $N$ non-relativistic $d-$dimensional quantum particles in a scalar (electric-like) potential $V:\R^d\mapsto \R$ and a gauge (magnetic-like) vector potential $\bA:\R^d \mapsto \R ^d$, interacting via an even pair-interaction potential $w:\R^d \mapsto \R$. It is done via the action of the \index{Many-body Schr\"odinger operator}{$N$-body Schr\"odinger operator}
\begin{equation}\label{eq:intro Schro op}
H_N := \sum_{j=1} ^N \left(-\im \nabla_{\bx_j} + \bA (\bx_j) \right) ^2 + V (\bx_j) + \sum_{1\leq i < j \leq N} w (\bx_i - \bx_j)
\end{equation}
on the space of wave-functions $L^2 (\R^{dN},\C)$. The coordinates of the $N$-particles in the Euclidean space are the vectors $\bx_1,\ldots,\bx_N$ and the units are such that $\hbar = 2m = 1$ (reduced Planck constant, twice the mass, set equal to $1$). This means that if $\bA$ represents a real magnetic field coupling to the electric charge of the particles, it is proportional to the square root of the fine structure constant $\alpha = e^2 / (\hbar c)$ ($e$= electric charge, $c=$ speed of light). We rarely consider actual Coulomb interactions between our particles, but in these units their strength would be proportional to $\alpha$. Spin is ignored as irrelevant for most of the topics discussed below. I refer to~\cite[Section~2.17]{LieSei-09} for a discussion of units. In most of the text our applications will be to cold alkali gases~\cite{PetSmi-01,PitStr-03,BloDalZwe-08,DalGerJuzOhb-11,Fetter-09,Cooper-08,Viefers-08}, where the particles are neutral atoms, the magnetic field is artificial, the external potential is a magneto-optic trap and interactions are via van der Waals forces  (actually only $s$-wave scattering most of the time, because of dilution).  

Under standard suitable assumptions on $\bA,V,w$, the above operator is well-defined and self-adjoint on some domain related to that of the $N$-body basic kinetic energy operator (the Laplacian on $\R^{dN}$). For physical reasons recalled below, one is in fact interested in the action of $H_N$ 
\begin{itemize}
 \item either on $L^2_{\asym} (\R^{dN},\C)$, the subspace of functions totally \emph{antisymmetric} under the exchange of the coordinates $\bx_1,\ldots,\bx_N$,
 \item or on $L^2_{\sym} (\R^{dN},\C)$, the subspace of functions totally \emph{symmetric} under the exchange of the coordinates $\bx_1,\ldots,\bx_N$.
\end{itemize}
The former option is relevant for fermions, i.e. quantum particles that obey the Pauli exclusion principle (colloquially, ``no more than one particle in a single quantum state''). The latter option is appropriate for bosons, quantum particles subject to no such exclusion rule. 

This review is solely concerned with the second case, that of bosons, and its main message might be summarized as 

\medskip

\begin{center}
\textbf{Particles that \emph{may} populate only a single quantum state} 

\textbf{\emph{do} populate a single quantum state.}
\end{center}

\medskip

This is a particular instance of the surprising efficiency of the mean-field approximation, wherein one assumes all particles to be independent and identically distributed according to a common statistical law (which, roughly speaking is the classical mechanics equivalent for ``populate a single quantum state''). 

\bigskip 

Let me be a bit more precise regarding the meaning of the above bold statement. Indeed, (as any statement so colloquially formulated) it must be taken with a few grains of salt. First let us recall how exactly the action of $H_N$ specifies the physics of a system of spinless non-relativistic bosons. The state of the system is described by a wave-function $\Psi_N \in L^2 _{\rm sym} (\R ^{dN}, \C)$ (or, perhaps, a statistical ensemble of such wave-functions). For reasons of interpretation recalled below, $\Psi_N$ must be $L^2$-normalized:
\begin{equation}\label{eq:intro norm}
\int_{\R^{dN}} |\Psi_N| ^2 = 1. 
\end{equation}
The dynamics is prescribed by the many-body Schr\"odinger equation 
\begin{equation}\label{eq:intro Schro dyn}
 \im \partial_t \Psi_N (t) = H_N \Psi_N (t) 
\end{equation}
that we supplement with a Cauchy datum $\Psi_N (0)$. Consequently, the equilibrium states are the eigenfunctions (or, again, statistical ensembles of such) of $H_N$. Said differently, equilibria are the critical points of the energy functional 
\begin{equation}\label{eq:intro energy}
\cE_N [\Psi_N] = \left\langle \Psi_N | H_N | \Psi_N \right\rangle_{L^2}  
\end{equation}
under the mass constraint~\eqref{eq:intro norm}.

All these candidate wave-functions are in addition symmetric, i.e.\footnote{An antisymmetric, \index{Quantum statistics, bosons and fermions}{fermionic}, function, would have $(-1) ^{\mathrm{sgn} (\sigma)}$ multiplying the right-hand side, with $\mathrm{sgn} (\sigma)$ the signature of a permutation.} 
\begin{equation}\label{eq:intro sym}
\Psi_N (\bx_1,\ldots,\bx_N) = \Psi_N (\bx_{\sigma(1)},\ldots,\bx_{\sigma(N)}) 
\end{equation}
for all permutation $\sigma$. Now, what is the simplest symmetric wave-function ? I think we all agree it is of the form 
\begin{equation}\label{eq:intro BEC}
\Psi_N (\bx_1,\ldots,\bx_N) = u ^{\otimes N} (\bx_1,\ldots,\bx_N) := u(\bx_1) \ldots u (\bx_N)
\end{equation}
with $u\in L^2 (\R^d,\C)$ a function of a single variable. The above represents a pure \index{Bose-Einstein condensate (BEC)}{Bose-Einstein condensate}, with all particles in the quantum state $u$. 

The surprising fact is that, in great generality, a large bosonic system ($N\to \infty$) will have a very strong tendency to prefer simple states of the form~\eqref{eq:intro BEC}. This is the phenomenon that is our main concern here. We shall limit ourselves to the case of the most stable equilibria, called \emph{ground states}, the minimizers of the energy functional~\eqref{eq:intro energy} under the unit mass constraint~\eqref{eq:intro norm}. Reviews of the dynamical pendant of this theory (the manifold of states of the form~\eqref{eq:intro BEC} is approximately invariant under the Schr\"odinger flow~\eqref{eq:intro Schro dyn}) may be found e.g. in~\cite{BenPorSch-15,Golse-13,Schlein-08,Spohn-12}.   

The first grain of salt regarding our main statement is that it can be valid only in special scaling limits of the many-body problem. In fact, what is required is a $N$-dependent scaling of the interaction potential $w$ (we take as reference length scale that of the trapping potential $V$) in the limit $N\to \infty$. There are several ways of achieving this, with a wide range of physical relevance and mathematical difficulty (unfortunately but unsurprisingly, the two aspects are rather positively correlated). This review aims at a systematic exposition of the known  means to give mathematical rigor to the above vague bold statement, i.e. prove its validity (or rather, that of its mathematically precise version) in the scaling limits just mentioned. These come in different types, and after introducing more background material in the rest of this introductory chapter, three families of limits will be considered in the next chapters.

I have tried to review the material in a pedagogical rather than chronological order. As the reader will see, several methods exist to deal with the problem at hand, and I have tried to be rather exhaustive in that regard. In particular, I have not limited myself to the one or two only known methods that are able to obtain the full results we aim at in the most general/difficult case. This is to avoid having to ``kill a fly with an atomic bomb'', as one colleague would put it. There is thus a gradual build-up in mathematical sophistication in the following, and at each stage I try to be exhaustive as to what exactly each method can achieve, and in which circumstances. Throughout the text I give mathematically precise statements of most lemmas and theorems that serve as our main tools, but, for want of space and time, I refer to the literature for most proofs.

\section{Basic quantum statistical mechanics}\label{sec:basic stat}

Our focus shall be on \emph{many-body quantum mechanics}. A lucid exposition of this topic usually starts from \emph{one-body classical mechanics}. 
  
\medskip

\noindent \textbf{One classical particle.} To know the state of this simplest of systems we need to specify a position  $\bx \in \R^d$ and a momentum $\bp \in \R^d$. The energy of the particle is then given by a Hamilton function $H(\bx,\bp)$ from the phase-space $\R^d \times \R ^d$ to $\R$. In non-relativistic classical mechanics (with mass $m = 1/2$) this is given as 
\begin{equation}\label{eq:1 class hamil}
 H (\bx, \bp) = |\bp|  ^2 + V (\bx) 
\end{equation}
where the first term is the kinetic energy and $V:\R^d \mapsto \R$ an external potential. 

The dynamics is given by Newton's equations (here in Hamiltonian form)
$$ \begin{cases}
    \partial_t \bx = \nabla_{\bp} H(\bx,\bp) = 2 \bp \\
    \partial_t \bp = - \nabla_{\bx} H(\bx,\bp) = - \nabla V (\bx).
   \end{cases}
$$
The zero-temperature equilibria are the minima of the Hamilton function $H(\bx,\bp)$. They are not particularly exciting, being specified by $\bp = 0$ and $\bx$ being a minimum point of $V$. This changes when one moves to quantum mechanics.

A bit of vocabulary is in order to introduce the sequel. We shall call \index{States, observables}{\emph{observables}} the functions (say continuous and bounded) on the phase-space $\R^d \times \R ^d$. They correspond to all quantities that could in principle be measured. The \index{States, observables}{\emph{state}} of a system is then a recipe to input an observable and output a number, ensuring that the process is linear. The latter assumption is natural if we want the measurement of $A+B$ to be\footnote{This does not go without subtleties however: Eugen Wigner liked to ask colleagues the question ``How do you measure $\bx + \bp$ ?''} the measurement of $A$ plus the measurement of $B$.  

In other words, the state of a system is a probability measure $\mu$ on $\R^d \times \R^d$. Given an observable $f$ we interpret the number 
\begin{equation}\label{eq:exp value}
 \int f(\bx,\bp) d\mu (\bx,\bp) 
\end{equation}
as the expected value of the quantity modeled by $f$, if the state of the system is described (probabilistically) by $\mu$. Note that the latter could be a Dirac measure at some point $\bx,\bp$, in which case the result of measurements is deterministic. 

The most noteworthy point for the sequel is that the algebra of classical observables is a commutative one, that of bounded functions from $\R^d \times \R ^d$ to $\R$.

\medskip

\noindent \textbf{One quantum particle.} Let us a make a long story short: quantization is the art of introducing a constraint as to the possible values of the position and the momentum of a particle. In hindsight (this is the content of the Gelfand-Naimark-Segal theorem), one can realize that any modification of the previous classical set-up that preserves commutativity of observables leads to a description equivalent to classical mechanics. Therefore we need to introduce some non-commutativity if we want to make some progress. 

What could the simplest form of non-commutativity be ? Perhaps we want to start from the observables $x_j,p_j$ ($j$-th coordinates of $\bx$ and $\bp$, $j=1 \ldots d$) and construct them as operators on some suitable space satisfying
\begin{equation}\label{eq:Heisenberg}
 [x_j,p_j] = x_j p_j - p_j x_j = \im \hbar. 
\end{equation}
We demand the simplest possible form of non-commutativity: the commutator is a constant (purely imaginary for latter convenience, and proportional to a number called $\hbar$, soon to be set equal to $1$). This is the \index{Heisenberg uncertainty principle}{Heisenberg uncertainty principle} which, as we will see, prevents one from knowing exactly the position and momentum of a particle\footnote{Continuing in the line of the previous footnote, it becomes even more difficult, even in principle, to measure $\bx$ and $\bp$ simultaneously and add the findings.} at the same time.

The connection with the formalism quickly described in the introductory paragraph is as follows. Suppose\footnote{Pretending we are unaware of the actual historical route to these ideas.} we brutally decide that the state of the system is described by a function $\psi : \R^d \mapsto \C$. Suppose we decide that $x_j$ acts on $\psi (\bx)$ via multiplication by the first coordinate of the argument $\bx$, that $p_j$ acts on $\psi (\bx)$ by multiplication in the Fourier domain by the first coordinate of $\bp$, the dual variable of $\bx$. Then it is equivalent to demand that $p_j$ acts as $-\im \partial_{x_j}$ and we have constructed operators satisfying our simplest non-commutative requirement~\eqref{eq:Heisenberg}.  

This crash-course on quantization is of course rather short, but I hope it motivates the following choices. Our quantum phase-space for one particle shall be the Hilbert space $L^2 (\R^d,\C)$. Square-integrability is demanded because we want to interpret $|\psi (\bx)| ^2$ as the probability density of our particle in position-space and $|\widehat{\psi} (\bp)| ^2$ as its probability density in momentum space. Here 
$$
\widehat{\psi}(\bp) = \frac{1}{(2\pi)^{d/2}} \int_{\R^d} \psi (\bx) e^{-\im \, \bp \cdot \bx}\,\mathrm{d}\bx$$ 
is the Fourier transform, normalized so as to be a $L^2$-isometry. This implies that functions of $\bx$ or $\bp$ can be given a natural meaning as observables: the expected value of $f(\bx)$ (respectively, $f(\bp)$) in the state $\psi$ is, in similarity with~\eqref{eq:exp value}, given by 
\begin{equation}\label{eq:recipe 1}
 \int_{\R^d} f (\bx) |\psi (\bx)| ^2 \mathrm{d}\bx,
\end{equation}
respectively
\begin{equation}\label{eq:recipe 2}
 \int_{\R^d} f (\bp) \left|\widehat{\psi} (\bp)\right| ^2 \mathrm{d}\bp. 
\end{equation}
Here we see that~\eqref{eq:Heisenberg} has lead us to the fact that one cannot specify with certainty both the position and the momentum of a quantum particle: this would demand that both $|\psi|^2$ and $|\widehat{\psi}| ^2$ are (close to) Dirac masses, a notorious impossibility. The Sobolev inequality is a convenient way to quantify this.

Now, applying the above recipes~\eqref{eq:recipe 1}-\eqref{eq:recipe 2}, we can bluntly replace $\bx,\bp$ by the corresponding (vector-valued) multiplications in~\eqref{eq:1 class hamil} to turn $H(\bx,\bp)$ to an operator acting on $L^2 (\R^d,\C)$
$$ H = - \Delta_{\bx} + V (\bx)$$
where the Laplacian is identified with the multiplication by $|\bp| ^2$ on the Fourier side and $V(\bx)$ acts as multiplication. The expected value of the energy in the state $\psi$ is then 
\begin{align*}
 \left\langle \psi | H | \psi \right\rangle_{L^2} &=  \int_{\R ^d} \left|\bp \right| ^2 \left|\widehat{\psi} (\bp)\right| ^2 \mathrm{d}\bp +   \int_{\R^d} V (\bx) |\psi (\bx)| ^2 \mathrm{d}\bx\\
 &= \int_{\R ^d} \left( | \nabla \psi (\bx)| ^2 + V (\bx) |\psi (\bx)| ^2 \right) \mathrm{d}\bx.
\end{align*}
As dynamics it is natural to take the Hamiltonian flow associated with $H$, namely the Schr\"odinger equation 
$$ \im \partial_t \psi = H \psi.$$
There can now be plenty of stationary states: all the eigenfunctions of $H$, and we shall be particularly interested in those achieving the lowest possible eigenvalue.

\medskip

\noindent \textbf{Mixed states.} It will be useful to allow some statistical uncertainty on the state of a particle. In classical mechanics this is achieved by identifying a \index{Pure state, mixed state}{pure state}, i.e. a point $\bx,\bp$ in the phase-space, to the corresponding Dirac mass, and then to take the convex envelope (all statistical superpositions) of such, obtaining of course all probability measures as state space. Slightly more abstractly, the phase space is $\R^d \times \R^d$, the \index{States, observables}{observables} include all continuous bounded functions thereon. The state space is the dual thereof, the set of probability measures. This convex set contains all mixed states, and its extreme points\footnote{Those one cannot write as non-trivial convex superpositions.}, the Dirac masses, are the pure states. 

In quantum mechanics we shall identify a pure state, i.e. a wave-function $\psi \in L^2 (\R^d,\C)$ with the corresponding orthogonal projector, denoted $|\psi \rangle \langle \psi|$. Then we may form statistical (meaning convex) combinations thereof, obtaining (by the spectral theorem) all positive trace-class operators on $L^2 (\R^d,\C)$ with trace $1$. 

Another way of saying this is that the phase-space is now $L^2 (\R^d,\C)$, and that the observables include bounded operators thereon. There is a twist in that we do not consider as state-space the full dual of the bounded operators, but restrict\footnote{For an infinite-dimensional Hilbert space, the dual of the bounded operators acting on it strictly includes the trace-class, which is actually the pre-dual of the bounded operators.} this to the trace-class~\cite{Simon-79,Schatten-60}. This is because we want the extreme points of the space of mixed states, identified with pure states, to be rank-one orthogonal projections.

\medskip

\noindent \textbf{Many classical particles.} For a system of $N$ classical particles the generalization is straightforward. The phase-space becomes $\R ^{dN} \times \R ^{dN}$, the observable space $C ^0 (\R^{dN} \times \R ^{dN})$, the state space $\cP (\R^{dN}\times \R^{dN})$. A prototype Hamilton function is 
\begin{equation}\label{eq:N class hamil}
H_N (\bX_N;\bP_N) = \sum_{j=1} ^N \left( |\bp_j| ^2 + V (\bx_j) \right) + \sum_{1\leq i < j \leq N} w (\bx_i-\bx_j) 
\end{equation}
with the notation $\bX_N = (\bx_1,\ldots,\bx_N),\bP_N = (\bp_1,\ldots,\bp_N)$ and we have introduced a pair interaction potential. Newton's equations take the form 
$$
\begin{cases}
 \partial_t \bx_j = 2 \bp_j \\
 \partial_t \bp_j = - \nabla_{\bx_j} H_N (\bX_N;\bP_N)
\end{cases}
$$
and they specify the evolution of pure states identified with points of the phase space. More generally one might be interested in the evolution of mixed states $\mubf_N \in \cP (\R^{dN}\times \R ^{dN})$, given by the push-forward along the trajectories of Newton's equations. Zero-temperature equilibria can become much less trivial to describe, because we are now looking for the minimal points of 
$$ \bX_N \mapsto \sum_{j=1} ^N V (\bx_j) + \sum_{1\leq i < j \leq N} w (\bx_i-\bx_j).$$
Something of importance is that we always choose $H_N$ to be symmetric under the exchange of particle labels:
$$  H_N \left( \bx_{1}, \ldots, \bx_{N}; \bp_{1}, \ldots, \bp_{N} \right) = H_N \left( \bx_{\sigma (1)}, \ldots, \bx_{\sigma (N)}; \bp_{\sigma (1)}, \ldots, \bp_{\sigma (N)} \right)$$
for any permutation $\sigma$. This is because we are thinking of identical particles, that must be \index{Indistinguishability}{indistinguishable}. Accordingly, any reasonable equilibrium state of $H_N$ will be a probability measure symmetric under particle-label exchanges. Time-evolution also preserves this symmetry condition. Thus we are ultimately only interested in states invariant under particle exchanges: probability measures over the phase space satisfying
\begin{equation}\label{eq:class indis} 
\mubf_N \left( \bx_{1}, \ldots, \bx_{N}; \bp_{1}, \ldots, \bp_{N} \right) = \mubf_N \left( \bx_{\sigma (1)}, \ldots, \bx_{\sigma (N)}; \bp_{\sigma (1)}, \ldots, \bp_{\sigma (N)} \right) 
\end{equation}
for any permutation $\sigma$.

\medskip

\noindent \textbf{Many quantum particles.} The phase-space now becomes $L^2 (\R^{dN})$ and accordingly the observables (respectively states) include\footnote{Many observables of interest are unbounded.} all bounded self-adjoint operators (respectively positive trace-class operators) acting thereon. Thus a \index{Pure state, mixed state}{pure state} is a wave-function $\Psi_N \in L^2 (\R ^{dN})$ identified with the corresponding orthogonal projector $|\Psi_N \rangle \langle \Psi_N |$. A mixed state is a positive trace-class operator $\Gamma_N$ with trace $1$ that we see (via the spectral theorem) as a statistical superposition of pure states
$$ \Gamma_N = \sum_{j=1} ^\infty \lambda_{N,j} |\Psi_{N,j} \rangle \langle \Psi_{N,j} | $$
with $(\Psi_{N,j})_j$ an orthonormal basis of $L^2 (\R^{dN})$ and $\lambda_{N,j}$ positive numbers adding to~$1$. 

We obtain a quantum Hamiltonian from~\eqref{eq:N class hamil} as previously
\begin{equation}\label{eq:N quant hamil}
H_N  = \sum_{j=1} ^N \left( -\Delta_{\bx_j} + V (\bx_j) \right) + \sum_{1\leq i < j \leq N} w (\bx_i-\bx_j).
\end{equation}
This specifies an energy,
$$ \cE_N [\Psi_N] = \langle \Psi_N | H_N | \Psi_N \rangle $$
for a pure state, and 
$$ \cE_N [\Gamma_N] = \tr \left( H_N \Gamma_N \right) $$
for a mixed state. The time-evolution is the \index{Many-body Schr\"odinger equation}{many-body Schr\"odinger flow} 
$$ \im \partial_t \Psi_N = H_N \Psi_N$$
for a pure state, and the von Neumann equation
$$ \im \partial_t \Gamma_N = [H_N,\Gamma_N]$$
for a mixed state, with $[A , B] = AB-BA$ the commutator of two operators $A$ and $B$. Equilibrium states of this evolution shall be our chief concern in this review, in particular energy minimizers (under $L^2$ unit mass constraint).

\medskip

\noindent\textbf{Many bosons.} In quantum mechanics there is a twist as to how we implement \index{Indistinguishability}{indistinguishability} of particles. Certainly, in accordance with~\eqref{eq:class indis} and the interpretation of $|\Psi_N|^2,|\widehat{\Psi_N}| ^2$ as probability densities in position/momentum space, we would like to have 
\begin{align}\label{eq:quant indis 1} 
\left|\Psi_N \left( \bx_{1}, \ldots, \bx_{N}\right) \right| ^2 &= \left| \Psi_N \left( \bx_{\sigma (1)}, \ldots, \bx_{\sigma (N)}\right) \right|^2\nonumber \\
\left|\widehat{\Psi_N} \left( \bp_{1}, \ldots, \bp_{N}\right) \right| ^2 &= \left| \widehat{\Psi_N} \left( \bp_{\sigma (1)}, \ldots, \bp_{\sigma (N)}\right) \right|^2.
\end{align}
For various reasons we shall not delve into, this is not a sufficient requirement (for starters, quantum mechanics is settled in linear spaces, whereas~\eqref{eq:quant indis 1} are non-linear constraints). As indicated previously, we want to restrict further to fully symmetric or fully antisymmetric wave-functions. A possible rationale for this is that these seem the most simple choices guaranteeing~\eqref{eq:quant indis 1}. One can further argue that, denoting $U_\sigma$ the unitary operator permuting particle labels according to $\sigma$,
\begin{equation}\label{eq:perm unit}
(U_\sigma \Psi_N) \left( \bx_{1}, \ldots, \bx_{N}\right) = \Psi_N \left( \bx_{\sigma (1)}, \ldots, \bx_{\sigma (N)}\right),
\end{equation}
any reasonable operator of the form~\eqref{eq:N quant hamil} will commute with $U_\sigma$ for any $\sigma$. It thus makes sense to look for normalized eigenfunctions $\Psi_N$ that are also eigenfunctions of $U_\sigma$ for all $\sigma$. But if for all $\sigma$  
$$ 
\Psi_N \left( \bx_{\sigma (1)}, \ldots, \bx_{\sigma (N)}\right) = \epsilon(\sigma) \Psi_N\left( \bx_{1}, \ldots, \bx_{N}\right)
$$
the map $\sigma \mapsto \epsilon(\sigma)$ must be a one-dimensional representation of the permutation group, hence either $\epsilon(\sigma) \equiv 1$ or $\epsilon(\sigma) = (-1)^{\rm{sgn}(\sigma)}$. 

This means we consider the action of $H_N$ only on fully symmetric functions 
\begin{equation}\label{eq:intro sym bis}
\Psi_N (\bx_1,\ldots,\bx_N) = \Psi_N (\bx_{\sigma(1)},\ldots,\bx_{\sigma(N)}), \quad \forall \sigma 
\end{equation}
or on fully antisymmetric functions
\begin{equation}\label{eq:intro asym}
\Psi_N (\bx_1,\ldots,\bx_N) = (-1) ^{\mathrm{sgn} (\sigma)} \Psi_N (\bx_{\sigma(1)},\ldots,\bx_{\sigma(N)}), \quad \forall \sigma. 
\end{equation}
This dichotomy reflects the empirically observed division of fundamental particles into two types, bosons and fermions. Suggestions of exotic quasi-particles falling outside of this dichotomy shall not be discussed here, see~\cite[Chapter~7]{Rougerie-hdr} and references therein. The above choices are referred to as ``\index{Quantum statistics, bosons and fermions}{quantum statistics}'' because they determine the way particles in thermal equilibrium with no interactions will populate the energy levels of the Hamiltonian.

From now on we always restrict to Case~\eqref{eq:intro sym bis}. This means our many-body Hilbert space is actually 
$$ \gH_N := \bigotimes_{\sym} ^N L^2 (\R^d) \simeq L^2 _{\sym} (\R ^{dN}).$$
Observables are always assumed to act on this space, which is legitimate because even if extended to the full $L^2  (\R ^{dN})$ they commute with the unitaries $U_\sigma$. States are positive trace-class operators with trace $1$ acting on $\gH_N$. Note that a state $\Gamma_N$ on the full $L^2  (\R ^{dN})$ preserves $\gH_N$ (i.e. is bosonic) if it satisfies 
\begin{equation}\label{eq:bosons}
 U_\sigma \Gamma_N = \Gamma_N U_\sigma = \Gamma_N
\end{equation}
for all permutation $\sigma$. A weaker notion of \index{Indistinguishability}{indistinguishability}, closer to what is demanded~\eqref{eq:class indis} for classical particles (``boltzons'') would be 
\begin{equation}\label{eq:boltzons}
 U_\sigma \Gamma_N U_\sigma ^* = \Gamma_N. 
\end{equation}

\medskip

\noindent\textbf{\index{Reduced density matrix}{Reduced density matrices.}} The full state $\Gamma_N$ (or the full wave-function $\Psi_N$) in fact contains too much information. It is often convenient/necessary to forget some of this information to make rigorous statements. In classical mechanics we can form the \index{Marginal density}{marginal densities} of a state $\mubf_N$, i.e. integrate out some of the degrees of freedom. The $k$-particle reduced density, given by
\begin{multline*}
 \mubf_N ^{(k)} \left(\bx_1,\ldots,\bx_k ; \bp_1, \ldots, \bp_k \right) \\
 = \int_{\R^{2d(N-k)}} \mubf\left(\bx_1,\ldots,\bx_k, \bx_{k+1},\ldots,\bx_N ; \bp_1, \ldots, \bp_k, \bp_{k+1},\ldots \bp_N \right) \mathrm{d}\bx_{k+1} \ldots \mathrm{d}\bx_N \mathrm{d}\bp_{k+1} \ldots \mathrm{d}\bp_N, 
\end{multline*}
contains all the information we need to describe what a typical $k$-tuple of particles does. Usually, it is sufficient to retain this information only for small values of $k$ (compared to $N$). This is all we need and hope to record. Observe that $\mubf_N ^{(k)}$ is obtained from $\mubf_N$ by testing against observables that depend only on $2k$ coordinates. Let us generalize this to the quantum case.

Given a state (aka, density matrix) $\Gamma_N$ on $\gH_N$ ($\Gamma_N = |\Psi_N \rangle \langle \Psi_N |$ in case we are actually thinking of a wave-function $\Psi_N$) we define its \emph{$k$-particles reduced density matrix} $\Gamma_N ^{(k)}$, a positive trace-class operator on $\gH_k$ by demanding that 
\begin{equation}\label{eq:def red mat dual}
 \tr_{\gH_k} \left( A_k \Gamma_N ^{(k)} \right) = {N \choose k} \tr_{\gH_k} \left( A_k \otimes \1 ^{\otimes (N-k)} \Gamma_N \right) 
\end{equation}
for every bounded operator $A_k$ on $\gH_k$. Here $A_k \otimes \1 ^{\otimes (N-k)}$ acts as $A_k$ on the first $k$ coordinates and trivially on the $N-k$ others (the choice of which particles are ``first'' and which are ``others'' is irrelevant because we always consider states satisfying~\eqref{eq:boltzons}). The combinatorial factor ${N \choose k}$ is there for normalization reasons: it is natural to have the trace of $\Gamma_N ^{(k)}$ equal to the number of $k$-tuples of particles.

What we have just defined is in fact the partial trace
\begin{equation}\label{eq:def red mat}
 \Gamma_N ^{(k)} = {N \choose k} \tr_{k+1 \to N} \left( \Gamma_N \right) 
\end{equation}
where again the choice of which $N-k$ degrees of freedom to trace upon is irrelevant. We could also identify $\Gamma_N$ with its integral kernel~\cite[Section~VI.6]{ReeSim1} 
$$ 
\Gamma_N \Phi_N (\bX_N) = \int_{\R^{dN}} \Gamma_N (\bX_N;\bY_N) \Phi_N (\bY_N) d\bY_N,$$
i.e., for a pure state $\Gamma_N = |\Psi_N \rangle \langle \Psi_N |$,  
$$ \Gamma_N (\bX_N;\bY_N) = \Psi_N (\bX_N) \overline{\Psi_N (\bY_N)}.$$
Then the integral kernel of $\Gamma_N ^{(k)}$ is obtained as 
$$ 
\Gamma_N ^{(k)} (\bX_k;\bY_k)  = {N \choose k} \int_{\R ^{d(N-k)}} \Gamma_N \left( \bX_k, \bZ_{N-k}; \bY_k, \bZ_{N-k} \right) \mathrm{d}\bZ_{N-k}.
$$
Observe, as a first use of reduced density matrices, that the energies we are interested in depend only on the second density matrix. With $H_N$ as in~\eqref{eq:N quant hamil}, 
$$
 \langle \Psi_N |H_N| \Psi_N \rangle_{L^2} = \tr_{\gH} \left( \left( -\Delta + V \right) \Gamma_N ^{(1)} \right) + \tr_{\gH_2}\left( w (\bx-\by) \Gamma_N ^{(2)} \right).
$$

\section{\index{Second quantization}{Second-quantized formalism}}\label{sec:second quant}

Roughly speaking, we have so far been working in first quantization: observables are defined as operators acting on classical fields, i.e. wave-\emph{functions}. By second quantization one usually means that fields too become operators\footnote{Edward Nelson used to say that first quantization is a mystery but second quantization is a functor.}. This can range from a deep physical move (as in quantum electrodynamics, where the electromagnetic field is quantized, and the number of associated photonic particles may \emph{physically} fluctuate) to a basic calculational trick (as in the ordinary non-relativistic quantum mechanics that concerns us in this text). In the latter case, second quantization has a \index{Grand-canonical}{grand-canonical} flavor: we can let the particle number fluctuate (fixing the chemical potential instead) if it turns out to be computationally convenient. This comes with a set of algebraic tools that are also extremely handy in the canonical (fixed particle number) context of this text. It will often be handy to rely on this ``second quantization'' formalism, by which we mean introduce \index{Creation and annihilation operators}{creation and annihilation operators}. 

We will not use this formalism throughout the text, so some hasty readers\footnote{And those who are already familiarized, of course.} might skip this section for now. A word of warning though: physicists learn this formalism in kindergarten, and it is tremendously useful. I would urge less hasty readers to take the occasion of them reading this text anyway to get acquainted with these notions. See~\cite{DerGer-13,GusSig-06,Solovej-notes} for more details.

\medskip 

\noindent \textbf{Fock space.} Recall we consider only the case of bosons in this review, but fermions are treated very similarly. It is convenient to gather all $N$-particle spaces under one roof, the bosonic \index{Fock space}{Fock space} 
\begin{equation}\label{eq:Fock}
 \gF = \gF (\gH) := \C \oplus \gH \oplus \gH_2 \oplus \ldots \oplus \gH_N \oplus \ldots  
\end{equation}
One calls 
\begin{equation}\label{eq:vacuum}
|0\rangle = 1 \oplus 0 \oplus 0 \oplus \ldots 
\end{equation}
the \index{Vacuum vector}{\emph{vacuum vector}}, that has no particles at all. 

We shall call \index{Grand-canonical}{grand-canonical} state (by opposition to canonical, $N$-particle, states encountered previously) a positive trace-class operator on $\gF$ with trace $1$. We shall only be interested in ``diagonal states'' of the form 
$$ \Gamma = \Gamma_0 \oplus \Gamma_1 \oplus \ldots \oplus \Gamma_N \oplus \ldots$$
with 
$$ \sum_{n=0} ^{\infty} \tr_{\gH_n} \, \Gamma_n = 1.$$
A $N$-particle state is recovered if all the $\Gamma_n$ are $0$ except $\Gamma_N$. If that is not the case we interpret the \index{Number operator}{particle number} as a genuine observable/operator 
\begin{equation}\label{eq:number second}
 \cN = \bigoplus_{n=0} ^\infty \, n \1_{\gH_n} 
\end{equation}
whose expectation value in a state $\Gamma$ is 
$$ \tr \left( \cN \Gamma \right) = \sum_{n=0} ^{\infty} n \tr_{\gH_n} \, \Gamma_n.$$

\medskip 

\noindent \textbf{\index{Reduced density matrix}{Reduced density matrices}, again.} Given a $k$-particle observable $A_k$ acting on $\gH_k$ we can lift it to Fock space in the natural way 
$$ \bbA_k := \bigoplus_{n= k} ^\infty \sum_{1 \leq i_1 < \ldots < i_k \leq n} A_{i_1,\ldots,i_k}$$
where $A_{i_1,\ldots,i_k}$ denotes $A_k$ acting on variables $i_1,\ldots,i_k$ of a $n$-particle wave-function. Then, we define the $k$-particle reduced density matrix of a grand-canonical state $\Gamma$ by setting 
\begin{equation}\label{eq:dens mat GC}
 \tr_{\gH_k} \left( A_k \Gamma^{(k)} \right) = \tr_{\gF} \left( \bbA_k \Gamma \right). 
\end{equation}
For a diagonal state we have 
$$ \Gamma^{(k)} = \sum_{n\geq k} \Gamma_n ^{(k)} = \sum_{n\geq k} {n \choose k} \tr_{k+1 \to n} \left( \Gamma_n  \right) .$$
where the reduced density matrix $\Gamma_n ^{(k)}$ of a $n$-particle state is defined as in the previous section~\eqref{eq:def red mat}.  

\medskip

\noindent \textbf{\index{Creation and annihilation operators}{Creation and annihilation operators}.} Given a one-body wave-function $u\in L^2 (\R^d)$ we define the associated annihilation operator $a(u)$ acting on the Fock space as specified by 
$$ \left( a (u) \psi_n \right) (\bx_1,\ldots,\bx_{n-1}) := \sqrt{n} \int_{\R^d} \overline{u(\bx)} \psi_n (\bx,\bx_1,\ldots,\bx_{n-1}) \mathrm{d}\bx. $$
Here $\psi_n$ is a $n$-particles bosonic wave-function, and the action is extended by linearity to the whole Fock space. Note that any annihilation operator ``annihiliates the vacuum''
$$ a (u) |0\rangle = 0.$$
The formal adjoint $a^{\dagger} (u)$ of $a(u)$ is the creation operator
$$ \left( a^\dagger (u) \psi_n \right) (\bx_1,\ldots,\bx_{n+1}) := \frac{1}{\sqrt{n+1}} \sum_{j=1} ^{n+1} u (x_j) \psi_n \left( x_1,\ldots, x_{j-1},x_{j+1},\ldots,x_n\right).$$
Note that $a(u)$ sends the $n$-particles sector of Fock space to the $(n-1)$-particles sector, while $a^\dagger (u)$ sends the $n$-particles sector to to the $(n+1)$-particles sector, whence the names annihilation and creation operators. 

The main thing to remember about these operators, and basically the only one used in practice to calculate with them is that they satisfy the \emph{\index{Canonical commutation relations (CCR)}{canonical commutation relations}} (CCR). Namely, let $u,v \in L ^2 (\R^d)$, then 
\begin{equation}\label{eq:CCR}
[a(u),a(v) ] = [a^\dagger (u),a^{\dagger} (v)] = 0, \quad [a (u), a^\dagger (v) ] = \langle u | v \rangle_{L^2}.  
\end{equation}

It is sometimes convenient to consider operators $a_\bx, a^\dagger_\bx,$ annihilating/creating a particle at a point $x\in \R ^d$ rather than in a state $u\in L^2 (\R^d)$. These operator-valued distributions are defined by requiring that, for all $u\in L ^2 (\R ^d)$, 
$$ 
a (u) = \int  \overline{u(\bx)} a_\bx \mathrm{d}\bx, \quad a^{\dagger} (u) =  \int  u(\bx) a^\dagger _\bx \mathrm{d}\bx. 
$$
The CCR now takes the form 
\begin{equation}\label{eq:CCR x}
[a_\bx,a_\by ] = [a^\dagger_\bx, a^{\dagger} _\by] = 0, \quad [a _\bx, a^\dagger_\by ] = \delta ( \bx-\by)  
\end{equation}
with $\delta$ the Dirac mass at the origin.

\medskip 

\noindent \textbf{Relation with Hamiltonians.} Consider extending~\eqref{eq:N quant hamil} to the Fock space in the natural way
$$ \bH = \bigoplus_{n=0} ^\infty H_n.$$
There is a very useful expression for $\bH$ (or any other reasonable operator on the Fock space) in terms of annihilation and creation operators: Let $(u_j)_{j\in \N}$ be an orthonormal basis of $L^2 (\R ^d)$. Then 
\begin{multline}\label{eq:hamil second}
 \bH = \sum_{j,k} \langle u_j | -\Delta + V  | u_k \rangle_{L^2(\R ^d)} a^\dagger (u_j) a (u_k) \\ + \frac{1}{2}\sum_{i,j,k,\ell} \left\langle u_i \otimes u_j | w (\bx-\by) | u_k \otimes u_{\ell} \right\rangle_{L^2 (\R^{2d})} a^{\dagger} (u_i) a ^{\dagger} (u_j) a (u_k) a (u_\ell). 
\end{multline}
Observe that a one-particle operator corresponds to a quadratic operator in creation/annihilation operators while a two-particle operator corresponds to a quartic one.

Using $a_\bx,a^\dagger_\bx$ instead we also have 
\begin{equation}\label{eq:second hamil x} 
\bH = \int_{\R^d} a^\dagger _\bx  \left( - \Delta_\bx + V (\bx) \right) a_\bx \mathrm{d}\bx + \frac{1}{2} \iint_{\R^d \times \R ^d} w (\bx-\by) a^\dagger _{\bx} a ^{\dagger}_{\by} a_\bx a_\by \mathrm{d}\bx \mathrm{d}\by. 
\end{equation}

\medskip 

\noindent \textbf{Relation with density matrices.} For any state $\Gamma$ over the bosonic Fock space (in particular any $N$-particle state living on only one sector), its reduced density matrices can be characterized by 
\begin{multline}\label{eq:dens mat CCR}
 \left\langle v_1 \otimes_\sym \ldots \otimes_\sym v_k | \Gamma ^{(k)} | u_1 \otimes_\sym \ldots \otimes_\sym u_k \right\rangle_{L^2} \\
 = \tr_{\gF} \left( a^\dagger (u_1) \ldots a ^{\dagger} (u_k) a (v_1) \ldots a(v_k) \Gamma \right). 
\end{multline}
Alternatively, $\Gamma ^{(k)}$ being a $k$-particle trace-class operator, we can identify it with an integral kernel which satisfies 
$$\Gamma ^{(k)} \left( \bx_1,\ldots, \bx_k ; \by_1,\ldots, \by_k \right) = \tr_{\gF} \left( a^\dagger _{\by_1} \ldots a^\dagger_{\by_k} a_{\bx_1} \ldots a_{\by_k} \Gamma \right).$$

\section{\index{Mean-field approximation}{Mean-field approximation} and \index{Scaling limits}{scaling limits}}\label{sec:sca lim} 

We can now return to the main theme of the review. We consider the general \index{Many-body Schr\"odinger operator}{many-body Schr\"odinger operator}
\begin{equation}\label{eq:intro Schro op bis}
H_N := \sum_{j=1} ^N \left(-\im \nabla_{\bx_j} + \bA (\bx_j) \right) ^2 + V (\bx_j) + \sum_{1\leq i < j \leq N} w_N (\bx_i - \bx_j),
\end{equation}
re-introducing the vector potential $\bA$ of an external magnetic field $B = \mathrm{curl}\, \bA$. Note that the interaction potential got decorated with a $N$ to indicate that it will soon be chosen $N$-dependent for reasons we shall explain. We could change the kinetic energy operator $\left(-\im \nabla_{\bx} + \bA (\bx) \right) ^2$ to something pseudo-relativistic, but we refrain from doing so. The above operator essentially includes all the possible types of difficulty one might encounter when dealing with mean-field limits. We shall mainly work variationally by considering the energy functional
\begin{equation}\label{eq:N energy}
 \cE_N [\Psi_N] = \langle \Psi_N |H_N| \Psi_N \rangle_{L^2}.
\end{equation}

\medskip 

\noindent\textbf{Bosonic \index{Ground state}{ground states.}} We focus on the action of $H_N$ on the bosonic space (we often denote $\gH=L^2 (\R^d)$ our basic one-particle Hilbert space)
$$\gH_N = \gH = L^2_\sym (\R ^{dN},\C)$$
and consider the ground state energy\footnote{With $\langle \Psi_N |H_N| \Psi_N \rangle$ understood in the sense of quadratic forms and set equal to $+\infty$ is $\Psi_N$ is not in the quadratic form domain of $H_N$.}
\begin{equation}\label{eq:N GSE}
E (N) = \inf\left\{ \langle \Psi_N |H_N| \Psi_N \rangle, \: \Psi_N \in \gH_N,\: \int_{\R^{dN}} |\Psi_N| ^2 = 1 \right\},  
\end{equation}
implicitly assuming the Hamiltonian is bounded from below so that the infimum exists. Of course $E(N)$ is the bottom of the spectrum of $H_N$. Most of the time we will consider the case where $E(N)$ is actually a minimum, thus the lowest eigenvalue of $H_N$, and we will study the associated minimizer(s)/eigenfunction(s) $\Psi_N$. When there is no minimizer or when the method allows it, we will also consider sequences of quasi-minimizers satisfying 
$$ \langle \Psi_N | H_N | \Psi_N \rangle \leq E (N) (1 + o(1)) $$
in the limit $N\to \infty$. Our main goal is to argue that, for large $N$, minimizers or quasi-minimizers can be looked for/approximated in the form of a Hartree \index{Trial state, Hartree}{trial state}
\begin{equation}\label{eq:ansatz}
 \Psi_N (\bx_1,\ldots,\bx_N) = u ^{\otimes N} (\bx_1,\ldots,\bx_N) := u(\bx_1) \ldots u (\bx_N) 
\end{equation}
when $N\to \infty$. Ansatz~\eqref{eq:ansatz} is a \index{Bose-Einstein condensate (BEC)}{Bose-Einstein condensate}, the quantum analogue of independent and identically distributed (iid) particles.

\medskip 

\noindent\textbf{Scaling limits.} Of course there is no reason to believe that the ground state of a large bosonic system should necessarily factorize, even in the large $N$ limit. Certainly, many types of correlations can occur. What is however true, is that for \emph{weakly interacting} systems, the ground state essentially always factorize. Here ``weakly interacting'' is not meant in a perturbative fashion: it is in fact sufficient that one-particle energies and interaction energies be of the same order of magnitude, i.e. that the two sums in~\eqref{eq:intro Schro op bis} weigh roughly the same in the limit. Since there are $N$ terms in the first sum (one-particle energy) and $\sim \frac{N(N-1)}{2}$ terms in the second sum (interactions), we have to assume some $N$-dependence of $w_N$ to achieve that.   

We shall always take as fixed reference length scale that of the one-particle Schr\"odinger operator $\left(-\im \nabla_{\bx} + \bA (\bx) \right) ^2 + V$. Think of the particles living in a fixed box if you wish, although we typically prefer $V$ to be a soft confinement, i.e. a potential on $\R^d$ with trapping behavior at infinity 
$$
V(\bx) \underset{|\bx| \to \infty}{\to} + \infty.
$$
In particular, we \emph{do not} work in the \index{Thermodynamic limit}{thermodynamic limit} of large volume/fixed density, which is sometimes thought of as the only relevant one in condensed matter physics. The latter view has its reasons: typical real-life systems have gigantic numbers of particles, an extension extremely large compared to their size, and the density is the experimentally adjustable parameter. However, scaling limits give some insights that would be extremely hard to vindicate in the thermodynamic limit. Besides, \index{Cold atoms}{cold atoms experiments}~\cite{PetSmi-01,PitStr-03,BloDalZwe-08,DalGerJuzOhb-11,Fetter-09,Cooper-08,Viefers-08} are mostly in regimes different from condensed matter physics (particle numbers are much smaller, and the extent of the system is set by a trapping potential). Moreover, their parameters can be tuned to a large extent (in particular, interaction potentials can be accessed via Feshbach resonances).    

There are then $N$ particles in a fixed volume and (because they do not satisfy any exclusion principle), the total one-body energy scales like $N$. We would like the total interaction energy to do the same, which requires something of the sort
\begin{equation}\label{eq:interaction scale}
 N \times \left(\mbox{Range of interactions}\right) ^{d} \times \left(\mbox{Strength of interactions}\right) \underset{N\to \infty}{\sim} 1 
\end{equation}
for the first two terms yield the number of particles a given one typically interacts with. Thus we think of $w_N$ as being of the form 
$$ w_N (\bx) = \lambda_N w \left( \frac{\bx}{L_N} \right)$$
where $w$ is a fixed potential and $\lambda_N,L_N$ (energy and length scales of the interaction) are chosen to depend on $N$ in such a way that~\eqref{eq:interaction scale} holds. It is by now common practice to guarantee this by picking a fixed non-negative number $\beta$ and setting 
$$ L_N = N ^{-\beta}, \quad \lambda_N = N ^{d\beta -1},$$
thus (the choice of dividing by $N-1$ instead of $N$ is for notational convenience later)
\begin{equation}\label{eq:scaled potential}
\boxed{w_N (\bx) = \frac{1}{N-1} N ^{d\beta} w \left( N ^{\beta} \bx\right) = \frac{1}{N-1} w_{N,\beta} (\bx).} 
\end{equation}
Tuning $\beta$ gives some freedom as to the physics we are describing, as we know explain.

\medskip

\noindent\textbf{\index{Mean-field limit}{Mean-field limits.}} Let us start with the simplest case $\beta = 0$. The range of the interaction is then comparable to the extension of the full system, so each particle interacts with essentially all the others. To make the interaction energy per particle finite we brutally divide the interaction potential by $N$. This is perhaps the most sensible thing to do mathematically if we think of long range forces, say electrostatic-gravitational. In this situation we really expect some statistical averaging to take place, in the spirit of the law of large numbers.

Actually, the physics is not much different for $\beta < 1/d$: the range of the interaction can be much smaller than the full system size, but it stays much larger than the typical inter-particle distance $N^{-1/d}$. To fulfill~\eqref{eq:interaction scale} the interaction strength stays small, $\lambda_N \to 0$ in the $N\to \infty$ limit, so that each particle interacts weakly with many others at a time. 

One can think of the case $0 < \beta < 1/d$ as a $\beta = 0$ case but with $w$ (formally !) replaced by a Dirac mass 
\begin{equation}\label{eq:delta int}
\left( \int_{\R^d} w \right) \delta_0 
\end{equation}
at the origin, leading to local interactions in the limit even though they are fairly long-range in the original system.  

\medskip

\noindent\textbf{\index{Dilute limit}{Dilute limits.}} When $\beta$ crosses the crucial $1/d$ threshold upwards, one enters a rather different physical regime. We now have that the range of the interactions is much smaller than the typical inter-particle distance $L_N \ll N^{-1/d}$, but the strength of the interaction becomes large $\lambda_N\gg 1$: each particle interacts with very few particles at a time, but very strongly.  It is less intuitive now that the system should stay weakly correlated in such a situation, but it in fact does as long as $\beta$ is not too large. 

In fact, for 
\begin{equation}\label{eq:dilute regime}
1/d < \beta < \begin{cases}
                 +\infty \mbox{ if } d=1,2\\
                 1 \mbox{ if } d = 3
                \end{cases} 
\end{equation}
one hardly notices any difference in the asymptotics of the bosonic many-body problem as compared with the mean-field regime: we end up again with an effective potential~\eqref{eq:delta int}. This is a remarkable fact when one thinks of the rather different physical situations involved. As we shall see, even if statements look very similar, one does notice a difference when it comes to proofs. The dilute case is much more difficult than the mean-field case, which is only fair from an analysis standpoint: the interaction is much more singular.

\medskip 

\noindent\textbf{\index{Gross-Pitaevskii limit}{Gross-Pitaevskii limit.}} The most challenging analysis questions arise when we leave the dilute regime~\eqref{eq:dilute regime} upwards. We shall only discuss this in 3D. In 2D one would need~\cite{CarCenSch-20,JebLeoPic-16,LieYng-01,LieSeiYng-01,SchYng-07} to make the scaling of the interaction potential depend exponentially on $N$. In 1D I am not aware of any equivalent of the regime we shall now discuss.

Thus we focus on the case $\beta = 1$ in 3D. What is special with this ? To answer we need to introduce the \emph{scattering length} $a_w$ of an interaction potential $w$. It can be characterized as 
\begin{equation}\label{eq:def scat}
4\pi a_w = \inf\left\{ \int_{\R^3} |\nabla f |^2 + \frac{1}{2} w |f| ^2, \: f(\bx) \underset{|\bx| \to \infty}{\to} 1  \right\}. 
\end{equation}
One should think of the above problem as giving the minimal energy of a pair of particles interacting locally via the potential $w$ (locally because of the boundary condition we require). The wave-function $f$ corresponds to the relative motion of the particles (the center of mass coordinate is removed). Going back to~\eqref{eq:interaction scale} we see by scaling that if $\beta = 1,d=3$ 
$$ a_{w_N} = N ^{-1} a_w < \frac{1}{8\pi N} \int w $$
where the upper bound is obtained by using the trial state $f\equiv 1$ in~\eqref{eq:def scat}. On the other hand, if $\beta < 1$ 
$$ a_{w_N} \sim N ^{-1} \int w.$$
In full generality, one should always take the scattering length into account to model interactions when $\beta >0$, but this affects the leading order in the large $N$ limit only if $\beta = 1$. See Section~\ref{sec:sca length} for more details.

It is better appreciated why the above is a length by introducing the \index{Scattering solution}{zero-energy scattering solution} $f_w$, i.e. the solution of the variational problem~\eqref{eq:def scat}. It solves 
\begin{equation}\label{eq:scat eq}
-\Delta f_w + \frac{1}{2} w f_w = 0 \mbox{ in } \R^3, \: f_w (\bx) \underset{|\bx| \to \infty}{\to} 1  
\end{equation}
and behaves, for large $|\bx|$, as 
$$ f_w (\bx) = 1 - \frac{a_w}{|\bx|} (1 + o (1)).$$

Following the early works of Gross and Pitaevskii~\cite{Gross-61,Pitaevskii-61}, in most situations of interest for the dilute Bose gas one should replace $w_N\rightsquigarrow 8 \pi a_{w_N} \delta_0$ and, perhaps, only then take the limit $N\to \infty$. In other words, the interaction potential obtained when taking first the limit of short-range interactions has the \index{Scattering length}{scattering length} as its parameter. This limit does not make a lot of sense mathematically, because delta (or point) interactions are hard to define (see~\cite{CorAntFinMicTet-15,CorAntFinMicTet-12,CorFinTet-15,AntFigTet-94} and references therein for works in this direction). The Gross-Pitaevskii limit $N\to\infty,\beta = 1$ yields the same final result as one would get if one could take first the range of the interactions to $0$, and then only $N\to \infty$. This is the way physicists usually think of the mean-field approximation in a dilute gas.

The main difference with the cases discussed before is that the effective potential obtained in the limit is indeed 
\begin{equation}\label{eq:delta scat}
8 \pi a_{w} \delta_0 
\end{equation}
instead of~\eqref{eq:delta int}. The difficulty is that this is an effect of short-range correlations between the particles, so that one has to go beyond the iid ansatz~\eqref{eq:ansatz} in this regime. 

\section{Non-linear Schr\"odinger functionals}\label{sec:NLS}

Let us now turn to the limit objects we shall derive in the large $N$ limit. Essentially they are obtained by inserting the ansatz~\eqref{eq:ansatz} in the many-body energy functional~\eqref{eq:N energy} and computing what one gets for large $N$. There is a significant twist however as regards the \index{Gross-Pitaevskii limit}{Gross-Pitaevskii limit}.

\medskip 

\noindent\textbf{Pure mean-field case,} $\beta = 0$ in~\eqref{eq:interaction scale}. We simply insert~\eqref{eq:ansatz} in~\eqref{eq:N energy}. Modulo approximating ${N\choose 2}$ by $N^2 /2$ we then get 
$$ \cE_N [\Psi_N] = N\cEh [u]+ O(1)$$
with the non-local \index{Hartree energy functional}{Hartree energy functional}
\begin{equation}\label{eq:rev Hartree f}
\cEh[u] = \int_{\R^d} \left| \left(-\im \nabla + \bA \right) u \right| ^2 + V |u| ^2 + \frac{1}{2} \iint_{\R^d \times \R ^d} |u(\bx)| ^2 w (\bx - \by) |u (\by)| ^2 \mathrm{d}\bx \mathrm{d}\by.
\end{equation}
Under standard assumptions on the data $\bA,V,w$ there exist minimizers for the ground state energy 
$$ 
\Eh := \inf\left\{ \cEh [u], \: \int_{\R^d} |u|^2  = 1 \right\}.
$$
We denote one such minimizer by $\uh$ and by $\Mh$ the set of all minimizers. Non uniqueness occurs~\cite{AshFroGraSchTro-02,GuoSei-13,Seiringer-02,CorRinYng-07,CorPinRouYng-12} typically for non-zero $\bA$ and/or attractive interaction $w$. We do not insist on the required conditions on $\bA,V,w$, for they are anyway implied by what we need to assume for the many-body problem to make sense.

We shall mostly focus on the case of \index{Trapping potential}{trapping external potentials} 
\begin{equation}\label{eq:trapping}
V(\bx) \underset{|\bx| \to \infty}{\longrightarrow} \infty  
\end{equation}
in which case the direct method in the calculus of variations leads straightforwardly to existence of minimizers. Sometimes we make more specific assumptions to control the growth of $V$ at infinity
\begin{equation}\label{eq:trapping s}
V(\bx) \geq c_s |\bx| ^s - C_s      
\end{equation}
for some $s>0$ and constant $c_s,C_s >0$. In case $V$ does not grow at infinity (non-trapped case), existence of minimizers is a more subtle matter and requires \index{Concentration-compactness}{concentration-compactness} techniques. This case shall concern us only in Section~\ref{sec:non trap}.

\medskip 

\noindent\textbf{Non-linear Schr\"odinger case,} $\beta >0$ in~\eqref{eq:interaction scale}. As discussed above, there is a clear distinction between the regimes $\beta < 1/d$ and $\beta > 1/d$. It however does not show up in the form of the limiting functional, as long as $\beta$ satisfies~\eqref{eq:dilute regime}. As previously, inserting~\eqref{eq:ansatz} in~\eqref{eq:N energy} leads to the, now $N$-dependent, \index{Hartree energy functional}{Hartree functional}  
\begin{equation}\label{eq:Hartree func N}
\int_{\R^d} \left( \left| \left(-\im \nabla + \bA \right) u \right| ^2 + V |u| ^2 \right) + \frac{N^{d\beta}}{2} \iint_{\R^d \times \R ^d} |u(\bx)| ^2 w \left(N ^{\beta} \left(\bx - \by\right)\right) |u (\by)| ^2 \mathrm{d}\bx \mathrm{d}\by. 
\end{equation}
Since of course 
$$ N^{d\beta} w \left( N ^{\beta} \, \cdot \right) \underset{N\to \infty}{\wto} \left( \int w \right)\delta_0 $$
as distributions, the sensible thing to do is to replace, in the large $N$ limit, the above functional by a local version. For shortness we denote 
\begin{equation}\label{eq:convention int w}
b_w := \int_{\R^d} w.  
\end{equation}
The \index{Non-linear Schr\"odinger energy functional}{non-linear Schr\"odinger functional} is then 
\begin{equation}\label{eq:rev nls f}
\cEnls[u]:= \int_{\R^d} \left| \left(-\im \nabla + \bA \right) u \right| ^2 + V |u| ^2 + \frac{b_w}{2} \int_{\R^d} |u(\bx)| ^4  \mathrm{d}\bx.
\end{equation}
There is now some discussion to be had as to whether the above is bounded below. Contrarily to the pure mean-field case, this is not implied by the $N$-body problem being well-defined for fixed $N$. 

In the non-linear Schr\"odinger case we shall always assume~\eqref{eq:trapping}. Then the infimum 
$$ 
\Enls := \inf \left\{ \cEnls [u], \: \int_{\R^d} |u| ^2 = 1 \right\}
$$
exists and is attained by minimizers $\unls$ forming the set $\Mnls$ if one makes the following

\begin{assumption}[\textbf{Stability of NLS energy}]\label{asum:NLS stab}\mbox{}\\
Let $w$ be the unscaled potential $w$ in~\eqref{eq:scaled potential} and $b_w$ its integral. We assume:
 \begin{itemize}
 \item In 1D, nothing particular. 
 \item In 2D, $b_w$ not to be too negative. We demand 
 $$ b_w > -a^*$$
where $a^*$ is the best constant in the Gagliardo-Nirenberg inequality 
\begin{equation}\label{eq:GagNir}
\frac{a^*}{2} \int_{\R ^2} |u| ^4 \leq \left(\int_{\R^2} |u| ^2 \right) \left(\int_{\R^2} |\nabla u| ^2 \right).
\end{equation}
\item In 3D, that $b_w$ be non-negative $b_w \geq 0$. 
\end{itemize} 
\end{assumption}

These constraints are imposed by the respective scalings of the interaction $\int |u|^4$ and kinetic energy $\int |\nabla u| ^2$ terms under mass-preserving changes of functions
$$ u(\bx) \mapsto \frac{1}{L^{d/2}} u\left(\frac{\bx}{L}\right). $$
The enemy is indeed mass-concentration of a minimizing sequence at some point under the influence of attractive interactions ($L \to 0$ schematically). Under the above assumptions, such a scenario is always prevented by the coercive kinetic energy, for it would lead to the energy being $+\infty$.

\medskip 

\noindent\textbf{Gross-Pitaevskii case,} $\beta =1$ in~\eqref{eq:interaction scale}. We discuss this case only in 3D, in lower dimensions it would require a different scaling of the interactions~\cite[Chapters~3 and 6]{LieSeiSolYng-05}. The limit object is a NLS functional as above, but with a different coupling constant. Of course that does not change the mathematical properties of the limit, or the existence theory for its minimizers. It does however change tremendously the derivation from the many-body problem and we thus prefer to use a different notation to avoid confusions.

Let thus $\EGP,\uGP,\MGP$ respectively be the minimum, a minimizer, and the set of minimizers (all this under unit mass constraint of course) of the \index{Gross-Pitaevskii energy functional}{Gross-Pitaevskii functional}
\begin{equation}\label{eq:GP func}
\cEGP [u] = \int_{\R^3} \left| \left(-\im \nabla + \bA \right) u \right| ^2 + V |u| ^2 + 4\pi a_w \int_{\R^3} |u(\bx)| ^4  \mathrm{d}\bx.
\end{equation}
where $a_w$ is the scattering length~\eqref{eq:def scat} of the fixed potential $w$ appearing in~\eqref{eq:interaction scale}. We assume it is non-negative to ensure boundedness from below of the energy. 

Observe that~\eqref{eq:def scat} ensures that 
$$ 4 \pi a_w < \frac{b_w}{2}$$
by taking the trial state $f\equiv 1$ in the variational principle and observing this cannot be an exact minimizer (as per~\eqref{eq:scat eq}). Thus for $\beta = 1$ the energy is \emph{lower} than one might expect from the arguments discussed above, leading to the NLS functional. This comes about because short-range correlations must be inserted on top of the ansatz~\eqref{eq:ansatz}. 

We will be more precise later in this text, but, roughly speaking, the idea behind the emergence of the Gross-Pitaevskii functional is that the ground state of the system actually looks like 
\begin{equation}\label{eq:ansatz GP}
\Psi_N (\bx_1,\ldots,\bx_N) \approx c_N \prod_{j= 1} ^N u (\bx_j) \prod_{1\leq i < j \leq N} f_{w_N} \left( \bx_i - \bx_j\right) 
\end{equation}
with $f_{w_N}$ the solution of~\eqref{eq:def scat}-\eqref{eq:scat eq} for the potential~\eqref{eq:interaction scale} and $c_N$ a normalization constant. The extra pair correlations in the second factor are responsible for the reduction of the effective coupling constant. Note that for $\beta < 1$ we have   
$$ 8 \pi a_{w_N} \underset{N\to \infty}{\longrightarrow} b_w = \int w$$
so that the effect of pair correlations in~\eqref{eq:ansatz GP} is not seen at leading order.

\section{Main theorem}

We are now ready to state a ``meta-theorem'' embodying the kind of results the material reviewed herein aims at proving. In the rest of the review we will present several avatars of this general statement. As we shall see, only a handful of methods allow to prove it in its full glory, i.e. with minimal assumptions and maximal conclusions. 

\medskip

\noindent\textbf{Comments on assumptions.} As far as the assumptions are concerned, we shall not be picky as to the regularity of the data $\bA,V,w$. A minima we need to assume that the original many-body Hamiltonian~\eqref{eq:intro Schro op bis} is (essentially) self-adjoint, which is ensured under the

\begin{assumption}[\textbf{Potentials}]\label{asum:pots}\mbox{}\\
Let $\max (1,d/2) < p < \infty$. We assume that 
\begin{itemize}
 \item $V\in L^p_{\rm loc} (\R^d) +  L ^{\infty}_{\rm loc} (\R ^d)$ with $$V(\bx)\underset{|\bx|\to\infty}{\to} \infty$$ 
 \item $w\in L^p (\R^d) + L^\infty (\R^d)$ with 
 $$w(x) \underset{|\bx|\to\infty}{\to} 0$$ 
 \item $\bA \in L^2_{\rm loc} (\R^d,\R^d)$ 
\end{itemize}
\end{assumption}

See e.g.~\cite[Remark~3.2]{Rougerie-LMU} and~\cite{ReeSim2} for more background. The discussion of magnetic fields is in~\cite{AvrHerSim-78}. We will not try to stick to the above in our statements, and shall often be rather liberal as regards regularity. Our criteria of success are indeed more structural: 
\begin{itemize}
 \item does the method allow for a non-zero $\bA$ ? 
 \item does the method allow for (partially) attractive interactions ?
 \item how large a $\beta$ (how singular an interaction) can the method afford ?
\end{itemize}
The third point has been discussed at length above. In particular, in the NLS/dilute/GP limits we do not care\footnote{A remarkable aspect of some of the methods we discuss is however that they can accomodate hard-core potentials, see Section~\ref{sec:sca length}.} too much about the regularity of $w$ since it is scaled to converge to a Dirac delta function anyway. In the pure mean-field regime $\beta = 0$, the singularity of the potential accommodated by the method of proof is more relevant.

As regards the other two points, first recall that non-zero $\bA$ or interaction potentials with attractive parts in general lead to the minimizers of the limit problem to be non-unique. The inclusion of non-trivial gauge fields in addition forces one to make crucial use of bosonic symmetry. Indeed, for $\bA \equiv 0$, minimizers of the $N$-body problem \emph{without} symmetry constraint are automatically bosonic wave-functions (see~\cite[Section~3.2.4]{LieSei-09} or Theorem~\ref{thm:bos min} below). Thus one may drop the constraint if it turns out to be convenient. This simplification is not possible in the presence of a non-zero $\bA$: examples are known~\cite{Seiringer-03} of situations where bosonic ground states differ from ground states without symmetry.  

As for attractive interactions, the main issue is that in a NLS limit $\beta >0$ they render even the question of whether the energy is bounded below proportionally to $N$ highly non-trivial. We shall need to distinguish between several notions of stable potentials:

\begin{definition}[\textbf{Stability notions}]\label{def:stability}\mbox{}\\
We say that the unscaled interaction potential $w$ from~\eqref{eq:scaled potential} is 
\begin{enumerate}
 \item repulsive if $w\geq 0$.
 \item classically stable if there is a constant $C>0$ such that for all $\bx_1,\ldots,\bx_N \in\R^d$
 $$ \sum_{1\leq i < j \leq N} w (\bx_i - \bx_j) \geq - C N.$$
 \item Hartree-stable if 
 $$ \inf_{u\in H^1 (\R^d)} \left( \frac{\iint_{\R^d \times \R^d} |u(\bx)| ^2 w(\bx-\by) |u(\by)| ^2 \mathrm{d}\bx \mathrm{d}\by }{\int_{\R^d} |u|^2 \int_{\R^d} |\nabla u|^2}\right) > -1$$
 \item NLS-stable if it satisfies Assumption~\ref{asum:NLS stab}, or GP-stable if it has positive scattering length. 
\end{enumerate}
We say that the many-body Hamiltonian~\eqref{eq:intro Schro op bis} is stable
\begin{enumerate}
 \item of the first kind if $H_N \geq - C(N)$ as an operator, for some function $C(N) \geq 0$.
 \item of the second kind if $H_N \geq -C N$ as an operator, for some constant $C>0$.
\end{enumerate}
\end{definition}

In the above, ``bounded below as an operator'' has the usual meaning that $A\geq B$ if 
$$ \langle \psi, A \psi \rangle \geq \langle \psi, B \psi \rangle$$
for any vector $\psi$. In particular the spectrum of $H_N$ (and thus the ground state energy) is bounded below, i.e. no sequence of state can give an arbitrarily negative energy. 

The four different stability notions for the potential $w$ are listed from the most stringent to the less. ``Repulsive'' speaks for itself: any particle encounter costs energy and (recall that $w\to 0$ at infinity) particles prefer to be as far apart as can be. 

``Classically stable''~\cite{Ruelle} means the potential does not need the quantum kinetic energy/uncertainty principle to obtain a bound from below. Note that if $w$ is classically stable then the scaled $w_N$ in~\eqref{eq:scaled potential} also is provided $\beta < 1/d$. In the dilute regime $\beta > 1/d$ this is no longer the case. A typical example of a classically stable interaction which needs not be repulsive is a potential of positive type: $\widehat{w} \geq 0$. We will use this notion in 3D mostly, where (because of scaling, cf the comments after Assumption~\ref{asum:NLS stab}) we cannot hope that the quantum kinetic helps stabilizing the system.

``Hartree-stable'' is a refinement of NLS-stable: it guarantees that the $N$-dependent Hartree functional~\eqref{eq:Hartree func N} with potential $w_{N,\beta}$ is bounded below independently of $N$. It could indeed very well be that the limit NLS functional is stable, but not the intermediary Hartree functional one obtains in the first place by inserting a \index{Trial state, Hartree}{factorized ansatz} in the many-body energy. In 3D (because of scaling again) Hartree stability boils down to classical stability. We will use the notion in 2D mostly, where it is implied~\cite{LewNamRou-14c} by the more transparent 
\begin{equation}\label{eq:Hartree stable}
\int_{\R^2} |w_-| < a^*  
\end{equation}
where $a^*$ is the optimal constant in~\eqref{eq:GagNir} and $w_-$ the negative part of $w$.

We refer to~\cite{LieSei-09} for further discussion of the notions of stability for the many-body Hamiltonian. Stability of the first kind is our basic starting point: the many-body Hamiltonian makes sense and its spectrum has a lower bound. Stability of the second kind is part of our conclusions. As we shall see, depending on the stability of the interaction $w$ it ranges from ``trivial statement'' to ``the main thing we have to prove''.

\medskip

\noindent\textbf{Comments on conclusions.} The first conclusion we would like to obtain, which serves as a minimal requirement, is the convergence of the ground state energy 
$$ \frac{E(N)}{N} \underset{N\to\infty}{\to} E^{\rm MF}$$
where the \index{Mean-field energy functional}{mean-field energy} $E^{\rm MF}$ is the Hartree, non-linear Schr\"odinger or Gross-Pitaevskii energy, depending on the regime (value of $\beta$) under consideration. In this text we shall discuss only convergence and not error estimates, although their precision might also be considered a criterion of efficiency for the methods. We do not consider it explicitly however, for the error estimates given by the methods we discuss would most of the time be far from the expected optimal ones.  

The obvious next question after the convergence of the energy concerns the convergence of ground state themselves. Namely, how well can we expect an actual many-body ground state $\Psi_N$ to be approximated by a pure \index{Bose-Einstein condensate (BEC)}{Bose-Einstein condensate} (BEC)
$$ \Psi_N \approx u ^{\otimes N}$$
with $u$ minimizing the appropriate mean-field energy functional ? The first remark is that these objects live in a $N$-dependent space, so that one cannot expect a convergence. Both are $L^2$-normalized, so a meaningful estimate could be 
$$ \norm{\Psi_N - u ^{\otimes N}}_{L^2 (\R^{dN})} \underset{N\to \infty}{\to} 0.$$
This is unfortunately \emph{wrong} as soon as the interaction $w$ is non-zero (see Section~\ref{sec:next} below for further references). The reason is that we physically do not expect that all particles are in the same quantum state, merely a vast majority of them. But  
$$ u^{\otimes (N-1)} \otimes_{\sym} v \perp u^{\otimes N} $$
in $L^2 (\R^{dN})$ as soon as $v \perp u$ in $L^2 (\R^d)$. Thus, even a \emph{single} particle in a state orthogonal to $u$ would ruin a norm estimate as above. We thus definitely have to make sense of the idea that ``\emph{most} particles are in the same quantum state''.

This is precisely what \index{Reduced density matrix}{reduced density matrices} 
$$ \Gamma_N ^{(k)} = {N \choose k} \tr_{k+1 \to N} |\Psi_N \rangle \langle \Psi_N|$$
are good for. We aim at a statement of the form 
$$ \frac{1}{{N \choose k}} \Gamma_N ^{(k)} \underset{N\to \infty}{\to} | u^{\otimes k} \rangle \langle u ^{\otimes k} |, $$
say for $k$ fixed when $N\to \infty$. Modulo the ${N \choose k}^{-1}$ normalization this is saying that the reduced density matrix $\Gamma_N ^{(k)}$ of the full many-body state $\Gamma_N = |\Psi_N \rangle \langle \Psi_N |$ converges to that of a pure BEC. Colloquially this means that most particles $k$-tuples are in the state $u ^{\otimes k}$. The convergence above should be strong in trace-class norm (the natural topology for reduced density matrices).

The above implicitly assumes that there is a unique mean-field minimizer $u$ that the interacting particles would want to populate. If that is not the case, we should lower our expectations to a statistical statement: the reduced density matrix $\Gamma_N ^{(k)}$ is close to a convex combinations of projectors $| u^{\otimes k} \rangle \langle u ^{\otimes k} |$ on mean-field minimizers: 
$$ \frac{1}{{N \choose k}} \Gamma_N ^{(k)} \underset{N\to \infty}{\to} \int_{\cM ^{\rm MF}} | u^{\otimes k} \rangle \langle u ^{\otimes k} |\mathrm{d}\mu (u) $$
where $\mu$ is a probability measure (independent of $k$) over the set $\cM ^{\rm MF}$ of all mean-field minimizers (see the previous section). 

\medskip 

\noindent \textbf{The meta-statement.} Now that we have lucidly evaluated what our expectations should be, we are ready to state a main theorem. We shall be slightly informal, the goal being to put all the different situations that we shall discuss under a single standard roof that we will afterwards decorate in different manners:

\begin{theorem}[\textbf{Scaling limit of bosonic ground states, generic statement}]\label{thm:main}
Let $\bA,V,w$ be (say smooth) functions such that $H_N$, the many-body Hamiltonian~\eqref{eq:intro Schro op bis} makes sense as a self-adjoint operator on $L^2 (\R ^{dN},\C)$. Assume that $V$ is trapping 
\begin{equation}\label{eq:trapping bis} 
V(\bx) \underset{|\bx|\to \infty}\to + \infty
\end{equation}
so that $H_N$ has compact resolvent and discrete spectrum. Let then $E(N)$ be its lowest eigenvalue and $\Psi_N$ an associated eigenfunction (ground state energy and ground state, respectively). 

Let $\beta \geq 0$ be as large as we can afford (but $\beta \leq 1$ if $d= 3$). If $\beta >0$ and $d\geq 2$, further assume Hartree stability for $d=2$ or classical stability (for $d=3$) (see Definition~\ref{def:stability}).

Let $\cEMF,\EMF,\uMF,\MMF$ be respectively the mean-field functional, its ground-state energy, one of its ground states and the set of its ground states respectively. Here $\rm{MF}$ stands (see Section~\ref{sec:NLS}) for $\rm{H}$ if $\beta = 0$, for $\rm{NLS}$ if $\beta >0$ (and $\beta < 1$ when $d=3$), for $\rm{GP}$ if $\beta = 1$ and $d=3$.  

We have, in the limit $N \to + \infty$:

\smallskip 

\noindent\textbf{Convergence of the energy:} 
\begin{equation}\label{eq:conv ener}
\frac{E(N)}{N} \to \EMF. 
\end{equation}

\smallskip 

\noindent\textbf{Convergence of reduced density matrices:} let $\Gamma_N ^{(k)},k\geq 0$ be the reduced density matrices of a many-body ground state $\Psi_N$. There exists a Borel\footnote{We use the $L^2$ topology on wave-functions.} probability measure $\mu$ on $\MMF$ (independent of $k$) such that 
\begin{equation}\label{eq:conv states}
{N \choose k} ^{-1} \Gamma_N ^{(k)} \to \int_{\MMF} |u ^{\otimes k} \rangle \langle u ^{\otimes k} | \mathrm{d}\mu(u) 
\end{equation}
along a subsequence (independent of $k$).
\end{theorem}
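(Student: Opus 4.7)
The plan is to establish matching upper and lower bounds on $E(N)/N$ and to derive~\eqref{eq:conv states} simultaneously by a compactness-plus-de-Finetti argument. For the upper bound I would use the Hartree trial state $\Psi_N=(\uMF)^{\otimes N}$ in the cases $\beta=0$ and $0<\beta<1$: inserting it into~\eqref{eq:N energy} and using the weak convergence $N^{d\beta}w(N^\beta\,\cdot)\wto b_w\delta_0$ produces $\cEMF[\uMF]+O(1/N)$ after controlling the boundary terms from ${N\choose 2}$ versus $N^2/2$. In the Gross-Pitaevskii regime $\beta=1$, $d=3$, the factorized ansatz overestimates the coupling by a factor $b_w/(8\pi a_w)$, so one must use instead the Jastrow-corrected ansatz~\eqref{eq:ansatz GP} with $f_{w_N}$ the solution of~\eqref{eq:scat eq}, and expand the resulting energy exploiting the scattering equation to recover $4\pi a_w\int|\uGP|^4$.

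For the lower bound I would first extract limits of the reduced density matrices. Since $E(N)/N$ is bounded above, since $-\Delta+V$ is coercive with $V\to\infty$ by~\eqref{eq:trapping bis}, and since $\tr\Gamma_N^{(k)}={N\choose k}$ by construction, the sequences ${N\choose k}^{-1}\Gamma_N^{(k)}$ are bounded and tight in trace-class uniformly in $N$ for each fixed $k$. By Banach-Alaoglu and a diagonal extraction, a subsequence satisfies ${N\choose k}^{-1}\Gamma_N^{(k)}\to\gamma^{(k)}$ weakly-$*$ in trace-class for every $k$, and tightness upgrades this to trace-norm convergence since the limiting trace equals $1$. The family $(\gamma^{(k)})_k$ is a consistent hierarchy of bosonic symmetric density matrices, so the strong quantum de Finetti theorem of Hudson-Moody produces a Borel probability measure $\mu$ on the $L^2$-unit sphere (modulo phase) such that
\[
\gamma^{(k)}=\int |u^{\otimes k}\rangle\langle u^{\otimes k}|\,d\mu(u), \quad k\geq 0.
\]

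Next I would pass to the liminf in the decomposition
\[
\frac{E(N)}{N}=\tr\!\left(\bigl((-\im\nabla+\bA)^2+V\bigr)\frac{\Gamma_N^{(1)}}{N}\right)+\tr\!\left(w_N(\bx-\by)\frac{\Gamma_N^{(2)}}{N(N-1)}\right).
\]
In the pure mean-field case $\beta=0$ this is routine: the one-body part is lower semicontinuous (Fatou), the two-body part converges against the fixed kernel $w$, and the de Finetti formula gives $\liminf E(N)/N\ge\int\cEh[u]\,d\mu(u)\ge\Eh$. Combined with the upper bound, this forces equality $\mu$-a.e., so $\mu$ is supported on $\Mh$, which yields~\eqref{eq:conv ener} and~\eqref{eq:conv states} simultaneously.

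The main obstacle is the lower bound when $\beta>0$, since the two-body kernel concentrates to a Dirac mass and direct passage to the limit against $\Gamma_N^{(2)}/N(N-1)$ fails. The standard remedy is to smear the interaction at a mesoscopic scale $N^{-\alpha}$ with $0<\alpha<\beta$ by a Dyson-type lemma or coarse-graining, replacing $w_{N,\beta}$ by a regularized potential whose integral is close to $b_w$ (and, in the Gross-Pitaevskii limit, close to $8\pi a_w$ via the scattering problem~\eqref{eq:def scat}), paying with a small fraction of the kinetic energy that can be absorbed thanks to trapping and to the assumed stability (classical or Hartree) of $w$. The genuinely hard case is $\beta=1$ in 3D: the renormalization from $b_w/2$ down to $4\pi a_w$ is a true short-range correlation effect~\eqref{eq:ansatz GP} invisible to any factorized trial state, and implementing it in the lower bound while keeping the de Finetti argument alive is the crux of the entire program.
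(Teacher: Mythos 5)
This theorem is explicitly a ``meta-statement'' that the paper does not prove in one place; its proof is distributed across Chapters~\ref{cha:MF 1}--\ref{cha:GP}, which exhibit several avatars of it with different methods. Against that benchmark your proposal is a reasonable high-level sketch and gets the $\beta=0$ case essentially right: the upper bound via the factorized trial state, the extraction of limiting density matrices and their strong convergence via trapping, and the application of the strong quantum de Finetti theorem followed by Fatou on the energy are precisely the argument of Section~\ref{sec:proof deF}. You also correctly identify the Jastrow/Dyson-corrected ansatz and the scattering-length renormalization as the crux of the Gross--Pitaevskii case, and you are right that a purely compactness-based argument cannot survive $\beta>0$.

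However, your proposed mechanism for the intermediate dilute regime $0<\beta<1$ (and $\beta<1/d$ for that matter) is off. You attribute the lower bound there to ``smearing the interaction at a mesoscopic scale $N^{-\alpha}$ by a Dyson-type lemma or coarse-graining, replacing $w_{N,\beta}$ by a regularized potential whose integral is close to $b_w$.'' That is not what happens nor what is needed: the integral of $w_{N,\beta}$ already equals $b_w$ for every $N$, and the Dyson lemma is not a smoothing device for this regime --- it is reserved for $\beta=1$, $d=3$, where its entire purpose is to replace the singular potential by a softer one whose integral encodes the \emph{scattering length} rather than the integral of $w$. In the dilute regime the paper instead makes the de Finetti argument quantitative: one localizes the problem to finitely many low-energy one-body modes (Section~\ref{sec:localization} and Lemma~\ref{lem:loc}), applies the finite-dimensional quantitative de Finetti theorems of Sections~\ref{sec:deF semi}--\ref{sec:deF info} (or the coherent-states analogue of Section~\ref{sec:coherent}), and controls the localization error by moments estimates on the ground state (Lemmas~\ref{lem:mom rep},~\ref{lem:mom att}). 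The Dyson lemma then enters only at $\beta=1$ to reduce the GP Hamiltonian to a dilute-regime Hamiltonian with a new potential carrying the scattering length, after which the same machinery closes the argument (Section~\ref{sec:coh reloaded}). So the missing idea in your sketch is precisely how the de Finetti argument is kept alive at $\beta>0$: not by coarse-graining the potential, but by localizing the state and quantifying the de Finetti approximation with errors controlled by second-moment bounds derived from the variational equation.
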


\begin{proof}[Comments]\mbox{}\\
If the mean-field minimizer is unique,~\eqref{eq:conv states} for $k=1$ implies that the first reduced density matrix has an eigenvalue of order $N$, which is \index{Bose-Einstein condensate (BEC)}{Bose-Einstein condensation}. Indeed, the eigenvalue of the one-body density matrix should be identified as the number of particles occupying the corresponding eigenfunctions (the notion originates in~\cite{PenOns-56}).
\end{proof}

\section{Outline}

The rest of the text, devoted to different versions and proofs of the main meta-statement above, is organized in four chapters. The rationale is to offer as exhaustive as possible a panorama of the tools relevant to deal with the problems defined above. Many of those tools have a broader interest and range of application. I very much hope this review can serve as an introduction to those, even though their full power will not necessarily be revealed here.

For pedagogical reasons I wanted to introduce new methods one at a time, in (what I felt was the) order of increasing mathematical sophistication. A smoother presentation is hopefully achieved by separating the regimes of our interest in three families: mean-field limits, dilute limits, Gross-Pitaevskii limit. I also felt the need of separating in two parts the discussion of the basic mean-field limits. This leads to the following organization: 

\begin{itemize}
 \item Chapter~\ref{cha:MF 1} presents two classes of methods that deal with mean-field limits of bosonic ground states without really using the quantum character of the problem. They have their limitations, but also an appealing simplicity. 
 \item Chapter~\ref{cha:MF 2} presents the basic methods to be used throughout all the rest of the review (with the notable exception of Sections~\ref{sec:thermo} and~\ref{sec:LDA}), in the simpler context of mean-field limits. Here we fully use the fact that we are dealing with bosonic quantum mechanics.
 \item Chapter~\ref{cha:dilute} tackles the dilute limit. As discussed above, there is a qualitative change in the physics as compared with the mean-field limit. This is reflected in the mathematical attack on these problems: basically we supplement the tools of Chapter~\ref{cha:MF 2} with refined estimates allowing to tame the singular nature of interactions.
 \item Chapter~\ref{cha:GP} addresses the Gross-Pitaevskii limit, that we see as a special kind of dilute limit where two-particle correlations have to be extracted. The tools to achieve this are to a large extent superimposed on the techniques of the previous two chapters. 
\end{itemize}

Before embarking on this program, let me mention a few connections to topics not covered here.

\section{Connections and further topics}\label{sec:next}

The main body of these notes is limited in scope, a necessary consequence of my choice to be rather precise  on the topics I did choose to cover. The reader should however bear in mind that the ideas and tools we shall encounter belong to a broader context and have wider applications. 

Below is a brief review of the literature on topics very much related to these notes. I apologize for not being able to cite all the relevant material (I have favored review texts when they are available), and for connections I might be unaware of. References for quantum statistical mechanics in general are~\cite{Ruelle,BraRob1,BraRob2,GusSig-06}, and for Bose systems in particular~\cite{Verbeure-11,LieSeiSolYng-05,PitStr-03,PetSmi-01,Rougerie-spartacus,Rougerie-LMU,Seiringer-06}. 

\subsection{Bogoliubov theory}

Several hints on this topic are provided in Sections~\ref{sec:Bogoliubov}-\ref{sec:Bogoliubov low}, but for reasons of space I had to refrain from trying a full review. 

The question is ``what is the next-to-leading order in the large $N$ expansion, after the mean-field contribution''? The answer is: \index{Bogoliubov theory}{Bogoliubov's theory}, as formulated in 1947~\cite{Bogoliubov-47}, or maybe appropriately generalized (see~\cite{AngVerZag-92,CorDerZin-09,ZagBru-01} for review). A particularly appealing heuristics is that, since the minimizers of the mean-field functional gives the leading order, the next order ought to be given by perturbing around minimizers. By definition the first variation vanishes, and thus the Hessian of the mean-field functional at its minimum is the relevant object. This is a one-body object that one can second-quantize (cf Section~\ref{sec:second quant}) to obtain a quadratic (in annihilators/creators) many-body Hamiltonian describing the quantum fluctuations around the \index{Bose-Einstein condensate (BEC)}{Bose-Einstein condensate}. Many results are known on such bosonic Bogoliubov Hamiltonians~\cite{BacBru-16,BruDer-07,Derezinski-17,NamNapSol-16}.

In recent years, important progress has been made in this direction. It is now known rigorously that Bogoliubov's theory indeed describes the next-to-leading order of many-body minimizers, as well as the low-lying excitation spectrum (first few eigenvalues above the ground state energy). See~\cite{Seiringer-11,GreSei-13,DerNap-13,LewNamSerSol-13,NamSei-14} for results in the mean-field limit,~\cite{BocBreCenSch-17} for the dilute limit, and~\cite{BocBreCenSch-18} for the Gross-Pitaevskii limit. More recently, the expansion of the low-lying energy spectrum~\cite{BosPetSei-20} and of the ground state density matrix~\cite{NamNap-20} have been considered in the mean-field regime, beyond Bogoliubov theory. Another approach to such large $N$ expansions is in~\cite{Pizzo-15a,Pizzo-15b,Pizzo-15c}. 

Bogoliubov's theory also plays a role in the ground-state energy expansion of the extended homogeneous Bose gas, in the low-density regime (the Lee-Huang-Yang correction~\cite{ErdSchYau-08,YauYin-09}). By this we mean that the thermodynamic limit is taken first, and only then, ideally, the density sent to $0$ (see Section~\ref{sec:thermo}, this is even harder than the Gross-Pitaevskii limit). This has been justified first in simplified settings~\cite{GiuSei-09,BriSol-19} (``mean-field-like'' and ``dilute-like'' settings, respectively) before a full derivation of the Lee-Huang-Yang formula was given in~\cite{BriFouSol-19,FouSol-19} (see also~\cite{Fournais-20}). To the best of my knowledge, the first rigorous justification of Bogoliubov's theory, on a particular example, is to be found in~\cite{LieSol-01,LieSol-04,Solovej-06}. 

\subsection{Dynamics}

There is a huge literature devoted to the time-evolution of (approximately) Bose-condensed initial data along the \index{Many-body Schr\"odinger equation}{many-body Schr\"odinger flow}. This is the natural dynamical pendant of the ground-state theory I review below. Reviews are in~\cite{BenPorSch-15,Golse-13,Schlein-08,Spohn-12}. The message is that an initially factorized wave-function stays approximately factorized after time-evolution. There are by now many methods to prove this and I apologize for inevitable\footnote{A quick search in my (not exhaustive) folders reveals more than 120 papers on this topic.} omissions in the brief list below:
\begin{itemize}
 \item the BBGKY hierarchy approach~\cite{Spohn-80,AdaBarGolTet-04,ErdYau-01,ElgErdSchYau-06,ErdSchYau-07,ErdSchYau-09,ErdSchYau-10,BarErdGolMauYau-02,BarGolMau-00,CheHaiPavSei-13,CheHaiPavSei-14,CheHol-13,CheHol-15,CheHol-16,ChePav-11,ChePav-14}.
 \item Pickl's method of ``Gr\"onwalling'' the number of excited particles~\cite{Pickl-10,Pickl-11,Pickl-15,KnoPic-10,JebLeoPic-16,JebPic-17}.
 \item the \index{Coherent states}{coherent states} method~\cite{AmmBre-12,Hepp-74,GinVel-79,GinVel-79b,RodSch-09,BenOliSch-12,CheLeeSch-11,BreSch-19}.
 \item the ``Egorov method''~\cite{FroGraSch-07,FroKnoPiz-07,AnaSig-11}.
 \item the Wigner measure method~\cite{AmmNie-08,AmmNie-09,AmmNie-11,AmmNie-15,AmmFalPaw-16}.
 \item the Lieb-Robinson bounds approach~\cite{ErdSch-09}.
 \item the optimal transport method~\cite{GolPau-15,GolMouPau-16}.
\end{itemize}

More recently, the derivation of the dynamical counterpart of \index{Bogoliubov theory}{Bogoliubov's theory} has also attracted attention~\cite{BreNamNapSch-17,BocCenSch-15,LewNamSch-14,MitPetPic-16,GriMacMar-10,GriMacMar-11,GriMac-12,NamNap-15,NamNap-16,NamNap-17}. In addition to giving the next-to-leading order in the dynamical setting, it can also serve as a means to control quantum fluctuations, and thereby derive the mean-field equation quantitatively, an approach pioneered in~\cite{Hepp-74}. Mean-field dynamics beyond Bogoliubov theory are studied in~\cite{BosPetPicSof-19}.
 
\subsection{Positive temperature}

The scaling limit of positive temperature bosonic equilibria is not a topic as developed as that of zero-temperature equilibria and the time evolution thereof. Lots of things remain poorly understood, in particular the holy grail, a proof of \index{Bose-Einstein condensate (BEC)}{Bose-Einstein condensation} in the thermodynamic limit, seems way out of reach, except in very special cases~\cite{AizLieSeiSolYng-04,KenLieSha-88}. Here is a selection of papers:
\begin{itemize}
 \item free-energy estimates for dilute gases in the thermodynamic limit~\cite{DeuMaySei-19,Seiringer-06,Seiringer-08,Yin-10}.
 \item rigorous bounds on the critical temperature for BEC~\cite{SeiUel-09,BetUel-10}.
 \item proofs of BEC in scaling limits~\cite{DeuSeiYng-18,DeuSei-19}.
 \item derivation of classical field theory from Gibbs states in a mean-field limit~\cite{FroKnoSchSoh-16,FroKnoSchSoh-17,FroKnoSchSoh-20,LewNamRou-14d,LewNamRou-17,LewNamRou-18b,LewNamRou-18_2D,LewNamRou-20,Sohinger-19}.
\end{itemize}

The latter topic is a natural extension of the derivation of ground states and the time evolution of factorized data (see~\cite{FroKnoSchSoh-20b,Rougerie-xedp15,Rougerie-SMF18,LewNamRou-ICMP} for more informal accounts). ``Classical field theory'' here means a measure of the form $\exp(-\cEMF [u])du$ on one-body wave-functions $u$. This is the natural ``positive temperature equilibrium'' of mean-field theory, and indeed, once properly defined, an invariant of the non-linear Schr\"odinger evolution. One difficulty lies in actually defining this object when the one-body state space is infinite dimensional. In finite dimension the problem is simpler, see~\cite{Knowles-thesis,Gottlieb-05} and~\cite[Appendix~B]{Rougerie-LMU,Rougerie-cdf}.

\subsection{Bosons in special settings and other approximations}

Various generalizations of the setting we discussed are of interest. In particular, if another physically relevant limit is superimposed to the mean-field/dilute/GP one, it is of interest to investigate whether and how the limits commute. Here is a selection of topics: 

\begin{itemize}
\item Reduced dimensionalities~\cite{LieSeiYng-04,LieSeiYng-03,SchYng-07,SeiYin-08}.
 \item Multi-components gases~\cite{AnaHotHun-17,MicOlg-16,MicOlg-17,Olgiati-17,MicNamOlg-19,Nguyen-20}.
 \item Multiple well potentials~\cite{OlgRou-20,OlgRouSpe-20,RouSpe-16}.
 \item Large magnetic fields/rotation speeds~\cite{BruCorPicYng-08,LewSei-09,LieSeiYng-09}.
 \item Fragmented condensation~\cite{VdBLewPul-86,DimFalOlg-18}.
\end{itemize}

I also mention a few works on different types of approximations for the ground state of the Bose gas. There does not seem to be too many of those that one could turn into rigorous mathematics. The Lieb-Liniger model for 1D bosons with contact interactions is remarkable in being basically exactly soluble~\cite{LanHekMin-17,Girardeau-60,LieLin-63,Lieb-63}. An alternative (to mean-field/Gross-Pitaevskii/Bogoliubov) approximation scheme is studied in \cite{Lieb-63b,LieLin-64,LieSak-64,CarLieJau-19,CarLieLosJau-20,CarHolLieJau-20}. 

\subsection{Quasi-classical systems}

The topic of this review might be called ``semiclassical'': a macroscopic quantum system to some extent behaves classically\footnote{In that the non-commutativity of quantum fields is ignored.} at leading order. One could call ``quasi-classical'' a variant of this situation: a finite-size quantum system acquires an effective classical interaction by coupling to another, macroscopic system. The latter becomes classical in an appropriate limit, see~\cite{LieTho-97,DonVar-83,CorFal-18,CorFalOli-19,CorFalOli-19b,FraGan-17,FraSei-19,LeoRadSchSei-19,LeoMitSei-20,LeoPic-18} and references therein.  

\subsection{Classical mean-field and related limits}

The mean-field limit for Bose systems has a natural analogue in classical mechanics. For equilibrium states this is dealt with in~\cite{MesSpo-82,Kiessling-89,Kiessling-93,CagLioMarPul-92,KieSpo-99}, see~\cite[Chapter~2]{Rougerie-spartacus,Rougerie-LMU} for review and~\cite{Serfaty-15,Serfaty-17} for more advanced topics. As regards dynamics (derivation of Vlasov or Boltzmann equations, propagation of molecular chaos), see~\cite{Golse-13,Spohn-12,GalRayTex-14,Mischler-11,PulSim-16,Jabin-14,Spohn-80} for reviews. Note that in this context as well, mean-field (typically leading to Vlasov's equation) and low/density dilute limits (typically leading to Boltzmann-like equations) should be distinguished, cf the aforementioned references for discussion.

\subsection{Fermionic mean-field limits and beyond}

Finally, fermionic systems also have scaling limits of their own. Because of the Pauli principle, it turns out they are naturally coupled with semiclassical limits. This requires specific methods to couple the two types of limits. A selection of references is~\cite{Bach-92,BacBrePetPicTza-16,BenPorSch-14,ElgErdSchYau-04,LieSim-77b,PetPic-16,Thirring-81,FouLewSol-15,LieSolYng-94,LieSolYng-94b,LieSolYng-95,FouMad-19,LieYau-87,LewMadTri-19}. 

Because of the Pauli principle again, fermions have a tendency to avoid one another a lot more than bosons, which makes dilute limits (with few particle encounters) more tricky to study. If spin is taken into account however, dilute-type interactions between different spin components are particularly relevant and pose problems akin to some we encounter in this review~\cite{LieSeiSol-05,Seiringer-06b,FalGiaHaiPor-20}.

An interesting recent direction is to study rigorously the corrections to the energy of the homogeneous Fermi gas, in the mean-field limit~\cite{Benedikter-19,BenNapPorSchSei-19,BenNapPorSchSei-20,HaiPorRex-20}. It turns out that they are due to correlations of a special form that can be understood via \emph{bosonization}, i.e. the emergence of bosonic quasi-particles. The latter are effectively weakly correlated and hence described by a bosonic \index{Bogoliubov Hamiltonian}{(Bogoliubov) quadratic Hamiltonian}. 

Some exotic/hypothetical quasi-particles known as anyons interpolate between bosons and fermions. Deriving effective mean-field models for those is also of interest~\cite{LunRou-15,Girardot-19,GirRou-20}.

\chapter{Mean-field limits, classical mechanics methods}\label{cha:MF 1}

We start our grand tour of derivation of mean-field type results by reviewing methods whose inspiration is drawn from classical mechanics. They proceed by either manipulations of the interaction potential, seen as a classical energy (Section~\ref{sec:hamil}) or by recasting the problem as a \index{Classical statistical mechanics}{classical statistical mechanics} ensemble (Section~\ref{sec:class deF}). The common point is that we do not use the full quantumness of the problem (in particular, bosonic statistics) and thus will not get optimal results. The simplicity of the methods still makes them appealing.

\section{Hamiltonian-based methods}\label{sec:hamil}

By ``Hamiltonian-based'' we mean a method that uses one or several special assumptions on the basic Hamiltonian~\eqref{eq:intro Schro op bis}. There is simply not enough structure in~\eqref{eq:intro Schro op bis} as it stands to prove the general form of Theorem~\ref{thm:main} by simply manipulating the formal expression of the Hamiltonian. One must crucially rely on the structure of the space we act on, namely use bosonic symmetry in one way or another. 

By contrast, the methods of this section are insensitive to bosonic symmetry. They work only in restricted cases, but their relative simplicity makes them appealing. Without further ado, let us present the simplifying assumption we shall use:

\begin{assumption}[\textbf{\index{Positivity improving}{Positivity improving} case}]\label{asum:pos pre}\mbox{}\\
Pick $A\equiv 0$ and $V\geq 0$ in~\eqref{eq:intro Schro op bis}. In particular the one-body Hamiltonian $h = -\Delta + V$ satisfies
\begin{equation}\label{eq:pos prev}
\left\langle u | h | u \right\rangle_{L^2 (\R^d)} \geq \left\langle |u| \,, h \, |u| \right\rangle_{L^2 (\R^d)}
\end{equation}
and the associated heat flow $e^{-th}$ is positivity improving for any $t>0$. Namely it maps non-trivial non-negative functions to positive functions. 
\end{assumption}

\begin{proof}[Comments]
If we had included pseudo-relativistic effects, Laplacians would be replaced by fractional Laplacians. In the absence of magnetic fields~\eqref{eq:pos prev} holds also in this case, and the heat flow stays positivity preserving, so that the methods of this section generalize to this case. 

For the usual Laplacian, our case of concern,~\eqref{eq:pos prev} is essentially just the fact (used with $\bA \equiv 0$) that, writing $u = \sqrt{\rho} e^{i\varphi}$
\begin{equation}\label{eq:diamag}
 \int_{\R^d} |\left(-\im \nabla + \bA\right) u| ^2 = \int_{\R^d} |\nabla \sqrt{\rho}| ^2 + \int_{\R^d} \rho |\left(\nabla + \bA\right) \varphi| ^2 \geq \int_{\R^d} |\nabla \sqrt{\rho}| ^2. 
\end{equation}
See~\cite[Theorem~7.8]{LieLos-01}. 

As regards the positivity improving property, the \index{Trotter product formula}{Trotter product formula} roughly says the following (see~\cite{Simon-05} or~\cite{ReeSim4} for details): for self-adjoint operators $A$ and $B$
$$ \exp (A+B) = \lim_{n\to \infty} \left( \exp \left(\frac{A}{n}\right) \exp \left(\frac{B}{n}\right) \right)^n.$$
Applied to $e^{-th}$ this gives an approximate expression of its integral kernel:
\begin{multline}\label{eq:Trotter} 
e^{-th} (\bx;\by) = \lim_{n\to \infty} \\ \int_{\R^{d(n-1)}} e^{\frac{t}{n}\Delta} (\bx;\by_1) e^{-\frac{t}{n}V(\by_1)} e^{\frac{t}{n}\Delta} (\by_1;\by_2) \ldots  e^{\frac{t}{n}\Delta} (\by_{n-1};\by) e^{-\frac{t}{n}V(\by)} \mathrm{d}\by_1\ldots \mathrm{d}\by_{n-1}.
\end{multline}
Since the \index{Heat kernel}{heat kernel} 
\begin{equation}\label{eq:heat kernel}
e^{t\Delta} (\bx;\by) = \frac{1}{(4\pi t)^{d/2}} e^{-|\bx -\by|^2 /(4t)}
\end{equation}
is positive, it follows that also $e^{-th} (\bx;\by) > 0$, so that $e^{-th}$ maps non-negative functions to positive functions. 

One may recognize that~\eqref{eq:Trotter} at finite $n$ is a discretization of 
$$e^{-th} (\bx;\by) = \int e^{-\int_0^t V(\omega(t))dt}  \mathrm{d}\mu^t_{\bx,\by} (\omega) $$
where $\mathrm{d}\mu^t_{\bx,\by} (\omega)$ is the conditional Wiener measure, the probability density for a Brownian motion leaving $\by$ at time $0$ and reaching $\bx$ at time $t$ to follow the path $\omega:[0,t] \mapsto \R^d$. This is the \index{Feynman-Kac formula}{Feynman-Kac formula}, see~\cite{Simon-05} or~\cite{ReeSim4} again for precise statements.
\end{proof}

Assuming the above, one can give a relatively soft proof of part of our main Statement~\ref{thm:main}. Recall the notation from Section~\ref{sec:NLS}: the \index{Hartree energy functional}{Hartree functional} is 
$$
\cEh[u] = \int_{\R^d} \left| \nabla u \right| ^2 + V |u| ^2 + \frac{1}{2} \iint_{\R^d \times \R ^d} |u(\bx)| ^2 w (\bx - \by) |u (\by)| ^2 \mathrm{d}\bx \mathrm{d}\by
$$
with minimum (under unit $L^2$ mass constraint) $\Eh$ and minimizer $\uh$. The \index{Non-linear Schr\"odinger energy functional}{NLS functional} is 
$$
\cEnls[u]:= \int_{\R^d} \left| \nabla u \right| ^2 + V |u| ^2 + \frac{b_w}{2} \int_{\R^d} |u(\bx)| ^4  \mathrm{d}\bx.
$$
with minimum (under unit $L^2$ mass constraint) $\Enls$, minimizer $\unls$ and the notation 
$$
b_w := \int_{\R^d} w.  
$$

\begin{theorem}[\textbf{Mean-field limit, positivity improving case}]\label{thm:MF pos pre}\mbox{}\\
We make Assumptions~\ref{asum:pots} and~\ref{asum:pos pre} (i.e. $\bA\equiv 0$ in~\eqref{eq:intro Schro op bis}). We also assume Hartree stability (see Definition~\ref{def:stability}). Let $0 \leq \beta < 1/d$. 

If $\beta = 0$ let the mean-field energy and ground state(s) $\EMF,\uMF$ stand for the Hartree objects recalled above. If $\beta >0$ replace them by the corresponding NLS objects. 

We have the following, in the limit $N\to \infty$ 

\noindent\textbf{Convergence of the energy:} the lowest eigenvalue $E(N)$ of~\eqref{eq:intro Schro op bis} satisfies
$$ \frac{E(N)}{N} \underset{N\to \infty}{\to} \EMF $$

\smallskip 

\noindent\textbf{Convergence of reduced density matrices:} if in addition the mean-field ground state $\uMF$ is unique, 
\begin{equation}\label{eq:conv states pos pre}
{N \choose k} ^{-1} \Gamma_N ^{(k)} \to  \left|(\uMF) ^{\otimes k} \right\rangle \left\langle (\uMF) ^{\otimes k} \right| . 
\end{equation}
where $\Gamma_N ^{(k)},k\geq 0$ are the reduced density matrices of a many-body ground state $\Psi_N.$
\end{theorem}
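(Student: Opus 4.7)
The plan is to obtain the upper bound $E(N)/N \leq \cE^{MF}[u^{MF}] + o(1)$ by inserting $\Psi_N = (u^{MF})^{\otimes N}$ into the energy and computing directly; for $0 < \beta < 1/d$ one slightly smoothens $u^{MF}$ to handle $w_{N,\beta} \wto b_w \delta_0$ when tested against $|u^{MF}|^2 \otimes |u^{MF}|^2$. All the substance lies in the matching lower bound, which exploits Assumption~\ref{asum:pos pre} to avoid bosonic symmetry entirely.

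First I would reduce to non-negative ground states: because $\bA \equiv 0$, the inequality $\langle \Psi | H_N | \Psi\rangle \geq \langle |\Psi| | H_N | |\Psi|\rangle$ allows us to take $\Psi_N \geq 0$, and the positivity-improving property of $e^{-tH_N}$ furnished by the Feynman--Kac representation upgrades this to uniqueness up to phase, hence symmetry of the minimizer. Thus $\mu_N := \Psi_N^{\,2}$ is a symmetric probability density on $\R^{dN}$ and the problem becomes a purely classical one on measures. Applying the Hoffmann--Ostenhof inequality to $\sqrt{\mu_N^{(k)}}$ on $\R^{dk}$ and using permutation symmetry yields
\[
\frac{E(N)}{N} \;\geq\; \frac{1}{k}\, I_k\bigl(\mu_N^{(k)}\bigr) \;+\; \int_{\R^d} V\, \mu_N^{(1)} \;+\; \frac{1}{2}\iint w_{N,\beta}(x-y)\, \mu_N^{(2)}(x,y)\, dx\, dy ,
\]
where $I_k$ denotes the Fisher information on $\R^{dk}$. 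The trapping of $V$ yields tightness; extracting a diagonal subsequence, $\mu_N^{(k)} \wto \mu^{(k)}$ for every $k$.

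The consistent symmetric family $\{\mu^{(k)}\}$ falls within the scope of the classical Hewitt--Savage theorem, producing a Borel probability measure $P$ on one-particle densities $\rho$ with $\mu^{(k)} = \int \rho^{\otimes k}\, dP(\rho)$. Passing $N \to \infty$ in each term (using the $H^1$ control on $\sqrt{\mu_N^{(1)}}$ coming from the Fisher-information bound to justify $w_{N,\beta} \wto b_w \delta_0$ tested against $\mu^{(2)}$ when $\beta > 0$, and Hartree stability to keep the limit bounded below), the three contributions become $(1/k) I_k(\mu^{(k)})$, $\int\!\int V \rho\, dP$, and $\tfrac{1}{2}\int D[\rho,\rho]\, dP$ respectively, where $D[\rho,\rho]$ is the mean-field self-interaction of $\rho$ (equal to $\iint w\, \rho \otimes \rho$ for $\beta = 0$ and $b_w \int \rho^2$ for $\beta > 0$).

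The main obstacle is to send $k \to \infty$ in the Fisher-information term. The naive convexity bound gives only $(1/k) I_k\bigl(\int \rho^{\otimes k}\, dP\bigr) \leq \int I(\rho)\, dP$, which is the \emph{wrong} direction for a lower bound. The resolution is that as $k$ grows the tensor-products $\rho^{\otimes k}$ attached to distinct $\rho$ concentrate on asymptotically disjoint supports (by concentration of the empirical measure), and $I_k$ becomes essentially additive on this asymptotically singular mixture, yielding $(1/k) I_k(\mu^{(k)}) \to \int I(\rho)\, dP$. Plugging this ergodic-type identity back gives $\liminf E(N)/N \geq \int \tilde\cE^{MF}[\rho]\, dP(\rho) \geq E^{MF}$, matching the upper bound. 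When $u^{MF}$ is unique, equality throughout the chain forces $P$-almost every $\rho$ to be the mean-field minimizer, so $P = \delta_{|u^{MF}|^2}$; the diagonals of $\binom{N}{k}^{-1} \Gamma_N^{(k)}$ then converge to $|u^{MF}(x_1)|^2 \cdots |u^{MF}(x_k)|^2$, and the pointwise non-negativity of the kernel of $\Gamma_N^{(k)}$ (a direct consequence of $\Psi_N \geq 0$) combined with trace-class compactness upgrades this to trace-norm convergence to $|(u^{MF})^{\otimes k}\rangle\langle (u^{MF})^{\otimes k}|$.
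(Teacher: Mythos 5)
Your argument is not the paper's proof of this theorem: it is the classical Hewitt--Savage reformulation that the paper develops in Section~\ref{sec:class deF} and uses to prove Theorem~\ref{thm:class quant confined}, whereas Theorem~\ref{thm:MF pos pre} is proved in Section~\ref{sec:MF Onsager} by a Hamiltonian-side argument (L\'evy-Leblond splitting plus Onsager's inequality for the lower bound, Feynman--Hellmann perturbation with bounded one-body operators $B$ for the density matrices). For the energy lower bound this substitution of method is legitimate, and your description of the key difficulty --- proving that the mean Fisher information $(1/k)\,I_k$ becomes affine on the limiting de~Finetti mixture, via asymptotic orthogonality of $\rho^{\otimes k}$ for distinct $\rho$ --- correctly identifies the paper's Lemma~\ref{lem:app A kin lim} (second proof).

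There is however a genuine gap in your final step, and it is precisely the one the paper flags when introducing the classical reformulation: this route accesses only \emph{marginal densities} $\rho_N^{(k)}$, not reduced density \emph{matrices} $\Gamma_N^{(k)}$. You claim that convergence of the diagonal $\Gamma_N^{(k)}(X;X)$, together with the pointwise non-negativity of the kernel $\Gamma_N^{(k)}(X;Y)\geq 0$ (coming from $\Psi_N\geq 0$) and trace-class compactness, forces trace-norm convergence of the operator to the rank-one projector. This is false: a positive trace-class operator with pointwise non-negative kernel is not determined by its diagonal. Concretely, in a two-dimensional model both $\frac12\,\mathbb{I}$ and $|u_0\rangle\langle u_0|$ with $u_0=(1,1)/\sqrt{2}$ have the same diagonal $(1/2,1/2)$, both have non-negative entries, both are positive with trace one, and yet they differ. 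The off-diagonal part of $\Gamma^{(k)}$ carries the coherence of the condensate and cannot be recovered from the classical marginals alone. The paper's Theorem~\ref{thm:class quant confined}, proved exactly by your route, therefore concludes only $\rho^{(k)}_N \to |(u^{\rm H})^{\otimes k}|^2$ in $L^1$, not the operator convergence~\eqref{eq:conv states pos pre}. To recover the full density matrix the paper perturbs the many-body Hamiltonian by $\eps\sum_j B_{\bx_j}$ with $B$ an arbitrary bounded one-body operator, proves convergence of the perturbed ground-state energies, and then applies the Feynman--Hellmann principle (Lemma~\ref{lem:FH}) plus Griffiths' lemma to extract $\lim N^{-1}\tr(B\,\Gamma_N^{(1)}) = \langle u^{\rm MF}|B|u^{\rm MF}\rangle$, with the $k>1$ case handled by a CCR/Jensen argument. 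Some such device that tests against non-diagonal observables is unavoidable here.

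A secondary point: your treatment of $\beta>0$ is also optimistic. The classical de Finetti route as set up in the paper is a compactness argument, which is why Theorem~\ref{thm:class quant confined} is stated for $\beta=0$ only; passing the double limit $N\to\infty$, $w_{N,\beta}\to b_w\delta_0$ through the Hewitt--Savage and Fisher-information steps requires quantitative control that the soft argument does not supply, and is precisely what the Onsager/L\'evy-Leblond estimates in Section~\ref{sec:MF Onsager} provide.
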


In this chapter we shall follow the pioneering works~\cite{BenLie-83,LieYau-87}, with additions from~\cite{Lewin-ICMP} and~\cite{LewNamRou-18a}. See also~\cite{BauSei-01,Nguyen-19,Nguyen-20,SeiYngZag-12} where similar tools are used. The proof we reproduce below for general interaction potentials does not seem to have appeared before~\cite{Lewin-ICMP}. In~\cite{BenLie-83,LieYau-87} the interaction potential has positive Fourier transform (Coulomb potential) or negative Fourier transform (Newtonian potential) respectively. In fact, if the interaction has positive Fourier transform one does not need Lemma~\ref{lem:Levy} below and one can prove stronger results~\cite{SeiYngZag-12} with variants of the methods below.

\subsection{Toolbox}

We start with the main consequence of assuming a positivity-improving heat flow. The next statement is~\cite[Theorem~XIII.47]{ReeSim4}.

\begin{theorem}[\textbf{Ground states of Schr\"odinger operators}]\label{thm:GS schro}\mbox{}\\
Assume $V\in L^1_{\rm loc}(\R^n)$ is trapping
$$ 
V (\bx) \underset{|\bx|\to \infty}\to \infty. 
$$
Let $H = -\Delta_{\R^n} + V$, seen as a self-adjoint operator (Friedrichs extension of the associated quadratic form) on $L^2 (\R^n), n\geq 1$. Then $H$ has discrete spectrum. The eigenspace corresponding to the lowest eigenvalue is one-dimensional, of the form $\left\{ c \Phi \right\}_{c\in \C}$ with $\Phi$ a positive function.   
\end{theorem}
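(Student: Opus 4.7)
The plan is three-fold: (i) establish discrete spectrum by showing that $H$ has compact resolvent, (ii) obtain a minimizer of the Rayleigh quotient via direct methods of the calculus of variations, and (iii) leverage the positivity improving property of $e^{-tH}$ recalled in the comments preceding the theorem, combined with a Perron--Frobenius style argument, to establish simplicity of the lowest eigenvalue together with the positivity of its eigenfunction.

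For (i), I would show that the form domain $\cQ(H) := \{u \in L^2 : \int (|\nabla u|^2 + (V+C)|u|^2) < \infty\}$ embeds compactly into $L^2(\R^n)$ (for $C$ large enough so that $V+C \geq 1$). On any fixed ball, Rellich--Kondrachov yields $L^2$-compactness from the $H^1$ bound. Outside a ball of radius $R$, the trapping hypothesis $V(\bx)\to +\infty$ forces the mass of a form-bounded sequence to be at most $c_R \to 0$ as $R\to \infty$, giving uniform tightness. Hence $(H+C)^{-1}$ is compact and the spectrum is discrete, accumulating only at $+\infty$. For (ii), set $E_0 := \inf\{\langle u, Hu\rangle : \|u\|_{L^2}=1\}$; any minimizing sequence is bounded in $\cQ(H)$, thus admits an $L^2$-strongly convergent subsequence with unit-mass limit $\Phi$, and weak lower semicontinuity of the quadratic form ensures that $\Phi$ achieves the infimum. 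It is then an eigenfunction with eigenvalue $E_0$.

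For (iii), recall that any ground state $\Phi$ of $H$ is also a top eigenvector of the bounded self-adjoint semigroup $e^{-tH}$ with eigenvalue $e^{-tE_0} = \|e^{-tH}\|$. Let $K_t(\bx,\by) > 0$ denote the integral kernel of $e^{-tH}$, positive almost everywhere by the Trotter formula~\eqref{eq:Trotter} combined with the positivity of the heat kernel~\eqref{eq:heat kernel}. Then pointwise
$$
e^{-tE_0}|\Phi(\bx)| = \left| \int K_t(\bx,\by) \Phi(\by) \, \mathrm{d}\by \right| \leq \int K_t(\bx,\by) |\Phi(\by)| \, \mathrm{d}\by = \left( e^{-tH} |\Phi| \right)(\bx),
$$
and integrating against $|\Phi|$ shows that both sides have the same $L^2$ norm, forcing equality almost everywhere. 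The equality case for the triangle inequality with a strictly positive kernel then forces $\Phi$ to have a constant complex phase almost everywhere, so $\Phi = e^{i\theta} |\Phi|$. Moreover $|\Phi| = e^{tE_0} e^{-tH}|\Phi|$ is strictly positive a.e. by the positivity improving property, so $\Phi$ vanishes nowhere. Non-degeneracy follows: if $\Phi_1, \Phi_2$ were two orthogonal ground states, each would be (up to a unit scalar) strictly positive, contradicting $\langle \Phi_1, \Phi_2\rangle_{L^2} = 0$.

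The hard part will be the rigidity in the triangle inequality at the heart of step (iii): one must justify that equality in $|\int K_t \Phi| = \int K_t |\Phi|$ with a strictly positive kernel really forces the phase of $\Phi$ to be constant a.e., rather than merely measurable with respect to some coarser partition. Because $K_t(\bx,\cdot) > 0$ almost everywhere on $\R^n$ for every $\bx$, this reduces to a classical characterization of equality in the triangle inequality for integrals, but it must be applied uniformly across the family of kernels. A secondary concern is the low regularity $V \in L^1_{\rm loc}$: the entire argument is framed in the quadratic form setting, pointwise eigenvalue equations are not directly available, and all identities must be read almost everywhere.
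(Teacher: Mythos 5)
Your proposal is correct. Steps (i) and (ii) — compactness of the resolvent via Rellich plus tightness from the trapping condition, and existence of a minimizer by the direct method — are standard and fine; the paper simply outsources these to~\cite[Section~XIII.12]{ReeSim4} without comment.

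For step (iii), your argument belongs to the same family of Perron--Frobenius-type arguments the paper sketches, but the rigidity mechanism is realized differently. The paper's comment works with the sign decomposition: after observing that $|\Psi|$ is again a ground state, it considers $|\Psi| - \Psi$, which is a nonnegative eigenfunction, and invokes positivity improving to conclude it is either $\equiv 0$ or $>0$ a.e. — hence $\Psi = |\Psi|$ or $\Psi = -|\Psi|$. This is clean but tacitly assumes $\Psi$ is real (legitimate since $H$ commutes with conjugation, but unstated). Your version instead runs the equality case of the integral triangle inequality directly on the kernel of $e^{-tH}$, which handles complex $\Psi$ head-on and produces the constant phase as part of the argument rather than as a preliminary reduction. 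Both routes end the same way: a strictly positive representative in every ground-state ray, so no two can be orthogonal. On your stated worries: the ``uniformity across the family of kernels'' concern is overstated — equality at a single $\bx_0$ with $K_t(\bx_0,\cdot)>0$ a.e. already pins the phase of $\Phi(\cdot)$ to a constant, since $\Phi$ does not depend on $\bx_0$. One small wording issue: ``integrating against $|\Phi|$ forces equality a.e.'' only gives equality on $\operatorname{supp}\Phi$ directly; the clean way to get $e^{-tH}|\Phi| = e^{-tE_0}|\Phi|$ as an $L^2$ identity is to observe that $\langle |\Phi|, e^{-tH}|\Phi|\rangle = e^{-tE_0}\||\Phi|\|^2$ saturates the top of the spectrum, so $|\Phi|$ lies in the top eigenspace by the spectral theorem — then the positivity-improving property applies. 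Finally, your remark about $V\in L^1_{\rm loc}$ is well taken: for the Feynman--Kac/Trotter representation one really needs something like the negative part of $V$ in a Kato class, which $L^1_{\rm loc}$ alone does not guarantee; the paper's hypotheses (Assumption~\ref{asum:pots}) are strictly stronger than the $L^1_{\rm loc}$ stated in the theorem, and the paper defers the precise conditions to~\cite{ReeSim4}.
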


\begin{proof}[Comments]
\cite[Section~XIII.12]{ReeSim4} contains much more general results, in particular, the trapping assumption for the potential $V$ is not needed. It is just convenient to ensure the existence of the lowest eigenvalue. In our applications we will use locally bounded potentials, which can be assumed positive by just shifting the energy reference. 

Briefly, the idea of the proof is that, because of~\eqref{eq:pos prev}, if $\Psi$ is a ground state, so is $|\Psi|$. Then, either $|\Psi| - \Psi \equiv 0$ or it is also a ground state, and hence an eigenfunction of $e^{-tH}$. 

But $e^{-tH}$ is \index{Positivity improving}{positivity improving}. One can see this using the \index{Trotter product formula}{Trotter product formula} as above (or the \index{Feynman-Kac formula}{Feynman-Kac formula}, see~\cite[Theorem VIII.30]{ReeSim1} or~\cite[Theorem~1.1]{Simon-05}) to write the integral kernel of $e^{-tH}$ in terms of that of $e^{t\Delta}$, which is explicit and positive.  

A non-negative eigenfunction of a positivity improving operator may not vanish on a positive measure set (i.e. the heat flow spreads mass instantaneously, so a stationary state thereof is non-zero almost everywhere). Thus either $|\Psi| - \Psi \equiv 0$ or $|\Psi| - \Psi > 0$ almost everywhere. In the latter case we must have $|\Psi| = - \Psi$ almost everywhere. 

This shows that any candidate eigenfunction for the lowest eigenvalue can be chosen positive almost everywhere. Clearly, there cannot be two such orthogonal positive functions. 
\end{proof}

As far as the mean-field limit is concerned, the essence of Assumption~\eqref{asum:pos pre} is that it allows to ignore bosonic symmetry, as per the:

\begin{theorem}[\textbf{Unrestricted minimizers are bosonic}]\label{thm:bos min}\mbox{}\\
Let $\Psi_N$ be an absolute ground state for~\eqref{eq:intro Schro op bis}, namely
$$ \langle \Psi_N |H_N| \Psi_N \rangle = \min\left\{ \langle \Psi_N |H_N| \Psi_N \rangle, \Psi_N \in L^2 (\R^{dN}), \int_{\R^{dN}} |\Psi_N| ^2  = 1 \right\}.$$
Under Assumption~\eqref{eq:pos prev}, $\Psi_N$ must be bosonic, i.e. $\Psi_N \in L^2 _{\sym} (\R^{dN})$. 
\end{theorem}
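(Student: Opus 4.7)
The plan is to invoke Theorem~\ref{thm:GS schro} for the full $N$-body operator $H_N$, viewed as a Schrödinger operator on $\R^{dN}$, and combine uniqueness of the positive ground state with the evident permutation symmetry of $H_N$. First I would rewrite
\[
H_N = -\Delta_{\R^{dN}} + W(\bx_1,\ldots,\bx_N), \qquad W := \sum_{j=1}^N V(\bx_j) + \sum_{i<j} w_N(\bx_i-\bx_j).
\]
Under Assumption~\ref{asum:pos pre} we have $\bA\equiv 0$, so~\eqref{eq:pos prev} holds for $H_N$ itself (the diamagnetic step in~\eqref{eq:diamag} being trivial), and $W$ is locally integrable. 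The trapping of $V$ at infinity, together with Assumption~\ref{asum:pots} on $w$ (and the scaling~\eqref{eq:scaled potential}, which keeps $w_N$ bounded below for fixed $N$), makes $W$ a trapping potential on $\R^{dN}$. Hence Theorem~\ref{thm:GS schro} applies: $H_N$ has discrete spectrum, and its lowest eigenvalue has a one-dimensional eigenspace spanned by a function $\Phi$ that is strictly positive almost everywhere.

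Next I would exploit the permutation symmetry. For every $\sigma \in \gS_N$ the unitary $U_\sigma$ defined in~\eqref{eq:perm unit} commutes with $H_N$, since both $\sum_j(-\Delta_{\bx_j}+V(\bx_j))$ and $\sum_{i<j}w_N(\bx_i-\bx_j)$ are invariant under relabeling. Therefore $U_\sigma \Phi$ is again a normalized eigenfunction of $H_N$ with the same lowest eigenvalue, and one-dimensionality of the ground eigenspace forces
\[
U_\sigma \Phi = c_\sigma \Phi, \qquad c_\sigma \in \C.
\]
Unitarity of $U_\sigma$ gives $|c_\sigma|=1$, while $(U_\sigma\Phi)(\bx_1,\ldots,\bx_N)=\Phi(\bx_{\sigma(1)},\ldots,\bx_{\sigma(N)})>0$ almost everywhere, together with $\Phi>0$ a.e., forces $c_\sigma \in \R_{>0}$. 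The two constraints together yield $c_\sigma = 1$ for every $\sigma$, so $\Phi$ is fully symmetric, i.e.\ $\Phi \in L^2_{\sym}(\R^{dN})$.

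Finally, any absolute ground state $\Psi_N$ must lie in the one-dimensional ground eigenspace, hence $\Psi_N = \lambda \Phi$ for some $\lambda \in \C$, which is itself symmetric and therefore bosonic.

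I do not anticipate a genuinely hard step: the only thing to watch is that Theorem~\ref{thm:GS schro} really applies to $H_N$ on $\R^{dN}$. The potentially delicate point is regularity/trapping of the full many-body potential $W$, but this follows transparently from Assumption~\ref{asum:pots} and the fact that $N$ is fixed at this stage (so the scaling in~\eqref{eq:scaled potential} plays no role). Everything else is a direct consequence of the non-degeneracy of the Perron-type ground state combined with the symmetry of $H_N$ under $U_\sigma$.
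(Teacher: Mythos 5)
Your argument is essentially the paper's own proof: both invoke Theorem~\ref{thm:GS schro} to get a unique (up to phase) strictly positive ground state $\Phi$ of $H_N$ on $L^2(\R^{dN})$, use $[H_N,U_\sigma]=0$ plus one-dimensionality to force $U_\sigma\Phi=c_\sigma\Phi$, and conclude $c_\sigma=1$ from positivity. The only cosmetic difference is that the paper first records $\Psi_N=c\,|\Psi_N|$ and notes $c_\sigma\in\{\pm1\}$ via the one-dimensional-representation argument before killing the sign with positivity, whereas you obtain $|c_\sigma|=1$ directly from unitarity and use positivity in the same way; the substance and the key input (Theorem~\ref{thm:GS schro} applied to the full $N$-body operator) are identical.
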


\begin{proof}
A simple proof is in~\cite[Section~3.2.4]{LieSei-09}. Here is another, less simple (in that it uses~Theorem~\ref{thm:GS schro} above). It follows from~\eqref{eq:pos prev} that 
$$ \langle \Psi_N, H_N \Psi_N \rangle \geq \langle |\Psi_N|\,, H_N \,|\Psi_N| \rangle,$$
hence $|\Psi_N|$ must also be a minimizer and there exists a $c \in \C,|c|=1$ such that $\Psi_N = c |\Psi_N|$. 

On the other hand, $H_N$ commutes with all the unitaries $U_\sigma$ permuting particle labels~\eqref{eq:perm unit}. Thus, a ground state (unique modulo a constant phase factor) must satisfy for any permutation $\sigma$
$$ U_\sigma \Psi_N = e_{\sigma} \Psi_N$$
for numbers $e_\sigma \in \{-1,1\}$. Applying this to $|\Psi_N|\geq 0$ all the $e_\sigma$'s must be $1$, and thus $|\Psi_N|$ be symmetric under particle label exchange. As per the above, any minimizer $\Psi_N$ must also be.
\end{proof}

It is often useful (but not strictly needed) to know another \index{Hoffmann-Ostenhof$\,^2$ inequality}{consequence} of~\eqref{eq:pos prev}, first derived\footnote{The square in the attribution is the (by now standard) way to give credit to both {authors} of~\cite{Hof-77}.} in~\cite{Hof-77}.

\begin{lemma}[\textbf{Hoffmann-Ostenhof$\,^2$ inequality}]\label{lem:HO ineq}\mbox{}\\
Let $\Psi_N \in L^2 (\R^{dN})$ and $\rho_{\Psi_N}$ be the corresponding one-particle density
\begin{equation}\label{eq:dens unsym}
\rho_{\Psi_N} (\bx) := \sum_{j=1} ^N \int_{\R^{d(N-1)}} |\Psi_N (\bx_1,\ldots,\bx_{j-1}, \bx, \bx_{j+1},\ldots,\bx_N)|^2 \mathrm{d}\bx_1,\ldots,\mathrm{d}\bx_{j-1}, \mathrm{d}\bx_{j+1},\ldots,\mathrm{d}\bx_N. 
\end{equation}
If $h$ satisfies~\eqref{eq:pos prev} then 
\begin{equation}\label{eq:HofOst}
\left\langle \Psi_N \,, \sum_{j=1} ^N h_{j} \, \Psi_N \right\rangle_{L^2 (\R^{dN})} \geq \left\langle \sqrt{\rho_{\Psi_N}}\, , h \, \sqrt{\rho_{\Psi_N}} \right\rangle_{L^2 (\R^d)}. 
\end{equation}
\end{lemma}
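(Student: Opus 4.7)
Since $\bA\equiv 0$, we have $h = -\Delta + V$ and the required inequality splits into a kinetic part and a potential part. The potential part is immediate: by definition of $\rho_{\Psi_N}$,
\[
\left\langle \Psi_N, \sum_{j=1}^N V(\bx_j) \Psi_N \right\rangle = \int_{\R^d} V(\bx) \, \rho_{\Psi_N}(\bx)\, \mathrm{d}\bx = \left\langle \sqrt{\rho_{\Psi_N}}, V\sqrt{\rho_{\Psi_N}}\right\rangle.
\]
So the real content of \eqref{eq:HofOst} is the kinetic inequality $\sum_j \int |\nabla_{\bx_j}\Psi_N|^2 \geq \int |\nabla\sqrt{\rho_{\Psi_N}}|^2$. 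By the diamagnetic-type bound \eqref{eq:diamag} (with $\bA\equiv 0$), we also have $\int|\nabla_{\bx_j}\Psi_N|^2 \geq \int|\nabla_{\bx_j}|\Psi_N||^2$, so we may assume from the start that $\Psi_N\geq 0$.

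Next, I would fix $j$ and let $\rho_j(\bx) := \int |\Psi_N(\bx_1,\dots,\bx_{j-1},\bx,\bx_{j+1},\dots,\bx_N)|^2 \mathrm{d}\bx_1\cdots\widehat{\mathrm{d}\bx_j}\cdots\mathrm{d}\bx_N$, so that $\rho_{\Psi_N} = \sum_{j=1}^N \rho_j$. Differentiating under the integral sign and applying Cauchy--Schwarz,
\[
\bigl|\nabla\rho_j(\bx)\bigr|^2 \leq 4\,\rho_j(\bx) \int \bigl|\nabla_{\bx_j}\Psi_N\bigr|^2 \,\mathrm{d}\bx_1\cdots\widehat{\mathrm{d}\bx_j}\cdots\mathrm{d}\bx_N,
\]
which rearranges to the pointwise bound $|\nabla\sqrt{\rho_j}(\bx)|^2 \leq \int|\nabla_{\bx_j}\Psi_N|^2\,\mathrm{d}\widehat{\bx_j}$. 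Integrating over $\bx$ gives $\int|\nabla\sqrt{\rho_j}|^2 \leq \int|\nabla_{\bx_j}\Psi_N|^2$, and summing over $j$ delivers
\[
\sum_{j=1}^N \int |\nabla_{\bx_j}\Psi_N|^2 \;\geq\; \sum_{j=1}^N \int \bigl|\nabla\sqrt{\rho_j}\bigr|^2.
\]

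It remains to pass from the right-hand side above to $\int|\nabla\sqrt{\rho_{\Psi_N}}|^2 = \int|\nabla\sqrt{\sum_j\rho_j}|^2$. This is the subadditivity
\[
\Bigl|\nabla\sqrt{\textstyle\sum_j \rho_j}\Bigr|^2 \;\leq\; \sum_j \bigl|\nabla\sqrt{\rho_j}\bigr|^2,
\]
which holds pointwise: writing $\nabla\sqrt{\sum_j\rho_j} = \sum_j (\sqrt{\rho_j}/\sqrt{\sum_k \rho_k})\,\nabla\sqrt{\rho_j}$ (valid wherever the denominator does not vanish; elsewhere both sides are controlled by a standard approximation $\sqrt{\rho+\eps}$, $\eps\downarrow 0$), a Cauchy--Schwarz in $j$ with weights $\rho_j/\sum_k\rho_k$ (summing to~$1$) gives the claim. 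Integrating in $\bx$ concludes the kinetic bound and hence \eqref{eq:HofOst}.

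The only delicate point is the last subadditivity step, since $\sqrt{\cdot}$ is not smooth at $0$; I would handle it by regularizing $\rho_j \leadsto \rho_j+\eps$, performing the Cauchy--Schwarz argument, and letting $\eps\downarrow 0$ by monotone/Fatou. Everything else is a direct application of Cauchy--Schwarz and the diamagnetic inequality, both of which require only $\bA\equiv 0$ and no symmetry of $\Psi_N$.
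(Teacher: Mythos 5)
Your proof is correct, but it takes a genuinely different route from the paper's. The paper (following Lewin) reformulates the left-hand side as $\tr(h\gamma_{\Psi_N})$ using the one-body reduced density matrix $\gamma_{\Psi_N}:=\sum_{j}\tr_{\neq j}|\Psi_N\rangle\langle\Psi_N|$, takes its spectral decomposition $\gamma_{\Psi_N}=\sum_k n_k|u_k\rangle\langle u_k|$, and feeds the (possibly infinite) family $\sqrt{n_k}\,u_k$ into the abstract convexity inequality $\sum_k\langle v_k,h\,v_k\rangle \geq \bigl\langle (\sum_k|v_k|^2)^{1/2}, h\,(\sum_k|v_k|^2)^{1/2}\bigr\rangle$, which is obtained purely from \eqref{eq:pos prev} by iterating the $K=2$ case with $u=f+\im g$. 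You instead decompose $\rho_{\Psi_N}=\sum_{j=1}^N\rho_j$ according to which particle slot is frozen, reduce $\int|\nabla_{\bx_j}\Psi_N|^2\geq\int|\nabla\sqrt{\rho_j}|^2$ by Cauchy--Schwarz in the remaining variables, and prove the pointwise subadditivity $|\nabla\sqrt{\sum_j\rho_j}|^2\leq\sum_j|\nabla\sqrt{\rho_j}|^2$ by the chain rule plus a further Cauchy--Schwarz. Your second step is the same convexity inequality as the paper's key lemma, just specialized to $h=-\Delta$ and proven by elementary calculus rather than derived from \eqref{eq:pos prev}; your first step replaces the density-matrix reduction. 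What the paper's route buys is that it is abstract in $h$, depending only on the structural property \eqref{eq:pos prev}, so it covers pseudo-relativistic kinetic energies $(-\Delta)^s$ without change --- a case explicitly flagged in the comments to Assumption~\ref{asum:pos pre}. Your route is tied to the local operator $-\Delta$ through $\nabla\sqrt{\rho}=\nabla\rho/(2\sqrt{\rho})$; extending it to fractional Laplacians would require redoing both Cauchy--Schwarz steps in the singular-integral (Gagliardo) representation, which is possible but not immediate. What your route buys is that it avoids the spectral theorem for trace-class operators entirely, uses only the wave-function itself, and is arguably closer in spirit to the original argument in \cite{Hof-77}.
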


\begin{proof}
The following proof is from~\cite{Lewin-ICMP}. We do not assume $\Psi_N$ to have any symmetry, so we extend the definition~\eqref{eq:def red mat} of the one-body density matrix: 
\begin{equation}\label{eq:red mat nonsym}
 \gamma_{\Psi_N} := \sum_{j=1} ^N \tr_{\neq j} \left( |\Psi_N \rangle \langle \Psi_N | \right) 
\end{equation}
where the symbol $\tr_{\neq j}$ means taking the partial trace with respect to all degrees of freedom but the $j$-th. Observe that in terms of integral kernels 
\begin{equation}\label{eq:red mat dens}
 \gamma_{\Psi_N} (\bx;\bx) = \rho_{\Psi_N} (\bx). 
\end{equation}
The desired inequality follows by noting that for any real-valued functions $v_1,\ldots,v_k$
$$ \sum_{k=1} ^K \langle v_k,  h  v_k \rangle \geq \left\langle \left(\sum_{k=1} ^K |v_k| ^2 \right) ^{1/2},  h \left(\sum_{k=1} ^K |v_k| ^2 \right)^{1/2} \right\rangle.$$
Indeed~\eqref{eq:pos prev} being valid for complex valued functions implies 
$$ \langle f | h | f \rangle + \langle g | h | g \rangle = \langle (f +\im g) | h | (f +  \im g) \rangle \geq \left\langle \sqrt{ |f| ^2 + |g|^2}  | h | \sqrt{ |f| ^2 + |g|^2} \right\rangle$$
for any real-valued functions $f,g$, and it suffices to iterate this inequality.

Then, using the spectral decomposition of $\gamma_{\Psi_N}$
$$ \gamma_{\Psi_N} = \sum n_k |u_k \rangle \langle u_k | $$
we have (the first equality is similar to~\eqref{eq:def red mat dual}) 
\begin{align*}
 \left\langle \Psi_N \,, \sum_{j=1} ^N h_{j} \, \Psi_N \right\rangle_{L^2 (\R^{dN})} &= \tr \left( h \gamma_{\Psi_N} \right) \\
 &= \sum_{k} n_k \langle u_k \,, h \, u_k \rangle \\
 &\geq \left\langle \sqrt{\sum_{k} n_k |u_k| ^2 }  \,, h \,  \sqrt{\sum_{k} n_k |u_k| ^2 } \right\rangle
\end{align*}
and we recognize that~\eqref{eq:red mat dens} says that  
$$\sum_{k} n_k |u_k| ^2 = \rho_{\Psi_N}.$$
\end{proof}

Now we introduce two tools that will allow us to bound the interaction from below using one-body terms. We first have a lower bound for repulsive interactions (more precisely, interactions with positive Fourier transform), originating in~\cite{Onsager-39}:

\begin{lemma}[\textbf{\index{Onsager's inequality}{Onsager's inequality}}]\label{lem:Onsager}\mbox{}\\
Assume $w\in L^{\infty} (\R^d)$ has a non-negative Fourier transform. Then, for any $\bx_1,\ldots,\bx_N \in \R ^d$ and $\rho \in L^1 (\R^d)$ 
\begin{equation}\label{eq:Onsager}
\sum_{1\leq i < j \leq N} w (\bx_i - \bx_j) \geq \sum_{j=1}^N w\star \rho (\bx_j) - \frac{1}{2} \iint_{\R^d\times \R^d} \rho (\bx) w(\bx-\by) \rho(\by) \mathrm{d}\bx d\by -\frac{N}{2} w (0). 
\end{equation}
\end{lemma}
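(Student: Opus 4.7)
The plan is to recognize that non-negativity of $\widehat{w}$ makes $w$ a positive-definite kernel: for any (sufficiently nice) real-valued signed measure $\nu$ on $\R^d$, Plancherel gives
\begin{equation*}
\iint_{\R^d\times\R^d} \nu(\bx)\, w(\bx-\by)\, \nu(\by)\, \mathrm{d}\bx \mathrm{d}\by \;=\; \int_{\R^d} \widehat{w}(\xi)\, |\widehat{\nu}(\xi)|^2 \, \mathrm{d}\xi \;\geq\; 0.
\end{equation*}
The inequality will then pop out by applying this positivity to the natural ``fluctuation'' measure $\nu = \mu - \rho$, where $\mu := \sum_{j=1}^N \delta_{\bx_j}$ is the empirical measure of the $N$ points and $\rho$ is the smooth density appearing in the statement.

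Concretely, I would expand the quadratic form:
\begin{equation*}
0 \;\leq\; \iint (\mu-\rho)(\bx)\, w(\bx-\by)\, (\mu-\rho)(\by) \, \mathrm{d}\bx \mathrm{d}\by \;=\; \iint \mu\, w\, \mu \;-\; 2 \iint \mu\, w\, \rho \;+\; \iint \rho\, w\, \rho.
\end{equation*}
The diagonal term $\iint \mu w \mu$ unfolds as $\sum_{i,j} w(\bx_i - \bx_j) = 2\sum_{i<j} w(\bx_i-\bx_j) + N w(0)$, picking up the $-\frac{N}{2}w(0)$ self-energy correction after dividing by $2$. The cross term gives $\iint \mu w \rho = \sum_{j=1}^N (w\star\rho)(\bx_j)$, and the last term is the double integral of $\rho w \rho$. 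Rearranging and dividing by $2$ yields exactly~\eqref{eq:Onsager}.

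The only real obstacle is that $\mu$ is not a function and the Fourier computation above is formal. To make it rigorous I would either (a) mollify: replace each $\delta_{\bx_j}$ by $\chi_\eps(\cdot - \bx_j)$ with $\chi_\eps$ a non-negative smooth approximation of the identity, run the Parseval argument with $\nu_\eps = \mu_\eps - \rho$ to get a clean inequality, and then pass to the limit $\eps\to 0$ using $w\in L^\infty$ together with the pointwise continuity properties of $w$ that one may assume (or approximate $w$ itself by a continuous function with non-negative Fourier transform); or (b) write $w = \widecheck{\widehat{w}}$ with $\widehat{w}\geq 0$ and use $w(\bx-\by) = \int \widehat{w}(\xi) e^{i\xi\cdot(\bx-\by)}\mathrm{d}\xi$ (up to constants) to rewrite
\begin{equation*}
\sum_{i,j} w(\bx_i-\bx_j) \;=\; \int \widehat{w}(\xi) \bigl| \textstyle\sum_{j=1}^N e^{i\xi\cdot\bx_j}\bigr|^2 \mathrm{d}\xi \;\geq\; 0,
\end{equation*}
and similarly express the cross and $\rho\rho$ terms to obtain the bound directly from
\begin{equation*}
\int \widehat{w}(\xi)\,\Bigl| \textstyle\sum_j e^{i\xi\cdot\bx_j} - \widehat{\rho}(\xi)\Bigr|^2 \mathrm{d}\xi \;\geq\; 0.
\end{equation*}
Route (b) is cleanest and avoids any limiting procedure as soon as $\widehat{w}\in L^1$ or one interprets the integrals distributionally; I would default to writing it this way.
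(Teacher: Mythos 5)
Your proof is correct and takes essentially the same approach as the paper: both apply the positivity of the quadratic form $\eta\mapsto\iint\eta\, w\,\eta$ (via $\widehat{w}\geq 0$) to the measure $\eta=\sum_j\delta_{\bx_j}-\rho$ and expand. You are somewhat more careful than the paper about justifying the formal use of Dirac masses, but the underlying idea is identical.
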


\begin{proof}
Modulo a density argument, we assume that $w$ is smooth. For any $\eta: \R^d \mapsto \R$ such that the integrals make sense we have
$$ \iint_{\R^d\times \R^d} \eta (\bx) w(\bx-\by) \eta(\by) \mathrm{d}\bx \mathrm{d}\by = \int_{\R^d} \widehat{w} (\bk) |\widehat{\eta} (\bk)| ^2 \mathrm{d}\bk \geq 0.$$
In particular we can apply this (with a slight abuse of notation, or another regularization argument) to the Radon measure 
$$\eta = \sum_{j= 1} ^N \delta_{x_j} - \rho,$$ 
and this gives the inequality. 
\end{proof}

It turns out that one can bound an arbitrary (regular) interaction from below, combining Onsager's lemma with a trick due to L\'evy-Leblond~\cite{LevyLeblond-69}, whose use in the context of our interest originates in~\cite{LieThi-84,LieYau-87}. 

\begin{lemma}[\textbf{\index{L\'evy-Leblond's trick}{L\'evy-Leblond's trick}}]\label{lem:Levy}\mbox{}\\
Let $w\in L^{\infty} (\R^d)$, which we write in the form 
$$ w = w_1 - w_2.$$
Let then $H_N$ be the associated many-body Hamiltonian~\eqref{eq:intro Schro op bis}, with $\bA\equiv 0$ as in Assumption~\ref{asum:pos pre}. Assume $N = 2M$ is even for simplicity and let $\bZ_M = (\bz_1,\ldots,\bz_M) \in \R^{dM}$. Define a new Hamiltonian on $L^2 (\R^{dM})$, parametrized by $\bZ_M$,
\begin{multline}\label{eq:LevLebHam} 
\widetilde{H}_M (\bZ_M) := \sum_{j=1} ^M \left( -2\Delta_{\bx_j} + 2 V(\bx_j) - \frac{2(2M-1)}{M}\sum_{k=1} ^{M} w_2 (\bx_j-\bz_k) \right) \\ + \frac{2(2M-1)}{M-1}\sum_{1\leq i< j \leq M} w_1 (\bx_i - \bx_j)
\end{multline}
with associated bosonic ground-state energy $E(\bZ_M)$. We have,
\begin{equation}\label{eq:Levy}
E(N) \geq \inf_{\bZ_M \in \R^{dM}} \left( E(\bZ_M) + \frac{2(2M-1)}{M-1}\sum_{1\leq k < \ell \leq M} w_2 (\bz_k-\bz_\ell) \right). 
\end{equation}
where $E(N)$ is the ground state energy of $H_N$.
\end{lemma}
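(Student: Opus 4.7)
The plan is to use bosonic symmetry of $\Psi_N$ to split the $N=2M$ variables into two halves $(\bx_1,\ldots,\bx_M;\bz_1,\ldots,\bz_M)$, to rewrite $\langle\Psi_N,H_N\Psi_N\rangle$ as an integral over $\bZ_M\in\R^{dM}$ of the expectation of $\widetilde{H}_M(\bZ_M)$ plus a boundary term, taken against the ``slice'' $\bX_M\mapsto\Psi_N(\bX_M,\bZ_M)$, and then to lower-bound the slice for each $\bZ_M$. Since $\Psi_N$ is symmetric under all permutations of its $N$ arguments, it is in particular symmetric in the first $M$, so the slice is bosonic on $\R^{dM}$ for a.e.\ $\bZ_M$; hence we may replace its contribution by $E(\bZ_M)\|\Psi_N(\cdot,\bZ_M)\|^2$. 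Pulling out $\inf_{\bZ_M}$ and using $\int\|\Psi_N(\cdot,\bZ_M)\|^2\,\mathrm{d}\bZ_M=1$ delivers~\eqref{eq:Levy}.

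The substance is combinatorial. Split $\langle H_N\rangle_{\Psi_N}=\langle T\rangle+\langle V\rangle+\langle W_1\rangle-\langle W_2\rangle$ with $T=\sum_j -\Delta_{\bx_j}$, $V=\sum_j V(\bx_j)$, $W_\alpha=\sum_{i<j}w_\alpha(\bx_i-\bx_j)$, all expectations with respect to $\Psi_N$. Bosonic symmetry gives $\langle T\rangle=2\langle\sum_{j=1}^M-\Delta_{\bx_j}\rangle$, $\langle V\rangle=2\langle\sum_{j=1}^M V(\bx_j)\rangle$, and, crucially, the \emph{two} alternative representations of any pair-sum,
\[
\langle W_\alpha\rangle=\frac{2(2M-1)}{M-1}\left\langle \sum_{1\leq i<j\leq M} w_\alpha(\bx_i-\bx_j) \right\rangle=\frac{2M-1}{M}\left\langle \sum_{j,k=1}^M w_\alpha(\bx_j-\bx_{M+k}) \right\rangle,
\]
the ratios $\binom{N}{2}/\binom{M}{2}$ and $\binom{N}{2}/M^2$ being purely combinatorial. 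Using the first form for $\langle W_1\rangle$ and the second form for $\langle W_2\rangle$, and writing $\bz_k:=\bx_{M+k}$, one recovers all the coefficients of $\widetilde{H}_M(\bZ_M)$ except for the cross $w_2$-coefficient, which comes out as $\frac{2M-1}{M}$ rather than the $\frac{2(2M-1)}{M}$ built into $\widetilde{H}_M$. A term-by-term match therefore yields
\[
\int_{\R^{dM}} \langle\Psi_N(\cdot,\bZ_M),\widetilde{H}_M(\bZ_M)\Psi_N(\cdot,\bZ_M)\rangle\,\mathrm{d}\bZ_M = \langle H_N\rangle_{\Psi_N} - \langle W_2\rangle_{\Psi_N}.
\]

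To reabsorb the leftover $\langle W_2\rangle_{\Psi_N}$, apply the first of the two representations above once more, but now to the intra-\emph{second}-half pairs $(\bx_{M+k},\bx_{M+\ell})=(\bz_k,\bz_\ell)$: by bosonic symmetry one has
\[
\langle W_2\rangle_{\Psi_N}=\frac{2(2M-1)}{M-1}\int_{\R^{dM}} \|\Psi_N(\cdot,\bZ_M)\|^2 \sum_{1\leq k<\ell\leq M} w_2(\bz_k-\bz_\ell)\,\mathrm{d}\bZ_M,
\]
which is precisely the boundary term of~\eqref{eq:Levy} integrated against the marginal $\|\Psi_N(\cdot,\bZ_M)\|^2$. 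Inserting into the identity above and lower-bounding each slice by its bosonic ground-state energy $E(\bZ_M)$ then yields the inequality. The main obstacle is pure bookkeeping: one must check that the coefficients $\frac{2(2M-1)}{M-1}$ and $\frac{2(2M-1)}{M}$ hard-wired into $\widetilde{H}_M(\bZ_M)$ are engineered exactly so that the ``missing'' factor $2$ in the cross-$w_2$ coefficient is restored by re-expressing $\langle W_2\rangle$ via the intra-$\bZ_M$-half identity, the match being lossless rather than producing an error term.
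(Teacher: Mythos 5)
Your proof is correct and takes essentially the same route as the paper: split the $N=2M$ bosonic coordinates into two halves, use full permutation symmetry to redistribute the pair interaction so that $w_1$ sits on intra-first-half pairs while $w_2$ is carried by cross pairs together with intra-$\bZ_M$ pairs (the latter giving the boundary term in~\eqref{eq:Levy}), and lower-bound each fixed-$\bZ_M$ slice by the bosonic ground-state energy $E(\bZ_M)$ before pulling out the infimum. Your two-pass bookkeeping (cross-only decomposition of $\langle W_2\rangle$, then compensating the factor-$2$ overshoot against the coefficient $2(2M-1)/M$ built into $\widetilde{H}_M$ via the intra-$\bZ_M$ representation) amounts to the same rebalanced pair term the paper writes down in a single step.
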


In applications we will write the splitting $w=w_1 - w_2$ with $w_1$ and $w_2$ having non-negative Fourier transforms, so that one can apply Lemma~\ref{lem:Onsager} to both parts. In words, the lemma means that one can bound the energy from below by artificially splitting the particles in two groups: $M$ quantum particles feeling the Hamiltonian $\widetilde{H}_M$ and $M$ classical particles at positions $\bz_1,\ldots,\bz_M$. The lower bound is attained by first optimizing the position of the quantum particles, given those of the classical ones, and then optimizing over the positions of the classical particles. The point is that all the attraction (mediated by $w_2$) is now inter-species, so that from the point of view of the quantum particles (in the first minimization), it is a one-body term. One may also split the particles in groups more elaborately (in particular, assuming $N$ even is not necessary), see the aforementioned references.

\begin{proof}
Denote 
$$ C_1 = \frac{2(2M-1)}{M-1}, \quad C_2 = \frac{2(2M-1)}{M}.$$
For ease of notation we split the particle labels in two groups 
$$\bY_M = (\by_1,\ldots,\by_M) := (\bx_1, \ldots, \bx_M)$$
and 
$$ \bZ_M = (\bz_1,\ldots,\bz_M) := (\bx_{M+1}, \ldots,\bx_{2M}).$$
Let then $\Psi_N$ be a bosonic $N$-particle wave-function. Using its symmetry
$$ \left\langle \Psi_N | \sum_{j=1} ^N h_{\bx_j}  | \Psi_N \right\rangle = 2 \left\langle \Psi_N | \sum_{j=1} ^M h_{\by_j}  | \Psi_N \right\rangle$$
and 
$$
\int_{\R^{Nd}} \left( \sum_{1\leq i< j \leq N} w(\bx_i-\bx_j) \right) \left|\Psi_N (\bx_1,\ldots,\bx_N)\right|^2 = \int_{\R^{2Md}} W (\bY_M,\bZ_M) \left|\Psi_N (\bx_1,\ldots,\bx_N)\right|^2
$$
with (using the handy prefactor $(N-1)^{-1}$ in front of the interaction)
\begin{align*} 
W (\bY_M,\bZ_M) &= C_1 \sum_{1\leq i < k\leq M} w_1 (\by_i-\by_j) - C_2 \sum_{i=1} ^M \sum_{j=1} ^M w_2 (\by_i-\bz_j) + C_1 \sum_{1\leq i <j\leq M} w_2 (\bz_i - \bz_j) 
\\&= W_1 (\bY_M,\bZ_M) + W_2 (\bZ_M).
\end{align*}
Denote $\Gamma_M$ the mixed $M$-particles state obtained from $\Gamma_N= |\Psi_N \rangle \langle \Psi_N|$ by tracing out the $\bz$ variables. Then using the above notation we have 
\begin{align*}
\langle \Psi_N | H_N | \Psi_N \rangle &= \tr\left( \sum_{j=1} ^M h_{\by_j} \Gamma_M \right) + \int_{\R^{2M}} \Gamma_N (\bY_M,\bZ_M;\bY_M,\bZ_M)\mathrm{d}\bY_M \mathrm{d}\bZ_M  \\
&+ \langle \Psi_N | W_2 (\bZ_M) | \Psi_N \rangle\\
&= \int_{\bZ_M \in \R^{dM}} \tr \left( \left(\sum_{j=1} ^M h_{\by_j} + W_1 (\bY_M,\bZ_M) + W_2 (\bZ_M) \right) \widetilde{\Gamma_{\bZ_M}}  \right) \mathrm{d}\bZ_M  
\end{align*}
where we identify density matrices with their integral kernels, denote 
$$ \widetilde{\Gamma_{\bZ_M}} (\bY_M;\bY'_M) = \Gamma_N (\bY_M,\bZ_M;\bY'_M,\bZ_M)$$
and observe that the latter, integrated over $\bZ_M$, yields $\Gamma_N (\bY_M;\bY'_M)$. There remains to use, at fixed $\bZ_M$, the operator lower bound 
$$ \sum_{j=1} ^M h_{\by_j} + W_1 (\bY_M,\bZ_M) + W_2 (\bZ_M) \geq \inf_{\bZ_M \in \R^{dM}} \left( E(\bZ_M) + \frac{2(2M-1)}{M-1}\sum_{1\leq k < \ell \leq M} w_2 (\bz_k-\bz_\ell) \right)$$
and note that 
$$ \int_{\bZ_M} \tr \left( \widetilde{\Gamma_{\bZ_M}}\right) \mathrm{d}\bZ_M = 1.$$
\end{proof}

The previous lemmas will allow us to prove energy convergence. To deduce convergence of states, we will rely on a very simple observation, the \index{Feynman-Hellmann principle}{Feynman-Hellmann principle}. This applies to variational problems whose dependence on an extra parameter is of interest. The statement is roughly that ``derivative of the minimum $=$ derivative of the functional, evaluated at the minimizer''. 

We state this as a lemma, but prefer to stay vague as to the actual formulation. We find it more convenient to decline the (very simple) proof as needed in specific cases rather than have too abstract a formulation. 

\begin{lemma}[\textbf{Feynman-Hellmann principle}]\label{lem:FH}\mbox{}\\
For $\eps \in [-\eps_0, \eps_0]$ we are given a variational principle 
$$ E( \eps) = \min \left\{ \cE_{\eps} [u], \: u \mbox{ in some space }  \right\}.$$
Assume that, at $\eps = 0$ there is a unique minimizer $u_0$. Further assume that
$$ \eps \mapsto \cE_{\eps} [u_0]$$
is continuous and 
$$ \eps \mapsto E (\eps)$$
is differentiable at $\eps = 0$. Then
\begin{equation}\label{eq:FH general}
\left(\partial_{\eps} E(\eps)\right)_{|\eps = 0} =  \left(\partial_{\eps} \cE_\eps \right) [u_0].
\end{equation}
\end{lemma}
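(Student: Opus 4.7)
The plan is to exploit the elementary observation that the minimizer $u_0$ at $\eps = 0$ remains an admissible trial state for every nearby value of $\eps$, yielding a one-sided comparison between $E(\eps)$ and $\cE_\eps[u_0]$ that is sharp precisely at $\eps = 0$. Concretely, I would first record that by the very definition of $E(\eps)$ as an infimum and by the admissibility of $u_0$,
$$
E(\eps) \leq \cE_\eps[u_0] \qquad \text{for every } \eps \in [-\eps_0, \eps_0],
$$
while at $\eps = 0$ the minimizer hypothesis gives equality $E(0) = \cE_0[u_0]$. Thus the auxiliary function
$$
f(\eps) := \cE_\eps[u_0] - E(\eps)
$$
is well-defined, nonnegative, and vanishes at $\eps = 0$.

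Next I would observe that this forces $\eps = 0$ to be a global minimum of $f$ on $[-\eps_0,\eps_0]$. Under the differentiability assumption on $E$ (and the analogous differentiability of $\eps \mapsto \cE_\eps[u_0]$, which is implicitly required for the right-hand side of \eqref{eq:FH general} to even make sense — the statement's ``continuity'' should be read as ``differentiability''), the one-sided incremental quotients of $f$ at $0$ satisfy
$$
\liminf_{\eps \to 0^+} \frac{f(\eps)}{\eps} \geq 0, \qquad \limsup_{\eps \to 0^-} \frac{f(\eps)}{\eps} \leq 0,
$$
simply because $f$ is nonnegative and vanishes at $0$. If the two one-sided derivatives agree, both must equal zero, and unpacking $f'(0) = 0$ is exactly the claimed identity \eqref{eq:FH general}.

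The proof is essentially a one-liner, so the ``obstacle'' is not analytic but conceptual: one must check in each application that the hypotheses hold. In the quantum contexts appearing later in the review, $E(\eps)$ will typically be a concave infimum of affine functions of $\eps$, hence differentiable except at a countable set of points, and uniqueness of $u_0$ will often be needed not to formulate the lemma itself but to argue \emph{a posteriori} that perturbed minimizers $u_\eps$ converge to $u_0$, thereby turning an energy expansion into information about states. That pairing, rather than the derivation of \eqref{eq:FH general} per se, is where the real work lies in the sequel.
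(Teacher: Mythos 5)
Your proof is correct and is essentially the same argument the paper gives: compare $E(\eps) \leq \cE_\eps[u_0]$ (with equality at $\eps = 0$) and let the one-sided difference quotients squeeze, which you package as the first-order condition $f'(0)=0$ for the nonnegative function $f(\eps)=\cE_\eps[u_0]-E(\eps)$. Your observation that ``continuous'' in the hypothesis should really be ``differentiable'' for the right-hand side of \eqref{eq:FH general} to make sense is accurate, and is addressed implicitly in the paper's remark about the special affine form $\cE_\eps[u]=\cE_0[u]+\eps\cP[u]$ used in the applications.
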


In most of our applications we will have the special form
$$
\cE_{\eps} [u] = \cE_{0} [u] + \eps \cP[u]
$$
so that $\eps \mapsto E( \eps)$ is concave as an infimum over linear function. 

\begin{argument}
Taking $\eps >0$ and using the variational principle we have
$$
\frac{E(\eps) - E (0)}{\eps} \leq \frac{\cE_{\eps} [u_0] - \cE_0 [u_0]}{\eps} 
$$
and 
$$ \frac{E(0) - E(-\eps)}{\eps} \geq \frac{\cE_{0} [u_0] - \cE_{-\eps} [u_0]}{\eps}.$$
Letting $\eps \to 0$ gives the desired result.
\end{argument}

\subsection{Applications}\label{sec:MF Onsager}

We now explain how to use the above tools to prove Theorem~\ref{thm:MF pos pre}. For pedagogical reasons, redundancies in the argument are not tracked down. We assume that $w\in L^\infty (\R^d)$ in the sequel, in order to apply Lemmas~\ref{lem:Onsager} and~\ref{lem:Levy} as they stand. If the potential is more singular, a suitable regularization allows to adapt the proof. Our main sources~\cite{BenLie-83,LieYau-87} deal with Coulomb/Newton interactions. For the former, the \index{Lieb-Oxford inequality}{Lieb-Oxford inequality}~\cite{Lieb-79,LieOxf-80,LieSei-09} can be used.

\begin{proof}[Proof of Theorem~\ref{thm:MF pos pre}] We separate the more difficult issue of convergence of states from that of energy convergence. 

\medskip 

\noindent \textbf{Energy convergence.} Using a \index{Trial state, Hartree}{trial stat}e of the form $\Psi_N = u ^{\otimes N}$ as in~\eqref{eq:ansatz} we immediately get 
\begin{equation}\label{eq:Hartree N}
 \frac{E(N)}{N} \leq \cEh_N [u] = \langle u | h | u\rangle + \frac{1}{2} \iint_{\R^d \times \R^d} |u(\bx)| ^2 N ^{d\beta} w \left( N^{\beta} (\bx-\by) \right)  |u(\bx)| ^2 \mathrm{d}\bx \mathrm{d}\by. 
\end{equation}
For $\beta = 0$, the right-hand side does not depend on $N$ and is exactly the \index{Hartree energy functional}{Hartree functional}, so we get the inequality
$$ \frac{E(N)}{N} \leq \Eh.$$
For $\beta >0$ one needs to prove that the infimum of the $N$-dependent functional $\cEh_N$ converges to $\Enls$. This is a rather simple exercise, the details of which we shall skip. It yields, for all $\beta >0$, and under the stated conditions on the interaction potential (they guarantee that $\cEh_N$ is bounded below independently of $N$) 
$$ \limsup_{N\to \infty}\frac{E(N)}{N} \leq \Enls.$$
To get a corresponding lower bound we use \index{L\'evy-Leblond's trick}{L\'evy-Leblond's trick} Lemma~\ref{lem:Levy} with $w_1 $ the inverse Fourier transform of $\widehat{w_N}_+$ and $w_2$ the inverse Fourier transform of $\widehat{w_N}_-$, where  
$$ w_N (\bx) = \frac{N ^{d\beta}}{N-1} w (N^{\beta} \bx)$$
and $+,-$ subscripts indicate positive/negative parts.This way both $w_1$ and $w_2$ have non-negative Fourier transforms. 

We now bound $\widetilde{H}_M (\bZ_M)$, as  defined in Lemma~\ref{lem:Levy}, from below. Let $M= N/2$ and $\Phi_M$ be a $M$-body wave function. Using Assumption~\ref{asum:pos pre} it follows from the \index{Hoffmann-Ostenhof$\,^2$ inequality}{Hoffmann-Ostenhof$\,^2$ inequality}~\ref{lem:HO ineq} that 
\begin{multline}\label{eq:use HO}
\Big\langle \Phi_M \Big| \sum_{j=1} ^M \left( -2\Delta_{\bx_j} + 2 V(\bx_j) - \frac{2(2M-1)}{M}\sum_{k=1} ^{M} w_2 (\bx_j-\bz_k) \right) \Big| \Phi_M \Big\rangle \\ 
\geq 2 \int_{\R^d} \left( \left| \nabla \sqrt{\rho_{\Phi_M}} \right|^2 + V \rho_{\Phi_M} \right) - \frac{2(2M-1)}{M}\sum_{k=1} ^{M} \int_{\R^d} w_2 (\bx-\bz_k) \rho_{\Phi_M} (\bx) \mathrm{d}\bx
\end{multline}
with $\rho_{\Phi_M}$ the one-particle density of $\Phi_M$, defined as in~\eqref{eq:dens unsym}. Then, if we use \index{Onsager's inequality}{Onsager's inequality} Lemma~\ref{lem:Onsager} with $\rho = \rho_{\Phi_M}$ we get 
\begin{align*} 
\left\langle \Phi_M \right| \sum_{1\leq i< j \leq M} w_1 (\bx_i - \bx_j) \left| \Phi_M \right\rangle &\geq \left\langle \Phi_M \right| \sum_{j= 1} ^M w_1 \star \rho_{\Phi_M} (\bx_j) \left| \Phi_M \right\rangle \\
&- \frac{1}{2} \iint_{\R^d \times \R^d} \rho_{\Phi_M} (\bx) w_1 (\bx - \by)  \rho_{\Phi_M} (\by) \mathrm{d}\bx \mathrm{d}\by - \frac{M}{2} w_1 (0)\\
&= \frac{1}{2} \iint_{\R^d \times \R^d} \rho_{\Phi_M} (\bx) w_1 (\bx - \by)  \rho_{\Phi_M} (\by) \mathrm{d}\bx \mathrm{d}\by - \frac{M}{2} w_1 (0),
\end{align*}
and thus 
\begin{multline*}
 \left\langle \Phi_M \right| \widetilde{H} (\bZ_M) \left| \Phi_M \right\rangle \geq N \cEh_N \left[ \sqrt{\eta} \right] - \frac{2(2M-1)}{M} \sum_{k=1} ^M w_2 \star \rho_{\Phi_M} (\bz_k) \\ + \frac{2(N-1)}{N} \iint_{\R^d \times \R^d} \rho_{\Phi_M} (\bx) w_2 (\bx - \by)  \rho_{\Phi_M} (\by) \mathrm{d}\bx \mathrm{d}\by - \frac{M}{2} w_1 (0)  
\end{multline*}
where we denote 
$$\eta = M ^{-1} \rho_{\Phi_M}.$$
On the other hand, using Lemma~\ref{lem:Onsager} with $\rho = (M-1) M ^{-1} \rho_{\Phi_M}$ we obtain
\begin{multline*}
 \frac{2(2M-1)}{M-1}\sum_{1\leq k < \ell \leq M} w_2 (\bz_k-\bz_\ell) \geq \frac{4(N-1)}{N} \sum_{k=1} ^M w_2 \star \rho_{\Phi_M} (\bz_k) \\ - \frac{(N - 1)(M-1)}{M^2} \iint_{\R^d \times \R^d} \rho_{\Phi_M} (\bx) w_2 (\bx - \by)  \rho_{\Phi_M} (\by) \mathrm{d}\bx \mathrm{d}\by - C N w_2 (0). 
\end{multline*}
Putting the previous inequalities together yields 
\begin{multline*} 
\left\langle \Phi_M \right| \widetilde{H} (\bZ_M) \left| \Phi_M \right\rangle + \frac{2(2M-1)}{M-1}\sum_{1\leq k < \ell \leq M} w_2 (\bz_k-\bz_\ell)  \\ \geq N \cEh_N \left[ \sqrt{\eta} \right] - C N \left(w_1 (0) + w_2 (0) \right) \geq N \EMF - C N^{d\beta} + o (N)
\end{multline*}
where we bound $\cEh_N \left[ \sqrt{\eta} \right]$ from below by $\Eh_N$ and skip the proof that when $\beta >0$ this converges to $\Enls$ when $N\to \infty$. This holds for any $M-$body bosonic function $\Phi_M$ and thus, going back to~\eqref{eq:Levy}, concludes the proof of the energy lower bound. 

\medskip 

\noindent \textbf{Convergence of states.} Now we assume that there exists a unique mean-field minimizer and prove~\eqref{eq:conv states pos pre}. This is done by introducing a perturbed problem depending on a small parameter $\eps$, and proving that the corresponding ground state energy converges to the appropriate mean-field limit. As per Lemma~\ref{lem:FH}, the reduced density matrices will be accessed by differentiating the energy in $\eps$. There will then remain to argue that the mean-field limit and differentiation in $\eps$ can be commuted. 

Let thus $\eps $ be a (small) real number and $B$ be a bounded operator on $L^2 (\R^d)$. The perturbed $N$-body Hamiltonian we shall consider is
$$ H_{N,\eps} = H_N := \sum_{j=1} ^N \left(- \Delta_{\bx_j} + V (\bx_j) + \eps B_{\bx_j} \right)+ \sum_{1\leq i < j \leq N} w_N (\bx_i - \bx_j),
$$
where 
$$ B_{\bx_j} = \1 ^{\otimes j-1} \otimes B \otimes \1 ^{\otimes N-j}.$$
Denote $E (N,\eps)$ the corresponding ground state energy \emph{without bosonic symmetry constraint}, namely the infimum of 
$$ \langle \Psi_N | H_{N,\eps} | \Psi_N\rangle $$
amongst all $L^2$- normalized $N$-body wave-functions $\Psi_N$. As per Theorem~\ref{thm:GS schro}, Assumption~\ref{asum:pos pre} implies that $E(N,0) = E(N),$ our original bosonic ground state energy. The $E(N,\eps)$ minimum needs however not be attained by a bosonic state.

Let $\cEmix_\eps$ be a perturbed mean-field functional extended to \index{Pure state, mixed state}{mixed states} (density matrices): 
\begin{equation}\label{eq:MF mixed}
\cEmix_\eps [\gamma] = \tr\left( \left(h + \eps B \right) \gamma \right) + \frac{1}{2} \iint_{\R^d \times \R^d} \rho_\gamma (\bx) w(\bx-\by) \rho_{\gamma} (\by) \mathrm{d}\bx \mathrm{d}\by 
\end{equation}
where $w$ is replaced by $b\delta_0$ if $\beta >0$ and $\gamma$ is a positive trace-class operator on $L^2 (\R^d)$, with $\rho_\gamma$ its density: 
$$ \rho_\gamma (\bx) = \gamma (\bx;\bx) = \sum_j \lambda_j |u_j (\bx)| ^2.$$
As usual $\gamma$ is identified with its integral kernel, and we denote $\lambda_j,u_j$ its eigenvalues and eigenfunctions. Observe that if $\gamma = |u\rangle \langle u|$ is a pure state and $\eps = 0$, the above is nothing but our target mean-field functional $\cEMF [u]$. 

We claim that, for all $\eps$ (possibly we need it to be small enough) 
\begin{equation}\label{eq:con ener mixed}
 \lim_{N\to \infty} \frac{E(N,\eps)}{N} = \Emix_\eps 
\end{equation}
where $\Emix_\eps$ is the minimum of $\cEmix_\eps$ amongst all one-particle mixed states ($\tr \, \gamma = 1$). For an upper bound, we observe that we can extend the minimization of the perturbed $N$-body energy without changing the result : 
\begin{equation}\label{eq:N body ener mixed}
E (N,\eps) = \inf \left\{ \tr \left( H_{N,\eps} \Gamma_N \right), \: \Gamma_N \mbox{ positive operator on } L^2 (\R^{dN}), \: \tr\, \Gamma_N = 1 \right\}. 
\end{equation}
Indeed, the energy is linear in the operator $\Gamma_N$ and any such positive operator with trace $1$ is, by the spectral theorem, a convex combination of orthogonal projector.

Hence we have, for any one-body mixed state $\gamma$ 
\begin{equation}\label{eq:up bound mixed}
 \frac{E (N,\eps)}{N} \leq N^{-1} \tr \left( H_{N,\eps} \gamma^{\otimes N} \right). 
\end{equation}
This is the place where it is useful to have dropped the bosonic symmetry constraint: $\gamma^{\otimes N}$ is certainly a $N$-body state (it is even symmetric in the sense of~\eqref{eq:boltzons}) but it is not bosonic unless $\gamma$ is pure~\cite{HudMoo-75}.

The right-hand side of~\eqref{eq:up bound mixed} is a Hartree-like functional for the mixed state $\gamma$. If $\beta = 0$ we directly get the upper bound corresponding to~\eqref{eq:con ener mixed}. If $\beta >0$ we minimize in $\gamma$, then pass to the limit $N\to \infty$. It is again an exercise on mean-field functionals to prove that this gives 
\begin{equation}\label{eq:con ener mixed up}
 \limsup_{N\to \infty} \frac{E(N,\eps)}{N} \leq \Emix_\eps. 
\end{equation}
To obtain a corresponding lower bound, we use again that, as minimizer in~\eqref{eq:N body ener mixed} we may use a mixed state $\Gamma_N$ satisfying~\eqref{eq:boltzons}, for $H_{N,\eps}$ commutes with all the unitaries exchanging particle labels. Then we make \emph{three simple observations} concerning our previous proof of energy convergence.

\textbf{First}, in the proof of Lemma~\ref{lem:Levy} (\index{L\'evy-Leblond's trick}{{L\'evy-Leblond's trick}}), the bosonic symmetry assumption was not used. We can work just as well with a (mixed) minimizer for~\eqref{eq:N body ener mixed} satisfying~\eqref{eq:boltzons} and obtain 
\begin{equation}\label{eq:no HO pre}
E(N,\eps) \geq \inf_{\bZ_M \in \R^{dM}} \left( E_\eps(\bZ_M) + \frac{2(2M-1)}{M-1}\sum_{1\leq k < \ell \leq M} w_2 (\bz_k-\bz_\ell) \right) 
\end{equation}
where $E_\eps(\bZ_M)$ is the ground state energy \emph{without bosonic symmetry} of~\eqref{eq:LevLebHam} (with $V$ replaced by $V+\eps B$). We seek a lower bound to this quantity.

\textbf{Second}, the use of the \index{Hoffmann-Ostenhof$\,^2$ inequality}{Hoffmann-Ostenhof$\,^2$ inequality} Lemma~\ref{lem:HO ineq} was somewhat superfluous in the first part of the proof. Namely, if we return to~\eqref{eq:use HO} we can simply write, instead of using~\eqref{eq:HofOst}, that    
\begin{equation}\label{eq:no HO}
 \tr\left( \left(\sum_{j=1} ^M - \Delta_{\bx_j} + V (\bx_j)+\eps B_{\bx_j} \right) \Gamma_M \right) = \tr \left( \left(- \Delta + V +\eps B \right) \Gamma_M^{(1)} \right)
\end{equation}
for any $M-$body state, where $\Gamma_M ^{(1)}$ is the associated one-body density matrix of $\Gamma_M$, defined as in~\eqref{eq:red mat dens}. This allows to bound the kinetic energy terms of~\eqref{eq:no HO pre}. 

\textbf{Third} we can bound the interaction terms in~\eqref{eq:no HO pre} exactly as discussed above. We pick $\Phi_M$ a minimizer for $E_\eps(\bZ_M)$, $\Gamma_M = |\Phi_M\rangle \langle \Phi_M|$ and apply\index{Onsager's inequality}{Onsager's inequality} Lemma~\ref{lem:Onsager} as above, first with $\rho = \rho_M ^{(1)} (\bx) = \Gamma_M ^{(1)} (\bx;\bx)$ (for the $w_1$ part of the interaction) and then with $\rho = (M-1)M^{-1}\rho_M ^{(1)} (\bx)$ (for the $w_2$ part of the interaction). Inserting in~\eqref{eq:no HO pre}, combining with~\eqref{eq:no HO} and using the variational principle yields 
$$ \inf_{\bZ_M \in \R^{dM}} \left( E_\eps(\bZ_M) + \frac{2(2M-1)}{M-1}\sum_{1\leq k < \ell \leq M} w_2 (\bz_k-\bz_\ell) \right) \geq N \Emix_\eps - C N ^{d\beta - 1} + o(1).$$
Hence~\eqref{eq:con ener mixed} is proved, for any $\eps$ small enough. This implies the desired convergence of density matrices, as we now explain. Observe that the functions on both sides of~\eqref{eq:con ener mixed} are concave in $\eps$, as infima of linear functions of $\eps$. It follows that the convergence~\eqref{eq:con ener mixed} implies (this is often referred to as Griffith's lemma)
\begin{equation}\label{eq:con ener der mixed}
 \frac{\partial_\eps E(N,\eps)}{N} \underset{N\to \infty}{\to} \partial_\eps \Emix_\eps. 
\end{equation}
We claim that the left-hand side evaluated at $ \eps = 0$ is  
\begin{equation}\label{eq:FH 1}
 \left(\frac{\partial_\eps E(N,\eps)}{N}\right) _{|\eps = 0} = N ^{-1} \tr B \gamma_N ^{(1)} 
\end{equation}
with $\Gamma_N^{(1)}$ the reduced density matrix of the unique (as per Theorem~\ref{thm:GS schro}) bosonic ground state, while the right-hand side is 
\begin{equation}\label{eq:FH 2}
 \left(\partial_\eps \Emix_\eps \right) _{\eps = 0} = \langle \uMF | B | \uMF \rangle 
\end{equation}
with $\uMF$ the unique mean-field minimizer. This is a \index{Feynman-Hellmann principle}{Feynman-Hellmann argument} for which we skip some details, see Lemma~\ref{lem:FH}. The main thing we have to verify to obtain~\eqref{eq:FH 2} is that $\Emix_0$ is uniquely minimized by $|\uMF \rangle \langle \uMF |$ (the corresponding ingredient for~\eqref{eq:FH 1} is directly given by Theorem~\ref{thm:GS schro}). But a minimizer $\gamma_0$ for $\cEmix$ must also minimize the linearized 
$$ \sigma \mapsto \tr\left( \left( h  + w \star \rho_{0} \right) \sigma \right)  $$ 
where $w$ is the final potential (hence, a Dirac delta if $\beta >0$) and $\rho_{0} (\bx) = \gamma_0 (\bx;\bx)$. Thus the range of $\gamma_0$ lies within the ground eigenspace of the Schr\"odinger operator $h  + w \star \rho_{0} $. This is a nonlinear condition for $\gamma_0$, but all we need to know is that this mean-field operator has a unique ground state, as per Theorem~\ref{thm:GS schro} (here we use that $h  + w \star \rho_{0}$ is \index{Positivity improving}{positivity improving} in the sense of Assumption~\ref{asum:pos pre}). Hence $\gamma_0$ must be rank one, a pure state $|u_0\rangle \langle u_0|$. As noted previously its energy then boils down to the mean-field/NLS energy, which $u_0$ must minimize and thus be equal to $\uMF$. 

At this stage we have that~\eqref{eq:FH 1} converges to~\eqref{eq:FH 2} for any bounded operator $B$. This gives the strong trace-class convergence (first weak-$\star$ convergence, then preservation of the trace norm, hence strong convergence~\cite[Addendum~H]{Simon-79}) claimed in~\eqref{eq:conv states pos pre} for $k=1$. To obtain the corresponding statement at $k>1$, denote $\ada,a$ the \index{Creation and annihilation operators}{creation/annihilation operators} associated to $\uMF$ in Section~\ref{sec:second quant}. We have 
$$
\left\langle \Psi_N | \left( \ada a \right) ^k | \Psi_N \right\rangle \geq  \left( \left\langle \Psi_N | \ada a  | \Psi_N \right\rangle \right) ^k 
$$
by Jensen's inequality. The convergence of the \index{Reduced density matrix}{first density matrix}, together with~\eqref{eq:dens mat CCR} implies that the right-hand side, divided by ${N \choose k},$ converges to $1$. On the other hand, using the CCR~\eqref{eq:CCR} repeatedly and~\eqref{eq:dens mat CCR} again, we have   
$$ { N \choose k } ^{-1} \left\langle \Psi_N | \left( \ada a \right) ^k | \Psi_N \right\rangle = { N \choose k } ^{-1} \left\langle (\uMF) ^{\otimes k} |\Gamma_N ^{(k)}| (\uMF) ^{\otimes k} \right\rangle + o (1).$$
Hence, in operator norm, 
$${ N \choose k } ^{-1} \Gamma_N ^{(k)} \to  | (\uMF) ^{\otimes k} \rangle \langle (\uMF) ^{\otimes k} |,$$
but the right-hand side being rank one, it is not difficult to see that the convergence must actually hold in trace-class norm.
\end{proof}

\subsection{Remarks}

There are a couple of remarks to be made about the proof strategy detailed in the two previous sections:

\medskip

\noindent\textbf{1.} Lemma~\ref{lem:Onsager} is stated with smooth interactions in mind. As we saw, it is still useful when applied to potentials scaled with $N$ as in~\eqref{eq:scaled potential}, where singularities appear in the limit of large $N$. Similarly, if one is instead interested in a fixed singularity ($\beta = 0$ but a singular potential in~\eqref{eq:scaled potential}, e.g. the Coulomb/Newtonian one), there is not much difficulty in adapting the strategy: one may truncate the potential around the singularities. The estimates being quantitative, one then passes to the vanishing truncation limit at the same time as the mean-field limit. 

\medskip

\noindent\textbf{2.} It is conceivable that one may obtain convergence of the first reduced density matrix (in the form of~\eqref{eq:conv states}) from~\eqref{eq:con ener der mixed} even if one does not assume that the mean-field minimizer is unique. This should use a bit of non-trivial convex analysis but should be doable, in view of a similar approach discussed in~\cite{LieSei-06}. We do not pursue this for lack of space and motivation.

\medskip

\noindent\textbf{3.} Obtaining~\eqref{eq:conv states} for $k>1$ would however be highly non-trivial without using the \index{Quantum de Finetti theorem}{quantum de Finetti theorem} (see below). This is because one would need the equivalent of~\eqref{eq:con ener der mixed}, but with $B$ replaced by a general $k$-body operator. Here one would no longer be able to use the cornerstones of the proof: \index{Onsager's inequality}{Onsager's inequality} and \index{L\'evy-Leblond's trick}{L\'evy-Leblond's trick}, Lemmas~\ref{lem:Onsager} and~\ref{lem:Levy}. For the same reason, it does not seem obvious to take into account three-body interactions within this framework.

\medskip

\noindent\textbf{4.} Perhaps the most severe limitation of the method is that it deeply relies on the fact that $\bA \equiv 0$. Superficially this is because we use \index{Hoffmann-Ostenhof$\,^2$ inequality}{Hoffmann-Ostenhof$\,^2$ inequality} Lemma~\ref{lem:HO ineq}, but that can be bypassed, as we saw. More fundamentally, the method is too rough in that it does not really distinguish the bosonic from the ``boltzonic'' problem, and that it relies on uniqueness properties of ground states (Theorem~\ref{thm:GS schro}).   
 
 \medskip

\noindent\textbf{5.} Another drawback (perhaps this is the most severe actually) is that the method (more specifically, Lemma~\ref{lem:Onsager}) seems difficult to improve to deal with dilute ($\beta < 1/d$) limits.  

\medskip

The last three points of the above list may serve as motivation for the rest of the text: it seems that we have hit the limitations of methods based on structural properties of the Hamiltonian. To go further it is desirable to take into account the structure of the bosonic space it acts on. Most of the rest of this review is concerned with exploiting such properties.

\newpage

\section{Applying the classical de Finetti theorem}\label{sec:class deF}

A first approach exploiting the structure of the state-space relies on the \index{Classical de Finetti theorem}{classical de Finetti theorem} (also known as the \index{Hewitt-Savage theorem}{Hewitt-Savage theorem}). It is perhaps less natural, and certainly less powerful, than approaches we will discuss later, in particular those based on the quantum de Finetti theorem. 

The main idea in this section is to reformulate the bosonic many-body ground-state problem as a classical statistical mechanics ensemble. Namely, we want to minimize the classical part (external potential plus interaction) of the energy jointly with (minus) a sort of \index{Entropy}{entropy}, accounting for the quantum kinetic energy. This works again only under Assumption~\ref{asum:pos pre} (no magnetic field), and since we see the problem as an effective classical one, this only allows to access information of a classical nature: convergence of reduced densities\footnote{Probability densities in position space only.} instead of the full reduced density matrices. The method originates in~\cite{Kiessling-12}. A review of a variant of it is in~\cite[Appendix~A]{Rougerie-spartacus,Rougerie-LMU}. 

We shall sketch a proof of the following simplification of Theorem~\ref{thm:main}:

\begin{theorem}[\textbf{Derivation of Hartree's theory, simplified statement}]\label{thm:class quant confined}\mbox{}\\
We make Assumptions~\ref{asum:pots} and~\ref{asum:pos pre} (i.e. $\bA\equiv 0$ in~\eqref{eq:intro Schro op bis}). Set $\beta = 0$ and \index{Hartree energy functional}{let} 
$$
\cEh[u] = \int_{\R^d} \left| \nabla u \right| ^2 + V |u| ^2 + \frac{1}{2} \iint_{\R^d \times \R ^d} |u(\bx)| ^2 w (\bx - \by) |u (\by)| ^2 \mathrm{d}\bx \mathrm{d}\by
$$
with minimum (under unit $L^2$ mass constraint) $\Eh$ and minimizer $\uh$.

We have the convergence of the ground-state energy (lowest eigenvalue of~\eqref{eq:intro Schro op bis}):
$$\lim_{N\to\infty}\frac{E(N)}{N}=\Eh.$$
Let $\Psi_N \geq 0$ be a ground state of $H_N$ and
\begin{equation}\label{eq:red densities}
 \rho_N ^{(n)} (\bx_1,\ldots,\bx_n) := \int_{\R ^{d(N-n)}} \left|\Psi_N\left( \bx_1,\ldots,\bx_N \right)\right| ^2 \mathrm{d}\bx_{n+1}\ldots \mathrm{d}\bx_N  
\end{equation}
be its $n$-body \index{Marginal density}{marginal density}. There exists a probability measure $\mu$ on $\cM_{\rm H}$, the set of minimizers of $\cEh$ (modulo a phase), such that, along a subsequence
\begin{equation}\label{eq:app A def fort result}
\lim_{N\to\infty} \rho^{(n)}_{N}= \int_{\cM_{\rm H}} \left|u ^{\otimes n}\right| ^2 \mathrm{d}\mu(u) \mbox{ for all } n\in \N,
\end{equation}
strongly in $L ^1 \left(\R ^{dn}\right).$ In particular, if $\Eh$ has a unique minimizer (modulo a constant phase), then for the whole sequence
\begin{equation}
\lim_{N\to\infty} \rho^{(n)}_{N} = \left|\left(\uh\right)^{\otimes n}\right| ^2 \mbox{ for all } n\in \N.
\label{eq:app A BEC-confined} 
\end{equation}
\end{theorem}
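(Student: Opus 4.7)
The plan is to recast the bosonic ground-state problem as a variational problem for symmetric probability measures on $\R^{dN}$, and then apply the Hewitt-Savage (classical de Finetti) theorem to the hierarchy of marginals.

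\textbf{Setup and upper bound.} Under Assumption~\ref{asum:pos pre}, Theorem~\ref{thm:bos min} (via the positivity-improving property) lets us choose a ground state $\Psi_N \geq 0$, so that $\mu_N := |\Psi_N|^2$ is a symmetric Borel probability measure on $\R^{dN}$. With the mean-field scaling $w_N = w/(N-1)$ corresponding to $\beta = 0$, and denoting $\mu_N^{(k)}$ the $k$-th marginal, the per-particle energy reads
\begin{equation*}
\frac{E(N)}{N} = \frac{1}{N}\int_{\R^{dN}} |\nabla \Psi_N|^2 + \int_{\R^d} V \, d\mu_N^{(1)} + \frac{1}{2}\iint_{\R^{2d}} w(\bx-\by) \, d\mu_N^{(2)}(\bx,\by).
\end{equation*}
The upper bound $\limsup_{N\to\infty} E(N)/N \leq \Eh$ is immediate from the trial state $\Psi_N = \uh^{\otimes N}$, since the combinatorial prefactor ${N\choose 2}/(N-1) = N/2$ reproduces $N\cEh[\uh]$ exactly.

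\textbf{Compactness and de Finetti extraction.} The trapping assumption~\eqref{eq:trapping bis} together with the upper bound on $E(N)/N$ and Hartree-stability (to control interactions from below) guarantees tightness of $\mu_N^{(1)}$, hence of every $\mu_N^{(k)}$ by symmetry. A diagonal extraction yields a subsequence along which $\mu_N^{(k)} \wto \mu_\infty^{(k)}$ weakly for every $k \in \N$. The family $(\mu_\infty^{(k)})_k$ is consistent and symmetric, so the classical de Finetti / Hewitt-Savage theorem produces a unique Borel probability measure $\mu$ on $\cP(\R^d)$ with
\begin{equation*}
\mu_\infty^{(k)} = \int_{\cP(\R^d)} \nu^{\otimes k} \, d\mu(\nu), \qquad \forall\, k \geq 1.
\end{equation*}
Weak convergence (plus mild truncation arguments for the unbounded parts of $V,w$) passes the one- and two-body terms to the limit:
\begin{equation*}
\int V \, d\mu_N^{(1)} \to \int \!\!\int V \, d\nu \, d\mu(\nu), \qquad \frac{1}{2}\iint w\, d\mu_N^{(2)} \to \int \frac{1}{2}\iint w\, d(\nu\otimes\nu)\, d\mu(\nu).
\end{equation*}

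\textbf{Kinetic energy --- the main obstacle.} A Cauchy-Schwarz argument on the partial derivatives of $\mu_N$ (the same idea underlying Lemma~\ref{lem:HO ineq}) gives, for every fixed $k \leq N$,
\begin{equation*}
\frac{1}{N}\int |\nabla \Psi_N|^2 \geq \frac{1}{k}\int |\nabla \sqrt{\mu_N^{(k)}}|^2,
\end{equation*}
and weak lower semicontinuity of the Fisher information (i.e.\ weak-$H^1$ lower semicontinuity of $\|\nabla\sqrt{\mu_N^{(k)}}\|^2$) yields, at fixed $k$,
\begin{equation*}
\liminf_{N\to\infty} \frac{1}{N}\int |\nabla \Psi_N|^2 \geq \frac{1}{k}\int |\nabla \sqrt{\mu_\infty^{(k)}}|^2.
\end{equation*}
The delicate step is to push $k \to \infty$ and identify $\lim_{k\to\infty}\frac{1}{k}\int|\nabla\sqrt{\mu_\infty^{(k)}}|^2$ with $\int\!\int|\nabla\sqrt{\nu}|^2 \, d\mu(\nu)$. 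Convexity of the Fisher information only delivers the reverse inequality, so one must exploit the product structure $\mu_\infty^{(k)} = \int \nu^{\otimes k} d\mu(\nu)$: heuristically, distinct product laws $\nu^{\otimes k}$ become nearly mutually singular for large $k$ (concentration on type classes), rendering the Fisher information asymptotically affine in $\mu$. A rigorous implementation can proceed via the de Bruijn identity to transfer the problem to differential entropy (which is strictly additive on tensor products) or through a covering argument on $\cP(\R^d)$ that localizes around each representing $\nu$.

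\textbf{Conclusion and convergence of densities.} Assembling the three bounds,
\begin{equation*}
\liminf_{N\to\infty} \frac{E(N)}{N} \geq \int_{\cP(\R^d)} \cEh[\sqrt{\nu}] \, d\mu(\nu) \geq \Eh,
\end{equation*}
which, combined with the upper bound, forces equality throughout and concentrates $\mu$ on the set $\{\nu = |u|^2 : u \in \Mh\}$. This yields weak convergence of $\rho^{(n)}_N = \mu^{(n)}_N$ to $\int |u^{\otimes n}|^2 \, d\mu(u)$ (after identifying $\nu$ with $|u|^2$), and the convergence upgrades to strong $L^1$ using the uniform $H^1$-bound on $\sqrt{\mu_N^{(k)}}$ (from the kinetic control) together with the tightness provided by~\eqref{eq:trapping bis} --- giving equi-integrability and hence strong convergence by Vitali. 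Uniqueness of the minimizer collapses $\mu$ to $\delta_{|\uh|^2}$, which is~\eqref{eq:app A BEC-confined} for the whole sequence.
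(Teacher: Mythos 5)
Your proposal follows essentially the same route as the paper: recast the ground-state problem as a variational problem for the exchangeable probability density $|\Psi_N|^2$, pass to the limit in the marginal densities, apply the classical de Finetti / Hewitt--Savage theorem, and confront the central difficulty---proving that the asymptotic per-particle Fisher information is \emph{affine} on exchangeable laws---via the de Bruijn identity or the near-orthogonality of high tensor powers. This is precisely the content of the paper's Lemma~\ref{lem:app A kin lim}, which, like your sketch, defers the details to the literature~\cite{Kiessling-12,HauMis-14}, so the two arguments agree both in architecture and in the level of rigor reached at the hard step.
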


The proof uses a compactness argument, whence the restriction to $\beta = 0$. The main idea is to formulate the problem only in terms of the probability density in position space $|\Psi_N|^2$. The quantum kinetic energy is in fact identical to the \index{Fisher information}{Fisher information} of the probability measure $|\Psi_N| ^2$. We thus want to minimize jointly the classical part of the energy and the Fisher information. Replacing the latter by minus the classical entropy of $|\Psi_N| ^2$ turns the problem into the classical \index{Boltzmann-Gibbs ensemble}{Boltzmann-Gibbs ensemble}, whose mean-field limit has been tackled in~\cite{MesSpo-82,CagLioMarPul-92,Kiessling-89,Kiessling-93,KieSpo-99}, based on the classical de Finetti theorem. The main idea of this section, originating in~\cite{Kiessling-12}, is to adapt the same strategy to the problem with the Fisher information replacing the classical entropy.  

\subsection{Classical reformulation}

We have already observed in Section~\ref{sec:non trap} that Assumption~\ref{asum:pos pre} implies
$$ \cE_N [\Psi_N] \geq \cE_N \left[| \Psi_N |\right].$$
The ground state energy can thus be calculated using only positive test functions
\begin{equation}\label{eq:app A formul class pre}
E(N) = \inf \left\{ \cE_N [\Psi_N], \Psi_N \in L_{\sym} ^2 (\R ^{dN})\right\} = \inf \left\{ \E_N [\Psi_N], \Psi_N \in L_{\sym} ^2 (\R ^{dN}), \Psi_N \geq 0 \right\}.
\end{equation}
In this section we actually forget all about bosonic statistics (we have observed that this is legitimate in Theorem~\ref{thm:bos min}) and write the infimum as 
\begin{equation}\label{eq:app A formul class}
E(N) =  \inf \left\{ \cE_N \left[\sqrt{\mubf_N}\right], \mubf_N \in \cP_{\sym}  (\R ^{dN})\right\}
\end{equation}
where $\mubf_N$ is a probability measure on $\R^{dN}$. It plays the role of $|\Psi_N| ^2$ so we retain the information that it is symmetric under label exchanges : 
\begin{equation}\label{eq:mubf sym}
\mubf_N (\bx_{\sigma(1)},\ldots,\bx_{\sigma(N)}) =  \mubf_N (\bx_{1},\ldots,\bx_{N})
\end{equation}
for all permutations $\sigma$. To further strengthen the analogy with \index{Classical statistical mechanics}{classical statistical mechanics} we rewrite the many-body energy as 
\begin{multline}\label{eq:ener class}
\cF_N [\mubf_N] := \int_{\R^{dN}} \left( \sum_{j=1} ^N V (\bx_j) + \frac{1}{N-1} \sum_{1\leq j < k \leq N} w (\bx_j-\bx_k) \right) \mathrm{d}\mubf_N (\bx_1,\ldots,\bx_N)\\ + \frac{1}{4} \int_{\R^{dN}} \sum_{j=1} ^N  \left| \nabla_j \log \mubf_N \right| ^2 \mathrm{d}\mubf_N
\end{multline}
where the $\cF$ stands for ``free-energy''. Indeed, we want to see this not as the quantum energy of $N$ bosons, but as the free-energy of classical particles whose positions are probabilistic. The first term is, as discussed previously, just the energy due to the potentials of the particles distributed according to $\mubf_N$. We want to see the second as a kind of entropy that prevents the latter to be a Dirac delta, i.e. forces some probabilistic uncertainty. This is why we wrote it as the \index{Fisher information}{\emph{Fisher information}}
\begin{equation}\label{eq:Fisher}
 \cI [\mubf_N] := \frac{1}{4} \sum_{j=1} ^N \int_{\R^{dN}}  \left| \nabla_j \log \mubf_N \right| ^2 \mathrm{d}\mubf_N = \sum_{j=1} ^N \int_{\R^{dN}}  \left| \nabla_j \sqrt{\mubf_N} \right| ^2. 
\end{equation}
If we replace this term by minus the temperature $T$ times the \index{Entropy}{entropy}
$$ \frac{1}{4} \int_{\R^{dN}} \sum_{j=1} ^N  \left| \nabla_j \log \mubf_N \right| ^2 \mathrm{d}\mubf_N \rightsquigarrow T \int_{\R^{dN}} \log \mubf_N \mathrm{d}\mubf_N $$
then~\eqref{eq:ener class} turns into the bona fide \index{Boltzmann-Gibbs ensemble}{Boltzmann-Gibbs} free energy. To study the large $N$ limit thereof, the strategy of~\cite{MesSpo-82,CagLioMarPul-92,Kiessling-89,Kiessling-93,KieSpo-99} is to 
\begin{enumerate}
 \item Pass to the $N\to\infty$ limit in the marginals of $\mubf_N$ to obtain a certain limit problem in terms of probability measures of \emph{infinitely many} variables, symmetric under their exchange. The de \index{Classical de Finetti theorem}{Finetti-Hewitt-Savage theorem} asserts that \emph{all} the latter are convex combinations of tensor powers, as in the right-hand side of~\eqref{eq:app A def fort result}.
 \item Observe that the limit problem is \emph{linear} in (the limit of) $\mubf_N$. Hence the infimum is attained at the extremal points of the (convex) variational set. The de Finetti-Hewitt-Savage theorem precisely says that the latter are tensor powers, so that we obtain the mean-field (free-) energy as the infimum of the limit problem.
\end{enumerate}

Note that the problem at fixed $N$ is NOT linear in $\mubf_N=|\Psi_N|^2$ but rather in $|\Psi_N \rangle \langle \Psi_N|$. We discuss this in the next section, but before that we state our main tool, which gives all \index{Hewitt-Savage theorem}{information} needed on the limits of symmetric probability measures of many variables.

\begin{theorem}[\textbf{Hewitt-Savage in large $N$ limit}]\mbox{}\\ \label{thm:Hewitt-Savage}
Let $(\mubf_N)_N$ be a sequence of symmetric probability measure over $\R^{dN}$. Assume that the marginals $\mubf_N ^{(k)}$ are tight:
\begin{equation} \label{eq:tightness_assumption}
\limsup_{R\to\infty}\sup_{N\in\mathbb{N}}\left(1-\mubf_N^{(k)}\left(B(0,R)^k\right) \right) =0.
\end{equation}
Extract a subsequence such that 
$\mubf_N ^{(1)} \wto \mu \in \cP (\R^d)$ 
as measures. There exists a unique probability measure $P \in \cP (\cP (\R^d))$ such that for any fixed $k\in\mathbb{N}$, and along the previously extracted subsequence,
\begin{equation} \label{eq:HS}
\mubf_N^{(k)}\to \mubf ^{(k)} = \int_{\mathcal{P}(\mathbb{R}^d)}\mu^{\otimes k}dP(\mu).
\end{equation}
in total variation norm.
\end{theorem}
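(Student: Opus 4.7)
The plan is to reduce the statement to the quantitative (finite $N$) version of the classical de Finetti theorem due to Diaconis and Freedman, combined with a compactness argument on the space $\cP(\cP(\R^d))$. For each $N$, I would introduce the empirical measure $\mu_{\bX_N} := N^{-1}\sum_{j=1}^N \delta_{\bx_j}$ and let $P_N \in \cP(\cP(\R^d))$ be its law under $\mubf_N$. The Diaconis--Freedman quantitative bound gives
$$
\left\| \mubf_N^{(k)} - \int_{\cP(\R^d)} \mu^{\otimes k} \, dP_N(\mu) \right\|_{TV} \leq \frac{k(k-1)}{N}
$$
for every fixed $k \leq N$; this is the only genuinely ``finite $N$'' input, and it already produces an approximate factorization of the marginals in total variation norm. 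The exchangeability~\eqref{eq:mubf sym} of $\mubf_N$ is what makes $\mu_{\bX_N}$ a meaningful object and is crucially used in the proof of this bound.

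Next I would extract a weak limit of $(P_N)_N$ in $\cP(\cP(\R^d))$, equipped with the topology inherited from the weak topology on $\cP(\R^d)$. The assumption~\eqref{eq:tightness_assumption} at $k=1$ says that $\mubf_N^{(1)}$ is tight in $\cP(\R^d)$; since $\mubf_N^{(1)}$ is the barycenter of $P_N$, a direct Prohorov argument lifts this to tightness of $P_N$ in $\cP(\cP(\R^d))$. Up to passing to a subsequence, compatible with the extraction in the statement, $P_N \to P$ weakly for some $P \in \cP(\cP(\R^d))$, with $\int \mu \, dP(\mu) = \mu$ (the given weak limit of $\mubf_N^{(1)}$). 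The map $Q \mapsto \int \mu^{\otimes k} \, dQ(\mu)$ being weakly continuous on $\cP(\cP(\R^d))$, this yields
$$
\int \mu^{\otimes k} \, dP_N(\mu) \;\longrightarrow\; \int \mu^{\otimes k} \, dP(\mu) =: \mubf^{(k)}
$$
in the narrow sense. Uniqueness of $P$ then follows by observing that the limiting family $(\mubf^{(k)})_k$ is consistent and exchangeable, hence extends to an exchangeable probability on $\R^{d\mathbb N}$, whose de Finetti measure is unique by the classical Hewitt--Savage theorem.

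The main obstacle in my view is actually the upgrade of the convergence to total variation norm, as claimed in the statement. The Diaconis--Freedman step controls $\mubf_N^{(k)}$ versus its \emph{own} empirical de Finetti mixture $\int \mu^{\otimes k} \, dP_N(\mu)$ in TV, but combining this with $P_N \to P$ only yields weak convergence of $\int \mu^{\otimes k} \, dP_N(\mu)$ to $\int \mu^{\otimes k} \, dP(\mu)$ a priori. To close this gap one must exploit further information: either tightness of the marginals at all orders $k$ (given by~\eqref{eq:tightness_assumption}) to produce an equicontinuity-type statement, or one interprets the total variation convergence as the sum of a TV-small correction from Diaconis--Freedman and a weak convergence part, which is the natural reading in the applications below. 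Either way, the uniqueness and structural statement~\eqref{eq:HS} are clean consequences of the combination of the two classical tools recalled above.
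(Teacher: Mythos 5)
Your argument follows the second of the two proof strategies the paper itself outlines in the comments after the theorem: you build the empirical measure $\mu_{\bX_N} = N^{-1}\sum_j \delta_{\bx_j}$, take its law $P_N$ under $\mubf_N$, invoke the Diaconis--Freedman quantitative bound, and pass to the limit in $P_N$. The lifting of tightness from $\mubf_N^{(1)}$ (which is exactly the barycenter of $P_N$) to tightness of $(P_N)$ in $\cP(\cP(\R^d))$ via a Markov/Prokhorov argument is correct, as is the observation that $Q \mapsto \int \mu^{\otimes k}\,dQ$ is continuous for the weak topology on $\cP(\cP(\R^d))$, so the weak convergence part of the claim is established. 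The paper's alternative route (limit in the hierarchy $(\mubf^{(k)})_k$ followed by Hewitt--Savage/Choquet) is non-constructive; yours is the quantitative one, which is what gets used later when error estimates are needed (Theorem~\ref{thm:deF semi} and its relatives).

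The obstacle you flag at the end is real, and your instinct that the total-variation claim must be read charitably is the right conclusion. In fact the statement as printed is literally too strong. Consider $d=1$ and $\mubf_N = \mathcal{N}(0,1/N)^{\otimes N}$: this is symmetric, the tightness assumption~\eqref{eq:tightness_assumption} holds, $\mubf_N^{(1)}\wto\delta_0$, and the de Finetti measure is $P=\delta_{\delta_0}$ so that $\mubf^{(k)}=\delta_0^{\otimes k}$; yet $\|\mubf_N^{(k)}-\delta_0^{\otimes k}\|_{TV}=2$ for all $N$ because one measure is absolutely continuous and the other is a point mass. So no proof can close that gap --- the conclusion should be read as weak-$\star$ convergence of the full marginal, with the total-variation control holding only between $\mubf_N^{(k)}$ and its \emph{own} Diaconis--Freedman mixture $\int\mu^{\otimes k}\,dP_N$, exactly as your decomposition exhibits. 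The paper's own comments after the theorem never argue for TV convergence either; they sketch the two routes and stop at the weak-$\star$ level, which suffices for the applications (e.g.\ Lemma~\ref{lem:app A lim}). In short: your proof is correct, matches the paper's intended (constructive) route, and correctly identifies the overreach in the stated topology --- you should replace ``in total variation norm'' by weak-$\star$ convergence of measures, plus the additional TV-small remainder coming from the Diaconis--Freedman bound.
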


\begin{proof}[Comments]
The marginals are the \index{Marginal density}{reduced densities}, defined as in~\eqref{eq:red densities}. When applying the above to trapped systems in the sequel, the tightness of the marginals will be essentially for free. There are two possible proof strategies for Theorem~\ref{thm:Hewitt-Savage}, see~\cite[Chapter~2]{Rougerie-spartacus,Rougerie-LMU} and~\cite{Mischler-11} for more details:
\begin{itemize}
 \item Pass to the limit in all the marginals to obtain a hierarchy $(\mubf^{(k)})_{k\in \N}$. Since obviously 
$$ \left(\mubf_N ^{(k+1)}\right)^{(k)} = \mubf_N ^{(k)}$$ 
one can deduce that the limit hierarchy is consistent in the sense that  
$$ \left(\mubf ^{(k+1)}\right)^{(k)} = \mubf ^{(k)}$$ 
for all $k$. Hence it defines a probability measure $\mubf \in \cP (\R ^{d\N})$ over sequences in $\R ^d$. This probability over infinitely many variables is still symmetric under variable exchanges. The symmetric probability measures over $\R ^{d\N}$ clearly form a convex set. Less clear is the crucial fact that the extremal points thereof are exactly the factorized probability measures. The proof~\cite{HewSav-55} is actually by contradiction. Accepting this fact, the existence of the measure $P$ in~\eqref{eq:HS} follows from the Choquet-Krein-Milman theorem~\cite{Simon-convexity}. The uniqueness is not hard to show.
\item Construct a measure $P_N \in \cP (\cP (\R ^{d}))$ at each fixed $N$ approximating the marginals in the manner 
\begin{equation}\label{eq:DF}
\mubf_N^{(k)} \approx \int_{\mathcal{P}(\mathbb{R}^d)}\mu^{\otimes k}dP_N (\mu) 
\end{equation}
and pass to the limit in $P_N$. The construction from~\cite{DiaFre-80} is very natural: since $\mubf_N$ does not see the order of the points in $\R^d$, it is actually a measure over \index{Empirical measure}{empirical measures} of the form 
$$ \mathrm{Emp} (\bX_N) = N^{-1} \sum_{j=1} ^N \delta_{\bx_j}, \quad \bX_N = (\bx_1,\ldots,\bx_N) \in \R^{dN}.$$
A simple but clever computation reveals that, if one defines $P_N$ to be 
$$ P_N = \int_{\R^{N}} \delta_{\rho =  \mathrm{Emp} (\bX_N)} \mathrm{d}\mubf_N (\bX_N)$$
then~\eqref{eq:DF} holds, and one has thus explicitly constructed an approximation of the target measure $P$.
\end{itemize}
\end{proof}

\subsection{Limit problem and use of the classical de Finetti theorem}

We start from~\eqref{eq:ener class} and pass to the limit $N\to \infty$, to obtain a problem posed directly in terms of a symmetric probability $\mubf \in \cP (\R^{dN})$ over infinitely many variables:
\begin{multline}\label{eq:app A prob lim}
\cF [\mubf]:= \limsup_{n\to \infty} \frac{1}{n} \cI [\mubf ^{(n)}] 
\\ + \int_{\R ^d} V(\bx) \mathrm{d}\mubf ^{(1)}(x) + \frac{1}{2}\iint_{\R ^d\times \R ^d } w(x-y) \mathrm{d}\mubf ^{(2)}(x,y), 
\end{multline}
where $\cI [\mubf ^{(n)}]$ is the \index{Fisher information}{Fisher information}~\eqref{eq:Fisher} and $\mubf^{(n)}$ stands for the $n$-th marginal of $\mubf$ (which can in fact be identified with the sequence $(\mubf ^{(n)})_{n\in \N}$, with $\mubf^{(n)} \in \cP_{\rm sym} (\R^{dn})$).

We have the following lemma:

\begin{lemma}[\textbf{Passing to the limit}]\label{lem:app A lim}\mbox{}\\
Let $\mubf_N \in \cP_{\sym}(\R ^{dN})$ achieve the infimum in~\eqref{eq:app A formul class}. Along a subsequence we have
$$ \mubf_N ^{(n)} \wto_* \mubf ^{(n)} \in \cP_{\rm sym} (\R ^{dn})$$
for all $n\in \N$, in the sense of measures. The sequence $\left( \mubf ^{(n)}\right)_{n\in \N}$ defines a probability measure $\mubf\in\cP_{\rm sym} (\R^{d\N})$ and we have
\begin{equation}\label{eq:app A lim inf}
\liminf_{N\to \infty} \frac{E(N)}{N} \geq \cF [\mubf]. 
\end{equation}
\end{lemma}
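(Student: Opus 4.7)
The plan is to first extract weakly-$*$ convergent marginals, then establish the energy lower bound by passing term-by-term through the limit. Three ingredients are needed: tightness (to get subsequential convergence), compatibility (so that the limits define a measure on the infinite product space), and lower semicontinuity of each energy contribution.

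First I would prove tightness of all marginals. The energy bound $\cF_N[\mubf_N] = N \EMF + O(N)$ combined with the trapping assumption $V(\bx) \to +\infty$ forces $\int V \,\mathrm{d}\mubf_N^{(1)}$ to remain bounded, hence $\mubf_N^{(1)}$ is tight. Symmetry of $\mubf_N$ then implies tightness of all $\mubf_N^{(n)}$ for $n \geq 1$. Extracting a diagonal subsequence along which $\mubf_N^{(n)} \wto \mubf^{(n)}$ for every $n$, the natural compatibility $\left(\mubf_N^{(n+1)}\right)^{(n)} = \mubf_N^{(n)}$ is preserved in the limit, so that the Kolmogorov extension theorem produces a unique $\mubf \in \cP_{\sym}(\R^{d\N})$ whose marginals are the $\mubf^{(n)}$.

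Next I would handle the potential energy and the interaction term. By lower semicontinuity under weak-$*$ convergence of measures (Fatou applied to the lsc, lower-bounded function $V$),
\begin{equation*}
\liminf_{N\to\infty} \int_{\R^d} V \,\mathrm{d}\mubf_N^{(1)} \geq \int_{\R^d} V \,\mathrm{d}\mubf^{(1)}.
\end{equation*}
For the interaction, the energy bound combined with the Hoffmann-Ostenhof$\,^2$ inequality (Lemma~\ref{lem:HO ineq}) controls $\|\nabla \sqrt{\mubf_N^{(1)}}\|_{L^2}$ uniformly in $N$. An analogous Fisher information bound on $\mubf_N^{(2)}$ then yields $H^1$ control on $\sqrt{\mubf_N^{(2)}}$, hence strong local convergence of $\mubf_N^{(2)}$ after compact Sobolev embedding. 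Since $w \in L^p + L^\infty$ with $w \to 0$ at infinity, this strong local convergence, combined with tightness, suffices to pass to the limit in $\int w(\bx-\by) \,\mathrm{d}\mubf_N^{(2)}(\bx,\by)$.

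The main obstacle is the Fisher information term, because we must compare $\frac{1}{N}\cI[\mubf_N]$ to the quantity $\limsup_{n\to\infty}\frac{1}{n}\cI[\mubf^{(n)}]$ appearing in $\cF[\mubf]$. The key input is the subadditivity/monotonicity of the Fisher information under marginalization of symmetric measures: for $n \leq N$,
\begin{equation*}
\frac{1}{n}\,\cI[\mubf_N^{(n)}] \;\leq\; \frac{1}{N}\,\cI[\mubf_N],
\end{equation*}
which follows from convexity of $\cI$ under the averaging implicit in taking marginals of a symmetric measure (this is the standard Carlen-type inequality). Combined with the well-known lower semicontinuity $\liminf_N \cI[\mubf_N^{(n)}] \geq \cI[\mubf^{(n)}]$ along weak convergence, this gives, at each fixed $n$,
\begin{equation*}
\liminf_{N\to\infty}\frac{1}{N}\cI[\mubf_N] \;\geq\; \frac{1}{n}\cI[\mubf^{(n)}],
\end{equation*}
and taking $\limsup_{n\to\infty}$ on the right produces the desired bound. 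Summing the three lower bounds yields $\liminf_N E(N)/N \geq \cF[\mubf]$, and the identification of the limit marginals with a probability measure on $\R^{d\N}$ (step one) sets up the stage for the subsequent application of the Hewitt-Savage Theorem~\ref{thm:Hewitt-Savage}.
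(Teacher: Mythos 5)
Your proof follows the same route as the paper's (tightness and Prokhorov for the marginals; passing to the limit directly in the potential terms; super-additivity of the Fisher information combined with its weak lower semicontinuity to handle the kinetic term), so the core argument is correct. Two small remarks. First, the inequality $\frac{1}{n}\cI[\mubf_N^{(n)}] \leq \frac{1}{N}\cI[\mubf_N]$ is what the paper (following Kiessling, Carlen, Hauray--Mischler) calls \emph{super}-additivity of the Fisher information; calling it ``subadditivity'' risks confusion with the entropy, where the inequality goes the other way — the content is the same. Second, invoking Lemma~\ref{lem:HO ineq} for the one-particle marginal is harmless but slightly circuitous: once you have written the energy purely in terms of $\mubf_N$ via~\eqref{eq:app A formul class}--\eqref{eq:ener class}, the quantum kinetic energy \emph{is} the Fisher information, and the $H^1$ control on $\sqrt{\mubf_N^{(1)}}$ and $\sqrt{\mubf_N^{(2)}}$ follows uniformly from the same super-additivity estimate you invoke later — no separate appeal to Hoffmann-Ostenhof$\,^2$ is needed. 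Your extra care on the interaction term (Sobolev compactness of $\sqrt{\mubf_N^{(2)}}$ to handle the $L^p$ part of $w$, tightness to handle the tails) is a level of detail the paper glosses over with ``passing to the limit in the potential terms is straightforward'', and it is indeed the right way to make this rigorous under Assumption~\ref{asum:pots}.
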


\begin{proof}[Comments]
Cf~\cite[Lemma~A.4]{Rougerie-spartacus,Rougerie-LMU}. Extracting convergent subsequences is straightforward, passing to the limit in the potential terms also is. Radon measures form the dual space of the continuous functions decaying at infinity, and we use the associated weak-$\star$ topology (i.e. weak convergence as measures). The limit is also a probability measure (i.e. does not lose mass) by a tightness argument (Prokhorov's theorem).  

To pass to the liminf in the Fisher information it is to recall that it is a strictly convex function of the density~\cite[Lemma~A.1]{LieSeiYng-00}. This allows~\cite{Rougerie-19b} to prove (see~\cite{Carlen-91b} or~\cite[Lemma~3.7]{HauMis-14} for other proofs) super-additivity of the Fisher information as used in~\cite{Kiessling-12}, in accord with the original approach of~\cite{MesSpo-82} of \index{Classical statistical mechanics}{classical statistical mechanics} equilibria (where one uses subadditivity of the entropy, cf~\cite[Section~3.2]{Rougerie-spartacus,Rougerie-LMU}). 
\end{proof}

Next we want to bring Theorem~\ref{thm:Hewitt-Savage} to bear on the limit problem~\eqref{eq:app A prob lim}. For this purpose one uses the next result, which has independent interest~\cite{HauMis-14,FouHauMis-14}, along with variants~\cite{Salem-19,Salem-19b,Rougerie-19b} dealing with fractional Fisher informations.

\begin{lemma}[\textbf{The mean Fisher information is linear}]\label{lem:app A kin lim}\mbox{}\\
The functional
$$ \mubf \mapsto \cF [\mubf]$$
defined in~\eqref{eq:app A prob lim} is affine on $\cP_{\rm sym} (\R ^{d\N})$. In particular the mean Fisher information
\begin{equation}\label{eq:mean Fisher}
\cI [\mubf]:= \limsup_{n\to \infty} \frac{1}{n} \cI \left[\mubf ^{(n)} \right] = \lim_{n\to \infty} \frac{1}{n} \cI \left[\mubf ^{(n)} \right]
\end{equation}
is affine.
\end{lemma}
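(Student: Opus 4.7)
The potential term $\int V\, d\mubf^{(1)}$ and the interaction term $\tfrac{1}{2}\iint w\, d\mubf^{(2)}$ are manifestly linear in $\mubf$, since marginalization $\mubf \mapsto \mubf^{(k)}$ is a linear operation on $\cP_{\rm sym}(\R^{d\N})$. The substance of the lemma therefore lies in showing the mean Fisher information
$$\cI[\mubf] = \limsup_{n\to\infty} \tfrac{1}{n} \cI\left[\mubf^{(n)}\right]$$
is in fact a genuine limit and is affine in $\mubf$. This is what I will address; the tactic is to identify the limit with an integral, manifestly linear in the Hewitt-Savage measure of $\mubf$.

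\textbf{Step 1: existence of the limit via superadditivity.} For any probability measure $\nu$ on $\R^{d_1}\times \R^{d_2}$ with marginals $\nu_1,\nu_2$, one has the \emph{superadditivity} of the Fisher information, $\cI[\nu] \geq \cI[\nu_1]+\cI[\nu_2]$ (this is the counterpart of the subadditivity of the entropy used in the classical framework, cf.\ the comments to Lemma~\ref{lem:app A lim}). Applied to $\mubf^{(n+m)}$, whose marginals on $\R^{dn}$ and $\R^{dm}$ are $\mubf^{(n)}$ and $\mubf^{(m)}$, this yields $\cI[\mubf^{(n+m)}]\geq \cI[\mubf^{(n)}]+\cI[\mubf^{(m)}]$. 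Hence $a_n := \cI[\mubf^{(n)}]$ is superadditive, and Fekete's lemma gives $\lim_n a_n/n = \sup_n a_n/n \in [0,+\infty]$.

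\textbf{Step 2: identification with a linear functional of the de Finetti measure.} By Theorem~\ref{thm:Hewitt-Savage}, $\mubf$ admits a unique representation $\mubf^{(n)} = \int_{\cP(\R^d)} \mu^{\otimes n}\, dP(\mu)$. I claim that
$$\lim_{n\to\infty} \tfrac{1}{n}\,\cI\!\left[\mubf^{(n)}\right] = \int_{\cP(\R^d)} \cI[\mu]\, dP(\mu). \qquad (\star)$$
The upper bound in $(\star)$ is the easier direction: by joint convexity of $\cI$ in its argument, together with the additivity $\cI[\mu^{\otimes n}] = n\,\cI[\mu]$ on tensor products, one has
$$\cI\!\left[\mubf^{(n)}\right] \leq \int \cI\!\left[\mu^{\otimes n}\right]\, dP(\mu) = n\int \cI[\mu]\, dP(\mu).$$
Dividing by $n$ and passing to the limit controls the sup obtained in Step~1. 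The lower bound is the substantive issue; I would follow the chain-rule-for-Fisher-information approach of Hauray-Mischler~\cite{HauMis-14}, as also exploited in~\cite{Rougerie-19b}. The idea is that for large $n$, the exchangeable structure of $\mubf^{(n)}$ allows the parameter $\mu$ in the de Finetti decomposition to be asymptotically ``resolved'' via the empirical measure of the $n$ variables; conditioning on a suitable sufficient statistic decomposes $\cI[\mubf^{(n)}]$ into a piece that scales like $n\int \cI[\mu]\,dP(\mu)$ plus a nonnegative remainder that can be absorbed or shown to be $o(n)$.

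\textbf{Step 3: conclusion.} Uniqueness in Theorem~\ref{thm:Hewitt-Savage} forces the map $\mubf \mapsto P$ to be affine: if $\mubf = \alpha\mubf_1 + (1-\alpha)\mubf_2$ with $\mubf_i \leftrightarrow P_i$, then $\mubf \leftrightarrow \alpha P_1 + (1-\alpha) P_2$. The right-hand side of $(\star)$ is linear in $P$, hence affine in $\mubf$, which proves $\cI[\mubf]$ is affine. Combined with the linearity of the potential and interaction terms from the preamble, this yields affineness of $\cF$ on $\cP_{\rm sym}(\R^{d\N})$.

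\textbf{Main obstacle.} The only nontrivial analytic point is the lower bound in $(\star)$. Fisher information is convex but not concave, so the convexity bound yields only the upper bound; the matching lower bound is what genuinely requires exploiting exchangeability, and it is here that one cannot avoid either the explicit chain-rule conditioning of~\cite{HauMis-14} or the equivalent large-$n$ asymptotics of the de Finetti construction. Once $(\star)$ is in hand, affineness is a formality.
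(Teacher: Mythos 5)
Your proposal is correct and matches the second of three proof routes sketched in the paper's comments, namely the Hauray--Mischler approach, but you reformulate it in a tighter way around the explicit identity $(\star)$, $\cI[\mubf] = \int_{\cP(\R^d)}\cI[\mu]\,\mathrm{d}P(\mu)$, rather than arguing affineness directly on two-point mixtures and extending by an abstract lemma as in \cite[Lemma~5.6]{HauMis-14}. Two things you add that the paper leaves implicit: the Fekete argument in Step~1, which cleanly settles that the $\limsup$ is an honest limit (the paper asserts this ``is part of the statement'' but leaves the mechanism to the references, where it is again superadditivity), and the Jensen/tensorization argument for the upper bound in $(\star)$, which is simple but worth isolating since it shows the only real work is in the matching lower bound. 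You correctly localize the obstacle there: Fisher information being convex but not concave, the convexity bound gives only ``$\leq$'', and ``$\geq$'' genuinely needs the asymptotic orthogonality of distinct tensor powers, exactly the ``orthogonality argument'' the paper quotes. Your Step~3 (affineness of $\mubf\mapsto P$ by uniqueness in Hewitt--Savage, then linearity of the right side of $(\star)$ in $P$) is a clean way to finish. One thing to note for completeness: the paper also records two other routes you do not consider, namely (i) affineness of the mean entropy transferred to the Fisher information via the identity $\cI = -\partial_t S$ along the linear heat flow, and (ii) descending from linearity of the quantum kinetic energy in $|\Psi_N\rangle\langle\Psi_N|$ via the quantum de Finetti theorem; either would sidestep the lower bound in $(\star)$ that you flag as the main obstacle, at the price of invoking different machinery.
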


\begin{proof}[Comments]
Taking a marginal is of course a linear operation, so it is obvious that the potential energy terms in~\eqref{eq:app A prob lim} are affine. The non-trivial part is that~\eqref{eq:mean Fisher} is affine (that the sup equals the limsup equals the lim is part of the statement). 

\medskip 

\noindent \textbf{First proof~\cite{Kiessling-12}.} It is fairly simple to see that the mean entropy 
$$
\limsup_{n\to \infty} - \frac{1}{n} \int_{ \R ^{dn}} \mubf ^{(n)} \log \mubf ^{(n)}  = \lim_{n\to \infty} - \frac{1}{n} \int_{ \R ^{dn}} \mubf ^{(n)} \log \mubf ^{(n)}
$$
is affine, and this has independent interest (in particular for the mean-field limit of the \index{Boltzmann-Gibbs ensemble}{Boltzmann-Gibbs ensemble}). See~\cite{RobRue-67} for the original reference and~\cite[Lemma~2.7]{Rougerie-spartacus,Rougerie-LMU}. Then the Fisher information is the derivative of the \index{Entropy}{entropy} along the heat flow. The heat equation is linear, and one thus deduces the linearity of the mean Fisher information from that of the mean entropy.

\medskip 

\noindent \textbf{Second proof.} This is part of a larger-scale investigation of questions related to kinetic theory and classical molecular chaos~\cite{HauMis-14}. The relevant statement is Theorem~5.7 therein. See also~\cite{Salem-19,Salem-19b}. Essentially one proves that $\cI$ is affine when restricted to simple measures  such as those of the form (one actually needs more)
$$ \mubf^{(n)} = \theta \rho_1 ^{\otimes n} + (1-\theta) \rho_2 ^{\otimes n}$$
by direct calculations and concludes via a general abstract argument~\cite[Lemma~5.6]{HauMis-14}. The direct calculation can be tedious. Briefly, one uses a ``orthogonality argument'': if $\rho_1 \neq \rho_2$, $\rho_1^{\otimes n}$ becomes very much alien to $\rho_2^{\otimes n}$ when $n\to \infty$:
$$ \int_{\R^{dn}} \sqrt{\rho_1^{\otimes n}} \sqrt{\rho_2^{\otimes n}} \to 0.$$
Any cross terms then drop from the calculation in the limit.

\medskip 

\noindent \textbf{Third proof.} If we go back to the quantum kinetic energy formulation, we can recall it is linear in $|\Psi_N \rangle \langle \Psi_N|$. Based on this fact, linearity as a function of $|\Psi_N|^2$ in the limit of large $N$ follows from the quantum de Finetti Theorem~\ref{thm:DeFinetti fort} stated below~\cite{Rougerie-19b}. This has interest for the other applications of the lemma. For bosonic mean-field limits however, if one is to use to \index{Quantum de Finetti theorem}{quantum de Finetti theorem}, it is more natural to do it as we shall describe shortly (without reformulating the problem as a classical ensemble).
\end{proof}

We can now briefly present a

\begin{proof}[Sketch of proof for Theorem~\ref{thm:class quant confined}]
The energy upper bound is again a simple trial state argument. For the energy lower bound one first passes to the liminf in the energy using Lemma~\ref{lem:app A lim}. This gives 
$$ \liminf_{N\to \infty} \frac{E(N)}{N} \geq \cF [\mubf]$$
where $\mubf \equiv (\mubf^{(k)})_{k\in \N}$ is the collection of the limits $N\to\infty$ of the marginals of a minimizer $\mubf_N = |\Psi_N| ^2$. Combining Theorem~\ref{thm:Hewitt-Savage} and Lemma~\ref{lem:app A kin lim} yields 
\begin{align*}
 \liminf_{N\to \infty} \frac{E(N)}{N} &\geq \cF [\mubf] = \int \cF \left[\rho ^{\otimes \infty}\right] dP (\rho)\\
 &= \int \cEh \left[ \sqrt{\rho}\right] dP (\rho)
\end{align*}
with $P$ the de Finetti measure associated to $\mubf$. The energy lower bound is clear from the above, so is the fact that $P$ must be concentrated on (squares of) Hartree minimizers. 
\end{proof}

\newpage

\chapter{Mean-field limits, quantum mechanics methods}\label{cha:MF 2}

Now we enter methods much more tailor-made for the bosonic mean-field problem. With the notable exception of the ``local density'' approach to dilute limits (to be discussed in Chapter~\ref{cha:GP}), the methods of this chapter will form the backbone of all of our proof strategies in the rest of the review.

\section{The quantum de Finetti theorem and applications}

The reader might be puzzled by our use of structure theorems from \index{Classical statistical mechanics}{classical statistical mechanics} (and/or probability theory) to deal with a quantum problem in Section~\ref{sec:class deF}. Is there a quantum variant of the de \index{Classical de Finetti theorem}{Finetti-Hewitt-Savage} \index{Hewitt-Savage theorem}{theorem} that one could use to avoid the detours we just made ? 

The answer is affirmative~\cite{Stormer-69,HudMoo-75}, and the quantum de Finetti theorem will allow us to give a first proof of the full statement of Theorem~\ref{thm:main} with a strategy adapted from~\cite{FanSpoVer-80,RagWer-89,Werner-92,PetRagVer-89}. We shall use compactness arguments, so we are still limited to the pure mean-field case $\beta = 0$, but we can handle external magnetic fields for the first time and obtain convergence of all \index{Reduced density matrix}{reduced density matrices}:

\begin{theorem}[\textbf{Mean-field limit of bosonic ground states, $\beta = 0$}]\label{thm:main beta=0}\mbox{}\\
Let the \index{Hartree energy functional}{Hartree functional} be
$$
\cEh[u] = \int_{\R^d} \left| \left(-\im \nabla + \bA \right) u \right| ^2 + V |u| ^2 + \frac{1}{2} \iint_{\R^d \times \R ^d} |u(\bx)| ^2 w (\bx - \by) |u (\by)| ^2 \mathrm{d}\bx \mathrm{d}\by
$$
with minimum $\Eh$ and minimizer $\uh$. We set $\beta = 0$ and work under Assumption~\ref{asum:pots}. We have

\smallskip

\noindent\textbf{Convergence of the energy:} 
$$
\frac{E(N)}{N} \to \Eh. 
$$

\smallskip 

\noindent\textbf{Convergence of reduced density matrices:} let $\Gamma_N ^{(k)},k\geq 0$ be the reduced density matrices of a many-body ground state $\Psi_N$. There exists a Borel probability measure $\mu$ on $\Mh$ (the set of Hartree minimizers) such that 
$$
{N \choose k} ^{-1} \Gamma_N ^{(k)} \to \int_{\Mh} |u ^{\otimes k} \rangle \langle u ^{\otimes k} | \mathrm{d}\mu(u) 
$$
strongly in trace-class norm, along a subsequence (independent of $k$).
\end{theorem}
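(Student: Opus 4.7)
My plan is to run the strategy that gave Theorem~\ref{thm:class quant confined} at the purely quantum level, replacing Theorem~\ref{thm:Hewitt-Savage} by its quantum analogue. The upper bound $\limsup_{N\to\infty} E(N)/N \le \Eh$ is routine: inserting the Hartree ansatz $\Psi_N = u^{\otimes N}$ into $\cE_N$ gives $N\cEh[u] + O(1)$ (the $O(1)$ is the difference between $\binom{N}{2}(N-1)^{-1}$ and $N/2$), so optimizing over $u$ yields the bound.

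For the lower bound, let $\Psi_N$ be a ground state and set $\gamma_N^{(k)} := \binom{N}{k}^{-1}\Gamma_N^{(k)}$. These are bosonic density matrices of unit trace on $\gH_k$, and in terms of them
\begin{equation*}
 \frac{\cE_N[\Psi_N]}{N} = \tr_{\gH}\!\bigl(h\,\gamma_N^{(1)}\bigr) + \tr_{\gH_2}\!\bigl(w(\bx-\by)\,\gamma_N^{(2)}\bigr),
\end{equation*}
with $h=(-\im\nabla+\bA)^2+V$. The matching upper bound together with the trapping assumption~\eqref{eq:trapping bis} yields a uniform bound on $\tr(h\gamma_N^{(1)})$; since $h$ has compact resolvent, this forces $(\gamma_N^{(1)})$ to be tight in trace-class norm, hence relatively compact. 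Extracting diagonally over $k$ and using the compatibility $\tr_{k+1}\gamma_N^{(k+1)} = \tfrac{N-k}{N-k-1}\,\gamma_N^{(k)}(1+o(1))$, I may assume that along a subsequence
\begin{equation*}
 \gamma_N^{(k)} \longrightarrow \gamma^{(k)} \qquad \text{strongly in trace class, for every } k\ge 1.
\end{equation*}
The limit $(\gamma^{(k)})_{k\ge 1}$ is a consistent sequence of bosonic density matrices of unit trace.

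Now comes the heart of the argument: apply the strong quantum de Finetti theorem (Hudson--Moody / St\o rmer) to this hierarchy. It supplies a unique Borel probability measure $\mu$ on the unit sphere $S L^2(\R^d)$ such that
\begin{equation*}
 \gamma^{(k)} = \int_{SL^2(\R^d)} |u^{\otimes k}\rangle\langle u^{\otimes k}|\, \mathrm{d}\mu(u), \qquad k\ge 1.
\end{equation*}
To pass to the liminf in the energy, I use that $h$ is bounded below, so Fatou-type lower semicontinuity in trace class gives $\tr(h\gamma^{(1)}) \le \liminf \tr(h\gamma_N^{(1)})$; the interaction $w\in L^p+L^\infty$ defines a two-body form-bounded perturbation of $1\otimes h + h \otimes 1$, and thanks to the strong trace-class convergence of $\gamma_N^{(2)}$ together with the kinetic bound, $\tr(w(\bx-\by)\gamma_N^{(2)}) \to \tr(w(\bx-\by)\gamma^{(2)})$. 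Injecting the de Finetti decomposition, Fubini converts the linear expression in $\gamma^{(1)},\gamma^{(2)}$ into
\begin{equation*}
 \liminf_{N\to\infty}\frac{E(N)}{N} \ge \int_{SL^2(\R^d)} \cEh[u]\, \mathrm{d}\mu(u) \ge \Eh,
\end{equation*}
matching the upper bound. Equality throughout forces $\mu$ to be supported on $\Mh$, which is exactly the announced statement~\eqref{eq:conv states} for every $k$. Weak convergence combined with preservation of the trace ($=1$ on both sides) upgrades to strong trace-class convergence.

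The main obstacle is the compactness / lower-semicontinuity step for unbounded one-body operators: I must ensure that $\gamma_N^{(k)} \to \gamma^{(k)}$ strongly in trace class (not merely weakly-$*$) and that the limit carries the full kinetic and potential energy, since only then does the quantum de Finetti theorem feed a genuine Hartree integral into the liminf. The trapping hypothesis $V(\bx)\to\infty$ is what makes this possible, through compactness of the resolvent of $h$ and Prokhorov-type tightness for the associated densities; without it (non-trapped case) one would have to resort to concentration-compactness techniques. Once this compactness is secured, the quantum de Finetti theorem does all the remaining structural work, and the derivation of the Hartree energy and of the convex decomposition of the density matrices follows from the linearity of $\cE_N$ in $\Gamma_N^{(1)}, \Gamma_N^{(2)}$.
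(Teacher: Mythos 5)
Your strategy is essentially the paper's, and most steps are sound, but there is a gap in the compactness step that matters precisely for invoking the \emph{strong} quantum de Finetti theorem. You claim that strong trace-class convergence of $\gamma_N^{(1)}$ (which is correct, via the uniform bound on $\tr(h\,\gamma_N^{(1)})$ and compactness of $h^{-1}$), combined with the exact compatibility relation $\tr_{k+1}\gamma_N^{(k+1)}=\gamma_N^{(k)}$, lets you ``extract diagonally'' and assume that $\gamma_N^{(k)}\to\gamma^{(k)}$ strongly in trace class for \emph{all} $k$. This implication is not automatic: the partial trace does not commute with weak-$\star$ limits (a weak-$\star$ convergent sequence $A_N\rightharpoonup_\star A$ on $\gH\otimes\gH$ can have $\tr_2 A_N$ fail to converge weakly-$\star$ to $\tr_2 A$, because testing $\tr_2 A_N$ against a compact $K$ amounts to testing $A_N$ against $K\otimes\1$, which is not compact). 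So from $\gamma_N^{(k)}\rightharpoonup_\star\gamma^{(k)}$ and $\tr_{k+1}\gamma_N^{(k+1)}=\gamma_N^{(k)}$ one only gets $\tr_{k+1}\gamma^{(k+1)}\le\gamma^{(k)}$, not the equality you would need to certify that the limiting hierarchy is consistent and tight. Consequently the hypothesis of the strong de Finetti theorem (Theorem~\ref{thm:DeFinetti fort}) has not been verified for $k\ge 2$, and the step ``apply strong de Finetti to the hierarchy'' is unjustified as written.

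The remedy is exactly what the paper does: after the diagonal extraction you only have weak-$\star$ convergence in general, so you first apply the \emph{weak} quantum de Finetti theorem (Theorem~\ref{thm:DeFinetti faible}), which produces a probability measure on the closed unit \emph{ball} of $L^2(\R^d)$, independent of $k$. You then upgrade: from the uniform kinetic bound and compactness of $h^{-1}$ you prove $\tr\gamma^{(1)}=1$, i.e.\ no mass loss at $k=1$. Because the de Finetti measure does not depend on $k$, this single equality forces the measure to concentrate on the unit sphere, which in turn yields $\tr\gamma^{(k)}=1$ and hence strong trace-class convergence for \emph{every} $k$. This $k$-independence is a genuine structural input of the de Finetti theorem and cannot be replaced by the marginal compatibility relations alone. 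Similarly, for the energy lower bound, rather than arguing separately that $\tr(w(\bx-\by)\gamma_N^{(2)})$ converges (which requires care since $w$ is unbounded), it is cleaner to shift $H_2=h_{\bx}+h_{\by}+w(\bx-\by)+C_h\ge 0$ and apply Fatou's lemma for operators to $\tr(H_2\gamma_N^{(2)})$, which also handles the kinetic part in one stroke; once $\tr\gamma^{(2)}=1$ is established, the constant $C_h$ can be removed from both sides. With these two repairs, the rest of your argument---inserting the de Finetti representation, Fubini, and the support property of $\mu$ from equality in the chain of inequalities---is exactly the paper's.
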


\subsection{The \index{Quantum de Finetti theorem}{quantum de Finetti theorem}}

To prove Theorem~\ref{thm:main beta=0} we shall take full advantage of the structure of states of large bosonic systems, or more precisely of their density matrices. This generalizes Theorem~\ref{thm:Hewitt-Savage}, which dealt only with densities, i.e. classical objects. Here is the statement we shall rely on: 

\begin{theorem}[\bf Strong quantum de Finetti in large $N$ limit]\label{thm:DeFinetti fort}\mbox{}\\
Let $\gH$ be a separable Hilbert space and $(\Gamma_N)$ a sequence of (mixed) bosonic \index{Pure state, mixed state}{states} on $\gH_N := \bigotimes_{\rm sym} ^N \gH$. Let $\Gamma_N ^{(k)},k\in \N$ be the associated reduced density matrices, and assume that 
\begin{equation}\label{eq:deF CV DMs}
{N \choose k} ^{-1} \Gamma_N ^{(k)}  \underset{N\to \infty}{\to} \gamma^{(k)}
\end{equation}
strongly in trace-class norm, for all $k\in \N$.  

There exists a unique Borel probability measure $\mu \in \cP (S\gH)$ on the sphere $S\gH = \left\{ u \in\gH, \norm{u} = 1\right\}$ of $\gH$, invariant under the action\footnote{Multiplication by a constant phase $e ^{i\theta}, \theta \in \R$.} of $S ^1$, such that
\begin{equation}
\gamma^{(k)}=\int_{S\gH}|u^{\otimes k}\rangle\langle u^{\otimes k}| \, \mathrm{d}\mu(u)
\label{eq:melange}
\end{equation}
for all $k\geq0$.
\end{theorem}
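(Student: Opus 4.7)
The overall strategy is to upgrade the limit $(\gamma^{(k)})_{k\geq 0}$ to a consistent hierarchy of bosonic states, construct a de Finetti measure $\mu^K$ on a finite-dimensional truncation via an explicit coherent-state formula, and then pass to the infinite-dimensional limit, relying crucially on strong (rather than weak-$\star$) trace-norm convergence to prevent mass loss.

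First I would verify that $(\gamma^{(k)})_{k\geq 0}$ is a \emph{consistent family of bosonic states}. The partial-trace identity $\tr_{k+1}\Gamma_N^{(k+1)} = \frac{N-k}{k+1}\Gamma_N^{(k)}$ is immediate from~\eqref{eq:def red mat}, and rearranging gives $\tr_{k+1}({N\choose k+1}^{-1}\Gamma_N^{(k+1)}) = {N\choose k}^{-1}\Gamma_N^{(k)}$. By continuity of the partial trace in trace norm, the assumption~\eqref{eq:deF CV DMs} passes through to yield $\tr_{k+1}\gamma^{(k+1)} = \gamma^{(k)}$, $\tr\gamma^{(k)}=1$, and bosonic symmetry of $\gamma^{(k)}$.

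The heart of the argument is a finite-dimensional quantitative de Finetti identity. Fix an orthonormal basis of $\gH$ and let $P_K$ be the projector onto the span of the first $K$ basis vectors. On $\bigotimes_{\sym}^N(P_K\gH)$ the symmetric projector admits the coherent-state resolution
\[
\1_{\bigotimes_{\sym}^N(P_K\gH)} = d_{N,K}\int_{S(P_K\gH)}|u^{\otimes N}\rangle\langle u^{\otimes N}|\,\mathrm{d}u, \qquad d_{N,K} := \dim\bigotimes_{\sym}^N(P_K\gH),
\]
with $\mathrm{d}u$ the uniform probability measure on the finite-dimensional sphere. I would then associate to $\Gamma_N$ the explicit probability measure
\[
\mathrm{d}\mu_N^K(u) := d_{N,K}\,\bigl\langle u^{\otimes N}, P_K^{\otimes N}\Gamma_N P_K^{\otimes N} u^{\otimes N}\bigr\rangle\,\mathrm{d}u
\]
on $S(P_K\gH)$. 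The quantitative de Finetti estimate of Chiribella / Christandl--K\"onig--Mitchison--Renner (in the form uniform in dimension due to Lewin--Nam--Rougerie) provides the key bound
\[
\left\| P_K^{\otimes k}\,\frac{\Gamma_N^{(k)}}{{N\choose k}}\,P_K^{\otimes k} - \int|u^{\otimes k}\rangle\langle u^{\otimes k}|\,\mathrm{d}\mu_N^K(u)\right\|_{\mathrm{tr}} = O_k(N^{-1}),
\]
with error controlled uniformly in $K$ and $\Gamma_N$. Compactness of $S(P_K\gH)$ then extracts (along a subsequence) $\mu_N^K \wto \mu^K$, and passing to the limit identifies $P_K^{\otimes k}\gamma^{(k)}P_K^{\otimes k} = \int|u^{\otimes k}\rangle\langle u^{\otimes k}|\,\mathrm{d}\mu^K(u)$.

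Finally I would send $K\to\infty$. Viewing each $\mu^K$ as a probability measure on the unit ball $B\gH$ equipped with the weak topology of $\gH$ (on which $B\gH$ is compact by Banach--Alaoglu), weak compactness of $\cP(B\gH)$ yields a subsequential limit $\mu$. The crucial point is that the assumption of \emph{strong} trace-norm convergence forces $\tr\gamma^{(1)} = 1$, and testing the $k=1$ moment identity against an approximation of the identity by finite-rank projectors $P_L\uparrow\1$ gives $\int\|u\|_\gH^2\,\mathrm{d}\mu(u) = 1$; since $\|u\|\leq 1$ on $B\gH$, this forces $\mu$ to be supported on $S\gH$. After symmetrizing by the $S^1$-action (which leaves each $|u^{\otimes k}\rangle\langle u^{\otimes k}|$ invariant), a diagonal extraction in $k$ produces the desired $\mu$. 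Uniqueness follows by a Stone--Weierstrass-type density argument: the operators $|u^{\otimes k}\rangle\langle u^{\otimes k}|$ for $k\in\N$ and $u\in S\gH$ generate a dense subalgebra of polynomial functions on $S\gH$, so the $\gamma^{(k)}$'s determine $\mu$. The main obstacle I anticipate is precisely this double limit $N\to\infty,\,K\to\infty$: one must prevent mass from escaping from $S\gH$ to the interior of $B\gH$ under the weak topology, and this is the exact point at which \emph{strong} trace-norm convergence, rather than mere weak-$\star$ convergence, becomes indispensable.
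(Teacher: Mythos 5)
Your proposal takes the constructive route to the theorem (finite-dimensional quantitative de Finetti plus a projection to finite-dimensional subspaces, then double limit), whereas the paper's stated route is the abstract non-constructive one: establish the consistency relations $\tr_{k+1}\gamma^{(k+1)}=\gamma^{(k)}$, $\tr\gamma^{(k)}=1$, and then invoke the Hudson--Moody/St{\o}rmer theorem directly on the resulting state with infinitely many particles. Your first paragraph (consistency) is correct and is precisely the paper's first step; the constructive route you then pursue is indeed mentioned in the paper's comments, so the choice of approach is legitimate, but there is a genuine gap in its execution.

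The gap is the ``localization'' step. The object $P_K^{\otimes N}\Gamma_N P_K^{\otimes N}$ is not a state (its trace is $<1$ and in fact tends to $0$ as $N\to\infty$ for fixed $K$ whenever $\tr(Q_K\gamma^{(1)})>0$), and crucially its reduced density matrices are \emph{not} $P_K^{\otimes k}\Gamma_N^{(k)}P_K^{\otimes k}$: one has $\tr_{k+1\to N}(P_K^{\otimes N}\Gamma_N P_K^{\otimes N})=P_K^{\otimes k}\tr_{k+1\to N}\bigl((\1^{\otimes k}\otimes P_K^{\otimes(N-k)})\Gamma_N\bigr)P_K^{\otimes k}$, and the extra projectors on the traced-out legs cannot be removed. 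Consequently, the bound you claim,
\begin{equation*}
\left\|\,P_K^{\otimes k}\,\tfrac{\Gamma_N^{(k)}}{\binom{N}{k}}\,P_K^{\otimes k}-\int |u^{\otimes k}\rangle\langle u^{\otimes k}|\,\mathrm{d}\mu_N^K(u)\right\|_{\mathrm{tr}}=O_k(N^{-1}),
\end{equation*}
is false: the right-hand side of the subtraction has total mass $\tr(P_K^{\otimes N}\Gamma_N P_K^{\otimes N})\to 0$, while the left-hand side has trace converging to $\tr(P_K^{\otimes k}\gamma^{(k)}P_K^{\otimes k})$, which is bounded away from zero. The quantitative de Finetti theorem relates the moments of the lower-symbol measure to the reduced density matrices of \emph{the same} finite-dimensional state, so applying it to a naive projection gives a bound involving the wrong marginals. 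The fix is exactly the Fock-space/geometric localization of Lewin that the paper invokes in Section 3.2 (equations defining $\Gamma_N^P$ and the identity $\bigl(\Gamma_N^P\bigr)^{(k)}=P^{\otimes k}\Gamma_N^{(k)}P^{\otimes k}$): one must construct a genuine state on the truncated Fock space $\gF(P_K\gH)$, distributed over all particle-number sectors $0$ to $N$, whose $k$-body density matrix \emph{is} $P_K^{\otimes k}\Gamma_N^{(k)}P_K^{\otimes k}$, and then apply the finite-dimensional de Finetti estimate sector by sector. Without this device the projection step breaks down, and this is exactly where the distinction between the strong theorem (on $S\gH$) and the weak one (on $B\gH$) becomes subtle.
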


\begin{proof}[Comments]
See~\cite{Stormer-69,HudMoo-75} for the original references~\cite{LewNamRou-14,AmmNie-08} for different proofs and~\cite{Ammari-hdr,Rougerie-spartacus,Rougerie-LMU} for more pedagogical accounts. 

\medskip 

\noindent\textbf{1.} The ``strong'' in the name of the theorem refers to the fact that we assume strong trace-class convergence in~\eqref{eq:deF CV DMs}. It is easy to see that such a convergence implies that ($\tr_{k+1}$ means partial trace over one factor of the tensor product $\gH_{k+1}$)
\begin{equation}\label{eq:consist quant}
 \tr_{k+1} \gamma ^{(k+1)} = \gamma^{(k)} 
\end{equation}
and one says that the sequence (hierarchy) of density matrices  $\left( \gamma^{(k)}\right)_{k\in \N}$ is consistent. The theorem of~\cite{HudMoo-75} applies to such a sequence, which defines an abstract state with infinitely many particles. The original version of the result~\cite{Stormer-69} applies directly to states with infinitely many particles. 

\smallskip

\noindent\textbf{2.} The original proof is not constructive, much in the spirit of the first proof of Theorem~\ref{thm:Hewitt-Savage} we alluded to. Constructive proofs were obtained later, see~\cite{Rougerie-spartacus,Rougerie-LMU} for more details. A really direct construction has so far been obtained only for finite dimensional Hilbert spaces $\gH$ (see Section~\ref{sec:deF semi}), but one can lift this construction to infinite dimensional spaces by Fock-space/geometric localization~\cite{AmmNie-08,LewNamRou-14}.

\smallskip

\noindent\textbf{3.} The original theorem applies not only to bosonic states as stated here. The classical symmetry assumption~\eqref{eq:class indis} is sufficient (boltzonic states), but then the measure lives over mixed one-body states $\gamma$ (positive trace-class operators) not only on pure states (rank-one projectors). 
\end{proof}

To use the above, it is necessary to obtain strong compactness of the density matrices as in~\eqref{eq:deF CV DMs}. This is easy for confined systems as mostly discussed in these notes, but it can be useful to relax the assumption (see Section~\ref{sec:non trap} below):

\begin{theorem}[\bf Weak quantum de Finetti theorem]\label{thm:DeFinetti faible}\mbox{}\\
Let $\gH$ be a separable Hilbert space and $(\Gamma_N)$ a sequence of (mixed) bosonic states on $\gH_N := \bigotimes_{\rm sym} ^N \gH$. Let $\Gamma_N ^{(k)},k\in \N$ be the associated \index{Reduced density matrix}{reduced density matrices}, and assume that 
\begin{equation}\label{eq:deF CV DMs faible}
{N \choose k} ^{-1} \Gamma_N ^{(k)}  \underset{N\to \infty}{\wto} \gamma^{(k)}
\end{equation}
weakly-$\star$ in the trace-class, for all $k\in \N$.  

There exists a unique Borel probability measure $\mu \in \cP (B\gH)$ on the ball $B\gH = \left\{ u \in\gH, \norm{u} \leq 1\right\}$ of $\gH$, invariant under multiplication by a constant phase, such that
\begin{equation}
\gamma^{(k)}=\int_{B\gH}|u^{\otimes k}\rangle\langle u^{\otimes k}| \, \mathrm{d}\mu(u)
\label{eq:melange faible}
\end{equation}
for all $k\geq0$.
\end{theorem}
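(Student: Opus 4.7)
The plan is to reduce the weak statement to the already established strong version, Theorem~\ref{thm:DeFinetti fort}, via geometric (Fock-space) localization onto a finite-dimensional subspace, following the approach of Ammari-Nier and Lewin-Nam-Rougerie that was alluded to in the comments on Theorem~\ref{thm:DeFinetti fort}. The key mechanism is that weak-$\star$ trace-class convergence becomes strong trace-class convergence once the states are squeezed by a finite-rank projection, and this is precisely what allows the strong theorem to be imported.

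First I would set up the localization. Fix a finite-rank orthogonal projection $P$ on $\gH$, with complement $Q = \1 - P$. To each bosonic $N$-particle state $\Gamma_N$ on $\gH_N$, associate a grand-canonical state $\Gamma_N^P$ on the bosonic Fock space $\gF(P\gH)$ by summing, over $n = 0,\ldots,N$, the appropriate partial traces of $\Gamma_N$ along $N-n$ factors of $Q\gH$. A direct computation in terms of the CCR of Section~\ref{sec:second quant} shows that the reduced density matrices of the localized state are simply
\[
(\Gamma_N^P)^{(k)} = P^{\otimes k}\, \Gamma_N^{(k)}\, P^{\otimes k}.
\]
Because $P$ has finite rank, $P^{\otimes k}$ is compact and the squeezed operator $\binom{N}{k}^{-1} P^{\otimes k} \Gamma_N^{(k)} P^{\otimes k}$ has range in a fixed finite-dimensional subspace. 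Hence the weak-$\star$ assumption~\eqref{eq:deF CV DMs faible} automatically upgrades to strong trace-class convergence to $P^{\otimes k} \gamma^{(k)} P^{\otimes k}$.

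Next I would apply the de Finetti structure in this finite-dimensional setting. The localized object is a grand-canonical state on $\gF(P\gH)$ rather than an $N$-particle state, but in finite dimension one may either invoke a grand-canonical variant of Theorem~\ref{thm:DeFinetti fort} (straightforward via the explicit construction available in finite dimension, cf.\ Section~\ref{sec:deF semi}) or reduce it to the canonical case sector by sector and combine. In either case one obtains a Borel probability measure $\mu_P$, $S^1$-invariant, on the \emph{ball} $B(P\gH)$ such that
\[
P^{\otimes k} \gamma^{(k)} P^{\otimes k} = \int_{B(P\gH)} \lvert u^{\otimes k}\rangle\langle u^{\otimes k}\rvert \, \mathrm{d}\mu_P(u), \qquad \forall k \geq 0.
\]
The measure lives on the ball (not the sphere) precisely because localization by $P$ loses mass to the $Q$-sector.

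Finally I would pass to the limit $P \uparrow \1$. Choose an increasing sequence of finite-rank projections $P_m$ converging strongly to the identity; each $\mu_{P_m}$ can be viewed as a Borel probability measure on $B\gH$, supported on $P_m \gH$. The ball $B\gH$ is weakly compact (Banach-Alaoglu), and the family $(\mu_{P_m})$ is tight for the weak topology, so by Prokhorov one extracts a narrowly convergent subsequence to some $\mu \in \cP(B\gH)$. Compatibility of the $\mu_{P_m}$'s as $m$ varies, inherited from the consistency of the hierarchy $(\gamma^{(k)})_k$, together with the fact that $P_m^{\otimes k} \gamma^{(k)} P_m^{\otimes k} \to \gamma^{(k)}$ strongly as $m \to \infty$, allows one to pass to the limit in the integral representation and obtain~\eqref{eq:melange faible}. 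Uniqueness of $\mu$ follows from its uniqueness at each finite-dimensional level together with the fact that the family of moments $\bigl\{\int \lvert u^{\otimes k}\rangle\langle u^{\otimes k}\rvert\, \mathrm{d}\mu(u)\bigr\}_{k\geq 0}$ determines $\mu$. The main technical obstacle lies in this last step: one must verify that narrow convergence of the $\mu_{P_m}$ on $B\gH$ (with its weak topology) is compatible with the trace-class topology on $k$-particle operators, i.e.\ that no mass escapes weakly along the limit. A secondary subtlety, essentially folklore but requiring some care, is the canonical-to-grand-canonical passage in the finite-dimensional de Finetti step, where one must accommodate the fluctuating particle number produced by the localization procedure.
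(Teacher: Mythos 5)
Your route---Fock-space/geometric localization to finite dimensions, a finite-dimensional de Finetti theorem, then the limit $P_m \uparrow \1$---is the one the paper's comment \textbf{4} points to, crediting the semi-constructive proof of~\cite{LewNamRou-14}. You identify the core mechanisms correctly: compression by a finite-rank $P^{\otimes k}$ upgrades weak-$\star$ to strong trace-class convergence; the localization losing mass to the $Q$-sector is exactly what forces the measure to live on the ball rather than the sphere; and compactness of $B\gH$ in the weak topology plus Prokhorov handles the diagonal limit $P_m\uparrow\1$.

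One step, however, is genuinely wrong and must be repaired. You appeal to ``consistency of the hierarchy $(\gamma^{(k)})_k$'' to secure compatibility of the $\mu_{P_m}$'s. But the paper's comment \textbf{3} on this very theorem stresses that under weak-$\star$ convergence the limit hierarchy is \emph{not} consistent: one only has $\tr_{k+1}\gamma^{(k+1)} \leq \gamma^{(k)}$, and simple examples show this inequality alone cannot yield~\eqref{eq:melange faible}---one must retain the information that $\gamma^{(k)}$ is the limit of the sequence $(\Gamma_N)$. The correct argument therefore keeps $(\Gamma_N)$ in play: apply the quantitative finite-dimensional de Finetti theorem (Theorem~\ref{thm:deF semi}, after the canonical-to-grand-canonical bookkeeping you acknowledge) to the localized state $\Gamma_N^{P_m}$ \emph{at fixed $N$}, obtaining a measure $\mu_N^{P_m}$ on $B(P_m\gH)$ that reproduces $\binom{N}{k}^{-1}(\Gamma_N^{P_m})^{(k)}$ up to an error $O(D_m k/N)$; pass to the limit $N\to\infty$ with $P_m$ fixed, using compactness of $B(P_m\gH)$ to extract a narrow limit $\mu^{P_m}$; and only then send $m\to\infty$. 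The compatibility you need---that $\mu^{P_{m'}}$ is the pushforward of $\mu^{P_m}$ under $u\mapsto P_{m'}u$ for $m'<m$---then follows from \emph{uniqueness} of the finite-dimensional de Finetti measure, not from any consistency of $(\gamma^{(k)})_k$. Finally, the worry you raise about reconciling narrow convergence on the weakly compact $B\gH$ with trace-class operator identities is resolved by observing that for each fixed $f,g\in\gH^{\otimes k}$ the map $u\mapsto\langle f|u^{\otimes k}\rangle\langle u^{\otimes k}|g\rangle$ is weakly continuous on $B\gH$, so testing~\eqref{eq:melange faible} against matrix elements passes to the limit.
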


\begin{proof}[Comments]\mbox{}\\
\noindent\textbf{1.} The weak-$\star$ convergence in~\eqref{eq:deF CV DMs faible} is the usual notion. The trace-class being the dual of the compact operators one demands that 
$$ {N\choose k} ^{-1} \tr_{\gH_k} \left( K_k \Gamma_N ^{(k)}\right) \to \tr_{\gH_k} \left( K_k \gamma ^{(k)}\right)$$
for any compact operator $K_k$. Since the convergence needs not hold for $K_k = \1_{\gH_k}$ there is generically a loss of mass in the limit 
\begin{equation}\label{eq:loss mass}
 \tr_{\gH_k} \gamma^{(k)} \leq 1 = \tr {N\choose k} ^{-1} \tr_{\gH_k} \Gamma_N ^{(k)}. 
\end{equation}
In fact, the convergence is strong if and only if there is equality in the above~\cite{dellAntonio-67,Simon-79}, which is not obvious since the trace-class is not reflexive.

\smallskip 

\noindent\textbf{2.} Modulo a diagonal extraction from any sequence of bosonic states $\Gamma_N$, one can always assume that~\eqref{eq:deF CV DMs faible} holds. The theorem is thus generic in this sense.

\smallskip 

\noindent\textbf{3.} In view of~\eqref{eq:loss mass}, it is fairly natural that the limit measure has to live over the unit ball instead of the unit sphere. A remarkable consequence of the theorem is that, if there is equality for one $k$ in~\eqref{eq:loss mass} (and hence strong convergence as in~\eqref{eq:deF CV DMs}), the measure must live on the sphere. Since it does not depend on $k$, there must be equality in~\eqref{eq:loss mass}, and thus strong trace-class convergence as in~\eqref{eq:deF CV DMs}, for all $k$.  
 
\smallskip 

\noindent\textbf{3.} For the same reason as discussed in point \textbf{1}, the limit hierarchy $(\gamma^{(k)})_k$ is in general not consistent. One can only get
$$ \tr_{k+1} \gamma ^{(k+1)} \leq \gamma ^{(k)}$$
as operators. This is quite insufficient to obtain~\eqref{eq:melange faible} as simple examples show, so one must retain the information that $\gamma^{(k)}$ is a limit of states with increasing particle numbers. 

\smallskip 

\noindent\textbf{4.} A semi-constructive proof of the result using \index{Fock space}\index{Fock space} \index{Localization}{localization} also yields useful corollaries regarding the part of the sequences not described by the weak-$\star$ limits~\cite{LewNamRou-14}.
\end{proof}

\subsection{Proof of the mean-field theorem}\label{sec:proof deF}

We now give the proof of Theorem~\ref{thm:main beta=0}, which originates in~\cite{FanSpoVer-80,RagWer-89,Werner-92,PetRagVer-89}. The argument we follow is from~\cite{LewNamRou-14}.

\begin{proof}
The energy upper bound is straightforward, using trial states of the form $u ^{\otimes N}$. We focus on the energy lower bound and the convergence of states. For all this proof we denote 
$$ h := \left(-\im \nabla + \bA \right)^2 + V $$
the one-body operator.

Consider a ground state $\Psi_N$ and denote $\Gamma_N = |\Psi_N \rangle \langle \Psi_N |$ the associated density matrix. Then
\begin{align}\label{eq:afort ener matrices}
\frac{E(N)}{N} &=  \frac1N \left\langle \Psi_N, H_N \Psi_N \right\rangle_{\gH ^N} = N^{-1}\tr_{\gH} \left( h\, \Gamma_N ^{(1)}\right) + {N \choose 2} ^{-1} \tr_{\gH_2} \left( w\, \Gamma_N ^{(2)} \right)
\end{align}
where $w$ is identified with the multiplication operator by $w(\bx-\by)$ on the two-particles space $\gH_2$. 

Denote 
$$ \gamma_N ^{(k)} := {N \choose k } ^{-1} \Gamma_N ^{(k)}.$$
This is (for any $k$) a bounded sequence of trace-class operators. Hence, modulo a diagonal procedure, we may extract a (not relabeled) subsequence such that~\eqref{eq:deF CV DMs faible} holds. We apply Theorem~\ref{thm:DeFinetti faible} and obtain a measure satisfying~\eqref{eq:melange faible}. We want to show that this measure lives over the unit sphere of $\gH$. In view of the comments after Theorem~\ref{thm:DeFinetti faible} we only need to show that 
$$ \tr \, \gamma ^{(1)} = 1. $$
Under our assumptions on $w$ we easily obtain the a priori bound 
$$ \tr_{\gH} \left( h\, \gamma_N ^{(1)}\right) \leq C,$$
independently of $N$. Thus, the positive operator (we can shift the energy reference to get $h \geq 0$)
$$ h ^{1/2} \gamma_N ^{(1)} h ^{1/2}$$
is bounded in trace-class. Modulo a further extraction we can assume that it converges weakly star in trace-class, and the limit must be $h ^{1/2} \gamma ^{(1)} h ^{1/2}$ Then 
$$ 1 = \tr \, \gamma_N ^{(1)} = \tr \left( h ^{-1} h ^{1/2} \gamma_N ^{(1)} h ^{1/2} \right) \underset{N\to \infty}{\to} \tr \, \gamma^{(1)} $$
because $h ^{-1}$ is compact (consequence of~\eqref{eq:trapping}, by the Sobolev compact embedding theorem directly for $\bA\equiv 0$ using~\cite{AvrHerSim-78} for non-trivial $\bA$). 

Then, along the previous subsequences we can apply Theorem~\ref{thm:DeFinetti fort}, which yields a measure $\mu \in \cP (S\gH)$ describing the limit density matrices as per~\eqref{eq:melange}. 

Next, Assumption~\ref{asum:pots} implies that there exists a constant $C_h$ such that, on the two-body space 
$$ H_2 := h_\bx + h_\by + w (\bx-\by) + C_h \geq 0.$$
As in~\eqref{eq:afort ener matrices}, 
$$
\tr_{\gH^2} \left( H_2 \gamma_N ^{(2)} \right) = N^{-1} \left\langle \Psi_N, H_N \Psi_N \right\rangle_{\gH ^N} + C_h
$$
and we deduce the a priori bound 
$$ \tr \left( H_2 \gamma_N ^{(2)} \right) \leq C. $$
We may thus assume that the positive operator $H_2 ^{1/2} \gamma_N ^{(2)} H_2 ^{1/2}$ converges weakly-$\star$ in trace-class (modulo a further extraction). The limit must be $H_2 ^{1/2} \gamma ^{(2)} H_2 ^{1/2}$ with the previously obtained $\gamma^{(2)}$. Using Fatou's lemma for operators (weak-$\star$ lower semi-continuity of the norm) and the cyclicity of the trace we deduce 
$$ \liminf_{N\to\infty} \tr \left( H_2 \gamma_N ^{(2)} \right) \geq \tr \left( H_2 \gamma ^{(2)} \right).$$
But by definition 
$$ \tr \left( H_2 \gamma_N ^{(2)} \right) = \frac{E(N)}{N} + C_h$$
and we have proved that $\gamma ^{(2)}$ has unit trace. We can thus subtract $C_h$ to both sides of the previous equation and get 
$$ \liminf_{N\to\infty} \frac{E(N)}{N} \geq \tr \left( \left(H_2 - C_h\right) \gamma ^{(2)} \right).$$
Inserting the representation~\eqref{eq:melange} of $\gamma^{(2)}$ gives 
$$ \liminf_{N\to\infty} \frac{E(N)}{N} \geq \int_{S\gH} \cEh [u] \mathrm{d}\mu (u) \geq \Eh $$ 
because $\mu$ is a probability measure. This is the sought-after energy lower bound. Combining with the upper bound shows that $\mu$ must be concentrated on Hartree minimizers, and thus the convergence of density matrices follows.  
\end{proof}

\subsection{Non-trapped case}\label{sec:non trap}

The weak quantum de Finetti theorem (together with some corollaries of its proof) is well-suited to deal with systems that lack compactness at infinity. Such a defect of compactness leads to the failure of~\eqref{eq:deF CV DMs} in the strong topology, but one can still rely on Theorem~\ref{thm:DeFinetti faible}. This is reviewed at length in~\cite{Rougerie-LMU,Rougerie-spartacus} so we will here only state the main result generalizing Theorem~\ref{thm:main beta=0}, obtained in~\cite{LewNamRou-14}. The main point is to consider one-body potentials that decay instead of grow at infinity, so that particles might escape from the trap.

\begin{theorem}[\textbf{Mean-field limit, non-trapped case}]\label{thm:non trap}\mbox{}\\
We set $\beta = 0$ and work under Assumption~\ref{asum:pots}, except for the fact that we take $V,|A|^2 \in L^p(\R^d) + L^{\infty} (\R^d)$ non-trapping: 
\begin{equation}\label{eq:non trapping} 
V(\bx),|\bA (\bx)|^2 \underset{|\bx|\to \infty}\to 0.
\end{equation}
Let then $E(N)$ be the bottom of the spectrum of $H_N$ (eigenvalue or bottom of essential spectrum) and $(\Psi_N)_N$ a sequence of approximate ground states: 
$$ \langle \Psi_N |H_N | \Psi_N \rangle \leq E (N) + o (N)$$
for $N\to \infty$.

Let the \index{Hartree energy functional}{Hartree functional} be
$$
\cEh[u] = \int_{\R^d} \left| \left(-\im \nabla + \bA \right) u \right| ^2 + V |u| ^2 + \frac{1}{2} \iint_{\R^d \times \R ^d} |u(\bx)| ^2 w (\bx - \by) |u (\by)| ^2 \mathrm{d}\bx \mathrm{d}\by
$$
with minimum $\Eh$ and minimizer(s) $\uh$. Let $\Mh$ be the set of weak limits of Hartree minimizing sequences:
\begin{multline}\label{eq:Hartree weak}
\Mh := \big\{ u \in L^2 (\R^d) \,|\, \exists (u_n)_n \in L^{2} (\R^d) \mbox{ such that } \, u_n \wto u \mbox{ weakly in } L^2 \\
 \norm{u_n}_L^2 = 1, \mbox{ and } \cEh[u_n] \to \Eh \big\} 
\end{multline}

We have, in the limit $N \to + \infty$:

\smallskip 

\noindent\textbf{Convergence of the energy:} 
\begin{equation}\label{eq:conv ener non}
\frac{E(N)}{N} \to \Eh. 
\end{equation}

\smallskip 

\noindent\textbf{Convergence of reduced density matrices:} let $\Gamma_N ^{(k)},k\geq 0$ be the reduced density matrices of an approximate ground state $\Psi_N$. There exists a Borel probability measure $\mu$ on $\Mh$ (independent of $k$) such that 
\begin{equation}\label{eq:conv states non}
{N \choose k} ^{-1} \Gamma_N ^{(k)} \wto_\star \int_{\Mh} |u ^{\otimes k} \rangle \langle u ^{\otimes k} | \mathrm{d}\mu(u) 
\end{equation}
weakly-$\star$ in the trace-class.
\end{theorem}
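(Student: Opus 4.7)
The strategy is to mirror the proof of Theorem~\ref{thm:main beta=0} from Section~\ref{sec:proof deF}, but using the \emph{weak} quantum de Finetti theorem (Theorem~\ref{thm:DeFinetti faible}) in place of the strong one, since the compactness argument that made $\gamma_N^{(1)}$ converge strongly (the compactness of $h^{-1}$) is unavailable here: the one-body resolvent is no longer compact because $V$ and $|\bA|^2$ vanish rather than grow at infinity, and the spectrum of $h$ touches $0$. Two genuinely new issues arise: (i) the limit density matrices only converge weakly-$\star$ in trace class, with a possible loss of mass to infinity, so the de Finetti measure lives on the closed ball $B\gH$ instead of the unit sphere; (ii) the support of $\mu$ must be identified with the set $\Mh$ of weak $L^2$-limits of minimizing sequences, rather than with the set of bona fide Hartree minimizers (which may not exist).

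\textbf{Upper bound.} For any $\eps > 0$, pick $u_\eps$ with $\norm{u_\eps}_{L^2}=1$ and $\cEh[u_\eps]\leq \Eh+\eps$, which exists by definition of the infimum. Use $\Psi_N=u_\eps^{\otimes N}$ as a trial state; a direct computation gives $N^{-1}\langle \Psi_N,H_N\Psi_N\rangle = \cEh[u_\eps] \leq \Eh+\eps$. Letting $\eps\to 0$ yields $\limsup_N E(N)/N \leq \Eh$.

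\textbf{Lower bound.} Let $(\Psi_N)_N$ be an approximate ground state and $\gamma_N^{(k)}={N\choose k}^{-1}\Gamma_N^{(k)}$ the normalized reduced density matrices. After a diagonal extraction, $\gamma_N^{(k)} \wto_\star \gamma^{(k)}$ weakly-$\star$ in trace class for every $k$. Theorem~\ref{thm:DeFinetti faible} produces a probability measure $\mu$ on $B\gH$ such that $\gamma^{(k)}=\int_{B\gH}|u^{\otimes k}\rangle\langle u^{\otimes k}|\,d\mu(u)$ for all $k\geq 0$. Under Assumption~\ref{asum:pots} the one-body operator $h$ is bounded below; shifting so that $h\geq 0$, the a priori bound $\tr(h\gamma_N^{(1)})\leq C$ and Fatou's lemma for positive operators give $\liminf_N\tr(h\gamma_N^{(1)}) \geq \tr(h\gamma^{(1)})$. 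For the interaction, the assumption that $w\in L^p+L^\infty$ vanishes at infinity makes $w(\bx-\by)(-\Delta_{\bx,\by}+1)^{-1}$ compact on $L^2(\R^{2d})$, and combined with the two-body kinetic bound one obtains $\tr(w\gamma_N^{(2)}) \to \tr(w\gamma^{(2)})$. Adding these,
\begin{equation*}
\liminf_{N\to\infty}\frac{E(N)}{N}\geq \tr(h\gamma^{(1)})+\frac{1}{2}\tr(w\gamma^{(2)}) = \int_{B\gH}\cEh[u]\,d\mu(u),
\end{equation*}
where I used the de Finetti decomposition at $k=1,2$ and the fact that $\cEh[u]=\langle u,hu\rangle + \tfrac{1}{2}\iint|u|^2 w|u|^2$ is defined for all $u\in B\gH$, not just for normalized $u$.

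\textbf{From the energy bound to $\Mh$.} The crucial structural input is the ``binding-type'' inequality $\cEh[u]\geq \Eh$ for every $u\in B\gH$, together with the identification that equality forces $u\in\Mh$. This is where the non-trapping hypothesis is used quantitatively: given $u$ with $\norm{u}^2=m\leq 1$, pick $v$ with $\norm{v}^2=1-m$ and localize $v$ to a point $R\mathbf{e}$ at infinity on a very wide scale. Since $V$ and $w$ vanish at infinity, translation invariance of the kinetic term and the decay of $w$ give $\cEh[u+T_R v] = \cEh[u] + \cEh_\infty[v] + o_R(1)$, where $\cEh_\infty[v]=\int|\nabla v|^2+\tfrac{1}{2}\iint|v|^2 w|v|^2$ is the ``free'' Hartree energy; its infimum at mass $1-m$ is zero (this is immediate if $w$ is repulsive and requires the stability assumptions to exclude collapse of the $v$-part otherwise). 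Hence $\cEh[u] \geq \Eh(1)-\Eh_\infty(1-m)= \Eh$. Integrating against $\mu$ matches the upper bound, so $\mu$ is concentrated on $\{u\in B\gH:\cEh[u]=\Eh\}$; running the construction in reverse produces an explicit minimizing sequence for $\cEh$ with weak limit $u$, proving $\mathrm{supp}\,\mu\subset\Mh$.

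\textbf{Main obstacle.} The technical heart is making the binding step rigorous without uniqueness or compactness: one has to decompose the many-body sequence into a ``localized'' and an ``escaping'' piece and track the energy of each, rather than manipulate a single weak limit. The cleanest implementation is the geometric / Fock-space localization of~\cite{LewNamRou-14}, which simultaneously constructs the measure $\mu$ and the mass defect at infinity, and proves that the energy is \emph{exactly} split between the two contributions, so no energy is ``lost'' in the weak-$\star$ limit. This splitting is what ultimately legitimizes dropping the one-body compactness of $h^{-1}$ used in Theorem~\ref{thm:main beta=0} and replaces it by the structural rigidity provided by the weak de Finetti theorem.
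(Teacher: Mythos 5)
Your overall strategy is the right one, and you correctly identify the two genuinely new points relative to Theorem~\ref{thm:main beta=0}: the de~Finetti measure lives on the ball rather than the sphere, and the support has to be identified with $\Mh$ rather than with a set of true minimizers. You also correctly name the actual tool used in the source the paper cites (the Fock-space geometric localization of~\cite{Lewin-11,LewNamRou-14}) and the binding inequality that legitimizes the weak-$\star$ passage to the limit. That is indeed the route the paper has in mind.

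There is however a concrete error in the middle of your lower bound. The operator $w(\bx-\by)(-\Delta_{\bx,\by}+1)^{-1}$ on $L^2(\R^{2d})$ is \emph{not} compact, even when $w$ vanishes at infinity and lies in $L^p+L^\infty$. The multiplication operator $w(\bx-\by)$ depends only on the relative coordinate $\bx-\by$ and has no decay whatsoever in the center-of-mass variable $(\bx+\by)/2$; the whole composite is therefore invariant under simultaneous translations $(\bx,\by)\mapsto(\bx+\mathbf{a},\by+\mathbf{a})$. A non-zero operator on an infinite volume that commutes with a unitary group of translations cannot be compact (send a fixed wave packet off to infinity in the center-of-mass direction: it converges weakly to zero while its image keeps constant norm). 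This is precisely the reason one-body compactness tricks, which work for $V(\bx)(-\Delta_\bx+1)^{-1}$, do \emph{not} carry over to the pair interaction, and it is also why the naive strategy of ``pass to the weak-$\star$ limit in each piece separately and add'' breaks here. Note moreover that even the combined Fatou argument used in the trapped proof runs into trouble once mass is lost: one has $\liminf \tr(H_2\gamma_N^{(2)})\geq \tr(H_2\gamma^{(2)})$ for $H_2\geq0$, but subtracting the normalization constant $C_h$ produces a term proportional to $1-\tr(\gamma^{(2)})$ of the \emph{wrong} sign when $\tr(\gamma^{(2)})<1$.

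The correct way to close the gap is exactly the one you mention in your final paragraph and should not be presented as an ``alternative cleanest implementation'' but as the only way to make the lower bound rigorous: decompose the bosonic state into a piece localized in a large ball and a piece escaping to infinity via the Fock-space localization, prove that the escaping piece contributes at least the translation-invariant Hartree energy (which is nonnegative by the scaling argument you sketch for the binding inequality), and deduce $\liminf E(N)/N\geq\int_{B\gH}\cEh[u]\,d\mu(u)$ by tracking both contributions, rather than by compactness of the interaction. Your binding inequality $\cEh[u]\geq\Eh$ for $u\in B\gH$ and the identification of the support with $\Mh$ via padding by escaping pieces are otherwise correct.
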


\begin{proof}[Comments] 
This is exactly saying that, in the mean-field limit, the binding of bosonic particles is described to leading order by Hartree theory. In general, the limit in~\eqref{eq:conv states non} is only weak-$\star$, for there may be a loss of mass at infinity. If it occurs, the limits describing the set $\Mh$ are only $L^2$-weak.
 
One can describe in more details the limit set $\Mh$ using binding inequalities and a standard concentration-compactness analysis~\cite{Lions-84,Lions-84b,Struwe}. Again, see~\cite{Rougerie-LMU,Rougerie-spartacus} for more comments. 
\end{proof}

\newpage

\section{\index{Localization}{Localization} plus quantum de Finetti}\label{sec:deF loc}

We turn to methods allowing to prove Theorem~\ref{thm:main} for $\beta >0$, i.e. when the limit object is a \index{Non-linear Schr\"odinger energy functional}{local non-linear Schr\"odinger functional}. In this case there are two limits to be taken at the same time: \begin{itemize}                                                                                                                                                                                                                  \item large particle number
\item interaction potential converging to a Dirac mass.                                                                                                                                                                                                                       \end{itemize}
This rules out compactness methods and calls for quantitative estimates, for it is not obvious that the two limits commute. The general philosophy however stays the same as in the previous section: generic many-particles bosonic states can be represented as statistical superpositions of factorized states. Inserting such a representation in the many-body energy directly leads to the mean-field energy. Our task is to control the error thus made, and now we turn quantitative.

Unfortunately, the known versions of Theorem~\ref{thm:DeFinetti fort} coming with quantitative estimates on the convergence~\eqref{eq:melange} are valid only in finite dimensional Hilbert spaces. Our strategy shall thus be to first localize the problem to finite dimensions. The dimension of the projected one-particle space must be chosen to minimize the sum of two errors: that due to the use of the \index{Quantum de Finetti theorem}{quantum de Finetti theorem} in the projected space, and the energy coming from particles living outside the projected space.

For pedagogical reasons we first mention a rough method for controlling the localization error. As we will see below, one can improve the result a lot, e.g. by relying on the variational PDE satisfied by minimizers to obtain a priori estimates. The following theorem can however be proved in a fully variational way, in the spirit of $\Gamma$-convergence~\cite{DalMaso-93,Braides-02}. Recall the NLS energy functional 
$$
\cEnls[u]:= \int_{\R^d} \left| \left( -\im \nabla + \bA \right) u \right| ^2 + V |u| ^2 + \frac{b_w}{2} \int_{\R^d} |u(\bx)| ^4  \mathrm{d}\bx.
$$
with minimum (under unit $L^2$ mass constraint) $\Enls$, minimizer(s) $\unls$ and the notation 
$$
b_w := \int_{\R^d} w.  
$$

\begin{theorem}[\textbf{NLS limit of bosonic ground states}]\label{thm:NLS}\mbox{}\\
Make Assumptions~\ref{asum:pots} and~\ref{asum:NLS stab} plus the more specific \index{Trapping potential}{trapping condition} 
\begin{equation}\label{eq:trapping s bis}
V(\bx) \geq c_s |\bx| ^s - C_s      
\end{equation}
for some $s>0$. Further assume that $(1+|\bx|) w (\bx) \in L^1 (\R^d)$ and $\widehat{w}\in L^1 (\R^d)$. Let 
\begin{equation}\label{eq:beta NLS}
0 < \beta < \frac{1}{2d}. 
\end{equation}
We have, in the limit $N \to + \infty$:

\smallskip 

\noindent\textbf{Convergence of the energy:} 
$$
\frac{E(N)}{N} \to \Enls. 
$$

\smallskip 

\noindent\textbf{Convergence of reduced density matrices:} let $\Gamma_N ^{(k)},k\geq 0$ be the reduced density matrices of a many-body ground state $\Psi_N$. There exists a Borel probability measure $\mu$ on the set $\Mnls$ of NLS minimizers such that, along a subsequence, 
$$
{N \choose k} ^{-1} \Gamma_N ^{(k)} \to \int_{\Mnls} |u ^{\otimes k} \rangle \langle u ^{\otimes k} | \mathrm{d}\mu(u) 
$$
strongly in trace-class norm.
\end{theorem}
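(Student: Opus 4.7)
The plan is to establish matching asymptotic upper and lower bounds on $E(N)/N$ and to deduce convergence of the reduced density matrices from the saturation of the lower bound. The constraint $\beta<1/(2d)$ will enter as the condition permitting a simultaneous choice of an intermediate \index{Localization}{localization} scale that tames two competing errors.

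For the upper bound, I take $u=\unls$ and the \index{Trial state, Hartree}{trial state} $u^{\otimes N}$. The energy per particle equals the $N$-dependent Hartree functional $\cEh_N[u]$ of~\eqref{eq:Hartree func N}. Since $u\in H^1$ and $N^{d\beta}w(N^\beta\cdot)$ is an approximate identity of mass $b_w$, the interaction integral converges to $\tfrac{b_w}{2}\int|u|^4$ under the integrability assumptions on $w$, giving $\cEh_N[\unls]\to\cEnls[\unls]=\Enls$, hence $\limsup E(N)/N\leq\Enls$.

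For the lower bound, let $\Psi_N$ be a ground state and $\gamma_N^{(k)}:=\binom{N}{k}^{-1}\Gamma_N^{(k)}$ its normalized reduced density matrices. The matching upper bound together with uniform-in-$N$ Hartree-stability supplies the a priori bounds $\tr(h\,\gamma_N^{(1)})\leq C$ and $\tr(|\bx|^s\gamma_N^{(1)})\leq C$, where $h=(-\im\nabla+\bA)^2+V$. Pick $\Lambda=\Lambda(N)\to\infty$ and let $P_\Lambda=\1(h\leq\Lambda)$, a finite-rank projector of dimension $N_\Lambda\lesssim\Lambda^{d(\frac12+\frac1s)}$ by Weyl asymptotics for Schr\"odinger operators with trapping~\eqref{eq:trapping s bis}. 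The inequality $\tr((1-P_\Lambda)\gamma_N^{(1)})\leq C/\Lambda$ says that almost all particles live in $P_\Lambda\gH$; a \index{Localization}{localization} procedure (either particle-by-particle or via geometric \index{Localization}{localization} on Fock space, preserving bosonic symmetry) replaces $\Gamma_N$ by its truncation $\widetilde\Gamma_N$ on $\bigotimes_{\sym}^N P_\Lambda\gH$ at the cost of $O(N/\Lambda)$ in the energy. The truncated problem lives in finite dimension, so a quantitative finite-dimensional version of Theorem~\ref{thm:DeFinetti fort} (in the spirit of the Christandl-K\"onig-Mitchison-Renner bound) provides a Borel probability measure $\mu_N$ on the sphere of $P_\Lambda\gH$ with
$$
\Bigl\|\widetilde\gamma_N^{(k)} - \int|u^{\otimes k}\rangle\langle u^{\otimes k}|\,\mathrm{d}\mu_N(u)\Bigr\|_1 \leq C\,\frac{k\,N_\Lambda}{N},\quad k=1,2.
$$
Testing the one-body density matrix against the projected $h$ (of operator norm $\Lambda$) and the two-body density matrix against $w_{N,\beta}=N^{d\beta}w(N^\beta\cdot)$ (of $L^\infty$ norm $\lesssim N^{d\beta}$) yields
$$
\frac{E(N)}{N} \geq \int\cEh_N[u]\,\mathrm{d}\mu_N(u) - O\!\Bigl(\tfrac{\Lambda N_\Lambda}{N}\Bigr) - O\!\Bigl(\tfrac{N^{d\beta}N_\Lambda}{N}\Bigr) - O\!\Bigl(\tfrac{1}{\Lambda}\Bigr).
$$
Combined with $\inf\cEh_N\to\Enls$, this gives $E(N)/N\geq\Enls-o(1)$ provided one can find $\Lambda=N^\theta$ with $\Lambda N_\Lambda=o(N)$ and $N^{d\beta}N_\Lambda=o(N)$; in the limit of strong trapping $s\to\infty$ these are jointly solvable precisely when $\beta<1/(2d)$, which is our assumption.

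The main obstacle is exactly this balance: the quantitative de Finetti error pushes $N_\Lambda$ down, whereas faithfully capturing the interaction $w_{N,\beta}$---concentrated in momentum up to scale $N^\beta$---demands $\Lambda\gtrsim N^{2\beta}$ and hence $N_\Lambda$ large. The threshold $\beta<1/(2d)$ is precisely where these demands meet, and the finest technical point is controlling the commutator-type error arising when $w_{N,\beta}$ is replaced by its projection onto $P_\Lambda^{\otimes 2}$ (requiring, for finite $s$, a refined choice of projector combining momentum and position cutoffs and leaning on the moment bound $\tr(|\bx|^s\gamma_N^{(1)})\leq C$). Once energy convergence is in hand, $(\mu_N)$ is tight via the uniform Hartree energy bound and~\eqref{eq:trapping s bis}; every weak limit $\mu$ is supported on $\Mnls$ by saturation of the lower bound, which gives~\eqref{eq:conv states} for $k=1,2$ directly and, by iteration for higher reduced density matrices (or by invoking the strong version of Theorem~\ref{thm:DeFinetti fort} once strong trace-class convergence of $\gamma_N^{(1)}$ is established), for every $k$.
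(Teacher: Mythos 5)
Your overall architecture—energy upper bound by a Hartree trial state, a priori one-body bounds, energy localization to a finite-dimensional low-lying subspace $P_\Lambda\gH$, a quantitative de Finetti bound inside that subspace, and finally the deduction of convergence of states from the saturated energy bound—is indeed the paper's strategy. But the specific quantitative de Finetti input you invoke cannot deliver the threshold $\beta<1/(2d)$ stated in the theorem.

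You use the Christandl--K\"onig--Mitchison--Renner type bound (Theorem~\ref{thm:deF semi}), which gives an error $\lesssim N_\Lambda k/N$ in \emph{trace norm}. Pairing that with the operator norm $\sim N^{d\beta}$ of the projected interaction, and with the dimension count $N_\Lambda\lesssim\Lambda^{d(1/2+1/s)}$ together with the forced choice $\Lambda\gtrsim N^{d\beta}$ (Lemma~\ref{lem:localize-energy}), your own error budget gives $N^{d\beta}N_\Lambda/N\sim N^{d\beta(1+d/2+d/s)-1}$, i.e.\ the condition $d\beta(1+d/2+d/s)<1$. This is strictly more restrictive than $\beta<1/(2d)$ for every finite $s$; it reaches $\beta<1/(2d)$ only as $s\to\infty$, and even then only when $d=2$ (for $d=1$ it is weaker, for $d=3$ it never reaches $1/6$). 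The theorem, however, asserts $\beta<1/(2d)$ for \emph{any} $s>0$. Your proposed escape (``a refined projector combining momentum and position cutoffs'') does not remove the obstruction: the bottleneck is the multiplicative factor $N_\Lambda$ in the trace-norm de Finetti error, which no finite-dimensional projector can avoid.

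The ingredient you are missing is the information-theoretic de Finetti theorem of Brand\~ao--Harrow and Li--Smith (Theorem~\ref{thm:deF info}), whose error depends on the dimension only through $\sqrt{\log N_\Lambda}$, but is formulated for observables of tensor-product form $A_1\otimes\cdots\otimes A_k$ and in a weaker norm. To exploit it one must write the two-body interaction as a superposition of product operators via the Fourier decomposition
$$
w_{N,\beta}(\bx-\by)=\int_{\R^d}\widehat{w_{N,\beta}}(\bp)\bigl(\cos(\bp\cdot\bx)\cos(\bp\cdot\by)+\sin(\bp\cdot\bx)\sin(\bp\cdot\by)\bigr)\mathrm{d}\bp,
$$
apply the de Finetti bound at each fixed $\bp$, and integrate. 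The resulting error is $\sim\sqrt{\log N_\Lambda/N}\cdot\int|\widehat{w_{N,\beta}}|\sim N^{d\beta-1/2}\sqrt{\log N_\Lambda}$, which is $o(1)$ precisely when $d\beta<1/2$—this is where $\beta<1/(2d)$ comes from, and now essentially without dependence on $s$. It also explains the hypothesis $\widehat{w}\in L^1(\R^d)$, which your argument never uses and which is in fact essential. A secondary subtlety you would also need to handle: the Brand\~ao--Harrow construction produces a measure on \emph{mixed} one-body states, not pure ones, so one must show in the limit that the measure concentrates on rank-one projectors before invoking Theorem~\ref{thm:DeFinetti fort}.
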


\begin{proof}[Comments]\mbox{}\\
\noindent\textbf{1.} This is taken from~\cite{Rougerie-19}. A previous version with a smaller $\beta$ was obtained in~\cite{LewNamRou-14c}. The latter quantity controls the speed at which the interaction potential converges to a Dirac delta. The larger it is, the more singular the limit. The threshold obtained above is in the middle of the mean-field regime  (the crossover to the dilute regime occurs at $\beta = 1/d$). With additional tools one can do much better, see below.

\medskip 

\noindent\textbf{2.} The main novelty in the proof technique consists in the derivation of explicit, quantitative if far from optimal, estimates on the energy. For this we use quantitative versions of the quantum de Finetti theorem which apply to finite dimensional one-particle Hilbert spaces (call $D$ the dimension). The $k$-body density matrix of a $N$-body bosonic state is approximated by a statistical superposition of factorized states with an explicit error bound, function of $D,N$ and $k$. The two versions (Subsections~\ref{sec:deF semi} and~\ref{sec:deF info}) we shall give differ both in the error bound they provide (order of magnitude, norm in which it is expressed) and the construction of the approximating state. 

\medskip 

\noindent\textbf{3.} In Subsection~\ref{sec:localization} we explain how to project the original one-body Hilbert space to finite dimensions, and control the associated error. This is the part we shall optimize more carefully later when dealing with dilute limits.

\medskip 

\noindent\textbf{4.} Although we shall not explain it here (see~\cite{LewNamRou-14c}), the method below allows to deal with the dilute regime (in fact, any $\beta >0$) in 1D. This is because the interaction is sub-critical with respect to the kinetic energy in this case, more precisely we have the Gagliardo-Nirenberg-Sobolev inequality
$$ \norm{u}_{L^{\infty} (\R)} ^2 \leq \norm{u}_{L^{2} (\R)} \norm{\nabla u}_{L^{2} (\R)}.$$
This allows for a much more efficient control of the localization error.
\end{proof}

\subsection{\index{Quantum de Finetti theorem}{Quantum de Finetti}: \index{Semiclassical analysis}{semiclassical} version}\label{sec:deF semi}

The following result is taken from~\cite{ChrKonMitRen-07,Chiribella-11,LewNamRou-14b,Harrow-13}, and is reminiscent of ideas of~\cite{AmmNie-08,AmmNie-09}. It is extensively discussed in~\cite[Chapter~4]{Rougerie-LMU,Rougerie-spartacus}, to which we refer for more details.

\begin{theorem}[\textbf{Semiclassical quantum de Finetti}]\label{thm:deF semi}\mbox{}\\
Let $\gH$ be a complex Hilbert space of dimension $D<\infty$. Let $\Gamma_N$ be an associated $N$-particle bosonic state, a positive trace-class operator on $\gH_N:=\bigotimes_{\rm sym}^N \gH$ with unit trace. Let $\Gamma_N ^{(k)}$ be the associated reduced density matrices, defined as in~\eqref{eq:def red mat}.

Define a measure $\mu_N$ on $S\gH$, the unit sphere of $\gH$ in the manner
\begin{equation}\label{eq:CKMR}
 \mathrm{d}\mu_N (u) := D_N \left\langle u ^{\otimes N} | \Gamma_N | u^{\otimes N} \right\rangle\mathrm{d}u  
\end{equation}
with $du$ the normalized Lebesgue measure on $S\gH$ and $D_N$ the dimension of the bosonic space $\gH_N$.   
 
Then
\begin{equation}\label{eq:deF semi}
 \tr \left| {N \choose k} ^{-1} \Gamma_N ^{(k)} - \int_{S\gH} |u^{\otimes k} \rangle \langle u ^{\otimes k}| \mathrm{d}\mu_N (u)  \right| \leq C \frac{Dk}{N}
\end{equation}
for some universal constant $C>0$. 
\end{theorem}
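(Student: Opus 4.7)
The strategy is to introduce an auxiliary ``reconstructed'' $N$-body state whose $k$-body marginals are exactly the sought-after operator, and then bound its distance in trace norm to the original $\Gamma_N$ at the level of $k$-body marginals. I proceed in four steps.

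\smallskip

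\emph{Step 1 (Resolution of identity via Schur's lemma).} The unitary group $U(\gH)$ acts irreducibly on $\gH_N$ via $U\mapsto U^{\otimes N}$. Since the operator $I_N := \int_{S\gH}|u^{\otimes N}\rangle\langle u^{\otimes N}|\,\mathrm{d}u$ commutes with this action, Schur's lemma forces $I_N = c_N \1_{\gH_N}$, and taking the trace fixes $c_N = 1/D_N$. This yields
$$D_N\int_{S\gH}|u^{\otimes N}\rangle\langle u^{\otimes N}|\,\mathrm{d}u = \1_{\gH_N},$$
which immediately shows that $\mu_N$ is a probability measure (positivity comes from that of $\Gamma_N$, normalization from integrating this identity against $\Gamma_N$). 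The same argument applied on $\gH_{N+k}$ gives the analogous identity there, which will be the key to the comparison.

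\smallskip

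\emph{Step 2 (The auxiliary state and its marginals).} Define $\widetilde{\Gamma}_N := \int_{S\gH}|u^{\otimes N}\rangle\langle u^{\otimes N}|\,\mathrm{d}\mu_N(u)$, a bosonic density matrix on $\gH_N$. Since $u$ is a unit vector, $\tr_{k+1\to N}|u^{\otimes N}\rangle\langle u^{\otimes N}| = |u^{\otimes k}\rangle\langle u^{\otimes k}|$, so that
$$\tr_{k+1\to N}\widetilde{\Gamma}_N = \int_{S\gH}|u^{\otimes k}\rangle\langle u^{\otimes k}|\,\mathrm{d}\mu_N(u) = \binom{N}{k}^{-1}\widetilde{\Gamma}_N^{(k)}.$$
Thus the theorem reduces to controlling $\binom{N}{k}^{-1}\bigl\|\Gamma_N^{(k)} - \widetilde{\Gamma}_N^{(k)}\bigr\|_1 \leq CDk/N$.

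\smallskip

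\emph{Step 3 (Reconstruction formula on $\gH_{N+k}$).} Factoring $|u^{\otimes(N+k)}\rangle\langle u^{\otimes(N+k)}| = |u^{\otimes k}\rangle\langle u^{\otimes k}|\otimes |u^{\otimes N}\rangle\langle u^{\otimes N}|$ and applying Step~1 on $\gH_{N+k}$, the integral $\int|u^{\otimes k}\rangle\langle u^{\otimes k}|\otimes|u^{\otimes N}\rangle\langle u^{\otimes N}|\,\mathrm{d}u$ equals $D_{N+k}^{-1}P_{\sym,N+k}$, where $P_{\sym,N+k}$ symmetrizes across all $N+k$ slots. Substituting into the definition of $\widetilde{\Gamma}_N^{(k)}$ yields the compact formula
$$\binom{N}{k}^{-1}\widetilde{\Gamma}_N^{(k)} = \frac{D_N}{D_{N+k}}\,\tr_{k+1\to N+k}\!\bigl[\,P_{\sym,N+k}\,\bigl(\1_{\gH_k}\otimes \Gamma_N\bigr)\,P_{\sym,N+k}\,\bigr],$$
with $\Gamma_N$ acting on slots $\{k+1,\dots,N+k\}$.

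\smallskip

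\emph{Step 4 (Combinatorial comparison).} Expanding $P_{\sym,N+k} = \frac{1}{(N+k)!}\sum_{\sigma}U_\sigma$ and grouping permutations according to the subset $I\subset\{1,\dots,N+k\}$ of size $k$ onto which they send the ``outer'' block $\{1,\dots,k\}$, one splits the sum into the $\binom{N+k}{k}^{-1}$-weighted identity contribution (which, using $\tr\Gamma_N=1$ and the bosonic symmetry of $\Gamma_N$, reproduces exactly $\binom{N}{k}^{-1}\Gamma_N^{(k)}$ up to the prefactor $\binom{N+k}{k}D_N/D_{N+k}$) and ``mixing'' contributions where at least one outer slot is swapped with an inner one. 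Direct computation gives
$$\binom{N+k}{k}\frac{D_N}{D_{N+k}} = \prod_{j=1}^{k}\frac{N+j}{N+D+j-1} = 1 + O\!\left(\frac{Dk}{N}\right),$$
and each mixing term is, after a further application of bosonic symmetry, of the form $\binom{N}{k}^{-1}\Gamma_N^{(k-\ell)}\otimes(\text{partial trace factors})$ weighted by $O(k/N)$. Summing and using the a priori trace bounds $\tr\Gamma_N^{(j)}=\binom{N}{j}\leq \binom{N}{k}$ for $j\leq k$ yields the claimed $O(Dk/N)$ trace-norm estimate.

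\smallskip

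The main obstacle is Step~4: individual terms in the permutation expansion are not small term by term in trace norm, and the $CDk/N$ bound only emerges after careful regrouping that exploits the bosonic symmetry of $\Gamma_N$ and the positivity preserving the total mass. The $D$ in the error comes from the ratio of dimensions $D_N/D_{N+k}$, and any attempt to bypass this dimensional factor would require fundamentally different tools, which is precisely why the infinite-dimensional quantitative de Finetti theorem is not available in this form.
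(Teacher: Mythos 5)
Your overall strategy — resolution of identity on $\gH_{N+k}$ via Schur's lemma, passing to the auxiliary state $\widetilde{\Gamma}_N$, and expanding the symmetrizer $P_{\sym,N+k}$ into permutations — is a legitimate and known route to the result, essentially the Christandl--K\"onig--Mitchison--Renner / Chiribella approach that the paper also cites. The paper's own sketch takes a different but closely related path: it expresses $\gamma_N^{(k)}=\int |u^{\otimes k}\rangle\langle u^{\otimes k}|\,\mathrm{d}\mu_N(u)$ and $\binom{N}{k}^{-1}\Gamma_N^{(k)}$ respectively as anti-normal-ordered and normal-ordered expectations of monomials in $a^\dagger(v),a(v)$ (Eqs.\ \eqref{eq:AWick} and \eqref{eq:dens mat CCR}), then compares them by repeated application of the CCR. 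The two proofs perform the same Wick-type combinatorics, one phrased algebraically, one phrased via permutation counting; either can in principle yield the optimal $CDk/N$ bound.

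However, your Step~4 contains a genuine error that invalidates the accounting. You claim that the \emph{non-mixing} permutations (those preserving $\{1,\dots,k\}$) reproduce $\binom{N}{k}^{-1}\Gamma_N^{(k)}$. They do not. For such a $\sigma$, decompose $U_\sigma=U_{\sigma|_A}\otimes U_{\sigma|_B}$ with $A=\{1,\dots,k\}$, $B=\{k+1,\dots,N+k\}$; bosonic symmetry gives $U_{\sigma|_B}\Gamma_N=\Gamma_N$, so
$$
\tr_{k+1\to N+k}\!\bigl[U_\sigma(\1_{\gH_k}\otimes\Gamma_N)\bigr]=U_{\sigma|_A}\cdot\tr(\Gamma_N)=U_{\sigma|_A},
$$
and summing over the $k!N!$ non-mixing permutations yields a multiple of the symmetrizer $P_{\sym,k}$, \emph{not} of $\Gamma_N^{(k)}$. (Test this for $k=N=1$: the identity permutation gives the scalar $\tr\Gamma_1=1$, while only the swap $\tr_2[U_{\rm swap}(\1\otimes\Gamma_1)]=\Gamma_1$ produces the density matrix.) It is the permutations that move the outer block \emph{into} the inner block that reconstruct $\Gamma_N^{(k)}$ after the partial trace, so the main term and the error terms are misattributed. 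Your dimensional identity is also off: one has $D_N/D_{N+k}=\prod_{j=1}^{k}\frac{N+j}{N+D+j-1}$ \emph{without} the extra $\binom{N+k}{k}$; inserting that binomial inflates the prefactor by a factor of order $N^k/k!$. Finally, the claim that the mixing contributions are of the form $\binom{N}{k}^{-1}\Gamma_N^{(k-\ell)}\otimes(\text{partial traces})$ weighted by $O(k/N)$ is asserted but not derived, and this is precisely where the work lies: the exact Chiribella-type decomposition one obtains expresses $\binom{N}{k}^{-1}\Gamma_N^{(k)}$ as a convex-like combination of $\int |u^{\otimes\ell}\rangle\langle u^{\otimes\ell}|\otimes(\cdots)\,\mathrm{d}\mu_N(u)$ over $\ell=0,\dots,k$, and the $CDk/N$ bound emerges from comparing the $\ell=k$ coefficient to $1$ and showing the remaining coefficients sum to $O(Dk/N)$. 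As written, Steps 1--3 are sound, but Step 4 needs to be redone with the correct identification of the dominant permutation class and the correct dimensional prefactor.
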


\begin{proof}[Comments]\mbox{}\\
\noindent\textbf{1.} Note the particularly simple construction of the measure. In fact, by \index{Schur's lemma}{Schur's lemma} we have 
\begin{equation}\label{eq:Schur}
D_N \int_{S\gH} |u^{\otimes N} \rangle \langle u ^{\otimes N}| \mathrm{d} u = \1_{\gH_N} 
\end{equation}
and one may think of the above as a coherent state~\cite{ComRob-12,KlaSka-85} resolution of the identity. The statement~\eqref{eq:deF semi} on reduced density matrices would follow if we could represent our original state within this \index{Coherent states}{coherent state} basis in the form
$$ \Gamma_N = \int_{S\gH} |u^{\otimes N} \rangle \langle u ^{\otimes N}| \mathrm{d}\mu_N (u).$$
In the vocabulary of semiclassical analysis, this amounts to looking for an \index{Symbol, upper and lower}{upper symbol}. The object defined in~\eqref{eq:CKMR} on the other hand is the lower symbol\footnote{Upper and lower symbols go under various names in the literature, see the discussion after Definition~\ref{def:symbols} below.} of $\Gamma_N$. A rationale for the theorem is that, in a semiclassical regime, upper and lower symbols have a strong tendency to coincide~\cite{Lieb-73b,Simon-80} (see Section~\ref{sec:coherent} below). That the large $N$ limit of bosonic systems can be interpreted as a semiclassical limit is a fact extensively used for the dynamical mean-field problem~\cite{AmmNie-08,AmmNie-09,AmmNie-11,FroGraSch-07,FroKnoPiz-07,FroKno-11,FroKnoSch-09,GinVel-79,GinVel-79b,Hepp-74}. 

\medskip

\noindent\textbf{2.} A convenient way to prove the theorem (and bolster its semiclassical feel) is to realize that, denoting 
$$ \gamma_N ^{(k)} = \int_{S\gH} |u^{\otimes k} \rangle \langle u ^{\otimes k}| \mathrm{d}\mu_N (u),$$ 
we have 
\begin{equation}\label{eq:AWick}
 \langle v^{\otimes k} |  \gamma_N ^{(k)}| v^{\otimes k} \rangle = \frac{(N+D-1)!}{(N+k+D-1)!} \tr \left( a(v)^k a ^\dagger(v)^k \Gamma_N \right) 
\end{equation}
where $a,a^\dagger$ are \index{Creation and annihilation operators}{annihilation and creation operators}, see Section~\ref{sec:second quant}. This completely determines the $k$-body operator $\gamma_N ^{(k)}$, and Equation~\eqref{eq:deF semi} says that it is close to ${N \choose k} ^{-1} \Gamma_N^{(k)}$. The latter is fully determined by~\eqref{eq:dens mat CCR}, whose right-hand side is (apart from a constant factor $\sim 1$) that of~\eqref{eq:AWick} with the annihilators/creators in the reverse order. A proof of the theorem (that of~\cite{LewNamRou-14b}, see the aforementioned references for alternatives) then essentially consists  in a repeated application of the \index{Canonical commutation relations (CCR)}{CCR}~\eqref{eq:CCR} to commute annihilators and creators and thereby compare~\eqref{eq:AWick} to~\eqref{eq:dens mat CCR}. The crux of the proof is that commutators between annihilators and creators are bounded independently of $N$, which is much smaller than the ``typical value'' of a single such operator. When projected on the $N$-body space such a typical value is of order $\sqrt{N}$ as can be expected (cf~\eqref{eq:number second} and~\eqref{eq:hamil second}) from the expression
$$ \cN = \sum_{k= 1} ^D a^\dagger (u_k) a(u_k)$$
where $(u_k)_{k=1} ^D$ is an orthonormal basis of $\gH$. Each commutator should thus be thought as a remainder when its expectation value is taken in a state with a large number of particles.

\medskip

\noindent\textbf{3.} A variant of the theorem, used for example in~\cite{AmmNie-08,AmmNie-09,AmmNie-11,AmmNie-15,LewNamRou-14d} works on the bosonic \index{Fock space}{Fock space}~\eqref{eq:Fock} built on $\gH$. Namely, one uses the variant of~\eqref{eq:Schur} given by 
$$ \pi ^{-D} \int | \xi(u) \rangle  \langle \xi(u) | \mathrm{d} u= \1_{\gF}$$
where 
\begin{equation}\label{eq:coh u}
\xi(u) = e^{-\norm{u} ^2/2} \bigoplus_{N=0} ^\infty \frac{u^{\otimes N}}{\sqrt{N!}}  
\end{equation}
is now a genuine coherent state in that it is of the form~\cite[Chapter~1]{KlaSka-85} 
\begin{equation}\label{eq:coh u bis}
 e^{a^{\dagger} (u)- a(u)} |0\rangle 
\end{equation}
with $|0\rangle = 1 \oplus 0 \oplus \ldots$ the vacuum state of Fock space. The error in the theorem is then quantified in terms of the average particle number (expectation value of $\cN$) of a state on Fock space. The use of coherent states for dynamical mean-field problems has a long history~\cite{Hepp-74,GinVel-79,RodSch-09,Schlein-08,Schlein_ICMP09-10,BenPorSch-15}.

\medskip

\noindent\textbf{4.} In the applications to mean-field limits, the main limitation of the above theorem is the dependence of the error term on the dimension of the one-body Hilbert space, $D$. It leads to the necessity of projecting the full $L^2$-space to a very small subspace, and hence to a rather bad localization error. In the next section we give another version of the quantum de Finetti theorem that has a much better dependence on $D$.
\end{proof}

\subsection{\index{Quantum de Finetti theorem}{Quantum de Finetti}: information-theoretic version}\label{sec:deF info}

In~\cite{BraHar-12,LiSmi-15}, a variant of Theorem~\ref{thm:deF semi} has been obtained, where the error's dependence on $D$ is only logarithmic. There are several catches to be able to achieve this, which we will discuss after having stated the 

\begin{theorem}[\textbf{Information-theoretic quantum de Finetti}]\label{thm:deF info}\mbox{}\\
Let $\gH$ be a complex Hilbert space of dimension $D<\infty$. Let $\Gamma_N$ be an associated $N$-particle symmetric state, a positive trace-class operator on $\gH^{\otimes N}$ with unit trace satisfying~\eqref{eq:boltzons}. Let $\Gamma_N ^{(k)}$ be the associated reduced density matrices, defined as in~\eqref{eq:def red mat}.

For each $k\in \N$ there exists a probability measure $\mu_N^{(k)}$ on the set of one-body states 
\begin{equation}\label{eq:mixed 1body}
 \cS := \left\{ \gamma \in \gS^1 (\gH), \gamma = \gamma^\dagger \geq 0, \tr\, \gamma  = 1\right\} 
\end{equation}
such that, for all self-adjoint operators $A_1,\ldots,A_k$ on $\gH$ 
\begin{equation}\label{eq:deF info}
 \tr \left| A_1 \otimes \ldots \otimes A_k \left( {N \choose k} ^{-1} \Gamma_N ^{(k)} - \int_{\cS} \gamma^{\otimes k} \mathrm{d}\mu_N ^{(k)}(\gamma) \right) \right| \leq C k \sqrt{\frac{\log D}{N}} \prod_{j=1} ^k \norm{A_j}
\end{equation}
for some universal constant $C>0$, where $\norm{A_j}$ stands for the operator norm.  
\end{theorem}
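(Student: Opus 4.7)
The plan is to follow the information-theoretic strategy pioneered by Brandão--Harrow, which trades the strong trace-norm control of Theorem~\ref{thm:deF semi} for control only against product observables $A_1 \otimes \ldots \otimes A_k$, in exchange for an exponential improvement of the dimension dependence (from $D$ to $\log D$). The underlying heuristic is that any symmetric $N$-body state with low ``per-particle information'' must be close to a mixture of product states when viewed through local measurements.

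First I would reduce the quantum problem to a classical one using an informationally complete POVM $\{M_\alpha\}$ on $\mathcal{H}$, with $\sum_\alpha M_\alpha = \1$. The state $\Gamma_N$ induces a symmetric probability distribution on the classical outcomes $(\alpha_1,\ldots,\alpha_N)$; product quantum states $\gamma^{\otimes k}$ correspond to product distributions, and the map $\alpha \mapsto M_\alpha$ admits an inverse that is bounded on product observables with norm essentially $\prod_j \|A_j\|$ (this is precisely why the final statement restricts test observables to tensor products). Next I would apply the classical chain rule for mutual information to the induced distribution: since the full entropy per particle is bounded by $\log D$ (thanks to Schur--Weyl--type compression, or equivalently a monogamy-of-information argument on the underlying quantum state), one gets
\begin{equation*}
\sum_{i=1}^{N-1} I(X_1 : X_{i+1} \mid X_2,\ldots,X_i) \;\leq\; \log D,
\end{equation*}
so that at least one index $i^\star \leq N-k$ yields conditional mutual information of order $(\log D)/N$.

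Then, conditioning on the outcomes of particles $i^\star+1,\ldots,N$ (which by symmetry is statistically equivalent to conditioning on the last $N-i^\star$ particles) and applying the quantum Pinsker inequality to convert small mutual information into closeness in total variation, one obtains that the classical distribution of $(X_1,\ldots,X_k)$ is approximately a conditional product distribution, up to an error of order $\sqrt{(\log D)/N}$ for each of the $k$ factors — hence $k\sqrt{(\log D)/N}$ after a telescoping argument over the $k$ particles. Inverting the POVM on the product observable $A_1 \otimes \ldots \otimes A_k$ translates this classical bound into the quantum bound~\eqref{eq:deF info}, with the measure $\mu_N^{(k)}$ defined as the pushforward, under the POVM inverse, of the distribution of the conditional one-body states obtained by this procedure (this is why $\mu_N^{(k)}$ depends on $k$, unlike in Theorem~\ref{thm:deF semi}).

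The main obstacle is the interplay between steps two and three: one must argue that even though there is no a priori control on \emph{which} $i^\star$ is good, the symmetry of $\Gamma_N$ makes an averaging argument work, and then one must ensure that the Pinsker-type step does not lose dimensional factors when lifted from the classical setting back to the quantum one. The optimality of the logarithmic dependence on $D$ relies crucially on using an informationally complete POVM with good concentration properties (the uniform POVM on rank-one projectors, or an approximate $4$-design, suffice) so that the inversion onto product observables incurs only the operator-norm factors $\|A_j\|$ and no extra dimension-dependent price. The factor $k$ in the bound is the combined cost of the telescoping, while the square root is the signature of Pinsker's inequality.
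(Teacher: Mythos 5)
The high-level information-theoretic ingredients you list --- the chain rule for mutual information applied to the permutation-symmetric state, Pinsker's inequality as the source of the $\sqrt{\cdot}$, iteration over the $k$ test systems producing the factor $k$, and the $k$-dependence of the constructed measure --- do correctly describe the skeleton of the Brand\~{a}o--Harrow and Li--Smith arguments. But your plan to reduce to a \emph{fully classical} problem by measuring all $N$ particles with an informationally complete POVM and then inverting that POVM on $A_1 \otimes \cdots \otimes A_k$ contains a step that would fail.

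The obstruction is quantitative and cannot be fixed by a better choice of POVM. For any informationally complete POVM $\{M_\alpha\}$ on a $D$-dimensional space, a decomposition $A = \sum_\alpha c_\alpha M_\alpha$ of a self-adjoint operator $A$ must have $\sum_\alpha |c_\alpha| \gtrsim D\norm{A}$; for the uniform rank-one POVM the dual frame carries an explicit $D+1$ factor, and spherical designs or randomized POVMs do not change this order of magnitude --- it is a frame-theoretic constraint, not a concentration phenomenon, so the claim that the inversion ``incurs only the operator-norm factors'' is simply false. Inverting on each of the $k$ test factors would therefore cost a factor of order $D^k$, wiping out the $\log D$ dependence that the theorem is designed to deliver. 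The actual argument circumvents this by measuring \emph{only} the witness particles $k+1,\ldots,N$ with a POVM $\{M_\mu\}$ acting on $\gH^{\otimes(N-k)}$, leaving the first $k$ test particles quantum: the post-measurement conditional state on those $k$ particles is then compared \emph{directly} to the $k$-fold tensor power of its own one-body marginal (this is the $\widetilde\Gamma$ displayed in the paper's sketch), so no inversion ever takes place on the test side. The mutual-information and Pinsker machinery you describe is applied to this hybrid quantum-classical construction, and the passage from the resulting LOCC-type bound to the product-observable bound~\eqref{eq:deF info} is a separate linear-algebra step (the reformulation cited in the Comments), again carrying no $D$-dependent penalty.
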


\begin{proof}[Comments]\mbox{}\\
\noindent \textbf{1.} The formulation we give is less powerful than the originals~\cite{BraHar-12,LiSmi-15} where in particular the error is expressed in the LOCC norm (local operations and classical communication). See~\cite{Rougerie-19,Girardot-19} for the reformulation testing against tensorized operators. Pedagogical discussions of the original proof are in~\cite{BraChrHarWal-16,Rougerie-19}. The first obvious drawback of this result, as compared with Theorem~\ref{thm:deF semi} is the weaker way in which the error is measured. Another point, worthy of note but not really annoying, is that the constructed measures a priori depend on $k$.  We usually only need $k=2$.

\medskip 

\noindent \textbf{2.} Theorem~\ref{thm:deF semi} can also be extended from bosonic to general symmetric states~\cite{ChrKonMitRen-07}. In Theorem~\ref{thm:deF info} above we state this generalization explicitly. The reason is that, even if we start from a bosonic state (satisfying the stronger~\eqref{eq:bosons} on top of~\eqref{eq:boltzons}) as needs be done for the topics of this review, the constructed measure does not charge only bosonic states. Namely, it a priori lives on the full one-body state space~\eqref{eq:mixed 1body} instead of just pure states $\gamma = | u \rangle \langle u|$. This is a nuisance that has to be taken care of when using the above for bosonic mean-field limits. 


\medskip 

\noindent \textbf{3.} I chose to refer to the statement as an ``information-theoretic'' version of the quantum de Finetti theorem because of its proof. Most of it proceeds by quantifying the errors made using information-based quantities: relative entropies, mutual informations etc... Pinsker's inequality is then used at the very end of the proof to recover trace-norm-based measures of the error.

\medskip 

\noindent \textbf{4.} The $N$-dependence of the bound is worse than that from Theorem~\ref{thm:deF semi}, but for the applications we target this is more than made-up for by the excellent dependence on~$D$.  
\end{proof}

Here is a glimpse of the construction that leads to the result, which is only semi-explicit. See the original references~\cite{BraHar-12,LiSmi-15} or~\cite[Appendix~A]{Rougerie-19} for more details. More background on the notions and heuristics below are in the lecture notes~\cite{BraChrHarWal-16}.

One formalizes the idea of a measurement of the last $N-k$ subsystems of our $N$ particles by associating to $\Gamma = \Gamma_N$ the family of states 
\begin{equation}\label{eq:measure}
\Gamma_{\mu} := \frac{\tr_{k+1\to N} \left( \1^{\otimes ^k} \otimes M_\mu \Gamma \right) \otimes |e_\mu \rangle \langle e_\mu |}{p_\mu},  \quad p_\mu := \tr \left( \1^{\otimes k} \otimes M_\mu \Gamma \right) 
\end{equation} 
where $(e_\mu)_\mu$ is an orthogonal basis of $\gH ^{\otimes (N-k)}$ and the $M_\mu$'s are positive matrices such that
$$ \sum_\mu M_\mu = \1 ^{\otimes (N-k)}.$$
Roughly, in quantum mechanics a measurement of an observable (self-adjoint operator) with spectral decomposition 
$$ \sum_\mu a_\mu |e_\mu \rangle \langle e_\mu |$$
in a state $\gamma$ leads to the value $a_\mu$ with probability $\langle e_\mu | \gamma | e_\mu \rangle$. After the measurement the system is in the pure state $|e_\mu \rangle \langle e_\mu|$. For various reasons (in particular, if the measurement is done over a subsystem only) one generalizes this by allowing the probability to end up in the state $|e_\mu \rangle \langle e_\mu|$ to be of the form $\tr \left( M_\mu \gamma\right)$ where, as above the positive matrices $M_\mu$ add to the identity.

From this point of view, we associate the states $\Gamma_\mu$ in~\eqref{eq:measure} to the original $\Gamma$ by performing a generalized measurement over $N-k$ particles only. The out-coming state is $\Gamma_\mu$ with probability $p_\mu$. Now we can form a statistical superposition of factorized states as follows
\begin{equation}\label{eq:post measure}
 \widetilde{\Gamma} = \sum_\mu p_\mu \left( N ^{-1} \Gamma_\mu ^{(1)}\right)^{\otimes N} 
\end{equation}
and hope it will accurately approximate the original $\Gamma$. Namely, we are trying to guess a good de Finetti representation of $\Gamma$ by (fictitiously) making measurements on $N-k$ subsystems and using the so-obtained information to construct a measure over one-body states. 

Now, for each measurement (orthonormal basis $(e_\mu)_\mu$ and positive matrices $(M_\mu)_\mu$ adding to the identity) we can evaluate the error between the density matrices of the associated $\widetilde{\Gamma}$ and those of the original $\Gamma$. Clearly, for any choice of measurement 
\begin{multline*}
 \inf_{\nu \in \cP (\cS)}\tr \left| A_1 \otimes \ldots \otimes A_k \left( {N \choose k} ^{-1} \Gamma_N ^{(k)} - \int_{\cS} \gamma^{\otimes k} \mathrm{d}\nu (\gamma) \right) \right| \\ \leq
 \tr \left| A_1 \otimes \ldots \otimes A_k {N \choose k} ^{-1} \left(  \Gamma_N ^{(k)} - \widetilde{\Gamma} ^{(k)} \right) \right|. 
\end{multline*}
So if we can construct a trial measurement such that the right-hand side of the above is bounded by the right-hand side of~\eqref{eq:deF info} there must exist a measure over one-body states such that~\eqref{eq:deF info} holds. See the above references for details on this procedure. The place where the construction ceases to be explicit is when the \emph{minimum} error over all measurements is bounded above by the \emph{maximum} error within a certain sub-class of factorized measurements.

\subsection{\index{Localization}{Localization} method}\label{sec:localization}

Now we sketch the proof of Theorem~\ref{thm:GS schro}, mostly by providing the localization (to finite dimensional spaces) needed to put Theorems~\ref{thm:deF semi} and~\ref{thm:deF info} to good use. See also~\cite[Chapter~7]{Rougerie-LMU,Rougerie-spartacus}

\bigskip 

\noindent \textbf{Localizing the Hamiltonian.} Denote 
\begin{equation}\label{eq:one body h}
 h = \left( -\im \nabla + \bA \right) ^2 + V \geq 0 
\end{equation}
the one-body Hamiltonian (assuming it is positive is just a shift of the energy reference). Let $\Lambda \geq 0$ be an energy cut-off that will ultimately be optimized over. Let 
\begin{equation}\label{eq:projectors}
 P = \1_{h \leq \Lambda}, \quad Q= \1 - P 
\end{equation}
be spectral projectors associated to $h$, which we use as localizers in energy-space. We refer to $Q\gH$ as the subspace of excited particles.

Our $h$ has compact resolvent, hence 
$$ N_\Lambda := \mathrm{dim} (P L^2 (\R^d)),$$
the number of energy levels below the cut-off $\Lambda$, is finite. In fact we have
\begin{equation}\label{eq:dim}
N_\Lambda := \dim (P) \leq C \Lambda ^{\frac{d}{s} + \frac{d}{2}}  
\end{equation}
with $s$ the exponent in~\eqref{eq:trapping s}. This goes under the name of a \index{Lieb-Thirring inequality}{Cwikel-Lieb-Rosenblum bound} and is a \index{Cwikel-Lieb-Rosenblum bound}{particular} case ($\delta = 0$) of  

\begin{lemma}[\textbf{A Lieb-Thirring inequality}]\label{lem:LT}\mbox{}\\
Let $\lambda_1,\ldots,\lambda_j,\ldots$ be the eigenvalues of~\eqref{eq:one body h}, counted with multiplicity. We have, for any $\delta \geq 0$, 
\begin{equation}\label{eq:LT}
\sum_{j, \lambda_j \leq \Lambda} \lambda_j ^\delta \leq C  \Lambda^{\delta + d/s + d/2}
\end{equation}
for some constant $C= C (d,s,\delta) >0$ with $s$ the exponent in~\eqref{eq:trapping s} and $d$ the spatial dimension.
\end{lemma}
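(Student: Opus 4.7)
The plan is to reduce the Riesz-type sum to a pure counting bound via the trivial estimate
$$\sum_{j:\,\lambda_j \leq \Lambda} \lambda_j^\delta \;\leq\; \Lambda^\delta \, N(\Lambda), \qquad N(\Lambda) := \#\{j : \lambda_j \leq \Lambda\},$$
which is loss-free at the level of the claimed exponent $\delta + d/s + d/2$ since the expected size of $N(\Lambda)$ is already $\Lambda^{d/s + d/2}$ (cf.~the Weyl heuristic $N(\Lambda) \sim (2\pi)^{-d}\,|\{(\bx,\bp) : |\bp|^2 + V(\bx) \leq \Lambda\}|$). Thus the whole game is to prove $N(\Lambda) \leq C \Lambda^{d/s + d/2}$, and multiply by $\Lambda^\delta$.

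First I would get rid of the magnetic field. Note that $N(\Lambda)$ is exactly the number of non-positive eigenvalues of $h - \Lambda = (-\im\nabla + \bA)^2 + (V-\Lambda)$. The diamagnetic inequality~\eqref{eq:diamag}, applied in its consequence for the Birman-Schwinger kernel (see~\cite{AvrHerSim-78} and~\cite[Thm.~2.13]{LieSei-09}), yields
$$N(\Lambda) \;\leq\; \#\{\text{eigenvalues of } -\Delta + (V - \Lambda) \text{ that are } \leq 0\}.$$
Once $\bA$ is gone I would invoke the Cwikel-Lieb-Rozenblum bound (in $d \geq 3$):
$$\#\{\text{non-positive eigenvalues of } -\Delta + W\} \;\leq\; C_d \int_{\R^d} W_-(\bx)^{d/2} \, \mathrm{d}\bx,$$
with $W = V - \Lambda$. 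In $d=1,2$, where CLR fails, I would use instead the Lieb-Thirring bound with a Riesz exponent $\gamma > 0$ combined with a layer-cake/doubling argument (i.e. bound $N(\Lambda)$ via $\sum_j (2\Lambda - \lambda_j)_-^\gamma$ restricted to $\lambda_j \leq \Lambda$), which is by now a standard maneuver in Schr\"odinger spectral theory.

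Second I would plug in the trapping assumption~\eqref{eq:trapping s}: since $V(\bx) \geq c_s |\bx|^s - C_s$, we have pointwise
$$(V - \Lambda)_-(\bx) \;\leq\; (C_s + \Lambda - c_s |\bx|^s)_+,$$
which is supported on $|\bx| \leq R$ with $R := ((C_s + \Lambda)/c_s)^{1/s}$. A change of variables $\bx = R \by$ gives
$$\int_{\R^d} (V - \Lambda)_-^{d/2} \, \mathrm{d}\bx \;\leq\; R^d (C_s + \Lambda)^{d/2} \int_{|\by| \leq 1} (1 - |\by|^s)^{d/2}\, \mathrm{d}\by \;\leq\; C \, \Lambda^{d/s + d/2}$$
for $\Lambda \geq 1$ (absorbing a harmless constant for bounded $\Lambda$ by simply increasing the overall constant). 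Combining with the CLR step yields $N(\Lambda) \leq C \Lambda^{d/s + d/2}$, whence~\eqref{eq:LT} after multiplying by $\Lambda^\delta$.

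The main obstacle is purely technical: in low dimensions ($d=1,2$) CLR is unavailable, so one must go through the $\gamma > 0$ Lieb-Thirring bound and a layer-cake argument to recover a counting estimate; the rest is just bookkeeping of the trapping integral. A cleaner, non-perturbative alternative (which one might prefer if sharper constants were needed) would be to work directly with the semiclassical Weyl-type upper bound for trapped $V$, but for the polynomial bound stated here the CLR-plus-diamagnetic route is the shortest.
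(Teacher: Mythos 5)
The overall strategy---reduce to a bound on the counting function $N(\Lambda) := \#\{j : \lambda_j \leq \Lambda\}$, control $N(\Lambda)$ by a Cwikel--Lieb--Rozenblum or Lieb--Thirring bound, and evaluate the resulting integral using the trapping condition~\eqref{eq:trapping s}---is sound and is precisely what the reference the paper defers to (\cite{LewNamRou-14c}, Lemma~3.3) does. Your integral computation in the second step is correct, and the layer-cake/doubling maneuver with $\gamma>0$ is the right workaround for $d=1,2$ (modulo a small notational slip: you want $(\lambda_j-2\Lambda)_-$, not $(2\Lambda-\lambda_j)_-$, so that the quantity is $\geq \Lambda^\gamma$ on the range $\lambda_j\leq\Lambda$). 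The reduction $\sum \lambda_j^\delta \leq \Lambda^\delta N(\Lambda)$ also implicitly uses the paper's convention $h\geq 0$ (shift by $C_s$); it is worth saying this once so that $\lambda_j^\delta$ is well-defined.

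The one genuine gap is in the magnetic reduction. The diamagnetic inequality~\eqref{eq:diamag} does \emph{not} imply the pointwise counting-function comparison
$$
N(\Lambda)\;\leq\;\#\left\{\text{non-positive eigenvalues of }-\Delta + V - \Lambda\right\}
$$
that you assert: the map $u\mapsto|u|$ is not linear, so the one-body quadratic-form inequality does not propagate through the min-max principle to the $j$-th eigenvalue for $j\geq 2$, and there is no such spectral monotonicity in general. What \emph{is} true---and is what~\cite{AvrHerSim-78} and~\cite[Chapter~4]{LieSei-09} provide---is that the CLR and Lieb--Thirring bounds hold for $h=(-\im\nabla+\bA)^2+V$ directly, with constants independent of $\bA$: the diamagnetic domination should be applied \emph{inside} the proof of the CLR/LT inequality (at the level of heat kernels or Birman--Schwinger kernels), not used to first pass to the non-magnetic operator's spectrum. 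With that correction the rest of your argument goes through unchanged and gives the claimed bound.
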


\begin{proof}[Comments]
Statements of this type are particularly important in rigorous many-body quantum mechanics. See~\cite[Lemma~3.3]{LewNamRou-14c} and references therein for the proof of the particular version above, and~\cite[Chapter~4]{LieSei-09} or~\cite[Chapters~3 and~5]{Simon-05} for general background.  

The right-hand side of~\eqref{eq:LT} is obtained as the large $\Lambda$ asymptotics for the \index{Semiclassical analysis}{semiclassical} analogue of the left-hand side, namely the phase-space integral
$$
\iint_{\bx,\bp \in \R^d \times \R^d}  \left( |\bp|^2 + |\bx| ^s \right)^\delta \1_{\left\{|\bp|^2 + |\bx| ^s \leq \Lambda\right\}} \mathrm{d} \bx \mathrm{d}\bp.
$$
\end{proof}

The expectation value of the many-body energy per particle associated with~\eqref{eq:intro Schro op bis} in the state vector $\Psi_N$ is given by
\begin{equation}\label{eq:to two body}
N^{-1} \left\langle \Psi_N | H_N | \Psi_N \right\rangle = {N \choose 2} ^{-1} \frac{1}{2} \tr \left( H_2 \Gamma_N ^{(2)} \right) 
\end{equation}
with the two-body \index{Reduced density matrix}{density matrix} $\Gamma_N ^{(2)}$ as in~\eqref{eq:def red mat}. The two-body Hamiltonian is
\begin{equation}\label{eq:two body hamil} 
H_2 := h_{\bx_1} + h_{\bx_2} + w_{N,\beta} (\bx_1 - \bx_2) 
\end{equation}
with 
$$w_{N,\beta} (\bx) = N^{d\beta} w (N^{\beta} \bx).$$
To obtain a lower bound to the energy we localize the two-body Hamiltonian using the following simple lemma (this is~\cite[Lemma~3.6]{LewNamRou-14c}):

\begin{lemma}[\textbf{Localized two-body Hamiltonian}]\label{lem:localize-energy}\mbox{}\\
Assume that $\Lambda \ge C \eps^{-1} N^{d\beta}$ for $0< \eps < 1$ and a large enough constant $C>0$. Then we have, as operators on $L^2 (\R^{2d})$,   
\begin{align} \label{eq:GP H2-localized-error}
H_2 \geq  P^{\otimes 2} H_{2} ^\eps P^{\otimes 2} + \frac{\Lambda}{2} \left( Q \otimes \1 + \1 \otimes Q \right)
\end{align}
where 
\begin{equation}\label{eq:eps two body}
H_2 ^\eps = H_2 - \eps \left|w_{N,\beta}  \left(\bx-\by\right)\right|. 
\end{equation}
\end{lemma}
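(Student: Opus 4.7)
The plan is to exploit the fact that the spectral projectors $P,Q$ commute with the one-body operator $h$, so the kinetic part $h_{\bx_1}+h_{\bx_2}$ is block-diagonal in the $P/Q$ decomposition of each factor, while the interaction $w_{N,\beta}(\bx_1-\bx_2)$ (a bounded multiplication operator, since $w\in L^\infty$, with norm $\leq N^{d\beta}\|w\|_\infty$) contributes off-diagonal pieces that must be absorbed. Writing $R:=\1-P^{\otimes 2}=P\otimes Q+Q\otimes P+Q\otimes Q$ and noting $R\leq Q\otimes\1+\1\otimes Q$, the one-body part splits as
\[
 h_{\bx_1}+h_{\bx_2} = P^{\otimes 2}(h_{\bx_1}+h_{\bx_2})P^{\otimes 2} + (P\otimes Q)(h_{\bx_1}+h_{\bx_2})(P\otimes Q) + \cdots
\]
and since $h\geq \Lambda Q$, the three sectors containing at least one $Q$ contribute at least $\Lambda$, giving the operator bound
\[
 h_{\bx_1}+h_{\bx_2} \geq P^{\otimes 2}(h_{\bx_1}+h_{\bx_2})P^{\otimes 2} + \Lambda (Q\otimes\1+\1\otimes Q).
\]

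For the interaction, I expand $w_{N,\beta}=P^{\otimes 2}w_{N,\beta}P^{\otimes 2}+P^{\otimes 2}w_{N,\beta}R+Rw_{N,\beta}P^{\otimes 2}+Rw_{N,\beta}R$ and treat the cross terms by a Cauchy--Schwarz / Young-type trick: set $X:=P^{\otimes 2}|w_{N,\beta}|^{1/2}$ and $Y:=R\,|w_{N,\beta}|^{1/2}\operatorname{sign}(w_{N,\beta})$, so that $XY^*+YX^*=P^{\otimes 2}w_{N,\beta}R+Rw_{N,\beta}P^{\otimes 2}$, and use $\pm(XY^*+YX^*)\leq \eps\,XX^*+\eps^{-1}YY^*$ to obtain
\[
 P^{\otimes 2}w_{N,\beta}R+Rw_{N,\beta}P^{\otimes 2}\geq -\eps\,P^{\otimes 2}|w_{N,\beta}|P^{\otimes 2}-\eps^{-1}R\,|w_{N,\beta}|\,R.
\]
The diagonal remainder is bounded by $Rw_{N,\beta}R\geq -R\,|w_{N,\beta}|\,R$, and both error terms satisfy $R\,|w_{N,\beta}|\,R\leq N^{d\beta}\|w\|_\infty\,R\leq N^{d\beta}\|w\|_\infty(Q\otimes\1+\1\otimes Q)$.

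Summing everything yields
\[
 H_2\geq P^{\otimes 2}H_2^{\eps}P^{\otimes 2}+\bigl[\Lambda-(1+\eps^{-1})N^{d\beta}\|w\|_\infty\bigr](Q\otimes\1+\1\otimes Q),
\]
and the assumption $\Lambda\geq C\eps^{-1}N^{d\beta}$ with $C$ large enough (absorbing $2(1+\eps)\|w\|_\infty$) ensures the bracket is at least $\Lambda/2$, giving \eqref{eq:GP H2-localized-error}. The only delicate point is the bookkeeping in the cross-term bound: one wants to pay a factor $\eps$ on the $PP$ side (where $|w_{N,\beta}|$ is the natural penalty compatible with $H_2^\eps$) and a factor $\eps^{-1}$ on the $Q$ side (where the kinetic gap $\Lambda$ is large enough to absorb $\eps^{-1}N^{d\beta}$). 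Everything else is routine algebra with spectral projectors.
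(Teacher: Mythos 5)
Your proof is correct and follows the standard approach for this type of localization lemma (the paper itself defers to~\cite[Lemma~3.6]{LewNamRou-14c}, which uses essentially the same block decomposition plus operator Cauchy--Schwarz). A few small checks on your bookkeeping: the kinetic estimate $h_{\bx_1}+h_{\bx_2}\geq P^{\otimes 2}(h_{\bx_1}+h_{\bx_2})P^{\otimes 2}+\Lambda(Q\otimes\1+\1\otimes Q)$ is exactly right (it requires $h\geq 0$, which the paper arranges by an energy shift), and the identity $Q\otimes\1+\1\otimes Q=P\otimes Q+Q\otimes P+2Q\otimes Q\geq R$ is what lets you convert $R$-projections into the $\Lambda$-weighted terms. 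The Cauchy--Schwarz split with $X=P^{\otimes 2}|w_{N,\beta}|^{1/2}$, $Y=R|w_{N,\beta}|^{1/2}\operatorname{sign}(w_{N,\beta})$ is clean, and the use of $R|w_{N,\beta}|R\leq\|w_{N,\beta}\|_\infty R\leq N^{d\beta}\|w\|_\infty(Q\otimes\1+\1\otimes Q)$ is where the $L^\infty$ assumption on the unscaled $w$ enters (this is indeed a standing hypothesis in the context where the lemma is used). The only blemish is a typo in your last sentence: the quantity to absorb into $C$ is $2(1+\eps^{-1})\|w\|_\infty$, not $2(1+\eps)\|w\|_\infty$; since $\eps<1$ gives $1+\eps^{-1}\leq 2\eps^{-1}$, taking $C\geq 4\|w\|_\infty$ works, which is consistent with the hypothesis $\Lambda\geq C\eps^{-1}N^{d\beta}$.
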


This says that if the cut-off is chosen large enough, the kinetic energy of excited particles outweighs their interaction energy (both the interaction between excited particles and the interaction between excited and non-excited particles).

\bigskip

\noindent\textbf{Localizing the state.} The main term to be bounded from below is now 
$$ {N \choose 2} ^{-1} \tr \left( \left(P^{\otimes 2} H_{2} ^\eps P^{\otimes 2} \right) \Gamma_N ^{(2)} \right) = {N \choose 2} ^{-1} \tr \left(  H_{2} ^\eps \left(P^{\otimes 2} \Gamma_N ^{(2)} P^{\otimes 2} \right)\right)$$
and the next trick is to view it not as the \emph{expectation of a localized Hamiltonian in the original state}, but as the \emph{expectation of the original Hamiltonian in a localized state}. The idea has a long history, recalled and distillated in~\cite{Lewin-11} to yield the method we now sketch. See also~\cite[Chapter~5]{Rougerie-spartacus,Rougerie-LMU} for more details and references.

We want to see the projected two-body density matrix $P^{\otimes 2} \Gamma_N ^{(2)} P^{\otimes 2}$ as the genuine density matrix of a projected state. It is proved in~\cite{Lewin-11} that this is doable, provided the latter state is looked for on the Fock space~\eqref{eq:Fock}. More precisely, there exists a unique state $\Gamma_N ^P$ on the projected \index{Fock space}{Fock space} 
$$ \gF (P\gH) = \C \oplus \gH \oplus \gH_2 \oplus \ldots \oplus \gH_n \oplus \ldots$$
of the form 
$$ \Gamma_N ^P = \Gamma_{N,0} ^P \oplus \Gamma_{N,1} ^P \oplus \ldots \oplus \Gamma_{N,N} ^P \oplus 0 \oplus \ldots  $$
such that, for all $k\leq N$ 
\begin{equation}\label{eq:loc DM}
\left( \Gamma_N ^P \right) ^{(k)} := \sum_{n\geq k} \left( \Gamma_{N,n} ^P \right)^{(k)} = P^{\otimes k} \Gamma_N ^{(k)} P ^{\otimes k}.
\end{equation}
Thus we write the quantity to be bounded below as  
\begin{equation}\label{eq:ener P localized}
 {N \choose 2} ^{-1} \sum_{n\geq 2}\tr \left(  H_{2} ^\eps \left( \Gamma_{N,n} ^P\right) ^{(2)}\right). 
\end{equation}
We can similarly consider a $Q$-localized state, with $Q= \1 - P$ and estimate the second term of~\eqref{eq:GP H2-localized-error} in a similar fashion. Very importantly, the $P$-localized and $Q$-localized states are related by the equality
\begin{equation}\label{eq:loc relation}
\tr\, \Gamma_{N,n} ^P = \tr\, \Gamma_{N,N-n} ^Q 
\end{equation}
which basically means that the probability of having $n$ particles out of $N$ $P$-localized is the same as the probability of having $N-n$ particles out of $N$ $(\1-P)$-localized. Certainly this is very reasonable to expect and the construction of these states does reflect that the two events just mentioned really are the same. We shall only need that they have the same probability.

\bigskip

\noindent\textbf{Proof of Theorem~\ref{thm:NLS}.} This is pretty much that given in Section~\ref{sec:proof deF}, made quantitative by inserting the above tools. We only sketch it.

\medskip

\noindent\textbf{1}. The energy upper bound is the easy part. Take a \index{Trial state, Hartree}{factorized Hartree ansatz}~\eqref{eq:ansatz}, and prove that the so-obtained Hartree energy with $w_{N,\beta}$ potential converges to 
the NLS one when $N\to \infty$. A short exercise in nonlinear analysis.

\medskip

\noindent\textbf{2}. For the lower bound, we insert the localized states in the energy lower bound expressed by~Lemma~\ref{lem:localize-energy}, e.g.~\eqref{eq:ener P localized} expresses the contribution of the first term in~\eqref{eq:GP H2-localized-error}. 
 
 \medskip

\noindent\textbf{3}. Now is the time to use the quantitative versions of the quantum de Finetti theorem for the $P$-localized state. In view of~\eqref{eq:loc DM} we can do this for the projection $\Gamma_{N,n}^P$ of $\Gamma_N^P$ to each $n$-particles sector of Fock space, and obtain an approximation for $P^{\otimes 2} \Gamma_N ^{(2)} P ^{\otimes 2}$ in the de Finetti form by summing the contributions of each $n$.
 
 \medskip

\noindent\textbf{4}. You might be worried that in the previous step we apply Theorems~\ref{thm:deF semi} or~\ref{thm:deF info} to states with $n\leq N$ particles, a number that needs not be large. Then the error estimates are not very efficient. But one can use the second term in~\eqref{eq:GP H2-localized-error}. For large $\Lambda$ it will say that the $Q$-localized state does not want to have too many particles. As per~\eqref{eq:loc relation} this means that the $P$-localized state has many particles, so that the main contributions in the above step come from sectors $n\sim N$, and give a good error.
 
 \medskip

\noindent\textbf{5}. The energy cut-off $\Lambda$ needs to be chosen $\propto N^{d\beta}$ as in Lemma~\ref{lem:localize-energy} to control the contributions from $Q$ terms. The dimension of the one-body Hilbert space in the \index{Quantum de Finetti theorem}{quantum de Finetti theorem} is then handled via~\eqref{eq:dim}. The final error term depending on $N$ and $\beta$ and tells us how big we can afford the latter to be. The parameter $\eps$ from Lemma~\ref{lem:localize-energy} is sent to zero at the very end in order for the expectation of~\eqref{eq:eps two body} in a factorized state to converge to the NLS energy.
 
 \medskip

\noindent\textbf{6}. If Theorem~\ref{thm:deF semi} has been used, as in~\cite{LewNamRou-14c}, the dependence on $s$ (growth of the \index{Trapping potential}{confining potential}) is pretty bad, as seen by combining~\eqref{eq:deF semi} and~\eqref{eq:dim}. This leads to a rather small, $s$-dependent $\beta$ for which the energy can be proved to converge.
 
 \medskip

\noindent\textbf{7}. Using Theorem~\ref{thm:deF info} instead, as in~\cite{Rougerie-19}, gives access to much better values of $\beta$ (those stated in the Theorem), for now the number of states below the energy cut-off does not matter much. Of course one has then to decompose the interactions potential in a form that permits the use of~\eqref{eq:deF info}. This can be done via the Fourier transform 
 \begin{align}\label{eq:Fourier}
 W (\bx-\by) &= \int_{\R ^d} \widehat{W} (\bp) e^{i \bp\cdot \bx} e^{-i \bp \cdot \by} \mathrm{d}\bp \nonumber \\
 &= \int_{\R ^d} \widehat{W} (\bp) \left(\cos (\bp \cdot \bx) \cos (\bp \cdot \by) + \sin (\bp \cdot \bx) \sin (\bp \cdot \by)\right) \mathrm{d}\bp
 \end{align}
Each term in the integral can be handled separately, for the multiplication operators in the integral indeed are of the form $A\otimes B$. 
 
 \medskip

\noindent\textbf{8}. Also, when using Theorem~\ref{thm:deF info} one has to pass to the limit a bit carefully to recover a measure that lives only on pure states in the end, as per Theorem~\ref{thm:DeFinetti fort}.  
 
 \medskip

\noindent\textbf{9}. Finally, a corollary of the energy convergence is that the final de Finetti measure must be concentrated on minimizers of the mean-field functional, which gives convergence of density matrices.

\bigskip

We shall not enter into more details. The main limitation of the method is in the \index{Localization}{localization} method, Lemma~\ref{lem:localize-energy}. It forces us to take the energy cut-off $\Lambda \propto N^{d\beta}$ to control the projection error. Then the final error in the energy is roughly that given by applying the quantitative de Finetti theorem in the low-energy subspace:
\begin{itemize}
 \item Theorem~\ref{thm:deF semi} gives an error $\sim N_\Lambda /N$ in trace-class norm. This we must multiply by the operator norm of the projected Hamiltonian, which is $\sim N^{d\beta}$ because the one-body operator is projected to values $\lesssim \Lambda$ and the interaction potential has $L^{\infty}$ norm $\sim N^{d\beta}$ too. The final error in the energy per particle is then, using~\eqref{eq:dim}, of order $N^{\alpha}$ with
 $$ \alpha = d\beta (1 + d/s + d/2) - 1.$$
 We obtain energy convergence  for $\alpha < 0$, which puts a severe, $s$-dependent, constraint on $\beta$.
 \item When applying Theorem~\ref{thm:deF info}, we decompose the interaction as in~\eqref{eq:Fourier}. At fixed $\bp$ we have an error $\propto N^{-1/2} (\log (N_\Lambda))^{1/2}$. Integrating over $\bp$ we multiply this by 
 $$ \int_{\bp \in \R^d}  |\widehat{w_{N,\beta} }(\bp) |  \mathrm{d}\bp \propto N^{d\beta},$$
 whence a final error $N^{d\beta -1/2} (\log (N_\Lambda))^{1/2}$ which is small if $d\beta < 1/2$ (the logarithmic dependence on $N_\Lambda$ plays no role).
\end{itemize}

\newpage

\section{\index{Coherent states}{Coherent states} method}\label{sec:coherent}

Now we turn to a different approach to the bosonic mean-field limit, initiated in~\cite{LieSeiYng-05,LieSei-06} with~\cite{LieTho-97} and earlier papers~\cite{HepLie-73,Hepp-74} as sources of inspiration. Contrarily to the de Finetti based method discussed in the two previous sections, the one we consider now proceeds by manipulating the Hamiltonian, not the state of the system. The reasons which made me decide the method deserved a section of its own instead of being included in Section~\ref{sec:hamil} are two-fold: 
\begin{itemize}
 \item the method is very general, it does not rely on any particular property of the Hamiltonian, as methods in Section~\ref{sec:hamil} did.
 \item the method, as the de Finetti-based one, lends itself to generalizations to deal with dilute and GP limits, see below.
\end{itemize}
As regards the second point, a bibliographical remark is in order: I have for pedagogical reasons separated the material of~\cite{LieSei-06} in several parts. The original source deals directly with the more difficult GP limit, without trying the MF limit first. Most of the ``generalizations'' mentioned above are thus contained in the original paper. Several other tools from~\cite{LieSei-06} will be introduced later in the review. Variants and refinements of the coherent state method have been used for different problems~\cite{LieSeiYng-09,Seiringer-06,Seiringer-08,Seiringer-ICMP10}.

I have not tried to give a complete proof here, and in particular I have not computed the value\footnote{The original reference can deal with any $\beta \leq 1$, but this requires the additional use of more sophisticated tools to be introduced later in the text.} of $\beta$  it yields (it will depend on $s$ rather badly, as in~\cite{LewNamRou-14c}). The interested reader should however have no difficulty filling in the gaps of the proof sketch we will provide for the 

\begin{theorem}[\textbf{NLS limit of bosonic ground states, restatement}]\label{thm:NLS bis}\mbox{}\\
Make Assumption~\ref{asum:pots} plus the more specific trapping condition~\eqref{eq:trapping s bis} for some $s>0$. Assume stability for the NLS functional as in Assumption~\ref{asum:NLS stab}. Further assume that $w\in L^{\infty} (\R^d)$. There exists a $\beta_0 (s)$ such that, for $0<\beta < \beta_0(s)$, the following holds in the limit $N \to + \infty$:

\smallskip 

\noindent\textbf{Convergence of the energy:} 
$$
\frac{E(N)}{N} \to \Enls. 
$$

\smallskip 

\noindent\textbf{Convergence of the one-body reduced density matrix:} let $\Gamma_N ^{(1)}$ be the one-body reduced density matrix of a many-body ground state $\Psi_N$. There exists a Borel probability measure $\mu$ on the set $\Mnls$ of NLS minimizers such that, along a subsequence, 
$$
{N} ^{-1} \Gamma_N ^{(1)} \to \int_{\Mnls} |u \rangle \langle u | \mathrm{d}\mu(u) 
$$
strongly in trace-class norm. 
\end{theorem}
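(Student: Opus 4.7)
The plan is to implement the Lieb--Seiringer--Yngvason coherent state strategy, which bounds the Hamiltonian itself from below by the infimum of a classical (Hartree-type) symbol, in contrast with the de Finetti methods of Section~\ref{sec:deF loc} that transform the state. First I would localize to finite dimension by introducing $P=\1_{h\leq\Lambda}$ for the one-body operator $h=(-\im\nabla+\bA)^2+V$, with a cutoff $\Lambda\sim \eps^{-1}N^{d\beta}$ to be tuned, and $Q=\1-P$. Lemma~\ref{lem:LT} gives $N_\Lambda:=\dim P\gH\lesssim\Lambda^{d/s+d/2}$, and Lemma~\ref{lem:localize-energy} reduces the problem to controlling
\[
\frac{E(N)}{N}\geq\frac{1}{N(N-1)}\tr\bigl(H_2^\eps\,P^{\otimes 2}\Gamma_N^{(2)}P^{\otimes 2}\bigr)+\frac{\Lambda}{N}\tr\bigl(Q\,\Gamma_N^{(1)}\bigr).
\]
I would lift the $P$-localized two-body density matrix to a state $\Gamma_N^P$ on the bosonic Fock space $\gF(P\gH)$ via~\eqref{eq:loc DM}, so that the first term is an expectation of the second-quantized localized Hamiltonian on $\gF(P\gH)$.

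The coherent state step exploits the resolution of identity $\pi^{-N_\Lambda}\int_{P\gH}|\xi(u)\rangle\langle\xi(u)|\,\mathrm du=\1_{\gF(P\gH)}$ with $\xi(u)$ as in~\eqref{eq:coh u bis}. Using $a(v)\xi(u)=\langle v|u\rangle\,\xi(u)$, the diagonal coherent-state matrix element of the Wick-ordered second quantization of $H_2^\eps$ equals the Hartree symbol $\cEh_N[u]$ with scaled potential $w_{N,\beta}$; the commutators needed to move annihilators to the right produce error terms bounded, on states with expected particle number $\sim N$, by $\|w_{N,\beta}\|_{L^\infty}\lesssim N^{d\beta}$ times combinatorial factors in $N_\Lambda$. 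A Berezin--Lieb type inequality (positivity of the coherent-state overlap $\langle\xi(u)|\Gamma_N^P|\xi(u)\rangle$ together with its normalisation as a probability density on $P\gH$) then yields
\[
\frac{E(N)}{N}\geq(1-\eps)\inf_{u\in P\gH,\;\|u\|^2=N}\frac{1}{N}\cEh_N[u]-\mathrm{Err}(N,\Lambda,\eps,\beta).
\]
Combining with~\eqref{eq:loc relation} to transfer particle-number information from the $Q$-localized side gives $\liminf E(N)/N\geq\inf\cEh_N$, and a direct analysis of $\cEh_N$ with vanishing-range potential (using $\widehat w\in L^1$, Assumption~\ref{asum:NLS stab} and standard concentration-compactness) shows $\inf\cEh_N\to\Enls$ provided $\Lambda,\eps,\beta$ are balanced so that $\mathrm{Err}\to 0$. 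This forces the existence of a threshold $\beta_0(s)>0$. The matching upper bound is the usual trial state $(\unls)^{\otimes N}$ followed by an elementary check that the corresponding Hartree energy converges to $\cEnls[\unls]$.

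To get the convergence of $\Gamma_N^{(1)}$, I would run a Feynman--Hellmann argument (Lemma~\ref{lem:FH}): perturb $H_N$ by $\eps\sum_j B_{\bx_j}$ for an arbitrary bounded self-adjoint $B$, repeat the coherent state analysis to obtain $E(N,\eps)/N\to\min_\gamma\cEmix_\eps[\gamma]$, and differentiate the resulting convergence at $\eps=0$ by concavity (Griffith's lemma). This identifies the weak-$\star$ limit of $N^{-1}\Gamma_N^{(1)}$ with $\int_{\Mnls}|u\rangle\langle u|\,\mathrm d\mu(u)$ for a Borel probability measure $\mu$ on $\Mnls$; the a priori kinetic energy bound together with the trapping of $V$ gives tightness and upgrades weak-$\star$ to strong trace-class convergence along a subsequence. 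The main obstacle is the book-keeping of CCR commutators when Wick-ordering the interaction on $P\gH$: each commutator carries a factor $\|w_{N,\beta}\|_\infty\sim N^{d\beta}$, while $\Lambda$ cannot be smaller than $\sim N^{d\beta}$ (else the $Q$-term in Lemma~\ref{lem:localize-energy} is lost) nor too large without $N_\Lambda$ blowing up by Lemma~\ref{lem:LT}. Striking the right balance between these competing constraints, and propagating it through the comparison between the projected Hartree functional and $\cEnls$, is what determines the $s$-dependent threshold $\beta_0(s)$, which degrades as $s\to 0$ because slowly growing traps host many low-energy one-body states.
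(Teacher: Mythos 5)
Your overall route — localize with $P=\1_{h\leq\Lambda}$, pass to $\gF(P\gH)$, introduce coherent states, compare upper and lower symbols, and recover the density matrix by a Feynman--Hellmann argument — is indeed the coherent-state method the paper uses for Theorem~\ref{thm:NLS bis}. However, there is a genuine gap at the heart of the lower bound, precisely where you write
\[
\frac{E(N)}{N}\geq(1-\eps)\inf_{u\in P\gH,\;\|u\|^2=N}\frac{1}{N}\cEh_N[u]-\mathrm{Err}(N,\Lambda,\eps,\beta).
\]
The constrained infimum over $\|u\|^2=N$ does not follow from the resolution of identity $\pi^{-D}\int|\Psi_Z\rangle\langle\Psi_Z|\,\mathrm dZ=\1_{\gF(P\gH)}$ together with the fact that the Husimi function $\langle\Psi_Z|\Gamma_N^P|\Psi_Z\rangle$ is a probability density: that argument gives $\tr(\bH^P\Gamma_N^P)\geq\inf_{Z\in\C^D}\bH^{\rm up}(Z)$ with the infimum taken over \emph{all} of $\C^D$, and this unconstrained infimum is vacuous (equal to zero at $Z=0$ for a repulsive potential, or worse if attractive parts are present). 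Invoking~\eqref{eq:loc relation} tells you that the state $\Gamma_N^P$ lives mostly on sectors with $n\sim N$ particles, but that is information about the state, not about the symbol of the operator; it does not constrain the infimum of $\bH^{\rm up}$, nor by itself justify restricting the competing $u$ to the sphere.

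The paper's proof closes this gap with the Lieb--Seiringer trick: one first adds a penalization $\frac{K}{N}(\cN-N)^2$ to the localized Fock-space Hamiltonian, \emph{then} passes to upper symbols. Since the state $\Gamma_N^P$ has particle number concentrated near $N$ (which is where \eqref{eq:loc relation} and the $Q$-localized kinetic term enter), adding this term shifts the trace only by a controllable error; at the same time the extra quartic polynomial in $Z$ forces the infimum of $\bHt^{\rm up}(Z)$ to be attained near $\|u_Z\|^2\approx N$, and sending $K\to\infty$ at the end produces exactly the constrained mean-field energy. Without the penalization, or alternatively without a quantitative estimate showing the Husimi density has negligible mass away from $\|u_Z\|^2\approx N$ together with an appropriate continuity estimate for $u\mapsto\cEh_N[u]$, the coherent-state lower bound does not close. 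Everything else — the choice of cutoff $\Lambda\sim\eps^{-1}N^{d\beta}$, the Lieb--Thirring control of $N_\Lambda$, the bookkeeping of CCR commutators of size $\|w_{N,\beta}\|_\infty\sim N^{d\beta}$, the passage $\cEh_N\to\cEnls$, and the concavity/Griffith's lemma argument for the one-body density matrix — matches the paper's treatment.
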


\begin{proof}[Comments]\mbox{}\\
\noindent\textbf{1.} Again, the statement above does not reflect the full power of the method as it was introduced in~\cite{LieSei-06}. See Sections~\ref{sec:better loc} and~\ref{sec:coh reloaded} below for this. Needless to say, the method also applies at $\beta = 0$.

\medskip

\noindent\textbf{2.} The only thing that was not explicit in~\cite{LieSei-06} is the treatment of attractive interactions. It is also conceivable that the method could be improved to give convergence of higher density matrices, but we do not pursue this.

\medskip

\noindent\textbf{3.} To some extent, the de Finetti based method and the coherent state method are two sides of a same coin. More precisely they are somehow dual to one another. We shall discuss this in Subsection~\ref{sec:compare deF coh}.
\end{proof}

\subsection{\index{Coherent states}{Coherent states} formalism}\label{sec:coh form}

Here we follow mainly~\cite[Section~1.3]{KlaSka-85}, but see also~\cite{ComRob-12}, in particular Section 10.3 therein. We start from $\gH$, a finite-dimensional complex Hilbert space with dimension $D$ and an orthonormal basis $u_1,\ldots,u_D$. Think of $\gH$ as a subspace of $L^2 (\R^d)$. 

We work on the bosonic Fock space $\gF = \gF (\gH)$ based on $\gH$, cf~\eqref{eq:Fock}. Denote $|0\rangle$ its vacuum vector, i.e. 
$$ | 0 \rangle = 1 \oplus 0 \oplus \ldots.$$

\begin{definition}[\textbf{Bosonic coherent states}]\label{def:coh}\mbox{}\\
Let $Z = (z_1,\ldots,z_D) \in \C ^D$ and define the associated \emph{coherent state} $\Psi_Z\in\gF$ as 
\begin{align}\label{eq:def coh}
\Psi_Z &= \exp \left( \sum_{j=1} ^D z_j a^\dagger (u_j) - \overline{z_j} a (u_j) \right) |0\rangle \nonumber\\
&= e^{-\frac{1}{2}\sum_{j=1} ^D |z_j| ^2} e^{\sum_{j=1} ^D z_j a^\dagger (u_j)} |0\rangle \nonumber \\
&= e^{-\norm{u_Z} ^2 /2} \bigoplus_{n=0} ^\infty \frac{u_Z^{\otimes n}}{\sqrt{n!}}
\end{align}
where the annihilation/creation operators are defined as in Section~\ref{sec:second quant} and 
$$ u_Z = \sum_{j=1} ^D z_j u_j.$$
\end{definition}

That the first two definitions are equivalent follows from the Baker-Campbell-Hausdorff formula ($a^\dagger$ and $a$ commute with their commutator). The third definition makes contact with the form we have already encountered in~\eqref{eq:coh u}-\eqref{eq:coh u bis}. To see it is equivalent to the first two, note that
$$ \sum_{j=1} ^D z_j a^\dagger (u_j) = a^\dagger (u_Z).$$
Observe the crucial fact that coherent states are eigenvectors of annihilation operators:
\begin{equation}\label{eq:coh anni}
a (u_j) \Psi_Z = z_j \Psi_Z 
\end{equation}
and more generally, for any $v\in\gH$, 
$$ a (v) \Psi_Z = \langle v | u_Z \rangle \Psi_Z.$$
This will be used to perform $c$-number substitutions in the Hamiltonian: replacing creators/annihilators by numbers in the second quantized form~\eqref{eq:hamil second}.

One use of the coherent states is that they form an overcomplete basis of the Fock space:

\begin{lemma}[\textbf{Coherent state partition of unity}]\label{lem:coh clos}\mbox{}\\
Denoting $\mathrm{d}Z$ the Lebesque measure on $\C^D \simeq \R^{2D}$ we have the closure relation (\index{Schur's lemma}{Schur lemma})
\begin{equation}\label{eq:coh basis}
\pi ^{-D} \int_{\C^D} |\Psi_Z \rangle \langle \Psi_Z | \mathrm{d}Z = \1_{\gF} 
\end{equation}
and the overlap formula
\begin{equation}\label{eq:coh over}
 \langle \Psi_{Z'} | \Psi_Z  \rangle_{\gF} = \exp \left( -\frac{1}{2} \sum_{j=1} ^D \left( |z_j| ^2 + |z'_j| ^2 - 2 \overline{z'_j} z_j \right)\right).
\end{equation}
\end{lemma}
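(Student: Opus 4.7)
My plan is to prove the two assertions separately, taking the overlap formula first since it is essentially a direct computation, and then reducing the closure relation to a classical Gaussian integral identity via an orthonormal monomial basis of $\gF$.

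For the overlap formula, I would use the third representation in~\eqref{eq:def coh}:
$$ \Psi_Z = e^{-\|u_Z\|^2/2} \bigoplus_{n=0}^\infty \frac{u_Z^{\otimes n}}{\sqrt{n!}}.$$
Since the $n$-particle sectors of $\gF$ are mutually orthogonal and $\langle u_{Z'}^{\otimes n}, u_Z^{\otimes n}\rangle_{\gH_n} = \langle u_{Z'}, u_Z\rangle_{\gH}^n$, summing a plain exponential series gives
$$ \langle \Psi_{Z'} | \Psi_Z\rangle_\gF = e^{-(\|u_{Z'}\|^2+\|u_Z\|^2)/2}\, e^{\langle u_{Z'}, u_Z\rangle_\gH}.$$
Since $(u_j)_j$ is orthonormal, $\|u_Z\|^2=\sum|z_j|^2$ and $\langle u_{Z'},u_Z\rangle = \sum \overline{z'_j}z_j$, which is precisely~\eqref{eq:coh over}.

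For the closure relation, the plan is to test against an orthonormal basis. Expanding the creation-operator exponential (using $[a^\dagger(u_j),a^\dagger(u_k)]=0$ from the CCR~\eqref{eq:CCR}),
$$ \Psi_Z = e^{-|Z|^2/2}\prod_{j=1}^D e^{z_j a^\dagger(u_j)} |0\rangle = e^{-|Z|^2/2} \sum_{\mathbf{k}\in\N^D} \left(\prod_{j=1}^D \frac{z_j^{k_j}}{\sqrt{k_j!}}\right) |\mathbf{k}\rangle,$$
where I write $|Z|^2:=\sum|z_j|^2$ and $|\mathbf{k}\rangle := \prod_j (k_j!)^{-1/2}(a^\dagger(u_j))^{k_j}|0\rangle$. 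A standard CCR computation shows $\{|\mathbf{k}\rangle\}_{\mathbf{k}\in\N^D}$ is an orthonormal basis of $\gF$, with $|\mathbf{k}\rangle\in\gH_{|\mathbf{k}|}$, $|\mathbf{k}|:=\sum k_j$. Hence
$$ \langle \mathbf{k}'|\Psi_Z\rangle\overline{\langle \mathbf{k}|\Psi_Z\rangle} = e^{-|Z|^2}\prod_{j=1}^D \frac{z_j^{k'_j}\overline{z_j}^{k_j}}{\sqrt{k_j!\,k'_j!}},$$
and the matrix element of the left-hand side of~\eqref{eq:coh basis} factorizes over the index $j$ as a product of one-dimensional complex Gaussian integrals.

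The key computation I would then invoke is the standard identity, for $m,n\in\N$,
$$ \frac{1}{\pi}\int_{\C} \overline{z}^m z^n e^{-|z|^2}\,\mathrm{d}z = n!\,\delta_{mn},$$
which follows from polar coordinates ($\int_0^{2\pi} e^{i(n-m)\theta}d\theta = 2\pi\delta_{mn}$) followed by $\int_0^\infty r^{2n+1} e^{-r^2}dr = n!/2$. Combining this with the factorized form above yields
$$ \pi^{-D}\int_{\C^D}\langle \mathbf{k}'|\Psi_Z\rangle\langle\Psi_Z|\mathbf{k}\rangle\,\mathrm{d}Z = \prod_{j=1}^D \frac{k_j!\,\delta_{k_j k'_j}}{\sqrt{k_j!\,k'_j!}} = \delta_{\mathbf{k}\mathbf{k}'},$$
which is~\eqref{eq:coh basis} read between basis vectors. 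I would not expect any serious obstacle here: the only subtlety is justifying exchange of sum and integral, which is routine via monotone/dominated convergence applied sector by sector (on the $n$-particle sector only finitely many $\mathbf{k}$ with $|\mathbf{k}|=n$ contribute, and the Gaussian weight controls everything).
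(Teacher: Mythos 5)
Your proof is correct. The paper's own argument is essentially a citation: it invokes the factorization $\gF(\gH)\simeq\bigotimes_j\gF(\mathrm{span}(u_j))$ to reduce to the one-mode case $D=1$ and then refers to~\cite[Section~1.3]{KlaSka-85} for the computation. Your argument is the same factorization carried out by hand, at the level of integrals rather than abstract unitary equivalence: the number-state matrix elements $\langle\mathbf{k}'|\Psi_Z\rangle\langle\Psi_Z|\mathbf{k}\rangle$ factorize over the index $j$, reducing~\eqref{eq:coh basis} to the elementary one-dimensional Gaussian moment $\pi^{-1}\int_\C \overline{z}^m z^n e^{-|z|^2}\,\mathrm{d}z=n!\,\delta_{mn}$, and the overlap formula~\eqref{eq:coh over} drops out of the tensor-power representation in~\eqref{eq:def coh} by summing a plain exponential series. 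So the route is the same, with the advantage that it is self-contained; the only thing you leave to the reader is the standard fact that the number states $|\mathbf{k}\rangle$ form an orthonormal basis of $\gF(\gH)$, which is exactly the content of the Fock-space factorization the paper alludes to.
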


\begin{proof}
A well-known direct computation, for which it is useful to recall the factorization property of Fock space 
$$ \gF (\gH) = \gF\left( \sspan (u_1) \oplus \ldots \oplus \sspan (u_D) \right)\simeq \gF \left(\sspan (u_1)\right) \otimes \ldots \otimes \gF\left(\sspan(u_D)\right)$$
in the sense of unitary equivalence, see e.g.~\cite[Appendix~A]{HaiLewSol_2-09}. Computations can then be reduced to the case where $\gH$ is one-dimensional. Details for this case are in~\cite[Section~1.3]{KlaSka-85}
\end{proof}

We can represent an operator $\bbA$ on $\gF$ in the coherent state basis as 
$$ \bbA = \pi ^{-2D} \int_{\C^D \times \C ^D}  \langle \Psi_{Z'} |\bbA |\Psi_{Z} \rangle |\Psi_{Z'} \rangle \langle \Psi_Z | \mathrm{d}Z \mathrm{d}Z'.$$
In fact, the basis is overcomplete enough that $\bbA$ is fully characterized by the diagonal elements $\langle \Psi_{Z} |\bbA |\Psi_{Z} \rangle$. Then a desirable and, perhaps, not too unreasonable thing to look for is a diagonal representation of $\bbA$ in the coherent state basis~\cite{Berezin-72,Lieb-73b,Simon-80}. 

\begin{definition}[\textbf{Symbols}]\label{def:symbols}\mbox{}\\
If an operator $\bbA$ can be put in the form 
\begin{equation}\label{eq:up symb}
\bbA = \pi ^{-D} \int_{\C^D}   \bbA^{\rm up} (Z) \, |\Psi_{Z} \rangle \langle \Psi_Z | \mathrm{d}Z 
\end{equation}
we call the map
$$ \C^D \ni Z \mapsto \bbA^{\rm up} (Z) \in \C$$
\index{Symbol, upper and lower}{its} \emph{upper symbol}. 

For any operator $\bbA$ we call the map
\begin{equation}\label{eq:low symb}
\C^D \ni Z \mapsto \bbA ^{\rm low} (Z) := \langle \Psi_Z |\bbA| \Psi_Z \rangle_\gF \in \R^+.  
\end{equation}
its \emph{lower symbol}.
\end{definition}

Symbol is meant in the usual sense of semiclassical/microlocal analysis as a representation of a quantum (perhaps pseudo-differential) operator as a function on some classical (perhaps symplectic) phase-space. The words ``upper'' and ``lower'' refer~\cite{Berezin-72,Lieb-73b,Simon-80} to the fact that the former give upper bounds on quantum partition functions, while the latter give lower bounds. This is the content of the \index{Berezin-Lieb inequalities}{Berezin-Lieb inequalities}~\cite[Appendix~B]{Rougerie-LMU,Rougerie-spartacus}. Other \index{Wick and anti-Wick quantization}{names} used in the literature are: lower symbol $\simeq$ covariant symbol $\simeq$ Husimi function $\simeq$ anti-Wick quantization $\simeq$ Toeplitz quantization, and upper symbol $\simeq$ contravariant symbol $\simeq$ Wigner measure $\simeq$ Wick quantization.  

What is the use of introducing two concepts with similar-looking names if they are not closely related ? We have the 

\begin{lemma}[\textbf{Relation between symbols}]\label{lem:symb}\mbox{}\\
Let $\bbA$ be an operator on $\gF$ with upper symbol $\bbA ^{\rm up}$ (we assume it exists). Then its lower symbol is given by 
\begin{equation}\label{eq:up to low}
\bbA ^{\rm low} (Z) = e^{\partial_Z \cdot \partial_{\overline{Z}}} \bbA ^{\rm up} (Z)  
\end{equation}
where 
$$\partial_Z \cdot \partial_{\overline{Z}} := \sum_{j=1} ^D \partial_{z_j} \partial_{\overline{z_j}}$$
and $\partial_z = \frac{1}{2}\left(\partial_x - \im \partial_y\right),\, \partial_{\overline{z}} = \frac{1}{2}\left(\partial_x + \im \partial_y\right) $ for a complex number $z= x +\im y$.

In addition, an operator $\bbA$ (and thus its \index{Symbol, upper and lower}{upper symbol}, if it has one) is uniquely determined by its lower symbol.
\end{lemma}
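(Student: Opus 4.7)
The plan is to prove~\eqref{eq:up to low} by direct computation, recognizing the resulting Gaussian convolution as a heat semigroup at time one, and to establish uniqueness by exploiting the holomorphic structure of coherent-state matrix elements.

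First I would take the matrix element $\langle \Psi_Z | \cdot | \Psi_Z \rangle$ of both sides of the upper-symbol representation~\eqref{eq:up symb}, which gives
$$\bbA^{\rm low}(Z) \;=\; \pi^{-D} \int_{\C^D} \bbA^{\rm up}(Z') \, |\langle \Psi_Z | \Psi_{Z'} \rangle|^2 \,\mathrm{d}Z'.$$
The overlap formula~\eqref{eq:coh over} immediately yields $|\langle \Psi_Z | \Psi_{Z'} \rangle|^2 = e^{-|Z - Z'|^2}$, so the lower symbol equals the Gaussian convolution
$$\bbA^{\rm low}(Z) \;=\; \pi^{-D} \int_{\C^D} e^{-|Z - Z'|^2} \bbA^{\rm up}(Z') \, \mathrm{d}Z'.$$
Next I would identify this convolution with the action of $e^{\partial_Z \cdot \partial_{\overline{Z}}}$. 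Writing $z_j = x_j + \im y_j$, one has $\partial_{z_j} \partial_{\overline{z_j}} = \tfrac{1}{4}(\partial_{x_j}^2 + \partial_{y_j}^2)$, so $\partial_Z \cdot \partial_{\overline{Z}} = \tfrac{1}{4} \Delta_{\R^{2D}}$. The solution at time one of $\partial_t u = \tfrac{1}{4} \Delta u$ on $\R^{2D}$ is convolution with the heat kernel $\pi^{-D} e^{-|Z|^2}$, which is exactly the kernel above. Comparing the two formulas establishes~\eqref{eq:up to low}, the convergence of the exponential series being justified first on a dense class of symbols (e.g. polynomials times a narrow Gaussian, where both sides can be computed explicitly by completing the square) and then extended by approximation.

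For the uniqueness statement, I would consider the off-diagonal matrix element
$$F(\zeta, W) \;:=\; e^{(|\zeta|^2 + |W|^2)/2} \, \langle \Psi_{\overline{\zeta}} | \bbA | \Psi_W \rangle.$$
Using the explicit form~\eqref{eq:def coh}, the Gaussian prefactors in $\Psi_{\overline{\zeta}}$ and $\Psi_W$ cancel and $F$ becomes a power series in $(\zeta, W) \in \C^D \times \C^D$ whose coefficients are matrix elements of $\bbA$ between occupation-number basis vectors of $\gF$; hence $F$ is holomorphic in $(\zeta, W)$. The hypothesis $\bbA^{\rm low} \equiv 0$ says $F(\overline{W}, W) = 0$ for every $W$, i.e. $F$ vanishes on the totally real submanifold $\{\zeta = \overline{W}\}$ of $\C^{2D}$. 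Matching joint Taylor coefficients at the origin on this submanifold (or invoking the identity theorem along a maximally totally real submanifold) forces $F \equiv 0$, so $\langle \Psi_Z | \bbA | \Psi_W \rangle = 0$ for all $Z, W$. The closure relation~\eqref{eq:coh basis} then yields $\bbA = 0$, proving injectivity of $\bbA \mapsto \bbA^{\rm low}$.

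The main obstacle I anticipate is the rigorous interpretation of~\eqref{eq:up to low} for a general upper symbol: the formal series $\sum_k (k!)^{-1}(\partial_Z \cdot \partial_{\overline{Z}})^k \bbA^{\rm up}$ need not converge, even though the underlying Gaussian convolution is well defined as soon as $\bbA^{\rm up}$ has subquadratic growth. One should therefore read~\eqref{eq:up to low} either as shorthand for the exact convolution identity derived above, or as a genuine differential identity on the dense subclass of symbols for which the exponential series converges absolutely, which then extends by continuity to the general case.
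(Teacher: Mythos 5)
Your argument is correct and follows exactly the same two steps as the paper's terse proof: take the lower symbol of both sides of~\eqref{eq:up symb}, identify the resulting Gaussian convolution (via the overlap formula~\eqref{eq:coh over} and $\partial_Z\cdot\partial_{\overline Z}=\tfrac14\Delta_{\R^{2D}}$) with the heat semigroup at time one, and establish uniqueness by analytically continuing the diagonal matrix elements $\langle\Psi_Z|\bbA|\Psi_Z\rangle$ to the holomorphic function $F(\zeta,W)$ of all off-diagonal elements $\langle\Psi_{Z'}|\bbA|\Psi_Z\rangle$. You have in fact supplied the details the paper delegates to~\cite{KlaSka-85}, and as a bonus your overlap computation $\left|\langle\Psi_Z|\Psi_{Z'}\rangle\right|^2=e^{-|Z-Z'|^2}$ corrects a spurious factor $\tfrac12$ that appears in the exponent of the display inside the paper's proof comment.
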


\begin{proof}
See~\cite[Section~1.3]{KlaSka-85}. The first claim is a consequence of the overlap expression~\eqref{eq:coh over}, which gives  
$$  \left|\langle \Psi_{Z'} | \Psi_Z  \rangle_{\gF} \right|^2= \exp \left( -\frac{1}{2} \sum_{j=1} ^D |z_j-z_j'| ^2 \right).
$$
A convolution with the above can be identified by a Fourier-side multiplication and related to the heat flow $e^{-\partial_Z \cdot \partial_{\overline{Z}}}$. The second claim is a unique analytic continuation argument from the lower symbol to all matrix elements $\langle\Psi_{Z'} |\bbA |\Psi_{Z} \rangle$. 
\end{proof}

Of course~\eqref{eq:up to low} goes in the wrong direction. We know the lower symbol exists, and how to compute it. We want to infer that an upper symbol exists, and compute it. In view of~\eqref{eq:up to low} this is tantamount to solving the heat flow \emph{backwards in time}, a dangerous undertaking (think of the regularizing properties of the forward heat flow). Fortunately, for the operators we shall be interested in (recall~\eqref{eq:hamil second}), this is doable (in~\eqref{eq:up poly} below we only consider the action of the backwards heat flow on analytic functions):

\begin{lemma}[\textbf{Symbols of polynomial operators}]\label{lem:sym pol}\mbox{}\\
Pick any normal-ordered monomial in creation/annihilation operators, i.e. for any $i_1,\ldots,i_k,j_1,\ldots,j_\ell\in \{1,\ldots,D\}$ denote 
$$ \bbA = a^\dagger (u_{i_1}) \ldots a^\dagger (u_{i_k}) a (u_{j_1}) \ldots a (u_{j_\ell}).$$
We have  
\begin{equation}\label{eq:low poly}
\bbA ^{\rm low} (Z) = \overline{z_{i_1} \ldots z_{i_k}} z_{j_1} \ldots z_{j_\ell} 
\end{equation}
and 
\begin{equation}\label{eq:up poly}
\bbA ^{\rm up} (Z) = e^{-\partial_Z \cdot \partial_{\overline{Z}}} \bbA ^{\rm low} (Z).  
\end{equation}
\end{lemma}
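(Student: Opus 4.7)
The plan is to handle the two formulas in turn. For the lower symbol \eqref{eq:low poly}, I would simply compute $\langle \Psi_Z|\bbA|\Psi_Z\rangle$ directly using the eigenvalue relation \eqref{eq:coh anni}. Since $\bbA$ is normal-ordered with all creators on the left and annihilators on the right, I would first act with the annihilators on $|\Psi_Z\rangle$ from the right, which produces the scalar factor $z_{j_\ell}\ldots z_{j_1}$, and then dually act with the creators on $\langle\Psi_Z|$ from the left, producing $\overline{z_{i_1}}\ldots\overline{z_{i_k}}$. Using $\|\Psi_Z\|^2=1$, formula \eqref{eq:low poly} follows immediately.

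For the upper symbol formula \eqref{eq:up poly}, the heuristic is just to invert the relation $\bbA^{\rm low} = e^{\partial_Z\cdot\partial_{\overline Z}}\bbA^{\rm up}$ from Lemma~\ref{lem:symb}. To make this rigorous without having to make sense of a backward heat flow on general data, I would exploit the fact that the lower symbol just computed is polynomial in $(Z,\overline Z)$. The series $e^{-\partial_Z\cdot\partial_{\overline Z}}$ applied to it then truncates after finitely many terms and defines a bona fide polynomial $p(Z)$. Setting
\[
\widetilde{\bbA} := \pi^{-D}\int_{\C^D} p(Z)\,|\Psi_Z\rangle\langle\Psi_Z|\,\mathrm{d}Z
\]
(a convergent integral in view of the Gaussian decay carried by each coherent projector), I would then verify that $\widetilde{\bbA} = \bbA$.

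To do so I would invoke the uniqueness half of Lemma~\ref{lem:symb}: it is enough to check that $\widetilde{\bbA}$ and $\bbA$ share the same lower symbol. From the overlap formula \eqref{eq:coh over} one computes $|\langle\Psi_{Z'}|\Psi_Z\rangle|^2 = e^{-|Z-Z'|^2}$, with $|Z-Z'|^2 = \sum_{j=1}^D |z_j - z_j'|^2$, and hence
\[
\widetilde{\bbA}^{\rm low}(Z') = \pi^{-D}\int_{\C^D} p(Z)\, e^{-|Z-Z'|^2}\,\mathrm{d}Z.
\]
On $\C^D \simeq \R^{2D}$ one has $\Delta = 4\,\partial_{Z'}\cdot\partial_{\overline{Z'}}$, and the Gaussian $\pi^{-D}e^{-|Z-Z'|^2}$ is precisely the heat kernel at the (unique) time realizing the semigroup element $e^{\partial_{Z'}\cdot\partial_{\overline{Z'}}}$ acting in the $Z'$ variable. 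The convolution above is therefore $\bigl(e^{\partial_{Z'}\cdot\partial_{\overline{Z'}}}p\bigr)(Z')$, which by the very definition of $p$ equals $\bbA^{\rm low}(Z')$.

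The only obstacle is this last heat-kernel identification, which reduces to a one-dimensional Gaussian integral in each complex coordinate and thus poses no real difficulty. The conceptual moral is that although the backward heat flow is pathological on general data, it is perfectly harmless when applied to a polynomial, which is exactly the situation produced by a normal-ordered monomial in creation/annihilation operators. By linearity, this also immediately yields an explicit upper symbol for any polynomial in the $a(u_j)$ and $a^\dagger(u_j)$, which is the only case needed in the applications to Hamiltonians of the form \eqref{eq:hamil second}.
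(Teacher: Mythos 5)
Your proof is correct and shares the overall strategy of the paper's proof: derive \eqref{eq:low poly} directly from the eigenvalue relation \eqref{eq:coh anni}, normalization of coherent states and adjointness; note that the series in \eqref{eq:up poly} truncates to a polynomial $p$ since the lower symbol is polynomial; define the candidate operator $\widetilde{\bbA}$ with $p$ as upper symbol; and conclude $\widetilde{\bbA}=\bbA$ by comparing lower symbols, invoking the uniqueness statement in Lemma~\ref{lem:symb}.

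Where you diverge is in how the lower symbol of $\widetilde{\bbA}$ is computed. The paper, in the illustrative case $\bbA = a^\dagger(u_1)a(u_1)$, re-interprets the scalar factor $p(Z') = |z_1'|^2-1$ acting under the integral as an operator factor by means of the eigenvalue relation, pulls out the closure relation \eqref{eq:coh basis}, and then normal-orders using the CCR \eqref{eq:CCR}, obtaining $a(u_1)a^\dagger(u_1)-1 = \bbA$. This is an algebraic route; extending it to general monomials requires a mildly tedious (and only sketched) normal-ordering bookkeeping. You instead recognize the kernel $\pi^{-D}\lvert\langle\Psi_{Z'}\vert\Psi_Z\rangle\rvert^2 = \pi^{-D}e^{-\lvert Z-Z'\rvert^2}$ as the Gaussian heat kernel implementing $e^{\partial_{Z}\cdot\partial_{\overline Z}}$ (i.e., the forward direction \eqref{eq:up to low}), so the convolution returns $e^{\partial_Z\cdot\partial_{\overline Z}}p = \bbA^{\rm low}$ outright. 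Your analytic route handles all monomials uniformly in a single stroke and makes the logical role of Lemma~\ref{lem:symb} transparent: the only non-trivial point is to show that for a polynomial the backward heat flow is a legitimate operation, after which the forward heat flow of Lemma~\ref{lem:symb} undoes it. The paper's route has the slight advantage of being self-contained at the operator level (one sees the anti-Wick/Wick ordering relation emerge directly), but for the present purpose both are equally valid.
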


\begin{proof}
Note first than in~\eqref{eq:up poly} the exponential in fact acts as a polynomial of finite degree, for the higher terms applied to the lower symbol give $0$. Thus~\eqref{eq:up poly} is just~\eqref{eq:up to low} in a case where it is legitimate to invert the relation and thus obtain the existence of an upper symbol.
 
The expression~\eqref{eq:low poly} is a straightforward consequence of~\eqref{eq:coh anni}, of the fact that coherent states are normalized and of $a^\dagger$ being the adjoint of $a$. 

We give the seed of the computation leading to~\eqref{eq:up poly}. Consider the case where 
$$ \bbA = a^\dagger (u_1) a (u_1).$$
Then define
$$\widetilde{\bbA} := \pi ^{-D} \int_{\C^D} \bbA ^{\rm up} (Z) |\Psi_Z \rangle \langle \Psi_Z|$$   
with $\bbA ^{\rm up} (Z)$ as in~\eqref{eq:up poly}. We do not know it is an upper symbol for $\bbA$ yet. To confirm this we must prove $\bbA = \widetilde{\bbA}$ and in view of Lemma~\ref{lem:symb} it suffices to compare the lower symbols of these operators. In the case at hand 
$$ \bbA ^{\rm up} = |z_1|^2 - 1,$$
thus 
\begin{align*}
\langle \Psi_Z | \widetilde{\bbA} | \Psi_Z \rangle &= \pi ^{-D} \int_{\C^D} \left| \langle \Psi_{Z} |\Psi_{Z'} \rangle \right|^2 |z'_1| ^2 \mathrm{d}Z' - 1  
\\&= \pi ^{-D}  \left\langle \Psi_{Z} \Big| a (u_1) \int_{\C^D} |\Psi_{Z'} \rangle \langle \Psi_{Z'}| \mathrm{d}Z' a^\dagger (u_1) \Big| \Psi_Z \right\rangle  - 1\\
&= \langle \Psi_Z | a (u_1) a^\dagger (u_1) | \Psi_Z \rangle - 1 \\
&= \langle \Psi_Z | \bbA | \Psi_Z \rangle
\end{align*}
where we used~\eqref{eq:coh anni}, the closure relation~\eqref{eq:coh basis} and the CCR~\eqref{eq:CCR}. The general case follows from similar considerations. 
\end{proof}

\subsection{Sketch of proof for the \index{Mean-field limit}{mean-field limit}}\label{sec:coh proof}

To simplify the approach as compared with the original~\cite{LieSei-06} we start again from Lemma~\ref{lem:localize-energy} to obtain a lower bound to the energy. We focalize on the $P$-localized part (first term in the right-hand side of~\eqref{eq:GP H2-localized-error}) and use again the $Q$-localized part to ensure that most particles in the original state are $P$-localized.

\medskip

\noindent\textbf{Energy lower bound in terms of an upper symbol.} Let $\Gamma_N^P$ be the $P$-localization of a many-body ground state, as defined in Section~\ref{sec:localization}. We seek a lower bound to 
\begin{align}\label{eq:coh low bound}
\tr \left( P^{\otimes 2} H_{2}^{\eps} P^{\otimes 2} \Gamma_N ^{(2)} \right) &=  \tr \left( H_{2}^{\eps}  \left( \Gamma_{N} ^P\right) ^{(2)} \right)\nonumber\\
&= \tr_{\gF (P \gH)} \left( \bH^P \Gamma_N^P \right)
\end{align}
where the last trace is over the \index{Fock space}{Fock space} generated from the $P$-projection of $L^2 (\R^d)$. We have denoted (compare with~\eqref{eq:hamil second})  
\begin{multline}\label{eq:P Hamil Fock}
\bH^P = \sum_{j=1} ^{D} \langle u_j | h  | u_j \rangle_{L^2(\R ^d)} a^\dagger (u_j) a (u_j) \\ + \frac{1}{N-1} \sum_{1\leq i,j,k,\ell \leq D} \left\langle u_i \otimes u_j | w_{N,\beta} (\bx-\by) | u_k \otimes u_{\ell} \right\rangle_{L^2 (\R^{2d})} a^{\dagger} (u_i) a ^{\dagger} (u_j) a (u_k) a (u_\ell). 
\end{multline}
where the $u_j$'s form a basis of $L^2 (\R^d)$ made of eigenfunctions of the one-body Hamiltonian~\eqref{eq:one body h}. Again, everything is now localized to the subspace $h\leq \Lambda$ and we denote $D= N_\Lambda$ the number of eigenvalues of $h$ below the cut-off. 

Now we are working on the Fock space, but we are not at liberty to use the lowest eigenvalue of $\bH^P$ as a lower bound to the energy. For repulsive interactions ($w_{N,\beta}\geq 0$) the latter is $0$ and attained by a state that has no particle at all. Instead we follow a trick of~\cite{LieSei-06}. Suppose we know $\Gamma_N ^P$ has exactly $N$ particles, namely it equals its projection on the $N$-particles sector of Fock space. Then~\eqref{eq:coh low bound} would,  for any constant $K>0$, equal 
$$ \tr_{\gF (P \gH)} \left( \bH^P \Gamma_N^P \right) + \frac{K}{N} \tr_{\gF (P\gH)} \left( (\cN - N) ^2 \Gamma_N^P \right) $$
where $\cN$ is the particle \index{Number operator}{number operator}~\eqref{eq:number second}. Of course our projected state $\Gamma_N ^P$ needs not have (and in fact, will not have) exactly $N$ particles. However, we can ensure its projection on Fock-space sectors where the particle number is not $\sim N$ is small by using the second, $Q$-localized term in~\eqref{eq:GP H2-localized-error} and~\eqref{eq:loc relation} exactly as sketched in the previous section. 

With apologies for this lack of details, we will from now on take for granted that $\Gamma_N^P$ is almost a $N$-body state and continue our proof sketch by seeking a lower bound to the modified Hamiltonian
\begin{equation}\label{eq:coh hamil}
\bHt = \bH^P + \frac{K}{N} (\cN - N) ^2 
\end{equation}
acting on the projected Fock space $\gF (P \gH)$. Now we introduce coherent states as discussed in the previous subsection. In view of~\eqref{eq:P Hamil Fock}, the above is a polynomial in annihilation and creation operators. Using Lemma~\ref{lem:sym pol} it thus has an upper symbol $\bH^{\rm up}$ in the \index{Coherent states}{coherent state} basis built from the eigenfunctions $u_1,\ldots,u_D$ of $h$ and we can write 
$$ \bHt = \pi ^{-D} \int_{Z\in \C ^D}  \bH^{\rm up} (Z) |\Psi_Z \rangle \langle \Psi_Z| \mathrm{d}Z \geq \inf_{Z\in\C^D} \bH^{\rm up} (Z) $$
in the notation introduced above, and using~\eqref{eq:coh basis}. We can thus bound the lowest eigenvalue of $\bHt$ from below by the minimum value of the upper symbol $\bH^{\rm up}$. This we shall estimate using Lemma~\ref{lem:sym pol}.

\medskip

\noindent\textbf{Difference between \index{Symbol, upper and lower}{upper and lower symbols}.} Indeed, using~\eqref{eq:up poly} and~\eqref{eq:low poly} we have that 
\begin{align}\label{eq:hamil symb}
\bH ^{\rm up} (Z) &= \left( 1 - \partial_Z \cdot \partial_{\overline{Z}} + \frac{1}{2} \left(\partial_Z \cdot \partial_{\overline{Z}}\right)^2\right) \bH^{\rm low} (Z)\nonumber\\
\bH ^{\rm low} (Z) &= \sum_{j=1} ^{D} h_j \overline{z_j} z_j  + \frac{1}{N-1} \sum_{1\leq i,j,k,\ell \leq D} w_{ijk\ell} \overline{z_i z_j} z_k z_l &+ \frac{K}{N} \left(\sum_{j=1} ^D  |z_j| ^2 - N \right)^2.
\end{align}
In the first expression we have used that $\bH ^{\rm low}$ is a quartic polynomial in the components of $Z$ to expand the exponential and discard higher order terms. We simplified the notation in a hopefully transparent way in the second expression (compare with~\eqref{eq:P Hamil Fock}), setting 
$$ w_{ijk\ell} = \left\langle u_i \otimes u_j | w_{N,\beta} (\bx-\by) | u_k \otimes u_{\ell} \right\rangle_{L^2 (\R^{2d})}.$$

The crux of the energy lower bound is that the main contribution comes from the first term in $\bH^{\rm up}$, namely we expect that to compute the minimum in $Z$ one can approximate 
\begin{equation}\label{eq:symb equiv}
 \bH^{\rm up} \approx \bH^{\rm low}. 
\end{equation}
This yields what we aim at, for a simple computation gives 
\begin{multline}\label{eq:low symb MF}
 \bH^{\rm low} (Z) = \langle u_Z | h | u_Z \rangle_{L^2 (\R^d)} + \frac{1}{N-1} \iint_{\R^d \times \R^d} |u_Z (\bx)| ^2 w_{N,\beta} (\bx - \by) |u_{Z} (\by)| ^2 \mathrm{d}\bx \mathrm{d}\by \\ + \frac{K}{N} \left( \norm{u_Z} ^2 _{L^2 (\R^d)} - N \right)^2  
\end{multline}
where $u_Z$ is as in Definition~\ref{def:coh}. We leave the reader convince himself/herself that if we minimize the above with respect to $Z$, letting $K$ be very large in the limit $N\to \infty$ (this is part of the fine tuning of all parameters in the proof, which we do not pursue), the minimum is attained for $\norm{u_Z}^2 \sim N$ and the infimum converges to the desired mean-field energy (note the scaling properties of the functional to extract the needed factor of $N$).

\medskip

\noindent\textbf{Errors in the energy estimate.} Thus what is left is to vindicate~\eqref{eq:symb equiv}. What we learn from looking at the lower symbol is that the minimum of $\bH^{\rm up}$ is likely to be attained where $\sum_{j=1} ^D |z_j| ^2 \sim N$ is large. But clearly, because of the derivatives in $Z$ it is made of, the difference 
\begin{equation}\label{eq:coh symb diff}
 \left| \bH^{\rm up} - \bH^{\rm low} \right| \ll \bH^{\rm low} 
\end{equation}
contains terms at most quadratic in $Z$ and thus ought to be smaller than the leading term $\bH^{\rm low}$. Let us have a look at the different terms one needs to estimate to confirm this expectation. We have 
\begin{equation}\label{eq:coh symb diff bis}
 \bH^{\rm up} - \bH^{\rm low} = - \partial_{Z} \cdot \partial_{\overline{Z}} \bH^{\rm low} + \frac{1}{2} \left( \partial_{Z} \cdot \partial_{\overline{Z}} \right) ^2 \bH^{\rm low}. 
\end{equation}
We list the contributions of the different terms below:

\medskip

\noindent$\bullet$ In the first term of~\eqref{eq:coh symb diff bis} we have the contribution 
$$ \sum_{j=1} ^D h_j \leq C \Lambda^{1+d/s+d/2}$$
from the quadratic term of $\bH^{\rm low}$, i.e. the sum of the $D$ first eigenvalues of $h$. This we estimate using the \index{Lieb-Thirring inequality}{Lieb-Thirring inequality} from Lemma~\ref{lem:LT}. Note that we need a bound $\ll N$, the total energy's order of magnitude. In view of the choice of $\Lambda$ in Lemma~\ref{lem:localize-energy} we are already limited to $d\beta < (1+d/s + d/2) ^{-1}$.

\medskip

\noindent$\bullet$ The quartic part of $\bH ^{\rm low}$ coming from the interaction potential contributes to the first term of~\eqref{eq:coh symb diff bis} a $-(N-1)^{-1}$ times
\begin{multline*} 
\sum_{1 \leq i,j,k \leq D} \overline{z_j} z_k \left( w_{ijik} + w_{ijki} + w_{jiik} + w_{jiki} \right) = \\
 \sum_{1\leq i \leq D} \left\langle u_i \otimes u_Z + u_Z \otimes u_i | w_{N,\beta} (\bx-\by) | u_i \otimes u_Z + u_Z \otimes u_i \right\rangle_{L^2 (\R^{2d})}
\end{multline*}
with $w_{N,\beta} ( \,.\, ) = N ^{d\beta} w (N^{\beta} \,.\, ) $
This can be bounded in absolute value e.g. by 
$$ \frac{C}{N} \norm{ w_{N,\beta}}_{L^\infty} \norm{u_Z}_{L^2} ^2 \sum_{1\leq i \leq D} \norm{u_i}_{L^2} ^2 = C N_\Lambda N^{d\beta - 1} \norm{u_Z}_{L^2} ^2 .$$

\medskip

\noindent$\bullet$ The quartic part of $\bH ^{\rm low}$ coming from the interaction potential contributes to the second term of~\eqref{eq:coh symb diff bis} a $(N-1)^{-1} /2$ times
$$\sum_{1 \leq i,j \leq D} \left( w_{ijij} + w_{ijji} + w_{jiij} + w_{jiji} \right) \leq N_\Lambda ^2 N^{d\beta}.$$

\medskip

\noindent$\bullet$ The term we introduced to control the particle number contributes 
$$ \frac{2K}{N} \norm{u_Z}_{L^2} ^2 - 2C$$
when hit by $- \partial_{Z} \cdot \partial_{\overline{Z}}$ and $2 K/N$ when hit by $\left(\partial_{Z} \cdot \partial_{\overline{Z}} \right) ^2/2.$

\medskip

The bottom line is that, if $\beta$ is not too big, all the above error terms are either $ \ll N$ independently of $Z$, or can be absorbed in the main term $\bH^{\rm low}(Z)$ without changing the asymptotics of the minimum thereof. A bit of work gives energy convergence for small values of $\beta$.

\medskip

\noindent\textbf{Convergence of \index{Reduced density matrix}{reduced density matrices} when the ground state is unique.} This is more tricky with this method (even controlling just the first one as stated in Theorem~\ref{thm:NLS bis}). We perturb the problem and rely on the \index{Feynman-Hellmann principle}{Feynman-Hellmann} Lemma~\ref{lem:FH} as used already in Section~\ref{sec:MF Onsager}. Consider adding a small multiple of a arbitrary bounded self-adjoint $k-$particles operator to $H_N$:
\begin{equation}\label{eq:perturb hamil many}
 H_{N,\eta} := H_N + \eta N { N\choose k} \sum_{1\leq i_1 \neq \ldots\neq i_k \leq N} \bA_{i_1\ldots i_k}. 
\end{equation}
Here $\bA$ acts on $\gH_k$ and $\bA_{i_1\ldots i_k}$ acts on the  $i_1, \ldots, i_k$ factors of $\gH_N$. The Feynman-Hellmann principle of~Lemma~\ref{lem:FH} tells us that, \textbf{if} $H_N$ has a unique minimizer $\Psi_N$ with reduced density matrices $\Gamma_N ^{(k)}$, then 
\begin{equation}\label{eq:FH LS}
 \tr\left( \bA {N \choose k} ^{-1} \Gamma_N ^{(k)} \right) = N^{-1}\partial_{\eta} E(N,\eta)_{|\eta = 0} 
\end{equation}
where $E(N,\eta)$ is the lowest eigenvalue of $H_{N,\eta}$. 

The method above directly applies to the perturbed $H_{N,\eta}$ when $k=1,2$ and gives the convergence of $N^{-1}E(N,\eta)$ to a perturbed mean-field energy. Both functions of $\eta$ are concave as infima over linear functions. The derivative in $\eta$ of $N^{-1}E(N,\eta)$ also converges to the derivative of the mean-field energy. An analog of~\eqref{eq:FH LS} for the mean-field functional gives convergence of the $2$-body density matrix. With a bit more sweat one could perhaps obtain higher density matrices also.

\medskip

\noindent\textbf{Convergence of reduced density matrices when the ground state is not unique.} There was an \textbf{if} above, namely we assumed the ground state to be unique. If this does not hold, one can still proceed with a bit of non-trivial convex analysis. The convergences of perturbed energies sketched above give some information on the structure of the (convex) set of limits of density matrices. In fact one can show that its extreme points are projectors onto mean-field minimizers, and conclude using the Choquet-Krein-Milman theorem~\cite{Simon-convexity}. This is done in details in~\cite[Section~3]{LieSei-06} for the one-body density matrix, and can probably be adapted to the $k$-body density matrix (provided one can first show the corresponding perturbed energies converge). \qed 

\medskip

We have indicated only the crudest of bounds in the above sketch. We leave it to the interested reader to figure out what combination of H\"older/Young/Sobolev/Gagliardo-Nirenberg/\index{Lieb-Thirring inequality}{Lieb-Thirring inequalities} yields the best estimate. But, if reaching large values of $\beta$ is the main concern, one should either rely on Section~\ref{sec:deF info} (whose tools lead to the claimed $\beta < 1/(2d)$ in Theorem~\ref{thm:NLS}). Or, better, couple the present techniques with those we shall describe later.  

\subsection{\index{Coherent states}{Coherent states} versus \index{Quantum de Finetti theorem}{de Finetti}}\label{sec:compare deF coh}

In Sections~\ref{sec:deF loc} and~\ref{sec:coherent} we have described two complementary methods that allow to treat very general Hamiltonians in the mean-field regime. Now we informally explain how they are related. Actually, it is the de Finetti method based on Theorem~\ref{thm:deF semi} that is closely related to the coherent states method. The variant based on Theorem~\ref{thm:deF info} stays somewhat on its own. 

Let us assume that, via localization as sketched previously, we are reduced to a lower bound on some Hamiltonian $\bH\equiv \bH^P$ acting on the Fock space $\gF \equiv \gF (P\gH)$, $P$ a dimension-$D$ orthogonal projector. Also assume that $\bH^P$ already contains a term penalizing the particle number so that its ground state is likely to be concentrated around the sector with $N$ particles. We thus seek a lower bound to 
\begin{equation}\label{eq:coh abs}
\inf_{\Psi \in \gF, \norm{\Psi} = 1} \left\{ \langle \Psi |\bH | \Psi\rangle \right\} = \inf_{\Gamma \in \cS (\gF), \tr\, \Gamma = 1} \left\{ \tr\left(\bH \Gamma\right)\right\}
\end{equation}
where the second  infimum is over (mixed) states on the Fock space and the equality is obtained via the correspondence $|\Psi \rangle \langle \Psi| = \Gamma$ for pure states 

We introduce coherent states $\Psi_Z,Z \in \C^D$ with the notation of Section~\ref{sec:coh form} (and emphasize the similarity with what has been discussed in the comments to Theorem~\ref{thm:deF semi}). We can now associate upper and lower symbols to essentially any operator, using Lemmas~\ref{lem:symb} and~\ref{lem:sym pol}. When estimating $ \tr \left(\bH \Gamma\right)$ we can thus either 
\begin{enumerate}
 \item Use the upper symbol of $\bH$ to write 
\begin{align*}
\tr\left(\bH \Gamma\right) &= \tr \left( \pi ^{-D} \int_{\C^D} \bH^{\rm up} (Z) |\Psi_Z \rangle \langle \Psi_Z | \mathrm{d}Z \: \Gamma \right) \\
&= \pi ^{-D} \int_{\C^D} \bH^{\rm up} (Z) \Gamma^{\rm low} (Z)\mathrm{d}Z
\end{align*}
Then we observe that $\bH^{\rm low} (Z)$ gives the mean-field energy and that $\Gamma^{\rm low} (Z)$ is a probability measure. Thus if we can replace $\bH^{\rm up} \rightsquigarrow \bH^{\rm low}$ in the above, we have won. This is what we did in Section~\ref{sec:coh proof}.
\item Use the upper symbol of $\Gamma$ to write  
$$ \tr\left( \bH \Gamma \right) = \pi ^{-D} \int_{\C^D} \bH^{\rm low} (Z) \Gamma^{\rm up} (Z)\mathrm{d}Z.$$
Here we have made the mean-field energy $\bH^{\rm low} (Z)$ appear already, but $\Gamma^{\rm up} (Z)$ needs not have a sign. We would like to approximate it by a probability measure, which essentially amounts to saying that its negative part is small. This is in some sense the outcome of the quantum de Finetti theorem in Section~\ref{sec:deF semi}.  
\end{enumerate}
In both cases above, what we really want is to approximate
$$ \tr\left( \bH \Gamma \right) \approx \pi ^{-D} \int_{\C^D} \bH^{\rm low} (Z) \Gamma^{\rm low} (Z)\mathrm{d}Z,$$
which gives the desired result. In case (1) we do it by approximating the Hamiltonian, using its simple expression as a polynomial in \index{Creation and annihilation operators}{annihilators/creators} and Lemma~\ref{lem:sym pol}. In case (2) we approximate the state instead, but the algebra is very much related. Observe indeed that the crucial step in the proof of~\eqref{eq:up poly} consists in normal-ordering a polynomial in creators/annihilators. This is also the crucial step in a proof of Theorem~\ref{thm:deF semi}, see~\eqref{eq:AWick}. 

Thus the semiclassical approach to mean-field limits boils down to comparing \index{Normal order, anti-normal order}{normal ordered} and anti-normal ordered polynomials in creators/annihilators (normal order means all creators on the left, anti-normal order means all creators on the right). Looking at things backwards, this is like comparing two different quantization procedures leading from the mean-field functional to the many-body Hamiltonian: the \index{Wick and anti-Wick quantization}{Wick quantization} (normal order) and the anti-Wick quantization (anti-normal order). We refer in particular to~\cite{AmmNie-08,AmmNie-09,AmmNie-11,Ammari-hdr} for more details on this point of view. Let us make a hint in this direction by stating the

\begin{definition}[\textbf{Wick and Anti-Wick quantizations}]\label{def:quantiz}\mbox{}\\
Let $\gH$ be a complex Hilbert space of dimension $D<\infty$. Let $h$ be a self-adjoint operator on $\gH$ with spectral decomposition
$$ h = \sum_{j=1} ^D h_j |u_j \rangle \langle u_j|$$
and $w$ be a self-adjoint operator on $\gH ^{\otimes 2}$. Define the polynomial of $Z=(z_1,\ldots,z_D)\in \C^D$
\begin{align}\label{eq:MF poly}
\cE (Z) &:= \langle u_Z | h | u_Z \rangle_{\gH} + \frac{1}{2} \langle u_Z ^{\otimes 2} | w| u_Z ^{\otimes 2}\rangle_{\gH_2} \\
&= \sum_{j=1} ^D h_j \overline{z_j} z_j + \frac{1}{N-1} \sum_{1\leq i,j,k,\ell \leq D} w_{ijk\ell} \overline{z_i z_j} z_k z_\ell
\end{align}
where $u_Z$ is as in Definition~\ref{def:coh} and 
$$ w_{ijk\ell} = \langle u_i \otimes u_j | w | u_k \otimes u_{\ell}\rangle_{\gH^{\otimes 2}}.$$
Define two operators on $\gF (\gH)$: the \emph{Wick quantization} of $\cE$
\begin{equation}\label{eq:Wick}
\cE^{\rm W} = \cE (a^\dagger,a):=  \sum_{j=1} ^D h_j a^\dagger_j a_j + \frac{1}{N-1} \sum_{1\leq i,j,k,\ell \leq D} w_{ijk\ell} a^\dagger_i a^\dagger_j a_k a_\ell
\end{equation}
and its \emph{anti-Wick quantization}
\begin{equation}\label{eq:AWick bis}
\cE^{\rm AW} = \cE (a,a^\dagger):=  \sum_{j=1} ^D h_j  a_j a^\dagger_j + \frac{1}{N-1} \sum_{1\leq i,j,k,\ell \leq D} w_{ijk\ell}  a_k a_\ell a^\dagger_i a^\dagger_j
\end{equation}
where the annihilation/creation operators are those defined by $u_1,\ldots,u_D$. 
\end{definition}

The idea is to replace complex numbers by annihilation/creation operators in formal expressions. Since the latter objects do not commute, a choice has to be made as regards the order in which to put them, leading to the two cases~\eqref{eq:Wick} and~\eqref{eq:AWick bis}. The former is what we should start from to do many-body quantum mechanics (recall~\eqref{eq:hamil second}).

The relation with what we have discussed previously is 

\begin{lemma}[\textbf{Wick and Anti-Wick quantizations}]\label{lem:quantiz}\mbox{}\\
We use the notation of the previous definition and the concepts of Definition~\ref{def:symbols}. Then 
\begin{itemize}
 \item $\cE^{\rm W}$ has $Z\mapsto \cE (Z)$ for lower symbol.
 \item $\cE^{\rm AW}$ has $Z\mapsto \cE (Z)$ for upper symbol.
\end{itemize}
\end{lemma}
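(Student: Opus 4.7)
The plan is to verify each of the two claims directly from the definitions of Section~\ref{sec:coh form}, exploiting the fact that coherent states are eigenvectors of all annihilation operators (equation~\eqref{eq:coh anni}). Throughout, abbreviate $a_j:=a(u_j)$ and $a^\dagger_j := a^\dagger(u_j)$, so that $a_j\Psi_Z = z_j\Psi_Z$ and, taking the adjoint, $\langle \Psi_Z| a^\dagger_j = \overline{z_j}\langle\Psi_Z|$.

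For the first claim, one computes the lower symbol of $\cE^{\rm W}$ from the definition~\eqref{eq:low symb} and the explicit expression~\eqref{eq:Wick}. Since each monomial in $\cE^{\rm W}$ is in \emph{normal order}, one first pushes each $a_j$ to the right onto $|\Psi_Z\rangle$ and each $a^\dagger_j$ to the left onto $\langle\Psi_Z|$, generating factors $z_j$ and $\overline{z_j}$ respectively without any commutator. This yields
\[
\langle\Psi_Z|\, a^\dagger_i a^\dagger_j a_k a_\ell \,|\Psi_Z\rangle = \overline{z_iz_j}z_kz_\ell,\qquad \langle\Psi_Z|\,a^\dagger_j a_j\,|\Psi_Z\rangle = |z_j|^2,
\]
and summing against the coefficients $h_j, w_{ijk\ell}$ in~\eqref{eq:Wick} produces exactly the polynomial $\cE(Z)$ of~\eqref{eq:MF poly}. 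This is in fact a special case of formula~\eqref{eq:low poly} of Lemma~\ref{lem:sym pol}.

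For the second claim, I would argue as in the proof sketch of Lemma~\ref{lem:sym pol}: for each anti-normal-ordered monomial appearing in $\cE^{\rm AW}$, the same eigenvalue property lets one pull the $a$'s (on the \emph{left} of the monomial, but also on the \emph{left} of the projector $|\Psi_Z\rangle\langle\Psi_Z|$) outside the integral as operators, and likewise the $a^\dagger$'s on the right. Concretely, using $z_k|\Psi_Z\rangle\langle\Psi_Z| = a_k|\Psi_Z\rangle\langle\Psi_Z|$ and $\overline{z_i}|\Psi_Z\rangle\langle\Psi_Z| = |\Psi_Z\rangle\langle\Psi_Z| a^\dagger_i$, together with the closure relation~\eqref{eq:coh basis}, one obtains
\[
\pi^{-D}\!\int_{\C^D}\! \overline{z_iz_j}z_kz_\ell\,|\Psi_Z\rangle\langle\Psi_Z|\,\mathrm{d}Z \;=\; a_k a_\ell \Bigl(\pi^{-D}\!\int |\Psi_Z\rangle\langle\Psi_Z|\,\mathrm{d}Z\Bigr) a^\dagger_j a^\dagger_i \;=\; a_k a_\ell a^\dagger_i a^\dagger_j,
\]
using also that creators (resp.\ annihilators) commute among themselves. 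Summing against $h_j$ and $w_{ijk\ell}$ reconstructs $\cE^{\rm AW}$ as given in~\eqref{eq:AWick bis}, which by Definition~\ref{def:symbols} says precisely that $\cE$ is an upper symbol of $\cE^{\rm AW}$. Uniqueness of this representation is not needed, but would follow from the second part of Lemma~\ref{lem:symb}. There is no real obstacle here: the whole proof is algebraic, and the only subtlety is keeping track of the bookkeeping of the $a$, $a^\dagger$ as they migrate across the coherent-state projector.
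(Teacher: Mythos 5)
Your proof is correct and relies on the same two ingredients as the paper's: the eigenvector property~\eqref{eq:coh anni} of coherent states under annihilation operators (and its adjoint for creation operators), together with the closure relation~\eqref{eq:coh basis}. The paper only sketches the anti-Wick claim in the model case $D=1$, $\cE(Z)=|Z|^2$, by testing $\langle u|aa^\dagger|v\rangle$ against arbitrary vectors and inserting the coherent-state resolution of the identity; you carry out the same computation operatorially on a general quartic monomial by migrating the $z$-factors across the projector $|\Psi_Z\rangle\langle\Psi_Z|$ as $a$'s and $a^\dagger$'s and then pulling them outside the integral, which is the identical argument written without the intermediate sandwiching.
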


\begin{proof}
Only the second statement has not been discussed so far, but this is a variant of~\eqref{eq:up poly}. We hint at the proof by considering the case $D=1$ and $\cE (Z) = |Z|^2$. Then we have to prove that 
$$ \pi ^{-1} \int_\C \overline{Z} Z |\Psi_Z \rangle \langle \Psi_Z| \mathrm{d}Z = a a^{\dagger}.$$
But 
\begin{align*}
 \langle u | a a^{\dagger} | v \rangle &= \langle  a^{\dagger} u | a^{\dagger} v \rangle \\
 &= \pi ^{-1} \int_\C   \langle a^\dagger u |  \Psi_Z \rangle \langle \Psi_Z | a^\dagger v \rangle  \mathrm{d}Z \\
&=\pi ^{-1} \int_\C  \langle  u |  a \Psi_Z \rangle \langle a \Psi_Z | v \rangle  \mathrm{d}Z \\
&= \pi ^{-1} \int_\C \overline{Z} Z  \langle  u |  \Psi_Z \rangle \langle \Psi_Z | v \rangle  \mathrm{d}Z
\end{align*}
using the coherent state closure relation~\eqref{eq:coh basis} and~\eqref{eq:coh anni}.
\end{proof}

As a final remark regarding semiclassics we mention the \index{Berezin-Lieb inequalities}{Berezin-Lieb inequalities}~\cite{Berezin-72,Lieb-73b,Simon-80} that give bounds on free-energies/partition functions instead of ground-state energies, i.e. on the problem with temperature instead of that at zero temperature.  See~\cite[Appendix~B]{Rougerie-spartacus,Rougerie-LMU} and~\cite{LewNamRou-14d,LewNamRou-17,LewNamRou-18b} for further discussion of this topic. 

\chapter{\index{Dilute limit}{Dilute limits}}\label{cha:dilute}

Now we move one step further as regards physical relevance (with application to \index{Cold atoms}{cold atomic gases} in mind) and mathematical sophistication. Namely, we attack the dilute regime, $\beta > 1/d$ (see the discussion in Section~\ref{sec:sca lim} where $\beta$ enters in~\eqref{eq:scaled potential}). Remark that in most of our previous discussion we had not quite covered the full range $\beta < 1/d$. The best we could do in generality was $\beta < 1/(2d)$ in~Theorem~\ref{thm:NLS}.  Also recall the last comment following Theorem~\ref{thm:NLS} (in 1D, any $\beta >0$ can be covered with its method of proof plus the use of a Sobolev inequality). Thus only the cases $d=2,3$ still require our attention. 

This chapter presents extensions of the techniques introduced in~Sections~\ref{sec:deF loc} and~\ref{sec:coherent}. 

\begin{itemize}
 \item In Section~\ref{sec:better loc} we follow a method of~\cite{LieSei-06} to deal with repulsive interactions. This couples the coherent state method with a much better localization technique than that of Lemma~\ref{lem:localize-energy}. 
 \item In Section~\ref{sec:mom} we introduce a set of a priori estimates derived from the variational \index{Many-body Schr\"odinger equation}{many-body Schr\"odinger equation} satisfied by energy minimizers. This is the first time we depart from a purely variational treatment.
 \item In Section~\ref{sec:mom rep} we couple the \index{Moments estimates}{moments estimates} to the de Finetti-based method in the  case of repulsive interactions. The results will be improved later when we discuss the Gross-Pitaevskii regime, but we continue with our motto of introducing new tools one at a time.
 \item In Section~\ref{sec:mom att} we state the best results known to date regarding interactions with an attractive part. In 2D we thus obtain genuinely attractive (focusing) mass-critical NLS functionals. In 3D the limit functionals need to be repulsive (defocusing) but we can work with potentials that are only classically stable in the sense of Definition~\ref{def:stability}, instead of purely repulsive.
\end{itemize}

\section{Better \index{Localization}{localization} in the \index{Coherent states}{coherent states} method}\label{sec:better loc}

For purely repulsive interactions it turns out one can still work variationally by being much more careful about the localization method, i.e. with the process of separating particles between low and high momenta and throwing away some interaction energy for a lower bound. In fact, by retaining part of the interaction between low and high momenta we will be able to prove the following using methods\footnote{We have not yet exhausted the full content of this paper, cf the bibliographical comment at the beginning of Section~\ref{sec:coherent}. The tools allowing to reach $\beta = 1$ will be presented below.} from~\cite{LieSei-06}. Recall the \index{Non-linear Schr\"odinger energy functional}{NLS energy functional} 
$$
\cEnls[u]:= \int_{\R^d} \left| \left( -\im \nabla + \bA \right) u \right| ^2 + V |u| ^2 + \frac{b_w}{2} \int_{\R^d} |u(\bx)| ^4  \mathrm{d}\bx.
$$
with minimum (under unit $L^2$ mass constraint) $\Enls$, minimizer(s) $\unls$ and the notation 
$$
b_w := \int_{\R^d} w.  
$$

\begin{theorem}[\textbf{Dilute limit in 3D, repulsive case}]\label{thm:dilute 3D}\mbox{}\\
Let $d=3$ Make Assumptions~\ref{asum:pots} plus the more specific trapping condition~\eqref{eq:trapping s bis} for some $s>0$. Let
\begin{equation}\label{eq:beta dilute}
0 < \beta < 2/3
\end{equation}
and assume a purely repulsive interaction, $0 \leq w \in L^\infty (\R^3)$. We have 

\smallskip 

\noindent\textbf{Convergence of the energy:} 
$$
\frac{E(N)}{N} \to \Enls. 
$$

\smallskip 

\noindent\textbf{Convergence of the one-body reduced density matrix:} let $\Gamma_N ^{(1)}$ be the one-body reduced density matrix of a many-body ground state $\Psi_N$. There exists a Borel probability measure $\mu$ on the set $\Mnls$ of NLS minimizers such that, along a subsequence, 
$$
{N} ^{-1} \Gamma_N ^{(1)} \to \int_{\Mnls} |u \rangle \langle u | \mathrm{d}\mu(u) 
$$
strongly in trace-class norm. 
\end{theorem}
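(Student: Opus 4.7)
My plan is to combine the \index{Coherent states}{coherent state} method of Subsection~\ref{sec:coh proof} with a much sharper \index{Localization}{localization} scheme than Lemma~\ref{lem:localize-energy}, following the strategy of~\cite{LieSei-06}. The bottleneck in Theorem~\ref{thm:NLS bis} was the constraint $\Lambda \gtrsim \eps^{-1} N^{3\beta}$ imposed by swallowing the entire $PQ$ cross-interaction into the kinetic cut-off: combined with $N_\Lambda \lesssim \Lambda^{3/s+3/2}$ (Lemma~\ref{lem:LT}), this left only a small $s$-dependent range of $\beta$. Because $w\geq 0$, I can keep most of the cross-interaction for free and dispose of only a small, controlled part, letting $\Lambda$ grow as slowly as desired.

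Concretely, I would write $H_N$ in second quantization~\eqref{eq:second hamil x} on the Fock space built from $P\gH$ and $Q\gH$ with $P = \1_{h\leq\Lambda}$, and decompose the quartic interaction according to which legs are $P$- or $Q$-occupied. The purely $P$-projected monomial is the main term, retained for the coherent-state estimate below. The remaining cross monomials contain at least one $Q$-leg. Using $w\geq 0$ and completing squares in these leftovers, they can be reorganized as a non-negative form plus an error absorbed by a small fraction of $\Lambda\,\cN_Q$, where $\cN_Q$ is the \index{Number operator}{number operator} on the $Q$-sector. The outcome is an operator lower bound of the shape
\begin{equation*}
H_N \;\geq\; P^{\otimes N}\bH^P P^{\otimes N} \;+\; c\,\Lambda\,\cN_Q \;-\; C\, N^{3\beta}\,\Lambda^{-1}\,\cN,
\end{equation*}
valid under $w\geq 0$, where $\bH^P$ is the Hamiltonian~\eqref{eq:P Hamil Fock}. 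The last error is $\ll N$ whenever $\Lambda \gg N^{3\beta-1}$, a much weaker requirement than the original $\Lambda \gg N^{3\beta}$.

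On the $P$-localized \index{Fock space}{Fock space} I then run the coherent-state argument: regularize by adding $K N^{-1}(\cN-N)^2$ (ensuring concentration near the $N$-particle sector via the $\Lambda\,\cN_Q$ term together with~\eqref{eq:loc relation}), and invoke Lemma~\ref{lem:sym pol} to compare the lower and upper symbols of the resulting Hamiltonian on $\gF(P\gH)$. Bounding the symbol difference term-by-term gives three errors per particle: $N^{-1}\sum_j h_j \lesssim N^{-1}\Lambda^{5/2+3/s}$ (by Lemma~\ref{lem:LT} with $\delta=1$), $N^{-1}N_\Lambda \norm{w_{N,\beta}}_{\infty} \lesssim N^{3\beta-1}\Lambda^{3/s+3/2}$, and $N^{-2}N_\Lambda^2 \norm{w_{N,\beta}}_{\infty} \lesssim N^{3\beta-2}\Lambda^{3+6/s}$. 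Choosing $\Lambda = N^\alpha$ with $\alpha$ slightly above $3\beta-1$ and taking $s$ large (the residual $s$-dependence being removable via the a priori moment bounds of Section~\ref{sec:mom}) makes all three errors $o(1)$ precisely when $\beta < 2/3$. The minimum of the lower symbol equals the Hartree energy with scaled potential $w_{N,\beta}$ projected to $P\gH$, which converges to $\Enls$ by a short $\Gamma$-convergence argument using the repulsivity of $w$.

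Convergence of $N^{-1}\Gamma_N^{(1)}$ to a convex combination of $|u\rangle\langle u|$ over $u\in\Mnls$ follows by the \index{Feynman-Hellmann principle}{Feynman-Hellmann scheme} of Subsection~\ref{sec:coh proof}: perturb $H_N$ by $\eta\sum_{j=1}^N B_{\bx_j}$ for an arbitrary bounded self-adjoint one-body $B$, rerun the same energy proof for the perturbed Hamiltonian (the sign analysis in the refined localization is unaffected by a bounded one-body perturbation), take the derivative at $\eta=0$ by Griffiths' lemma, and apply the Choquet--Krein--Milman theorem to the convex set of weak-$\star$ limits of $N^{-1}\Gamma_N^{(1)}$. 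The main obstacle is the sign analysis in the refined localization: the cross monomials with $Q$-legs are not individually non-negative as operators, and showing that their sum can be dominated by a small multiple of $\Lambda\,\cN_Q$ after completing squares requires a careful argument that crucially uses $w\geq 0$. This is exactly where the method breaks down for interactions with attractive parts, necessitating the different machinery of Sections~\ref{sec:mom}--\ref{sec:mom att}.
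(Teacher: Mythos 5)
The strategy is pointed in the right direction---use repulsivity to keep the $PQ$ cross-interactions rather than swallowing them into a crude $\Lambda\,\cN_Q$ bound as in Lemma~\ref{lem:localize-energy}---but the execution departs from the paper's and has a genuine gap in the error bookkeeping.

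Your claimed inequality
\begin{equation*}
H_N \;\geq\; P^{\otimes N}\bH^P P^{\otimes N} \;+\; c\,\Lambda\,\cN_Q \;-\; C\, N^{3\beta}\,\Lambda^{-1}\,\cN
\end{equation*}
is not achievable by completing squares: the cross monomials (one or two $Q$-legs) cannot be dominated by a term $\propto\cN$ with a prefactor $N^{3\beta}\Lambda^{-1}$ after discarding the $PQ$ interactions. Any Cauchy--Schwarz of a one-$Q$-leg cross term against a $\cN_Q$-type quantity leaves a remainder whose operator norm scales like $\norm{w_{N,\beta}}_\infty\sim N^{3\beta}$, and you are back to the crude constraint $\Lambda\gtrsim N^{3\beta}$. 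The paper avoids this exactly by \emph{not} trading the cross terms for a number-operator bound: the decomposition $W\geq P^{\otimes2}WP^{\otimes2}+(1-P^{\otimes2})WP^{\otimes2}+P^{\otimes2}W(1-P^{\otimes2})$ (valid because $w\geq0$) is inserted into the \emph{operator-valued} lower symbol of Lemma~\ref{lem:bound low}, whose value at $Z\in\C^D$ is not a scalar but an operator on $\gF(Q\gH)$. The cross terms then survive as a quadratic (Bogoliubov) Hamiltonian in the $Q$-modes, which is controlled using the excited kinetic energy $T$, not $\Lambda\cN_Q$. That is what unlocks $\beta<2/3$.

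The second problem is the ordering of limits. You couple $\Lambda=N^\alpha$ to $N$ and demand that the symbol-difference errors from Lemma~\ref{lem:sym pol} be $o(1)$ simultaneously. Even granting $\alpha$ slightly above $3\beta-1$ and $s$ arbitrarily large, your second error $N^{3\beta-1}\Lambda^{3/s+3/2}\to N^{3\beta-1}\Lambda^{3/2}\sim N^{(3\beta-1)\cdot 5/2}$ is \emph{growing} for every $\beta>1/3$. The range $\beta<2/3$ therefore cannot be reached under your coupling; the scaling actually forces $\beta<1/3$, i.e.\ you do not even exit the mean-field regime. The paper's route takes $N\to\infty$ at \emph{fixed} $D$ (keeping the de~Finetti/coherent-state errors, which scale with $D$, under control by freezing them), and only then $D\to\infty$, $\delta\to0$, $K\to\infty$. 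With this ordering, the only error that imposes a constraint on $\beta$ is $\delta^{-1}N^{3\beta-1}$ from Lemma~\ref{lem:bound low}, which is $\ll N$ iff $\beta<2/3$. Your appeal to the moment bounds of Section~\ref{sec:mom} to remove the $s$-dependence is a confusion between two distinct proofs: those bounds belong to the de~Finetti route of Theorem~\ref{thm:dilute} in Section~\ref{sec:mom rep}, not to the coherent-state proof of the present theorem.
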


\begin{proof}[Comments]\mbox{}\\
\noindent\textbf{1.} The technique we shall expose certainly also works in 2D. We leave it to the reader to adapt the Sobolev exponents everywhere in~\cite{LieSei-06} to figure out which $\beta$ it allows to reach.
 
\medskip 
 
\noindent\textbf{2.} We shall use the positivity of the interaction in the proof below. Most likely we thus may not hope to access higher density matrices with the method below. Indeed, as sketched in Section~\ref{sec:coh proof} we would need to perturb the Hamiltonian with arbitrary (not necessarily positive) $k$-body terms, $k\geq 2$. 

\end{proof}

In the rest of this section we sketch the proof of the above, following~\cite{LieSei-06}. 

\medskip

\noindent\textbf{Extension to \index{Fock space}{Fock space}.} As in Section~\ref{sec:coh proof}, we want to use coherent states, and thus extend the original $N-$body Hamiltonian to the Fock space. We trade the sharp value of the particle number for a penalization thereof by writing 
$$
E(N) \geq \inf \mathrm{spec}_{\gF} \left( \bH_N + \frac{K}{N} \left( \cN - N\right) ^2 \right)
$$
for any $K>0$ ($E(N)$ is actually the supremum over $K>0$ of the right-hand side). Here $E(N)$ is the ground state energy of the original Hamiltonian acting on $\gH_N$. On the right-hand side we have the bottom of the spectrum of the Fock-space Hamiltonian inside the parenthesis: $\bH_N$ is the original $H_N$ extended to Fock space using~\eqref{eq:hamil second} (with $w = N^{-1} w_{N,\beta}$) and $\cN$ is the particle \index{Number operator}{number operator}. 

From now on we denote 
\begin{equation}
 \bG := \bH_N + \frac{K}{N} \left( \cN - N\right)^2 
\end{equation}
the Fock-space operator we shall concentrate on.

\medskip

\noindent\textbf{Coherent states, again.} The first observation is that it is not necessary to first project the Hamiltonian to finite dimensions before introducing coherent states. It is still desirable to introduce coherent states only for finitely many modes of the one-body Hamiltonian because it is not obvious what becomes of~\eqref{eq:coh basis} when $D= \infty$. 

We carry on with the notation of Section~\ref{sec:localization}, and split the one-body Hilbert space $\gH = L^2 (\R^d)$ between low kinetic energy modes ($P\gH$ with $P = \1_{h\leq \Lambda}$) and high kinetic energy modes ($Q\gH$ with $Q= \1 - P$). Then, as previously mentioned, the Fock space $\gF (\gH)$ tensorizes 
$$ \gF (P \gH \oplus Q \gH) \simeq \gF (P\gH) \otimes \gF (Q\gH).$$
This precisely means that there is a unitary operator
$$ \cU : \gF (P \gH \oplus Q \gH) \mapsto \gF (P\gH) \otimes \gF (Q\gH)$$
defined by its action on creation operators (with a similar formula for annihilation operators)
$$ \cU \, a^\dagger (f) \,\cU = a^{\dagger} (Pf)\otimes \1 + \1 \otimes a^{\dagger } (Qf).$$
See e.g.~\cite[Appendix~A]{HaiLewSol_2-09} or~\cite{Ammari-04,Lewin-11} for more details. Any operator $\bbA$ acting on $\gF(\gH)$ (or at least, and this is all we need, any polynomial in annihilation/creation operators) is customarily identified with its action $\cU \, \bbA \, \cU^*$ on $\gF (P\gH) \otimes \gF (Q\gH)$. 

Now we introduce coherent states for the modes $u_1,\ldots,u_D$ spanning $P\gH$ as described in Section~\ref{sec:coh form}. From~\eqref{eq:coh basis} we have the closure relation/\index{Schur's lemma}{Schur's lemma}
$$ \1_{\gF (P\gH) \otimes \gF (Q\gH)} = \pi ^{-D} \int_{\C^D} |\Psi_Z \rangle \langle \Psi_Z | \otimes \1_{\gF (Q\gH)} \mathrm{d}Z.$$
To an operator $\bbA$ on $\gF (\gH)$ we can then associate a \index{Symbol, upper and lower}{\emph{lower symbol}}
\begin{align}\label{eq:low symb ext}
\bbA^{\rm low} (Z) &= \langle \Psi_Z | \bbA |\Psi_Z\rangle \nonumber \\
&= \tr_{\gF(P\gH)} \left( |\Psi_Z \rangle \langle \Psi_Z | \otimes \1_{\gF (Q\gH)} \,\cU\, \bbA \, \cU^* \right) 
\end{align}
where the first line is the usual notation and the second its actual meaning. This is now a function from $\C^D$ (or, equivalently, from $P\gH$) with values in operators on $\gF (Q \gH)$. If there exists another  function $\bbA^{\rm up}$ satisfying
\begin{equation}\label{eq:up symb ext}
\cU\, \bbA \, \cU^* = \pi ^{-D} \int_{\C ^D} \bbA ^{\rm up} (Z) |\Psi_Z \rangle \langle \Psi_Z | \otimes \1_{\gH} \mathrm{d}Z 
\end{equation}
we call it the \emph{upper symbol} of $\bbA$. The following consequences of Lemmas~\ref{lem:symb}, \ref{lem:sym pol} and~\ref{lem:quantiz} will be of use. First, for operators that one can express as polynomials in annihilators/creators
\begin{equation}\label{eq:symbs ext}
\bbA ^{\rm up} (Z) = e^{-\partial_{Z} \cdot \partial_{Z}} \bbA^{\rm low} (Z).  
\end{equation}
Next, somewhat schematically,
\begin{equation}\label{eq:symbs ext bis}
\bbA ^{\rm low} (Z) = \mathrm{Cnum}_{\gF(P\gH)} \left( \mathrm{Nord}\, \bbA \right). 
\end{equation}
Here $\mathrm{Nord}$ denotes \index{Normal order, anti-normal order}{normal} ordering: $\bbA$ is put in a form with all creation operators on the left and all annihilation operators on the right\footnote{We will always use~\eqref{eq:symbs ext bis} with already normal-ordered expressions such as~\eqref{eq:hamil second} anyway.}. Then we perform the \index{Classical number substitution}{classical number substitution} $\mathrm{Cnum}$, by which we mean that the creation operator $a^{\dagger} (u_j)$ is replaced by $\overline{z_j}$ and the annihilation operator $a(u_j)$ by $z_j$, this for all $1\leq j \leq D$.

\medskip 

\noindent\textbf{\index{Localization}{Localization} of symbols.} Now we replace the use of Lemma~\ref{lem:localize-energy} by lower bounds to the symbols defined above, which are now operators on $\gF(Q\gH)$. This is our way to keep track of the excited particles. In view of~\eqref{eq:up symb ext} what we really want is a lower bound to the upper symbol, for it gives direct access to the original operator we are interested in. We however start with the lower symbol. Hereafter we lighten notation by setting, for $u\in L^2 (\R^d)$ 
\begin{align}\label{eq:WNbeta}
 I [u] &= \iint_{\R^d \times \R^d} |u (\bx)| ^2 W_{N,\beta} (\bx - \by) |u (\by)|^2 \mathrm{d}\bx \mathrm{d}\by\nonumber\\
 W_{N,\beta} (\bx) &= N^{3\beta - 1} w (N^{\beta} \bx) \geq 0.
\end{align}

\begin{lemma}[\textbf{Lower bound to lower symbol}]\label{lem:bound low}\mbox{}\\
Denote $h_j$ the $j$-th eigenvalue of $h$ (associated to the eigenfunction $u_j$) and 
\begin{equation}\label{eq:kin excit coh}
T = \sum_{j = D+1} h_j a^\dagger (u_j) a (u_j) 
\end{equation}
the kinetic energy operator restricted to $\gF (Q\gH)$. Let $u_Z$ be as in Definition~\ref{def:coh}. For any $\delta >0$ we have, as operators on $\gF (Q\gH)$, 
\begin{align}\label{eq:low low}
\bG ^{\rm low} (Z) &\geq \langle u_Z | h | u_Z \rangle + I [u_Z] \left(1 - \delta - h_D ^{-1/4} N^{-1} T \right) + \frac{K}{N} \left( \norm{u_Z}_{L^2} ^2 - N \right)^2 \nonumber \\
&- C \left( N^{\frac{\beta - 1}{2}} + N^{-1/2} h_D ^{-1/4} \sqrt{T} + N^{-1} \sqrt{T}\right)\left( \langle u_Z | h | u_Z \rangle + I [u_Z] \right) - \delta^{-1} N^{3\beta - 1}.
\end{align}
\end{lemma}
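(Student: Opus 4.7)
The plan is to compute $\bG^{\rm low}(Z)$ term by term using~\eqref{eq:symbs ext bis}: put each piece in normal-ordered form in the eigenbasis $(u_j)$ of $h$ and then perform the classical number substitution $a(u_j)\mapsto z_j$, $a^\dagger(u_j)\mapsto \overline{z_j}$ for $j\leq D$, while keeping $a^{\#}(u_j)$ with $j>D$ as operators on $\gF(Q\gH)$. Equivalently, writing the position-space field as $a_x = u_Z(x)+\tilde a_x$ with $\tilde a_x$ the field of $\gF(Q\gH)$, the substitution just replaces every $P$-annihilator by $u_Z(x)$ and every $P$-creator by $\overline{u_Z(x)}$. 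The kinetic term $\sum_j h_j\, a^\dagger(u_j) a(u_j)$ is already diagonal in the chosen basis, so its lower symbol is cleanly $\langle u_Z | h | u_Z\rangle + T$. The number penalty becomes $\tfrac{K}{N}(\|u_Z\|^2 - N + \cN_Q)^2\geq \tfrac{K}{N}(\|u_Z\|^2 - N)^2$ up to a cross term $\propto (K/N)(\|u_Z\|^2-N)\,\cN_Q$ that is absorbed via Young's inequality and the operator bound $\cN_Q\leq T/h_D$ on $\gF(Q\gH)$.

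The heart of the proof is the normal-ordered interaction $\tfrac{1}{2}\iint w_N(x-y)\, a_x^\dagger a_y^\dagger a_x a_y\,dx\,dy$. Substituting $a_x=u_Z(x)+\tilde a_x$ and expanding produces sixteen contributions that I would classify by their total degree $k\in\{0,1,2,3,4\}$ in the excited fields $\tilde a^{\#}$. The $k=0$ piece reproduces $\tfrac12\iint w_N|u_Z|^2|u_Z|^2$, which up to the harmless factor $N/(N-1)$ is the leading contribution and combines with the positive density-density part of $k=2$ (namely $\int(w_N\ast|u_Z|^2)(y)\,\tilde a_y^\dagger\tilde a_y\,dy\geq 0$, manifestly so because $w_N\geq 0$) to produce the main term proportional to $I[u_Z]$ in the announced inequality. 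The $k=4$ piece $\tfrac12\iint w_N\,\tilde a_x^\dagger\tilde a_y^\dagger\tilde a_x\tilde a_y\geq 0$ is simply discarded. The remaining $k=2$ pieces (the ``hopping'' and ``pair-creation/annihilation'' terms), together with the entire $k=1$ and $k=3$ contributions, are non-sign-definite and must be estimated.

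For these I would use the weighted Cauchy--Schwarz bound on $\gF(Q\gH)$
\begin{equation*}
\tilde a^\dagger(f)\,\tilde a(f) \;\leq\; \|h^{-1/2}f\|_{L^2}^2\,T \;\leq\; h_D^{-1}\,\|f\|_{L^2}^2\,T,
\end{equation*}
which follows by attaching a weight $h_j^{1/2}$ to each mode and using $h_j\geq h_D$ for $j>D$. Interpolating this with the trivial estimate $\tilde a^\dagger(f)\tilde a(f)\leq \|f\|^2\cN_Q$ yields exactly the $h_D^{-1/4}$ and $h_D^{-1/2}$ weights that appear in the statement. Repeated application of Young's inequality $2|ab|\leq \delta a^2+\delta^{-1}b^2$ then absorbs each cross contribution into one of: $\delta\, I[u_Z]$, the $\sqrt T$-weighted remainder $C(N^{(\beta-1)/2}+N^{-1/2}h_D^{-1/4}\sqrt T+N^{-1}\sqrt T)(\langle u_Z|h|u_Z\rangle+I[u_Z])$, or the $c$-number error $-\delta^{-1}N^{3\beta-1}$, the last one coming from normal-ordering commutators such as $\tfrac{1}{2}w_N(0)$ whose $L^\infty$-norm scales as $\|w_N\|_{L^\infty}\sim N^{3\beta-1}$.

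The chief obstacle will be the combinatorial bookkeeping: one must simultaneously juggle five classes of cross terms and ensure that the powers of $N$, $h_D\sim\Lambda$ and $\delta$ align correctly with the Cauchy--Schwarz splittings, so that the announced operator inequality emerges with exactly the stated remainders rather than with worse scaling or with uncontrolled signs. The two essential positivity inputs are $w_N\geq 0$ (used separately for the $k=0$, $k=4$ and density-density part of $k=2$) and the positivity of $T$ on $\gF(Q\gH)$, through which the spectral cutoff $\Lambda$ is converted into a small parameter via the weighted bound above. Collecting everything then gives the stated inequality on $\gF(Q\gH)$.
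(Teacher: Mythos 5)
Your expansion $a_x = u_Z(x)+\tilde a_x$ and the classification of the interaction by degree $k$ in the excited fields is a reasonable way to organize the $c$-number substitution, but the way you then dispose of the various pieces hides a genuine gap at $k=3$. You discard the quartic piece $\tfrac12\iint w_N\,\tilde a_x^\dagger\tilde a_y^\dagger\tilde a_x\tilde a_y$ outright, keep the density $k=2$ piece because it is positive, and then announce that the $k=1$, $k=3$ and remaining $k=2$ pieces will all be controlled by the quadratic bound $\tilde a^\dagger(f)\tilde a(f)\leq h_D^{-1}\|f\|^2\,T$ (and its $\cN_Q$-interpolate). That works for the linear and quadratic pieces, but it cannot work for the cubic ones: a cubic operator such as $\iint w_N(x-y)\,\tilde a_x^\dagger\tilde a_y^\dagger\tilde a_x\,u_Z(y)+\mathrm{h.c.}$ is not dominated, as an operator inequality on $\gF(Q\gH)$, by anything of degree $\leq 2$; any Cauchy--Schwarz you apply to it produces precisely a quartic error $\iint w_N\,\tilde a_x^\dagger\tilde a_y^\dagger\tilde a_y\tilde a_x$ (one factor is $\tilde a_y\tilde a_x$, the other $u_Z(y)\tilde a_x$, so the squares are the $k=4$ and $k=2$ density pieces), and you have just thrown the $k=4$ term away. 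There is no $T^{3/2}$- or $T^2$-flavoured remainder in \eqref{eq:low low}, so an honest lower bound that leaves a cubic term in play cannot match the statement.

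The mechanism the paper uses (following~\cite{LieSei-06}, Eq.~(61)) is sharper and avoids this entirely. One writes the positivity of the interaction at the two-body operator level as
\[
W_{N,\beta} \;\geq\; P^{\otimes 2} W_{N,\beta} P^{\otimes 2} \;+\; (1-P^{\otimes 2}) W_{N,\beta} P^{\otimes 2} \;+\; P^{\otimes 2} W_{N,\beta} (1-P^{\otimes 2}),
\]
the dropped piece $(1-P^{\otimes 2}) W_{N,\beta} (1-P^{\otimes 2})\geq 0$. Under the $c$-number substitution this dropped piece is \emph{exactly} the sum of your $k=4$, your $k=3$, and the density \emph{and} exchange/hopping parts of your $k=2$: it collects all terms with at least one excited creator \emph{and} at least one excited annihilator. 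Dropping it as a single positive operator disposes of the cubic term for free and without any Cauchy--Schwarz. What survives is genuinely quadratic in $\tilde a^\#$ — the $k=0$ term, the $k=1$ terms, and the $k=2$ pair-creation/annihilation pieces (two excited creators, or two excited annihilators) — i.e.\ a Bogoliubov Hamiltonian on $\gF(Q\gH)$, and only \emph{those} pieces are controlled by $T$ via weighted Cauchy--Schwarz, producing the $\sqrt T$ and $T$ remainders in \eqref{eq:low low}. Your separate observations that $w_N\geq 0$ makes $k=4$ and the density $k=2$ piece individually positive are correct but do not suffice; the point is that the troublesome cubic piece sits inside the single positive two-body operator $(1-P^{\otimes 2})W(1-P^{\otimes 2})$, and only when packaged together with the $k=4$ and the density/hopping $k=2$ pieces can it be discarded.
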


\begin{proof}[Comments]
This is an adaptation of~\cite[Equation (61)]{LieSei-06}. The positivity of the interaction is used by writing that, as an operator on the two-body space,
$$
W_{N,\beta} \geq P^{\otimes 2} W_{N,\beta} P^{\otimes 2} + (1-P^{\otimes 2}) W_{N,\beta} P^{\otimes 2} + P^{\otimes 2} W_{N,\beta} (1-P^{\otimes 2})
$$
for the difference between left and right sides is 
\begin{equation}\label{eq:complete square}
(1-P^{\otimes 2}) W_{N,\beta} (1-P^{\otimes 2})\geq 0. 
\end{equation}
Compared to Lemma~\ref{lem:localize-energy} we keep track of the interaction between pairs of low energy particles and pairs of high-low (or high-high) energy particles. This means that the \index{Second quantization}{second quantized} interaction $\sum_{ijk\ell} W_{ijk\ell} a^\dagger_i a^\dagger_j a_k a_\ell$ is bounded below by retaining terms with at most two indices $\geq D$. This leads, after using~\eqref{eq:symbs ext bis}, to a lower bound to the lower symbol in terms of a \index{Bogoliubov Hamiltonian}{quadratic} (Bogoliubov) Hamiltonian (in the $a,a^\dagger$ operators of the excited space) that one can then, with some effort, control\footnote{One might also use exact expressions for the ground state energy of bosonic quadratic Hamiltonians, but this is not the approach pursued in~\cite{LieSei-06}. See Sections~\ref{sec:Bogoliubov} and~\ref{sec:Bogoliubov low} below for comments on quadratic Hamiltonians.} using the kinetic energy of the excited modes (which is also quadratic). 

The virtue of~\eqref{eq:low low} is that it allows to pass to the limit first in $N\to \infty$ and then only $D\to \infty$, provided $\beta < 2/3$. Indeed, the main terms on the first line will be proved to be of order $N$, and we should expect the kinetic energy $T$ to be at most of this order as well (namely, its expectation value in a ground state will be at most of this order). Dividing the whole inequality by $N$ we see that the errors are all either of the form $f(D) o_N (1)$, a function of $D$ times something becoming small when $N\to\infty$ or the form $f(N) o_D (1)$, a \emph{bounded} function of $N$ times something becoming small when $D\to \infty$. The limiting term is the last one, which is $\ll N$ only provided $\beta < 2/3$. Under this condition we can take successively the limits $N\to \infty,D\to \infty,\delta\to 0$ and isolate the leading order.  
\end{proof}

Next we turn to the upper symbol. It follows from the considerations in Section~\ref{sec:coh form} that $\bG$ does have one, and that it is related to the lower symbol via~\eqref{eq:symbs ext}. The main part will come from the lower symbol itself, but we still have to bound the terms coming from the $Z,\overline{Z}$  derivatives in~\eqref{eq:symbs ext}. 

\begin{lemma}[\textbf{Lower bound to upper symbol}]\label{lem:bound up}\mbox{}\\
Same notation as above, with in addition
$$\cN ^Q = \sum_{j=D+1} ^\infty a^\dagger (u_j) a (u_j)$$
the number (operator) of excited particles. We have, as operators on $\gF (Q\gH)$, 
\begin{align}\label{eq:low up}
\bG^{\rm up} (Z) - \bG ^{\rm low} (Z) &\gtrsim - \sum_{j=1} ^D h_j - \frac{KD}{N} \left( \cN_Q + \norm{u_Z}_{L^2} ^2 + 1 \right) \nonumber\\
&-N^{-1}\left( \langle u_Z | h| u_Z \rangle  + T \right) \sum_{j\leq D} h_j ^{1/2} \nonumber\\
&- N^{\frac{\beta -2}{2}}  \left( \langle u_Z | h| u_Z \rangle + \cN_Q + 1\right) \sum_{j\leq D} h_j ^{3/4}
\end{align}
where $\gtrsim e $ means ``larger than a universal constant times $e$''.
\end{lemma}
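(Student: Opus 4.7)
The plan is to use the identity $\bG^{\rm up}(Z) = e^{-\partial_Z\cdot\partial_{\bar Z}}\,\bG^{\rm low}(Z)$ from \eqref{eq:symbs ext}. Since $\bG = \bH_N + \tfrac{K}{N}(\cN-N)^2$ is a polynomial of degree at most four in the creators/annihilators, the exponential series truncates after the second-order term, so that
\begin{equation*}
\bG^{\rm up}(Z) - \bG^{\rm low}(Z) = -\,\partial_Z\cdot\partial_{\bar Z}\,\bG^{\rm low}(Z) + \tfrac{1}{2}\,(\partial_Z\cdot\partial_{\bar Z})^2\,\bG^{\rm low}(Z).
\end{equation*}
My approach is to compute this finite correction piece by piece, grouping the resulting operator-valued contributions into the shape displayed in \eqref{eq:low up}.

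First I would dispose of the easier terms. The one-body operator $\sum_j h_j\, a_j^\dagger a_j$ is diagonal in the basis $(u_j)$, so its lower symbol is simply $\langle u_Z|h|u_Z\rangle + T$; one application of $\partial_Z\cdot\partial_{\bar Z}$ returns the constant $\sum_{j\le D} h_j$ and higher derivatives vanish, producing exactly the first term $-\sum_{j\le D}h_j$ of \eqref{eq:low up}. For the penalty $\tfrac{K}{N}(\cN-N)^2$, writing $\cN = \sum_{j\le D}a_j^\dagger a_j + \cN_Q$ and normal-ordering shows that its lower symbol equals $\tfrac{K}{N}\bigl((|Z|^2+\cN_Q-N)^2 + |Z|^2\bigr)$; a direct differentiation then produces contributions of order $\tfrac{KD}{N}(|Z|^2+\cN_Q-N)$ and $\tfrac{KD^2}{N}$, which combine via the elementary inequality $|a|\le a^2+1$ to the claimed lower bound $-\tfrac{KD}{N}(\cN_Q + \|u_Z\|_{L^2}^2 + 1)$.

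The main obstacle is the quartic interaction $\tfrac{1}{N-1}\sum_{ijk\ell}W_{ijk\ell}\,a_i^\dagger a_j^\dagger a_k a_\ell$, whose lower symbol splits into sixteen pieces indexed by whether each of $i,j,k,\ell$ belongs to $P\gH$ or $Q\gH$; after $c$-number substitution on the $P$-indices each piece is a monomial of some degree $r$ in $Z,\bar Z$ whose coefficient is an operator on $\gF(Q\gH)$ of complementary degree $4-r$ in the $a_\alpha^\dagger,a_\alpha$ with $\alpha>D$. Applying $\partial_Z\cdot\partial_{\bar Z}$ generates partially contracted matrix elements such as $\sum_{i\le D}W_{ijik}$, which I would recognize as convolutions against $\rho_P(x)=\sum_{i\le D}|u_i(x)|^2$, then estimate by H\"older's and Young's inequalities involving the scalings $\|W_{N,\beta}\|_{L^1}\sim N^{-1}$ and $\|W_{N,\beta}\|_{L^2}^2\sim N^{3\beta-2}$, together with the Sobolev bounds $\|u_j\|_{L^6}\lesssim h_j^{1/2}$ and $\|u_j\|_{L^\infty}\lesssim h_j^{3/4}$ valid for eigenfunctions of $h$ in three dimensions. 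The delicate step is the treatment of the $P$--$Q$ cross pieces, in which one or two factors of $a^\dagger,a$ on $\gF(Q\gH)$ survive the differentiation: they must be paired against $P$-mode factors via a weighted Cauchy--Schwarz $ab \le \eta a^2 + \eta^{-1}b^2$, tuned so that the $Q$-side reproduces $\cN_Q$ or $T$, while the $P$-side reproduces $\langle u_Z|h|u_Z\rangle$ multiplied by one of the explicit sums $\sum_{j\le D}h_j^{1/2}$ or $\sum_{j\le D}h_j^{3/4}$. These sums are finite by the Lieb--Thirring bound of Lemma~\ref{lem:LT}, and combined with the interaction scalings produce precisely the last two lines of \eqref{eq:low up}.
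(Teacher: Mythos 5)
Your overall plan is sound: expand $\bG^{\rm up}-\bG^{\rm low}$ via the truncated exponential, and compute the correction separately for the one-body, penalty, and quartic pieces. The treatment of the one-body term and of the penalty $\tfrac{K}{N}(\cN-N)^2$ (whose lower symbol you correctly identify as $\tfrac{K}{N}\bigl((|Z|^2+\cN_Q-N)^2+|Z|^2\bigr)$) is essentially correct, though the ``$|a|\le a^2+1$'' step is unnecessary --- after differentiating, the positive pieces $+2KD$ and $+KD^2/N$ can simply be dropped and the remaining negatives read off directly.

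The gap is in the quartic interaction: you address only the first-order correction $-\partial_Z\cdot\partial_{\bar Z}V^{\rm low}$ and say nothing concrete about the second-order term $+\tfrac{1}{2}(\partial_Z\cdot\partial_{\bar Z})^2 V^{\rm low}$, or you implicitly propose to estimate it by the same H\"older/Young/Sobolev toolkit. That misses the one structural observation the argument actually hinges on. Since only the all-$P$ piece of $V^{\rm low}$ has degree $4$ in $Z$, the second-order correction reduces to a \emph{scalar} proportional to
\[
\sum_{i,j\le D}\bigl(w_{ijij}+w_{ijji}\bigr)
=\iint \rho_P(\bx)\,W_{N,\beta}(\bx-\by)\,\rho_P(\by)\,\mathrm{d}\bx\,\mathrm{d}\by
+\iint W_{N,\beta}(\bx-\by)\,|P(\bx;\by)|^2\,\mathrm{d}\bx\,\mathrm{d}\by,
\]
which is manifestly non-negative precisely because $w\ge 0$ (this is the ``convexity of the mean-field interaction energy'' the paper refers to). Since this term enters the expansion with a $+$ sign, it can be \emph{dropped} from the lower bound. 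If instead you bound its absolute value, you generate an extra error of size $\sim D^2 N^{3\beta-2}$ that is not among the terms in \eqref{eq:low up}; depending on $\beta$ this does not sit below any of the displayed errors, and in any case it throws away exactly the repulsivity the whole approach is built on. Once you replace the attempted estimate by ``drop the non-negative second-order interaction correction,'' the remaining task --- bounding $\partial_Z\cdot\partial_{\bar Z}V^{\rm low}$ by weighted Cauchy--Schwarz against $\langle u_Z|h|u_Z\rangle$, $T$, $\cN_Q$, using the eigenfunction Sobolev bounds and the Lieb--Thirring sums --- is the right toolkit and matches the cited computation of~\cite{LieSei-06}.
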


\begin{proof}[Comments]
In view of~\eqref{eq:symbs ext} and since $\bG$ is quartic in \index{Creation and annihilation operators}{annihilators/creators}, this is about bounding from below 
$$ -\partial_Z \cdot \partial_{\overline{Z}} \, \bG^{\rm low} + \frac{1}{2} \left( \partial_Z \cdot \partial_{\overline{Z}}\right) ^2 \bG^{\rm low}.$$
The contribution of the interaction to the second term above is non-negative and can thus be dropped from the lower bound. Essentially this is a rephrasing of the convexity of the mean-field interaction energy and is based on $w$ being non-negative. 

Once this has been observed, the task left is to bound the term $\partial_Z \cdot \partial_{\overline{Z}} \bG^{\rm low}$ from above. The end result we rephrased in the lemma is~\cite[Equation~(71)]{LieSei-06}. The value of~\eqref{eq:low up} is that it is again possible to pass to the limit therein, first $N\to \infty$ and next $D\to \infty$, to obtain a $o(N)$.
\end{proof}

\noindent\textbf{Conclusion.} Using the two above lemmas and~\eqref{eq:up symb ext} yields an operator lower bound on $\bG$ (that we identify with $\cU \, \bG \, \cU^*$ since they are unitarily equivalent) of the form 
\begin{equation}\label{eq:coh final}
\bG \geq \pi^{-D} \int_{\C^D} \widetilde{\bG} (Z) |\Psi_Z \rangle \langle \Psi_Z | \otimes \1_{\gF (G\gH)} \mathrm{d}Z - \delta^{-1} N^{3\beta - 1} - \sum_{j=1} ^D h_j 
\end{equation}
with 
\begin{align*}
\widetilde{\bG} (Z) &- \langle u_Z | h | u_Z \rangle - I [u_Z] \left(1 - \delta - h_D ^{-1/4} N^{-1} T \right) - \frac{K}{N} \left( \norm{u_Z}_{L^2} ^2 - N \right)^2\nonumber \\
&\gtrsim - \left( \langle u_Z | h | u_Z \rangle + I [u_Z] \right) \left( N^{\frac{\beta - 1}{2}} + N^{-1/2} h_D ^{-1/4} \sqrt{T} + N^{-1} \sqrt{T}\right) \\
&- \frac{KD}{N} \left( \cN_Q + \norm{u_Z}_{L^2} ^2 + 1 \right) -N^{-1}\left( \langle u_Z | h| u_Z \rangle  + T \right) \sum_{j\leq D} h_j ^{1/2}\\
&- N^{\frac{\beta -2}{2}}  \left( \langle u_Z | h| u_Z \rangle + \cN_Q + 1\right) \sum_{j\leq D} h_j ^{3/4} - C \delta^{-1} N^{3\beta - 1}.
\end{align*}
To conclude we need to take the expectation value of both sides of~\eqref{eq:coh final} in a ground state of $\bG$, divide by $N$, and take the limits first $N\to \infty$ then $D\to \infty$, then $\delta \to 0$ and finally $K\to \infty$. This gives the needed energy lower bound in terms of the NLS energy (recall that the most stringent condition comes from the error $\delta^{-1} N^{3\beta - 1}$, which is $\ll N$ only for $\beta <  2/3$), provided we know some simple a priori bounds on the ground state. But since the Hamiltonian $\bG$ is made only of positive terms and its ground-state energy can easily be bounded above by a multiple of $N$, there is no difficulty in obtaining 
$$ \langle \Psi_{\bG} | T | \Psi_{\bG} \rangle_{\gF} \leq C N  $$
for any ground state vector $\Psi_\bG$, with $T$ the kinetic energy of the excited modes~\eqref{eq:kin excit coh}. By concavity we also have 
$$ \langle \Psi_\bG | \sqrt{T} | \Psi_\bG \rangle_{\gF}  \leq C \sqrt{N} $$
and since 
$$ \cN_Q \leq h_D ^{-1} T $$
we also have 
$$ \langle \Psi_\bG | \cN_Q | \Psi_\bG \rangle_{\gF} \leq C h_D ^{-1} N.$$
These are all the estimates needed to close the proof of the energy lower bound.

To deduce convergence of the one-body density matrix, the argument is the same (\index{Feynman-Hellmann principle}{Feynman-Hellmann-like}) as that sketched in Section~\ref{sec:coh proof}. The case of a non-unique ground state (fairly common with non-trivial magnetic field, $\bA \neq 0$) requires refined arguments, in particular some convex analysis. We do not reproduce the details.

\medskip

\section{\index{Moments estimates}{Moments estimates}}\label{sec:mom}

We saw in the previous section that the dilute regime could be reached by passing to the limit first in the particle number $N\to \infty$ and then only in the kinetic energy cut-off $\Lambda$. This is the key to bypass the bad dependence of error estimates on the dimension of the low-energy one-particle state-space in the semiclassical de \index{Quantum de Finetti theorem}{Finetti} method and the \index{Coherent states}{coherent states} method. 

Next we turn to another set of tools that allows to take limits in this order or, at least, to take the energy cut-off to infinity much more slowly than needed in Lemma~\ref{lem:localize-energy}. The main virtue of these tools, compared to those of the previous section, is that they will allow us to deal with attractive interactions. For the first time in this review we will exploit the variational \index{Many-body Schr\"odinger equation}{many-body Schr\"odinger equation}  satisfied by minimizers. The main idea is that, whereas control of the many-body Hamiltonian itself gives bounds on the kinetic energy
$$ \tr \left( h \Gamma_N^{(1)}\right)$$
with $h$ the one-particle Hamiltonian and $\Gamma_N^{(1)}$ the one-particle reduced density matrix of a ground state, control of higher powers of the Hamiltonian gives access to moments of the kinetic energy, say the second 
\begin{equation}\label{eq:2 moment}
\tr \left( h\otimes h \Gamma_N^{(2)}\right). 
\end{equation}
This provides a much better control of the localization error when one projects the problem to low kinetic energy modes. 

For a ground state vector $\Psi_N$ we have the variational equation 
$$ H_N \Psi_N = E(N) \Psi_N$$
and thus 
$$ H_N ^2 \Psi_N = E(N) ^2 \Psi_N.$$
If we can show that $H_N^2$ controls the non-interacting $\left(\sum_{j=1} ^N h_j\right) ^2$ then we can deduce a bound on~\eqref{eq:2 moment}. Our main task shall thus be such a control, which is non-trivial because of the well-known fact that squaring (in fact~\cite[Chapter~5]{Bhatia} taking any power $t>1$) is not an operator/matrix monotone operation. Even in the case of repulsive interactions where $H_N \geq \sum_{j=1} ^N h_j$ it certainly does not follow that $H_N^2 \geq \left(\sum_{j=1} ^N h_j\right)^2$.

The control we shall need is provided by a set of simple and much-less-simple inequalities bearing on the interaction potential. The much-less-simple ones have their origin in~\cite{ErdYau-01} where the control of higher moments of the Hamiltonian seems to have been used for the first time, in a dynamical setting (see also~\cite{ErdSchYau-07,ErdSchYau-09,ErdSchYau-10}). We state the inequalities in 2D and 3D only for these are the cases we are interested in. The statement uses a smooth interaction potential but of course, by density, each inequality can be extended to potentials for which the right side makes sense. 

\begin{lemma}[\textbf{Operator inequalities for pair interactions}]\label{lem:opineq}\mbox{}\\
Let $W:\R^d \mapsto \R$ be a smooth decaying function. Let $W(\bx-\by)$ be the associated multiplication operator on $L^2 (\R^{2d})$. Let 
$$
\begin{cases}
p\geq 3/2,\, \alpha > 3/4 \mbox{ if } d=3 \\
p>1, \alpha > 1/2 \mbox{ if } d=2.
\end{cases}
$$
We have that, as operators,
\begin{align}\label{eq:Sob}
|W(\bx-\by)| &\leq C_p \norm{W}_{L^p (\R^d)} (-\Delta_{\bx})\\
|W(\bx-\by)| &\leq C \norm{W}_{L^1 (\R^d)} (-\Delta_{\bx})^{\alpha} (-\Delta_{\by})^{\alpha} \label{eq:ErdYau}\\
(-\Delta_{\bx}) W(\bx-\by) +  W(\bx-\by) (-\Delta_{\bx}) &\geq \nonumber\\-C_{p} &\left( \norm{W}_{L^p} + \norm{W}_{L^2} \right) (1-\Delta_{\bx}) (1-\Delta_{\by}).\label{eq:Nam}
\end{align}
Moreover, with $\bA \in L^2_{\rm loc} (\R^d)$ and $h = \left(-\im \nabla + \bA \right)^2$
\begin{multline}\label{eq:Nam mag}
h_{\bx} W(\bx-\by) +  W(\bx-\by) h_{\bx} \geq \\ -C_{p} \left( \norm{W}_{L^p} + \norm{W}_{L^2} \right) \left( (1-\Delta_{\bx}) (1-\Delta_{\by}) + h_{\bx} (1-\Delta_{\by})\right)
\end{multline}
\end{lemma}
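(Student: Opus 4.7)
The four inequalities are operator bounds on $L^2(\R^{2d})$. My plan is to prove \eqref{eq:Sob} by a fiberwise Sobolev embedding, to prove \eqref{eq:ErdYau} via a Fourier-side Schur argument in the spirit of Erdős--Yau, and to obtain the anticommutator bounds \eqref{eq:Nam} and \eqref{eq:Nam mag} by expanding the anticommutator and reducing the resulting pieces to \eqref{eq:Sob} and \eqref{eq:ErdYau}. The main obstacle is \eqref{eq:ErdYau}: the other three inequalities flow from it and from \eqref{eq:Sob} by soft manipulations.

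For \eqref{eq:Sob}, any $\Psi\in H^1(\R^{2d})$ satisfies
\begin{equation*}
\langle\Psi,|W(\bx-\by)|\Psi\rangle = \int_{\R^d} d\by\int_{\R^d}|W(\bx-\by)|\,|\Psi(\bx,\by)|^2\,d\bx .
\end{equation*}
At fixed $\by$, Hölder's inequality dominates the inner integral by $\|W\|_{L^p}\|\Psi(\cdot,\by)\|_{L^{2p/(p-1)}}^{2}$, and the Sobolev embedding $H^1(\R^d)\hookrightarrow L^{2p/(p-1)}(\R^d)$ -- valid exactly for $p\geq 3/2$ in $d=3$ and for any $p>1$ in $d=2$ -- bounds this by $C_p\|W\|_{L^p}\|\nabla_\bx\Psi(\cdot,\by)\|_{L^2}^{2}$. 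Integrating in $\by$ yields \eqref{eq:Sob}.

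For \eqref{eq:ErdYau}, the operator $W(\bx-\by)$ has momentum-space kernel $\hat W(\bp-\bp')\delta(\bp+\bq-\bp'-\bq')$ with $|\hat W|\leq\|W\|_{L^1}$. After the substitution $\Psi=(-\Delta_\bx)^{-\alpha/2}(-\Delta_\by)^{-\alpha/2}\Phi$, the desired inequality becomes the uniform $L^2$-boundedness (with norm $\leq C\|W\|_{L^1}$) of the integral operator on $L^2(\R^{2d})$ with kernel
\begin{equation*}
\frac{\hat W(\bp-\bp')\,\delta(\bp+\bq-\bp'-\bq')}{|\bp|^{\alpha}|\bq|^{\alpha}|\bp'|^{\alpha}|\bq'|^{\alpha}} .
\end{equation*}
Combining $|\hat W|\leq\|W\|_{L^1}$ with a suitable weighted Schur test reduces this to the convergence of Riesz-composition integrals on momentum space, which holds precisely when $\alpha>d/4$, i.e.~$\alpha>3/4$ in $d=3$ and $\alpha>1/2$ in $d=2$. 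This is the main technical step; I plan to import it directly from the Erdős--Yau paper referenced in the excerpt rather than reprove it.

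For \eqref{eq:Nam}, direct expansion gives the quadratic-form identity
\begin{equation*}
\langle\Psi,\{(-\Delta_\bx),W\}\Psi\rangle = 2\int W(\bx-\by)|\nabla_\bx\Psi|^2\,d\bx d\by - \int(\Delta_\bx W)(\bx-\by)|\Psi|^2\,d\bx d\by .
\end{equation*}
The first term is bounded below by applying \eqref{eq:Sob} in the $\by$-variable to $\nabla_\bx\Psi$, yielding a contribution $-C_p\|W\|_{L^p}\langle\Psi,(1-\Delta_\bx)(1-\Delta_\by)\Psi\rangle$. For the second term, I plan to integrate by parts twice, using the identity $\nabla_\bx W(\bx-\by)=-\nabla_\by W(\bx-\by)$ to transfer one gradient from the $\bx$-variable to the $\by$-variable; this rewrites it as $2\Re\int W\bigl(\nabla_\bx\bar\Psi\cdot\nabla_\by\Psi+\bar\Psi\,\nabla_\bx\!\cdot\!\nabla_\by\Psi\bigr)$. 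The first piece is again estimated by \eqref{eq:Sob} and contributes another $\|W\|_{L^p}$ term. The second piece is handled by Cauchy--Schwarz on the integrals, $|\int fg|\leq\|f\|_{L^2}\|g\|_{L^2}$, with $f=|W||\Psi|$ and $g=|\nabla_\bx\!\cdot\!\nabla_\by\Psi|$: the norm $\|g\|_{L^2}^2$ is directly bounded by $\langle\Psi,(1-\Delta_\bx)(1-\Delta_\by)\Psi\rangle$, while $\|f\|_{L^2}^2=\int|W|^2|\Psi|^2$ is controlled via \eqref{eq:ErdYau} applied to $|W|^2$ (whose $L^1$-norm equals $\|W\|_{L^2}^2$). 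This is precisely the mechanism by which the $\|W\|_{L^2}$ norm appears on the right-hand side of \eqref{eq:Nam}. Finally, \eqref{eq:Nam mag} is deduced from \eqref{eq:Nam} by expanding $h_\bx=-\Delta_\bx+2\bA\cdot(-i\nabla_\bx)-i(\nabla\!\cdot\!\bA)+|\bA|^2$: the non-magnetic anticommutator is controlled by \eqref{eq:Nam} (producing the $(1-\Delta_\bx)(1-\Delta_\by)$ term on the right), the scalar $|\bA|^2W$ piece by \eqref{eq:Sob}, and the magnetic cross term $\{\bA\cdot(-i\nabla_\bx),W\}$ by Cauchy--Schwarz after completing the square into $h_\bx$ rather than $-\Delta_\bx$ -- which is why $h_\bx(1-\Delta_\by)$ appears alongside $(1-\Delta_\bx)(1-\Delta_\by)$ on the right of \eqref{eq:Nam mag}.
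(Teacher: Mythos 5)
Your treatment of \eqref{eq:Sob}, \eqref{eq:ErdYau} and \eqref{eq:Nam} is correct and follows the same line as the reference the paper itself cites for this lemma (the paper gives no proof of its own and defers entirely to \cite[Lemma~3.2]{NamRouSei-15}, commenting only that \eqref{eq:Sob} ``is just the Sobolev inequality'' and that \eqref{eq:ErdYau} goes back to Erd\H{o}s--Yau). In particular the anticommutator identity for $\{-\Delta_\bx, W\}$, the double integration by parts on $\int(\Delta_\bx W)|\Psi|^2$ using $\nabla_\bx W = -\nabla_\by W$, and the appeal to \eqref{eq:ErdYau} applied to $|W|^2$ to generate the $\|W\|_{L^2}$ term are exactly the right moves. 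One small caveat on \eqref{eq:Sob}: the $H^1\hookrightarrow L^{2p/(p-1)}$ embedding you invoke controls the full $H^1$ norm, not only $\|\nabla_\bx\Psi\|_{L^2}$ — at subcritical $p$ (i.e.\ $p>3/2$ in $d=3$, and always in $d=2$) the homogeneous form fails by scaling, so the bound should read $(1-\Delta_\bx)$ rather than $(-\Delta_\bx)$; this is a pre-existing slip in the lemma statement, not in your argument, and is harmless downstream.

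Your plan for \eqref{eq:Nam mag} has a genuine gap. You propose to expand $h_\bx = -\Delta_\bx + 2\bA\cdot(-\im\nabla_\bx) - \im(\nabla\cdot\bA) + |\bA|^2$ and treat each piece of the anticommutator separately. First, with $\bA$ only in $L^2_{\rm loc}$ the individual terms ($\bA\cdot\nabla_\bx$, $\nabla\cdot\bA$, $|\bA|^2$) are not separately controllable, so the expansion is not a legitimate starting point. Second, and more concretely, the piece $\{|\bA|^2, W\} = 2|\bA(\bx)|^2 W(\bx-\by)$ cannot be handled ``by \eqref{eq:Sob}'': the function $|\bA(\bx)|^2 W(\bx-\by)$ is not of the translation-invariant form $\widetilde W(\bx-\by)$, and if you instead conjugate the operator inequality $|W(\bx-\by)|\leq C\|W\|_{L^p}(1-\Delta_\by)$ by the multiplication operator $|\bA(\bx)|$, you obtain a lower bound involving $|\bA(\bx)|^2(1-\Delta_\by)$, which is not dominated by $h_\bx(1-\Delta_\by)$ — there is no general operator inequality $|\bA|^2\lesssim h_\bx + 1$.

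The right move is not to split $h_\bx$ at all. Writing $D_j = -\im\partial_{x_j} + A_j$ so that $h_\bx = \sum_j D_j^2$, a direct computation of the quadratic form, expanding $D_j(W\Psi) = WD_j\Psi - \im(\partial_{x_j}W)\Psi$ and taking real parts (the $A_j(\partial_{x_j}W)|\Psi|^2$ cross terms are purely imaginary and drop), gives
\begin{equation*}
\langle\Psi, (h_\bx W + W h_\bx)\Psi\rangle = 2\sum_j\langle D_j\Psi, W D_j\Psi\rangle - \int(\Delta_\bx W)|\Psi|^2,
\end{equation*}
which is structurally identical to the non-magnetic identity: the kinetic term has $\nabla_\bx$ replaced by the magnetic gradient, and the $\Delta_\bx W$ term is completely unchanged and carries no $\bA$. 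Applying \eqref{eq:Sob} in the $\by$-variable — which the magnetic field does not touch — then bounds the first term below by $-C_p\|W\|_{L^p}\sum_j\langle D_j\Psi, (1-\Delta_\by)D_j\Psi\rangle = -C_p\|W\|_{L^p}\langle\Psi, h_\bx(1-\Delta_\by)\Psi\rangle$; this is precisely where $h_\bx(1-\Delta_\by)$ comes from on the right of \eqref{eq:Nam mag}. The $\Delta_\bx W$ term is then handled exactly as in your proof of \eqref{eq:Nam}. This also explains the paper's footnote about a small fixable gap in the cited proof when $W$ has a negative part: one needs \eqref{eq:Sob} an additional time to control the negative part of $2\sum_j\langle D_j\Psi, W D_j\Psi\rangle$.
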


\begin{proof}[Comments]\mbox{}
The simple part is~\eqref{eq:Sob} which is just the Sobolev inequality. The much-less-simple part starts in~\eqref{eq:ErdYau}, a version of which first appeared in~\cite[Lemma~5.3]{ErdYau-01}. Another is in~\cite{LieSei-06}. The full statement is in~\cite[Lemma~3.2]{NamRouSei-15} and~\cite[Lemma~6]{LewNamRou-15}. We refer to~\cite[Lemma~3.2]{NamRouSei-15} for the proof\footnote{The astute reader will notice a small gap in the proof of~\eqref{eq:Nam mag} in case $W$ is allowed to have a negative part. It is easily fixed by an additional use of~\eqref{eq:Sob}.}. 

The virtue of~\eqref{eq:ErdYau} is that, when used with potentials scaled in the manner~\eqref{eq:scaled potential}, the $L^1$ norm is fixed (whereas the higher $L^p$ norms blow up when $N\to \infty$). Thus the pair interaction energy is controlled by a power $<1$ of the second moment. 

Likewise, in~\eqref{eq:Nam} if $\beta$ is not too large one can use the inequality with potentials scaled as in~\eqref{eq:WNbeta} and see that the operator on the left-hand side is controlled by $(1-\Delta_{\bx})(1-\Delta_\by )$.
\end{proof}

The second moments estimates are as follows. We state first a version for purely repulsive potentials from~\cite{NamRouSei-15} (this is a simpler version of Lemma~3.1 therein). A version for potentials with no sign from~\cite[Lemma~5]{LewNamRou-15} will be given in Section~\ref{sec:mom att} below. From now on we shall assume 
\begin{equation}\label{eq:h dom Lap}
h = \left( -\im \nabla + \bA \right) ^2 + V \geq c \left( -\Delta + V \right) - C 
\end{equation}
for two positive constants $c,C >0$. This is a mild but non-trivial assumption we shall comment on below.

\begin{lemma}[\textbf{Second \index{Moments estimates}{moment estimate}, repulsive case}]\label{lem:mom rep}\mbox{}\\
Assume that the pair interaction potential is repulsive, $w\geq 0$. Let $\beta < 2/3$ if $d=3$ or $\beta < 1$ if $d=2$. Let $\Psi_N$ be a ground state for~\eqref{eq:intro Schro op bis} and $\Gamma_N^{(2)}$ the associated two-particles reduced density matrix. Denoting $h$ the one-particle Hamiltonian in~\eqref{eq:intro Schro op bis} we have 
\begin{equation}\label{eq:2mom rep}
{N \choose 2} ^{-1} \tr \left( h_1 \otimes h_2 \, \Gamma_N ^{(2)} \right) = {N \choose 2} ^{-1} \left\langle \Psi_N \Big| \sum_{1\leq i <j \leq N} h_i h_j \Big| \Psi_N \right\rangle \leq C  
\end{equation}
for a constant $C>0$ independent of $N$.
\end{lemma}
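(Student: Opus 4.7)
The plan is to exploit the variational equation $H_N \Psi_N = E(N) \Psi_N$ by squaring it. Since $\Psi_N$ is an eigenfunction, we have the \emph{exact} identity
$$
\langle \Psi_N | H_N^2 | \Psi_N \rangle = E(N)^2.
$$
On the other hand, by plugging a Hartree trial state into the energy functional (exactly as for the energy upper bound in Theorem~\ref{thm:NLS} or Theorem~\ref{thm:dilute 3D}) one has $E(N) \leq C N$, so the right-hand side is at most $C N^2$. The goal is therefore to show that $H_N^2$ dominates $(\sum_j h_j)^2$ up to terms that are either non-negative or can be absorbed.

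Expanding the square,
$$
H_N^2 = \Big(\sum_{j} h_j\Big)^2 + \Big(\sum_{j} h_j\Big) W + W \Big(\sum_{j} h_j\Big) + W^2,
\qquad W := \sum_{1\le i<j\le N} w_N(\bx_i-\bx_j).
$$
The term $W^2$ is non-negative since $w\ge 0$, and may be discarded for a lower bound. In the cross terms $(\sum_k h_k) W + W (\sum_k h_k)$, separate $k \notin \{i,j\}$ from $k\in\{i,j\}$. For $k\notin\{i,j\}$ the operator $h_k$ commutes with $w_N(\bx_i-\bx_j)$, and since $h_k\geq 0$ (after an irrelevant energy shift) and $w_N\ge 0$, these contributions are non-negative. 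Only the ``diagonal'' terms $h_i w_N(\bx_i-\bx_j) + w_N(\bx_i-\bx_j) h_i$ (and the symmetric ones in $j$) remain to be controlled.

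For these diagonal cross terms I would invoke inequality~\eqref{eq:Nam mag} from Lemma~\ref{lem:opineq} with $W = w_N$. Summed over $i<j$, this yields an operator bound of the form
$$
\Big(\sum_{k} h_k\Big) W + W \Big(\sum_{k} h_k\Big) \;\geq\; -\,C\big(\|w_N\|_{L^p}+\|w_N\|_{L^2}\big)\,\Big(\sum_{j}(1+h_j)\Big)^2,
$$
where I use~\eqref{eq:h dom Lap} to dominate the $(1-\Delta)$ factors by $(1+h)$. The scaling~\eqref{eq:scaled potential} gives $\|w_N\|_{L^p} \lesssim N^{d\beta(1-1/p)-1}$; taking $p=3/2$ in $d=3$ and any $p>1$ in $d=2$, the prefactor reduces to $o(1)$ precisely under the assumed hypothesis $\beta<2/3$ (resp.\ $\beta<1$). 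Putting everything together,
$$
H_N^2 \;\geq\; \Big(\sum_{j} h_j\Big)^2 - o(1)\Big(\sum_{j}(1+h_j)\Big)^2.
$$

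Taking the expectation in $\Psi_N$ and expanding $(\sum_j(1+h_j))^2 = (\sum_j h_j)^2 + 2N\sum_j h_j + N^2$, the second-moment term can be absorbed into the left-hand side (since its prefactor is $1-o(1)$), while the linear term is controlled by the a priori bound $\langle \Psi_N | \sum_j h_j|\Psi_N\rangle \leq C N$ (immediate from $\langle \Psi_N|H_N|\Psi_N\rangle = E(N)\leq CN$ plus $W\ge 0$). This yields
$$
\Big\langle \Psi_N \Big| \Big(\sum_{j} h_j\Big)^2 \Big| \Psi_N\Big\rangle \;\leq\; C N^2.
$$
Finally, expanding the square and using that $h_j^2 \ge 0$ and that $h_i,h_j$ commute and are non-negative for $i\neq j$,
$$
\sum_{i\neq j} h_i h_j \;\leq\; \Big(\sum_{j} h_j\Big)^2,
$$
so dividing by $N(N-1)$ gives the desired bound~\eqref{eq:2mom rep}.

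The main obstacle is the threshold $\beta < 2/3$ (resp.\ $\beta<1$): it is exactly the condition under which $(\|w_N\|_{L^p}+\|w_N\|_{L^2})$ is $o(1)$ for the choice of $p$ allowed by~\eqref{eq:Nam mag}. The positivity of $w$ is also crucially used twice—to discard $W^2$ and to discard the off-diagonal cross terms $h_k W$ with $k\notin\{i,j\}$; relaxing the repulsivity assumption will require the more delicate treatment mentioned in Section~\ref{sec:mom att}.
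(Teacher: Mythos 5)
Your argument is correct and follows essentially the same route as the paper: square the eigenvalue equation to get $\langle \Psi_N | H_N^2 | \Psi_N\rangle = E(N)^2 \lesssim N^2$, expand $H_N^2$, drop the nonnegative $W^2$ and off-diagonal cross terms, control the diagonal cross terms via inequality~\eqref{eq:Nam mag} together with~\eqref{eq:h dom Lap}, and absorb the resulting $o(1)$ error into $(\sum_j h_j)^2$. Your bookkeeping with $(1+h_j)$ in place of $h_j$ is slightly more explicit than the paper's, but the mechanism and the identification of the threshold $\beta<2/d$ are the same.
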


\begin{proof}[Comments]Assumption~\eqref{eq:h dom Lap} allows to relate the magnetic Laplacian to the usual one which appears in most inequalities of Lemma~\ref{lem:opineq}. A convenient way to ensure its validity is to assume that for all $\bx\in \R^d$ (we are mostly concerned with the behavior at infinity)
\begin{equation}\label{eq:V dom A} 
|\bA (\bx)|^2 \leq \eps V(\bx) + C_\eps
\end{equation}
for some $0<\eps<1$ and $C_\eps\geq 0$. It is desirable to be able to dispense with this, in particular because in the emblematic case of a uniform magnetic field one wants to take $|\bA (\bx)|^2 = B^2|\bx|^2 /4$ and then~\eqref{eq:V dom A} puts a unnecessary constraint on the growth of $V$ at infinity. We do not pursue the removal of~\eqref{eq:h dom Lap} in this review, see~\cite[Step~2~in~Section~4.2]{NamRouSei-15} for this.
\end{proof}

\begin{proof}[Proof of Lemma~\ref{lem:mom rep}]
Using the \index{Many-body Schr\"odinger equation}{variational equation} we have 
$$ H_N ^2 \Psi_N = E (N) ^2 \Psi_N$$
and thus, using the usual trial state argument to bound $E(N)$, 
\begin{equation}\label{eq:bound ener squared}
  \langle \Psi_N | H_N^2 | \Psi_N \rangle \leq C N^2. 
\end{equation}
On the other hand 
\begin{multline*}
H_N ^2 = \sum_{1\leq i,j \leq N} h_i h_j + \sum_{1 \leq i,j,k \leq N} \left(h_i w_N(\bx_j - \bx_k) +  w_N(\bx_j - \bx_k) h_i \right) \\ 
+ \sum_{1\leq i,j,k,\ell\leq N} w_N(\bx_i - \bx_j) w_N(\bx_k - \bx_\ell). 
\end{multline*}
Here we assume $w_N \geq 0$, hence all the terms on the second line are positive operators (multiplication operators by positive functions). Also, for all distinct indices $i,j,k$ 
$$ h_i w_N(\bx_j - \bx_k) +  w_N(\bx_j - \bx_k) h_i \geq 0$$
because $h_i \geq 0$ and  $w_N(\bx_j - \bx_k) \geq 0$ are commuting operators when they act on different variables. Hence, using that $w_N$ is even, 
$$ H_N ^2 \geq \sum_{1\leq i,j \leq N} h_i h_j + 2 \sum_{1 \leq i,j \leq N} \left(h_i w_N(\bx_i - \bx_j) +  w_N(\bx_i - \bx_j) h_i \right).$$
Inserting~\eqref{eq:Nam mag} and using~\eqref{eq:h dom Lap} we deduce 
$$ H_N ^2 \geq \sum_{1\leq i,j \leq N} h_i h_j \left( 1- C N ^{d\beta/2 - 1} \right)$$
when $w_N$ is chosen as in~\eqref{eq:scaled potential}. When $\beta<2/d$ the coefficient in the parenthesis is positive for large $N$, and combining with~\eqref{eq:bound ener squared} concludes the proof.
\end{proof}

\section{\index{Dilute limit}{Dilute limit} with \index{Moments estimates}{moments estimates}, repulsive case}\label{sec:mom rep}

Inserting the moment estimates in the basic scheme of Section~\ref{sec:deF loc} leads to notable improvements of the main result. The method to deal with purely repulsive potentials is from~\cite{NamRouSei-15}, although not explicitly formulated that way (see Remark~3.3 and Section~4.2 therein). 

\begin{theorem}[\textbf{Dilute limit again, repulsive case}]\label{thm:dilute}\mbox{}\\
Make Assumptions~\ref{asum:pots} as in Theorem~\ref{thm:main} plus~\eqref{eq:trapping s} with some $s>0$ and 
\begin{equation}\label{eq:growth A}
\left| \bA (\bx)\right| \leq c e^{c|\bx|} 
\end{equation}
We work in dimensions $d=2,3$ and assume $0 \leq w \in L^1 (\R^d) \cap L^2 (\R^d)$. Let 
\begin{equation}\label{eq:beta dilute bis}
\begin{cases}
0 < \beta < 2/3 \mbox{ if } d = 3\\ 
0 < \beta < 1 \mbox{ if }  d = 2.
\end{cases}
\end{equation}
We have, in the limit $N \to + \infty$:

\smallskip 

\noindent\textbf{Convergence of the energy:} 
$$
\frac{E(N)}{N} \to \Enls. 
$$

\smallskip 

\noindent\textbf{Convergence of reduced density matrices:} let $\Gamma_N ^{(k)},k\geq 0$ be the reduced density matrices of a many-body ground state $\Psi_N$. There exists a Borel probability measure $\mu$ on the set $\Mnls$ of NLS minimizers such that, along a subsequence, 
$$
{N \choose k} ^{-1} \Gamma_N ^{(k)} \to \int_{\Mnls} |u ^{\otimes k} \rangle \langle u ^{\otimes k} | \mathrm{d}\mu(u) 
$$
strongly in trace-class norm.
Let $\rm{MF}$ stand for $\rm{NLS}$ in Theorem~\ref{thm:main}. 
\end{theorem}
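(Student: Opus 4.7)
The plan is to combine the localization-plus-quantum-de-Finetti strategy of Section~\ref{sec:deF loc} with the second-moment a priori estimate of Lemma~\ref{lem:mom rep}. The new ingredient is that the bound $\tr(h_1\otimes h_2\,\Gamma_N^{(2)})\leq CN^2$ permits us to \emph{fix} the kinetic energy cut-off $\Lambda$ independently of $N$, pass to the limit $N\to\infty$ first, and only then let $\Lambda\to\infty$. This decouples the constraint on $\beta$ from the dimension $D=N_\Lambda$ of the low-energy subspace, which was the bottleneck in Theorem~\ref{thm:NLS}.

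The energy upper bound is as usual: take $u$ minimizing $\cEnls$ (smoothed by approximation), use $u^{\otimes N}$ as a trial, and let $N\to\infty$ in the resulting Hartree-type energy with interaction $w_{N,\beta}$; under $w\in L^1\cap L^2$ this converges to $\cEnls[u]=\Enls$. For the lower bound, start from a ground state $\Psi_N$ with the basic kinetic bound $\tr(h\,\Gamma_N^{(1)})\leq CN$ together with the moment estimate of Lemma~\ref{lem:mom rep}. Fix $\Lambda>0$, set $P=\1_{h\leq\Lambda}$, $Q=\1-P$, and split $\Gamma_N^{(2)}=P^{\otimes 2}\Gamma_N^{(2)}P^{\otimes 2}+R_N^\Lambda$, where $R_N^\Lambda$ collects all terms containing at least one factor of $Q$. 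The key estimate is
$$ \bigl|\tr(H_2\,R_N^\Lambda)\bigr| \leq C\,\Lambda^{-\delta}\,N^2, $$
obtained by dominating $w_{N,\beta}(\bx-\by)$ by a fractional product $(1+h_\bx)^\alpha(1+h_\by)^\alpha$ via~\eqref{eq:ErdYau} (crucially, $\|w_{N,\beta}\|_{L^1}$ stays fixed as $N\to\infty$), using $Q\leq\Lambda^{-1}h$ on the $Q$-factors, and absorbing the remaining kinetic weights using the second-moment bound. Hence, at fixed $\Lambda$, the energy per particle is determined up to $O(\Lambda^{-\delta})$ by $P^{\otimes 2}\Gamma_N^{(2)}P^{\otimes 2}$.

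Next, realize $P^{\otimes 2}\Gamma_N^{(2)}P^{\otimes 2}$ as the two-body density matrix of a Fock-space state via the localization procedure of Section~\ref{sec:localization}, and apply the information-theoretic de Finetti Theorem~\ref{thm:deF info} on $P\gH$ (of finite dimension $N_\Lambda$). Decomposing $w_{N,\beta}$ via its Fourier representation~\eqref{eq:Fourier} and using $\|\widehat{w_{N,\beta}}\|_{L^\infty}\leq\|w\|_{L^1}$, the de Finetti error integrates to $O(\sqrt{\log N_\Lambda/N})$, vanishing as $N\to\infty$ at fixed $\Lambda$. This yields
$$ \frac{E(N)}{N}\geq \int \cE^{\mathrm H}_{N,\Lambda}[\gamma]\,\mathrm{d}\mu_N(\gamma)-o_N(1)-C\Lambda^{-\delta}, $$
where $\cE^{\mathrm H}_{N,\Lambda}$ is a Hartree functional (extended to mixed states) with a $P^{\otimes 2}$-projected version of $w_{N,\beta}$. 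Extracting a weak limit $\mu_N\wto\mu$ along a subsequence (tight thanks to the trapping), letting $N\to\infty$ and then $\Lambda\to\infty$, the localized Hartree energy converges to $\cEnls$ by a standard $\Gamma$-convergence type argument, giving $\liminf_N E(N)/N\geq\Enls$ and forcing $\mu$ to be supported on $\Mnls$.

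Convergence of all reduced density matrices $\Gamma_N^{(k)}$ then follows from the saturation argument of Section~\ref{sec:proof deF}, promoted to $k\geq 2$ through the above energy identity. The main obstacle is the quantitative coupling of the remainder $R_N^\Lambda$ to the de Finetti error: the ranges $\beta<2/3$ (3D) and $\beta<1$ (2D) come precisely from the constraint $\beta<2/d$ in which Lemma~\ref{lem:mom rep} is valid, and any extension would require either a sharper moment estimate or an altogether non-variational a priori bound. A secondary technical point is that Theorem~\ref{thm:deF info} yields a measure on \emph{mixed} one-body states, so a final step (exploiting that $\cEnls$ is uniquely minimized on pure states among mixed ones, together with convergence of $\Gamma_N^{(1)}$) is needed to concentrate $\mu$ on pure NLS minimizers as stated.
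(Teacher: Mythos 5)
Your strategy is essentially correct in outline (moment estimate plus localization plus a finite-dimensional de Finetti theorem, taking $N\to\infty$ at fixed $\Lambda$), but the choice of de Finetti theorem creates a genuine gap that undoes the improvement on $\beta$.

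You invoke the information-theoretic quantum de Finetti theorem (Theorem~\ref{thm:deF info}) to handle the $P^{\otimes 2}$-localized part. That theorem only bounds the error when tested against operators of tensorized form $A_1\otimes A_2$, which forces you to decompose $P^{\otimes 2} w_{N,\beta}(\bx-\by)P^{\otimes 2}$ via the Fourier representation~\eqref{eq:Fourier}. At each fixed $\bp$, the error is $O(\sqrt{\log N_\Lambda/N})$ since $\|P\cos(\bp\cdot\bx)P\|\leq 1$; but you must then integrate over $\bp$ with weight $|\widehat{w_{N,\beta}}(\bp)|$, and
$$
\int_{\R^d}\left|\widehat{w_{N,\beta}}(\bp)\right|\,\mathrm{d}\bp = N^{d\beta}\int_{\R^d}|\widehat{w}(\bq)|\,\mathrm{d}\bq,
$$
so the total error is $O\!\left(N^{d\beta}\sqrt{\log N_\Lambda/N}\right)$, not $O(\sqrt{\log N_\Lambda/N})$ as you claim. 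Your use of $\|\widehat{w_{N,\beta}}\|_{L^\infty}\leq\|w\|_{L^1}$ bounds the integrand pointwise but does not tame the $\bp$-integral, which grows like $N^{d\beta}$. That error vanishes only for $\beta<1/(2d)$, the same range already accessible via Theorem~\ref{thm:NLS}, and is far from covering $\beta<2/d$.

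The paper instead applies the semiclassical de Finetti theorem (Theorem~\ref{thm:deF semi}), which controls the difference of density matrices in trace-class norm with an error $O(N_\Lambda/N)$. That error can be tested directly against the full operator $P^{\otimes 2}H_2 P^{\otimes 2}$, whose \emph{operator norm} is bounded by a $\Lambda$-dependent but $N$-independent constant thanks to~\eqref{eq:ErdYau} (since $\|w_{N,\beta}\|_{L^1}$ is fixed). The de Finetti contribution to the error is then $C_\Lambda N_\Lambda/N$, which tends to zero as $N\to\infty$ at fixed $\Lambda$ for \emph{any} $\beta<2/d$, so the constraint really does come only from the validity of the moment estimate (Lemma~\ref{lem:mom rep}). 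Swapping Theorem~\ref{thm:deF info} for Theorem~\ref{thm:deF semi} in your argument closes the gap, and also removes the nuisance you note at the end: Theorem~\ref{thm:deF semi} produces a measure on pure states directly, so the final concentration step on pure NLS minimizers needs no extra argument.
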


\begin{proof}[Comments]\mbox{}\\
\noindent\textbf{1.} We have already obtained the 3D statement in Section~\ref{sec:better loc}, except for the convergence of higher density matrices. The latter is particularly transparent using the \index{Quantum de Finetti theorem}{quantum de Finetti theorem}.

\medskip

\noindent\textbf{2.} The attractive case is more involved because of possible stability issues. We discuss it in the next section.
\end{proof}

We give some elements of the proof. A first way to take advantage of the moment estimates from Lemma~\ref{lem:mom rep} is to control the localization to low one-body energy modes much more efficiently than in~Lemma~\ref{lem:localize-energy}. The following, which is essentially a restatement of~\cite[Equation (46)]{LewNamRou-15}, goes in a different direction than what we presented in Section~\ref{sec:better loc}. In particular, we no longer need the interaction potential to be repulsive (which will be useful in the next section). 

\begin{lemma}[\textbf{\index{Localization}{Localizing} the Hamiltonian, again}]\label{lem:loc}\mbox{}\\
Let $\Gamma_N$ be a $N$-particle state with normalized reduced density matrices (compare with~\eqref{eq:def red mat})
\begin{equation}\label{eq:normalized DM}
\gamma_N^{(2)} = \tr_{3\to N} \Gamma_N ,\quad \gamma_N^{(1)} = \tr_{2\to N} \Gamma_N
\end{equation}
With the above notation, for any 
\begin{equation}\label{eq:loc delta}
\begin{cases}
\delta > 1/2 \mbox{ in 2D} \\
\delta > 3/4 \mbox{ in 3D}
   \end{cases}
\end{equation}
there exists a $C_\delta >0$ such that  
\begin{multline}\label{eq:loc}
\tr \left( \left( H_2 - P^{\otimes 2} H_2 P^{\otimes 2} \right) \gamma_N ^{(2)} \right) \geq \\- C_{\delta} \Lambda ^{(\delta -1)/2} \left( \tr \left(h \gamma_N ^{(1)}\right) \right) ^{(1-\delta)/2}  \left( \tr \left(h\otimes h \gamma_N ^{(2)}\right) \right) ^{\delta}
\end{multline}
where $H_2$ is the two-body Hamiltonian~\eqref{eq:two body hamil}, $h$ the one-body Hamiltonian~\eqref{eq:one body h} and $P$ the associated projector~\eqref{eq:projectors} below the energy cut-off $\Lambda$.
\end{lemma}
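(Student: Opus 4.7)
My approach would first isolate the contribution of the interaction, since the localization error of the kinetic part is a positive operator. Using that $h$ commutes with the spectral projectors $P$ and $Q = \1-P$, one checks directly that
\[
h_1 - P^{\otimes 2} h_1 P^{\otimes 2} = h\otimes Q + hQ \otimes P \;\geq\; 0,
\]
and similarly for $h_2$. Consequently
\[
\tr\bigl((H_2 - P^{\otimes 2} H_2 P^{\otimes 2})\gamma_N^{(2)}\bigr) \;\geq\; \tr\bigl((w - P^{\otimes 2} w P^{\otimes 2})\gamma_N^{(2)}\bigr),
\]
where $w$ stands for the multiplication operator by $w_{N,\beta}(\bx_1-\bx_2)$, and it remains to control the right side from below.

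Setting $R = \1 - P^{\otimes 2}$ (itself a projector), I would decompose $w - P^{\otimes 2} w P^{\otimes 2} = P^{\otimes 2} w R + R w P^{\otimes 2} + R w R$ and apply the operator Cauchy--Schwarz inequality $|\tr(XY\rho)|^2 \leq \tr(XX^*\rho)\tr(Y^*Y\rho)$ with $\rho = \gamma_N^{(2)}$, obtaining for instance
\[
|\tr(P^{\otimes 2} w R \gamma_N^{(2)})|^2 \;\leq\; \tr\bigl(P^{\otimes 2}|w|P^{\otimes 2}\gamma_N^{(2)}\bigr) \cdot \tr\bigl(R|w|R\,\gamma_N^{(2)}\bigr).
\]
The crucial input at this stage is the Erd\H{o}s--Yau type inequality~\eqref{eq:ErdYau}, which together with~\eqref{eq:h dom Lap} yields $|w| \leq C h_1^\alpha h_2^\alpha$ as operators for any $\alpha > 1/2$ (2D) or $\alpha > 3/4$ (3D), with a constant depending only on $\|w\|_{L^1}$ and therefore independent of $N$. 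For the first factor, operator concavity of $x \mapsto x^\alpha$ on $(0,1)$ together with operator Jensen gives $\tr(h_1^\alpha h_2^\alpha \gamma_N^{(2)}) \leq M_2^\alpha$ where $M_2 := \tr((h\otimes h)\gamma_N^{(2)})$. For the second factor, using $R \leq Q_1 + Q_2$ and the bosonic symmetry of $\gamma_N^{(2)}$ reduces matters to $\tr(Q_1 h_1^\alpha h_2^\alpha \gamma_N^{(2)})$; the key spectral bound is that on $\mathrm{Ran}(Q)$ one has $h \geq \Lambda$, hence $Q h^\alpha \leq \Lambda^{\alpha-1} h$ as operators for $\alpha < 1$. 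This yields $\tr(R|w|R\gamma_N^{(2)}) \lesssim \Lambda^{\alpha-1}\,\tr(h_1 h_2^\alpha \gamma_N^{(2)}) \lesssim \Lambda^{\alpha-1} M_1^{1-\alpha} M_2^\alpha$, with $M_1 := \tr(h\gamma_N^{(1)})$. Taking square roots and multiplying, one recovers the bound with the identification $\delta = \alpha$. The remaining term $\tr(RwR\gamma_N^{(2)})$ is handled by the same operator bound $|w|\leq C h_1^\alpha h_2^\alpha$ and carries an even stronger negative power of $\Lambda$, so it is subdominant.

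The main technical obstacle I anticipate is the non-commutative interpolation producing the precise exponents $(1-\delta)/2$ and $\delta$ in the final bound. In particular, the step $\tr(h_1 h_2^\alpha \gamma_N^{(2)}) \lesssim M_1^{1-\alpha} M_2^\alpha$ requires some care because $\gamma_N^{(2)}$ does not commute with $h_1$ or $h_2$; one likely needs to invoke the L\"owner--Heinz / operator Jensen machinery, or alternatively a H\"older type trace inequality for products of commuting positive operators weighted by a positive trace-class state. Beyond this, the restriction $\delta > 1/2$ (resp.\ $\delta > 3/4$) matches exactly the dimension-dependent threshold in~\eqref{eq:ErdYau}, so any improvement on $\delta$ would require a sharper version of the Erd\H{o}s--Yau inequality rather than a modification of the localization argument itself.
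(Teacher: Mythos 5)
Your strategy is essentially the paper's, and all the key ingredients are in place: you isolate the kinetic localization error as a positive operator, bound the interaction by $|w|\lesssim h^\delta\otimes h^\delta$ via the Erd\H{o}s--Yau inequality~\eqref{eq:ErdYau} together with~\eqref{eq:h dom Lap} (this is exactly what pins down the dimension-dependent thresholds on $\delta$), gain the factor $\Lambda^{\delta-1}$ from $Q h^\delta\leq\Lambda^{\delta-1}h$, and interpolate to get $\tr\bigl(h_1 h_2^\delta\gamma_N^{(2)}\bigr)\leq\bigl(\tr(h\gamma_N^{(1)})\bigr)^{1-\delta}\bigl(\tr(h_1 h_2\gamma_N^{(2)})\bigr)^\delta$. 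Your worry about the last step is unfounded: $h_1$ and $h_2$ commute, so this is scalar H\"older applied to the joint spectral measure of $(h_1,h_2)$ induced by the state $\gamma_N^{(2)}$, no L\"owner--Heinz machinery is needed. The paper reaches the identical estimate by inserting Young's inequality $h^\delta\leq \eta^{-1}h+\eta^{\delta/(1-\delta)}$ and then optimizing over $\eta$, which is H\"older in disguise.

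The one place your argument delivers a genuinely weaker conclusion is the diagonal excited block. Splitting $w-P^{\otimes 2}wP^{\otimes 2}=P^{\otimes 2}wR+RwP^{\otimes 2}+RwR$ and bounding $\tr(RwR\,\gamma_N^{(2)})$ directly gives a term of size $\Lambda^{\delta-1}\bigl(\tr(h\gamma_N^{(1)})\bigr)^{1-\delta}\bigl(\tr(h_1h_2\gamma_N^{(2)})\bigr)^\delta$, which is the \emph{square} of the cross-term bound (up to a factor $\bigl(\tr(h_1h_2\gamma_N^{(2)})\bigr)^\delta$). It is dominated by the cross-term bound only when $\tr(h\gamma_N^{(1)})\lesssim\Lambda$ -- true in the application, but not a hypothesis of the lemma, which is a deterministic one-term inequality. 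The paper sidesteps this by using the symmetric splitting $w-P^{\otimes 2}wP^{\otimes 2}=\tfrac12(Rw+wR)+\tfrac12(P^{\otimes 2}wR+RwP^{\otimes 2})$ together with the operator Cauchy--Schwarz $\mp(AwB^*+BwA^*)\leq\eps A|w|A^*+\eps^{-1}B|w|B^*$ with $A=R$ and $B=\1^{\otimes 2}$, respectively $B=P^{\otimes 2}$; the $RwR$ contribution is thereby absorbed into the cross terms, and the clean single-term estimate~\eqref{eq:loc} drops out after optimizing over $\eps$. In your trace formulation the fix is a one-liner: estimate the real parts of $\tr(Rw\,\gamma_N^{(2)})$ and of $\tr(P^{\otimes 2}wR\,\gamma_N^{(2)})$ instead, so that the factor $|w|^{1/2}R$ carries \emph{all} of $R$; each is bounded by $\sqrt{\tr(R|w|R\gamma_N^{(2)})\,\tr(|w|\gamma_N^{(2)})}$ or $\sqrt{\tr(R|w|R\gamma_N^{(2)})\,\tr(P^{\otimes 2}|w|P^{\otimes 2}\gamma_N^{(2)})}$, there is no leftover diagonal term, and you recover~\eqref{eq:loc} exactly.
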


\begin{proof}
Since $h\geq 0$, $PQ = 0$ and $P$ commutes with $h$ we certainly have 
$$ h_1 + h_2 \geq  P^{\otimes 2} \left( h_1 + h_2 \right) P^{\otimes 2}.$$
The interaction term is then our only concern. Denote 
$$\Pi = \1 ^{\otimes 2} - P^{\otimes 2}$$
and write ($W := w_{N,\beta}$ is hereafter identified with the multiplication by $w_{N,\beta}(\bx_1-\bx_2)$ on the two-body space)
$$ W - P^{\otimes 2} W P^{\otimes 2} = \frac{1}{2} \left( \Pi W + \Pi W P^{\otimes 2} + W\Pi + P^{\otimes 2}W \Pi \right)$$
Separating $W = W_+ - W_-$ and using the Cauchy-Schwarz inequality for operators we have 
\begin{equation}\label{eq:split W}
 W - P^{\otimes 2} W P^{\otimes 2} \geq - \frac{1}{4} \left(\eps |W| + \eps^{-1} \Pi |W| \Pi + \eps P^{\otimes 2} |W| P^{\otimes 2} + \eps^{-1} \Pi |W| \Pi  \right). 
\end{equation}
Then, for any $\delta$ as in~\eqref{eq:loc delta} 
\begin{equation}\label{eq:loc P}
\tr \left( \left( |W| + P^{\otimes 2} |W| P^{\otimes 2} \right) \gamma_N^{(2)} \right) \leq C_{\delta} \left(\tr\left( h\otimes h \gamma_N^{(2)} \right)\right)^{\delta}. 
\end{equation}
Indeed, from Lemma~\ref{lem:opineq} and Young's inequality, for any $\delta$ as in~\eqref{eq:loc delta} and $\eta >0$, 
$$ |W| \leq C_\delta h ^{\delta} \otimes h ^{\delta} \leq C_{\delta} \left( \eta ^{-1} h \otimes h + \eta ^{\delta/(1-\delta)}\right).$$
Taking the trace against $\gamma_N ^{(2)}$ and optimizing over $\eta$ yields~\eqref{eq:loc P} (recall $\gamma_N ^{(2)}$ has trace one, and observe that the $P^{\otimes 2} |W| P^{\otimes 2}$ term can be treated on the same footing). 

On the other hand, using Lemma~\ref{lem:opineq} again and the definition of $\Pi$, for any $\delta$ as in~\eqref{eq:loc delta}   
\begin{equation}\label{eq:loc Pi}
\Pi |W| \Pi \leq C_{\delta} \Pi h ^{\delta} \otimes h^{\delta} \Pi \leq C{\delta} \Lambda ^{\delta-1} \left(h^{\delta} \otimes h + h\otimes h^{\delta} \right)
\end{equation}
Using Young's inequality again we have, for any $\eta >0$,
$$ h ^{\delta} \leq C_\delta\left( \eta^{-1} h + \eta ^{\delta/(1-\delta)}\right).$$
Thus, taking the trace of~\eqref{eq:loc Pi} against $\gamma_N ^{(2)}$, optimizing over $\eta$ gives 
\begin{equation}\label{eq:loc Pi bis}
\tr \left( \Pi |W| \Pi \gamma_N^{(2)} \right) \leq C_{\delta} \tr\left( h\otimes h \gamma_N^{(2)} \right)^{\delta} \tr\left( h \gamma_N^{(1)} \right)^{1-\delta} 
\end{equation}
upon recalling that $\gamma_N ^{(1)}$ is the partial trace of $\gamma_N^{(2)}$. 

There remains to take the trace of~\eqref{eq:split W} against $\gamma_N^{(2)}$, insert~\eqref{eq:loc P} and~\eqref{eq:loc Pi bis}, optimize over $\eps$ and the result is proved.
\end{proof}

Now we can give a 

\medskip

\noindent\textbf{Sketch of proof for Theorem~\ref{thm:dilute}, repulsive case.} As in Section~\ref{sec:better loc} the goal is to take the limits first $N\to \infty$ and then only $\Lambda \to \infty$. This requires a rather fine control of the errors involved.  

\medskip

We assume $w\geq 0$ and then may use Lemma~\ref{lem:mom rep}. Inserting these estimates in~\eqref{eq:loc} we obtain that 
\begin{equation}\label{eq:loc use}
{N\choose 2} ^{-1} \tr \left( \left( H_2 - P^{\otimes 2} H_2 P^{\otimes 2} \right) \Gamma_N ^{(2)} \right) \geq - C_{\delta} \Lambda ^{(\delta -1)/2} 
\end{equation}
where $\Gamma_N^{(2)}$ is the two-particle reduced density matrix of a ground state of $H_N$ and $\delta$ may be chosen strictly smaller than $1$. The virtue of the above is that the localization error is now small for $\Lambda \to \infty$, independently of $N$. 

We use Theorem~\ref{thm:deF semi} to control the $P^{\otimes 2}$-localized part of the interaction. Observe that the operator norm of the projected two-body Hamiltonian satisfies 
\begin{equation}\label{eq:norm proj}
\norm{P^{\otimes 2} H_2 P ^{\otimes 2}} \leq C_\Lambda 
\end{equation}
where the constant on the right-hand side depends only on $\Lambda$, not on $N$. For the one-body term this is of course obvious, for the interaction term this follows from Inequality~\eqref{eq:ErdYau}. 

Using Theorem~\ref{thm:deF semi} and the localization method of Section~\ref{sec:localization} we construct a de Finetti measure $\mu_N$ for the projected states $\Gamma_N ^P$ associated to $\Gamma_N =|\Psi_N \rangle\langle \Psi_N |$, the orthogonal projector onto a ground state. We skip some details here but observe that, as per~\eqref{eq:deF semi} and~\eqref{eq:norm proj} we will essentially get  
\begin{equation}\label{eq:deF use}
{N\choose 2} ^{-1} \tr \left( P^{\otimes 2} H_2 P^{\otimes 2} \, \Gamma_N ^{(2)} \right) \geq \int \cEnls [u] \mathrm{d}\mu_N (u) - C_\Lambda o_N (1) \geq  \Enls  - C_\Lambda o_N (1)
\end{equation}
where $o_N (1) \to 0$ when $N\to \infty$ and is independent of $\Lambda$. 

We can now pass to the limit first as $N\to \infty$ to make the error in~\eqref{eq:deF use} vanish and then $\Lambda\to\infty$ to make the error in~\eqref{eq:loc use} vanish, and deduce the desired energy lower bound 
$$ \liminf_{N\to \infty} {N\choose 2} ^{-1} \tr \left( H_2  \, \Gamma_N ^{(2)} \right) \geq \Enls.$$
The corresponding energy upper bound is obtained as usual with a factorized (Hartree) \index{Trial state, Hartree}{trial state}. Note then that in~\eqref{eq:deF use} we have sandwiched 
$$ \int \cEnls [u] \mathrm{d}\mu_N (u)$$
in between the energy upper and lower bounds. With a bit extra effort we can prove that the measure $\mu_N$ (obtained as sketched in Section~\ref{sec:deF loc}) converges for large $N$ to $\mu$, the measure associated to the sequence of states $\Gamma_N$ via Theorem~\ref{thm:DeFinetti fort} (given the bounds we have already collected so far it is easy to see that one can apply this theorem). Moreover we will deduce from the energy bounds that, with $\cEnls$ and $\Enls$ the energy functional~\eqref{eq:rev nls f} and its infimum respectively,
$$ \int \cEnls [u] \mathrm{d}\mu (u) = \Enls,$$
which implies that the limit measure is concentrated on NLS minimizers, and hence the desired convergence of reduced density matrices. Observe that the only limitation on $\beta$ in this proof is that inherited from Lemma~\ref{lem:mom rep}.

\hfill \qed

\section{\index{Dilute limit}{Dilute limit} with \index{Moments estimates}{moments estimates}, attractive case}\label{sec:mom att}

We conclude our discussion of dilute limits by turning to the attractive case. In 2D this allows to obtain focusing mass-critical energy functionals in the limit, the result is from~\cite{LewNamRou-15,NamRou-19}. In 3D this allows to assume only classical stability instead of a purely repulsive potential. The result is implicit in the more general study~\cite{Triay-17} of Bose gases with dipole-dipole interactions.

\begin{theorem}[\textbf{Dilute limit again, attractive case}]\label{thm:dilute att}\mbox{}\\
Make Assumptions~\ref{asum:pots} as in Theorem~\ref{thm:main} plus~\eqref{eq:trapping s} with some $s>0$ and~\eqref{eq:growth A}. We work in dimensions $d=2,3$ and assume Hartree stability in the former case, classical stability in the latter (cf Definition~\ref{def:stability}) for some $w \in L^1 (\R^d) \cap L^2 (\R^d)$. Let 
\begin{equation}\label{eq:beta dilute ter}
\begin{cases}
0 < \beta < \frac{1}{3} + \frac{s}{42 s + 45} \mbox{ if } d = 3\\ 
0 < \beta < 1 \mbox{ if }  d = 2.
\end{cases}
\end{equation}
We have, in the limit $N \to + \infty$:

\smallskip 

\noindent\textbf{Convergence of the energy:} 
$$
\frac{E(N)}{N} \to \Enls. 
$$

\smallskip 

\noindent\textbf{Convergence of reduced density matrices:} let $\Gamma_N ^{(k)},k\geq 0$ be the reduced density matrices of a many-body ground state $\Psi_N$. There exists a Borel probability measure $\mu$ on the set $\Mnls$ of NLS minimizers such that, along a subsequence, 
$$
{N \choose k} ^{-1} \Gamma_N ^{(k)} \to \int_{\Mnls} |u ^{\otimes k} \rangle \langle u ^{\otimes k} | \mathrm{d}\mu(u) 
$$
strongly in trace-class norm.
\end{theorem}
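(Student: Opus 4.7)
The plan is to follow the same three-step architecture used for Theorem~\ref{thm:dilute} in Section~\ref{sec:mom rep} --- a priori moment estimates, localization via Lemma~\ref{lem:loc}, quantum de Finetti on the low-energy sector --- but with the moment estimate replaced by a sign-free version. The energy upper bound is obtained as usual by inserting a Hartree trial state $u^{\otimes N}$ in $\langle\Psi_N|H_N|\Psi_N\rangle$, using Assumption~\ref{asum:NLS stab} (in 2D) or classical stability with scaling (in 3D) to guarantee that the $N$-dependent Hartree functional with $w_{N,\beta}$ stays bounded below and converges, as $N\to\infty$, to $\cEnls[u]$.

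The first and most delicate step is a second-moment estimate analogous to Lemma~\ref{lem:mom rep} but without assuming $w\geq 0$. The idea is to exploit the identity $H_N^2\Psi_N=E(N)^2\Psi_N$ together with stability of the second kind, $H_N\geq -CN$. In 2D the latter follows from Hartree stability~\eqref{eq:Hartree stable} applied through the Hoffmann-Ostenhof argument and a factorized upper bound; in 3D it follows from classical stability together with the observation that, for $\beta<1/d$, the pair sum $\sum_{i<j}w_N(\bx_i-\bx_j)$ is still classically bounded below by $-CN$. Writing $w=w_+-w_-$, expanding $H_N^2$ and collecting terms, the dangerous contributions are the cross-products $h_i\,w_-(\bx_j-\bx_k)+\mathrm{h.c.}$ with $i\in\{j,k\}$. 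These we control using~\eqref{eq:Nam mag}, which yields an operator bound of the form $-C(\|w_-\|_{L^p}+\|w_-\|_{L^2})(1-\Delta_{\bx_i})(1-\Delta_{\bx_k})$; scaled with $w_{N,\beta}$, the prefactor produces powers of $N^{d\beta}$ that must be absorbed by the two-body moment $\tr(h\otimes h\,\Gamma_N^{(2)})$. Balancing these contributions yields a second-moment bound ${N\choose2}^{-1}\tr(h\otimes h\,\Gamma_N^{(2)})\leq C$ in the $\beta$-range stated in~\eqref{eq:beta dilute ter}. This is precisely~\cite[Lemma~5]{LewNamRou-15} (for 2D) and its 3D variant in~\cite{Triay-17}, and it is the origin of the $s$-dependent threshold in 3D.

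With the moment bound in hand, Lemma~\ref{lem:loc} applies verbatim since it requires no sign on $w$, giving
\[
{N\choose 2}^{-1}\tr\!\left((H_2-P^{\otimes 2}H_2 P^{\otimes 2})\Gamma_N^{(2)}\right)\geq -C_\delta\Lambda^{(\delta-1)/2}
\]
for suitable $\delta<1$. On the $P$-localized piece, we apply the semiclassical quantum de Finetti bound of Theorem~\ref{thm:deF semi} to the projected state $\Gamma_N^P$ constructed as in Section~\ref{sec:localization}, using~\eqref{eq:ErdYau} (or its $w_-$ analogue) to control the operator norm of $P^{\otimes 2}H_2P^{\otimes 2}$ by a constant depending only on $\Lambda$. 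One obtains
\[
{N\choose 2}^{-1}\tr\!\left(P^{\otimes 2}H_2P^{\otimes 2}\,\Gamma_N^{(2)}\right)\geq \int\cEnls[u]\,\mathrm{d}\mu_N(u)-C_\Lambda\,o_N(1),
\]
at which point we pass to the limit first $N\to\infty$ (killing the de Finetti error independently of $\Lambda$) and then $\Lambda\to\infty$ (killing the localization error). Combined with the matching upper bound this gives $E(N)/N\to\Enls$, and the fact that $\int\cEnls[u]\,\mathrm{d}\mu(u)=\Enls$ for the limit de Finetti measure $\mu$ forces $\mu$ to be concentrated on $\Mnls$, yielding the stated trace-norm convergence of all reduced density matrices.

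The main obstacle is the moment estimate in Paragraph~2, specifically in 3D. Unlike the repulsive case where the cross terms $h_i\,w(\bx_j-\bx_k)+\mathrm{h.c.}$ are manifestly nonnegative and can simply be dropped, here one must quantitatively trade a fraction of the two-body moment against the negative part of $w_{N,\beta}$, whose $L^2$ norm grows like $N^{\beta(d/2)}$. Getting the sharp exponent requires interpolating~\eqref{eq:Nam mag} with~\eqref{eq:ErdYau} and reinvesting Lemma~\ref{lem:LT} to bound the number of low-energy one-body states, which is where the $s$-dependence in the threshold $\beta<\tfrac{1}{3}+\tfrac{s}{42s+45}$ arises. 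In 2D the subcriticality of the kinetic energy makes this trade-off lossless and one recovers the full range $\beta<1$.
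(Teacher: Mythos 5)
Your architecture (moment estimate $\to$ localization via Lemma~\ref{lem:loc} $\to$ de Finetti on the low-energy sector) matches the paper's approach, and you correctly identify the relevant moment estimate as coming from~\cite{LewNamRou-15} and~\cite{Triay-17}. But there are two genuine gaps, both flagged explicitly in the paper's discussion after Lemma~\ref{lem:mom att}.

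First, you treat stability of the second kind ($H_N\geq -CN$) as an available input, proposing to derive it ``from Hartree stability applied through the Hoffmann-Ostenhof argument'' in 2D and from ``classical stability together with the observation that, for $\beta<1/d$, the pair sum is still classically bounded below by $-CN$'' in 3D. The latter reasoning is precisely what fails in the dilute regime: as noted after Definition~\ref{def:stability}, classical stability of the scaled $w_N$ only follows from that of $w$ when $\beta<1/d$, and the 3D range in~\eqref{eq:beta dilute ter} is $\beta>1/3$. Stability of the second kind is not an input here --- it is one of the conclusions, and the paper stresses exactly this point. Correspondingly, the moment bound is not an a priori bound of the type used in the repulsive case: Lemma~\ref{lem:mom att} is \emph{conditional}, expressing $\tr(h\otimes h\,\Gamma_N^{(2)})$ in terms of $|e_{N,\eps}|$, which one does not yet know to be $O(1)$. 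Your proposal presents it as an unconditional $O(1)$ bound and then plugs it straight into Lemma~\ref{lem:loc}.

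Second, and as a direct consequence, the passage to the limit cannot be done in the order $N\to\infty$ then $\Lambda\to\infty$ as you describe. That ordering worked for Theorem~\ref{thm:dilute} because the repulsive moment bound was unconditional and hence uniform in $\Lambda$. Here the paper explicitly says the opposite: ``the bounds in Lemma~\ref{lem:mom att} not being truly a priori we have to perform a bootstrap argument to improve energy estimates progressively, and we cannot take limits $N\to\infty$ and $\Lambda\to\infty$ one after the other.'' The missing idea in your proposal is exactly this bootstrap: one feeds a rough energy lower bound into the conditional moment estimate to obtain a rough moment bound, which via localization and de Finetti yields an improved energy lower bound, and so on until the estimates close. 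The $s$-dependent threshold $\beta<\tfrac13+\tfrac{s}{42s+45}$ in 3D is the value at which this iterative scheme closes, not (as you speculate) an artifact of interpolating~\eqref{eq:Nam mag} against~\eqref{eq:ErdYau} with a Lieb-Thirring count of low-energy states.
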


\begin{proof}[Comments]\mbox{}\\
In 2D a natural threshold is reached in~\cite{NamRou-19}, in that the result holds as long as second moment estimates are available (Lemma~\ref{lem:mom att} below). In 3D the values of $\beta$ achieved so far are not much larger than the diluteness threshold $1/3$. This shows that allowing even a small attractive part in the interaction potential makes proofs much harder. See~\cite{Lee-09,Yin-10} for results in this direction. 
\end{proof}

We do not reproduce the proof, whose main ingredient is the following adaptation of Lemma~\ref{lem:mom rep}:

\begin{lemma}[\textbf{Second moment estimate, attractive case}]\label{lem:mom att}\mbox{}\\
Let $d=2$ and $\beta < 1$ or $d=3$ and $\beta < 2/3$. Let $\Psi_N$ be a ground state for~\eqref{eq:intro Schro op bis} and $\Gamma_N^{(2)}$ the associated two-particles reduced density matrix. Let $e_{N,\eps}$ be the ground state energy per particle ($N^{-1}$ times the lowest eigenvalue) of 
$$ H_{N,\eps} = H_N - \eps \sum_{j=1} ^N h_j.$$
We have, for all $0<\eps < 1$, 
\begin{equation}\label{eq:2mom att}
{N \choose 2} ^{-1} \tr \left( h_1 \otimes h_2 \Gamma_N ^{(2)} \right) \leq C_\eps \left(\frac{1+|e_{N,\eps}|}{\eps}\right) ^2  
\end{equation}
for a constant $C_\eps>0$ independent of $N$. 
\end{lemma}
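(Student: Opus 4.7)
The plan is to mimic the template of Lemma~\ref{lem:mom rep}, replacing the crude positivity arguments (which break down for attractive $w$) by arguments that exploit the stability of the perturbed Hamiltonian $H_{N,\eps} = (1-\eps)\sum_j h_j + \sum_{i<j} w_N(\bx_i - \bx_j)$. For $\eps$ small enough, Hartree stability in 2D or classical stability in 3D forces $H_{N,\eps}$ itself to be stable of the second kind, so that $H_{N,\eps}\geq N e_{N,\eps}$ as operators. A Hartree trial state $u^{\otimes N}$ yields $E(N)\leq CN$, and taking expectation in $\Psi_N$ of the operator inequality $H_{N,\eps}\geq N e_{N,\eps}$ gives the first-moment bound
\[
\eps\,\langle T_N\rangle_{\Psi_N} \;=\; E(N) - \langle H_{N,\eps}\rangle_{\Psi_N} \;\leq\; E(N)-Ne_{N,\eps}\;=:\;\Delta\;\leq\; C N(1+|e_{N,\eps}|),
\]
with $T_N=\sum_{j=1}^N h_j$.

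For the second moment, I would exploit the variational equation $H_N\Psi_N=E(N)\Psi_N$, which gives $\langle H_N^2\rangle_{\Psi_N}=E(N)^2\leq CN^2$. The strategy is to establish an operator lower bound of the form
\[
H_N^2 \;\geq\; c_\eps\, T_N^2 \;-\; C_\eps\, N^2\bigl(1+|e_{N,\eps}|\bigr)^2
\]
on $\gH_N$, with $c_\eps>\eps^2$. Taking the expectation against $\Psi_N$ and combining with the identity $\langle H_{N,\eps}^2\rangle = E(N)^2-2\eps E(N)\langle T_N\rangle+\eps^2\langle T_N^2\rangle$ (which follows from $H_N\Psi_N=E(N)\Psi_N$) would then yield $\langle T_N^2\rangle\leq C_\eps N^2(1+|e_{N,\eps}|)^2/\eps^2$, from which the lemma follows since $\sum_{i\neq j} h_i h_j \leq T_N^2$.

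To produce this operator inequality, I would decompose $H_N = H_{N,\eps}+\eps T_N$ and expand $H_N^2 = H_{N,\eps}^2 + \eps\{H_{N,\eps},T_N\} + \eps^2 T_N^2$. The leading term $\eps^2 T_N^2$ provides the desired positive coefficient on $T_N^2$. For the anticommutator, write $\{H_{N,\eps},T_N\}=2(1-\eps) T_N^2 + \{W_N,T_N\}$ with $W_N = \sum_{i<j} w_N(\bx_i-\bx_j)$; the cross piece $\{W_N,T_N\}=\sum_{i,\,j<k}(h_i w_N(\bx_j-\bx_k)+w_N(\bx_j-\bx_k)h_i)$ splits over pairs $(i,\{j,k\})$ as diagonal terms $i\in\{j,k\}$ (handled via~\eqref{eq:Nam mag}) and off-diagonal terms $i\notin\{j,k\}$ (handled via the combination of~\eqref{eq:Sob} and especially~\eqref{eq:ErdYau}, whose $L^1$-norm factor scales as $O(1/N)$ under~\eqref{eq:scaled potential} and thereby saves the one power of $N$ one needs when summing over $i$). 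This gives $\{W_N,T_N\}\geq -o_N(1)\,T_N^2 - C_\eps N^2$ precisely in the stated range of $\beta$. Finally, for $H_{N,\eps}^2$, the operator form of the stability inequality, $(H_{N,\eps}-Ne_{N,\eps})^2\geq 0$, yields $H_{N,\eps}^2\geq 2Ne_{N,\eps}H_{N,\eps}-(Ne_{N,\eps})^2$, whose expectation (combined with the first-moment bound on $\langle H_{N,\eps}\rangle$) produces the error term $O(N^2(1+|e_{N,\eps}|)^2)$.

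The hard part will be controlling the interaction-interaction piece $W_N^2$ that is embedded in $H_{N,\eps}^2$, namely the sum $\sum_{i<j,\,k<l}w_N(\bx_i-\bx_j)w_N(\bx_k-\bx_l)$ of products of potentially sign-indefinite operators. Unlike the repulsive situation, it cannot simply be dropped. The key insight is that one never has to bound $W_N^2$ term by term; instead, one retains it inside $H_{N,\eps}^2$ and uses the \emph{global} operator inequality $(H_{N,\eps}-Ne_{N,\eps})^2\geq 0$, which encodes precisely the Hartree/classical stability hypothesis and packages all interaction-interaction cross terms against the surviving kinetic part into the single scalar quantity $Ne_{N,\eps}$. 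The $\beta$-threshold ($\beta<1$ in 2D, $\beta<2/3$ in 3D) is dictated by the compatibility of the cross-kinetic estimates of Lemma~\ref{lem:opineq} with absorption into $\eps^2 T_N^2$; remarkably, it coincides with the one encountered in the repulsive Lemma~\ref{lem:mom rep}, reflecting that the obstruction lies in the same operator-theoretic estimate on the anticommutator, irrespective of the sign of $w$.
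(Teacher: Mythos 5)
Your overall skeleton is the right one and is essentially the one the paper's cited references~\cite{LewNamRou-15,Triay-17} use: decompose $H_N=H_{N,\eps}+\eps T_N$ with $T_N=\sum_j h_j$, square, exploit the variational identity $\langle H_N^2\rangle_{\Psi_N}=E(N)^2$, and use stability. The first-moment bound $\eps\langle T_N\rangle\leq E(N)-Ne_{N,\eps}$ is correct. But there is a genuine gap in your handling of the \emph{off-diagonal} anticommutator terms, and that gap is exactly where the attractive case requires a new idea rather than a recycled inequality from Lemma~\ref{lem:opineq}.

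The off-diagonal contribution to $\{W_N,T_N\}$ is $2\sum_i h_i W_N^{(i)}$ with $W_N^{(i)}:=\sum_{j<k,\,j,k\neq i}w_N(\bx_j-\bx_k)$; you propose to control it via~\eqref{eq:ErdYau}. For distinct $i,j,k$ this gives
$$
h_i\, w_N(\bx_j-\bx_k)\;\geq\;-C\norm{w_N}_{L^1}\,h_i\,(-\Delta_j)^{\alpha}(-\Delta_k)^{\alpha},\qquad \alpha<1,
$$
and summing over all triples $(i,\{j,k\})$ produces, even after the $\norm{w_N}_{L^1}=O(N^{-1})$ gain, a genuine \emph{three-body} error of the schematic form $-N^{-1}T_N\,\bigl(\sum_j(-\Delta_j)^\alpha\bigr)^2$, i.e.\ cubic in kinetic-energy-type operators. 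This cannot be absorbed by the quadratic $\eps^2T_N^2$, nor controlled in expectation, since $\langle T_N^3\rangle_{\Psi_N}$ is a third moment which you do not have. The naive $N$-power-counting that you emphasize comes out right, but the operator orders do not match, so the claimed inequality $\{W_N,T_N\}\geq -o_N(1)T_N^2-C_\eps N^2$ does not follow from this route. Contrast with the diagonal terms, where~\eqref{eq:Nam mag} really is a two-body inequality living inside $T_N^2$.

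The key idea you are missing is \emph{stability of the sub-Hamiltonian commuting with $h_i$}. Set $H^{(i)}:=(1-\eps)\sum_{j\neq i}h_j+W_N^{(i)}$, the $\eps$-perturbed Hamiltonian of the remaining $N-1$ particles with the same potential $w_N$. It commutes with $h_i$, and stability of the second kind gives $H^{(i)}\geq -C(\eps)N$ as an operator (it is $H_{N-1,\eps}$ up to a harmless $O(1/N)$ rescaling of $w_N$), whence
$$
h_iH^{(i)}=h_i^{1/2}H^{(i)}h_i^{1/2}\;\geq\;-C(\eps)N\,h_i .
$$
Rewriting $W_N^{(i)}=H^{(i)}-(1-\eps)(T_N-h_i)$ and summing over $i$ then yields
$$
\sum_i h_iW_N^{(i)}\;\geq\;-C(\eps)NT_N-(1-\eps)\Bigl(T_N^2-\sum_i h_i^2\Bigr).
$$
Crucially, the $-2\eps(1-\eps)T_N^2$ this contributes to $\eps\{W_N,T_N\}$ exactly cancels the $+2\eps(1-\eps)T_N^2$ from the kinetic part $\eps\{(1-\eps)T_N,T_N\}$ that you correctly isolated, leaving a net coefficient $\eps^2$ on $T_N^2$ together with a linear-in-$T_N$ error of size $NT_N$ that the first-moment bound controls. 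From there your argument closes.

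Two smaller points. First, you do not need the refined $(H_{N,\eps}-Ne_{N,\eps})^2\geq 0$: once the anticommutator is tamed as above, the trivial $H_{N,\eps}^2\geq 0$ already suffices. Second, $W_N^2\geq 0$ regardless of the sign of $w$, since $W_N$ is multiplication by a real function, so the interaction-interaction piece was never the source of difficulty; the entire distance between the repulsive and attractive cases lives in the off-diagonal anticommutator, and the tool to close it is commuting-sub-Hamiltonian stability, not~\eqref{eq:ErdYau}.
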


\begin{proof}[Comments]\mbox{}\\
 \noindent\textbf{1.} This is stated as a ``non-necessarily repulsive case''. The potential needs not be (partially) attractive for the result to hold, but if it is not, one should rely on Lemma~\ref{lem:mom rep} instead.  

\medskip

\noindent\textbf{2.} Lemma~\ref{lem:mom rep} was a true a priori information on ground states and can be employed directly to estimate error terms in the mean-field limit. Lemma~\ref{lem:mom att} by contrast is a conditional statement: it becomes most useful if we happen to know that $|e_{N,\eps}|$ is of order $1$. In the attractive case this is not quite obvious, for it is precisely saying that the system is stable (of the second kind,~cf Definition~\ref{def:stability}) under our assumptions. This is part of what we are aiming to prove, namely that the stability of the mean-field problem implies that of the many-body one.  

\medskip 

\noindent\textbf{3.} The proof of Lemma~\ref{lem:mom att} follows from similar considerations as that of Lemma~\ref{lem:mom rep}, see~\cite{LewNamRou-15} and~\cite{Triay-17}.

\medskip 

\noindent\textbf{4.} To prove Theorem~\ref{thm:dilute att} one combines moments estimates with Lemma~\ref{lem:loc} again. The conclusion is more subtle: the bounds in Lemma~\ref{lem:mom att} not being truly a priori we have to perform a bootstrap argument to improve energy estimates progressively, and we cannot take limits $N\to \infty$ and $\Lambda \to \infty$ one after the other.
\end{proof}

\chapter{The \index{Gross-Pitaevskii limit}{Gross-Pitaevskii limit}}\label{cha:GP}

So far we have justified, in situations of increasing generality and mathematical difficulty, the absence of inter-particle correlations in the leading order contributions to the ground state of the Bose gas. The increase in mathematical difficulty was motivated by and correlated with an increase in physical relevance for dilute \index{Cold atoms}{atomic gases}. As explained in Section~\ref{sec:sca lim}, in the scaling limit corresponding to a dilute gas the interaction becomes rather singular. If the gas is made very dilute, it in fact becomes singular enough that inter-particle correlations can no longer be neglected, even at leading order. This is the problem we now tackle, for repulsive interaction potentials and in 3D. A review of this problem as of 2005 (and related topics) is in~\cite{LieSeiSolYng-05} (see also~\cite{Seiringer-06c}). The presentation below overlaps this text to some extent. 

The Gross-Pitaevskii limit is that of largest physical relevance for dilute systems. This is because the true \index{Scattering length}{scattering length} of the interaction potential appears as effective coupling constant, not just its first \index{Born approximation}{Born approximation}. In fact, physicists argue that in a dilute system, the interactions are only via $s$-wave scattering, and replace the interaction potential (recall the notation of Section~\ref{sec:sca lim}) by a Dirac-like $4\pi a_w \delta$ in the \index{Many-body Schr\"odinger Hamiltonian}{many-body Schr\"odinger Hamiltonian} \emph{before} making any other sort of approximation, mean-field or otherwise. This manipulation is hardly legitimate mathematically, for delta interactions are seldom Kato-bounded perturbations of the kinetic energy operator (except in 1D~\cite{LieLin-63,SeiYngZag-12,AmmBre-12,Rosenzweig-19} or when projected in special spaces~\cite{LewSei-09,LieSeiYng-09,SeiYng-20}). What the GP limit does is to provide a parameter regime where the final result of the approximation, the GP energy functional, is recovered. 

It will be important to realize that the effective GP interaction with the scattering length in front is actually made of interaction energy \emph{and} part of the (high frequency) kinetic energy.  Here is the plan:
\begin{itemize}
 \item We start by discussing the two-body scattering process that dictates the short-range pair correlations in the gas: Section~\ref{sec:sca length}.
 \item It is already rather non-trivial to come up with a good trial state, one that looks physically relevant, does the job, and can be handled in a mathematically rigorous fashion. See Sections~\ref{sec:Jas-Dyson} and~\ref{sec:Bogoliubov} respectively for two versions of the trial state, both yielding the sought-after energy upper bound.
 \item Perhaps the hardest part of the analysis reviewed in this chapter is to extract the effect of pair correlations in order to derive an energy lower bound. The tools to do that go under the name of \index{Dyson lemma}{Dyson lemmas}: Section~\ref{sec:Dyson}.
 \item Sections~\ref{sec:thermo} and~\ref{sec:LDA} review a method which is specific to the Gross-Pitaevskii limit\footnote{It could probably be of use also for the dilute limit of the previous chapter, but less probably for the mean-field limit.}. It works in two steps: first one proves a formula for the ground state energy of the \index{Homogeneous Bose gas}{homogeneous Bose gas} in a \index{Thermodynamic limit}{thermodynamic}/dilute limit. Then one uses this formula to deal with the inhomogeneous gas by a \index{Local density approximation (LDA)}{local density approximation} method. 
 \item In Section~\ref{sec:coh reloaded} we show how to combine the Dyson lemmas and the tools of the previous chapter to provide different proofs and extend the results to the case of magnetic fields.
 \item Finally in Section~\ref{sec:Bogoliubov low} we review other means than the Dyson lemma to extract pair correlations in energy lower bounds. The methods we review allow to derive GP ground states only for unscaled potentials $w$ with small enough scattering length (see below for more comments in this direction). 
\end{itemize}

For extensions to 2D (where, again, the scaling of the interaction is different) of the material presented below we refer to~\cite{CarCenSch-20,LieYng-01,LieSeiYng-01,SchYng-07}.

\section{Pair correlations and the scattering length}\label{sec:sca length}

For simplicity we henceforth work under the following assumptions: 

\begin{assumption}[\textbf{The interaction potential in the GP limit}]\label{asum:GP pot}\mbox{}\\
The unscaled interaction potential $w:\R^3 \mapsto \R$ is smooth, non-negative, radial, and with compact support in the ball of radius $ 0 < R_w < \infty$. 
\end{assumption}

The above can be relaxed to a large extent. Probably the most annoying assumption we make is that the potential be purely repulsive (see Section~\ref{sec:mom att}). It is pretty hard to remove it, although some results are known~\cite{Lee-09,Yin-10b}. What is definitely not needed is the smoothness. The potential could even have a hard-core, formally $w = +\infty$ inside a ball of radius $R_w^{\rm hard}\leq R_w$, which we would materialize by changing the configuration space from $\R^{3N}$ to $\R^{3N}\setminus \left\{ |\bx_i-\bx_j| \leq R_w^{\rm hard} \mbox{ for some } i\neq j \right\}.$ 

As hinted at in Sections~\ref{sec:sca lim}-\ref{sec:NLS} we should think that the state of our Bose gas contains pair correlations to reduce repulsive interactions. The GP limit is\footnote{By definition !} the regime where this has a leading order effect in the large $N$ limit, but we do not need that scaling to guess what the correlations should be. 

We look for a wave-function $f:\R^d \mapsto \C$ describing the motion of the relative coordinate of a pair of particles. This motion will happen on the length scale of the interaction potential. In a dilute gas, this is much shorter than the macroscopic extent of the full system. Thus $f$ should converge at infinity to some value prescribed by the macroscopic variations of the system. We shall take this value to be $1$, without loss of generality (see the next section where we  connect the short-range pair correlations to the macroscopic behavior of the system). A slightly different way to formulate what we are looking for is that we consider a infinite homogeneous system, with reference density $1$ and ask how it gets modified if we insert a repulsive potential $w$. 

This discussion leads us to the following variational problem (the $1/2$ in front of $w$ is because this is the energy in the relative coordinate of a pair of particles)
\begin{equation}\label{eq:GP def scat}
4\pi a_w = \inf\left\{ \int_{\R^3} |\nabla f |^2 + \frac{1}{2} w |f| ^2, \: f(\bx) \underset{|\bx| \to \infty}{\to} 1  \right\} 
\end{equation}
with the associated variational \index{Scattering solution}{equation} 
\begin{equation}\label{eq:GP scat eq}
-\Delta f + \frac{1}{2} w f = 0, \quad f(\bx) \underset{|\bx| \to \infty}{\to} 1.  
\end{equation}
We record the main properties of this problem in the following

\begin{theorem}[\textbf{The scattering problem}]\label{thm:scat leng}\mbox{}\\
The variational problem~\eqref{eq:GP def scat} and the PDE~\eqref{eq:GP scat eq} have a unique solution $f$. It is non-negative, radial and satisfies 
\begin{equation}\label{eq:GP sca leng}
f(\bx) = 1 - \frac{a_w}{|\bx|} \mbox{ for } |\bx| > R_w 
\end{equation}
where $R_w$ is the radius of the support of $w$. Here $a_w$ is the \index{Scattering length}{\emph{scattering length}} of the potential $w$ and it is connected to the minimal scattering energy as indicated in~\eqref{eq:GP def scat}. 

Moreover, let $f_R$ be the unique minimizer of 
\begin{equation}\label{eq:GP scat R}
\cE^{\rm scat}_R [f] := \int_{B(0,R)} |\nabla f |^2 + \frac{1}{2} w |f| ^2
\end{equation}
with Dirichlet boundary condition $f_R = 1$ on the circle of radius $R$. If $R> R_w$ we have 
\begin{equation}\label{eq:GP scat ener R}
E^{\rm scat}_R = \cE^{\rm scat}_R [f_R] = 4\pi a_w \left(1 - \frac{a_w}{R} \right)^{-1}
\end{equation}
and, with $f$ the solution to~\eqref{eq:GP scat eq}, 
\begin{equation}\label{eq:GP scat sol}
f_R (\bx) = \frac{f(\bx)}{f(R)}  
\end{equation}
for all $|\bx| \leq R$.
\end{theorem}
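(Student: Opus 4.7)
The plan is to attack the bounded-domain problem~\eqref{eq:GP scat R} first and then recover the whole-space statement by a limiting/matching procedure. Fix $R>R_w$ and consider minimization of $\cE^{\rm scat}_R$ over the affine space $\{f\in H^1(B(0,R))\,:\,f=1 \text{ on }\partial B(0,R)\}$. The functional is coercive (thanks to the Dirichlet condition and Poincaré's inequality applied to $f-1$) and strictly convex in $f$ since $w\geq 0$, so the direct method yields a unique minimizer $f_R$. Replacing $f_R$ by $|f_R|$ leaves the energy unchanged and the boundary condition satisfied, hence by uniqueness $f_R\geq 0$; the strong maximum principle applied to the linear equation $-\Delta f_R+\tfrac12 w f_R=0$ then forces $f_R>0$ since $f_R\not\equiv 0$. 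Similarly, composing with any rotation of $\R^3$ preserves $w$, the domain, and the boundary condition, so uniqueness forces $f_R$ to be radial.

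The second step is structural. On the annulus $R_w<|\bx|<R$ the minimizer is radial and harmonic, so $f_R(r)=A_R+B_R/r$ for some constants fixed by $f_R(R)=1$ and the matching condition with the interior solution at $r=R_w$. More usefully, I would exploit linearity: all radial solutions of $-\Delta\phi+\tfrac12 w\phi=0$ form a one-dimensional family (from ODE theory for the reduced radial equation, together with regularity at the origin), and hence there exists a unique such $\phi$ on $\R^3$ normalized so that $\phi(\bx)=1-a_w/|\bx|$ for $|\bx|>R_w$, where $a_w\in\R$ is now \emph{defined} as minus the $1/r$ coefficient of this canonical solution. Setting $f:=\phi$, and noting $f$ does not vanish on $[R_w,\infty)$ (by positivity of the analogous $f_R$ and a limiting argument, or directly by a maximum-principle comparison), one reads off $f_R=f/f(R)$ from linearity and the boundary matching $f_R(R)=1$.

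The energy formula~\eqref{eq:GP scat ener R} is then a one-line computation by integration by parts: since $f_R$ solves the scattering equation in the bulk of $B(0,R)$,
\begin{equation*}
E^{\rm scat}_R=\int_{B(0,R)}f_R\bigl(-\Delta f_R+\tfrac12 w f_R\bigr)+\int_{\partial B(0,R)}f_R\,\partial_\nu f_R=4\pi R^2\,\frac{f'(R)}{f(R)},
\end{equation*}
and plugging $f(r)=1-a_w/r$ for $r>R_w$ yields $f'(R)/f(R)=a_w/(R^2-a_w R)$, giving $4\pi a_w(1-a_w/R)^{-1}$ as claimed. Letting $R\to\infty$ proves $\lim_R E^{\rm scat}_R=4\pi a_w$, and in particular $a_w\geq 0$ since $E^{\rm scat}_R\geq 0$.

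Finally, for the whole-space problem~\eqref{eq:GP def scat}, the admissible set is most cleanly parametrized by writing $f=1-u$ with $u\in\dot H^1(\R^3)$ (the homogeneous Sobolev space, in which $u\to 0$ at infinity by the Sobolev embedding $\dot H^1\hookrightarrow L^6$). The functional becomes strictly convex in $u$, and the minimizer on $B(0,R)$ with boundary value $1$ gives the trial upper bound $E^{\rm scat}_R\to 4\pi a_w$ when extended by $1$ outside $B(0,R)$. For the matching lower bound, for any admissible $f$ the restriction to $B(0,R)$ has boundary trace converging to $1$ as $R\to\infty$ (in $H^{1/2}$), and the variational characterization of $E^{\rm scat}_R$ (with constant boundary value near $1$, by scaling) gives $\cE^{\rm scat}_R[f]\geq 4\pi a_w(1-a_w/R)^{-1}(1+o(1))$. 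Sending $R\to\infty$ delivers the bound $4\pi a_w$ and identifies $f$ constructed above as the (unique, by strict convexity) minimizer. The main obstacle in writing this out carefully is the third paragraph's lower-bound step: one must control the boundary trace of a general admissible $f$ on $\partial B(0,R)$ uniformly as $R\to\infty$, which I would handle by a Hardy-type inequality for $\dot H^1(\R^3)$ functions together with the monotonicity $R\mapsto E^{\rm scat}_R$ that follows from Dirichlet bracketing.
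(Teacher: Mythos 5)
Your proof is correct and follows the paper's overall strategy (integration by parts on a ball of radius $R$ to compute the energy and deduce the formula), but you prove the identity $f_R = f/f(R)$ by a different argument than the paper. You argue via ODE theory: the radial reduction of $-\Delta\phi + \tfrac12 w\phi = 0$ (write $\phi(\bx)=h(|\bx|)$ and $u(r)=r\,h(r)$ to get $u''=\tfrac12 w u$) has a two-dimensional solution space, and demanding regularity at the origin (equivalently $u(0)=0$) cuts it down to one dimension; hence $f_R$ and $f$ must be proportional, and the boundary condition $f_R(R)=1$ fixes the constant. The paper instead defines $\widetilde f$ by gluing $f(R)\,f_R$ on $B(0,R)$ to $f$ outside, observes that $\widetilde f$ is admissible for the full-space variational problem with the same energy as $f$, and concludes $\widetilde f=f$ by uniqueness of the whole-space minimizer. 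Your ODE route is more elementary and avoids invoking whole-space uniqueness at this step; the paper's route stays entirely within the calculus of variations but presupposes more about the full-space problem. The one soft spot you correctly flag is the full-space lower bound at the end. Rather than controlling the boundary trace of a general admissible $f$ on $\partial B(0,R)$, a cleaner fix is to cut off $u=1-f$ with a function $\chi_R$ equal to $1$ on $B(0,R)$ and supported in $B(0,2R)$, with $|\nabla\chi_R|\lesssim 1/R$; the energy cost of the cutoff is $O\bigl(\int_{R\leq|\bx|\leq 2R}|u|^2/R^2\bigr)$, which is the tail of the Hardy integral $\int |u|^2/|\bx|^2 \lesssim \|\nabla u\|_{L^2}^2$ and so vanishes as $R\to\infty$ (for all $R$, not just a subsequence). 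The cut-off function is admissible for $E^{\rm scat}_{2R}$, giving $E^{\rm scat}_{2R}\leq \cE[f]+o(1)$ and hence $4\pi a_w\leq\cE[f]$. Finally, note that the paper itself defers the rigorous existence, uniqueness, positivity, and radiality of $f$ and $f_R$ to~\cite[Appendix~C]{LieSeiSolYng-05}, so your first paragraph supplies material the paper does not spell out.
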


\begin{proof}[Comments]
See~\cite[Appendix~C]{LieSeiSolYng-05} for proofs. There the variational problem is set first in a ball of radius $R>R_w$ as in~\eqref{eq:GP scat R} but this is really a matter of convenience, see in particular the first remark following~\cite[Theorem~C.1]{LieSeiSolYng-05}. In 2D however (a case we are not concerned with) the restriction to a finite ball is necessary, for the limit $R\to \infty$ is trickier. 

The origin of~\eqref{eq:GP sca leng} is clear: for $|\bx| > R_w$ we have 
$$ -\Delta f = 0$$
and the right-hand side is the general form of a solution tending to $1$ at infinity. The scattering length is related to the value of $f (R_w)$ by 
$$
a_w = R_w (f (R_w) - 1).
$$
To see that the two characterizations~\eqref{eq:GP def scat} and~\eqref{eq:GP sca leng} of the scattering length are indeed equivalent, multiply~\eqref{eq:GP scat eq} by $f$ and integrate by parts on a ball of radius $R > R_w$. This yields 
\begin{align}\label{eq:calc scat} 
\int_{B(0,R)} |\nabla f| ^2 + \frac{1}{2} w |f| ^2  &= \oint_{\partial B(0,R)} f \partial_r f \nonumber \\
&= \oint_{\partial B(0,R)} \left(1 - \frac{a_w}{R}\right) \frac{a_w}{R^2} = 4\pi a_w - 4\pi \frac{a_w^2}{R} 
\end{align}
and it suffices to let $R\to\infty$ to see that the infimum in~\eqref{eq:GP def scat} is indeed $4\pi a_w$ with $a_w$ as in~\eqref{eq:GP sca leng}. 

A further useful characterization is to define 
$$ g:= w f$$
and observe that, integrating~\eqref{eq:GP scat eq} by parts on $B(0,R)$ 
$$ \frac{1}{2} \int_{B(0,R)} g = \oint_{\partial B(0,R)} \partial_r f = \oint_{\partial B(0,R)} f \partial_r f + o_R (1)$$
with $o_R (1) \to 0$ when $M\to \infty$. By the previous computation and taking the limit $R\to \infty$ we deduce 
\begin{equation}\label{eq:GP def scat bis}
8 \pi a_w = \int_{\R^3} g. 
\end{equation}
The quantity $a_w$ can thus be interpreted\footnote{In our (absence of) units.} as the minimal energy of a two-particles scattering process~\eqref{eq:GP def scat}, as the typical length scale thereof~\eqref{eq:GP sca leng} or as the integral~\eqref{eq:GP def scat bis} of an effective interaction potential $g$ incorporating the fine details of the process. 

We finally discuss~\eqref{eq:GP scat sol}. Consider the function defined on $\R^3$ by 
$$ 
\widetilde{f} (\bx) = \begin{cases} 
                       f (R) f_R (\bx) \mbox{ if } |\bx| \leq R\\
                       f(\bx) \mbox{ if } |\bx| > R
                      \end{cases}
$$
with $f$ the minimizer of the scattering energy in the full space and $f_R$ the minimizer in the ball of radius $R$. Certainly $\widetilde{f}$ is a valid trial state for the energy in the full space, and by definition it minimizes the energy on the ball $B(0,R)$ with boundary condition $\widetilde{f} (R) = f (R)$, while having the same energy as $f$ in the exterior of $B(0,R)$. It thus must be a minimizer over the full space, and by uniqueness one obtains $\widetilde{f} = f $, which is~\eqref{eq:GP scat sol}. Then~\eqref{eq:GP scat ener R} is just another version of the calculation we did in~\eqref{eq:calc scat} above (replace $f(\bx)$ by $f_R (\bx)= f(\bx) f(R)^{-1}$).
\end{proof}

In the notation of Section~\ref{sec:sca lim}, the \index{Gross-Pitaevskii limit}{Gross-Pitaevskii limit} is $d=3,\beta = 1$. Hence our scaled interaction potential is 
\begin{equation}\label{eq:GP sca pot}
w_N (\bx) = N^{2} w (N \bx). 
\end{equation}
By scaling it is easy to see from~\eqref{eq:GP def scat} that 
\begin{equation}\label{eq:GP scale scat}
a_{w_N} = N^{-1} a_{w}.  
\end{equation}
The rationale for the scaling in the GP limit can thus be rephrased by saying that the scattering length of $w_N$ measures the strength of interactions. This number we want to be of order $N^{-1}$ for a nice energy balance as in Section~\ref{sec:sca lim}. As per~\eqref{eq:GP scale scat} this is clearly achieved by taking a fixed $w$ and scaling as in~\eqref{eq:GP sca pot}. 

To see that the cases $\beta < 1$ really correspond to a degenerate version of the GP limit we state the 

\begin{theorem}[\textbf{\index{Born approximation}{Born series} for the scattering length}]\label{thm:Born}\mbox{}\\
Let 
$$ w_N (\bx) = N^{3\beta - 1} w (N^{\beta} \bx) $$
with $0<\beta < 1$. Let $\mathcal{L}_N$ be the operator defined by
$$ \mathcal{L}_N (g) (\bx) = \frac{1}{8\pi}w_N (\bx) \int_{\R^3} \frac{1}{|\bx - \by|} g (\by) \mathrm{d}\by.$$
Then the scattering length of $w_N$ satisfies, for any $K \in \N$
\begin{equation}\label{eq:Born}
a_{w_N} = \frac{1}{8\pi} \left(\int_{\R^3} w_N - \sum_{k=2} ^{K-1} \int_{\R^3} \mathcal{L}_N ^{k-1} (w_N) -  \int_{\R^3} \mathcal{L}_N ^{K-1} (w_N) \right)(1+o_N(1)).
\end{equation}
\end{theorem}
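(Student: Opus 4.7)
The plan is to convert the scattering PDE~\eqref{eq:GP scat eq} into a Lippmann--Schwinger integral equation and expand its solution as a Neumann series in the operator $\mathcal{L}_N$. First I would set $f_N = 1 - u_N$ with $u_N(\bx) \to 0$ at infinity; using the Newtonian Green's function $G(\bx) = (4\pi|\bx|)^{-1}$ of $-\Delta$ on $\R^3$, the PDE $-\Delta f_N + \tfrac{1}{2} w_N f_N = 0$ becomes $-\Delta u_N = \tfrac{1}{2} w_N f_N$, and hence
$$ u_N(\bx) = \frac{1}{8\pi}\int_{\R^3}\frac{w_N(\by)\,f_N(\by)}{|\bx-\by|}\,\mathrm{d}\by. $$
Multiplying $f_N = 1 - u_N$ by $w_N$ and setting $g_N := w_N f_N$ gives the fixed-point relation $g_N = w_N - \mathcal{L}_N(g_N)$.

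Iterating this identity $K$ times produces the finite Born expansion
$$ g_N \;=\; \sum_{k=0}^{K-1}(-1)^k\,\mathcal{L}_N^{k}(w_N) \;+\; (-1)^K\,\mathcal{L}_N^{K}(g_N). $$
Integrating over $\R^3$ and invoking the identity $8\pi a_{w_N} = \int_{\R^3} g_N$ (relation~\eqref{eq:GP def scat bis} applied to $w_N$) produces an exact formula for $a_{w_N}$ as a Born series in the $\mathcal{L}_N^k(w_N)$, plus a remainder $(-1)^K \int \mathcal{L}_N^K(g_N)$.

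The core of the proof is then to estimate this remainder and show that it is of order $o_N(1)$ relative to the leading contribution $\int w_N = N^{-1}\int w$. Two ingredients enter. First, a maximum-principle/variational comparison (testing $\min(f_N,1)$ and $\max(f_N,0)$ in~\eqref{eq:GP def scat}, using $w\geq 0$) yields $0 \leq f_N \leq 1$, so $0 \leq g_N \leq w_N$ pointwise, and in particular $0 \leq \mathcal{L}_N^K(g_N) \leq \mathcal{L}_N^K(w_N)$. Second, the scaling $w_N(\bx) = N^{3\beta - 1} w(N^\beta \bx)$ makes $\mathcal{L}_N$ effectively contractive: a direct change of variable $\bx = N^{-\beta}\tilde\bx$ in the $(k+1)$-fold integral representing $\int_{\R^3}\mathcal{L}_N^k(w_N)$ gives
$$ \int_{\R^3}\mathcal{L}_N^k(w_N) \;=\; N^{-1}\,N^{k(\beta-1)}\,c_k(w), $$
where $c_k(w)$ is the $N$-independent constant obtained by running the same iteration with the fixed potential $w$ in place of $w_N$. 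Since $\beta<1$, each successive term is smaller than its predecessor by a factor $N^{\beta-1}\to 0$, so the remainder at stage $K$ is $O(N^{-1}\cdot N^{K(\beta-1)}) = o_N(1)\cdot N^{-1}$, an $o_N(1)$ correction to $\int w_N$.

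The main obstacle I anticipate is the uniform finiteness of the constants $c_k(w)$ for all $k\in \N$. This requires that repeated application of the fixed-$N=1$ operator $\mathcal{L}$ to the compactly supported, bounded function $w$ stays in a class on which $\mathcal{L}$ can be iterated again. Here one uses that each application of $\mathcal{L}$ preserves compact support (thanks to the prefactor $w(\bx)$ in the definition) and, via the Hardy--Littlewood--Sobolev or weak-Young inequality for the Riesz kernel $|\cdot|^{-1}$, sends $L^\infty_c$ into $L^\infty_c$ with a controlled norm. Combining the exact Born expansion with this quantitative remainder bound yields~\eqref{eq:Born} for every $K\in\N$.
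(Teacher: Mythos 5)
Your approach is exactly the folklore Lippmann--Schwinger argument the paper refers to, and its main ingredients --- the fixed-point equation $g_N = w_N - \mathcal{L}_N(g_N)$ for $g_N := w_N f_N$, the pointwise bound $0 \leq f_N \leq 1$ (hence $0 \leq \mathcal{L}_N^K(g_N) \leq \mathcal{L}_N^K(w_N)$, by monotonicity of $\mathcal{L}_N$ on nonnegative functions), and the scaling identity $\int_{\R^3}\mathcal{L}_N^k(w_N) = N^{-1+k(\beta-1)}\,c_k(w)$ --- are all correct and cleanly stated.

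However, you pass silently over a discrepancy that your own derivation makes visible. The iteration yields the alternating-sign expansion
$$ 8\pi a_{w_N} = \sum_{k=0}^{K-1} (-1)^k \int_{\R^3} \mathcal{L}_N^k(w_N) + (-1)^K \int_{\R^3} \mathcal{L}_N^K(g_N), $$
while the displayed formula~\eqref{eq:Born} subtracts \emph{every} term $\int_{\R^3} \mathcal{L}_N^{k}(w_N)$, $k\geq 1$, with no sign alternation; for $K \geq 3$ the two expressions differ by $2\sum_{j\geq 1} \int_{\R^3} \mathcal{L}_N^{2j}(w_N) > 0$. That difference is of course absorbed by the multiplicative $(1+o_N(1))$, since every term past $\int w_N$ is already $o_N(1)$ relative to the first --- but for the very same reason~\eqref{eq:Born} is nearly vacuous about the subleading coefficients, and you should not assert that your computation "yields~\eqref{eq:Born}" without comment. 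The honest, and sharper, conclusion your argument actually delivers is
$$ \left| 8\pi a_{w_N} - \sum_{k=0}^{K-1}(-1)^k\int_{\R^3}\mathcal{L}_N^k(w_N)\right| \leq \int_{\R^3} \mathcal{L}_N^{K}(w_N) = N^{-1+K(\beta-1)}\,c_K(w), $$
i.e.\ an error smaller than the last retained term by the factor $N^{\beta-1}\to 0$; I would state the result in this additive form, with the alternating signs the Lippmann--Schwinger iteration produces, rather than via a global $(1+o_N(1))$. One last small remark: the finiteness of $c_k(w)$ needs no Hardy--Littlewood--Sobolev; since $w$ is bounded with compact support, $\int_{\R^3} w(\by)|\cdot-\by|^{-1}\,\mathrm{d}\by \in L^\infty$, and the prefactor $w(\bx)$ in $\mathcal{L}_N$ keeps all iterates bounded and supported in $\mathrm{supp}(w)$.
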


\begin{proof}[Comments]
This is considered folklore in the mathematical physics community, based on the derivation of physics textbooks. See~\cite[Remarks after Theorem~1.1]{BriSol-19} for more details. The Born series~\eqref{eq:Born} can also be obtained from an explicit representation of the scattering length in terms of a resolvent, see~\cite[Definition~2 and Appendix~A]{HaiSei-08}. 

The rationale is that, when $\beta <1$, the solution of the scattering equation associated with $w_N$ converges to $1$ uniformly. This gives the first term straightforwardly, which is just $N^{-1}$ times the integral of $w$, i.e. the effective coupling constant we get in the NLS limit discussed previously. More generally the $k$-th term in the series above is of order $N^{-1} N^{(\beta-1) (k-1)}$ with our scaled interaction potential.
\end{proof}

A posteriori, the above result explains why we never saw the full scattering length appear in the limits $\beta < 1$ considered before. The short-range correlations via the scattering process may well be present, and lead to the scattering length being the effective coupling constant. But its leading order when $N\to \infty$ is just the integral of the potential. Instead of checking whether our estimates are refined enough to capture the corrections and see more terms of the Born series, we prefer to work in a regime (the GP limit) where all terms in the series weigh the same. Approaches where more terms of the Born series are captured for $\beta < 1$ are e.g. in~\cite{GiuSei-09,BriSol-19}. This requires a discussion of another subleading correction, the \index{Bogoliubov theory}{Bogoliubov energy}.

\section{Jastrow-Dyson trial states}\label{sec:Jas-Dyson} 

In the previous section we defined the high-energy/short-range process that dictates the pair correlations in our systems. Now we turn to the construction of trial states that effectively incorporate those in order to modify the coupling constant from the naive guess (integral of the unscaled potential) to the actual, smaller, value set by the scattering length. 

This is the first subtlety we encounter in dealing with the GP limit: one cannot obtain the correct energy by just using factorized states of the form $u^{\otimes N}$. Instead we introduce the

\begin{definition}[\textbf{Jastrow-Dyson trial states}]\mbox{}\\
Let $R>0$ be a given radius and $u^{\rm GP}$ a minimizer of the Gross-Pitaevskii functional~\eqref{eq:GP func}, under unit mass constraint. Let $f_{R,N}$ be the minimizer of the scattering energy~\eqref{eq:GP scat R} in the ball of radius $R$, associated with the potential $w_N = N ^2 w (N \cdot)$. Extend it to $1$ outside of $B(0,R)$ and define
\begin{itemize}
 \item The \index{Trial state, Jastrow}{Jastrow trial state} 
\begin{equation}\label{eq:Jastrow trial}
\Psi_N ^{\rm Jas} (\bx_1,\ldots,\bx_N) = c_{\rm Jas} \prod_{j=1} ^N u^{\rm GP} (\bx_j) \prod_{1\leq i < j \leq N} f_{R,N} (\bx_i - \bx_j)
\end{equation}
with $c_{\rm Jas}$ a $L^2$ normalization constant.  
\item The \index{Trial state, Dyson}{Dyson} trial state  
\begin{equation}\label{eq:Dyson trial}
\Psi_N ^{\rm Dys} (\bx_1,\ldots,\bx_N) = c_{\rm Dys} \prod_{j=1} ^N u^{\rm GP} (\bx_j) \prod_{j=1} ^N F_{j} (\bx_1,\ldots,\bx_j)
\end{equation}
with $c_{\rm Dys}$ a $L^2$ normalization constant and 
$$ F_{j} (\bx_1,\ldots,\bx_j):= f_{R,N} \left( \min_{ 1 \leq k \leq j-1 } |\bx_j-\bx_k| \right).$$
By convention $F_{1} \equiv 1.$
\end{itemize}
\end{definition}

\begin{proof}[Comments]
Trial states of the form~\eqref{eq:Jastrow trial} originate in~\cite{Dingle-49,Jastrow-55} and are ubiquitous for they seem the most natural way to incorporate pair correlations, the next simplest thing after an uncorrelated trial state. Proving rigorous bounds with them is sometimes tricky however, which is why Dyson~\cite{Dyson-57} came up with the more subtle~\eqref{eq:Dyson trial}. The physical interpretation of this function is that particles are inserted one at a time in the trap, adapting their wave-function to the particles that are already there. More specifically, they correlate with their nearest neighbor amongst the previously inserted particles. This does not capture all pair-correlations, but turns out to be sufficient for leading order asymptotics.

In practice it is sufficient to take $R \gg N^{-1}$, the range of the potential $w_N$, by a small amount. It could seem natural to impose $R\ll N^{-1/3}$, the typical inter-particle distance, but that is not necessary, for the scattering solution quickly converges to $1$ on length scales $\gg N^{-1}$ anyway. 
\end{proof}

The above trial states seem like good guesses for the true ground state of the system. This is confirmed by evaluating their energies:

\begin{theorem}[\textbf{Energy of Jastrow-Dyson trial states}]\label{thm:GP up bound}\mbox{}\\
Take $d=3$ and $\beta = 1$ in the many-body Hamiltonian~\eqref{eq:intro Schro op bis}-\eqref{eq:scaled potential} and $ N^{-1} \ll R \ll 1$ in the previous definition. Then, in the limit $N\to \infty$, 
\begin{align*}
 \left\langle \Psi_N ^{\rm Jas} | H_N | \Psi_N^{\rm Jas} \right\rangle \leq N \EGP (1+o(1))\\
\left\langle \Psi_N ^{\rm Dys} | H_N | \Psi_N^{\rm Dys} \right\rangle \leq N \EGP (1+o(1)) 
\end{align*}
with $\EGP$ the GP ground state energy, infimum of~\eqref{eq:GP func}. Consequently, the many-body ground state energy for $d=3, \beta = 1$ satisfies
\begin{equation}\label{eq:GP up bound}
E(N) \leq N \EGP (1+o(1)).
\end{equation}
\end{theorem}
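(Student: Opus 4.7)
My plan is to focus on the Dyson trial state $\Psi_N^{\rm Dys}$, whose hierarchical structure (each $F_j$ depends only on $\bx_1,\ldots,\bx_j$) makes the energy estimate more tractable than a direct attack on the Jastrow state. Since any trial state yields an upper bound on $E(N)$, establishing the bound for $\Psi_N^{\rm Dys}$ suffices for \eqref{eq:GP up bound}; the Jastrow version follows by a similar but more intricate calculation that I would not attempt in detail. First I would verify $c_{\rm Dys} = 1 + o(1)$, using the explicit decay $f_{R,N}(r) = 1 - a_w/(Nr)(1+o(1))$ outside $\mathrm{supp}\, w_N$ together with the smallness of the correlation region: since the deviation of each $F_j$ from $1$ is concentrated near the diagonal and integrable on the scale of $R$, the product $\prod_j F_j^2$ integrates to $1+o(1)$ against $\prod_k |\uGP(\bx_k)|^2 d\bx_k$ in the regime $N^{-1} \ll R \ll 1$.

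The core of the estimate is a computation of the two-body reduced density matrix of $\Psi_N^{\rm Dys}$ at short distances. I would show that for $|\bx - \by| \lesssim R$,
\[
\rho^{(2)}_{\rm Dys}(\bx, \by) \approx N^2 |\uGP(\bx)|^2 |\uGP(\by)|^2 f_{R,N}^2(\bx - \by).
\]
The crucial observation making the Dyson state work is that although only $N-1$ correlation factors are inserted, each $F_j$ activates with whichever earlier particle is nearest to $\bx_j$, and in the dilute regime (typical inter-particle distance $N^{-1/3} \gg$ range $N^{-1}$) whenever a pair $(i,j)$ lies within the support of $w_N$, almost surely the nearest-earlier-neighbor index $k^*(j)$ equals $i$. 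Atypical configurations in which two earlier particles both lie within $N^{-1}$ of $\bx_j$ are extremely rare and contribute only $o(N)$ to the interaction energy. Consequently the short-distance pair density of $\Psi_N^{\rm Dys}$ takes the same form as for a Jastrow state, each ordered pair effectively carrying the correlation factor $f_{R,N}^2(\bx-\by)$.

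Combining the interaction energy $\tfrac{1}{2}\int w_N(\bx-\by) \rho^{(2)}_{\rm Dys}(\bx,\by) d\bx d\by$ with the correlation kinetic energy $\sum_j \int |\nabla_j F_j|^2 |\Psi_N^{\rm Dys}|^2 d\bX$, and exploiting the scattering identity $\cE^{\rm scat}_R[f_{R,N}] = \int(|\nabla f_{R,N}|^2 + \tfrac{1}{2} w_N f_{R,N}^2) = 4\pi a_w N^{-1}(1+o(1))$ from Theorem~\ref{thm:scat leng}, produces the GP nonlinear term $4\pi a_w N \int |\uGP|^4$. Together with the one-body contribution $N \int (|(-\im\nabla + \bA) \uGP|^2 + V|\uGP|^2)$, extracted from the leading $\nabla \uGP(\bx_j)$ piece of $\nabla_j \Psi_N^{\rm Dys}$ with $o(1)$ corrections controlled by the smallness of the correlation region, this yields $\langle \Psi_N^{\rm Dys} | H_N | \Psi_N^{\rm Dys}\rangle \le N \EGP(1+o(1))$. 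The hardest part of the proof will be the rigorous justification that the pair density has the claimed form: one must carefully handle the cross terms $\nabla_j F_\ell$ arising when $\bx_j$ becomes the nearest earlier neighbor of some $\bx_\ell$ with $\ell > j$, using an integration by parts combined with the scattering equation $-\Delta f_{R,N} + \tfrac{1}{2} w_N f_{R,N} = 0$ to cancel apparently divergent pieces and show that only the expected scattering-energy-per-close-pair contribution survives.
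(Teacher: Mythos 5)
Your plan takes the opposite route from the paper: the paper computes with the Jastrow trial state $\Psi_N^{\rm Jas}$ (following~\cite{LieSeiYng-04,MicNamOlg-19}) and explicitly remarks that this choice gives ``somewhat simpler computations'' than Dyson's, whereas you attack the Dyson state on the grounds that its hierarchical structure is ``more tractable.'' That claim is questionable: the very feature you flag at the end of your proposal --- the cross terms $\nabla_{\bx_j}F_\ell$ for $\ell > j$ that appear because $\bx_j$ can be the nearest earlier neighbor of a later particle --- is exactly the combinatorial and analytic difficulty that historically made the Dyson estimate~\cite{Dyson-57,LieSeiYng-00,Seiringer-03} more intricate, and which the Jastrow route sidesteps. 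In the Jastrow state the correlation kinetic energy decouples cleanly pair by pair, allowing the two-particle integral to factor against the $\Psi_{N,jk}^{\rm Jas}$ auxiliary function; in the Dyson state there is no such clean factorization because the nearest-earlier-neighbor index is itself a random function of the other coordinates. So you have deferred the genuinely hard step rather than resolved it.

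There is also a concrete gap in your logic for deducing~\eqref{eq:GP up bound}. You write that ``any trial state yields an upper bound on $E(N)$,'' but $E(N)$ in~\eqref{eq:N GSE} is the infimum over the \emph{bosonic} space $\gH_N = L^2_{\rm sym}(\R^{dN})$, and $\Psi_N^{\rm Dys}$ is \emph{not} symmetric under particle exchange: $F_j(\bx_1,\ldots,\bx_j)$ singles out particle $j$ and ties it to the nearest of the \emph{earlier-labelled} particles, so permuting labels changes the function. Hence the naive variational principle does not give $E(N) \leq \langle \Psi_N^{\rm Dys} | H_N | \Psi_N^{\rm Dys}\rangle$. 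To close this gap you must invoke Theorem~\ref{thm:bos min} (when $\bA \equiv 0$ the unrestricted and bosonic ground state energies coincide) or, for $\bA \neq 0$, the more delicate fact from~\cite{Seiringer-03} that minimizing over trial functions of the form $\prod_j \uGP(\bx_j) F(\bx_1,\ldots,\bx_N)$ with $F$ real-valued gives the bosonic energy. Without one of these, the step from the Dyson computation to~\eqref{eq:GP up bound} is simply missing. (The Jastrow state, being manifestly symmetric, does not have this problem, which is a further practical argument for the paper's choice.)
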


\begin{proof}[Comments]
We do not try to control the error precisely, but this is certainly doable, although one cannot hope for an optimal bound using only the trial states we discussed. Rigorous estimates as above originate in~\cite{Dyson-57}, using the trial function~\eqref{eq:Dyson trial}. This was improved and generalized much later in~\cite{LieSeiYng-00,Seiringer-03}. Later still it was realized that the trial function~\eqref{eq:Jastrow trial} actually does the job, with somewhat simpler computations~\cite{LieSeiYng-04,MicNamOlg-19}. We shall sketch only this latter estimate, and remark that the Dyson trial state giving the same energy as the more natural Jastrow one is remarkable, for it contains only special correlations. 

Before actually sketching a computation, we note that the Dyson trial state is not an obviously valid trial state when magnetic fields are present, $\bA \neq 0$. Indeed~\eqref{eq:Dyson trial} is not invariant under exchange of particles, thus it is not in our bosonic variational set. Without magnetic fields this is of no concern: we are at liberty to use a non-symmetric trial state, for Theorem~\ref{thm:bos min} tells us that the bosonic ground state energy and absolute ground state energy coincide. This is \emph{wrong} for $\bA \neq 0$, as exemplified e.g. in~\cite{Seiringer-03}. However, the infimum of the many-body energy over trial states of the form 
$$ \prod_{j=1} ^N u^ {\rm GP} (\bx_j) F (\bx_1,\ldots,\bx_N)$$
with $F$ \emph{real-valued} does coincide with the infimum over general real-valued $F$. This is proved in~\cite[Section~4.3]{Seiringer-03} and allows us to use the Dyson trial state even when there is a magnetic field. The point is that a real-valued $F$ does not really see the magnetic field, so that one can extend Theorem~\ref{thm:bos min} (following~\cite[Section~3.2.4]{LieSei-09} rather than the proof sketch we provided). 
\end{proof}

\begin{proof}
We sketch the calculation with the \index{Trial state, Jastrow}{Jastrow} trial state, which is clearly bosonic. The details are in~\cite[Section~3.2]{MicNamOlg-19}. As there we set $\bA \equiv 0$ for simplicity but the generalization is straightforward.

We need the following facts about the scattering solution: there are constants $c,C>0$ such that  
\begin{equation}\label{eq:GP bound scat}
c \leq f_{R,N} \leq 1,\: f_{R,N} (\bx) \geq 1 - \frac{C}{N|\bx|}, \: |\nabla f_{R,N}| \leq \frac{C}{N|\bx|^2}.   
\end{equation}
The upper bound $f_{R,N} \leq 1$ comes from the fact that, since $\Delta f_{R,N} \geq 0$ as per~\eqref{eq:GP scat eq}, $f_{R,N}$ must take its maximum on the boundary. The lower bound $c \leq f_{R,N}$ is a consequence of Harnack's inequality.

For the other two bounds observe that, by scaling, $f_{R,N} (N ^{-1} \, \cdot )$ must minimize the scattering energy associated to $w$ in the ball of radius $RN$. Thus, using~\eqref{eq:GP scat sol},  
$$ f_{R,N} (\bx) = \frac{f_w (N \bx)}{f_w (RN)} $$
where $f_w$ is the solution of the scattering solution for the unscaled $w$, in the full space. The desired estimates then follow because $f_w$ is a nice fixed function, to which one can apply Theorem~\ref{thm:scat leng}.

The main point in the calculation is that the kinetic energy $-\Delta_{\bx_j}$ acting on the trial state produces, in addition to the kinetic energy of $u^{\rm GP}$ and cross terms that one can bound using~\eqref{eq:GP bound scat}, the terms 
$$ \sum_{k\neq j} \int_{\R^{3N}} |\nabla_{\bx_j} f_{jk}| ^2 \frac{|\Psi_N^{\rm Jas}|^2}{|f_{jk}|^2} $$
with $f_{jk} = f_{R,N} (\bx_j - \bx_k)$. Grouping those with the interaction terms produces combinations of the form 
\begin{multline*}
 \int_{\R^{3N}} |\nabla_{\bx_j} f_{jk}| ^2 \frac{|\Psi_N^{\rm Jas}|^2}{|f_{jk}|^2} + \frac{1}{2} w_N (\bx_j - \bx_k) |\Psi_N^{\rm Jas}|^2 \\ 
 \leq \int_{\R^{3N}} \left(|\nabla_{\bx_j} f_{jk}| ^2 + \frac{1}{2} w_N (\bx_j - \bx_k) |f_{jk}|^2\right) |u ^{\rm GP} (\bx_j) | ^2 |u ^{\rm GP} (\bx_k) |^2 |\Psi_{N,jk}^{\rm Jas}|^2  
\end{multline*}
using $f_{R,N} \leq 1$ and with 
$$ \Psi_{N,jk}^{\rm Jas} := \frac{\Psi_{N}^{\rm Jas}}{u^{\rm GP} (\bx_j) u^{\rm GP} (\bx_k) f_{jk}\prod_{\ell\neq j,k}f_{j\ell}f_{k\ell}},$$
which is independent of $\bx_j$ and $\bx_k$ and can be shown, using~\eqref{eq:GP bound scat}, to satisfy
$$ \norm{\Psi_{N,jk}^{\rm Jas}}_{L^2 (\R^{3(N-2)})} = 1 + o(1).$$
Hence the integration in $\bx_j,\bx_k$ being independent from the rest, we obtain an effective interaction energy term
$$ 
\int_{\R^{6}} \left(|\nabla_{\bx_j} f_{jk}| ^2 + \frac{1}{2} w_N (\bx_j - \bx_k) |f_{jk}|^2\right) |u ^{\rm GP} (\bx_j) | ^2 |u ^{\rm GP} (\bx_k) |^2.
$$
There is a separation of scales in the above, since the scattering solution lives over a length scale $\sim N^{-1}$ and $u^{\rm GP}$ on the macroscopic length scale. This implies that essentially the integral is located where $\bx_j \simeq \bx_k$ and leads to it being asymptote to 
$$
\left(\int_{\R^{3}} |\nabla f_{R,N}| ^2 + \frac{1}{2} w_N |f_{R,N}|^2\right) \left(\int_{\R^3}|u ^{\rm GP} | ^4\right).
$$
This is the desired quartic interaction energy of $u^{\rm GP}$ and the prefactor is essentially $4 N^{-1} \pi a_w$ because of~\eqref{eq:GP scat ener R}. There are $\sim N^2$ such terms in the computation, summing them leads to the correct GP interaction energy. All other terms can be estimated similarly. 
\end{proof}

\section{\index{Trial state, Bogoliubov}{Bogoliubov-like} trial states}\label{sec:Bogoliubov}

Let us now discuss an alternative trial state, also giving the Gross-Pitaevskii energy in the limit. The precise computation is rather trickier than what we saw in the previous section, but it is also more systematic. The trial state indeed lends itself to modifications allowing to capture also the next-to-leading order in the energy, given by a modified \index{Bogoliubov Hamiltonian}{Bogoliubov-like Hamiltonian} (see Section~\ref{sec:next} for a brief discussion). This remark only scratches the surface of new important developments in the field~\cite{BocBreCenSch-17,BocBreCenSch-18,BocBreCenSch-18b}, namely the study of fluctuations around \index{Bose-Einstein condensate (BEC)}{Bose-Einstein condensation} and the derivation of the \index{Bogoliubov theory}{Bogoliubov excitation spectrum} in the GP limit (see also~\cite{BriFouSol-19,BriSol-19,FouSol-19}). We hint at these developments by discussing an alternative way of enforcing pair correlations in a trial state. The rigorous application of this idea originates in~\cite{ErdSchYau-08}. We present two more recent, closely related, constructions in two subsections. 

\subsection{Take 1}

The following discussion is a summary of~\cite[Appendix~A]{BenPorSch-15}. The trial state constructed there does not have a fixed particle number, i.e. it lives over the \index{Fock space}{Fock space}~\eqref{eq:Fock} (see however~\cite{BocBreCenSch-18,BocBreCenSch-18b,BocBreCenSch-18c,BocBreCenSch-17} for refinements). It is a trial state for the \index{Second quantization}{second-quantized Hamiltonian}
\begin{equation}\label{eq:GP second quant}
\bH_N := \bigoplus H_{N,n}, \quad H_{N,n}:= \sum_{j=1}^n h_{\bx_j} + \sum_{1\leq i < j \leq n} w_N (\bx_i - \bx_j)  
\end{equation}
with the one-particle Hamiltonian
$$ h = \left( -\im \nabla + \bA \right)^2 + V.$$
Strictly speaking it thus does not give a variational upper bound to the $N$-body energy we are concerned with. However, the state is very much concentrated around the $N$-particles sector of Fock space, and such an upper bound could be obtained by either projecting on the $N$-particle sector or considering modified energies as in Section~\ref{sec:coherent}: 
$$ \left\langle \Psi | \bH_N + C \left( \cN - N \right) ^2 | \Psi \right\rangle$$
where $\Psi$ is a normalized vector on the Fock space and $\cN$ the particle number operator~\eqref{eq:number second}. Adjusting $C$ as a function of $N$ yields a ground state very much concentrated around particle number $N$, and this can be used to deduce an upper bound to the $N$-body energy. 

\medskip 

Basically we are trying to implement pair correlations by building a state whose \index{Reduced density matrix}{first and second density matrices}~\eqref{eq:dens mat GC} are related by (identifying them with their kernels)
\begin{equation}\label{eq:GP correl inf}
 \Gamma^{(2)} (\bx_1,\bx_2;\by_1,\by_2) \simeq f_N(\bx_1-\bx_2) f_N (\by_1-\by_2)\Gamma^{(1)} (\bx_1;\by_1) \Gamma^{(1)} (\bx_2;\by_2) 
\end{equation}
with $f_N$ the solution of the scattering equation~\eqref{eq:scat eq} associated with $w_N$. There are two aspects to this equation: 
\begin{itemize}
\item The state is uncorrelated on macroscopic length scales, because $f_N (\bx)\simeq 1$ for $|\bx| \gg N^{-1}$ thus
\begin{equation}\label{eq:GP factor}
 \Gamma_N^{(2)} \simeq \Gamma_N ^{(1)} \otimes \Gamma_N ^{(1)} 
\end{equation}
e.g. in the trace-class topology.
\item On microscopic length scales, we recover the two-body scattering process. This is a singular perturbation of~\eqref{eq:GP factor} that will show up in any norm involving derivatives. In particular this will modify the kinetic energy drastically. 
\end{itemize}
We will implement the two aspects separately by unitary Fock-space operators. We use the notation of Section~\ref{sec:second quant} throughout. First, most particles will be in a condensed state, generated using the

\begin{definition}[\textbf{\index{Weyl operator}{Weyl operators}}]\label{def:Weyl}\mbox{}\\
For $u\in L^2 (\R^d)$ let the associated Weyl operator 
\begin{equation}\label{eq:GP Weyl}
W (u) = e^{\ada (u) - a (u)} = e ^{-\norm{u}_{L^2} ^2}\, e ^{\ada (u)} e^{-a(u)}.
\end{equation}
It is unitary with $W(u)^{-1} = W(u) ^* = W (-u)$  and generates a shift of creation and annihilations operators: 
\begin{align}\label{eq:Weyl shift} 
W (u) ^* \ada (v) W (u) &= \ada(v) + \langle u | v \rangle\nonumber\\
W (u) ^* a (v) W (u) &= a(v) + \langle v | u \rangle.
\end{align}
\end{definition}

You should compare with Definition~\ref{def:coh} and in particular observe that a coherent state is created by applying the Weyl operator to the \index{Vacuum vector}{vacuum vector}
\begin{equation}\label{eq:GP coherent}
\xi (u) := e^{-\norm{u}^2} \bigoplus_{n\geq 0} \frac{u^{\otimes n}}{\sqrt{n!}} = W(u) |0 \rangle 
\end{equation}
To create a \index{Coherent states}{coherent state} with $N$ particles in the normalized wave-function $\varphi$ one would apply $W \left(\sqrt{N} \varphi \right)$ to the vacuum. This is the \index{Grand-canonical}{grand-canonical} analogue of a \index{Bose-Einstein condensate (BEC)}{Bose-Einstein condensate}. One easily computes from the CCR~\eqref{eq:CCR} that the expected number of particles is $N$, with a much smaller variance $\sqrt{N}$:
\begin{align}\label{eq:GP coh num}
\left\langle \xi\left(\sqrt{N} \varphi\right) |\, \cN \, |  \xi\left(\sqrt{N} \varphi\right) \right\rangle &= N\nonumber \\
\left\langle \xi\left(\sqrt{N} \varphi\right) |\left(\cN - N \right)^2|  \xi\left(\sqrt{N} \varphi\right) \right\rangle &=N.
\end{align}

The strategy to generate correlations is to apply a Bogoliubov transformation to the vacuum before applying the Weyl operator. We stay basic on that matter and refer to~\cite[Chapter~9]{Solovej-notes} and~\cite{BacLieSol-94} for more details.

\begin{definition}[\textbf{\index{Bogoliubov transformation}{Bogoliubov transformation}}]\label{def:Bog trans}\mbox{}\\
Let $\gH$ be a separable complex Hilbert space. A Bogoliubov transformation is a unitary operator $\cT$ on the Fock space $\gF (\gH)$ built from $\gH$ as in~\eqref{eq:Fock} such that for all $f\in \gH$
\begin{equation}\label{eq:GP bog}
\cT  \ada (f) \cT ^* = \ada (Uf) + a (V f) 
\end{equation}
and a similar relation for \index{Creation and annihilation operators}{annihilation} operators, where $U$ is a linear map and $V$ an antilinear map\footnote{$V ( \lambda  f + g) = \overline{\lambda} V f + Vg$} on $\gH$.  

Because $\cT$ is unitary, the rotated annihilation and creation operators $\cT a (f) \cT^*$ and $\cT \ada (f) \cT^*$ still satisfy the \index{Canonical commutation relations (CCR)}{CCR}~\eqref{eq:CCR}. This implies that the maps $U,V$ are such that the operator
$$ 
\left(\begin{matrix}
U & V \\
\overline{V} & \overline{U}
\end{matrix}\right)
$$
is unitary on $\gH \oplus \overline{\gH}$.
\end{definition}

\begin{proof}[Comments]
We state a few ``folkloric'' facts about this concept, which might not be exactly right but that you can bear in mind as basic rules of thumb (see~\cite{BacLieSol-94,NamNapSol-16} and references therein for more rigor). 

The origin of this concept is in~\cite{Bogoliubov-47,Valatin-58} where it is used to explicitly diagonalize Hamiltonians that are quadratic polynomials in creation and annihilation operators. Any expression quadratic in $\ada (u),a(u)$ for $u\in \gH$ can be cast in the standard, exactly soluble, form 
$$ \sum c_j \ada (u_j) a (u_j)$$
where the $u_j$ form an orthonormal basis of $\gH$, by conjugation with a Bogoliubov transformation. This is particularly useful in \index{Bogoliubov theory}{Bogoliubov's approach} of the imperfect Bose gas. The main point is to transform expressions that have non particle number conserving contributions such as $\ada (u) \ada(v)$ or $a(u) a(v)$.

A closely related concept is that of \index{Quasi-free state}{quasi-free state} (Definition~\ref{def:quasi free} below), meaning a state that has its correlations in a standard ``gaussian'' form. This means that the expectation in a quasi-free state of any monomial in annihilators/creators can be explicitly computed from the expectations of monomials of order 2. In other words, all the higher density matrices can be computed from the first one by an explicit formula known as the (quantum) \index{Wick's theorem}{Wick theorem}. These particular states play a crucial, ubiquitous role, for they exhaust all the \index{Boltzmann-Gibbs ensemble}{Gibbs states}
$$ \Gamma_{\beta,H} = \frac{e^{-\beta H}}{\tr \left( e^{-\beta H}\right)}$$
of quadratic (non-interacting or weakly interacting, basically) Hamiltonians $H$. Here $\beta >0$ is an inverse temperature and $\Gamma_{\beta,H}$ is a positive-temperature equilibrium, minimizing a free energy (energy minus temperature $\times$ entropy).

In fact, there is an equivalence between quasi-free states, Bogoliubov transformations applied to the vacuum, and Gibbs states of quadratic Hamiltonians.
\end{proof}

We shall use a well-chosen Bogoliubov transformation to define our trial state. Keep in mind our last comments that Gibbs states of quadratic Hamiltonians and Bogoliubov transformations are essentially the same.  

\begin{definition}[\textbf{Bogoliubov-like \index{Trial state, Bogoliubov}{trial state}}]\label{def:bog trial}\mbox{}\\
Let $u^{\rm GP}$ be a minimizer of the GP functional~\eqref{eq:GP func}, $f_N$ be the solution to the scattering equation~\eqref{eq:scat eq} associated with $w_N$. Define the correlation function 
\begin{equation}\label{eq:GP correl func}
k (\bx;\by) := N (f_N(\bx-\by) - 1) \uGP (\bx) \uGP(\by) 
\end{equation}
and the associated \index{Bogoliubov transformation}{Bogoliubov transformation} 
\begin{equation}\label{eq:GP bog correl}
\cT := \exp\left( \frac{1}{2}\iint_{\R^3 \times \R^3} \left(k(\bx;\by) \ada_{\bx} \ada_{\by} - \overline{k(\bx;\by)} a_{\bx} a_\by \right)\mathrm{d}\bx \mathrm{d}\by \right) 
\end{equation}
with the creation and annihilation operators in configuration space as in~\eqref{eq:CCR x}. The Bogoliubov trial state is 
\begin{equation}\label{eq:GP bog trial}
\Psi_N ^{\rm Bog} := W \left( \sqrt{N} \uGP \right) \cT |0\rangle 
\end{equation}
with $W(\cdot)$ the Weyl operator of Definition~\ref{def:Weyl} and $|0\rangle$ the vacuum vector of the Fock space~\eqref{eq:Fock}.
\end{definition}

\begin{proof}[Comments]
The action of the Bogoliubov transformation on creators/annihilators can be explicitly computed~\cite{BenOliSch-12}:
\begin{align}\label{eq:GP Bog rot}
\cT^* a(g) \cT &= a\left( \cosh_k g \right) + \ada (\sinh_k g)\nonumber\\
\cT^* \ada(g) \cT &= \ada\left( \cosh_k g \right) + a (\sinh_k g)
\end{align}
with the operators (here the function $k$ is identified with the operator of which it is the convolution kernel and products are operator compositions)
\begin{equation}\label{eq:GP cosh}
 \cosh_k = \sum_{n\geq 0} \frac{1}{(2n)!} \left( k \overline{k} \right)^n ,\quad \sinh_k = \sum_{n\geq 0} \frac{1}{(2n+1)!} \left( k \overline{k} \right)^n k. 
\end{equation}
This gives the maps $U,V$ associated to $\cT$ in Definition~\ref{def:Bog trans}. See also~\cite[Chapter~5]{BenPorSch-15} for more discussion.

It is not entirely obvious why this construction implements the desired correlations on top of a mostly Bose-condensed state. A tentative rationale is to interpret the operator being exponentiated in $\cT$ as removing pairs of particles in the state $\uGP (\bx) \uGP(\by)$ and replacing them by pairs in the state $f_N (\bx-\by) \uGP(\bx)\uGP(\by)$. This enforces~\eqref{eq:GP correl inf}. The exponentiation makes calculations tractable while keeping this essential building block.  
\end{proof}

We may now state the energy estimate obtained with the above. Again, it works without fixing the particle number but there should be no difficulty in deducing an upper bound on $E(N)$ from this\footnote{In~\eqref{eq:GP ener Bog} I have stated a $O (\sqrt{N})$ remainder as in~\cite[Appendix~A]{BenPorSch-15}, but the constructed trial state actually yields a $O(1)$ remainder, which is optimal (see below).}.

\begin{theorem}[\textbf{Energy of a Bogoliubov-like trial state}]\label{thm:GP ener Bog}\mbox{}\\
With the Fock-space Hamiltonian as in~\eqref{eq:GP second quant} and $\Psi_N ^{\rm Bog}$ as in the previous definition we have 
\begin{equation}\label{eq:GP ener Bog}
\left\langle \Psi_N ^{\rm Bog} | \bH_N | \Psi_N ^{\rm Bog} \right\rangle \leq N \EGP + O (\sqrt{N}).  
\end{equation}
\end{theorem}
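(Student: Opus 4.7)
Proof plan. My strategy is to compute $\langle \Psi_N^{\rm Bog} | \bH_N | \Psi_N^{\rm Bog}\rangle = \langle 0 | \cT^* W(\sqrt{N}u^{\rm GP})^* \bH_N W(\sqrt{N}u^{\rm GP}) \cT | 0\rangle$ in two stages: first I conjugate $\bH_N$ by the Weyl operator, which cleanly separates the (classical) condensate contribution from quantum fluctuations but delivers an interaction built with $b_w$ instead of $8\pi a_w$; then I conjugate the remaining fluctuation Hamiltonian by $\cT$ and take the vacuum expectation, and the miraculous closing of the gap $b_w - 8\pi a_w$ will come from the \emph{scattering equation} satisfied by $f_N$.

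Concretely, using the second-quantized form~\eqref{eq:second hamil x} and the shift rule~\eqref{eq:Weyl shift} with $u \to \sqrt{N} u^{\rm GP}$, I would expand $W^* \bH_N W$ into five blocks ordered by the degree in the fluctuation operators $a_\bx, a^\dagger_\bx$: a constant term equal to
\[
N \int |(-\im\nabla+\bA) u^{\rm GP}|^2 + V|u^{\rm GP}|^2 + \frac{N(N-1)}{2}\iint w_N(\bx-\by)|u^{\rm GP}(\bx)|^2|u^{\rm GP}(\by)|^2\mathrm{d}\bx\mathrm{d}\by,
\]
a linear part that vanishes when $u^{\rm GP}$ satisfies the GP Euler--Lagrange equation (modulo a Lagrange multiplier absorbed in the $\cN - N$ bookkeeping), a quadratic part $\bH_{\rm quad}$ combining the one-body operator $h$ with pair-interaction terms of the type $\int w_N |u^{\rm GP}|^2 \ada a$ and $\frac{1}{2}\int w_N u^{\rm GP}(\bx) u^{\rm GP}(\by)(\ada_\bx \ada_\by + a_\bx a_\by)$, a cubic part $\sqrt{N}\int w_N u^{\rm GP}(\bx)(\ada_\bx \ada_\by a_\bx + \text{h.c.})$, and the unchanged quartic interaction. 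The constant term overshoots the desired $N E^{\rm GP}$ precisely by $\frac{N}{2}(b_w - 8\pi a_w)\int |u^{\rm GP}|^4 + o(N)$, because short-range correlations have not yet been built in.

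Next I apply $\cT^*\cdot \cT$ and take vacuum expectation using~\eqref{eq:GP Bog rot} and the identity $\cT |0\rangle$ is a \index{Quasi-free state}{quasi-free state} with one-particle density matrices expressible in terms of $\sinh_k$ and $\cosh_k$. For the quadratic block, the key contribution is the kinetic piece $\int \nabla_\bx k \cdot \overline{\nabla_\bx k}$ arising from $\langle 0 | \cT^* a^\dagger h a \cT | 0 \rangle = \mathrm{tr}(h\, \sinh_k \overline{\sinh_k})$; to leading order $\sinh_k \approx k$, so this term equals $N^2 \int |\nabla_\bx f_N(\bx-\by)|^2 |u^{\rm GP}(\bx)|^2 |u^{\rm GP}(\by)|^2\mathrm{d}\bx\mathrm{d}\by(1+o(1))$, the $\nabla$ falling on $u^{\rm GP}$ being subleading by a factor $N^{-1}$. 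For the cubic block, Wick-contracting one creation/annihilation pair against the quasi-free state produces a term $N\int w_N(\bx-\by)(f_N(\bx-\by)-1) |u^{\rm GP}(\bx)|^2|u^{\rm GP}(\by)|^2\mathrm{d}\bx\mathrm{d}\by(1+o(1))$. Summing these with the constant piece and using the scattering equation $-\Delta f_N + \tfrac{1}{2} w_N f_N = 0$ in the form $\int(|\nabla f_N|^2 + \tfrac{1}{2} w_N f_N^2) = 4\pi a_w/N$, i.e.\ $\int|\nabla f_N|^2 = \frac{1}{N}(4\pi a_w - \tfrac{1}{2}\int w_N f_N^2)$ in the scaled variables, the difference $b_w - 8\pi a_w = \int w(1-f)$ is exactly absorbed and what remains is $4\pi a_w \int |u^{\rm GP}|^4$, giving precisely $NE^{\rm GP}$ at leading order.

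The main obstacle I expect is the control of the remainders: the quartic block $\frac{1}{2}\int w_N \ada\ada a\,a$ in vacuum expectation produces a term involving $\int w_N (f_N-1)^2 |u^{\rm GP}|^2 |u^{\rm GP}|^2$ which must be shown to be $o(N)$ using $|f_N-1| \leq C/(N|\bx|)$ from~\eqref{eq:GP bound scat}, and similarly the higher-order terms in the Taylor expansion of $\sinh_k, \cosh_k$ (the operators $k\overline{k}, (k\overline{k})^2,\ldots$) contribute corrections that must be estimated using the bound $\norm{k}_{L^2(\R^6)}^2 = N^2 \int |f_N-1|^2 |u^{\rm GP}|^2|u^{\rm GP}|^2 \lesssim N$, which is compatible with an $O(\sqrt{N})$ error. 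Linear-in-fluctuations terms produced by the Weyl conjugation that do not match the GP equation can be absorbed by choosing the condensate to solve the Euler--Lagrange equation for $\cEGP$; any residual must be handled through Cauchy--Schwarz against the quadratic part. Finally, to deduce the bound~\eqref{eq:GP up bound} on the $N$-body problem from the Fock-space bound, one uses that $\Psi_N^{\rm Bog}$ is strongly concentrated on the $N$-particle sector (thanks to~\eqref{eq:GP coh num} and the fact that $\cT$ creates $O(\norm{k}^2) = O(N)$ additional particles but with Gaussian-like concentration), either by projecting to the $N$-particle sector and controlling the renormalization, or by adding a penalization $K N^{-1}(\cN - N)^2$ and optimizing $K$ as in Section~\ref{sec:coherent}.
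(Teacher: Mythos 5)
Your proposal follows essentially the same route as the paper's proof sketch: Weyl conjugation to remove the condensate, Bogoliubov conjugation to implement pair correlations, Wick's theorem/normal ordering for the vacuum expectation, the approximations $\sinh_k\approx k$ and $\cosh_k\approx \delta$, and the scattering equation to recombine the $\int\norm{\nabla_\bx\sinh_\bx}^2$ kinetic piece with the cubic and constant interaction terms so that $b_w$ is replaced by $8\pi a_w$. The paper performs both conjugations before normal ordering whereas you separate the Weyl and $\cT$ steps and expand by degree in the fluctuation field, but the algebra and the source of the cancellation are the same; one small imprecision is that $\norm{k}_{L^2(\R^6)}^2$ is in fact $O(1)$ (not merely $O(N)$), since $f_N-1\sim -a/(N|\bx-\by|)$ gives $\int|f_N-1|^2\sim N^{-2}$, and this sharper estimate is what allows the trial state to actually yield an $O(1)$ remainder as noted in the paper's footnote. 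Your final paragraph on passing to the canonical $N$-body bound is consistent with the paper's commentary but strictly goes beyond the statement of this theorem, which is a Fock-space estimate.
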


\begin{proof}
We present a very brief sketch. A less brief one can be found in~\cite[Appendix~A]{BenPorSch-15}. Supplemented with tools from~\cite[Chapter~5]{BenPorSch-15} and references therein, it can be turned into a complete proof. 

One starts by writing~\eqref{eq:GP second quant} using position \index{Creation and annihilation operators}{creators/annihilators}, as in ~\eqref{eq:second hamil x} and neglecting the magnetic field for simplicity
$$
\bH = \int_{\R^d} a^\dagger _\bx  \left( - \Delta_\bx + V (\bx) \right) a_\bx \mathrm{d}\bx + \iint_{\R^d \times \R ^d} w_N (\bx-\by) a^\dagger _{\bx} a ^{\dagger}_{\by} a_\bx a_\by \mathrm{d}\bx \mathrm{d}\by. 
$$
With a repeated use of~\eqref{eq:Weyl shift} and~\eqref{eq:GP Bog rot} together with the \index{Canonical commutation relations (CCR)}{CCR}~\eqref{eq:CCR x} one can compute 
$$  \cT^* W \left( \sqrt{N} \uGP \right) ^* \bH_N W \left( \sqrt{N} \uGP \right) \cT $$
and put it into \index{Normal order, anti-normal order}{normal order}, with annihilators on the right and creators on the left. In view of the trial state's definition one then needs to take the expectation of the above in the \index{Vacuum vector}{vacuum vector}
$$ |0\rangle = 1 \oplus 0 \oplus \cdots.$$
After normal ordering, all terms that still contain annihilation or creation operators will give a $0$ expectation in the vacuum. Only the constants produced by the normal ordering survive and yield (after a lengthy computation) 
\begin{align}\label{eq:GP mess}
&N \int_{\R^3} |\nabla \uGP| ^2 + \int_{\R^3} \norm{\nabla_\bx \sinh_\bx}_{L^2} ^2  \nonumber \\
\quad \quad &+ N \int_{\R^3} V |\uGP| ^2 + \int_{\R^3} V \norm{\sinh_\bx}_{L^2} ^2  \nonumber\\
\quad \quad &+ \frac{1}{2} \iint_{\R^3 \times \R^3} w_N (\bx - \by) \left| \left\langle \cosh_\by | \sinh_\bx \right\rangle \right|^2  \nonumber\\
\quad \quad &+ \frac{1}{2} \iint_{\R^3 \times \R^3} N w_N (\bx - \by) \left( \left\langle \sinh_\bx | \cosh_\by \right\rangle \uGP (\bx) \uGP (\by) + c.c. \right)\nonumber\\
\quad \quad &+\frac{N}{2} \iint_{\R^3 \times \R^3} N w_N (\bx - \by) |\uGP (\bx)| ^2 |\uGP (\by)| ^2
\end{align}
where the terms on the first line come from the kinetic energy, those on the second line from the external potential and the rest from the interaction. We have denoted 
$$ 
\cosh_\bx (\bz) := \cosh_k (\bx;\bz), \quad \sinh_\bx (\bz) := \sinh_k (\bx;\bz)
$$
the integral kernels of the operators appearing in~\eqref{eq:GP cosh}. Because of~\eqref{eq:GP sca leng} one should think that 
$$k (\bx;\by) \sim \frac{1}{|\bx-\by| + N ^{-1}}$$
and thus approximate 
\begin{equation}\label{eq:GP doigt mouille}
\cosh_\bx (\bz) \simeq \delta (\bx-\bz), \quad \sinh_\bx (\bz) \simeq k(\bx;\bz), 
\end{equation}
which allows to show that the second term of the second line of~\eqref{eq:GP mess} is negligible. All the action lies in combining the second term of the first line (contributed by the kinetic energy) with the interaction to reconstruct the GP interaction energy.

Using~\eqref{eq:GP doigt mouille} repeatedly and neglecting derivatives falling on $\uGP$ (much less singular that those acting on the \index{Scattering solution}{scattering solution}) one finds 
\begin{align*}
 \int_{\R^3} \norm{\nabla_\bx \sinh_\bx} ^2 &= \int_{\R^3} \left\langle \sinh_\bx | \Delta_\bx \sinh_\bx\right\rangle \\
 &\simeq N^4 \iint_{\R^3 \times \R^3} (1- f_N(\bx-\by)) |\uGP (\bx)| ^2 |\uGP (\by)| ^2 \left( - \Delta f (\bx - \by)\right)
\end{align*}
and there remains to use the scattering equation~\eqref{eq:GP scat eq} for $f_N$ (recall it is associated with the scaled potential $w_N$) and insert~\eqref{eq:GP doigt mouille} in the interaction terms of~\eqref{eq:GP mess} to obtain as recombination the effective interaction term 
$$ 
\frac{N}{2}\iint_{\R^3 \times \R ^3} N^3 w (N(\bx - \by)) f_N(\bx-\by) |\uGP (\bx)|^2 |\uGP (\by)|^2. 
$$
Observe now that as $N\to \infty$
$$ N^3 w (N(\bx - \by)) f_N(\bx-\by) = N^3 w (N(\bx - \by)) f(N(\bx-\by)) \wto \left( \int f w\right) \delta_0 $$
and use~\eqref{eq:GP def scat bis} to obtain the desired interaction energy.
\end{proof}

\subsection{Take 2} 

A variant of the above construction is in~\cite{NamNapRicTri-20}. It has the advantages of working directly at fixed particle number, and to allow us to encounter other tools of general interest, such as

\begin{definition}[\textbf{The \index{Excitation map}{excitation map}}]\label{def:excit map}\mbox{}\\
Let $\gH$ be a complex separable Hilbert space. Fix some $u\in \gH$ and denote $\gH^\perp$ its orthogonal in $\gH$. Uniquely write a generic $N$-particle bosonic vector $\Psi_N \in \gH_N$ in the manner 
$$ \Psi_N = \sum_{j=0} ^N \varphi_k \otimes_{\rm sym} u ^{\otimes (N-k)}$$
with bosonic $k-$particles vectors $\varphi_k \in \gH^\perp_k.$ The map 
\begin{equation}\label{eq:exc map}
\cU_N : \begin{cases}
	  \gH_N \mapsto \gF^{\leq N} \left( \gH^{\perp} \right)   \\
	  \Psi_N \mapsto \bigoplus_{k= 0} ^N \varphi_k
        \end{cases}
\end{equation}
is unitary from the $N$-particles space $\gH_N$ to the truncated \index{Fock space}{Fock space} 
$$ \gF^{\leq N} \left( \gH^{\perp} \right) = \bigoplus_{k= 0} ^N \gH^\perp_k.$$
\end{definition}

The definition is from~\cite{LewNamSerSol-13}. The one-body state vector $u$ is thought of as a reference low-energy state in which most particles reside, the orthogonal Hilbert space then represents excited states. The idea is that $\Psi_N \in \gH_N$ is ``close to'' $u^{\otimes N}$ if $\cU_N \Psi_N \in \gF^{\leq N} \left( \gH^{\perp} \right)$ has a low particle number expectation.

We will use $\cU_N$ (with $u = \uGP$ a GP minimizer) instead of the \index{Weyl operator}{Weyl operator} of the previous section. It is then important to be able to conjugate typical Hamiltonians with $\cU_N$, i.e. know the analogue of~\eqref{eq:Weyl shift}. We use the \index{Second quantization}{second quantized} formulation~\eqref{eq:hamil second}, and since $\cU_N$ is unitary, it is sufficient to know how to conjugate a single\footnote{Write $\cU_N a^\dagger a\, \cU_N^* = \cU_N a^\dagger \cU_{N-1} ^{*}  \cU_{N-1}  a\, \cU_N^*$, note that $  \cU_{N-1}  a\, \cU_N^* = \left(\cU_{N}     a^\dagger \, \cU_{N-1} ^{*}\right)^*$ etc...} \index{Creation and annihilation operators}{annihilator/creator}. Denote $\cN^{\perp}$ the \index{Number operator}{number operator} of $\gF^{\leq N} \left( \gH^{\perp} \right)$. The rule is then 
\begin{equation}\label{eq:excit act 1}
 \cU_N  a^\dagger(u) \cU_{N-1} ^{*} = \sqrt{N-\cN^{\perp}} 
\end{equation}
and, for all $v\in \gH^\perp$ 
\begin{equation}\label{eq:excit act 2}
 \cU_N  a^\dagger(v) \cU_{N-1} ^{*} = a^\dagger (v). 
\end{equation}
See~\cite[Section~4]{LewNamSerSol-13} for more details.

We construct a trial state by conjugating a \index{Pure state, mixed state}{mixed state} on $\gF^{\leq N} \left( \gH^{\perp} \right)$ by $\cU_N$. The rationale is that (a) if the latter has few excitations, then the conjugated state is essentially $(\uGP)^{\otimes N}$ and (b) if said particles are energetic enough, they modify the Hamiltonian felt by the non-excited particles. Calculations are made tractable by choosing an excited state of a special form (which we have alluded to already):

\begin{definition}[\textbf{\index{Quasi-free state}{Quasi-free/gaussian states}}]\label{def:quasi free}\mbox{}\\
Let $\Gamma$ be a (mixed) state on the Fock space $\gF (\gH)$ of a separable Hilbert space $\gH$ with finite particle number expectation~\eqref{eq:number second}. It is said to be \emph{quasi-free} if, for any monomial in annihilation/creation operators $(a^\sharp_j)_{1\leq j \leq 2J}$ the \index{Wick's theorem}{Wick rule} 
\begin{align}\label{eq:Wick rule}
\tr \left( a^\sharp_1 \ldots a^\sharp_{2J} \Gamma \right) &= \sum_{\sigma} \prod_{j=1} ^J \tr\left(a^{\sharp}_{\sigma (2j-1)} a^{\sharp}_{\sigma (2j)}\right)\nonumber\\
\tr \left( a^\sharp_1 \ldots a^\sharp_{2J-1} \Gamma \right) &= 0
\end{align}
holds, where the sum is over all pairings, i.e. permutations of the $2J$ indices such that $\sigma(2j-1) < \min \left\{ \sigma (2j), \sigma (2j+1)\right\}$ for all $j$. 
\end{definition}

\begin{proof}[Comments]
The Wick rule is the quantum generalization of the rule for computing higher moments of a gaussian random  variable as a function of its first moment, whence the name ``gaussian'' states. The name ``quasi-free'' comes from the fact that essentially any equilibrium of a weakly interacting Hamiltonian (i.e. quadratic in annihilators/creators) is quasi-free.

The definition says that a quasi-free state is fully determined (because all its density matrices are) by its one-body density matrix $\gamma:\gH \mapsto \gH$ and its pairing matrix $\alpha:\gH \mapsto \overline{\gH}$ defined by 
\begin{align}\label{eq:DM QF}
 \left\langle f | \gamma | g \right\rangle &= \tr \left(a^\dagger (g) a (f)  \Gamma \right)\nonumber \\
 \left\langle \overline{f} | \alpha | g \right\rangle &= \tr \left(a^\dagger (g) a^\dagger (f)  \Gamma \right)
\end{align}
for all $f,g\in \gH$. We have encountered $\gamma$ before, this is just the one-particle reduced density matrix~\eqref{eq:def red mat}. The \index{Pairing matrix}{pairing matrix} $\alpha$ looks at how $\Gamma$ couples different sectors of Fock space (it is zero for states with fixed particle number). The funny convention that $\alpha:\gH \mapsto \overline{\gH}$ should not concern you too much, it is because one might prefer linear to anti-linear operators (in contrast with the convention in Definition~\ref{def:Bog trans}).  
\end{proof}

Again, see~\cite{BacLieSol-94,Solovej-notes,Nam-thesis} for a more complete discussion, in particular, for the proof of the next lemma (Theorem~3.2 in~\cite{Nam-thesis}, see also~\cite[Appendix~A]{LewNamSerSol-13}). 

\begin{lemma}[\textbf{Quasi-free reduced density matrices}]\label{lem:QF DM}\mbox{}\\
Let $\gamma:\gH \mapsto \gH$ and $\alpha:\gH \mapsto \overline{\gH}$. There exists a unique mixed quasi-free state with $\Gamma$ these one-body and pairing density matrices if and only if 
$$ 
\gamma \geq 0, \quad \tr\, \gamma < \infty, \quad \overline{\alpha} = \alpha ^*$$
and
\begin{equation}\label{eq:cond DM QF}
\begin{pmatrix}
 \gamma & \alpha^* \\
\alpha & 1 + \overline{\gamma}
\end{pmatrix} \geq 0 \mbox{ on } \gH \oplus \overline{\gH} 
\end{equation}
Here we denote $\overline{A} = J A J$ with $J$ the complex conjugation. If (and only if) moreover
$$\alpha \alpha ^* = \gamma (1 + J\gamma J^*) \mbox{ and } \gamma \alpha = \alpha J \gamma J^*$$
then $\Gamma$ is \index{Pure state, mixed state}{pure} (a orthogonal projector).
\end{lemma}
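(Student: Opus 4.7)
My plan is to prove the lemma in three stages: necessity of the stated conditions, sufficiency through an explicit construction, and the purity characterization. The overall strategy exploits the one-to-one correspondence between quasi-free states and their covariance data $(\gamma,\alpha)$ provided by the Wick rule~\eqref{eq:Wick rule}: uniqueness will then be automatic from Definition~\ref{def:quasi free} once existence is established, since all $k$-point functions are determined by $\gamma$ and $\alpha$.

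For necessity, $\gamma \geq 0$ and $\tr\,\gamma<\infty$ follow from $\gamma$ being, by~\eqref{eq:DM QF}, the usual one-particle reduced density matrix of a state with finite particle number. The identity $\overline{\alpha}=\alpha^*$ is a direct consequence of the \index{Canonical commutation relations (CCR)}{CCR}~\eqref{eq:CCR} ($a^\dagger$'s commute among themselves) together with the symmetry $\tr(AB\,\Gamma)=\overline{\tr(B^*A^*\Gamma)}$. The block-matrix positivity~\eqref{eq:cond DM QF} is obtained by testing $\Gamma$ against $B^*B\geq 0$ where $B:=a(f)+a^\dagger(Jg)$ for arbitrary $f,g\in\gH$: after putting $B^*B$ in \index{Normal order, anti-normal order}{normal order} using the CCR and taking the trace, one finds exactly
\begin{equation*}
0 \leq \tr(B^*B\,\Gamma)=\Big\langle \begin{pmatrix} f \\ \bar g\end{pmatrix}\Big|\begin{pmatrix} \gamma & \alpha^* \\ \alpha & 1+\overline{\gamma}\end{pmatrix}\Big|\begin{pmatrix} f \\ \bar g\end{pmatrix}\Big\rangle,
\end{equation*}
and varying $f,g$ yields the announced inequality on $\gH\oplus\overline{\gH}$.

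For sufficiency, the idea is to normalize $(\gamma,\alpha)$ by a \index{Bogoliubov transformation}{Bogoliubov transformation} to a diagonal ``particle--hole'' form, in which the state becomes manifestly a product \index{Boltzmann-Gibbs ensemble}{Gibbs state}. Concretely, the positivity~\eqref{eq:cond DM QF} combined with $\overline{\alpha}=\alpha^*$ is precisely the obstruction-free condition allowing one to diagonalize the matrix $\bigl(\begin{smallmatrix}\gamma & \alpha^* \\ \alpha & 1+\overline{\gamma}\end{smallmatrix}\bigr)$ by a symplectic (Bogoliubov) rotation $(U,V)$ preserving the CCR structure; this is a Williamson-type normal form on the one-body phase space $\gH\oplus\overline{\gH}$. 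I would construct eigenmodes $(u_j)$ of the normalized one-body density matrix with eigenvalues $n_j\in[0,\infty)$, $\sum_j n_j<\infty$, and define the candidate state as $\cT^*\,\Gamma_0\,\cT$, where $\cT$ is the Bogoliubov transformation implementing $(U,V)$ and $\Gamma_0$ is the \index{Quasi-free state}{quasi-free} Gibbs state with one-body density matrix $\bigoplus_j n_j|u_j\rangle\langle u_j|$ and vanishing pairing (a tensor product of thermal states mode by mode, well-defined because of the trace-class assumption). A direct computation using~\eqref{eq:GP bog} and~\eqref{eq:DM QF} verifies that the so-constructed state indeed reproduces $(\gamma,\alpha)$; Wick's rule for $\Gamma_0$ transfers to $\cT^*\Gamma_0\cT$ through the Bogoliubov conjugation, showing that the result is quasi-free. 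The main obstacle here is the actual existence of the diagonalizing Bogoliubov transformation, which is precisely where~\eqref{eq:cond DM QF} enters (and which fails in general for non-positive symplectic pencils).

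For the purity criterion, note that $\Gamma=\cT^*\Gamma_0\cT$ is a rank-one projector iff $\Gamma_0$ is, which happens iff all occupation numbers $n_j$ vanish, i.e.\ $\Gamma_0=|0\rangle\langle 0|$ is the \index{Vacuum vector}{vacuum}. In terms of the diagonalized data this is equivalent to $\gamma_0^2=\gamma_0$ with $\gamma_0=0$, and undoing the Bogoliubov rotation translates this into algebraic relations among $\gamma$ and $\alpha$: a short calculation using~\eqref{eq:GP bog} and the symplectic identities for $(U,V)$ shows that these are exactly $\alpha\alpha^*=\gamma(1+J\gamma J^*)$ and $\gamma\alpha=\alpha J\gamma J^*$. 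Equivalently, one recognizes that these two identities say that the block matrix~\eqref{eq:cond DM QF} squares (in an appropriate $J$-twisted sense) to itself, i.e.\ is a projection on $\gH\oplus\overline{\gH}$, which is the standard characterization of pure \index{Quasi-free state}{quasi-free states}. The hardest technical point in the whole argument is therefore the Bogoliubov diagonalization step; once available, the rest is bookkeeping with the Wick rule and the CCR.
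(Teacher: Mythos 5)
The paper does not prove Lemma~\ref{lem:QF DM}; it explicitly defers to~\cite{BacLieSol-94,Solovej-notes,Nam-thesis} (in particular Theorem~3.2 of Nam's thesis and~\cite[Appendix~A]{LewNamSerSol-13}). So there is no in-paper argument to compare against. Your sketch does, however, reproduce faithfully the standard route of those references: necessity from testing $\Gamma$ against operators of the form $B^*B$, sufficiency by a symplectic/Bogoliubov normal form reducing to a product thermal state, purity by reducing to the vacuum.

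Two points are worth tightening. First, your necessity computation is correct once the notation is straightened out: with the paper's convention $\alpha:\gH\to\overline\gH$ and the test vector $(f,\bar g)\in\gH\oplus\overline\gH$, the operator should simply be $B=a(f)+a^\dagger(g)$ with $f,g\in\gH$ (the $J$ you inserted is spurious: $a^\dagger$ is already defined on $\gH$, and the $\bar g$ in the block form just reflects the anti-linear slot). A direct normal ordering of $B^*B$ then yields exactly the quadratic form of the block matrix, as you claim. Second, and more substantively, the step you rightly flag as ``the main obstacle'' --- the existence of the diagonalizing Bogoliubov transformation --- is where essentially all the mathematical content resides; invoking a ``Williamson-type normal form'' is heuristically right but does not by itself deliver a \emph{unitarily implementable} transformation on Fock space. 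One must verify the Shale condition, i.e.\ that the anti-linear part $V$ of the transformation is Hilbert--Schmidt; this is where the hypothesis $\tr\,\gamma<\infty$ enters (together with positivity it forces $\alpha$ to be Hilbert--Schmidt via a Cauchy--Schwarz argument on the block matrix, which then propagates to $V$). With that supplied, your construction of $\Gamma_0$ as a mode-by-mode thermal (or ground) state and the transfer of Wick's rule through conjugation are correct, and the purity criterion follows as you describe: the two algebraic identities express that the generalized one-body density matrix squares to itself in the appropriate twisted sense, equivalently that $\gamma_0=0$ after diagonalization. In short: the proposal is the right proof strategy and aligns with the references the paper cites, but as written it leaves the Bogoliubov implementability --- the hard part --- as an assertion rather than an argument.
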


This suggests an appealing construction. First pick one-body and pairing density matrices $\gamma,\alpha$ giving rise to the desired correlations (i.e. higher density matrices) via the Wick rule~\eqref{eq:Wick rule}. Check that they satisfy~\eqref{eq:cond DM QF}. Then there exists a state for which one can calculate everything, with the desired correlations built-in. This leads to the following 

\begin{definition}[\textbf{Bogoliubov-like \index{Trial state, Bogoliubov}{trial state}, again}]\label{def:bog trial 2}\mbox{}\\
Let $\uGP$ be a GP minimizer and $f_N$ the zero-energy scattering solution from Theorem~\ref{thm:scat leng} associated with $w_N (\bx) = N^2 w (N \bx)$. Let $k$ be the operator on $L^2 (\R^3)$ with integral kernel (cf Definition~\ref{def:bog trial})
\begin{equation}\label{eq:kernel cor}
k(\bx,\by):= \uGP (\bx) N (1-f_N (\bx-\by))  \uGP(\by) 
\end{equation}
and $Q$ be the orthogonal projector on $\gH^\perp$, the orthogonal of $\mathrm{span} (\uGP)$. Let 
$$ \gamma = Q k ^2 Q, \quad \alpha = \overline{Q} k Q$$
where $k^2$ is meant as an operator square. The above operators satisfy the requirements of~Lemma~\ref{lem:QF DM}. Let $\Gamma$ be the unique associated quasi-free state on $\gF (\gH^\perp)$ and 
$$ \Gamma_N := \cU_N ^* \1_{\cN^\perp \leq N} \Gamma \1_{\cN^\perp\leq N} \cU_N$$
with $\cN^\perp$ the number operator on $\gF (\gH^\perp)$.
\end{definition}

The result, from~\cite{NamNapRicTri-20}, is 

\begin{theorem}[\textbf{Energy of a Bogoliubov-like trial state, again}]\label{thm:GP ener Bog 2}\mbox{}\\
With $\Gamma_N$ as in the previous definition we have 
\begin{equation}\label{eq:up bound NNRT}
E (N) \leq \frac{\tr \left(H_N \Gamma_N \right)}{\tr \, \Gamma_N} \leq N\EGP + C 
\end{equation}
for a constant $C>0$ uniformly bounded in $N$.
\end{theorem}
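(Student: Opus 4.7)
The plan is to compute the energy $\tr(H_N \Gamma_N)$ by conjugating the many-body Hamiltonian with the excitation map $\cU_N$ from Definition~\ref{def:excit map} applied with $u = \uGP$, then taking the expectation in the quasi-free state $\Gamma$ truncated by $\1_{\cN^\perp \leq N}$. Using the second-quantized form~\eqref{eq:second hamil x} of $H_N$ and the action rules~\eqref{eq:excit act 1}-\eqref{eq:excit act 2}, one can write $\cU_N H_N \cU_N^*$ on the truncated Fock space $\gF^{\leq N}(\gH^\perp)$ as
\begin{equation*}
\cU_N H_N \cU_N^* = N \cEGP[\uGP] + \bH^{\rm quad} + \bH^{\rm cub} + \bH^{\rm quart} + \mathcal{R},
\end{equation*}
where $\bH^{\rm quad}$ is a quadratic Bogoliubov-type operator on $\gH^\perp$ containing the kinetic piece $\int a^\dagger_\bx (-\Delta_\bx + V) a_\bx$, off-diagonal terms $\int w_N(\bx-\by)|\uGP(\bx)|^2 a^\dagger_\bx a_\by$, and pairing terms $\int w_N(\bx-\by)\uGP(\bx)\uGP(\by) a^\dagger_\bx a^\dagger_\by + \mathrm{h.c.}$; $\bH^{\rm cub}$ and $\bH^{\rm quart}$ collect cubic/quartic remainders from the interaction, and $\mathcal{R}$ gathers terms arising from the factors $\sqrt{N-\cN^\perp}-\sqrt{N}$ produced by~\eqref{eq:excit act 1}.

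The critical step is to evaluate the expectation of $\bH^{\rm quad}$ in $\Gamma$ via the Wick rule~\eqref{eq:Wick rule} applied to $\gamma = Qk^2Q$ and $\alpha = \overline{Q} k Q$, with $k$ the kernel~\eqref{eq:kernel cor}. After expanding, one obtains essentially
\begin{equation*}
\tr\bigl((-\Delta)\gamma\bigr) + \tfrac{N}{2}\iint w_N(\bx-\by) \uGP(\bx)\uGP(\by) (f_N-1)(\bx-\by)\uGP(\bx)\uGP(\by)\,\mathrm{d}\bx\,\mathrm{d}\by + \text{diagonal},
\end{equation*}
where the diagonal term combined with the trivial Hartree-like contribution from the $N \cEGP[\uGP]$ block is $\tfrac{1}{2}\iint w_N |\uGP|^2|\uGP|^2$. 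Using $-\Delta k = -\tfrac{1}{2} w_N f_N\,\uGP\uGP$ inside the support of $w_N$ (this is the scattering equation~\eqref{eq:GP scat eq} for $f_N$) and integrating by parts reduces the whole combination to $\tfrac{N}{2}\iint w_N(\bx-\by) f_N(\bx-\by) |\uGP(\bx)|^2|\uGP(\by)|^2$, which by the separation of scales between $w_N$ (range $N^{-1}$) and $\uGP$ (macroscopic) converges to $4\pi a_w \int|\uGP|^4$ in view of~\eqref{eq:GP def scat bis}. This reconstructs the GP interaction constant and, together with the $N$ copies of the one-body part of $\cEGP[\uGP]$, produces exactly $N \EGP$.

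The remaining work is to bound every other contribution by a uniform constant. A key a priori input is that $k$, $\gamma$ and $\alpha$ are Hilbert--Schmidt uniformly in $N$: from~\eqref{eq:GP sca leng} one has $|k(\bx,\by)| \lesssim |\uGP(\bx)||\uGP(\by)|/(|\bx-\by|+N^{-1})$, hence $\tr\,\gamma = \tr(k^2) \leq C$ by a Hardy-type estimate using $\uGP \in L^\infty \cap H^1$. Consequently the $\cN^\perp$-expectation in $\Gamma$ is $O(1)$, so (i) the projection $\1_{\cN^\perp \leq N}$ changes $\Gamma$ by an amount $\tr(\Gamma \1_{\cN^\perp > N}) \lesssim N^{-1}$ in trace norm, harmless for both numerator and denominator in~\eqref{eq:up bound NNRT}; (ii) the cubic and quartic corrections $\bH^{\rm cub}$, $\bH^{\rm quart}$ give an $O(1)$ contribution since their Wick contractions involve only products of $\gamma$, $\alpha$ and one surviving factor of $w_N$, each pairing producing an integral of order one; (iii) Taylor expansion of $\sqrt{N-\cN^\perp}$ around $\sqrt{N}$ yields $\sqrt{N}(1 - \cN^\perp/(2N) + O(N^{-2}(\cN^\perp)^2))$, with the correction pieces again contributing $O(1)$ since $\langle(\cN^\perp)^2\rangle_\Gamma \leq C$. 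Finally $\tr\,\Gamma_N = 1 + O(N^{-1})$, so division by this normalization preserves the $O(1)$ remainder.

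The main obstacle I foresee is the cubic term $\bH^{\rm cub}$, which carries a factor $\sqrt{N}$ and does not vanish on quasi-free states: its Wick pairings involve $\sqrt{N}\iint w_N(\bx-\by)\uGP(\bx) \alpha(\bx,\by) k(\bx,\by)$-type expressions which are a priori singular because $k$ is of order $|\bx-\by|^{-1}$ at short distances, and $w_N$ concentrates at those scales. The careful point is to exploit the pointwise bound $|k(\bx,\by)| \lesssim (|\bx-\by|+N^{-1})^{-1}\|\uGP\|_\infty^2$ together with a Cauchy--Schwarz splitting $w_N = w_N^{1/2} \cdot w_N^{1/2}$ and the bound $\iint w_N(\bx-\by)|\bx-\by|^{-2}\,\mathrm{d}\bx \lesssim N$ on a compact set, which cancels the $\sqrt{N}$ prefactor and leaves an $O(1)$ contribution. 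Once this quantitative control is in place, all terms beyond $N\EGP$ are bounded uniformly in $N$, yielding~\eqref{eq:up bound NNRT}.
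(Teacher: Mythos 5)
Your overall strategy --- conjugate $H_N$ with the excitation map $\cU_N$, expand using~\eqref{eq:excit act 1}--\eqref{eq:excit act 2}, and evaluate expectations in the quasi-free state via the Wick rule~\eqref{eq:Wick rule} --- is indeed the one the paper envisages, and the Hilbert--Schmidt bound on $k$ (hence $\tr\,\gamma = O(1)$, i.e.~few excitations) is the right a priori input. But your accounting of which terms contribute at order $N\EGP$ and which are $O(1)$ remainders is wrong in two decisive places, and the ``main obstacle'' you flag is a phantom.

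The cubic term $\bH^{\rm cub}$ cannot be an obstacle: it contains an odd number of excited creators/annihilators (the factor $\sqrt{N-\cN^\perp}$ is a function of $\cN^\perp=\sum a^\dagger a$ and so preserves parity), and the expectation of any odd monomial in a quasi-free state vanishes by the second line of~\eqref{eq:Wick rule}. There is no ``Wick pairing'' of a cubic expression, and the $\sqrt{N}\iint w_N\uGP\,\alpha\, k$ integral you worry about never arises. The genuine issue sits precisely where you claim there is none: the quartic pairing contraction $\iint w_N(\bx-\by)\,|\alpha(\bx;\by)|^2\,\mathrm{d}\bx\,\mathrm{d}\by$, which you relegate to $O(1)$, is in fact $O(N)$. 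With $|\alpha(\bx;\by)|\approx N(1-f_N)(\bx-\by)\,|\uGP(\bx)\uGP(\by)|$ one gets
\begin{equation*}
\iint w_N\,|\alpha|^2 \approx N^2\left(\int w_N(1-f_N)^2\right)\int|\uGP|^4 = N\left(\int w\,(1-f_w)^2\right)\int|\uGP|^4,
\end{equation*}
because $|\alpha|^2$ is as singular as $N^2$ on exactly the scale $|\bx-\by|\sim N^{-1}$ where $w_N$ concentrates. This contribution is indispensable: the constant produced by the $\cU_N$ conjugation is $N$ times the \emph{Hartree} energy of $\uGP$ with $w_N$ (not $N\cEGP[\uGP]$ as you write), and only the combined sum of that Hartree term, the kinetic energy of the excitations, the pairing term, and the quartic pairing --- all of order $N$ --- converts the bare coupling $b_w/2$ into the scattering length $4\pi a$ via~\eqref{eq:GP def scat} and~\eqref{eq:GP def scat bis}. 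Dropping the quartic pairing as a remainder yields the wrong coupling constant. (A smaller slip: the scattering equation gives $-\Delta_\bx k\simeq +\tfrac{N}{2}w_N f_N\,\uGP\uGP$, not $-\tfrac{1}{2}w_N f_N\,\uGP\uGP$.)
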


\begin{proof}[Comments]
The trial state is mixed, but that is of no concern, the first inequality holds because the energy is linear in the state $\Gamma_N = |\Psi_N \rangle \langle \Psi_N |.$ Note that the order of the remainder in the energy upper bound has been made explicit, and is in fact optimal~\cite{BocBreCenSch-18}. 

Details of the calculation are lengthy, and shall not be reproduced. The point is that everything is rather explicit: one must conjugate the original Hamiltonian with $\cU_N$ from Definition~\ref{def:excit map}. This is conveniently done using the second-quantized expression~\eqref{eq:hamil second} and~\eqref{eq:excit act 1}-~\eqref{eq:excit act 2}. There are cancellations because $\uGP$ satisfies the GP variational equation, and almost-cancellations because $\Gamma$ has a bounded number of particles (replace $N-\cN^\perp \rightsquigarrow N$ each time it occurs.) Once this is done, what is left is to compute the expectation of the conjugated Hamiltonian in $\Gamma$. The latter being quasi-free, with explicit one-body and \index{Pairing matrix}{pairing DMs}, the \index{Wick's theorem}{Wick rule}~\eqref{eq:Wick rule} takes care of the computation. 

Now, why (as opposed to how) does this work ? To answer this, recall the correlations~\eqref{eq:GP correl inf} we want to enforce. In particular, to reduce the interaction energy we would like to have
\begin{equation}\label{eq:correl loc}
 \Gamma_N ^{(2)}(\bx,\by;\bx,\by) \simeq N^2 |\uGP (\bx)|^2 |\uGP(\by)|^2 |f_N (\bx-\by)| ^2.
\end{equation}
From the action of $\cU_N$ and the Wick rule one can compute exactly for our trial state. Let me mention only the salient points, which can be guessed by following~\eqref{eq:GP sca leng} and replacing 
\begin{equation}\label{eq:doigt 2}
 f_N (\bx) \rightsquigarrow 1 - \frac{a}{N|\bx|} 
\end{equation}
wherever it occurs. 

Heuristically (we neglect all occurrences of $Q$)
\begin{align*}
 \Gamma_N ^{(2)}(\bx,\by;\bx,\by) &\simeq N^2 |\uGP (\bx)|^2 |\uGP(\by)|^2 + \Gamma^{(2)} (\bx,\by;\bx,\by) \\
 &= N^2 |\uGP (\bx)|^2 |\uGP(\by)|^2 + \gamma (\bx;\bx) \gamma (\by;\by) + |\gamma (\bx;\by)| ^2 + |\alpha (\bx;\by)| ^2\\
 &\simeq N^2 |\uGP (\bx)|^2 |\uGP(\by)|^2 + |\alpha (\bx;\by)| ^2.
\end{align*}
The first line has simply the density matrix of the condensate plus that of $\Gamma$ on its right-hand side. Normally there would be cross-terms, but we neglect them for the following reasons. (a) $\gamma$ is fairly regular and does not contribute to the leading order. (b) There is a singularity in $\alpha$, and hence $|\alpha(\bx;\by)|^2$ is typically much bigger that $\alpha (\bx;\by).$ Thus any linear term in $\alpha,\gamma$ does not contribute to the leading order. In the second line we have expressed $\Gamma ^{(2)}$ using the Wick rule. The three terms are known as ``direct'', ``exchange'' and ``pairing''. In the third line we use again that $\gamma$ is regular.

Inserting the expression for $\alpha$ we find 
$$ \Gamma_N ^{(2)}(\bx,\by;\bx,\by) \simeq N^2 |\uGP (\bx)|^2 |\uGP(\by)|^2 \left( 1 + (1-f_N (\bx-\by)) ^2 \right) $$
and if we use~\eqref{eq:doigt 2} we have that 
$$ 1 + (1-f_N (\bx-\by)) ^2 \simeq 1 + \frac{a^2}{N^2 |\bx-\by|^2} \simeq f_N (\bx-\by) ^2$$
as desired for~\eqref{eq:correl loc}. In the last approximation we use 
$$ 1 + \frac{a^2}{N^2 |\bx-\by|^2} - f_N (\bx-\by) ^2 \simeq - 2 \frac{a}{N|\bx-\by|}.$$
The right-hand side is much smaller than the main terms for $N|\bx-\by|$ very small or very large, and this is all we care about. 

The above explains, I hope, how one can reproduce a \index{Trial state, Jastrow}{Jastrow-like} factor by using \index{Quasi-free state}{quasi-free states} with singular pairing density matrices. To obtain the final energy estimates one must be careful that $\gamma$ and $\alpha$ are negligible against $|\alpha|^2$ but \emph{their derivatives are not}: there are few excitations, but they are very energetic. Including their high kinetic energy (due to them being dealing with short length scales) in the calculation leads to the final estimate.
\end{proof}

\section{\index{Dyson lemma}{Dyson lemmas}}\label{sec:Dyson}

In the previous two sections we have seen how to extract the \index{Scattering length}{scattering length} from suitable trial states, and thus obtain the GP energy as an upper bound to the true ground state energy in the GP limit. It is much harder to obtain the GP energy as a lower bound, i.e. prove that the trial states just constructed are optimal. This is indeed tantamount to finding a universal way of extracting short-range pair correlations from a generic wave-function, in order for the original interaction to combine neatly with part of the kinetic energy and reproduce the two-body scattering process. Our weapon of choice to achieve this originates in~\cite{Dyson-57} and goes under the name of Dyson lemma. 

\medskip

The first version of the lemma bounds from below the kinetic and interaction energies with a possibly very singular potential in terms of a potential energy in a much softer new potential.  

\begin{lemma}[\textbf{Dyson's lemma}]\label{lem:Dyson 1}\mbox{}\\
Let $w$ be as in Assumption~\ref{asum:GP pot}, with finite range $R_w$. Let $U:\R^+ \mapsto \R$ be a function with 
$$ \int_{\R^+} U(r) r^2 \mathrm{d}r = a_w $$
where $a_w$ is the scattering length of $w$, as in Theorem~\ref{thm:scat leng}. Assume that the support of $U$ is disjoint from that of $w$:
$$
U(r) = 0 \mbox{ for all } r< R_w.
$$
Then, for any differentiable $f:\R^3 \mapsto \C$ and any convex domain $\Omega$ containing the origin
\begin{equation}\label{eq:Dyson lem 1}
\int_\Omega \left(|\nabla f| ^2 + \frac{1}{2} w |f|^2 \right) \geq \int_{\Omega} U |f|^2
\end{equation}

\end{lemma}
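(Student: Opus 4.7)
The plan is to reduce the three-dimensional inequality to a one-dimensional radial problem, and then to extract the constant $a_w$ by invoking Theorem~\ref{thm:scat leng} pointwise in the radial variable. I would assume throughout that $U\geq 0$, as is standard for such arguments; the crucial global inputs will be the normalization $\int_0^{\infty} U(r)\, r^2\, dr = a_w$ together with the explicit minimal scattering energy $4\pi a_w(1 - a_w/r)^{-1}$ in a ball of radius $r\geq R_w$ from Theorem~\ref{thm:scat leng}.

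First I would reduce to one dimension. Since $w$ and $U$ are radial and $|\nabla f|^2 \geq |\partial_r f|^2$ pointwise, and since convexity of $\Om$ with $0\in\Om$ ensures that each ray $\{r\om : r\geq 0\}$ meets $\Om$ in a segment $[0,R(\om))$ with $R(\om)\in(0,+\infty]$, integration in spherical coordinates reduces~\eqref{eq:Dyson lem 1} to the one-dimensional bound
\begin{equation}\label{eq:plan-1D}
\int_0^{R(\om)} \Bigl(|g_\om'(r)|^2 + \tfrac12 w(r)\, |g_\om(r)|^2\Bigr)\, r^2\, dr \;\geq\; \int_0^{R(\om)} U(r)\, |g_\om(r)|^2\, r^2\, dr
\end{equation}
for each $\om\in S^2$, with $g_\om(r):=f(r\om)$. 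The only loss is discarding the angular part of $|\nabla f|^2$, which is in any case lossless when $f$ is radial.

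Next I would address~\eqref{eq:plan-1D}, which is the heart of the argument. Fix $\om$, write $R=R(\om)$ and $g=g_\om$; if $R<R_w$ then $U\equiv 0$ on $[0,R]$ and there is nothing to prove, so assume $R\geq R_w$, and define the nondecreasing running energy
\begin{equation}
F(r):= \int_0^r \Bigl(|g'(s)|^2 + \tfrac12 w(s)\, |g(s)|^2\Bigr)\, s^2\, ds.
\end{equation}
For each $r\in[R_w,R]$, extending $g$ radially to $\tilde g(\bx):= g(|\bx|)$ on $B(0,r)\subset\R^3$ produces a candidate for the scattering problem on that ball, with constant Dirichlet boundary value $g(r)$ and scattering energy $4\pi F(r)$. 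Applying Theorem~\ref{thm:scat leng} with boundary value $g(r)$ (by scaling from boundary value $1$) yields
\begin{equation}\label{eq:pt-bound}
F(r)\;\geq\; \frac{a_w}{1-a_w/r}\, |g(r)|^2,\qquad\text{equivalently}\qquad |g(r)|^2 \leq \frac{1-a_w/r}{a_w}\, F(r),
\end{equation}
and the denominator is positive because $a_w\leq R_w\leq r$, which follows from $f\geq 0$ and~\eqref{eq:GP sca leng}.

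Finally, I would close the argument via two elementary inputs: monotonicity $F(r)\leq F(R)$, and the normalization of $U$ combined with $1-a_w/r\leq 1$. Multiplying~\eqref{eq:pt-bound} by $U(r)\, r^2\geq 0$ and integrating over $[R_w,R]$,
\begin{align*}
\int_0^R U(r)\,|g(r)|^2\, r^2\, dr
&= \int_{R_w}^R U(r)\, |g(r)|^2\, r^2\, dr \\
&\leq \frac{1}{a_w}\int_{R_w}^R U(r)\, r^2\, (1-a_w/r)\, F(r)\, dr \\
&\leq \frac{F(R)}{a_w}\int_{R_w}^R U(r)\, r^2\, dr \;\leq\; F(R),
\end{align*}
which is~\eqref{eq:plan-1D}. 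The main ``aha'' is that the pointwise scattering bound~\eqref{eq:pt-bound} on $F(r)$, when smeared against a soft density $U(r)\, r^2$ of total mass $a_w$, already recovers exactly the prefactor $a_w$ through only the crude inputs $F(r)\leq F(R)$ and $1-a_w/r\leq 1$; no refinement is needed. I do not anticipate any real obstacle beyond correctly setting up the variational characterization pointwise in $r$.
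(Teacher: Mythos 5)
Your proof is correct and follows essentially the same route as the paper: reduce to radial segments by discarding angular derivatives, invoke the scattering-energy formula from Theorem~\ref{thm:scat leng} pointwise in the radial variable, and close by smearing against $U(r)\,r^2$ and using its normalization. The only difference is cosmetic — you introduce the running energy $F(r)$ and integrate over $r$ at fixed $\om$, whereas the paper integrates over angles at fixed $R$ and then over $R$; both discard the factor $(1-a_w/r)^{-1}\geq 1$ and both implicitly require $U\geq 0$ (which you correctly make explicit, and which holds in all applications).
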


\begin{proof}[Comments]
Again, you should think of $f$ as describing the relative motion of a pair of particles. The whole point is that when we apply this to a scaled or singular potential (e.g. $w \rightsquigarrow w_N$ as in the \index{Gross-Pitaevskii limit}) with fixed integral, $R_w$ is typically very small and hence $w$ very large on its support. The (radial) function $U$ we replace it with however lives \emph{outside} the small support of $w$. We still want to fix its integral, but that no longer implies that $U$ needs be very large on its support. Thus we have obtained a lower bound in terms of a softer potential whose integral gives the scattering energy. Indeed, as a 3D function 
$$ \int_{\R^3} U = 4\pi a_w.$$
For an energy lower bound we can then think of the GP limit with potential $w_N (x) = N^{2} w (N x)$ as a \index{Mean-field limit}{mean-field limit} with the potential $U$. In practice we will not be at liberty to take $U$ as soft as we like, and the limit shall rather be a \index{Dilute limit}{dilute} one.
\end{proof}

\begin{proof}
We follow~\cite[Lemma~2.5]{LieSeiSolYng-05}. Let first $R\geq R_w$ and, for all $\sigma$ on the unit sphere $\mathbb{S}^2$, $R(\sigma)$ be the length of the radial segment starting at the origin and included in $\Omega$. We have that 
$$\int_\Omega \left(|\nabla f| ^2 + \frac{1}{2} w |f|^2 \right) \geq \oint_{\sigma \in \mathbb{S}^2} \int_{0}^{R(\sigma)} \left(|\partial_r f(r,\sigma)| ^2 + \frac{1}{2} w |f(r,\sigma)|^2  \right) r^2 \mathrm{d}r \mathrm{d}\sigma$$
and we bound the integral over each radial segment at fixed $\sigma$ in the manner
$$ \int_{0}^{R(\sigma)} \left(|\partial_r f(r,\sigma)| ^2 + \frac{1}{2} w |f(r,\sigma)|^2  \right) r^2 \mathrm{d}r \geq \begin{cases}                                                                                                              0 \mbox{ if } R(\sigma) < R\\
a_w |f(R,\sigma))| ^2 \mbox{ if } R(\sigma) \geq R.                                                                                                              \end{cases}
$$
Indeed, in the first case there is nothing to prove, for the integrand is non-negative. In the second case, since the scattering problems considered in Theorem~\ref{thm:scat leng} have radial solutions, we can bound from below by the ground state energy of $(4\pi)^{-1}$ times~\eqref{eq:GP scat R}, with Dirichlet boundary condition $f(R,\sigma)$. By calculations similar to those in the proof of Theorem~\ref{thm:scat leng} we obtain that for $R(\sigma) \geq R > R_w$, this lower bound is exactly as in the right-hand side of the above. 

Thus we have proved that, for any $R> R_w$ 
$$ \int_\Omega \left(|\nabla f| ^2 + \frac{1}{2} w |f|^2 \right) \geq \oint_{\sigma \in \mathbb{S}^2}  a_w |f(R,\sigma))|^2 \1_{R(\sigma) \geq R} \mathrm{d}\sigma.$$
Let now $\widetilde{U} = U a_w$ with $U$ as in the statement. Multiplying the above by $\widetilde{U} (R)$ and integrating with respect to $R^2 dR$ proves the lemma (the left-hand side does not depend on $R$).
\end{proof}

The previous lemma will not suffice for all our applications. A major drawback is that it gives away all the kinetic energy to obtain a lower bound on the interaction. One might fear that it thus also sacrifices the kinetic energy due to the gas' density varying at the macroscopic scale of the full system, which should however enter the final energy. Sometimes this is harmless: if the system is homogeneous for instance, there is no macroscopic kinetic energy to be recovered. Sometimes (see Section~\ref{sec:LDA} below) one can get away by applying Lemma~\ref{lem:Dyson 1} after having extracted the macroscopic kinetic energy by some neat trick. 

In the general case however, since interactions happen on a short length scale, only the high-frequency part of the kinetic energy should be used to control the scattering process. The low-frequency part should be left untouched and used to reconstruct the macroscopic variations of the density profile. The next lemma, from~\cite{LieSeiSol-05}, does just that\footnote{Originally it was introduced to deal with the low-density unpolarized Fermi gas. There the density does not vary on the macroscopic scale, but the kinetic energy of the free system is non-trivial (Pauli principle, Fermi sphere ...) and has to be recovered.}. Other variants are in~\cite{LewSei-09,SeiYng-20}.

\begin{lemma}[\textbf{Generalized \index{Dyson lemma}{Dyson lemma}}]\label{lem:Dyson 2}\mbox{}\\
Let $\chi :\R^+ \mapsto \R^+$ be a smooth function with $0 \leq \chi \leq 1$, $\chi (r) = 0$ for $r \leq 1$ and $\chi (r)= 1$ for $r\geq 2$. Define 
$$ \chi_{K} (x) := \chi (K x)$$
and let $\chi_K (\bp) (-\Delta) \chi_K (\bp)$ denote the operator acting as the multiplication by $\chi_K(p)^2 |p|^2$ in the Fourier domain:
$$ 
\chi_K (\bp) (-\Delta) \chi_K (\bp) \psi = \cF^{-1} \left( \chi_K(\bp)^2 |\bp|^2 \widehat{\psi} (\bp) \right). 
$$
Let $U:\R^+ \mapsto \R^+$ be as in the previous lemma, with in addition $U(r) \equiv 0$ for some $R>R_w$. For any $\eps >0$ we have that
\begin{equation}\label{eq:Dyson lem 2}
\chi_K (\bp) (-\Delta) \chi_K (\bp) + \frac{1}{2} w(x) \geq (1-\eps) U - C\frac{a_w R^2 K^5}{\eps}.
\end{equation}
In fact, for any differentiable function $f$, 
\begin{equation}\label{eq:Dyson lem 2 bis}
\int_{|\bx|\leq R}  \left| \chi_K (\bp) \nabla f \right|^2 + \frac{1}{2} w(x) |f|^2\geq \int_{\R^3} \left((1-\eps) U - C\frac{a_w R^2 K^5}{\eps} \right) |f|^2
\end{equation}
where $\chi_K (\bp) \nabla$ acts as $-\im \chi_K (\bp)\bp \cdot$ on the Fourier side.
\end{lemma}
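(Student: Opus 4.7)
The approach is to reduce~\eqref{eq:Dyson lem 2 bis} to the original Dyson Lemma~\ref{lem:Dyson 1} by splitting $f$ into high and low momentum components. Let $f_h := \chi_K(\bp)f$ and $f_l := (1-\chi_K(\bp))f$, so that $f = f_h + f_l$. Since $\chi_K(\bp)\nabla$ acts as the Fourier multiplier $-\im\chi_K(\bp)\bp$, Plancherel's theorem gives
\begin{equation*}
\int|\chi_K(\bp)\nabla f|^2 \;=\; \int|\nabla f_h|^2,
\end{equation*}
so the truncated kinetic term is the full Dirichlet integral of $f_h$. The low-momentum remainder $f_l$ has Fourier support where $\chi_K<1$, hence satisfies Bernstein-type pointwise bounds; in particular $\|f_l\|_{L^\infty}$ and $\|\nabla f_l\|_{L^\infty}$ are controlled by $\|f\|_{L^2}$ times explicit powers of $K$ (with $K$ playing the role of a momentum cutoff scale, as the claimed $K^5$ in the error requires).

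Next, I would use the elementary inequality $|f|^2 \geq (1-\eps)|f_h|^2 - \eps^{-1}|f_l|^2$ (which follows from $2|\Re\,\bar f_h f_l|\leq \eps|f_h|^2+\eps^{-1}|f_l|^2$) to convert $\tfrac12 w|f|^2$ into a term in $|f_h|^2$. Combined with the kinetic rewriting, this yields
\begin{equation*}
\int_{|\bx|\leq R}\!|\chi_K(\bp)\nabla f|^2 + \tfrac12 w|f|^2
\;\geq\; (1-\eps)\!\!\int_{|\bx|\leq R}\!\!\left(|\nabla f_h|^2 + \tfrac12 w|f_h|^2\right) - \tfrac{1}{\eps}\!\int \tfrac12 w|f_l|^2.
\end{equation*}
The bracket on the right is estimated by applying Lemma~\ref{lem:Dyson 1} to $f_h$ on the convex domain $\Omega = B(0,R)$ with the prescribed $U$, producing $\int_{|\bx|\leq R} U|f_h|^2$. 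A second use of the quadratic splitting $|f_h|^2\geq(1-\eps)|f|^2-\eps^{-1}|f_l|^2$ then returns the main term $(1-\eps)^2\!\int U|f|^2$, at the cost of a second low-frequency penalty $\eps^{-1}\!\int U|f_l|^2$.

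The main obstacle is sharply bounding the two low-frequency remainders $\int w|f_l|^2$ and $\int U|f_l|^2$, both supported in $\{|\bx|\leq R\}$, by $C\,a_w R^2 K^5\|f\|_{L^2}^2/\eps$. A crude estimate via $\|f_l\|_{L^\infty}^2\!\int(w+U)$ delivers only a purely $K$-dependent factor and misses the geometric $R^2$ gain. The finer matching of spatial and spectral scales is expected to come from a first-order Taylor expansion $f_l(\bx) = f_l(0) + O(|\bx|\,\|\nabla f_l\|_{L^\infty})$ on the small ball $\{|\bx|\leq R\}$: the gradient part contributes $\|\nabla f_l\|_{L^\infty}^2\!\int(w+U)|\bx|^2\,d\bx \lesssim K^5\cdot a_w R^2\|f\|_{L^2}^2$, while the constant part $|f_l(0)|^2\!\int(w+U)$ must be absorbed into the main term by exploiting the scattering relation $\tfrac12\!\int wf = \int U = 4\pi a_w$ (cf.~\eqref{eq:GP def scat bis}), producing a cancellation between the two penalties up to the desired order. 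Once these estimates are in place, optimizing $\eps$ small but fixed and collecting terms yields~\eqref{eq:Dyson lem 2 bis} for arbitrary differentiable $f$, and the operator inequality~\eqref{eq:Dyson lem 2} follows by viewing both sides as quadratic forms.
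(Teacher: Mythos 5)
The paper does not supply a proof here --- it only states that the argument is ``a variation on'' the proof of Lemma~\ref{lem:Dyson 1} and refers to~\cite{LieSeiSol-05} --- so your task is to reconstruct it. Your scaffold (split $f=f_h+f_l$ at the cutoff scale, use $\chi_K(\bp)\nabla f=\nabla f_h$ pointwise, feed $f_h$ into Lemma~\ref{lem:Dyson 1}, then pass back to $f$) is a reasonable starting point, and you are right that the crude Bernstein bound does not deliver the stated $a_w R^2 K^5$. But the refinement you then propose does not close the gap.

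A small structural remark first. Rather than $\eps$-splitting the potential term $w|f|^2$ into $w|f_h|^2$ and then $U|f_h|^2$ back into $U|f|^2$, which costs you the extra factor $(1-\eps)^2$ and generates two separate low-frequency penalties, it is cleaner to split only the kinetic energy: $|\nabla f_h|^2 = |\nabla f - \nabla f_l|^2 \geq (1-\eps)|\nabla f|^2 - \eps^{-1}|\nabla f_l|^2$, and then apply Lemma~\ref{lem:Dyson 1} to $f$ itself (not $f_h$). This gives the $(1-\eps)U$ prefactor of the statement directly, with a single error $\eps^{-1}\int_{B(0,R)}|\nabla f_l|^2$.

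The serious issue is the low-frequency error. In the regime where the lemma is actually used (eventually $KR\ll 1$, with $K$ frozen while $N\to\infty$ so $R\to 0$), the bound $a_w R^2 K^5$ is \emph{sharper} than both $R^3 K^5$ (what the kinetic-only split gives by Bernstein) and $a_w K^3$ (what your $\|f_l\|_\infty^2\int(w+U)$ estimate gives), so neither settles the lemma. Your Taylor expansion correctly identifies the gradient contribution $\|\nabla f_l\|_\infty^2 R^2\int(w+U) \lesssim a_w R^2 K^5\|f\|_{L^2}^2$ as having the right form, but the zeroth-order term $|f_l(0)|^2\int(w+U)\lesssim a_w K^3\|f\|_{L^2}^2$ remains, and when $KR<1$ it dominates the target bound. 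Your proposed escape --- a ``cancellation between the two penalties'' via $\tfrac12\int wf=\int U=4\pi a_w$ --- does not exist. Both penalties $\eps^{-1}\int\tfrac12 w|f_l|^2$ and $\eps^{-1}\int U|f_l|^2$ enter the lower bound with the \emph{same} negative sign, so they reinforce rather than cancel; and, crucially, the quantity appearing in your error is $\tfrac12\int w$, not $\tfrac12\int w f$, and $\tfrac12\int w>4\pi a_w=\int U$ strictly (the trial state $f\equiv 1$ in~\eqref{eq:GP def scat} is not a minimizer). So there is no identity available to absorb the zeroth-order mode. This is a genuine gap: the sharp control of $f_l(0)$ is not obtainable after the global $\eps$-split you have performed. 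The intended proof carries the high/low decomposition \emph{inside} the shell-by-shell radial estimate of Lemma~\ref{lem:Dyson 1}, so that the (almost constant on $B(0,R)$) low-frequency part enters the boundary term of the scattering problem rather than a blunt Cauchy--Schwarz penalty --- that is what the paper's phrase ``a variation on that we just discussed'' refers to, and it is what your global-split strategy bypasses.
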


\begin{proof}[Comments]
The proof is a variation on that we just discussed, which is the case $\chi \equiv 1$. See~\cite{LieSeiSol-05} for details. The remaining kinetic energy is $(1 - \chi_K^2 (p)) (-\Delta)$, so if we take $K\to \infty$ we have indeed consumed only its high-frequency part to reconstruct the scattering length via the integral of $U$. In applications one can  afford to take $K\to \infty$ (and $\eps \to 0$) at the very end of the proof, which is a quite strong indication of the separation of scales at work in the problem. 

In applications, $a_w \propto N^{-1}$ will be a small number in the limit $N\to \infty$. The range $R_w \propto N^{-1}$ is also very small. What we want to do is replace $w$ with a softer potential $U$ with fixed integral, which is possible if we can take $R$ much larger than $R_w$. We also want to take the frequency cut-off $K$ to be large. The lemma tells us how to tune $R$ and $K$ in order to make an affordable error.
\end{proof}

Next we turn to applying the above lemmas to the many-body problem. Let us start with a consequence of Lemma~\ref{lem:Dyson 1}:

\begin{corollary}[\textbf{Dyson's lower bound to the full Hamiltonian}]\label{cor:Dyson 1}\mbox{}\\
Consider the many-body Hamiltonian~\eqref{eq:intro Schro op bis}, possibly restricted to a finite domain, and with $w_N\geq 0$. Let 
$$ \bt_j := \min \left\{ |\bx_j - \bx_k|, 1 \leq k \leq N, k\neq j \right\}$$
be the distance from $\bx_j$ to its nearest neighbor. 

Let $U$ be associated to $w_N$ as in Lemma~\ref{lem:Dyson 1}. Then, as operators,
\begin{equation}\label{eq:Dyson 1 app} 
H_N \geq \sum_{j= 1} ^N U (\bt_j).
\end{equation}
\end{corollary}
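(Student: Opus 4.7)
The idea is to dispose of the external potential and vector potential, reduce the two-body interaction to a sum of nearest-neighbor terms with a factor of $\tfrac12$, and then for each particle $j$ apply Dyson's Lemma~\ref{lem:Dyson 1} on the Voronoi cell of its nearest neighbor, pairing the full kinetic energy $-\Delta_{\bx_j}$ with the half-interaction.

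First I would bound $H_N$ from below by its ``non-magnetic, untrapped'' version. Assuming (after a harmless constant shift) that $V\geq 0$ and applying the diamagnetic inequality~\eqref{eq:diamag} pointwise to $|\psi|$, one gets
\begin{equation*}
\langle \psi, H_N \psi\rangle \;\geq\; \sum_{j=1}^N \int |\nabla_{\bx_j}\psi|^2 \,\mathrm{d}\bx \;+\; \sum_{1\leq i<k\leq N} \int w_N(\bx_i-\bx_k)|\psi|^2\,\mathrm{d}\bx.
\end{equation*}
Next, for each configuration $\bx=(\bx_1,\ldots,\bx_N)$ pick an index $k(j)=k(j,\bx)\in\{1,\ldots,N\}\setminus\{j\}$ realizing the minimum $\bt_j=|\bx_j-\bx_{k(j)}|$. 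Since the ordered pair $(j,k(j))$ is mapped to the unordered pair $\{j,k(j)\}$, and since for any fixed unordered pair $\{a,b\}$ at most two indices $j$ (namely $j=a$ and $j=b$) can have $k(j)\in\{a,b\}\setminus\{j\}$, we get the pointwise bound
\begin{equation*}
\sum_{1\leq i<k\leq N} w_N(\bx_i-\bx_k) \;\geq\; \frac{1}{2}\sum_{j=1}^N w_N(\bx_j-\bx_{k(j)}),
\end{equation*}
using that $w_N\geq 0$ to throw away the non-nearest-neighbor pairs. Combining, it suffices to show the one-particle operator lower bound (for each fixed $j$ and each fixed value of $(\bx_k)_{k\neq j}$)
\begin{equation*}
-\Delta_{\bx_j} + \frac{1}{2} w_N(\bx_j-\bx_{k(j,\bx)}) \;\geq\; U(\bt_j)
\end{equation*}
as multiplication/differential operators acting in the variable $\bx_j$, and then sum over $j$ and integrate over the remaining coordinates.

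To prove this one-particle inequality, I would partition $\R^3$ (the $\bx_j$-space, with the other $\bx_l$ fixed) into the Voronoi cells
\begin{equation*}
R_l := \bigl\{\,\by\in\R^3 \,:\, |\by-\bx_l|\leq |\by-\bx_m|\ \text{for all}\ m\neq j,l\,\bigr\},\qquad l\neq j,
\end{equation*}
each of which is an intersection of closed half-spaces, hence a convex domain, and which contains $\bx_l$. Translating by $-\bx_l$ so that the interaction center sits at the origin, the shifted set $R_l-\bx_l$ remains convex and contains $0$. On each such cell, $\bx_{k(j,\bx)}=\bx_l$ by construction, so Lemma~\ref{lem:Dyson 1} applied to $\by\mapsto\psi(\ldots,\by+\bx_l,\ldots)$ yields
\begin{equation*}
\int_{R_l}\!\Bigl(|\nabla_{\bx_j}\psi|^2+\tfrac12 w_N(\bx_j-\bx_l)|\psi|^2\Bigr)\mathrm{d}\bx_j \;\geq\; \int_{R_l}U(|\bx_j-\bx_l|)|\psi|^2\,\mathrm{d}\bx_j.
\end{equation*}
Summing over $l\neq j$ (the cells tile $\R^3$ up to a measure-zero set) recombines the left-hand side into $\int(|\nabla_{\bx_j}\psi|^2+\tfrac12 w_N(\bx_j-\bx_{k(j)})|\psi|^2)\,\mathrm{d}\bx_j$ and the right-hand side into $\int U(\bt_j)|\psi|^2\,\mathrm{d}\bx_j$, as desired. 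Summing over $j$ and integrating against the remaining $\bx_k$'s finishes the proof.

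The main obstacle is conceptual rather than computational: one has to split the interaction symmetrically (the factor $\tfrac12$) between the two particles of a pair so that the \emph{full} kinetic energy $-\Delta_{\bx_j}$ of each particle remains available — this is exactly the combination Dyson's lemma requires — and one must check convexity of the Voronoi cells so that the lemma applies on each piece. Everything else is the bookkeeping described above.
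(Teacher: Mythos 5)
Your proof is correct and takes essentially the same route as the paper's: diamagnetic inequality and dropping the external potential, then a Voronoi-cell partition of the $\bx_j$-space so that Dyson's Lemma~\ref{lem:Dyson 1} can be applied cell by cell. The only cosmetic difference is the order of the two halving steps: you first replace the full interaction by half the nearest-neighbor sum globally (using the double-counting argument), whereas the paper writes the exact identity $\sum_{i<j}w_N = \frac{1}{2}\sum_j\sum_{k\neq j}w_N$ and only then drops the non-nearest-neighbor terms cell by cell; both are valid and lead to the same per-cell inequality. Your explicit translation putting the nearest-neighbor center $\bx_l$ at the origin is the correct reading of the convexity hypothesis in Lemma~\ref{lem:Dyson 1} (the paper's phrase ``with $\bx_1$ playing the role of the origin'' is a slip and should read $\bx_k$).
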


\begin{proof}
By the diamagnetic inequality~\cite[Theorem~7.21]{LieLos-01} we can assume for this lower bound that the magnetic field is $0$. We also drop the one-body term. Consider then the part acting on the first particle, 
$$ -\Delta_{\bx_1} + \frac{1}{2} \sum_{1 < j \leq N} w_N (\bx_1-\bx_j).$$
Split the domain ($\R^d$ or a finite region) into Voronoi cells ($V_k$ is the set where  $\bx_1$ is closer to $\bx_k$ than to any other point in the collection)
$$ V_k := \left\{ \bt_1 = |\bx_1 - \bx_k| \right\}.$$
These are convex sets and we may thus apply Lemma~\ref{lem:Dyson 1} in them, with $\bx_1$ playing the role of the origin. This yields, for any $N$-particle wave-function $\Psi_N$ 
$$ \int_{V_k} |\nabla_{\bx_1} \Psi_N|^2 + \frac{1}{2} \sum_{1 < j \leq N} w_N (\bx_1-\bx_j)|\Psi_N|^2 \geq  
\int_{V_k} U (\bt_1) |\Psi_N|^2$$
where we use $w_N\geq 0$ to keep only the contribution of $w_N (\bx_1 - \bx_k)$ in $V_k$. Adding the contribution of all Voronoi cells, and then the parts acting on particles $2,\ldots N$ we get the statement.
\end{proof}

A drawback of the above, besides that we have used all the kinetic energy, is that the bound from below is in terms of a nearest neighbor potential instead of a genuine pair interaction. This can be handled in the dilute regime, for three-particles encounters are rare anyway.

\medskip

Next, Lemma~\ref{lem:Dyson 2} leads to 

\begin{corollary}[\textbf{Generalized Dyson lower bound}]\label{cor:Dyson 2}\mbox{}\\
Consider the many-body Hamiltonian~\eqref{eq:intro Schro op bis}, in GP scaling $w_N (\bx) = N^2 w(N\bx)$, with $w$ satisfying Assumption~\ref{asum:GP pot}. 

Let $R \geq 2 N^{-1} R_w $ and $U_R$ be associated to $w_N$ via Lemma~\ref{lem:Dyson 2}, with $U_R (r) = 0$ if $ N^{-1} R_w \leq r \leq R$. Let $\chi_K$ also be as in Lemma~\ref{lem:Dyson 2}. Then, for all $\eps >0$ and $K>0$ 
\begin{equation}\label{eq:Dyson 2 app}
H_N \geq \sum_{j=1} ^N \left(h_{\bx_j} - (1-\eps) \chi_K (\bp_j) (-\Delta_{\bx_j}) \chi_K (\bp_j) \right) +  (1-\eps)^2 W_N - C \frac{N^2 R^2 K^5}{\eps} 
\end{equation}
with 
$$ 
W_N (\bx_1,\ldots,\bx_N) := \sum_{i\neq j } U_R (\bx_i - \bx_j) \prod_{k \neq i, j} \Theta_{2R} (\bx_j-\bx_k) 
$$
with $\Theta_{2R}$ a radial smoothened Heaviside step-function:
$$ 0 \leq \Theta_{2R} \leq 1, \quad \Theta_{2R} (r) = 0 \mbox{ for } r \leq 2R, \quad \Theta_{2R} = 1 \mbox{ for } r \geq 4R.$$
Moreover, 
\begin{equation}\label{eq:Dyson 2 app 2}
W_N \geq \sum_{i\neq j } U_R (\bx_i - \bx_j) - \sum_{k\neq i \neq j \neq k} U_R (\bx_i - \bx_j) \left( 1- \Theta_{2R} (\bx_j-\bx_k) \right).
\end{equation}
\end{corollary}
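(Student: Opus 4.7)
The starting point is to isolate the effective one-body operator of~\eqref{eq:Dyson 2 app} via the trivial identity
\[
H_N = \sum_{j=1}^N \bigl[h_{\bx_j} - (1-\eps)\chi_K(\bp_j)(-\Delta_{\bx_j})\chi_K(\bp_j)\bigr] + (1-\eps)\sum_{j=1}^N \chi_K(\bp_j)(-\Delta_{\bx_j})\chi_K(\bp_j) + \tfrac{1}{2}\sum_{i\neq j} w_N(\bx_i-\bx_j),
\]
whose first sum is exactly what must appear on the right-hand side. Since $w_N\geq 0$ and $0\leq\Theta_{2R}\leq 1$, I would then insert the cut-offs into the interaction for free:
\[
\tfrac{1}{2}\sum_{i\neq j} w_N(\bx_i-\bx_j) \geq \tfrac{1}{2}\sum_{i\neq j} w_N(\bx_i-\bx_j)\prod_{k\neq i,j}\Theta_{2R}(\bx_j-\bx_k).
\]
The key geometric fact is this: since $R\geq 2R_w/N$, both $w_N(\bx_{i_1}-\cdot)$ and $U_R(\bx_{i_1}-\cdot)$ are supported in the ball of radius $R$ around $\bx_{i_1}$, where $\Theta_{2R}(\bx_{i_1}-\cdot)$ vanishes identically. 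Hence, viewed as a function of $\bx_j$ with the other positions frozen, any summand indexed by $i_2\neq i_1$ contains the vanishing factor $\Theta_{2R}(\bx_j-\bx_{i_1})$ inside its product, and is therefore zero on the support of the $i_1$-summand. So for each $j$ the restricted interaction reduces at each $\bx_j$ to at most one translate of $w_N$, multiplied by a weight bounded by $1$; the same holds for $U_R$.

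Next I would apply Lemma~\ref{lem:Dyson 2} in the $\bx_j$-variable to each such single-$i$ contribution. Using the prefactor $(1-\eps)$ on the kinetic term together with $w_N\geq 0$,
\[
(1-\eps)\chi_K(\bp_j)(-\Delta_{\bx_j})\chi_K(\bp_j) + \tfrac{1}{2} w_N(\bx_i-\bx_j) \geq (1-\eps)\bigl[\chi_K(\bp_j)(-\Delta_{\bx_j})\chi_K(\bp_j) + \tfrac{1}{2}w_N(\bx_i-\bx_j)\bigr],
\]
to which Lemma~\ref{lem:Dyson 2} delivers the further lower bound $(1-\eps)^2 U_R(\bx_i-\bx_j) - C(1-\eps)a_{w_N}R^2K^5/\eps$, with $a_{w_N}=a_w/N$ by~\eqref{eq:GP scale scat}. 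The weight $\prod_{k\neq i,j}\Theta_{2R}(\bx_j-\bx_k)$ varies on the scale $R\gg R_w/N$ and is effectively constant on the microscopic support of $w_N$ and $U_R$; multiplying the operator inequality by this non-negative weight (after absorbing $(1-\theta)\chi_K(-\Delta)\chi_K \geq 0$ into the discarded kinetic piece) preserves the bound up to a controlled error. Summing the disjoint single-$i$ contributions for each $j$, and then over the $N$ particles, yields
\[
(1-\eps)\sum_j \chi_K(\bp_j)(-\Delta_{\bx_j})\chi_K(\bp_j) + \tfrac{1}{2}\sum_{i\neq j}w_N(\bx_i-\bx_j)\prod_{k\neq i,j}\Theta_{2R}(\bx_j-\bx_k) \geq (1-\eps)^2 W_N - C\frac{N^2R^2K^5}{\eps},
\]
which combined with the opening identity proves~\eqref{eq:Dyson 2 app} (the $N^2$ in the error is not sharp but comfortably accommodates the actual $N\cdot a_{w_N}=a_w$ budget).

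Finally,~\eqref{eq:Dyson 2 app 2} is a purely combinatorial consequence of the inequality $1-\prod_k a_k \leq \sum_k(1-a_k)$ for $a_k\in[0,1]$, applied with $a_k=\Theta_{2R}(\bx_j-\bx_k)$, together with $U_R\geq 0$. The main obstacle is precisely the middle step: Lemma~\ref{lem:Dyson 2} is a two-body statement, and one must rigorously justify dragging the slowly-varying $\bx_j$-dependent weight $\prod_{k\neq i,j}\Theta_{2R}(\bx_j-\bx_k)$ through the operator inequality without spoiling the $(1-\eps)^2$ coefficient or the error. This is where the scale separation $R_w/N\leq R/2$ is decisive: on the microscopic support of $w_N$ and $U_R$ the weight is essentially constant, so that one can either freeze it at the center $\bx_i$ via a quadratic form argument on test functions localized around $\bx_i$, or replace the smooth $\Theta_{2R}$ by a slightly larger/smaller sharp indicator at the price of an absorbable correction.
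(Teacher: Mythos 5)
Your starting decomposition of $H_N$ is correct, the two-step $(1-\eps)^2$ bookkeeping (one factor from Lemma~\ref{lem:Dyson 2}, one from $\tfrac12 w_N\geq(1-\eps)\tfrac12 w_N$) is the right mechanism, and the combinatorial inequality~\eqref{eq:Dyson 2 app 2} is correctly reduced to $1-\prod_k a_k\leq\sum_k(1-a_k)$. But the gap you flag at the end is real and your proposed fix does not close it: the choice of active variable is wrong. You apply Lemma~\ref{lem:Dyson 2} in the $\bx_j$-variable, while the weight $\prod_{k\neq i,j}\Theta_{2R}(\bx_j-\bx_k)$ in the target is a genuine function of $\bx_j$, and it is \emph{not} slowly varying relative to $U_R$. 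Indeed $U_R(\bx_i-\cdot)$ is supported up to distance $R$ from $\bx_i$ (Lemma~\ref{lem:Dyson 2} only requires $U\equiv 0$ outside $[N^{-1}R_w,\,R]$), while each factor $\Theta_{2R}(\bx_j-\bx_k)$ ramps from $0$ to $1$ as $|\bx_j-\bx_k|$ runs over $[2R,4R]$. These two scales are the same, of order $R$: if some $\bx_k$ sits at distance $\sim 3R$ from $\bx_i$, the weight sweeps the full range $[0,1]$ as $\bx_j$ moves across the support of $U_R(\bx_i-\cdot)$. The scale separation $R\gg N^{-1}R_w$ is decisive only for $w_N$; it does nothing for $U_R$. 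Freezing the weight at $\bx_i$, or replacing $\Theta_{2R}$ by a sharp indicator, therefore degrades the $(1-\eps)^2$ constant on part of the support rather than producing an absorbable error.

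The paper avoids the issue by taking the \emph{other} variable as active. Because $\prod_{k\neq i,j}\Theta_{2R}(\bx_j-\bx_k)$ contains no $\bx_i$, one should freeze all $\bx_k$ with $k\neq i$ and work on the $\bx_i$-part of the Hamiltonian (the paper calls the active particle $\bx_1$); the cut-off is then a constant and never has to be pushed through Lemma~\ref{lem:Dyson 2}. There is also no need to insert the cut-off on $w_N$ at all. Concretely: for each frozen configuration let $J$ be the set of $j\neq 1$ with $\prod_{k\neq 1,j}\Theta_{2R}(\bx_j-\bx_k)\neq 0$; every such $\bx_j$ is at distance $>2R$ from all other spectators, so the balls $B(\bx_j,R)$, $j\in J$, are pairwise disjoint; split $\int|\chi_K(\bp_1)\nabla_{\bx_1}\psi|^2$ over those disjoint balls, apply Lemma~\ref{lem:Dyson 2} in each, discard $\tfrac12 w_N(\bx_1-\bx_j)\geq 0$ for $j\notin J$, and only afterwards minorize the resulting $U_R(\bx_1-\bx_j)$ by $U_R(\bx_1-\bx_j)\prod_{k\neq 1,j}\Theta_{2R}(\bx_j-\bx_k)$ using $\Theta_{2R}\leq 1$. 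Summing over the $N$ choices of active particle and applying your $(1-\eps)$ absorption then gives~\eqref{eq:Dyson 2 app}.
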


\begin{proof}
We work on the part of the Hamiltonian acting on $\bx_1$. Consider fixing $\bx_2, \ldots, \bx_N \in \R^d$ with $|\bx_1-\bx_j| \geq 2R$ for all $j=2\ldots N$. Then there can only be one such point at a time with $|\bx_1 - \bx_j| \leq R$. Therefore, for any function $\psi$ of $\bx_1$  
$$ \int_{\R^d} |\chi_K (\bp_1) \nabla_{\bx_1} \psi|^2 + \frac{1}{2} \sum_{j=2} ^N w_N (\bx_1-\bx_j)|\psi|^2 \geq \sum_{j=2}^N \int_{|\bx_1-\bx_j|\leq R} |\nabla_{\bx_1} \psi|^2 + \frac{1}{2} w_N (\bx_1-\bx_j)|\psi|^2.$$
Applying Lemma~\ref{lem:Dyson 2} and integrating over the set $|\bx_1-\bx_j| \geq 2R$ for all $j=2\ldots N$ we find, for any $L^2$ normalized function $\Psi_N$ of the $N$ coordinates 
\begin{multline*}
 \int_{\R^{dN}} |\chi_K (\bp_1) \nabla_{\bx_1} \psi|^2 + \frac{1}{2} \sum_{j=2} ^N w_N (\bx_1-\bx_j)|\Psi_N|^2 \\
 \geq (1-\eps) \int_{\R^{dN}} \sum_{j\leq 2 \leq N} U_R (\bx_1-\bx_j) \prod_{2 \leq k\neq j \leq N} \Theta_{2R} (\bx_j-\bx_k) - \frac{C N^2 R^2 K^5}{\eps}. 
\end{multline*}
Multiplying by $(1-\eps)$ and adding the contributions of the part of the Hamiltonian acting on the $N-1$ other particles leads to 
\begin{multline*}
H_N \geq \sum_{j=1} ^N \left(h_{\bx_j} - (1-\eps) \chi_K (\bp_j) (-\Delta_{\bx_j}) \chi_K (\bp_j) \right) + 
\\ (1-\eps)^2 \left( \sum_{i\neq j } U_R (\bx_i - \bx_j) \prod_{k \neq i,j} \Theta_{2R} (\bx_j-\bx_k) \right)- C \frac{N^2 R^2 K^5}{\eps}. 
\end{multline*}
To conclude we note that 
$$ \prod_{k,j\neq i} \Theta_{2R} (\bx_j-\bx_k)  = \prod_{k,j\neq i } \left(1 - (1-\Theta_{2R} (\bx_j-\bx_k))\right) \geq 1 - \sum_{k,j\neq i } \left(1 -\Theta_{2R} (\bx_j-\bx_k)\right)$$
because, for any choice of numbers $0 \leq s_j \leq 1,j=1\ldots J$ we have 
$$ \prod_{j=1} ^J (1-s_j) \geq 1 - \sum_{j=1} ^J s_j$$
which can be proven by induction over $J$. 
\end{proof}

Combining~\eqref{eq:Dyson 2 app} and~\eqref{eq:Dyson 2 app 2} we have a lower bound with an unwanted three-body term
$$ \sum_{k\neq i \neq j \neq k} U_R (\bx_i - \bx_j) \left( 1- \Theta_{2R} (\bx_j-\bx_k)\right).$$
But the summand is non-zero only for $|\bx_i - \bx_j| \leq R$ and $|\bx_j - \bx_k| \leq 4R$, i.e. when three particles are at distance $\lessapprox R$ from one another. To discard this term we shall choose $R\ll N^{-1/3}$, the typical inter-particle distance, and the latter event will intuitively have very small probability. Proving it rigorously is not quite easy however. Having to choose $R\ll N^{-1/3}$ means bounding from below the Gross-Pitaevskii scaling Hamiltonian by a new Hamiltonian with dilute scaling: we cannot quite soften the potential as much as we would like to.

It will turn out useful to recall~\eqref{eq:Dyson 2 app}, even though the combination with~\eqref{eq:Dyson 2 app 2} is simpler. The reason is that in~\eqref{eq:Dyson 2 app} we keep memory of the fact that the original interaction was positive, an information which is lost in~\eqref{eq:Dyson 2 app 2}.  

\section{Thermodynamic energy of the homogeneous dilute gas}\label{sec:thermo}

We start our study of GP limits of bosonic ground states by considering the case of a homogeneous gas in a cubic box, that is the one-body potential is set to $0$ and the Euclidean space $\R^{dN}$ is replaced by $\Lambda ^{N}$ where $\Lambda$ is a cube of side length $L$. The reason this is simpler is that we do not have to worry about a non-trivial macroscopic density profile: the minimizer of the GP energy functional is the constant function. For later purposes (and mostly because this is an important result in itself) we shall however be concerned with bounds whose error terms are uniform in the side length $L$ of the box, provided the total density $\varrho = N / L ^3$ satisfies the diluteness condition 
\begin{equation}\label{eq:dilute thermo}
\varrho a^3 \ll 1 
\end{equation}
where $a$ is the scattering length of the interaction (which you may think of as setting its effective range). 

In other words, we are interested in the \index{Thermodynamic limit}{thermodynamic limit} of the energy, and want to be able to take this limit before we eventually play with the potential or the particle number/density to achieve~\eqref{eq:dilute thermo}: 

\begin{theorem}[\textbf{Energy of the \index{Homogeneous Bose gas}{homogeneous dilute Bose gas}}]\label{thm:therm}\mbox{}\\
Consider 
$$ H_N := \sum_{j=1} ^N -\Delta_{\bx_j} + \sum_{1\leq i < j \leq N} w_a (\bx_i - \bx_j)$$
acting on $L^2_{\rm sym} (\Lambda ^N)$, $\Lambda = [0,L]^3$ with periodic boundary conditions.  Here 
$$ w_a (\bx) = a^{-2} w ( a ^{-1} \bx )$$
with $w$ fixed satisfying Assumption~\ref{asum:GP pot} (hence $w_a$ has \index{Scattering length}{scattering length} $a$). Let $E(N,L)$ be the lowest eigenvalue of this operator, fix the density $\varrho = NL^{-3}$ and define 
\begin{equation}\label{eq:def therm ener}
e (\varrho) := \lim_{N,L \to \infty} \frac{E(N,L)}{N} 
\end{equation}
the energy density in the thermodynamic limit.

In the limit $\varrho a^3 \to 0$ we have 
\begin{equation}\label{eq:therm ener dilute}
e(\varrho) = 4\pi a \varrho (1 + o (1)). 
\end{equation}
More precisely, for any $N,L$ large enough, the lower bound
\begin{equation}\label{eq:therm ener bounds}
\frac{E(N,L)}{N} \geq 4\pi \varrho a \left( 1 - o (1)\right)
\end{equation}
holds, where the $o(1)$ goes to zero when $N a^{3} L^{-3} \to 0$. 
\end{theorem}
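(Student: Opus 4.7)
The plan is to establish matching upper and lower bounds on $E(N,L)/N$ in which the error terms depend only on the diluteness parameter $\varrho a^3$ (and not on $N,L$ separately), so that both \eqref{eq:therm ener dilute} and the uniform bound \eqref{eq:therm ener bounds} follow at once. Limits in $N,L$ at fixed $\varrho$ then give \eqref{eq:def therm ener}.

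\textbf{Upper bound.} First I would take a \index{Trial state, Jastrow}{Jastrow} trial function adapted to the box,
$$\Psi_N(\bx_1,\ldots,\bx_N) := \prod_{1\le i<j\le N} f_b(|\bx_i-\bx_j|),$$
where $f_b$ is the minimizer of the scattering problem \eqref{eq:GP scat R} with potential $w_a$ on the ball of radius $b$, extended by $1$ outside, with $a\ll b\ll\varrho^{-1/3}$. Following the computation of Theorem~\ref{thm:GP up bound}, but with the macroscopic wavefunction replaced by the constant $L^{-3/2}$, the diagonal term obtained after integrating one pair of coordinates reconstructs $E^{\rm scat}_b=4\pi a(1+O(a/b))$ from~\eqref{eq:GP scat ener R}; the off-diagonal terms and the overlap between distinct pair correlations are controlled by positive powers of $\varrho b^3$ and $\varrho a^3$. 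Combined with the $\sim N^2/L^3$ number of pairs, this yields the upper bound $E(N,L)/N\le 4\pi a\varrho(1+o(1))$.

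\textbf{Lower bound.} This is where the real work lies. I would proceed as follows.

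\emph{Step 1 (kinetic splitting and Dyson reduction).} Split $-\Delta_{\bx_j}=-\varepsilon\Delta_{\bx_j}-(1-\varepsilon)\Delta_{\bx_j}$ for a small parameter $\varepsilon>0$ to be optimized later. Apply Corollary~\ref{cor:Dyson 1} to the ``hard'' piece $-(1-\varepsilon)\sum_j\Delta_{\bx_j}+\sum_{i<j}w_a(\bx_i-\bx_j)$, choosing a soft radial potential $U_R$ supported in $[R_w a,R]$ with $\int_0^\infty U_R(r)r^2\,\mathrm{d}r=a(1-\varepsilon)$, e.g. $U_R(r)=\tfrac{3a(1-\varepsilon)}{R^3-R_w^3 a^3}\mathbf{1}_{[R_w a,R]}(r)$. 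This yields the operator inequality
\begin{equation*}
H_N\ \ge\ -\varepsilon\sum_{j=1}^N\Delta_{\bx_j}\ +\ (1-\varepsilon)\sum_{j=1}^N U_R(\bt_j),
\end{equation*}
with $\bt_j$ the nearest-neighbor distance, and we are free to pick $a\ll R\ll\varrho^{-1/3}$.

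\emph{Step 2 (Neumann box decomposition).} Partition $\Lambda$ into small cubes $\{\Lambda_\alpha\}$ of side $\ell$ with $R\ll\ell$, and bound the full (periodic) Laplacian from below by the direct sum of Neumann Laplacians on each $\Lambda_\alpha$. A minor care must be taken so that each nearest-neighbor pair lives in a single cube (lost pairs are counted for free because $U_R\ge0$). This reduces the problem to bounding below the per-cube energy
$$H^{\rm box}_n:=-\varepsilon\sum_{j=1}^n\Delta^{\rm N}_{\bx_j}+(1-\varepsilon)\sum_{j=1}^n U_R(\bt_j)$$
on $L^2_{\rm sym}(\Lambda_\alpha^n)$ for varying $n$.

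\emph{Step 3 (per-box estimate via Temple's inequality).} In each box, the zero-momentum constant is the Neumann ground state of $-\varepsilon\Delta^{\rm N}$ with gap $\varepsilon(\pi/\ell)^2$. Using Temple's inequality in the orthogonal complement and replacing nearest-neighbor by pair sums at the cost of a three-body overcounting term, I would obtain
\begin{equation*}
\inf\mathrm{spec}\,H^{\rm box}_n\ \ge\ (1-\varepsilon)\,\frac{4\pi a\,n(n-1)}{\ell^3}\,\bigl(1-\Phi(n,\ell,R,\varepsilon)\bigr),
\end{equation*}
where $\Phi$ is an explicit small error coming from (i) Temple's second-order correction, controlled by $\|\sum_j U_R(\bt_j)\|_\infty^2/(\varepsilon\ell^{-2})$, and (ii) the three-body term $\sum_{i\neq j\neq k}U_R(\bx_i-\bx_j)(1-\Theta_{2R}(\bx_j-\bx_k))$ from Corollary~\ref{cor:Dyson 2}.

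\emph{Step 4 (averaging and optimization).} Letting $n_\alpha$ be the number of particles in box $\alpha$, the total lower bound is $\sum_\alpha\inf\mathrm{spec}\,H^{\rm box}_{n_\alpha}$ averaged over the distribution of particles among boxes. The key algebraic fact is convexity of $n\mapsto n(n-1)$, giving
$\sum_\alpha n_\alpha(n_\alpha-1)\ge(N/M)(N/M-1)\cdot M=N(N/M-1)$ where $M=(L/\ell)^3$, so
$(1/N)\sum_\alpha(4\pi a\,n_\alpha(n_\alpha-1)/\ell^3)\ge 4\pi a\varrho\,(1-\ell^3/L^3\cdot\varrho^{-1}\cdot\ldots)$. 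Finally optimizing $\varepsilon,R,\ell$ as appropriate positive powers of $\varrho a^3$ (schematically $R/\ell,\ \ell\varrho^{1/3}$ and $\varepsilon$ each of the form $(\varrho a^3)^{\kappa}$ with $0<\kappa<1$) collapses all errors into a single factor $(1-o_{\varrho a^3\to 0}(1))$, independent of $N,L$.

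\textbf{Main obstacle.} The delicate point is the simultaneous tuning of the four small parameters $\varepsilon$, $a/R$, $R/\ell$, $\ell\varrho^{1/3}$: Temple's inequality demands that the softened potential $U_R$ have small $L^\infty$ norm compared to the Neumann gap $\varepsilon\pi^2/\ell^2$, which pushes $R$ large and $\ell$ small; but converting the nearest-neighbor sum to a genuine pair sum (with the three-body remainder of Corollary~\ref{cor:Dyson 2}) and keeping boxes essentially empty of triples demands $R\ll\varrho^{-1/3}$, and averaging over box occupations demands $\varrho\ell^3\gg 1$. Threading these inequalities to produce any $o(1)$ that is a function of $\varrho a^3$ alone is the technical core of the proof.
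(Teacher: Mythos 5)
Your lower bound is essentially the paper's strategy (Dyson reduction, Temple's inequality in small boxes, and a cell decomposition to reach the thermodynamic limit, with the same parameter hierarchy $a\ll R\ll\varrho^{-1/3}\ll\ell$), but two steps need repair. First, the order of operations in Steps~1--2 is backwards: after applying Corollary~\ref{cor:Dyson 1} globally you hold a nearest-neighbour sum $\sum_j U_R(\bt_j)$ where $\bt_j$ is the nearest-neighbour distance among all $N$ points; since $U_R\geq 0$ is supported in an annulus $[R_w a, R]$, the map $t\mapsto U_R(t)$ is \emph{not} monotone, so passing to the in-cell nearest-neighbour distance can make individual terms larger or smaller, and ``lost pairs are counted for free'' is not a valid reduction. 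The clean order, which is what Lemmas~\ref{lem:cell} and~\ref{lem:small box} implement, is to decouple the cells \emph{first} (dropping inter-cell interactions, legitimate because $w_a\geq 0$) and only then apply Dyson inside each cell, so the nearest-neighbour distance is automatically the in-cell one. Second, convexity of $n\mapsto n(n-1)$ alone does not close Step~4: the per-cell Temple factor in~\eqref{eq:small box} has denominator $\pi\varepsilon\ell^{-2}-4a\ell^{-3}n(n-1)$, which changes sign once $n$ is too large, so you must also control how particles distribute among cells, i.e.\ the minimizer $(c_n)$ in~\eqref{eq:cell method}; the paper does this via subadditivity of $n\mapsto E(n,\ell)$, a point your argument glosses over.

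Your upper bound uses the symmetric Jastrow ansatz $\prod_{i<j}f_b(|\bx_i-\bx_j|)$, which is exactly what the paper warns against in the comments following this theorem. The Jastrow computation of Theorem~\ref{thm:GP up bound} transfers to the GP scaling because there the cumulative excluded volume $\sim N\cdot (R_w/N)^3$ vanishes, whereas in the thermodynamic limit at fixed $\varrho$ and $b$ there are $\sim \varrho^2 L^6$ pairs, each contributing a normalization defect $\sim b^3/L^3$, so the Jastrow norm degenerates like $\exp(-C\varrho^2 b^3 L^3)$ as $L\to\infty$ and the simple expansion that controls the ratio of interacting to non-interacting normalizations in the GP case no longer converges. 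The paper's upper bound therefore uses Dyson's trial state~\eqref{eq:Dyson trial}, whose telescoping structure (each particle correlated only with its nearest \emph{predecessor}) lets the normalization and energy be computed iteratively with errors uniform in $L$ at fixed $\varrho$. This is the one place where your proposal genuinely departs from the proof, and the substitution is not harmless.
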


\begin{proof}[Comments]
An energy upper bound was derived in~\cite{Dyson-57}, but the corresponding lower bound was proved only some 40 year later~\cite{LieYng-98}. More explicit estimates of the error terms than what we state are known, and an upper bound matching~\eqref{eq:therm ener bounds}. All of this is already reviewed in~\cite{LieSeiSolYng-05} and we do not give further details. Note that one can achieve the \index{Dilute limit}{dilute limit} either by scaling the potential with some $a \to 0$ or by lowering the density $\varrho \to 0$. The latter ways is usually preferred physically: one considers the potential as given by the physics, and the density as the adjustable parameter. 

The reason we emphasize the lower bound in~\eqref{eq:therm ener bounds} is that it will be used in the next section, when we deal with the inhomogeneous gas via \index{Local density approximation (LDA)}{local density approximation}.
\end{proof}

We do not comment on the energy upper bound, whose proof resembles that sketched in Section~\ref{sec:Jas-Dyson}. A noteworthy difference is that one has to use Dyson's \index{Trial state, Dyson}{trial state} instead of Jastrow's to handle the thermodynamic limit. The excluded volume induced by including all pair correlations in the Jastrow state would be too large when taking the thermodynamic limit first. 

We focus on sketching the proof of~\eqref{eq:therm ener bounds}, referring to~\cite{LieSeiSolYng-05} for a more detailed exposition and references to original sources. Generalizations are in~\cite{Lee-09,Yin-10b,LieYng-01} Without any further comment we always ignore bosonic symmetry in this section, as per Theorem~\ref{thm:bos min}.

\medskip

The proof works in two steps that we present as two lemmas:

\begin{lemma}[\textbf{Energy in a small box}]\label{lem:small box}\mbox{}\\
Assume that $L \leq C (a \varrho ^2)^{-1/5}$ for a sufficiently small constant $C>0$. We have for any $0< \eps < 1$ and $R < L/2$,
\begin{multline}\label{eq:small box}
\frac{E(N,L)}{N} \geq 4\pi a \varrho (1-\eps) \left(1 - \frac{2R}{L} \right)^3 \left(1 + \frac{4\pi}{3} (R^3 - a^3 R_w ^3)  \right)^{-1} \\ 
\times \left( 1 - \frac{3}{\pi} \frac{aN}{(R^3 - a^3 R_w ^3)\left( \pi \eps L ^{-2} - 4 a L^{-3} N(N-1) \right)} \right). 
\end{multline}
\end{lemma}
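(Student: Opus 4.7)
The strategy, which is the classical Lieb--Yngvason argument, combines Dyson's Lemma~\ref{lem:Dyson 1} (via its many-body form Corollary~\ref{cor:Dyson 1}) with Temple's inequality, applied to the ground state of the free periodic Laplacian in $\Lambda$. The restriction $L\leq C(a\varrho^2)^{-1/5}$ is exactly what will make Temple's denominator stay positive.

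The plan is as follows. First, split the kinetic term as $-\Delta = -\eps\Delta - (1-\eps)\Delta$ and apply Corollary~\ref{cor:Dyson 1} to the $(1-\eps)$--part together with the interaction, choosing a soft radial potential $U$ supported in $[aR_w, R]$ with $\int_0^\infty U(r)\,r^2\,\mathrm{d}r = a$. Carrying out Dyson's argument inside the Voronoi cell of each particle, but only within the sub-cube of side $L-2R$ (so that the convex cell is honestly contained in $\Lambda$), produces the geometric loss $(1-2R/L)^3$ visible in~\eqref{eq:small box} and yields the operator inequality
\begin{equation*}
H_N \;\geq\; \eps\, T_N \;+\; (1-\eps)\,\Bigl(1-\tfrac{2R}{L}\Bigr)^{3} \sum_{j=1}^{N} U(\bt_j),
\end{equation*}
where $\bt_j$ is the distance from $\bx_j$ to its nearest neighbour and $T_N = \sum_j (-\Delta_{\bx_j})$. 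This step is clean and standard; the price is that all short-scale information about $w_a$ has been traded for the soft tail $U$ of integral $4\pi a$.

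Second, bound the ground-state energy of $\widetilde H:=\eps T_N + (1-\eps)(1-2R/L)^3\sum_j U(\bt_j)$ from below by Temple's inequality with trial vector the constant $\Psi_0 \equiv L^{-3N/2}$. The unperturbed operator $\eps T_N$ has lowest eigenvalue $0$ and spectral gap equal to $\eps\pi^2/L^2$ (up to a factor $4$ with our conventions), so Temple gives
\begin{equation*}
\inf\mathrm{spec}(\widetilde H) \;\geq\; \langle \Psi_0, \widetilde H \Psi_0\rangle \;-\;\frac{\langle \Psi_0, \widetilde H^{2}\Psi_0\rangle - \langle \Psi_0, \widetilde H\Psi_0\rangle^{2}}{\eps\pi^{2} L^{-2} - \langle \Psi_0, \widetilde H\Psi_0\rangle},
\end{equation*}
provided the denominator is positive --- this is the hypothesis that becomes visible as $\pi\eps L^{-2} - 4 a L^{-3} N(N-1)>0$ in the last factor of~\eqref{eq:small box}, and this is exactly where the hypothesis $L\leq C(a\varrho^{2})^{-1/5}$ will be used. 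Computing $\langle \Psi_0, \sum_j U(\bt_j)\Psi_0\rangle$ amounts to a geometric calculation: for fixed $\bx_j$, the probability (in the $N-1$ remaining uniform variables) that the nearest neighbour lies at distance $\geq r$ is $(1 - V_r/L^3)^{N-1}$ with $V_r$ the volume of a ball of radius $r$ excluded by the support of $U$; expanding this produces the excluded-volume factor $(1+\tfrac{4\pi}{3}(R^3 - a^3 R_w^3))^{-1}$ and the main term $4\pi a\varrho N$.

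The main obstacle will be Step~3 (estimating the Temple fluctuation numerator $\langle\widetilde H^{2}\rangle - \langle\widetilde H\rangle^{2}$). This reduces, since $\Psi_0$ is constant and the kinetic part contributes zero to both moments, to a second-moment bound on $\sum_j U(\bt_j)$, which one controls by $\|U\|_\infty\cdot\langle\sum_j U(\bt_j)\rangle$ with $\|U\|_\infty\lesssim a/(R^3 - a^3 R_w^3)$ for an optimally chosen $U$; dividing by $\langle\widetilde H\rangle\sim 4\pi a N^{2}/L^{3}$ and the gap produces the explicit correction $\tfrac{3}{\pi}\,aN/[(R^3 - a^3 R_w^3)(\pi\eps L^{-2} - 4aL^{-3}N(N-1))]$ appearing in~\eqref{eq:small box}. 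Assembling the three multiplicative corrections from Dyson's geometric loss, from the excluded-volume correction to $\langle\Psi_0, \sum_j U(\bt_j)\Psi_0\rangle$, and from Temple's fluctuation bound yields exactly~\eqref{eq:small box}; the parameters $\eps, R$ are then free and will be optimized in the passage to the thermodynamic limit of Theorem~\ref{thm:therm}.
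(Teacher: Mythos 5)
Your proposal follows essentially the same route as the paper's own sketch (and as the original Lieb--Yngvason argument): reduce via Dyson's construction to a soft nearest-neighbour potential, then apply Temple's inequality with the constant trial state on the unperturbed free Laplacian. The $\eps$-split of the kinetic energy, which the paper's proof-comments leave implicit, is indeed where both the factor $(1-\eps)$ and the gap $\sim \eps L^{-2}$ in the final correction factor come from; you are right to make it explicit. Two small points of bookkeeping are worth flagging, though neither affects the final outcome. First, the geometric factor $(1-2R/L)^3$ does not arise at the stage of the operator inequality coming out of Corollary~\ref{cor:Dyson 1}; that corollary yields $H_N \geq \eps\,T_N + (1-\eps)\sum_j U(\bt_j)$ without it. The factor appears later, when one bounds $U(\bt_1)$ from below by $\|U\|_\infty\, \1_{\bx_1\in\Lambda_{L-2R}}\,\1_{\exists k:\, aR_w\leq|\bx_k-\bx_1|\leq R}$ and integrates against $|\Psi_0|^2$; restricting $\bx_1$ to the subcube of side $L-2R$ is what produces the volume loss $(1-2R/L)^3$. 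Since $(1-2R/L)^3<1$, writing the factor into the operator inequality happens to be a valid (if redundant) weakening, so the final formula is unharmed, but the logic is cleaner if you keep it where it belongs. Second, applying Lemma~\ref{lem:Dyson 1} to $(1-\eps)|\nabla f|^2 + \tfrac12 w|f|^2 = (1-\eps)\bigl(|\nabla f|^2 + \tfrac12 \tfrac{w}{1-\eps}|f|^2\bigr)$ changes the scattering length; one should note that $a_{w/(1-\eps)}\geq a_w$ by monotonicity of the scattering energy in the potential, so one may still choose $U$ with $\int_0^\infty U(r)\,r^2\,\mathrm{d}r = a$. Finally, the claimed variance bound on $\sum_j U(\bt_j)$ needs a bit more care than ``$\|U\|_\infty$ times the mean'' because the nearest-neighbour distances $\bt_j$ are correlated; the pointwise bound $\sum_j U(\bt_j) \leq N\|U\|_\infty$ together with $\langle X^2\rangle \leq \|X\|_\infty\langle X\rangle$ is what actually produces the extra factor of $N$ visible in the numerator $aN/(R^3 - a^3R_w^3)$ of the correction. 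With these clarifications your outline reproduces the paper's argument faithfully.
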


\begin{proof}[Comments]
Here you should really think that $a$ and $\varrho$ are of order $1$ (they are in the thermodynamic limit, and one of them is taken to $0$ only afterwards). The condition $L \leq C (a \varrho ^2)^{-1/5}$ is necessary for the denominator in the last factor to be non-negative, for $a L^{-3} N(N-1)$ is essentially $a L^3 \varrho^2$. 

The rationale behind this expression is that we apply Corollary~\ref{cor:Dyson 1} with 
$$ U_R (r) = \begin{cases} 
              \frac{a}{3(R^3 - (aR_w)^3)^{-1}} \mbox{ for } aR_w < r < R\\
              0 \mbox{ otherwise,}
             \end{cases}
$$
the (appropriately normalized) indicator function of an annulus with inner radius the range of the interaction potential $w_a$ (as required in Lemma~\ref{lem:Dyson 1}) and an outer radius we are free to choose. The result is that we need to bound from below 
$$ E(N,L) \geq \int_{\Lambda^N} \sum_{j=1} ^N U(\bt_j) |\Psi_N| ^2.$$
If we could replace the true interacting ground state $\Psi_N$ by the (normalized) constant wave-function (true ground state of the non-interacting problem) in the above, we would get as lower bound 
\begin{equation}\label{eq:geometry}
 E(N,L) \geq 4\pi a \varrho \left(1 - \frac{2R}{L} \right)^3 \left(1 + \frac{4\pi}{3} (R^3 - a^3 R_w ^3)  \right) ^{-1} 
\end{equation}
without any further correction, and we would be in extremely good shape. The extra correcting factors come about when coping with the replacement $\Psi_N \rightsquigarrow$ non-interacting ground state.

Let us explain where~\eqref{eq:geometry} comes from. We bound from below 
\begin{equation}\label{eq:use Dyson}
 U_R(\bt_1) \geq \frac{a}{3(R^3 - (aR_w)^3)^{-1}} \1_{\bx_1\in \Lambda_{L-2R}} \1_{\exists k \geq 2, aR_w \leq |\bx_k - \bx_1| \leq R}
\end{equation}
where $\bx_1$ is in the subcube at least at distance $2 R$ from the boundary of the original cube. We integrate this first in $\bx_2, \ldots, \bx_N$. We have made sure that the ball of radius $R$ around $\bx_1$ always fits in the box so we have to estimate the probability (in the constant wave-function) that there is a point within the annulus $r\in [aR_w, R]$ centered on $\bx_1$:
\begin{align*}
 \mathbb{P} \left( \exists k \geq 2, aR_w \leq |\bx_k - \bx_1| \leq R \right) &= 1 - \mathbb{P} \left( \forall k \geq 2,  |\bx_k - \bx_1| \leq aR_w \mbox{ or } |\bx_k - \bx_1| \geq R \right)\\
 &= 1 - \mathbb{P} \left( |\bx_2 - \bx_1| \leq aR_w \mbox{ or } |\bx_2 - \bx_1| \geq R \right) ^{N-1}\\
 &= 1 - \left(1 - \frac{4\pi}{3L^3} \left(R^3 - (aR_w)^3 \right)\right) ^{N-1}\\
 &\geq 1 - \frac{1}{1+(N-1)\frac{4\pi}{3L^3} \left(R^3 - (aR_w)^3 \right)}
\end{align*}
using $(1-x) ^{N} \leq (1+Nx)^{-1}$ for $ 0 \leq x\leq 1$ (prove by induction on $N$). Insert in~\eqref{eq:use Dyson} to estimate the $\bx_2, \ldots ,\bx_N$ integration, obtain a factor $L^3(1-2R/L)^3$ from the $\bx_1$ integration over the subcube, multiply by $N$ for the contribution of the other particles. This gives~\eqref{eq:geometry}. 

We have not explained how one can afford to estimate using the constant function, minimizing the kinetic energy. The intuition is simple: the gap in kinetic energy (in the spectrum of $-\Delta$ restricted to the box) above the constant wave-function is $\sim L^{-2}$ (cost for putting one particle in a non-constant wave-function). If the cube is small enough, this is larger than the typical interaction energy, and one can use perturbation theory to control the discrepancy between the constant wave-function and the true, interacting one. We do not give details: one uses Temple's inequality, and the error is encoded in the last factor of the second line of~\eqref{eq:small box}. Note however that this can clearly work only in a very small cube, with the constraint that it still be much larger than the scattering length and range of the potential (physically this is what sets the length scale over which the ground state varies). 
\end{proof}

Clearly we cannot take the \index{Thermodynamic limit}{thermodynamic limit} using only Lemma~\ref{lem:small box}. Nor can we fix the size of the box and take the \index{Gross-Pitaevskii limit}{GP limit} $a= N^{-1} \to 0$. The next step is thus to split a big box into many sub-boxes where one can apply~\eqref{eq:small box} efficiently. We use 

\begin{lemma}[\textbf{\index{Cell method}{Cell method}}]\label{lem:cell}\mbox{}\\
Let $\ell \leq L$ such that $L/\ell$ is an integer. Then 
\begin{equation}\label{eq:cell method}
E(N,L) \geq \min \left\{ \sum_{n\geq 0} c_n E (n,\ell) \, \big|\, c_n\geq 0, \sum_{n\geq 0} c_n = \frac{L^3}{\ell^3}, \sum_{n\geq 0} n c_n = N\right\}.  
\end{equation}
\end{lemma}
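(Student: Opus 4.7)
The plan is to implement the classical Neumann bracketing / cell decomposition argument, which consists in partitioning the big box into sub-boxes, discarding interactions between different cells (using $w\ge 0$), and then distributing the $N$ particles among the cells.

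First I would cover $\Lambda = [0,L]^3$ by $M:=(L/\ell)^3$ closed sub-cubes $\Lambda_1,\ldots,\Lambda_M$ of side length $\ell$. For the kinetic part, I would invoke the standard Neumann bracketing inequality: as a quadratic form on $L^2(\Lambda)$ (with periodic boundary condition on $\Lambda$),
\begin{equation*}
-\Delta_{\Lambda}^{\rm per} \;\ge\; \bigoplus_{\alpha=1}^{M} (-\Delta_{\Lambda_\alpha}^{\rm Neu}),
\end{equation*}
which lifted to the $N$-body space $L^2(\Lambda^N)$ gives $\sum_{j} -\Delta_{\bx_j}^{\rm per} \geq \sum_{j} (-\Delta_{\bx_j}^{\rm Neu,\,cell})$, where the right-hand side is the direct sum of Neumann Laplacians over whichever sub-cube each $\bx_j$ belongs to. For the interaction, I would simply drop every pair $(i,j)$ whose two coordinates lie in distinct sub-cubes, which is legitimate since $w_a \ge 0$.

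Next I would decompose $L^2(\Lambda^N) = \bigoplus_{\vec n} \mathcal H_{\vec n}$, where $\vec n = (n_1,\ldots,n_M)$ with $n_\alpha \ge 0$ and $\sum_\alpha n_\alpha = N$, and $\mathcal H_{\vec n}$ is the subspace where exactly $n_\alpha$ coordinates lie in $\Lambda_\alpha$ (for each symmetric configuration of labels). Because both the lower-bound kinetic operator and the truncated interaction preserve this decomposition, the lower bound reduces cell by cell: on $\mathcal H_{\vec n}$, the operator is bounded below by $\bigoplus_\alpha H_{n_\alpha}^{\rm Neu}(\Lambda_\alpha)$. Using bosonic symmetry (Theorem~\ref{thm:bos min} allows us to assume the ground state is symmetric, and symmetry in the full variables descends to symmetry within each cell), each cell contribution is bounded below by $E(n_\alpha,\ell)$, interpreted with Neumann boundary conditions --- the same replacement as used in the proof of Lemma~\ref{lem:small box}, which can be written in that form without modifying its conclusion. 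Therefore
\begin{equation*}
E(N,L) \;\ge\; \min_{\vec n}\; \sum_{\alpha=1}^{M} E(n_\alpha,\ell), \qquad \text{subject to } n_\alpha\ge 0,\ \sum_\alpha n_\alpha = N.
\end{equation*}

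Finally I would rewrite this minimum in terms of the occupation numbers $c_n := \#\{\alpha : n_\alpha = n\}$. The constraints $\sum_\alpha 1 = M$ and $\sum_\alpha n_\alpha = N$ translate into $\sum_{n\ge 0} c_n = L^3/\ell^3$ and $\sum_{n\ge 0} n\, c_n = N$, while $\sum_\alpha E(n_\alpha,\ell) = \sum_{n\ge 0} c_n E(n,\ell)$. Relaxing the integer constraint on the $c_n$ to $c_n\ge 0$ can only decrease the minimum, yielding precisely~\eqref{eq:cell method}. The main obstacle in writing this cleanly is the bookkeeping at the interface of the sub-cubes (ensuring the cell decomposition of configurations is measurable and covers $\Lambda^N$ up to a null set) and, more conceptually, the BC-consistency issue: the definition of $E(n,\ell)$ in the theorem uses periodic BC, so one has to either restate Lemma~\ref{lem:small box} with Neumann BC (for which its proof goes through unchanged, the constant function being still the non-interacting ground state) or note that the proof of Lemma~\ref{lem:small box} only used the spectral gap above the constant, which is the same under both BC.
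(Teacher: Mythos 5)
Your proposal is correct and follows essentially the same route as the paper's proof: split $\Lambda^N$ into the disjoint blocks indexed by how many coordinates land in each sub-cube, drop the kinetic energy across cell walls (Neumann bracketing) and the interactions across cells (using $w\geq 0$), and reindex by the occupation multiplicities $c_n$. The paper phrases the argument directly at the level of $\int_{\Lambda_\alpha^N}|\nabla\Psi_N|^2 + W|\Psi_N|^2$ and a $\min_\alpha$ extraction rather than as an operator direct-sum inequality, and it does not invoke bosonic symmetry in the cells at all — it bounds below by the \emph{unrestricted} Neumann ground state energy, which is harmless since by Theorem~\ref{thm:bos min} this coincides with the bosonic one. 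Your two extra remarks are genuine: the paper is silent on the periodic-vs-Neumann mismatch between the statement of Theorem~\ref{thm:therm} and what the cell method actually produces (your observation that Lemma~\ref{lem:small box} only used the spectral gap above the constant, which is the same in both settings, is exactly the right fix), and the paper also leaves implicit that allowing real $c_n\geq 0$ can only lower the minimum, which you correctly make explicit.
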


\begin{proof}
Split $\Lambda = [0,L]^3$ into $J$ cubes (cells) $\Lambda_j,j=1\ldots J$ of side-length $\ell$. Then 
$$ \Lambda ^N = \cup_{n_1, \ldots, n_J } \Lambda_1 ^{n_1} \times \ldots \times \Lambda_j ^{n_j}$$
with $\sum_{j=1}^J n_j = N$ and the union is disjoint. Denote generically $\alpha$ the multi-index $(n_1,\ldots,n_J)$ and 
$$\Lambda_\alpha ^N = \Lambda_1 ^{n_1} \times \ldots \times \Lambda_j ^{n_j}.$$
Then, with $\Psi_N$ the ground-state wave-function, $X_N = (\bx_1,\ldots,\bx_N)$, and $W$ denoting the full interaction
\begin{align*} 
E(N,L) &= \sum_{\alpha}\int_{\Lambda_\alpha ^N}  \left(|\nabla_{X_N} \Psi_N| ^2 + W |\Psi_N|^2 \right)\\
&\geq \min_{\alpha} E (\Lambda_\alpha ^N) \sum_{\alpha} \int_{\Lambda_\alpha ^N}|\Psi_N|^2  = \min_{\alpha} E (\Lambda_\alpha ^N)
\end{align*}
where we denoted $E (\Lambda_\alpha ^N)$ the lowest eigenvalue of our $N$-body Schr\"odinger operator restricted to $\Lambda_\alpha ^N$ (with no boundary conditions, hence Neumann boundary conditions, and no bosonic symmetry). Since the interaction potential is non-negative, on $\Lambda_\alpha ^N,$ 
$$ -\Delta_{X_N} \geq \sum_{j=1} ^J \left( - \Delta_{X_j} + \frac{1}{2} \sum_{ \bx_k \neq \bx_\ell \in X_j } w_a (\bx_k - \bx_\ell) \right) $$
with $X_j$ denoting the $n_j$ coordinates in $\Lambda_j$ and we dropped the interaction across cells. Hence 
$$ E (\Lambda_\alpha ^N) \geq \sum_{j=1} ^J E (n_j,\ell) $$
and we reorganize the sum  
$$ E (\Lambda_\alpha ^N) \geq \sum_{n=1} ^J c_n E (n,\ell) $$
where $c_n$ is the number of cells $\Lambda_j$ containing exactly $n$ particles (i.e. having $n_j= n$). Certainly these numbers satisfy the constraints stated in~\eqref{eq:cell method}, for the total number of particles is always $N$ and the total volume occupied by the cells, $\sum_n c_n \ell ^3$ must equal the volume of the big cube.  
\end{proof}

The advantage is that we are now free to choose the size of the cell in which we apply~\eqref{eq:small box}. Roughly, what we want is 
$$ a \ll R \ll \varrho ^{-1/3} \ll \ell \ll (\rho a) ^{-1/2}$$
for the Dyson potential to have a range much larger than the \index{Scattering length}{scattering length}, but still correspond to a dilute interaction; for each cell to contain a macroscopic number of particles, and for the gap of the kinetic energy in each cell to control the typical interaction energy within the cell. There is bit of fine tuning to be done here, the details of which we do not provide.

Note however that to apply Lemma~\ref{lem:small box} with a good error term in the small boxes of side length $\ell$, we have to make sure that essentially $n \sim \varrho \ell^3$ for the configuration minimizing the right-hand side of~\eqref{eq:cell method}, where $\varrho$ is the density in the big box. This is not for free, but can be (approximately) guaranteed using the subadditivity of the ground state energy.

\section{\index{Local density approximation (LDA)}{Local density approximation} method}\label{sec:LDA}

The estimates of the previous section are the basic input to the first derivation of Gross-Pitaevskii ground states that we present, which is also~\cite{LieSeiYng-00,LieSei-02} the first to have been obtained. This will work only in the absence of magnetic fields, for the method ignores bosonic symmetry (see~\cite{Seiringer-03} for an application of the method to systems with magnetic fields but no bosonic symmetry). 

Now we know that 
$$ e(\varrho) \sim 4\pi a \varrho$$
for the ground state energy density of an infinitely extended \index{Homogeneous Bose gas}{homogeneous Bose gas} with density $\varrho$ and scattering length $a$. The GP functional for an inhomogeneous Bose gas can then be seen as a local density approximation (LDA)
$$ \cEGP[u] \approx \int_{\R^3} \left( |\nabla u | ^2 + V |u|^2 \right) + \int_{\R^{3}} |u(\bx)|^2 e \left(|u(\bx)|^2\right)\mathrm{d}\bx. $$
In the first term we isolate the macroscopic kinetic energy and trapping energy, responsible for the overall profile of the gas. In the second term we approximate the interaction energy using the energy of the homogeneous gas locally in space, obtaining an energy density $\varrho (\bx) e (\varrho (\bx)).$

Two remarks are in order:
\begin{itemize}
 \item This is more subtle than it looks, for the energy of the homogeneous gas uses some\footnote{When the interaction is a hard-core, it is even \emph{only} made of kinetic energy.} kinetic energy. As we understood previously, this is a high-frequency component, and we have to understand how it decouples from the rest.
 \item Such an approximation is most relevant if the gas is dilute, i.e. $\beta > 1/3$ in our convention. Then one can neglect the non-local nature of the original interaction and make the LDA work.
\end{itemize}

As for the second point, we leave it to the reader to check that, indeed, the proof of the theorem below could also be used to treat the dilute regime, without magnetic field. These considerations lead us again to the Gross-Pitaevskii energy functional
$$ \cEGP[u] = \int_{\R^3} \left( |\nabla u | ^2 + V |u|^2 \right) + 4 \pi a_w\int_{\R^{3}} |u(\bx)|^4 \mathrm{d}\bx $$
with $a_w$ the scattering length of the unscaled interaction potential $w$.

\begin{theorem}[\textbf{\index{Gross-Pitaevskii limit}{Gross-Pitaevskii limit}, first statement}]\label{thm:GP first}\mbox{}\\
Let $H_N$ be the many-body Hamiltonian~\eqref{eq:intro Schro op bis} in GP scaling, 
$$ w_N (\bx) = N^2 w (N \bx)$$
with $w$ fixed, satisfying Assumption~\ref{asum:GP pot} and with scattering length $a$ (cf Theorem~\ref{thm:scat leng}). Assume~\eqref{eq:trapping s bis} and that there is no magnetic field, $\bA \equiv 0$ in~\eqref{eq:intro Schro op bis}. We have, in the $N\to \infty$ limit,

\noindent\textbf{Convergence of the energy:} 
$$
\frac{E(N)}{N} \to \EGP. 
$$

\smallskip 

\noindent\textbf{Convergence of \index{Reduced density matrix}{reduced density matrices}:} let $\Gamma_N ^{(k)},k\geq 0$ be the reduced density matrices of a many-body ground state $\Psi_N$ and $u = \uGP$ the (unique) Gross-Pitaevskii minimizer (cf Section) We have
$$
{N \choose k} ^{-1} \Gamma_N ^{(k)} \to |u ^{\otimes k} \rangle \langle u ^{\otimes k} | 
$$
strongly in trace-class norm.
Let $\rm{MF}$ stand for $\rm{NLS}$ in Theorem~\ref{thm:main}. 
\end{theorem}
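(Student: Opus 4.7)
The upper bound $E(N)/N \leq \EGP + o(1)$ is given directly by Theorem~\ref{thm:GP up bound}, using a Jastrow or Dyson trial state built on the unique GP minimizer $\uGP$ (uniqueness follows from strict convexity of $\cEGP$ in $\rho = |u|^2$ when $\bA \equiv 0$). The real work is the matching lower bound, for which the plan is to follow a local density approximation strategy. First I would extract the scattering length via the generalized Dyson lemma, Corollary~\ref{cor:Dyson 2}, with parameters $R$, $K$, $\eps$ tuned so that $N^{-1} \ll R \ll N^{-1/3}$ (shorter than the mean interparticle distance but much larger than the range of $w_N$) and $K \to \infty$, $\eps \to 0$ slowly enough that the error $N^2 R^2 K^5 / \eps = o(N)$. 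The three-body nearest-neighbour correction in~\eqref{eq:Dyson 2 app 2} is of order $N^3 R^3 \ll N$ in probability and can be absorbed. This produces
\[
H_N \,\geq\, \sum_{j=1}^N \Bigl( (1-\chi_K(\bp_j)^2)(-\Delta_{\bx_j}) + V(\bx_j) \Bigr) \,+\, (1-\eps)^2 \!\!\sum_{i \neq j} U_R(\bx_i-\bx_j) \,-\, o(N),
\]
where the soft potential $U_R$ has integral equal to $4\pi a_w/N$ up to lower order.

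Next I would implement the LDA itself. Since $\bA \equiv 0$, by Theorem~\ref{thm:bos min} the ground state can be taken non-negative and the Hoffmann-Ostenhof$^2$ inequality (Lemma~\ref{lem:HO ineq}) transfers the remaining low-frequency one-body kinetic energy to the density $\rho_N(\bx) = N \Gamma_N^{(1)}(\bx;\bx)/\tr\Gamma_N^{(1)}$ of the ground state:
\[
\Bigl\langle \Psi_N \,\Big|\, \sum_j (1 - \chi_K(\bp_j)^2)(-\Delta_{\bx_j}) + V(\bx_j) \,\Big|\, \Psi_N \Bigr\rangle \,\geq\, \int_{\R^3} \Bigl(|\nabla \sqrt{\rho_N}|^2 + V \rho_N\Bigr) - o(N).
\]
I would then tile $\R^3$ by cubes of side $\ell$ with $N^{-1/3} \ll \ell \ll 1$, apply the cell method of Lemma~\ref{lem:cell} to the effective Hamiltonian with interaction $\sum U_R$ inside each box, and bound the in-cell energy via the homogeneous Bose gas estimate~\eqref{eq:therm ener bounds} of Theorem~\ref{thm:therm}. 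Summing over cells reconstructs the quartic term:
\[
(1-\eps)^2 \Bigl\langle \Psi_N \Big| \sum_{i\neq j} U_R(\bx_i - \bx_j) \Big| \Psi_N \Bigr\rangle \,\geq\, \frac{4\pi a_w}{N} \int_{\R^3} \rho_N^2 \,-\, o(N).
\]
Combining with the one-body term and minimizing over $\rho_N \geq 0$ with $\int \rho_N = N$ yields
\[
\frac{E(N)}{N} \,\geq\, \inf_{\int |u|^2 = 1} \cEGP[u] - o(1) \,=\, \EGP - o(1),
\]
which closes the energy asymptotics.

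For convergence of the density matrices I would use the Feynman-Hellmann approach of Section~\ref{sec:MF Onsager}. Perturb $V \rightsquigarrow V + \eps B$ for an arbitrary bounded self-adjoint one-body operator $B$: the above argument yields $N^{-1} E(N,\eps) \to \EGP_\eps$, both sides concave in $\eps$. Griffiths' lemma gives convergence of derivatives, and Lemma~\ref{lem:FH} identifies $N^{-1} \tr(B \Gamma_N^{(1)}) \to \langle \uGP | B | \uGP \rangle$ (using uniqueness of $\uGP$). This gives weak-$\star$ convergence of $N^{-1} \Gamma_N^{(1)}$ to $|\uGP\rangle\langle\uGP|$; since the limit has unit trace, the convergence is strong in trace-class. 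The higher-$k$ statements follow from the $k=1$ case by the CCR/Jensen argument recalled at the end of Section~\ref{sec:MF Onsager}.

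The hard part will be executing the cell decomposition cleanly. Theorem~\ref{thm:therm} only delivers a useful bound when the particle number in each cell is close to the expected $N\int_{\mathrm{cell}} |\uGP|^2$, which is not an a priori fact. A bootstrap is required: first use a crude energy bound plus the coercivity $V(\bx) \to \infty$ to show $\rho_N$ cannot escape to infinity nor concentrate on sets of measure $\ll \ell^3$; then refine cell by cell, exploiting that the windows $N^{-1} \ll R \ll \ell \ll 1$ and the explicit error in~\eqref{eq:therm ener bounds} leave exactly enough room. This intermediate control on $\rho_N$ is what makes the LDA rigorous and is the technical heart of the argument.
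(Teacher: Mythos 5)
Your route diverges genuinely from the paper's. The paper first performs the Lassoued--Mironescu substitution $\Psi_N = \prod_j \uGP(\bx_j) F(\bx_1,\ldots,\bx_N)$ and uses the GP variational equation to extract the macroscopic kinetic and potential energy exactly, leaving a weighted quadratic form $Q(F)$ containing \emph{both} the kinetic energy of $F$ and the interaction; the \emph{simple} Dyson lemma (Lemma~\ref{lem:Dyson 1}) together with the cell method and Temple's inequality is then applied to $Q(F)$. You instead propose to apply the \emph{generalized} Dyson lemma (Corollary~\ref{cor:Dyson 2}) to consume the high-frequency kinetic energy, then the Hoffmann--Ostenhof$\,^2$ inequality to convert the remaining low-frequency kinetic energy into $\int|\nabla\sqrt{\rho_N}|^2$, and finally the cell method on the interaction alone.

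This double-spends the kinetic energy, and the resulting interaction bound is not justified. The homogeneous gas lower bound~\eqref{eq:therm ener bounds} and the cell method behind it are fundamentally Temple-inequality arguments: inside each small box, a strictly positive spectral gap of the \emph{kinetic} operator is what forces the in-box wave-function to be approximately constant and hence the interaction to take its uncorrelated value. If you first hand the high-frequency kinetic energy to the Dyson lemma and then convert the low-frequency part into $\int |\nabla\sqrt{\rho_N}|^2$ via Lemma~\ref{lem:HO ineq}, there is no kinetic term left on the right side of the decomposition to run Temple's argument inside the cells, and the central claim
$$
(1-\eps)^2 \Bigl\langle \Psi_N \Big| \sum_{i\neq j} U_R(\bx_i - \bx_j) \Big| \Psi_N \Bigr\rangle \,\geq\, \frac{4\pi a_w}{N} \int_{\R^3} \rho_N^2 \,-\, o(N)
$$
cannot be derived from Theorem~\ref{thm:therm} or Lemma~\ref{lem:cell}: a positive pair potential evaluated in a state with anticorrelations can be arbitrarily small at fixed $\rho_N$, so no lower bound in terms of $\rho_N$ alone exists without kinetic control. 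This is exactly what the paper's decomposition circumvents: the full kinetic energy of the correlation factor $F$ is retained inside $Q(F)$ and spent only once, on the cell-by-cell Temple argument. A secondary obstruction is that Lemma~\ref{lem:HO ineq} requires the one-body operator to satisfy the diamagnetic-type inequality~\eqref{eq:pos prev}; the cut-off multiplier $(1-\chi_K^2(\bp))(-\Delta)$ does not have a positivity-preserving heat semigroup, so the HO$\,^2$ step would fail even if the kinetic energy budget allowed it. If you want to use the generalized Dyson lemma rather than the Lassoued--Mironescu substitution, you must handle the remaining low-frequency kinetic energy together with the soft interaction via a genuinely quantitative method (moments estimates plus de Finetti or coherent states, as in Section~\ref{sec:coh reloaded}), not separate them with HO$\,^2$ first.
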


\begin{proof}[Sketch of proof]
Again, a more complete proof is already reviewed in~\cite{LieSeiSolYng-05}. This combines tools from~\cite{LieSeiYng-00} (energy convergence) and~\cite{LieSei-02,LieSeiYng-02b,LieSeiYng-03b} (convergence of states). See also~\cite{LieSeiYng-01} for the 2D case. 

The energy upper bound is derived as in Section~\ref{sec:Jas-Dyson}. We turn to the 

\medskip 

\noindent\textbf{Energy lower bound.} We shall rely on Lemma~\ref{lem:Dyson 1}, and since we now deal with an inhomogeneous gas it is important to first extract the kinetic energy of the Gross-Pitaevskii minimizer. Take a normalized ground-state wave-function and write it as 
\begin{equation}\label{eq:extract}
\Psi_N (\bx_1,\ldots,\bx_N) = \prod_{j=1}^N \uGP (\bx_j) F(\bx_1,\ldots,\bx_N) 
\end{equation}
where $\uGP$ is the GP minimizer (without magnetic field, it is unique, and can be chosen strictly positive). Note that for three functions related by $\Psi = U F$, partial integration gives 
$$ \int |\nabla \Psi| ^2 = \int |U|^2 |\nabla F| ^2 + |F| ^2 |\nabla U|^2 + 2 \int U F \nabla U \cdot \nabla F = \int |U|^2 |\nabla F| ^2 - \int |F| ^2 \Delta U.$$
From the GP variational equation 
\begin{equation}\label{eq:GP eq}
 - \Delta \uGP + V \uGP + 8\pi a |\uGP|^2 \uGP = \left(\EGP + 4\pi a \int_{\R^d} |\uGP| ^4 \right) \uGP 
\end{equation}
and the above identity\footnote{This trick is extensively used when studying vortex patterns in Ginzburg-Landau and Gross-Pitaevskii theory~\cite{Aftalion-06,CorPinRouYng-11b}. In this context it originates in~\cite{LasMir-99}.} we get 
\begin{equation}\label{eq:LasMir}
\left\langle \Psi_N | H_N |\Psi_N \right\rangle = N \EGP + 4\pi a N \int_{\R^d} |\uGP| ^4 + Q (F) 
\end{equation}
with the quadratic form 
$$ 
Q (F) := \sum_{j=1} ^N \int_{\R^{3N}} \prod_{k=1}^N \left| \uGP (\bx_k)\right| ^2 \left(  |\nabla_{\bx_j} F| ^2 - 8\pi a  |\uGP(\bx_j)| ^2 |F|^2 + \frac{1}{2}\sum_{i \neq j } w(\bx_i - \bx_j) |F| ^2  \right).
$$
Now the rationale is that the weights $\left| \uGP (\bx_j)\right| ^2$ vary over a much larger length scale than the content of the big parenthesis above, for we have gotten rid of the external potential $V$ that sets the GP length scale. Thus, locally in space, $F$ will look like the minimizing configuration of the homogeneous Bose gas and then, roughly
$$ |\nabla_{\bx_j} F| ^2  + \frac{1}{2}\sum_{i \neq j} w(\bx_i - \bx_j) |F| ^2 \approx 4\pi a N^{-2} \rho_N (\bx_j)^2 $$
where $\rho_N$ is the \index{Marginal density}{marginal density} of a single particle
$$ 
\rho_N (\bx) = N \int_{\R^{3(N-1)}} |\Psi_N (\bx,\bx_2,\ldots,\bx_N)|^2 \mathrm{d}\bx_2\ldots \mathrm{d}\bx_N.
$$
Indeed we have seen in the previous section that the energy density of the homogeneous gas is $4\pi a  \rho$ and we have to apply this with our potential that has scattering length $a/N$.  If we can justify the above local density approximation, then we find  
\begin{multline*}
 Q (F) \approx \sum_{j=1} ^N 4\pi a \int_{\R^{3}} \left( N^{-2} \rho_N (\bx_j)^2 - 2 N^{-1} \rho_N (\bx_j) |\uGP(\bx_j)| ^2\right) \\ 
 \geq - \sum_{j=1} ^N 4\pi a \int_{\R^{3}} |\uGP(\bx_j)| ^4 = - 4\pi aN \int_{\R^{3}} |\uGP| ^4  
\end{multline*}
by completing the square. Inserting in~\eqref{eq:LasMir} completes the lower bound.

To make the above rigorous, one applies a variant of the \index{Cell method}{cell method} of Lemma~\ref{lem:cell} to the quadratic form $Q(F)$, reducing to this functional with variables in small boxes~\cite[Appendix~B]{Seiringer-diploma}. In these one can approximate $|\uGP|^2$ by a constant (from elliptic PDE techniques the smoothness and decay at infinity of $\uGP$ are under control), in which case bounding $Q$ from below reduces to applying~\eqref{eq:therm ener bounds}. One needs to ensure there are sufficiently many particles in each box to do that, but that can be proven to be the case, at least in a configuration optimizing the distribution of particles amongst the cells. 

\medskip 

\noindent\textbf{Convergence of states.} Since the Gross-Pitaevskii minimizer is unique, it is sufficient to prove convergence of the first reduced density matrix, as explained at the end of Section~\ref{sec:MF Onsager}. Roughly, a detailed inspection of the proof of energy convergence shows that all the kinetic energy used to control the scattering process is contained in small sets close to particle encounters. Thus we expect that $F$ (defined in~\eqref{eq:extract} by extracting the GP profile from the ground state) is almost constant. 

In the approach of~\cite{LieSei-02} (see~\cite{Seiringer-06c} for a variant) this is proved by considering the auxiliary function 
$$ f(\bx,\bX_{N-1}):= \frac{\Psi(\bx,\bX_{N-1})}{\uGP (\bx)} $$
where $\bX_{N-1} = \left( \bx_2,\ldots,\bx_N\right)$. Because we have extracted from $\Psi(\bx,\bX_{N-1})$ the GP profile, the kinetic energy of $f$ is located where $\bx$ is close to one of the points in $\bX_{N-1}$. Hence $\nabla_\bx f$ is small in $L^2$ sense outside of a set of small measure. Using a Poincar\'e inequality one can hence hope to control the deviation of $f$ from its mean. A special inequality~\cite{LieSeiYng-03b} is needed because the set outside of which we control the gradient can be irregular and disconnected. If we accept that $f(\bx,\bX_{N-1})$ is roughly constant in $\bx$ we find ($\uGP$ and $\Psi_N$ are real-valued)
\begin{align*}
 \frac{1}{N} \left\langle \uGP | \gamma_N^{(1)} | \uGP \right\rangle &= \int \uGP (\bx) \uGP (\by) \Psi_N(\bx,\bX_{N-1}) \Psi_N(\by,\bX_{N-1}) \mathrm{d}\bx \mathrm{d}\by \mathrm{d}\bX_{N-1}\\
 &= \int |\uGP (\bx)|^2 |\uGP (\by)|^2 f(\bx,\bX_{N-1}) f(\by,\bX_{N-1}) \mathrm{d}\bx \mathrm{d}\by \mathrm{d}\bX_{N-1}\\
 &\approx \int  |\uGP (\bx)|^2 |\uGP (\by)|^2 f(\bx,\bX_{N-1}) ^2  \mathrm{d}\bx \mathrm{d}\by \mathrm{d}\bX_{N-1} \\
 &= \int  |\uGP (\by)|^2 \mathrm{d}\by \int |\Psi_N (X_N)| ^2  \mathrm{d}\bX_{N} = 1,
\end{align*}
which implies 
$$ N^{-1} \gamma_N^{(1)} \to |\uGP \rangle \langle \uGP | $$ 
in operator norm, and hence in trace norm because the limit is rank one.
\end{proof}

\section{\index{Coherent states}{Coherent states}/de \index{Quantum de Finetti theorem}{Finetti} method reloaded}\label{sec:coh reloaded}

In the previous section we have derived GP ground states from many-body quantum mechanics with a special method (\index{Local density approximation (LDA)}{local density approximation}) based on the diluteness of the gas. Now we connect to more general methods, namely those of Sections~\ref{sec:deF loc} and~\ref{sec:coherent}. We have already explained in Chapter~\ref{cha:dilute} how to extend these approaches to the dilute regime. The \index{Dyson lemma}{Dyson lemmas} of Section~\ref{sec:Dyson} will turn the GP limit into a ``dilute regime plus extra error terms'', and what we need now is explain how to control those error terms, mostly due to spurious three-body terms from the application of Lemma~\ref{lem:Dyson 2}. Unlike the previous two sections, we keep track of bosonic symmetry all along, and thus provide a derivation that works even when external magnetic fields are turned on. Recall the expression of the \index{Gross-Pitaevskii energy functional}{Gross-Pitaevskii energy functional}
$$ \cEGP[u] = \int_{\R^3} \left( \left|\left(-\im \nabla + \bA \right)u \right| ^2 + V |u|^2 \right) + 4 \pi a\int_{\R^{3}} |u(\bx)|^4 \mathrm{d}\bx $$

\begin{theorem}[\textbf{\index{Gross-Pitaevskii limit}{Gross-Pitaevskii limit}, full statement}]\label{thm:GP second}\mbox{}\\
Let $H_N$ be the many-body Hamiltonian~\eqref{eq:intro Schro op bis} in GP scaling, 
$$ w_N (\bx) = N^2 w (N \bx)$$
with $w$ fixed, satisfying Assumption~\ref{asum:GP pot} and with \index{Scattering length}{scattering length} $a$. Assume~\eqref{eq:trapping s bis} and~\eqref{eq:growth A}. We have, in the limit $N \to + \infty$:

\smallskip 

\noindent\textbf{Convergence of the energy:} 
$$
\frac{E(N)}{N} \to \EGP. 
$$

\smallskip 

\noindent\textbf{Convergence of reduced density matrices:} let $\Gamma_N ^{(k)},k\geq 0$ be the reduced density matrices of a many-body ground state $\Psi_N$. There exists a Borel probability measure $\mu$ on $\MGP$ (the set of Gross-Pitaevskii ground states) such that, along a subsequence, 
$$
{N \choose k} ^{-1} \Gamma_N ^{(k)} \to \int_{\MGP} |u ^{\otimes k} \rangle \langle u ^{\otimes k} | \mathrm{d}\mu(u) 
$$
strongly in trace-class norm.
\end{theorem}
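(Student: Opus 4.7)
The plan is to combine the generalized Dyson lemma (Corollary~\ref{cor:Dyson 2}) with the de Finetti/coherent states techniques from Chapter~\ref{cha:MF 2} adapted to the dilute regime as in Chapter~\ref{cha:dilute}. The guiding idea is that, once the short-range correlations have been extracted via Lemma~\ref{lem:Dyson 2}, the remaining problem looks like a dilute mean-field problem (with $\beta<1$) for a soft effective interaction $U_R$ whose integral is precisely $4\pi a_w$ times a volume factor, so that the effective coupling constant will be the scattering length rather than $\int w$.

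First, I would apply Corollary~\ref{cor:Dyson 2} with parameters $N^{-1}\ll R\ll N^{-1/3}$, a momentum cut-off $K$ with $K^5 R^2 \ll 1$, and small $\eps>0$, to obtain the operator inequality
\begin{equation*}
H_N \;\geq\; \sum_{j=1}^N \Bigl( h_{\bx_j} - (1-\eps)\,\chi_K(\bp_j)(-\Delta_{\bx_j})\chi_K(\bp_j)\Bigr) + (1-\eps)^2 W_N - o(N),
\end{equation*}
with $W_N$ as in~\eqref{eq:Dyson 2 app}. The effective two-body potential $U_R$ is soft (its $L^\infty$ norm is of order $a_w/R^3$), supported away from the origin, and satisfies $\int U_R = 4\pi a_w$. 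Thus the resulting Hamiltonian is of dilute type with a coupling constant already tuned to the scattering length; the remaining kinetic energy $\sum_j(h_{\bx_j}-(1-\eps)\chi_K(\bp_j)(-\Delta)\chi_K(\bp_j))$ still dominates the macroscopic one-body energy, which will rebuild the GP kinetic and potential terms.

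Next, I would deal with the three-body correction produced by~\eqref{eq:Dyson 2 app 2}: the sum $\sum_{i,j,k\text{ distinct}} U_R(\bx_i-\bx_j)(1-\Theta_{2R}(\bx_j-\bx_k))$ is supported on the set where three particles sit in a ball of radius $\lesssim R$. This is the main technical obstacle, because one must show that a minimizer assigns vanishing probability to such triple encounters as $R\ll N^{-1/3}$. The natural tool is the moment estimate of Lemma~\ref{lem:mom rep} (valid since $w\geq 0$), which provides an $N$-uniform bound on $\tr(h_1 h_2\,\Gamma_N^{(2)})$ and, combined with Sobolev-type inequalities from Lemma~\ref{lem:opineq}, allows to estimate such three-body expectation values by powers of $R/N^{-1/3}$. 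Where the three-body bound is not sharp enough, one can recycle the stronger form~\eqref{eq:Dyson 2 app} directly: the unused positivity~\eqref{eq:complete square} of the original $w_N$ provides an extra non-negative term $(1-P^{\otimes 2})W_{N,\beta}(1-P^{\otimes 2})$ that can absorb the three-body contribution after projecting to high-energy modes.

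Having reduced matters to a dilute-type Hamiltonian for the potential $U_R$, I would then run the strategy of Section~\ref{sec:mom rep}: localize in energy via $P=\1_{h\leq \Lambda}$ with $\Lambda$ chosen sufficiently large, use Lemma~\ref{lem:loc} together with the second-moment bound to control the localization error by $\Lambda^{(\delta-1)/2}$ with $\delta>3/4$, and apply the semiclassical quantum de Finetti theorem (Theorem~\ref{thm:deF semi}) to the $P$-localized state. Testing against $u^{\otimes 2}$, the effective two-body energy becomes
\begin{equation*}
\iint |u(\bx)|^2 U_R(\bx-\by) |u(\by)|^2\,\mathrm{d}\bx\mathrm{d}\by \;\xrightarrow[R\to 0]{}\; 4\pi a_w \int |u|^4,
\end{equation*}
so that the de Finetti lower bound converges to $\cEGP[u]$. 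The order of limits is crucial: first $N\to\infty$ at fixed $\Lambda,K,R,\eps$, then $\Lambda\to\infty$, then $K\to\infty$, $R\to 0$, $\eps\to 0$. Matching with the upper bound of Theorem~\ref{thm:GP up bound} yields the energy convergence.

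Finally, for the convergence of reduced density matrices I would perturb $H_N$ as in~\eqref{eq:perturb hamil many} by $\eta N\binom{N}{k}^{-1}\sum \bA_{i_1\ldots i_k}$ with $\bA$ a bounded self-adjoint $k$-body operator. The entire argument above applies uniformly in small $\eta$ (the perturbation is a bounded, subleading term that does not interfere with the Dyson-lemma step), yielding $N^{-1}E(N,\eta)\to \EGP_\eta$, a concave limit in $\eta$. The Feynman–Hellmann/Griffith argument then identifies the limit of $\binom{N}{k}^{-1}\tr(\bA\,\Gamma_N^{(k)})$ with $\int_{\MGP}\tr(\bA |u^{\otimes k}\rangle\langle u^{\otimes k}|)\mathrm{d}\mu(u)$ along a subsequence; when $\MGP$ is not a singleton one recovers the measure-valued statement using convex analysis as in Section~\ref{sec:coh proof}. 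The hard part throughout is genuinely the three-body term and the interplay of the five parameters $(N,\Lambda,K,R,\eps)$, whose compatibility is what ultimately forces the scaling constraint and distinguishes the GP limit from the easier dilute case.
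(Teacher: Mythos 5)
Your skeleton is right (generalized Dyson lemma followed by Chapter~\ref{cha:dilute} dilute-regime techniques, finishing with a de~Finetti step), but the way you propose to control the three-body correction contains a genuine error. You invoke Lemma~\ref{lem:mom rep} to obtain the $N$-uniform bound $\binom{N}{2}^{-1}\tr(h_1 h_2\,\Gamma_N^{(2)})\leq C$ for the density matrix of the \emph{original} ground state. That lemma is proved under the hypothesis $\beta<2/3$ in 3D: its proof needs the prefactor $(1-CN^{d\beta/2-1})$ to stay positive, and at $\beta=1$ this blows up as $N^{1/2}$. Indeed the paper is explicit that one does \emph{not} expect a second moment estimate for a true ground state of $H_N$ in the GP regime. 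The fix used in the paper (Take~2, Section~\ref{sec:coh reloaded}) is to apply Corollary~\ref{cor:Dyson 2} first, to keep the softened two-body Hamiltonian $\Htilde_N$ of~\eqref{eq:Dyson hamil 2} with the product of cut-offs $\Theta_{2R}$ intact, and then to prove a second moment estimate for a ground state $\Psit_N$ of $\Htilde_N$ itself (Lemma~\ref{lem:mom Dys}), which works because the effective range $R\gg N^{-2/3}$ puts the Dyson Hamiltonian in the dilute rather than GP regime. The three-body term is then estimated in the state $\Psit_N$ (Lemma~\ref{lem:NamRouSei}). Your fallback remark about reusing the positivity $(1-P^{\otimes 2})W_{N,\beta}(1-P^{\otimes 2})$ from~\eqref{eq:complete square} points vaguely in the direction of ``keep extra positivity,'' but as stated it does not supply the needed moment estimate, and the Lemma~\ref{lem:bound low} positivity trick belongs to a different decomposition of the energy.

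Two secondary points. First, your parameter window $N^{-1}\ll R\ll N^{-1/3}$ is too weak: the Dyson remainder $N^2R^2K^5/\eps$ must be $o(N)$, which together with the dilute-regime constraint forces $N^{-2/3}\ll R\ll N^{-1/2}$, and in particular $R$ must be an $N$-dependent negative power (so one cannot send $N\to\infty$ first at fixed $R$ as you suggest). This tuning is precisely what the second moment of $\Htilde_N$ makes possible. Second, for the convergence of reduced density matrices the paper does not use a Feynman--Hellmann/Griffith argument but a perturbation by $-\frac{\ell!}{N^{\ell-1}}\sum|v^{\otimes\ell}\rangle\langle v^{\otimes\ell}|_{i_1,\ldots,i_\ell}$ combined with the de~Finetti measure of Theorem~\ref{thm:DeFinetti fort}; since the Dyson-lemma machinery is applied to the perturbed Hamiltonian anyway, this step tests moments of the de~Finetti measure against projectors on $v^{\otimes\ell}$ and lets $\ell\to\infty$, avoiding the convex analysis that a bare Feynman--Hellmann approach would require when $\MGP$ is not a singleton.
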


We are mostly after an efficient lower bound to the ground state energy. The idea is to use Corollary~\ref{cor:Dyson 2} to turn the GP limit in a dilute limit with a potential whose integral encodes the scattering length, and which is in a scaling $R \sim N^{-\beta} \gg N^{-2/3}$. This permits the use of the methods of Chapter~\ref{cha:dilute} on this problem. We will have to discard unwanted terms from the \index{Dyson lemma}{Dyson-like lower bound} and, as hinted at at the end of Section~\ref{sec:Dyson}, this requires tacking $R \sim N^{-\beta} \ll N^{-1/3}$ i.e. we cannot reduce the GP singularity all the way down to a mean-field one.  

For convenience we always assume~\eqref{eq:h dom Lap} in the rest of this section, as we did in Section~\ref{sec:mom rep}. This can be bypassed by ad-hoc arguments, see the original references we will cite. 

\subsection{Take 1}

Let us explain first how the three body term in~\eqref{eq:Dyson 2 app 2} is dealt with in~\cite{LieSei-06}. Applying Corollary~\ref{cor:Dyson 2} we find that the energy is bounded from below as 
\begin{equation}\label{eq:LieSei low}
E (N) \geq E^{\rm Dys}_1 (N) - C N^3 \left\langle \Psi_N | U_R (\bx_1 - \bx_2) \left( 1- \Theta_{2R} (\bx_2-\bx_3) \right) | \Psi_N \right\rangle  - C \frac{N^2 R^2 K^5}{\eps} 
\end{equation}
where $E^{\rm Dys}_1 (N)$ is the lowest eigenvalue of the ``\index{Dyson Hamiltonian}{Dyson Hamiltonian}''
\begin{equation}\label{eq:Dyson hamil 1}
\sum_{j=1} ^N \left(h_{\bx_j} - (1-\eps) \chi_K (\bp_j) (-\Delta_{\bx_j}) \chi_K (\bp_j) \right) + (1-\eps)^2 \sum_{i\neq j } U_R (\bx_i - \bx_j) 
\end{equation}
acting on bosonic functions and $\Psi_N$ is any ground state of the original Hamiltonian $H_N$. The new potential $U_R$ has integral $4 \pi a N^{-1}$ and a range $R$ which will play the role of $N^{-\beta}$ in the notation of the rest of the notes. The point of the Dyson lemma is that $R\gg N^{-1}$, i.e. we have turned the GP limit in a dilute one.

An adaptation of the methods of Chapter~\ref{cha:dilute} yields the lower bound 
$$ \liminf_{K\to \infty} \liminf_{\eps \to 0}\liminf_{N\to \infty} \frac{E^{\rm Dys}_1 (N)}{N} \geq \EGP$$
provided $R\gg N^{-2/3}$ (which is $\beta < 2/3$ in the notation of the rest of the notes, the range of applicability of Theorems~\ref{thm:dilute 3D} and~\ref{thm:dilute}). We do not comment about how the limits $\eps \to 0$ and $K\to \infty$ are disposed of. This is in any event much simpler than what we discussed so far, and we can afford to take these limits \emph{after} $N\to \infty$. Observe then that the last term in~\eqref{eq:LieSei low} is $o(N)$ provided $R \ll N^{-1/2}$, certainly compatible with $R \gg N^{-2/3}$. 

The crucial point is now to control the second term of~\eqref{eq:LieSei low} and prove it is $o(N)$ provided $R$ is not too large. The key lemma is 

\begin{lemma}[\textbf{Three particle expectations in ground states}]\label{lem:LieSei 3}\mbox{}\\
Let $P_N$ be the orthogonal projector onto the ground eigenspace of $H_N$. Let $\xi : \R^9 \mapsto \R^+$ be an arbitrary positive function, identified with the corresponding multiplication operator on the three-body space~$L^2 (\R^9)$ . Then, for any $\alpha >0$ 
\begin{multline}\label{eq:Lie Sei 3}
\tr \left( \xi (\bx_1,\bx_2,\bx_3) P_N \right) \leq\\
C \alpha^{-6/2} \sup_{\bx_1 \in \R^3} \left(\int_{\R^6} \xi (\bx_1,\bx_2,\bx_3) \mathrm{d}\bx_2 \mathrm{d}\bx_3 \right) \exp\left(\alpha E(N-3) - E (N) \right).  
\end{multline}
\end{lemma}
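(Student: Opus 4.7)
The plan is to combine the heat-semigroup representation of the ground-state projector with Feynman--Kac pointwise bounds on the many-body heat kernel, exploiting the repulsive nature of the interaction. The starting point is the spectral inequality $P_N \leq e^{\alpha E(N)} e^{-\alpha H_N}$ valid for any $\alpha > 0$, because on the ground eigenspace $e^{-\alpha H_N}$ acts as $e^{-\alpha E(N)} P_N$ while being non-negative on the orthogonal complement. Using cyclicity of the trace and $\xi \geq 0$, this gives
\[
\tr \bigl(\xi(\bx_1,\bx_2,\bx_3) P_N \bigr) \;\leq\; e^{\alpha E(N)} \tr \bigl(\xi\, e^{-\alpha H_N}\bigr),
\]
and the problem is reduced to bounding the trace on the right.

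Next I would decompose $H_N$ by isolating particles $1,2,3$: under the assumption $w_N \geq 0$ (which is what makes these Dyson-lemma estimates run for repulsive potentials), we have the operator bound $H_N \geq h_1 + h_2 + h_3 + H_{N-3}^{(4,\ldots,N)}$, obtained by dropping every pair interaction involving at least one of these three distinguished particles together with those inside the cluster $\{1,2,3\}$. The Trotter product formula combined with the pointwise positivity of the removed multiplicative potentials (Feynman--Kac representation) upgrades this operator bound to a pointwise inequality on the diagonal of the heat kernel,
\[
e^{-\alpha H_N}(\bX_N; \bX_N) \;\leq\; \Bigl(\prod_{j=1}^{3} e^{-\alpha h}(\bx_j; \bx_j)\Bigr)\; e^{-\alpha H_{N-3}^{(4,\ldots,N)}}(\bX_{N-3}; \bX_{N-3}).
\]

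The third step is to integrate against $\xi$ and estimate the three-particle factor. The diamagnetic inequality~\eqref{eq:diamag} together with $V \geq 0$ gives $e^{-\alpha h}(\bx;\bx) \leq (4\pi\alpha)^{-3/2}$, and the one-body semigroup is sub-stochastic, $\int e^{-\alpha h}(\bx;\by)\,d\bx \leq 1$. Bounding two of the three heat kernels by their supremum and integrating the remaining one gives
\[
\int \xi(X_3) \prod_{j=1}^{3} e^{-\alpha h}(\bx_j; \bx_j)\, dX_3 \;\leq\; (4\pi\alpha)^{-3}\; \sup_{\bx_1}\int \xi(\bx_1,\bx_2,\bx_3)\,d\bx_2\,d\bx_3 \;\leq\; C\alpha^{-3}\,\|\xi\|_*,
\]
which explains the announced power $\alpha^{-3} = \alpha^{-6/2}$ in the statement.

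The technical heart of the argument — and the step I expect to be the main obstacle — is the reduction of the $(N-3)$-body factor to an expression governed by $E(N-3)$. A brute-force trace $\int e^{-\alpha H_{N-3}}(\bX;\bX)\,d\bX = \sum_k e^{-\alpha E_k(N-3)}$ is not controlled by $e^{-\alpha E(N-3)}$ alone, so one cannot simply tensor-factorize blindly. The way out is to avoid taking the $(N-3)$-body trace against the identity: one should condition on $X_3$ and apply the bosonic operator inequality $\langle \phi, e^{-\alpha H_{N-3}}\phi\rangle \leq e^{-\alpha E(N-3)}\|\phi\|^2$ to the partial wave-function $\phi_{X_3}(\bX_{N-3}) := \Psi_N(X_3,\bX_{N-3})$ (which is a bosonic $(N-3)$-particle state by the symmetry of $\Psi_N$). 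Writing $\Psi_N = e^{\alpha(E(N)-E(N-3))}\,e^{\alpha E(N-3)}e^{-\alpha H_N}\Psi_N$, using the pointwise factorization above on the kernel, and applying Cauchy--Schwarz in the $\bX_{N-3}$ variables (so that the $(N-3)$-body Gaussian moment collapses into $e^{-\alpha E(N-3)}\|\phi_{X_3}\|^2$) then produces exactly the exponential factor $\exp(\alpha E(N-3) - E(N))$ of the lemma, multiplied by the $\alpha^{-3}\|\xi\|_*$ already extracted from the three-particle heat kernels. The delicate point throughout is to keep bosonic symmetry alive so that $E(N-3)$ — rather than some larger partition-function quantity — emerges in the exponent.
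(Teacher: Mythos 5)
Your overall strategy is sound and broadly parallel to the one the paper sketches: split off the three distinguished particles via the pointwise Feynman--Kac factorization of the many-body heat kernel (valid because the discarded cross-interaction potentials are non-negative), then control the three-particle factor by explicit heat-kernel bounds and the $(N-3)$-body factor via the spectral bound at $E(N-3)$. You also correctly see that a naive trace factorization leaves you with a full $(N-3)$-body partition function rather than $e^{-\alpha E(N-3)}$, and that conditioning on $X_3$ and applying $e^{-\alpha H_{N-3}}\leq e^{-\alpha E(N-3)}$ to the bosonic slice $\phi_{X_3}:=\Psi_N(X_3,\cdot)$ is the cure; but the estimate you then need in the $\bX_{N-3}$ variables is Minkowski's integral inequality for the $L^2$ norm, not Cauchy--Schwarz, and when carried out it produces $\exp\left(2\alpha(E(N)-E(N-3))\right)$ rather than a single power of $\alpha$ in the exponent --- a harmless overshoot (incidentally the exponent in the printed lemma should be read as $\alpha(E(N)-E(N-3))\geq 0$, which is the quantity the paper later needs to stay bounded).

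The concrete gap is your three-particle estimate
\[
\int \xi(X_3) \prod_{j=1}^{3} e^{-\alpha h}(\bx_j; \bx_j)\,\mathrm{d}X_3 \;\leq\; (4\pi\alpha)^{-3}\sup_{\bx_1}\int \xi\,\mathrm{d}\bx_2\,\mathrm{d}\bx_3,
\]
which is false as written. Bounding two of the three diagonal factors by $(4\pi\alpha)^{-3/2}$ and ``integrating the remaining one'' yields $\int e^{-\alpha h}(\bx_1;\bx_1)\,\mathrm{d}\bx_1 = \tr\, e^{-\alpha h}$, a $V$- and $\alpha$-dependent quantity, not $1$; the sub-Markov property you invoke concerns the \emph{off-diagonal} integral $\int e^{-\alpha h}(\bx;\by)\,\mathrm{d}\bx$, which never appears when you stay on the diagonal. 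Since the whole point of the lemma is a constant independent of the trap, this is a real breakdown, not a cosmetic one. The correct mechanism, as in the paper's comments, keeps the off-diagonal kernel $k_\alpha(\bx_1;\by_1)$ intact, bounds only $k_\alpha(\bx_2;\by_2)k_\alpha(\bx_3;\by_3)\leq C\alpha^{-3}$, and then treats $k_\alpha$ as an $L^2(\R^3)\to L^2(\R^3)$ contraction acting on the auxiliary function $F_\xi(\bx_1):=\int |f(\bx_1,\bx_2,\bx_3)|\sqrt{\xi(\bx_1,\bx_2,\bx_3)}\,\mathrm{d}\bx_2\mathrm{d}\bx_3$, after which Cauchy--Schwarz in $(\bx_2,\bx_3)$ produces the norm $\sup_{\bx_1}\int\xi$. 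The $\alpha^{-3}$ and the correct norm of $\xi$ emerge together only when the surviving heat kernel is handled as an operator, not integrated against a constant. In your revised plan this means replacing your middle-step estimate by the operator-norm bound on $\sqrt{\xi}\,k_\alpha^{\otimes 3}\sqrt{\xi}$ (essentially the paper's $\Lambda_\alpha$) evaluated against the conditional density $g(X_3):=\|\phi_{X_3}\|_{L^2(\R^{3(N-3)})}$; the displayed inequality above cannot be imported unchanged. Note finally that the paper's proof, per its comments and the reference to~\cite{LieSei-06}, proceeds through Gibbs states at positive temperature followed by the zero-temperature limit (whence $P_N$ appears rather than a single $|\Psi_N\rangle\langle\Psi_N|$); your direct spectral-projector route is a valid alternative when the ground state is unique.
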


\begin{proof}[Comments]
We do not discuss the proof of the above, which heavily relies on path integrals ideas (the \index{Trotter product formula}{Trotter} and the \index{Feynman-Kac formula}{Feynman-Kac-It\^o} formulas). It can be found in~\cite[Lemma~2]{LieSei-06}. The reason why we get $P_N$ instead of $|\Psi_N \rangle \langle \Psi_N|$ for a single ground state in the right-hand side of~\eqref{eq:Lie Sei 3} is that one starts from estimates in positive temperature equilibria of $H_N$, and then lets the temperature go to zero. The \index{Boltzmann-Gibbs ensemble}{Gibbs state} then converges to $P_N$ (all ground states are equally likely in this limit). Of course if the ground state is unique, this is irrelevant. 

The original inequality uses the \index{Heat kernel}{heat kernel} at time (or inverse temperature) $\alpha$ of ($V$ being the one-body potential in $H_N$)
$$ h_3 = \sum_{j=1} ^3 \left( - \Delta_{\bx_j} + V (\bx_j) \right),$$
whose kernel we identify with $e^{-\alpha h_3}$. Then the first term (involving $\xi$) in the right-hand side of~\eqref{eq:Lie Sei 3} is replaced by $\Lambda_{\alpha}$, the operator norm of the map $L^2 (\R^9) \mapsto L^2 (\R^9)$ whose integral kernel is 
$$K_{\alpha,\xi} \left( \bX;\bY \right) := \sqrt{\xi\left( \bX \right)} \left( e^{-\alpha h_3} \left( \bX; \bY \right) \right) \sqrt{\xi(\bY)}$$
Thus $\Lambda_\alpha$ is in fact the norm of the  heat flow 
$$ \dd_t u_3 = h_ 3 u_3$$
at time $\alpha$ (whose Green function is $e^{-\alpha h_3}$), seen as an operator from $L^2 ( \R^9, \sqrt{\xi (\bX)}^{-1} \mathrm{d}\bx)$ to itself. 

To see that the lemma we stated follows from this original formulation, observe that the Feynman-Kac-Trotter formula~\cite{Simon-05} implies (similarly as in the discussion around~\eqref{eq:Trotter}) that for $V\geq 0$ 
$$ e^{-\alpha h_3} \left( \bx_1,\bx_2,\bx_3; \by_1,\by_2,\by_3 \right) \leq k_\alpha(\bx_1;\by_1) k_\alpha(\bx_2;\by_2) k_\alpha(\bx_3;\by_3)$$
with $k_\alpha$ the heat kernel at time $\alpha$ of the free Laplacian on $\R^3$. The latter is explicitly known~\eqref{eq:heat kernel}, but we only use that $k_\alpha(\bx;\by) \leq C \alpha^{-3/2}.$ Inserting in the above we find 
$$
\left| \int_{\bX,\bY \in \R^9} \overline{f (\bX)} K_{\alpha,\xi} \left( \bX;\bY\right) f (\bY) \mathrm{d}\bY \mathrm{d}\bx\right| \leq C \alpha^{-6/2} \int_{\bx,\by \in \R^3} |F_\xi (\bx)| k_\alpha (\bx;\by) |F_\xi (\by)| \mathrm{d}\bx \mathrm{d}\by
$$
with 
$$
F_\xi (\bx) := \int_{\bx_2,\bx_3 \in \R^2} f(\bx,\bx_2,\bx_3) \sqrt{\xi(\bx,\bx_2,\bx_3)}.
$$
But the heat kernel $k_\alpha$ is bounded from $L^2$ to $L^2$, with bound $1$, so 
\begin{multline*}
\left|\int_{\bX,\bY \in \R^9} \overline{f (\bX)} K_{\alpha,\xi} \left( \bX;\bY\right) f (\bY) d\bY \mathrm{d}\bx\right| \leq C \alpha^{-6/2} \int_{\R^3} F_\xi (\bx) ^2 \mathrm{d}\bx \\
\leq C \alpha^{-6/2} \int\left(\int \xi(\bx_1,\bx_2,\bx_3) \mathrm{d}\bx_2 \mathrm{d}\bx_3 \right) \left( \int f^2(\bx_1,\bx_2,\bx_3) \mathrm{d}\bx_2 \mathrm{d}\bx_3 \right)  \mathrm{d}\bx_1 
\end{multline*}
where we used Cauchy-Schwarz to bound $F_\xi$ from above pointwise. This being so for any $f\in L^2 (\R^9)$ proves that 
$$ \Lambda_\alpha \leq C \alpha^{-6/2} \sup_{\bx_1 \in \R^3} \int_{\R^6} \xi (\bx_1,\bx_2,\bx_3) \mathrm{d}\bx_2 \mathrm{d}\bx_3,$$
as used in~\eqref{eq:Lie Sei 3}.
\end{proof}

To conclude the proof of the energy lower bound, one first has to ensure that $E(N-3) - E (N)$, which appears in the right-hand side of~\eqref{eq:Lie Sei 3}, stays finite when $N\to \infty$. This is intuitively true (this is the energy gain for removing three particles from the system). An efficient a priori bound is not obvious but can be circumvented by appropriate arguments that we do not reproduce. Let us thus assume that $E(N-3) - E (N)$ is bounded independently of $N$ and see how the above lemma allows to dispose of the three-body term in~\eqref{eq:LieSei low}.  

Averaging~\eqref{eq:LieSei low} with respect to $|\Psi_N\rangle \langle \Psi_N|$ we can freely replace 
$$ \left\langle \Psi_N | U_R (\bx_1 - \bx_2) \left( 1- \Theta_{2R} (\bx_2-\bx_3) \right) | \Psi_N \right\rangle$$
by 
$$ 
\tr\left( U_R (\bx_1 - \bx_2) \left( 1- \Theta_{2R} (\bx_2-\bx_3) \right) P_N \right), 
$$
which is, in view of the above, bounded by a constant (depending on $\alpha >0$) times 
$$ \sup_{\bx_1} \int_{\R^6} U_R (\bx_1- \bx_2 ) \left( 1- \Theta_{2R} (\bx_2-\bx_3) \right)  \mathrm{d}\bx_2 \mathrm{d}\bx_3 \leq C R^3 N^{-1}.$$
Here we performed first the integral in $\bx_3$ to obtain the factor $R^{3}$ (independently of $\bx_1,\bx_2$), then the integral in $\bx_2$, using that $\int_{\R^3} U_R$ is bounded by $N^{-1}$. 

Thus the second term in~\eqref{eq:LieSei low} is bounded by $C N^2 R^3 \ll N$ if we choose $R \ll N^{-1/3}$, the typical inter-particle distance. We are at liberty to do so, for this is compatible with the requirement $\beta < 2/3$ in Chapter~\ref{cha:dilute}. Hence our sketch of the energy lower bound is complete. 

To deduce convergence of density matrices, one can apply all this machinery to a perturbed Hamiltonian and use a \index{Feynman-Hellmann principle}{Feynman-Hellmann} argument as mentioned in Section~\ref{sec:coh proof}. This is a bit long and strictly speaking has been considered only for the first density matrix, see~\cite{LieSei-06}. The convergence of density matrices is more transparent with the de Finetti method, as we shall discuss at the end of the next section. 

\subsection{Take 2}

Our second way of dealing with the three-body term is taken from~\cite{NamRouSei-15}. We replace~\eqref{eq:LieSei low} by   
\begin{equation}\label{eq:NamRouSei low}
E (N) \geq E^{\rm Dys}_2 (N) - C \frac{N^2 R^2 K^5}{\eps} 
\end{equation}
where $E^{\rm Dys}_2 (N)$ is the lowest eigenvalue of the ``\index{Dyson Hamiltonian}{Dyson Hamiltonian}''
\begin{multline}\label{eq:Dyson hamil 2}
\Htilde_N := \sum_{j= 1} ^N \left(h_{\bx_j} - (1-\eps) \chi_K (\bp_j) (-\Delta_{\bx_j}) \chi_K (\bp_j) \right) \\+ (1-\eps)^2 \sum_{i\neq j } U_R (\bx_i - \bx_j) \prod_{k \neq i, j} \Theta_{2R} (\bx_j-\bx_k) 
\end{multline}
acting on bosonic functions. Compared with the previous section we have not yet used~\eqref{eq:Dyson 2 app 2}. We do it now: 
\begin{equation}\label{eq:NamRouSei low 2}
E ^{\rm Dys}_2 (N) \geq E ^{\rm Dys}_1 (N) - C N^3 \left\langle \Psit_N | U_R (\bx_1 - \bx_2) \left( 1- \Theta_{2R} (\bx_2-\bx_3) \right) | \Psit_N \right\rangle
\end{equation}
where $E ^{\rm Dys}_1 (N)$ is the lowest eigenvalue of~\eqref{eq:Dyson hamil 1} and $\Psit_N$ a ground state of~\eqref{eq:Dyson hamil 2}. We can estimate $E ^{\rm Dys}_1 (N)$ as in Chapter~\ref{cha:dilute} provided $R \gg N^{-2/3}$, as already explained. The main difference with the approach in~\eqref{eq:LieSei low} is that we have to bound the expectation of the three-body term in a ground state $\Psit_N$ of~\eqref{eq:Dyson hamil 2}. Why is that useful ? We shall make two observations:
\begin{itemize}
 \item Such a ground state $\Psit_N$ satisfies a \index{Moments estimates}{second moment estimate} akin to Lemma~\ref{lem:mom rep}. 
 \item The three-body term is controlled by the second moment of the kinetic energy.
\end{itemize}
Here it is important to (a) use the Dyson lemma to reduce the singularity of the interaction and (b) consider a ground state of~\eqref{eq:Dyson hamil 2} and not~\eqref{eq:Dyson hamil 1} directly. Indeed, we \emph{do not expect} that a second moment estimate can hold for a true ground state of the original Hamiltonian. We do expect that it holds for a ground state of~\eqref{eq:Dyson hamil 1}, but that would be harder to prove, because~\eqref{eq:Dyson hamil 1} contains attractive terms, unlike~\eqref{eq:Dyson hamil 2} (cf Section~\ref{sec:mom att}).  

We refer to~\cite{NamRouSei-15} for more details and state these two observations as lemmas, whose proofs are (lengthy) variations on the considerations of Sections~\ref{sec:mom}-\ref{sec:mom rep}. Essentially you should think that~\eqref{eq:Dyson hamil 2} is not much different from a bona fine Hamiltonian with pair interactions, in a \index{Dilute limit}{dilute scaling}. Computing with it is of course harder because of the cut-off killing the pair interaction whenever three particles sit at the same place.

Denote 
$$ \htilde := h - (1-\eps) \chi_K (\bp) (-\Delta) \chi_K (\bp)$$
the one-body Hamiltonian appearing in~\eqref{eq:Dyson hamil 2}. 

\begin{lemma}[\textbf{Second moment estimate for Dyson's Hamiltonian}]\label{lem:mom Dys}\mbox{}\\
For any $\eps,K>0$ in Corollary~\ref{cor:Dyson 2} and $R$ satisfying $R\gg N^{-2/3}$ when $N\to \infty$ we have 
$$ \Htilde_N ^2 \geq \frac{1}{3} \left(\sum_{j=1} ^N \htilde_{\bx_j} \right) ^2$$
as operators, where $\Htilde_N$ is as in~\eqref{eq:Dyson hamil 2}.
\end{lemma}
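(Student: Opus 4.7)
Plan: I would mirror the strategy of Lemma~\ref{lem:mom rep}, in the spirit of squaring the variational equation and comparing to the non-interacting second moment. Decompose $\Htilde_N = T + W$, where $T := \sum_{j=1}^N \htilde_{\bx_j}$ is the one-body part (self-adjoint; after shifting the energy reference so that $V\ge 0$, one has $\htilde = h - (1-\eps)\chi_K(\bp)(-\Delta)\chi_K(\bp) \ge \eps(-\Delta) + V \ge 0$, and so $T\ge 0$), and
$$W := (1-\eps)^2 \sum_{i \neq j} U_R(\bx_i - \bx_j) \prod_{k \neq i,j} \Theta_{2R}(\bx_j - \bx_k)$$
is multiplication by a pointwise non-negative function (since $U_R\ge 0$ and $0\le \Theta_{2R}\le 1$), hence $W \ge 0$ as an operator. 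Expanding, $\Htilde_N^2 = T^2 + (TW+WT) + W^2$, and discarding $W^2\ge 0$ reduces the claim to controlling the anticommutator $TW+WT$ from below, more precisely to showing $TW + WT \ge -\tfrac{2}{3}T^2 + o(T^2)$ in the regime $R \gg N^{-2/3}$.

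The key step is then to unfold $TW+WT = \sum_{k,\,i\neq j}\bigl(\htilde_k \hat W_{ij} + \hat W_{ij}\htilde_k\bigr)$ with $\hat W_{ij} := U_R(\bx_i-\bx_j)\prod_{\ell\neq i,j}\Theta_{2R}(\bx_j-\bx_\ell)$ and split according to whether $k\in\{i,j\}$ or not. The diagonal contribution ($k\in\{i,j\}$) is handled exactly as in the proof of Lemma~\ref{lem:mom rep}, by invoking the operator inequality~\eqref{eq:Nam mag} of Lemma~\ref{lem:opineq} with $W \rightsquigarrow U_R$: using $\|U_R\|_{L^1}\sim N^{-1}$ together with $\|U_R\|_{L^p}\lesssim N^{-1}R^{-3(p-1)/p}$ and the assumption $R\gg N^{-2/3}$, one checks that after bosonic symmetrization the resulting operator is bounded from below by $-o(1)\,T^2$, uniformly in $N$.

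The genuinely new obstacle is the off-diagonal contribution ($k\notin\{i,j\}$), which is a true three-body term: $\htilde_k$ does not commute with $\hat W_{ij}$ because of the cutoff factor $\Theta_{2R}(\bx_j - \bx_k)$. I would handle it by writing $\hat W_{ij} = \Pi_{ij,k}\cdot U_R(\bx_i-\bx_j)\Theta_{2R}(\bx_j-\bx_k)$, where $\Pi_{ij,k} := \prod_{\ell\neq i,j,k}\Theta_{2R}(\bx_j-\bx_\ell)$ commutes with $\htilde_k$ and satisfies $0\le \Pi_{ij,k}\le 1$, and then applying a three-body analogue of~\eqref{eq:Nam mag} to $\htilde_k\cdot[U_R(\bx_i-\bx_j)\Theta_{2R}(\bx_j-\bx_k)] + \mathrm{h.c.}$. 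Here one uses that $\Theta_{2R}$ is smooth with $\|\nabla \Theta_{2R}\|_{L^\infty}\lesssim R^{-1}$ and is supported on $\{|\bx_j-\bx_k|\le 4R\}$, so the contribution is geometrically small; the product $U_R(\bx_i-\bx_j)\Theta_{2R}(\bx_j-\bx_k)$ has $L^1$-norm $\lesssim N^{-1}R^3$ and can be absorbed into $\htilde_i\htilde_k + \htilde_j\htilde_k$ with a prefactor $o(1)$ provided $R\gg N^{-2/3}$, after using bosonic symmetry to sum over the three labels.

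Collecting these estimates one gets $\Htilde_N^2 \ge (1-o(1))T^2$, which in particular implies the stated $\Htilde_N^2 \ge \tfrac13 T^2$ for $N$ large enough; the constant $\tfrac13$ is deliberately far from sharp, kept as a fixed positive constant so that the bound is clean and holds uniformly once $N$ is large. The hard part is, as indicated, the three-body off-diagonal contribution, which has no analogue in the pair-interaction setting of Lemma~\ref{lem:mom rep} and is responsible for the restriction $R\gg N^{-2/3}$: shorter $R$ makes the three-body commutator too singular to be absorbed by the one-body second moment $T^2$.
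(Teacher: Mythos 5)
Your overall strategy --- decompose $\Htilde_N = T + W$ with $T = \sum_j \htilde_{\bx_j}$ and $W$ the nonnegative regularized Dyson interaction, expand $\Htilde_N^2$, discard $W^2 \geq 0$, and control the anticommutator $TW + WT$ via the operator inequalities of Lemma~\ref{lem:opineq} --- matches what the paper has in mind: it characterizes the proof only as a ``lengthy variation'' on Lemma~\ref{lem:mom rep} and defers the details to~\cite{NamRouSei-15}. You also correctly identify the genuinely new obstacle relative to Lemma~\ref{lem:mom rep}, namely that the cutoffs $\Theta_{2R}$ turn the interaction into a true many-body potential, producing commutators $[\htilde_k,\Theta_{2R}(\bx_j-\bx_k)]$ with no analogue in the pair-interaction setting.

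Three points deserve a second look. First, the claim $\htilde \ge \eps(-\Delta) + V$ is only correct when $\bA\equiv 0$: with a magnetic field, $h=(-\im\nabla+\bA)^2+V$ is not comparable to $-\Delta+V$ as operators, and one must instead invoke Assumption~\eqref{eq:h dom Lap}, which the paper explicitly imposes for this section. Second, your split into ``$k\in\{i,j\}$ versus $k\notin\{i,j\}$'' does not cleanly separate pair from multi-body commutators: the cutoff $\prod_{\ell\neq i,j}\Theta_{2R}(\bx_j-\bx_\ell)$ depends on $\bx_j$, so the $k=j$ contribution already involves $[\htilde_j,\Theta_{2R}(\bx_j-\bx_\ell)]$ for every $\ell$, which falls outside the scope of~\eqref{eq:Nam mag}; only $k=i$ reduces cleanly to the pair estimate after sandwiching by $\prod_{\ell}\Theta_{2R}(\bx_j-\bx_\ell)^{1/2}$, which does commute with $\htilde_i$. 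Third, the closing attribution is off: the restriction $R\gg N^{-2/3}$ is already forced by the two-body cross terms $\htilde_i\, U_R(\bx_i-\bx_j)+U_R(\bx_i-\bx_j)\,\htilde_i$, exactly as the constraint $\beta<2/3$ emerges in Lemma~\ref{lem:mom rep} via~\eqref{eq:Nam mag} applied to a potential whose $L^2$-norm scales as $N^{-1}R^{-3/2}$. The cutoff-induced commutators are an additional error source, not the principal one, so presenting them as ``responsible for'' the $R$ constraint would mislead the reader about where the sharp scaling really comes from.
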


Clearly this implies that, for a ground state $\Psit_N,$ 
$$ \left\langle \Psit_N | \htilde_{\bx_1} \htilde_{\bx_2} | \Psit_N \right\rangle \leq C$$
independently of $N$ when $N\to \infty$, a convenient variant of~\eqref{eq:2mom rep}. 

The above is a crucial ingredient in the proof of 

\begin{lemma}[\textbf{Three particles expectations in ground states, again}]\label{lem:NamRouSei}\mbox{}\\
With the same assumptions and notation as above 
\begin{equation}\label{eq:Dyson hamil 3}
 \Htilde_N \geq \sum_{j= 1} ^N \htilde_{\bx_j} + (1-\eps)^2 \sum_{i\neq j } U_R (\bx_i - \bx_j) - C_{\eps,K} \frac{R^2}{N^2} \Htilde_N ^4. 
\end{equation}
\end{lemma}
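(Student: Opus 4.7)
The starting point is to isolate the unwanted three-body contribution. By Corollary~\ref{cor:Dyson 2}, applying~\eqref{eq:Dyson 2 app 2} inside the definition~\eqref{eq:Dyson hamil 2} of $\Htilde_N$ gives
$$
\Htilde_N \;\geq\; \sum_{j=1}^N \htilde_{\bx_j} + (1-\eps)^2 \sum_{i\neq j } U_R(\bx_i-\bx_j) - (1-\eps)^2 V,
$$
where
$$
V \;:=\; \sum_{\substack{i,j,k\\ \text{distinct}}} U_R(\bx_i-\bx_j)\bigl(1-\Theta_{2R}(\bx_j-\bx_k)\bigr).
$$
It thus suffices to establish the operator bound $V \leq C_{\eps,K}\, R^2 N^{-2}\, \Htilde_N^4$. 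The plan is to dominate each summand of $V$ by three kinetic-energy operators (one per coordinate it depends on), then sum and compare with a fourth power of $\Htilde_N$ via a moment-type argument à la Lemma~\ref{lem:mom Dys}.

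First I would exploit that $U_R$ and $(1-\Theta_{2R})$ are commuting multiplication operators. Writing
$$
V_{ijk} \;=\; U_R^{1/2}(\bx_i-\bx_j)\,\bigl(1-\Theta_{2R}(\bx_j-\bx_k)\bigr)\,U_R^{1/2}(\bx_i-\bx_j),
$$
and using the Sobolev-type bound~\eqref{eq:Sob} with $p=3/2$ applied in the $\bx_k$ variable (noting $\|1-\Theta_{2R}\|_{L^{3/2}(\R^3)} \leq C R^2$), one obtains
$$
V_{ijk} \;\leq\; C R^2\, U_R(\bx_i-\bx_j)\,(-\Delta_{\bx_k}),
$$
since $U_R(\bx_i-\bx_j)$ commutes with $-\Delta_{\bx_k}$. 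A similar factorization combined with the Erdős-Yau bound~\eqref{eq:ErdYau} (valid for $\alpha>3/4$) yields, after symmetrization,
$$
V_{ijk} \;\leq\; C R^2 N^{-1}\, (1-\Delta_{\bx_i})^{\alpha}(1-\Delta_{\bx_j})^{\alpha}(1-\Delta_{\bx_k}),
$$
since $\|U_R\|_{L^1} \leq C N^{-1}$. Pick $\alpha$ close to $3/4$, so in particular $\alpha \leq 1$.

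Second, summing over the distinct triples $(i,j,k)$ gives
$$
V \;\leq\; C R^2 N^{-1} \sum_{i\neq j\neq k\neq i} (1-\Delta_{\bx_i})(1-\Delta_{\bx_j})(1-\Delta_{\bx_k}) \;\leq\; C R^2 N^{-1}\, \bigl(T_N + N\bigr)^3
$$
with $T_N := \sum_j (-\Delta_{\bx_j})$. At this point I would use the assumption~\eqref{eq:h dom Lap} together with the cut-off structure $\htilde = h - (1-\eps)\chi_K(\bp)(-\Delta)\chi_K(\bp)$ to compare $(-\Delta)$ with $\htilde$: outside the low-frequency region $\chi_K$ controls, $\htilde \gtrsim_\eps (-\Delta)$ with a constant deteriorating in $\eps$, and the remaining low-frequency portion contributes an $\eps, K$-dependent bounded term. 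This produces
$$
T_N + N \;\leq\; C_{\eps,K}\bigl(\Htilde_N + N\bigr).
$$

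Third, and this is the heart of the matter, I would upgrade the cubic bound into a quartic one using a moment-type argument analogous to Lemma~\ref{lem:mom Dys}. The gain of one extra power comes from the observation that the interaction terms in $\Htilde_N$ are non-negative, so that $\Htilde_N^2$ dominates $\bigl(\sum_j \htilde_{\bx_j}\bigr)^2$, and iterating once more
$$
\Htilde_N^4 \;\gtrsim\; \bigl(\textstyle\sum_j \htilde_{\bx_j}\bigr)^4 \;\gtrsim_{\eps,K}\; (T_N+N)^4 \;\gtrsim\; N\,(T_N+N)^3
$$
so that the cubic bound from step two converts to $V \leq C_{\eps,K}\, R^2 N^{-2} \Htilde_N^4$, as claimed.

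The hard part will be the last step: operator-squaring is not monotone in general, so extending the second-moment estimate of Lemma~\ref{lem:mom Dys} into a genuine operator inequality relating $\Htilde_N^4$ to $(T_N+N)^4$ is delicate. One likely needs to mimic the proof of Lemma~\ref{lem:mom Dys} (expanding $\Htilde_N^2$ and showing that commutators involving the cut-off interaction $U_R \prod \Theta_{2R}$ are subdominant via~\eqref{eq:Nam}--\eqref{eq:Nam mag}) and then iterate it once more to reach fourth powers. This iteration produces additional three- and four-body commutator terms that must in turn be absorbed; controlling them requires precisely the fact that the cut-off $\prod_k \Theta_{2R}$ forces pair interactions to be active only in the absence of nearby third particles, so that the commutator remainders are harmless on the relevant scale $N^{-2/3} \ll R \ll N^{-1/2}$.
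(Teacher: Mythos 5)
Your strategy matches the paper's route (reduce to the three-body term $V$ via Corollary~\ref{cor:Dyson 2}, then estimate it), and the bound on each $V_{ijk}$ via~\eqref{eq:Sob} for $(1-\Theta_{2R})$ and~\eqref{eq:ErdYau} for $U_R$ is sound, yielding $V\leq CR^2N^{-1}(T_N+N)^3$. (Minor remark: the constant in $T_N+N\leq C(\Htilde_N+N)$ does not actually need $K$, since $\chi_K(\bp)(-\Delta)\chi_K(\bp)\leq -\Delta$ unconditionally.)

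The genuine gap is in your ``heart of the matter'': the chain $\Htilde_N^4\gtrsim(\sum_j\htilde_{\bx_j})^4\gtrsim_{\eps,K}(T_N+N)^4$ does not hold as you set it up, and neither link follows from what you invoke. Operator powers beyond the first are not monotone, and the second-moment bound of Lemma~\ref{lem:mom Dys} does \emph{not} iterate: $A=B+W$ with $W,B\geq 0$ and $A^2\geq cB^2$ does not imply $A^4\geq c'B^4$ for any $c'>0$. A concrete obstruction: take $B=\mathrm{diag}(n,1)$, $W=\left(\begin{smallmatrix}1&1\\1&1\end{smallmatrix}\right)\geq0$, $A=B+W$; then $B\geq1$ and a trace/determinant check gives $A^2\geq\tfrac12 B^2$ for all $n$, yet $\det(A^4-cB^4)=-cn^6+O(n^5)$ is negative for every fixed $c>0$ once $n$ is large, so $A^4\geq cB^4$ fails. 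Hence $\Htilde_N^4\gtrsim(\sum_j\htilde_{\bx_j})^4$ has to be proved from scratch by expanding $(\sum_j\htilde_{\bx_j}+W_N)^4$ and absorbing all the mixed $W_N$-terms via Lemma~\ref{lem:opineq}; this is precisely the ``lengthy variation'' the paper alludes to, not a free iteration. The inner detour through $T_N+N$ suffers from the same defect, but it is dispensable: since $A\mapsto A^\alpha$ is operator-monotone for $0<\alpha\leq1$ and $-\Delta\leq C_\eps(\htilde+C_\eps)$ under~\eqref{eq:h dom Lap}, you can replace $(1-\Delta_i)^\alpha$ by $C_\eps(\htilde_i+C_\eps)^\alpha\leq C_\eps(\htilde_i+C_\eps)$ already in the Erd\H{o}s--Yau step; the $\htilde_j$'s commute pairwise and can be shifted so that $\htilde_j\geq1$, which gives directly $V\leq C_\eps R^2N^{-2}(\sum_j\htilde_{\bx_j})^4$ without ever passing through $T_N$. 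What is left is exactly the missing fourth-moment estimate.
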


This is stated as a bound on the full Hamiltonian~\eqref{eq:Dyson hamil 2} for conciseness, but this is really obtained by first reducing to~\eqref{eq:Dyson hamil 1} using~\eqref{eq:Dyson 2 app 2}, and then estimating the three-body term. 

With this at our disposal, we are left with studying the Hamiltonian on the right-hand side of~\eqref{eq:Dyson hamil 3}, which follows the lines of Chapter~\ref{cha:dilute} (remember that we take $R\gg N^{-2/3}$, which means $\beta < 2/3$). The extra error term in~\eqref{eq:Dyson hamil 3} is easily discarded: evaluated in a ground state of $\Htilde_N$
$$ \Htilde_N \Psit_N = E^{\rm Dys}_2 (N) \Psit_N$$
it is bounded by a constant times $N^2 R^2 $ (because we easily have $E^{\rm Dys}_2 (N) \leq CN$). This is $o(N)$ as needed provided $R \ll N^{-1/2}$, which is compatible with our other desiderata. 

\medskip

We are now done with our sketch of the proof of the energy lower bound. Let us say a few words of the convergence of reduced density matrices. The \index{Quantum de Finetti theorem}{quantum de Finetti theorem} is very handy to avoid a non-trivial bit of convex analysis (or, better said, the convex analysis part is included in the quantum de Finetti theorem).  

Note that the energy convergence directly gives information on a ground state $\Psit_N$ of~\eqref{eq:Dyson hamil 2}, which is not we are after. The way out is to apply all the above to a modified Hamiltonian (still acting on bosonic functions)
$$ H_{N,v,\ell} := H_N - \frac{\ell!}{N^{\ell - 1}} \sum_{1 \leq i_1 < \ldots < i_\ell \leq N}|v^{\otimes \ell} \rangle \langle v ^{\otimes \ell}|_{i_1,\ldots,i_\ell}$$
for $v \in L^2$ and $\ell \in \N$. The perturbation is a nice, bounded operator, so its inclusion destroys none of the methods we used, and we get the lower bound 
$$ \frac{E (N,v,\ell)}{N} \geq \inf_{\norm{u}_{L^2 = 1}} \left( \cEGP [u] -| \langle v | u \rangle | ^{2\ell} \right)   - o (1)$$
for the lowest eigenvalue of $H_{N,v,\ell}$.

Now, for a ground state $\Psi_N$ of the original Hamiltonian $H_N$ and its $\ell$-th \index{Reduced density matrix}{reduced density matrix} $\Gamma_N ^{(\ell)}$ 
\begin{align*}
{N \choose \ell} ^{-1}  \left\langle v^{\otimes \ell} | \Gamma_N ^{(\ell)} |  v^{\otimes \ell} \right\rangle &= N^{-1} \left(\left\langle \Psi_N | H_N | \Psi_N \right\rangle - \left\langle \Psi_N | H_{N,v,\ell} | \Psi_N \right\rangle \right) \\
&\leq N^{-1} \left(E (N) - E (N,v,\ell)\right) 
\end{align*}
Passing to the limit $N\to \infty$ we deduce from energy estimates that 
$$ \int | \langle u | v \rangle | ^{2\ell} \mathrm{d}\mu (u) \leq \EGP - \inf_{\norm{u}_{L^2 = 1}} \left( \cEGP [u] -| \langle v | u \rangle | ^{2\ell} | \right)$$
where $\mu$ is the de Finetti measure of the sequence $(\Psi_N)_N$, cf Theorem~\ref{thm:DeFinetti fort}. Fix $v$ with unit $L^2$ norm and apply this to $\lambda v$ for small $\lambda$. Simple perturbative arguments yield that a minimizer of   
$$ \cEGP [u] - \lambda ^{2\ell}| \langle v | u \rangle | ^{2\ell} $$
must, modulo subsequence, converge to a GP minimizer when $\lambda \to 0$. Taking this limit along all subsequences we find 
$$ \lambda ^{2\ell} \int | \langle u | v \rangle | ^{2\ell} \mathrm{d}\mu (u) \leq \lambda ^{2\ell}\sup_{u\in \MGP} | \langle v | u \rangle | ^{2\ell} + o(\lambda^{2\ell})$$
with $\MGP$ the set of all GP minimizers. Hence  
$$ \int | \langle u | v \rangle | ^{2\ell} \mathrm{d}\mu (u) \leq \left( \sup_{u\in \MGP}\left| \langle v | u \rangle \right| \right) ^{2\ell}$$
for all normalized $v\in L^2 (\R^d)$. This implies that the de Finetti measure $\mu$ is concentrated on $\MGP$. To get a feel as to why, consider the possibility that $\mu$ has a wrongly placed atom, i.e. assigns non-zero mass $m$ to a $v$ at finite $L^2$ distance from $\MGP$. Then, for this choice of $v$, the sup on the right-hand side is $<1$, whereas the left-hand side is bounded below by $m$. Taking $\ell \to \infty$ leads to a contradiction. Details for the general case are in~\cite[Section~4.3]{NamRouSei-15}.

\section{\index{Bogoliubov theory}{Bogoliubov methods} for GP ground states}\label{sec:Bogoliubov low}

We finally present two alternative methods~\cite{BocBreCenSch-18b,BocBreCenSch-18c,Fournais-20,Hainzl-20,NamNapRicTri-20} to deal with the GP limit. We shall not do them justice because we present them as ways of obtaining the

\begin{theorem}[\textbf{Gross-Pitaevskii limit, partial statement}]\label{thm:GP small}\mbox{}\\
Let $H_N$ be the many-body Hamiltonian~\eqref{eq:intro Schro op bis} in GP scaling, 
$$ w_N (\bx) = \lambda N^2 w (N \bx)$$
with $w$ fixed, satisfying Assumption~\ref{asum:GP pot} and with \index{Scattering length}{scattering length} $a$. Assume~\eqref{eq:trapping s bis} and that there is no magnetic field, $\bA \equiv 0$. Let the \index{Gross-Pitaevskii energy functional}{Gross-Pitaevskii functional} be 
$$ \cEGP[u] = \int_{\R^3} \left( |\nabla u | ^2 + V |u|^2 \right) + 4 \pi a\int_{\R^{3}} |u(\bx)|^4 \mathrm{d}\bx $$
with $a$ the scattering length of $w$.

There exists a $\lambda_0$ such that,  if $0< \lambda \leq \lambda_0$, we have, in the limit $N \to + \infty$:

\smallskip 

\noindent\textbf{Convergence of the energy:} 
$$
\frac{E(N)}{N} \to \EGP. 
$$

\smallskip 

\noindent\textbf{Convergence of reduced density matrices:} let $\Gamma_N ^{(1)}$ be the reduced density matrices of a many-body ground state $\Psi_N$ and $u= \uGP$ the (unique) minimizer of $\cEGP$. Along a subsequence, 
$$
{N \choose k} ^{-1} \Gamma_N ^{(k)} \to |u ^{\otimes k} \rangle \langle u ^{\otimes k} |  
$$
strongly in trace-class norm.
\end{theorem}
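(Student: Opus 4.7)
The plan is to establish both bounds without invoking the Dyson lemma machinery, using instead the Bogoliubov rotation approach of Section~\ref{sec:Bogoliubov} in its sharper form. For the upper bound I will use the trial state of Definition~\ref{def:bog trial 2}, which by Theorem~\ref{thm:GP ener Bog 2} already yields $E(N) \leq N\EGP + O(1)$ without any smallness assumption on $\lambda$. So only the matching lower bound requires the hypothesis that $\lambda$ be small.

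For the lower bound, I would first use the excitation map $\cU_N$ from Definition~\ref{def:excit map} (with reference state $\uGP$) to rewrite $\langle \Psi_N | H_N | \Psi_N \rangle = \langle \cU_N \Psi_N | \cU_N H_N \cU_N^* | \cU_N \Psi_N \rangle$ on the truncated excitation Fock space. Using the GP variational equation to generate cancellations together with the replacement rules~\eqref{eq:excit act 1}--\eqref{eq:excit act 2}, the conjugated Hamiltonian decomposes as $\cU_N H_N \cU_N^* = N\EGP + \cL_N + \cK_N + \cC_N + \cQ_N$, where $\cL_N$ is linear (Weyl-like), $\cK_N$ is quadratic in the excited-mode creators/annihilators, and $\cC_N, \cQ_N$ are cubic and quartic. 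Next I would apply a Bogoliubov transformation $\cT$ with kernel $k$ as in~\eqref{eq:kernel cor}, chosen to enforce the two-body scattering equation. The crucial algebraic identity is that $\cT^* (\cK_N + \text{short-range part of } \cQ_N) \cT$ renormalizes the naive coupling constant $\int w$ into $8\pi a$, producing a quadratic (Bogoliubov) Hamiltonian $\mathcal{H}^{\rm Bog}_N$ that is non-negative for $\lambda \leq \lambda_0$.

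The main obstacle will be controlling the cubic remainder $\cT^* \cC_N \cT$ that cannot be absorbed into $\mathcal{H}^{\rm Bog}_N$. Here is where the smallness of $\lambda$ enters decisively: these terms scale as $\sqrt{\lambda}$ times expressions of the form $a^\dagger(v) a^\dagger(w) a(z) + \mathrm{h.c.}$, and a Cauchy--Schwarz bound of the form $\pm\cC_N \leq \eps \, T + \eps^{-1} \lambda \, \cN^\perp$ (with $T$ the kinetic energy of excitations and $\cN^\perp$ their number) can be absorbed into the quadratic form provided the spectral gap of $\mathcal{H}^{\rm Bog}_N$, which remains of order $1 - O(\lambda)$ away from zero, is positive. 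A bootstrap on $\langle \cN^\perp \rangle$ closes the argument: one first proves the crude bound $\langle \cN^\perp \rangle \leq C$ from a soft spectral argument, uses it to dispose of the cubic and quartic errors, and deduces $E(N) \geq N\EGP - C$, which in turn upgrades the excitation bound to the sharp form needed.

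Convergence of the reduced density matrix follows from the a priori bound $\langle \Psi_N | \cN^\perp | \Psi_N \rangle \leq C$ uniformly in $N$, obtained by inserting the sharp energy lower bound into the positivity of $\mathcal{H}^{\rm Bog}_N$. This bound implies $\tr\bigl| N^{-1}\Gamma_N^{(1)} - |\uGP\rangle\langle \uGP| \bigr| \leq C N^{-1/2}$ by the standard argument relating condensate depletion to trace-norm convergence, hence the stated convergence in trace norm. The convergence of $\Gamma_N^{(k)}$ for higher $k$ is then obtained by iteration as at the end of Section~\ref{sec:MF Onsager}, applying Jensen's inequality to moments of $a^\dagger(\uGP)a(\uGP)$ and using the CCR~\eqref{eq:CCR} to reduce higher density matrices to the first one.
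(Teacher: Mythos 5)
Your proposal follows the paper's first route in Section~\ref{sec:Bogoliubov low} (the BBCS-style conjugation with the correlation map, summarized in Theorem~\ref{thm:exc hamil}), so the overall strategy is correct. However there is one genuine technical gap that the paper addresses and you do not: after applying the excitation map $\cU_N$ you land on the \emph{truncated} Fock space $\gF^{\leq N}(\gH^\perp)$, and the ordinary Bogoliubov transformation $\cT$ with kernel $k$ as in~\eqref{eq:kernel cor} (or equivalently Definition~\ref{def:Bog trans}) does \emph{not} preserve that space --- it can create states with more than $N$ excitations, which is incompatible with the $N$-body constraint and makes the subsequent conjugation algebra meaningless. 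The paper resolves this with the \emph{generalized} Bogoliubov transformation $\cT_b$ of Definition~\ref{def:Bog gen}, built from the modified operators $b_\bp^\dagger = a_\bp^\dagger\sqrt{(N-\cN^\perp)/N}$ and $b_\bp = \sqrt{(N-\cN^\perp)/N}\,a_\bp$, which does map $\gF^{\leq N}(\gH^\perp)$ to itself; these only \emph{approximately} satisfy the CCR, and controlling the deviation from~\eqref{eq:GP Bog rot} is a substantial part of the work. Your error bound for the cubic remainder should be restated with this in mind: in the paper's formulation the full error $\cE$ satisfies $\pm\cE \leq \lambda(\cK+\cW)+C_\lambda$ (not a $\sqrt{\lambda}$ scaling), and it is absorbed against the manifestly non-negative $\cK+\cW$ for $\lambda$ small, not against a spectral gap of a Bogoliubov Hamiltonian. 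Two lesser remarks: the paper's detailed development of this route is explicitly set on the torus (the homogeneous gas with $V\equiv 0$), and the trapped case you work in requires nontrivial adaptation (the paper only remarks that this ``probably can'' be done); and the paper also offers a genuinely different second route via the ``completing the square'' inequality~\eqref{eq:GP square complete} and the explicit Bogoliubov ground-state formula of Lemma~\ref{lem:Bog GSE}, which is the one actually carried out in the inhomogeneous setting in~\cite{NamNapRicTri-20}, so that is the more directly applicable proof for the theorem as stated. Your upper bound via Theorem~\ref{thm:GP ener Bog 2} and the density-matrix argument via the a priori bound on $\cN^\perp$ and Jensen/CCR iteration are fine and match the paper.
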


Here we assume some smallness of the unscaled potential $w$, which is not needed with the previous methods. Let me comment on this choice:
\begin{itemize}
 \item One significant aspect of the methods of~\cite{BocBreCenSch-18b,BocBreCenSch-18c,Hainzl-20,NamNapRicTri-20} is that they provide an alternative method to the \index{Dyson lemma}{Dyson lemma} to extract pair correlations. This works only for small $\lambda$, but is conceptually very interesting.
 \item The actual goal of the papers we cited is to obtain an optimal rate for the convergence of the first density matrix. Even with small $\lambda$ this is not something we can obtain with the previous methods.
 \item Actually, by using the previous results as starting point, one could bootstrap the arguments and obtain BEC with an optimal rate without assuming small~$\lambda$. So far this has been worked out only for the homogeneous Bose gas~\cite{BocBreCenSch-18b}. And then one has to use the Dyson lemma at least once.
 \item More recently, a full alternative to the Dyson lemma that works without assuming small $\lambda$ was proposed in~\cite{AdhBreSch-20}.
 \item We shall encounter arguments that are crucial to the rigorous derivation of Bogoliubov's theory for the excitation spectrum~\cite{Seiringer-11,GreSei-13,LewNamSerSol-13,DerNap-13,BocBreCenSch-17,BocBreCenSch-18,BriFouSol-19,BriSol-19,FouSol-19}. I chose not to discuss this topic in full details, but some introduction to the topic is worthwhile.
\end{itemize}

We will now sketch in two subsections some of the methods of~\cite{BocBreCenSch-18b,BocBreCenSch-18c} and~\cite{NamNapRicTri-20} for obtaining energy lower bounds in the GP regime. This treatment parallels the two subsections of Section~\ref{sec:Bogoliubov}. 

\subsection{Conjugating with the \index{Correlation map}{correlation map}}

In the first part of Section~\ref{sec:Bogoliubov}, appropriate correlations have been added to a condensed state by acting with the unitary operator from Definition~\ref{def:Bog trans}. It is very tempting then to un-act (act with the adjoint) on the true many-body ground state, in the hope that this will extract the correlations, and allow to treat the rest of the state in a mean-field like fashion. This is one of the main ideas of~\cite{BocBreCenSch-18b,BocBreCenSch-18c}, that we briefly sketch now.

What we discuss here probably can be adapted to inhomogeneous systems, but to follow~\cite{BocBreCenSch-18b,BocBreCenSch-18c,Hainzl-20} we restrict to the \index{Homogeneous Bose gas}{homogeneous Bose gas} in a fixed periodic box. As in Section~\ref{sec:thermo} we thus replace $\R^3$ by the unit torus $\mathbb{T}^3$ and set the external potential $V\equiv 0$. The $N$-particles Hamiltonian is then 
\begin{equation}\label{eq:hamil hom}
 H_N := \sum_{j= 1} ^N -\Delta_{\bx_j} + \lambda N^2 \sum_{1\leq i < j \leq N} w (N(\bx_i - \bx_j))
\end{equation}
with $w$ fixed as in Assumption~\ref{asum:GP pot}, with scattering length $a>0$. As in Theorem~\ref{thm:GP small} the coupling constant $\lambda>0$ will ultimately be small enough but fixed. By momentum conservation ($H_N$ commutes with translations) we can write this in \index{Second quantization}{second-quantized} form as 
\begin{equation}\label{eq:hamil hom 2}
H_N = \sum_{\bp \in (2 \pi \Z) ^3} |\bp| ^2 a^\dagger_ p a_p + \frac{\lambda}{2N} \sum_{\bk,\bp,\bq\in (2\pi \Z)^3} \widehat{w} \left(\frac{\bk}{N} \right) a ^{\dagger} _{\bp + \bk} a^\dagger _{\bq-\bk} a_\bp a_\bq 
\end{equation}
where $a^\dagger_\bp, a_\bp$ \index{Creation and annihilation operators}{create/annihilate} a particle in the plane-wave state $\bx \mapsto e^{\im \bp \cdot \bx}$.

What was done in Section~\ref{sec:Bogoliubov} was to conjugate the above by a \index{Weyl operator}{Weyl operator} removing the condensate part (here, particles in the zero momentum mode $\bp = 0$, aka the constant function), and with a \index{Bogoliubov transformation}{Bogoliubov transformation} to remove correlations. Applying the so-obtained operator to the vacuum led to a nice energy upper bound. To obtain a lower bound, it can be convenient to do the conjugations in such a way that the resulting operator only talks to the excited particles. For that purpose the Weyl operator from Definition~\ref{def:Weyl} is replaced by the \index{Excitation map}{excitation map} from Definition~\ref{def:excit map}, and associated calculations are not much more difficult, we merely replace~\eqref{eq:Weyl shift} by~\eqref{eq:excit act 1}-\eqref{eq:excit act 2}. 

More difficult is the replacement of the Bogoliubov transformation, because we want it to map the truncated \index{Fock space}{Fock space} $\gF ^{\leq N}(\gH^\perp)$ from Definition~\ref{def:excit map} to itself, where $\gH^\perp$ is the orthogonal of the constant function. The following originates from~\cite{BreSch-19}:

\begin{definition}[\textbf{Generalized Bogoliubov transformation}]\label{def:Bog gen}\mbox{}\\
Let the function $\eta(\bp)$ be defined by
$$ \eta (\bp) := \frac{1}{N^2} \widehat{f - 1} \left( \frac{\bp}{N}\right) $$
with $f$ the zero-energy \index{Scattering solution}{scattering solution} associated with $w$ (Theorem~\ref{thm:scat leng}). Let the modified creators/annihilators of excited particles be
\begin{equation}\label{eq:modif ops}
b ^\dagger _\bp := a^\dagger_\bp \sqrt{\frac{N-\cN^\perp}{N}}, \quad b_\bp := \sqrt{\frac{N-\cN^\perp}{N}}a_\bp
\end{equation}
with $\cN^\perp = \sum_{\bp \neq 0} a^{\dagger}_\bp a_\bp$ the \index{Number operator}{number} (operator) of excited particles. 

The generalized Bogoliubov transformation is the unitary map from $\gF^{\leq N}(\gH^\perp)$ to itself
\begin{equation}\label{eq:Bog tran gen}
\cT_b := \exp\left( \sum_{\bp \neq 0} \eta (\bp) \left(b^{\dagger} _\bp b^{\dagger} _{-\bp} - b^{\dagger} _\bp b^{\dagger} _{-\bp}\right)\right) 
\end{equation}
where $\gH^{\perp} = \mathrm{span} \left( e^{\im \bp \cdot \bx}\right)_{\bp \neq 0} $.
\end{definition}

\begin{proof}[Comments]
Compared to Definition~\ref{def:bog trial} we now work in Fourier variables and have replaced the original creators/annihilators $a^\dagger_\bp,a_{\bp}$ with~\eqref{eq:modif ops}. The latter have the virtue of preserving the particle number and the excited Fock space $\gF^{\leq N}(\gH^\perp)$. Heuristically you should think that on the full Fock space (cf the discussion around Definition~\ref{def:excit map})
$$
b ^\dagger _\bp :=  a^\dagger_\bp \frac{a_0}{\sqrt{N}} , \quad b_\bp := \frac{a^\dagger_0}{\sqrt{N}} a_\bp .
$$
We expect most particles to be condensed and hence $a^\dagger_0,a_0 \sim \sqrt{N}$, which means that $b^{\dagger}_\bp,b_\bp$ almost satisfy the \index{Canonical commutation relations (CCR)}{CCR}~\eqref{eq:CCR}. They do not satisfy it exactly however, and dealing with the remainders is sometimes tedious. The idea can be traced back at least to~\cite{LieSol-01,LieSol-04,Solovej-06} and has been used repeatedly~\cite{GiuSei-09,Seiringer-11,GreSei-13}. What we need here is a very strong version of this ``approximate CCR'', for we want to mimic~\eqref{eq:GP Bog rot} as closely as possible: 
\begin{equation}\label{eq:approx CCR}
 \cT_b ^* a_\bp \cT_b \simeq \cosh (\eta (\bp)) a_\bp + \sinh(\eta (\bp)) a^\dagger_{-\bp} 
\end{equation}
in order to conjugate the Hamiltonian with $\cT_b$. In~\cite[Lemma~3.4]{BocBreCenSch-18b}, an estimate on the difference between the two sides of~\eqref{eq:approx CCR} is derived, giving efficient bounds when $\cT_b$ is as in~\eqref{eq:Bog tran gen} with $\norm{\eta}_{L^2}$ small. This is the case when the map $\cT_b$ is meant to excite few particles out of the condensate.  
\end{proof}

The core of the proof of Theorem~\ref{thm:GP small} is given in the following~\cite{BocBreCenSch-18b,BocBreCenSch-18c}

\begin{theorem}[\textbf{Excitation Hamiltonian, correlations removed}]\label{thm:exc hamil}\mbox{}\\
With the previous notation, let $\mathcal{G}_N: \gF^{\leq N} (\gH^\perp) \mapsto \gF^{\leq N} (\gH^\perp)$ be defined as 
$$ \mathcal{G}_N := \cT_b^* \cU_N H_N \cU_N ^* \cT_b.$$
It is decomposed as 
\begin{equation}\label{eq:split excit}
 \cG_N = 4\lambda \pi a N + \mathcal{K} + \mathcal{W}  + \mathcal{E}  
\end{equation}
where $\mathcal{K}$ and $\mathcal{W}$ are respectively the kinetic and interaction energies of the excitations
$$
 \mathcal{K} = \sum_{0\neq \bp \in (2\pi \Z)^3} |p|^2 a^\dagger_\bp a_\bp,  \quad \mathcal{W} = \frac{\lambda}{2N} \sum_{\bp,\bq,\bp + \bk, \bq-\bk \neq 0} \widehat{w} \left(\frac{\bk}{N} \right) a ^{\dagger} _{\bp + \bk} a^\dagger _{\bq-\bk} a_\bp a_\bq 
$$
and, as an operator, 
\begin{equation}\label{eq:error BBCS}
 \pm \mathcal{E} \leq \lambda \left( \mathcal{K} + \mathcal{W} \right) + C_\lambda 
\end{equation}
with $C_\lambda$ a constant, only depending on $\lambda$.
\end{theorem}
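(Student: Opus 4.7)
The plan is to conjugate in two stages and track carefully the combinatorics of zero-momentum versus excited modes. First I would apply $\cU_N$ to $H_N$, using the identities~\eqref{eq:excit act 1}-\eqref{eq:excit act 2}: the kinetic part restricts to $\sum_{\bp\neq 0}|\bp|^2 a^\dagger_\bp a_\bp = \mathcal{K}$ verbatim, while in the second-quantized interaction~\eqref{eq:hamil hom 2} I would classify the 24 monomials $a^\dagger_{\bp+\bk}a^\dagger_{\bq-\bk}a_\bp a_\bq$ by the number $j\in\{0,\dots,4\}$ of indices equal to zero. Momentum conservation forces $j\neq 1,3$ for the surviving terms, so only $j=0,2,4$ contribute. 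The $j=4$ contribution is $\tfrac{\lambda}{2N}\widehat{w}(0)(N-\cN^\perp)(N-\cN^\perp-1)$, the $j=0$ contribution rebuilds a quartic analogue of $\mathcal{W}$ restricted to excited momenta, and the $j=2$ pieces give a quadratic-in-$b^\sharp$ expression of Bogoliubov type, namely
\begin{equation*}
\frac{\lambda}{N}\sum_{\bp\neq 0}\widehat{w}\!\Bigl(\frac{\bp}{N}\Bigr)\Bigl[b^\dagger_\bp b_\bp+\tfrac{1}{2}(b^\dagger_\bp b^\dagger_{-\bp}+b_\bp b_{-\bp})\Bigr]+\text{diag. shift},
\end{equation*}
modulo error terms controlled by $\cN^\perp/N$.

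The second stage is to conjugate by $\cT_b$. Here I would invoke the ``approximate CCR'' bound quoted after Definition~\ref{def:Bog gen}: up to remainders that shrink with $\norm{\eta}_{L^2}$, relation~\eqref{eq:approx CCR} transforms $b^\sharp_\bp$ by the Bogoliubov rotation $(\cosh\eta(\bp),\sinh\eta(\bp))$. Applying this to $\mathcal{K}$ produces $\mathcal{K}$ back, plus an off-diagonal piece $\sum_\bp |\bp|^2\eta(\bp)(b^\dagger_\bp b^\dagger_{-\bp}+\text{h.c.})$ and a constant $\sum_\bp |\bp|^2\sinh^2\eta(\bp)$. Applying it to the $j=2$ block produces analogous quadratic and constant contributions. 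The cornerstone is that the total off-diagonal quadratic coefficient vanishes at leading order, and the total constant collapses to $4\lambda\pi aN$: this is exactly the statement, after rescaling, that $\eta$ solves the rescaled scattering equation $-\Delta(1-f)+\tfrac12 w f=0$, so that
\begin{equation*}
\sum_\bp |\bp|^2 \eta(\bp)+\tfrac{\lambda}{2N}\sum_\bq \widehat{w}\!\Bigl(\frac{\bq}{N}\Bigr)\cosh\eta(\bq-\bp)\sinh\eta(\bp)\ \longrightarrow\ 0,
\end{equation*}
and $4\pi a=\int wf$ via~\eqref{eq:GP def scat bis}. This is where the scattering length emerges in place of the naive Born coupling $\widehat{w}(0)/2$. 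I would keep the $j=0$ quartic term $\mathcal{W}$ essentially unchanged, since conjugating it by $\cT_b$ costs only $O(\lambda)\cdot\mathcal{W}$ plus cubic cross-terms.

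What remains then constitutes $\mathcal{E}$: the cubic terms generated when $\cT_b$ acts on $\mathcal{W}$ and on the $j=2$ quadratic block, the commutator remainders from the approximate CCR, the contributions from expanding $\sqrt{(N-\cN^\perp)/N}$, and the subleading constants. For each I would aim at the operator bound~\eqref{eq:error BBCS}. Cubic terms $\sum \widehat{w}(\bk/N)\eta(\cdot) b^\sharp b^\sharp b^\sharp$ are handled by Cauchy-Schwarz combined with the standard estimate $\pm(b^\dagger b^\dagger b + \text{h.c.}) \lesssim \delta\,\mathcal{K}+\delta^{-1}\norm{\eta}^2\mathcal{W}/\lambda$ and taking $\delta\sim\lambda$. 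Residual quartic terms are directly dominated by $\lambda\mathcal{W}$. The CCR remainder terms bring factors of $\cN^\perp/N$, which one absorbs into $\mathcal{K}$ via Poincar\'e on the torus. The smallness of $\lambda$ is used precisely to turn the prefactors of $\mathcal{K}$ and $\mathcal{W}$ in these bounds into something dominated by $\lambda(\mathcal{K}+\mathcal{W})$.

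The main obstacle is the last paragraph: although each term is morally small, the standard Bogoliubov diagonalization does not tolerate the singularities inherited from the GP scaling $\lambda N^2 w(N\cdot)$, and naive Cauchy-Schwarz loses factors of $N$. The technical heart of the argument, following~\cite{BocBreCenSch-18b,BocBreCenSch-18c}, is a collection of commutator estimates showing that $\cT_b$ almost preserves $\mathcal{K}+\mathcal{W}$, i.e.\ $\cT_b^*(\mathcal{K}+\mathcal{W})\cT_b\leq C(\mathcal{K}+\mathcal{W})+C$, and that the cubic and quartic remainders obey $\pm X\leq \lambda(\mathcal{K}+\mathcal{W})+C_\lambda$. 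These estimates require the decay and $H^1$-smallness of $\eta$ built into Definition~\ref{def:Bog gen} (inherited from the decay $f-1\sim a/|\bx|$ of the scattering solution), which is exactly where the assumption that $\lambda$ be small enters quantitatively.
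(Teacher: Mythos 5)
Your overall strategy is the right one: this theorem is a simplified restatement of \cite{BocBreCenSch-18b,BocBreCenSch-18c}, and conjugating first by the excitation map $\cU_N$ and then by the generalized Bogoliubov transformation $\cT_b$, letting the rescaled Fourier-side scattering equation absorb the off-diagonal quadratic pieces and recombine the Born coupling into $8\pi a$, is precisely the route those papers take. The paper's own ``proof'' is only a comment pointing to these references (with a parallel sketch of the closely related grand-canonical scheme of \cite{Hainzl-20}, which swaps $\cU_N,\cT_b$ for the operator $\cB$ of \eqref{eq:Bog corr} and the decomposition $\bH_0+\bH_1+\bH_2+Q_2+Q_3+Q_4$), so you are filling in more detail than the paper provides.

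However, there is a genuine bookkeeping error right at the first stage. You claim that momentum conservation forces the number $j$ of zero indices in $a^\dagger_{\bp+\bk}a^\dagger_{\bq-\bk}a_\bp a_\bq$ to avoid $1$ and $3$. Only $j=3$ is forbidden: if three of $\{\bp+\bk,\,\bq-\bk,\,\bp,\,\bq\}$ vanish then $(\bp+\bk)+(\bq-\bk)=\bp+\bq$ forces the fourth to vanish. But $j=1$ is entirely possible; take for instance $\bp=0$ with $\bk,\bq,\bq-\bk$ all nonzero, or $\bp+\bk=0$ with $\bp,\bq,\bq+\bp$ all nonzero. These $j=1$ monomials, carrying one factor of $a^\sharp_0\simeq\sqrt{N-\cN^\perp}$ and three excited $a^\sharp_\bp$, are exactly the cubic block $Q_3$ that the paper's sketch writes down explicitly and that the Comments single out as ``the main difficulty in using the above when $\lambda$ is not sufficiently small for \eqref{eq:error BBCS} to be efficient.'' By declaring them impossible, your stage-one decomposition drops them, so your stage-two accounting of $\mathcal{E}$ never sees them; the cubic terms you do list (those generated by $\cT_b$ acting on $\mathcal{W}$ and on the $j=2$ block) are a different, subsidiary population. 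Since the smallness of $\lambda$ is needed precisely to beat the $Q_3$ contribution (and \cite{BocBreCenSch-18,AdhBreSch-20} must introduce additional cubic/quartic unitaries when $\lambda$ is not small), omitting it is not cosmetic; you would have to reinstate it, normal-order the result, and run your Cauchy--Schwarz scheme on it as well. Separately, your display of the ``scattering cancellation'' is garbled (the sum over $\bp$ should not be there, the convolution kernel should be $\widehat w((\bp-\bq)/N)$, and the correct relation is the inhomogeneous $|\bp|^2\eta(\bp)+\tfrac{\lambda}{2N}\sum_{\bq}\widehat w(\tfrac{\bp-\bq}{N})\eta(\bq)=-\tfrac{\lambda}{2}\widehat w(\tfrac{\bp}{N})$, not a limit to zero), though here the underlying idea is clearly right.
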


\begin{proof}[Comments]
This is a simplified statement, as compared to the full result. In fact $\mathcal{E}$ contains Bogoliubov pairing terms that are physically relevant, so not all of it has to be discarded if one wants to continue in the direction of~\cite{BocBreCenSch-18}. It contains also a term due to interactions of triplets of excited particles, which is the main difficulty in using the above when $\lambda$ is not sufficiently small for~\eqref{eq:error BBCS} to be efficient. This term can ultimately be taken care of as in~\cite{BocBreCenSch-18} by using the Dyson lemma and another unitary transformation, this time of the form of the exponential of a cubic operator in creators/annihilators. In~\cite{AdhBreSch-20} a unitary of the form of the exponential of a quartic operator in creators/annihilators is used to deal with interactions of quadruplets of excited particles. All this goes beyond the scope of this review.
\end{proof}

Using~\eqref{eq:error BBCS} the proof of the energy lower bound is complete, for small $\lambda$, because $\cK + \cW$ is a positive operator and the constant $4\pi a N$ in~\eqref{eq:split excit} is none other than the GP energy of the homogeneous Bose gas in a fixed torus. By unitary equivalence with $H_N$ we even deduce, for $\lambda$ small enough  
$$ H_N \geq N \EGP + c \cN ^\perp - C$$
with $c,C>0$, and this can be used to control the condensation rate (difference between $N^{-1} \Gamma_N^{(1)}$ and $|\uGP \rangle \langle \uGP |$). 

\medskip

The philosophy of conjugating the \index{Many-body Schr\"odinger Hamiltonian}{many-body Schr\"odinger Hamiltonian} with well-chosen unitary maps removing correlations is quite systematic and can be carried much further than hinted at above~\cite{AdhBreSch-20,BocBreCenSch-17,BocBreCenSch-18,BocBreCenSch-18b,BocBreCenSch-18c}. A few basic intuitions can be briefly summarized. I will follow mostly~\cite{Hainzl-20}, where a slightly different model (\index{Grand-canonical}{grand-canonical} setting) is considered. The difference will not be very apparent in this summarized discussion, but for the fact that we do not conjugate with the excitation map from Definition~\ref{def:excit map} as above. Let thus ($\mathrm{h}.\mathrm{c}.$ stands for ``hermitian conjugate''.)
\begin{equation}\label{eq:Bog corr}
\cB := \exp\left( N ^{-1} \sum_{\bp \neq 0} \eta (\bp) \left(a^{\dagger} _\bp a^{\dagger} _{-\bp} a_0 a_0 - \mathrm{h.c.}\right)\right). 
\end{equation}
We consider the action of $e^{\cB}$ on $H_N$ as defined in~\eqref{eq:hamil hom 2} (and extended to the full Fock space $\gF (\gH)$). We will repeatedly use Duhamel-like formulas 
\begin{align}\label{eq:Duhamel}
e^{-\cB} \cA  e^{-\cB} &= \cA + \int_{0}^1 e^{-s\cB} [\cA,\cB] e^{s\cB} \mathrm{d} s \nonumber \\
&= \cA + [\cA,\cB] + \int_{0}^1 \int_{0} ^t e^{-t\cB} \left[[\cA,\cB],\cB\right] e^{t\cB} \mathrm{d} t \mathrm{d} s\nonumber \\
&= \cA + [\cA,\cB] + \frac{1}{2} \left[[\cA,\cB],\cB\right] + \ldots
\end{align}
We then write $H_N$ as 
\begin{equation}\label{eq:split hom hamil}
H_N = \bH_0 + \bH_1 + \bH_2 + Q_2 + Q_3 + Q_4 
\end{equation}
where (schematically...)
\begin{align}\label{eq:split hom hamil 2}
\bH_0 &= \mbox{ terms containing only } a_0^\dagger, a_0 \nonumber \\ 
\bH_1 &= \sum_{\bp \neq 0} |\bp|^2 a^\dagger_{\bp} a_\bp \\ 
\bH_2 &= \mbox{ interaction terms containing exactly one } a_\bp^\dagger \mbox{ and one } a_\bp \mbox{ for } \bp \neq 0 \nonumber \\ 
Q_2 &= \mbox{ terms containing exactly two } a_\bp ^\dagger \mbox{ or two } a_\bp \nonumber \mbox{ for } \bp \neq 0 \nonumber\\ 
Q_3 &= \mbox{ terms containing exactly three } a_\bp^\dagger, a_\bp \mbox{ for } \bp \neq 0 \nonumber \\ 
Q_4 &= \mbox{ terms containing exactly four } a_\bp^\dagger, a_\bp \mbox{ for } \bp \neq 0.  
\end{align}
Recall that we expect $a_0 ^\dagger, a_0 \sim \sqrt{N}$. Hence, essentially, the larger the number of $a_\bp^\dagger, a_\bp, \bp \neq 0$ a term contains, the smaller its contribution. In the Gross-Pitaevskii scaling however this is not so straightforward to see, nor easy to vindicate. Consider the quadratic (in $a^\dagger_\bp,a_\bp$)  
$$ Q_2 = \frac{\lambda}{2N} \sum_{\bp \neq 0} \widehat{w} \left( \frac{\bp}{N}\right) \left( a^\dagger _\bp a^{\dagger} _{-\bp} a_0 a_0 + \mathrm{h}.\mathrm{c}.\right) $$
This term is our main ennemy because it mixes the momenta $\bp$ and $-\bp$ and is thus responsible for the occurence of \index{Bogoliubov theory}{Bogoliubov pair-excitations}, which play a crucial role in the low-energy physics of the Bose gas. Another quadratic term, $\bH_2$ is easier to control. Other manifestly coercive terms, namely $\bH_1$ and $Q_4$, will ultimately control it and other remainders in the following argument. They will also give the final desired bound on the number of excited particles. The cubic term $Q_3$ turns out to be negligible at the ordrer of precision we aim at (although it does play a role at the next order). 

The role of the conjugation with the \index{Bogoliubov transformation}{Bogoliubov transformation}~\eqref{eq:Bog corr} is thus to dispose of the $Q_2$ term. The choice of the coefficients $\eta (\bp)$ using the \index{Scattering solution}{scattering solution} implies that 
$$
|\bp| ^2 \eta (\bp) + \frac{\lambda}{2N} \sum_{\bq\neq 0} \widehat{w} \left(\frac{\bp-\bq}{N}\right) \eta (\bq) = -\frac{\lambda}{2} \widehat{w} \left(\frac{\bp}{N}\right).
$$
This is essentially the scattering equation~\eqref{eq:scat eq} in Fourier variables. It yields some cancellations so that 
\begin{equation}\label{eq:cancel Hainzl}
 \left[ \bH_1 + Q_4,\cB \right] = - Q_2 + \mbox{ remainder } 
\end{equation}
where, as in the rest of the discussion, we stay unprecise as to what ``remainder'' actually means. A basic idea is that, since $\cB$ is quadratic in excited creators/annihilators, so is $\left[ \bH_1,\cB \right]$ (after using the CCR). The other term $\left[ Q_4,\cB \right]$ is quartic, but contains non \index{Normal order, anti-normal order}{normal-ordered}\footnote{Normal-ordered = all creators on the left of annihilators} terms whose normal-ordering produces another quadratic term. The sum of these two quadratic terms kills $Q_2$. 

The next important point is as follows: first~\eqref{eq:cancel Hainzl} leads to 
\begin{multline*}
 \int_{0}^1 \int_{0} ^t e^{-t\cB} \left[[\bH_1 + Q_4 ,\cB],\cB \right] e^{t\cB} \mathrm{d} t \mathrm{d} s + \int_{0}^1 e^{-t\cB} \left[Q_2,\cB \right] e^{t\cB} \mathrm{d} t  \\ =  \int_{0}^1 \int_{s} ^1 e^{-t\cB} \left[Q_2,\cB \right] e^{t\cB} \mathrm{d} t + \mbox{ remainder } 
\end{multline*}
and then 
\begin{align*}
\int_{0}^1 \int_{s} ^1 e^{-t\cB} \left[Q_2,\cB \right] e^{t\cB} \mathrm{d} t \,
&\simeq  \frac{1}{2}[Q_2,\cB] + \mbox{ remainder } \nonumber\\
&\simeq \frac{\lambda}{2} \sum_{\bp \neq 0} \widehat{w} \left(\frac{\bp}{N}\right) \eta (\bp)+ \mbox{ remainder } \nonumber\\
&\simeq \frac{\lambda}{2} N^2 \int w_N (f_N - 1) + \mbox{ remainder }
\end{align*}
with $f_N$ the solution to the scattering equation associated with $w_N$. 

Thus, if we use the first formula in~\eqref{eq:Duhamel} to conjugate $Q_2$ and the second to conjugate $\bH_1 + Q_4$, we find 
$$ e^{-\cB} H_N e^{\cB} = \bH_0 + \frac{\lambda}{2} N^2 \int w_N (f_N -1) + \bH_1 + Q_4  + e^{-\cB} \left( \bH_2 + Q_3\right) e^{\cB} + \mbox{ remainder }.$$
The contribution from $\bH_2$ and $Q_3$ can be neglected, and $\bH_0$ is basically a mean-field energy (of the constant function). It yields a term $\frac{\lambda}{2} N^2 \int w_N $. All in all
$$ e^{-\cB} H_N e^{\cB} \geq \frac{\lambda}{2} N^2 \int w_N f_N  + \bH_1 + Q_4 + \mbox{ remainder }$$
which gives the desired bound, for 
$$ \int w_N f_N = 8 \pi a_{w_N} = 8 \pi N^{-1} a_w$$
as discussed around~\eqref{eq:GP def scat bis}.

\subsection{Completing the square}

The approach of~\cite{NamNapRicTri-20} is inspired by the works~\cite{BriFouSol-19,BriSol-19,FouSol-19} on the \index{Thermodynamic limit}{thermodynamic limit} of the Bose gas. Similar tools, combined with a systematic use of \index{Localization}{localization} techniques~\cite{Fournais-20}, allow to prove that \index{Bose-Einstein condensate (BEC)}{Bose-Einstein condensation} holds not only on the macroscopic length scale (as discussed in this review), but also a certain range of short length scales.  

We complete a square by a variant of the simple observation~\eqref{eq:complete square}:
$$ \left( \1_{\gH_2} - P\otimes P f_N (\bx-\by) \right) w_N (\bx-\by) \left( \1_{\gH_2} - f_N (\bx-\by) P\otimes P  \right) \geq 0$$
with $P = |\uGP \rangle \langle \uGP|$ the projector on the GP ground state and $f_N$ the zero-energy scattering solution. This gives (all multiplication operators are understood in the variable $\bx-\by$)
\begin{multline}\label{eq:GP square complete}
 w_N  \geq P\otimes P \left( 2 f_N -f_N^2 \right) w_N P \otimes P + \left( P \otimes P f_N w_N Q\otimes Q + h.c.\right) \\
 + \left( P\otimes P f_N w_N P\otimes Q + P\otimes P f_N w_N Q\otimes + \mathrm{h.c.} \right)
\end{multline}
where 
$$P = |\uGP \rangle \langle \uGP|, \quad Q = \1 - P$$
and $\mathrm{h.c.}$ stands for ``hermitian conjugate''.

The rationale for throwing away some terms from the lower bound is that 
\begin{itemize}
 \item particles in the condensate (in the range of $P$)  interact via the modified potential $f_N w_N$.
 \item interactions of triples and quadruples of excited particles (in the range of $Q$) do not contribute to the GP energy. Hence all terms with more than two $Q$'s can be neglected.
 \item interactions between pairs of excited particles and pairs of condensed particles are crucial to reconstruct the scattering length.
\end{itemize}
In short, we are trying to vindicate the guesses backing the construction of the trial state from Definition~\ref{def:bog trial 2}. The first line of~\eqref{eq:GP square complete} will combine with the kinetic energy of excited particles to reproduce the scattering process, while the second line will mostly cancel when combined with terms $$ P h Q + Q h P$$ from the kinetic energy, because of the variational equation satisfied by $\uGP$. The need to assume a small coupling constant (in this simplified presentation) arises when controlling these ``mostly cancellations''. 

The method comes in two steps. We first define an operator that will be used to take into account the contribution of excited particles: 
\begin{equation}\label{eq:NNRT Bog}
\HB := \sum_{n,m > 0} \left\langle u_n | H | u_m \right\rangle a^\dagger_n a_m + \frac{1}{2} \sum_{n,m > 0} \left\langle u_n | K | u_m \right\rangle \left( a^\dagger_n a^\dagger_m  + a_m a_n  \right).
\end{equation}
This is a Bogoliubov-type\footnote{This is NOT \emph{the} Bogoliubov Hamiltonian which gives the next-to leading order of the energy as proven in~\cite{BocBreCenSch-18}.} Hamiltonian, meaning it is quadratic in \index{Creation and annihilation operators}{annihilators/creators} and has a non-particle number-conserving $a^\dagger a^\dagger + a a$  part. We have denoted $(u_n)_n\geq 0$ an orthonormal basis of $L^2 (\R^d)$, with $u_0 = \uGP$ and associated creators/annihilators $a_n, a^\dagger_n$. Thus $\HB$ acts on $\gF (Q \gH)$, the Fock space of excited particles (cf Definition~\ref{def:excit map}). The one-body operators it is made of are
\begin{align}\label{eq:Bog data}
 H &= Q \left(-\Delta + V - \mu \right)Q \nonumber\\
 K &= Q\otimes Q \, \widetilde{K} \nonumber\\
 \widetilde{K} (\bx,\by)&= N \uGP (\bx) \uGP (\by) f_N (\bx-\by) w_N (\bx-\by)  
\end{align}
where in the last equation we identify integral kernel and operator, whereas in the second we mean operator composition. The quantity $\mu >0$ in the first line is a suitable chemical potential. For techincal reasons it departs from the true GP chemical potential/Lagrange multiplier $\EGP + 4\pi a \int_{\R^3} |\uGP| ^4 $ from~\eqref{eq:GP eq}.

\begin{lemma}[\textbf{Lower bound with a \index{Bogoliubov Hamiltonian}{Bogoliubov Hamiltonian}}]\label{lem:low Bog}\mbox{}\\
Assumptions as in Theorem~\ref{thm:GP small} (in particular there is a unique GP minimizer $\uGP$). There exists a $\lambda_0 >0$ and a $c >0$ such that, for all $0< \lambda \leq \lambda_0$ 
\begin{multline}\label{eq:low Bog}
H_N \geq N \int_{\R^3} \left( |\nabla \uGP| ^2  + V |\uGP| ^2 \right) \\
+ \frac{N^2}{2} \iint_{\R^3 \times \R ^3}  |\uGP (\bx)| ^2 w_N (\bx - \by) \left( 2 f_N (\bx-\by)  - f_N ^2 (\bx-\by) \right) |\uGP (\by)| ^2 \mathrm{d}\bx \mathrm{d}\by \\
+ \eB + c \cN^{\perp} - C
\end{multline}
as an operator on $\gH_N$, with $\cN^{\perp} = N - a^{\dagger} (\uGP) a (\uGP)$ the \index{Number operator}{number} (operator) of particles outside the condensate and $\eB$ the lowest eigenvalue of the Bogoliubov-type Hamiltonian~\eqref{eq:NNRT Bog} acting on $\gF (Q \gH)$.
\end{lemma}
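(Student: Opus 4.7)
The plan is to follow the completing-the-square strategy of \eqref{eq:GP square complete}, organizing $H_N$ according to the splitting $L^2(\R^3) = \mathrm{span}(\uGP) \oplus Q L^2(\R^3)$ with $P=|\uGP\rangle\langle\uGP|$ and $Q = \1 - P$. First I would rewrite $H_N$ in second-quantized form with respect to the basis $u_0 = \uGP, u_1, u_2, \ldots$, so that each piece carries a definite number, zero to four, of creation/annihilation operators acting on the excited subspace $Q L^2$. As in \eqref{eq:split hom hamil}--\eqref{eq:split hom hamil 2}, this gives $H_N = \bH_0 + \bH_1 + \bH_2 + Q_2 + Q_3 + Q_4$, where $\bH_0$ acts as a scalar on the fully condensed state, $\bH_1$ is the one-body operator restricted to $Q L^2$, $\bH_2$ is the particle-number-conserving quadratic term, $Q_2$ is the pairing $a^\dagger a^\dagger + aa$ piece, and $Q_3, Q_4$ carry three and four excited operators.

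Next I would apply the operator inequality \eqref{eq:GP square complete} to each pair potential $w_N(\bx_i - \bx_j)$ and sum over pairs. The $P^{\otimes 2}(2f_N - f_N^2) w_N P^{\otimes 2}$ contribution reconstructs precisely the double integral on the second line of \eqref{eq:low Bog}, up to a replacement of $a^\dagger(\uGP)^2 a(\uGP)^2 / 2$ by $N(N-1)/2$, which is legitimate at the price of a $\cN^\perp$ correction absorbable in the final constant and the coercive $c \cN^\perp$. The one-body kinetic and potential terms of the condensate combine, via the Gross-Pitaevskii variational equation \eqref{eq:GP eq}, to reproduce the first line of \eqref{eq:low Bog}; the scattering-corrected coupling constant $\int w_N f_N = 8\pi a_w/N$ is exactly the right one here. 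The pairing term $P^{\otimes 2} f_N w_N Q^{\otimes 2} + \mathrm{h.c.}$ is, after second quantization, precisely the off-diagonal $K$ piece of $\HB$ built on the kernel \eqref{eq:Bog data}. Combining this with the diagonal $Q h Q$ extracted from $\bH_1$ (with a suitable chemical-potential shift $\mu$ absorbing the residual diagonal mean-field contribution of $\bH_2$) reconstructs the full Bogoliubov-type Hamiltonian \eqref{eq:NNRT Bog}, giving the lower bound by $\eB$.

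The delicate pieces are the mixed $P^{\otimes 2} f_N w_N (P\otimes Q + Q \otimes P) + \mathrm{h.c.}$ terms in \eqref{eq:GP square complete} and the remaining $Q_3$, $Q_4$ contributions of $H_N$. For the mixed linear-in-excited terms I would exploit \eqref{eq:GP eq}: the off-diagonal action of the one-body operator, namely $P h Q + Q h P$, produces the operator $\uGP \bigl(V + 8\pi a_w |\uGP|^2\bigr)$ in the $P$-slot, which matches at leading order the mixed interaction on the right of \eqref{eq:GP square complete}; the residual discrepancy is a quadratic form of order $\lambda$ bounded by $\bH_1 + \cN^\perp$. The cubic $Q_3$ is controlled by Cauchy--Schwarz against $\bH_1$ and $Q_4$ with prefactor $\lambda$, and $Q_4 \geq 0$ is kept as a bonus. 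Choosing $\lambda$ small enough lets all these errors be absorbed by the unused fraction of $\bH_1 \geq \inf\mathrm{spec}(QhQ)\, \cN^\perp$, where the positive spectral gap of $QhQ$ (thanks to $\uGP$ being a non-degenerate minimizer) provides the coercivity needed to produce the $c\cN^\perp$ term on the right-hand side of \eqref{eq:low Bog}. The main obstacle is precisely this ``mostly cancellation'' between the mixed $PPPQ$ terms and the off-diagonal one-body kinetic energy via \eqref{eq:GP eq}: one must track the short-range correction $f_N - 1$ with enough precision to exchange $\int w_N$ for $\int w_N f_N = 8\pi a_w/N$, and the error of this exchange is what ultimately forces the smallness assumption on $\lambda$ in this approach.
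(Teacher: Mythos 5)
Your overall strategy is the right one: the lemma in the paper (from~\cite{NamNapRicTri-20}) is indeed built on the completing-the-square inequality~\eqref{eq:GP square complete}, with the purely-$P$ block producing the modified interaction term, the $P\otimes P f_N w_N Q\otimes Q$ block producing the pairing part $K$ of $\HB$, and the linear-in-$Q$ block being absorbed into the $c\cN^\perp - C$ remainder via (almost) cancellation against the off-diagonal one-body contribution, which is where the GP equation~\eqref{eq:GP eq} and the identity $\int w_N f_N = 8\pi a_w/N$ enter. This matches the paper's proof.

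However, you appear to be conflating the two approaches of Section~\ref{sec:Bogoliubov low}. In the completing-the-square route, once $w_N(\bx_i-\bx_j)$ is replaced by the right-hand side of~\eqref{eq:GP square complete} in every pair, the resulting lower-bound Hamiltonian carries \emph{no} interaction terms with three or four excited creation/annihilation operators: the $Q_3$ and $Q_4$ pieces you propose to control by Cauchy--Schwarz, and the $Q_4\geq 0$ you propose to ``keep as a bonus'', were all discarded as part of the positive operator $(\1 - P\otimes P f_N)w_N(\1 - f_N P\otimes P)$. The whole point of Take~2, as opposed to the conjugation-based Take~1 where $Q_3,Q_4$ survive and must be tamed by further unitaries or the Dyson lemma~\cite{BocBreCenSch-18,AdhBreSch-20}, is precisely that this single inequality kills those terms at the outset. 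For the same reason the claim that the chemical potential $\mu$ in~\eqref{eq:Bog data} ``absorbs the residual diagonal mean-field contribution of $\bH_2$'' is misplaced: after the square completion there is no surviving $\bH_2$-type interaction; the shift by $\mu$ arises instead from unwinding factors like $a^\dagger(\uGP)a(\uGP)=N-\cN^\perp$ in the remaining (4P, 3P--1Q, 2P--2Q) terms, and its exact choice is a technical matter the paper only hints at. A smaller point: the decomposition~\eqref{eq:split hom hamil}--\eqref{eq:split hom hamil 2} you start from is tailored to the translation-invariant torus, where momentum conservation forbids the linear-in-excited ($Q_1$) terms; in the trapped setting of Lemma~\ref{lem:low Bog} those terms do appear (you later treat them correctly as the ``mixed $PPPQ$'' pieces), so one should not copy~\eqref{eq:split hom hamil 2} verbatim.
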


\begin{proof}[Comments]
The interaction term on the first line comes from the first term on the right of~\eqref{eq:GP square complete}. The Bogoliubov-like energy $\eB$ includes contributions from the second term on the right of~\eqref{eq:GP square complete}, plus the kinetic energy of the excited particles. The rest has been disposed of, interestingly by making only a $O(1)$ error, and keeping handy the term $c \cN^{\perp}$ that permits to finely control the condensation rate.  
\end{proof}

There is still a piece of GP interaction energy to be extracted from the Bogoliubov-like Hamiltonian:

\begin{lemma}[\textbf{Ground state energy of a Bogoliubov Hamiltonian}]\label{lem:Bog GSE}\mbox{}\\
With the previous notation, and if $0< \lambda$ is small enough we have that $\HB$ is bounded below on $\gF (Q \gH)$. The infimum of its spectrum is equal to
\begin{equation}\label{eq:Bog GSE ex}
\eB = \frac{1}{2} \tr_{Q\gH}\left( E-H\right) 
\end{equation}
where $E= \left( (H-K)^{1/2}  \left(H+K\right) (H-K)^{1/2}\right)^{1/2}$. Moreover, 
\begin{equation}\label{eq:Bog GSE}
\eB \geq -\frac{1}{4} \tr_{Q\gH} \left( H^{-1} K^2 \right) - C \norm{K} \tr_{Q\gH} \left( H^{-2} K^2 \right).
\end{equation}
\end{lemma}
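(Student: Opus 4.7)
My plan splits the lemma into two tasks: the exact diagonalization formula $\eB = \tfrac{1}{2}\tr_{Q\gH}(E-H)$, and the quantitative expansion~\eqref{eq:Bog GSE}.

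For the first, I would invoke the standard diagonalization theorem for bosonic quadratic Hamiltonians (as reviewed in~\cite{BacLieSol-94,NamNapSol-16,Solovej-notes}): associate to $\HB$ the block operator $\mathcal{A} = \bigl(\begin{smallmatrix} H & K \\ \overline{K} & \overline{H}\end{smallmatrix}\bigr)$ on $Q\gH \oplus \overline{Q\gH}$, and note that under the positivity conditions $H \pm K > 0$ (equivalently $\mathcal{A}\ge0$), there exists a unitarily-implementable \index{Bogoliubov transformation}{Bogoliubov transformation} $\cT$ on $\gF(Q\gH)$ such that
\begin{equation*}
\cT^* \HB \cT = \sum_n E_n \tilde{a}_n^\dagger \tilde{a}_n + \tfrac{1}{2}\tr_{Q\gH}(E - H),
\end{equation*}
where the $\tilde a_n,\tilde a_n^\dagger$ satisfy the \index{Canonical commutation relations (CCR)}{CCR} and $\{E_n\}$ is the spectrum of $E$. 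The first term being a positive operator minimized by the vacuum, $\eB$ is exactly the additive constant. The positivity assumptions must then be verified: $H$ has a positive spectral gap as a restriction of $-\Delta+V-\mu$ away from the (unique) GP ground state $\uGP$, while $\|K\|$ is of order $\lambda$ for small $\lambda$ (using $\int f_N w_N = 8\pi a_{w_N} = 8\pi a/N$ and $a = O(\lambda)$ from the \index{Born approximation}{Born expansion} of Theorem~\ref{thm:Born}, combined with the smoothness and decay of $\uGP$). This gives $H \pm K > 0$ for $0 < \lambda \le \lambda_0$ small enough. Finiteness of $\tr_{Q\gH}(E - H)$ follows from the \index{Lieb-Thirring inequality}{Lieb--Thirring-type} trace bounds for $H^{-1}$ once combined with the estimate sketched below.

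For the quantitative inequality~\eqref{eq:Bog GSE}, the idea is to expand $E - H$ to second order in $K$ with controlled remainder. A convenient starting point is the noncommutative integral representation
\begin{equation*}
\sqrt{A} - \sqrt{B} = \frac{1}{\pi}\int_0^\infty \left(\frac{1}{B+s} - \frac{1}{A+s}\right)\sqrt{s}\,ds \qquad (A,B>0),
\end{equation*}
applied with $A = E^2 = (H-K)^{1/2}(H+K)(H-K)^{1/2}$ and $B = H^2$. Iterating the resolvent identity inside the integral and using the exact identity $\tr(E^2 - H^2) = \tr((H-K)(H+K) - H^2) = -\tr(K^2)$ (by cyclicity, since $K=K^*$), one obtains the second-order expansion
\begin{equation*}
\tfrac{1}{2}\tr_{Q\gH}(E - H) = -\tfrac{1}{4}\tr_{Q\gH}(H^{-1}K^2) + R,
\end{equation*}
where $R$ is a cubic-or-higher order remainder in $K$. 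Because each extra factor of $K$ costs at most $\|K\|$ in operator norm while one must still keep enough factors of $H^{-1}$ to ensure trace-class character, a careful bookkeeping of the nested commutators $[H,K]$ that arise from the noncommutativity yields $|R| \le C\,\|K\|\,\tr_{Q\gH}(H^{-2}K^2)$, which is the claimed bound.

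The main obstacle, as in all of Bogoliubov theory, lies in the third paragraph: the non-commutativity of $H$ and $K$ means the naive Taylor expansion $E \simeq H - \tfrac{1}{2}H^{-1}K^2 + \cdots$ (valid when $[H,K]=0$) must be replaced by a symmetric counterpart $E \simeq H - \tfrac{1}{2}H^{-1/2}(H^{-1/2}KH^{-1/2})^2 H^{1/2} + \cdots$, whose trace happens to coincide with $-\tfrac{1}{4}\tr(H^{-1}K^2)$ by cyclicity but whose remainder demands a delicate resolvent analysis. The work is entirely operator-theoretic and independent of the many-body context, so I expect one can either cite an existing lemma along the lines of~\cite[Theorem~6.3]{NamNapSol-16} or carry out the expansion explicitly in a short technical appendix.
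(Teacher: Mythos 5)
Your overall strategy matches what the paper does (and, more to the point, what the paper delegates to its references): cite the standard diagonalization theory of quadratic bosonic Hamiltonians for the exact formula~\eqref{eq:Bog GSE ex}, then expand $E-H$ in powers of $K$ to obtain~\eqref{eq:Bog GSE}. The first part is fine, and your exact identity $\tr(E^2-H^2)=-\tr(K^2)$ is correct and useful. The proposal of the integral representation of the square root plus iterated resolvent identities is indeed the natural route.

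However, there is a concrete error in your sketch of the second-order term. You write $E\simeq H-\tfrac12 H^{-1/2}\bigl(H^{-1/2}KH^{-1/2}\bigr)^2H^{1/2}$ and claim that its trace coincides with $-\tfrac14\tr(H^{-1}K^2)$ by cyclicity. Unpacking the expression, $H^{-1/2}(H^{-1/2}KH^{-1/2})^2H^{1/2}=H^{-1}KH^{-1}K$, whose trace is $\tr(H^{-1}KH^{-1}K)$, not $\tr(H^{-1}K^2)$. These are genuinely different (cyclicity can only cycle the whole product, it cannot move an $H^{-1}$ past a single $K$), and your candidate term has the wrong power of $H$ to begin with: in the commuting case it reduces to $H-\tfrac12 H^{-2}K^2$, whereas the true second-order expansion of $\sqrt{H^2-K^2}$ is $H-\tfrac12 H^{-1}K^2$. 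You would want something of the form $E-H\simeq-\tfrac12 KH^{-1}K$ (self-adjoint, correct commuting limit, correct trace $-\tfrac12\tr(H^{-1}K^2)$). This is not a cosmetic slip: the paper explicitly warns that obtaining the constant $-1/4$ in front of $\tr(H^{-1}K^2)$ exactly is essential (the easier $-1/2$ would be insufficient), so one cannot afford to blur the distinction between $\tr(H^{-1}K^2)$ and $\tr(H^{-1}KH^{-1}K)$. Once the leading term is corrected, the remainder bound $C\norm{K}\tr(H^{-2}K^2)$ does follow by the bookkeeping you describe, but the commutator corrections between $H$ and $K$ coming from the square-root expansion must be tracked with care rather than absorbed by an appeal to cyclicity. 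The detailed computation is carried out in~\cite{NamNapRicTri-20}, which would be the appropriate reference here rather than a general Bogoliubov-diagonalization lemma.
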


\begin{proof}[Comments]
The assumption that $\lambda$ be small enough ensures that $H \geq K$, so that the definition of $E$ makes sense. The whole point of Bogoliubov Hamiltonians is that they can be solved explicitly, see~\cite{GreSei-13} where in particular~\eqref{eq:Bog GSE ex} is derived (beware of the different notation). It is part of the statement that the trace actually makes sense. Further discussions of Bogoliubov Hamiltonians are in~\cite{BacBru-16,BruDer-07,DerGer-13,Derezinski-17,NamNapSol-16}. 

It is not always easy to extract information from the explicit solution of a Bogoliubov Hamiltonian. In particular, it is here important to get the first term on the right-hand side of~\eqref{eq:Bog GSE} exactly right (a weaker bound with $-1/2$ instead of $-1/4$ in front of the first term is easier to derive but insufficient here). 

To see how~\eqref{eq:Bog GSE} might follow from~\eqref{eq:Bog GSE ex}, let us pretend that $H$ and $K$ commute (the actual proof in~\cite{NamNapRicTri-20} is of course more involved than that). Then 
$$E = \sqrt{H^2 - K^2} = H \left(1 - \frac{K^2}{H^2}\right)^{1/2}.$$
Furthermore, we are dealing with short-range correlations, which means large kinetic energy. One should then think that $H\gg K$, and then 
$$ E \simeq H - \frac{1}{2} \frac{K^2}{H}$$
which, inserted in~\eqref{eq:Bog GSE ex}, gives a rationale for~\eqref{eq:Bog GSE}. Note also that the $H\gg K$ heuristics hints at the reason why the second term in~\eqref{eq:Bog GSE} ultimately turns out to be a remainder.   
\end{proof}

One can further estimate the main term to find 
\begin{equation}\label{eq:Nam comp}
\tr_{Q\gH} \left( H^{-1} K^2 \right) \simeq 2N^2 \iint_{\R^3 \times \R^3} |\uGP (\bx)| ^2  \left( w_N f_N (1-f_N) \right) (\bx-\by) |\uGP(\by)|^2. 
\end{equation}
Indeed, the main contribution is kinetic, so let us replace $H\rightsquigarrow (-\Delta)^{-1}$. Let us also ignore the $Q$ operators for simplicity. We then find 
$$  \tr_{Q\gH} \left( H^{-1} K^2 \right) \simeq N^2 \tr\left( \uGP (\bx) \widehat{f_N w_N} (\bp)\uGP (\bx) |\bp| ^{-2} \uGP (\bx) \widehat{f_N w_N} (\bp) \uGP (\bx)\right)$$
where functions of $\bx,\bp$ are understood as multiplication operators in position/momentum space. One can then figure out that commuting $\uGP(\bx)$ and $|\bp|^{-2}$ yields a negligible remainder. Using the scattering equation~\eqref{eq:scat eq} to express $|\bp| ^{-2}\widehat{f_N w_N} (\bp)$ (and using the cyclicity of the trace) then leads to
$$  \tr_{Q\gH} \left( H^{-1} K^2 \right) \simeq N^2 \tr\left( \uGP (\bx) ^2 \widehat{f_N w_N} (\bp)\uGP  (\bx) ^2 \widehat{1-f_N } (\bp)\right).$$
The operator in the trace has an explicit integral kernel $O(\bx;\by)$, the trace~\cite[Section~VI.6]{ReeSim1} is the integral of $O(\bx;\bx)$, which leads to~\eqref{eq:Nam comp}. 

Then, combining~\eqref{eq:low Bog} and~\eqref{eq:Bog GSE} leads to 
\begin{multline*}
 H_N \geq  N \int_{\R^3} \left( |\nabla \uGP| ^2  + V |\uGP| ^2 \right) \\ + \frac{N^2}{2} \iint_{\R^3 \times \R ^3}  |\uGP (\bx)| ^2 w_N (\bx - \by) f_N (\bx-\by)  |\uGP (\by)| ^2 \mathrm{d}\bx \mathrm{d}\by 
 - C 
\end{multline*}
and there only remains to recall that, as per the discussion after Theorem~\ref{thm:scat leng} 
$$ \int w_N f_N = 8\pi a N^{-1} $$
and thus 
$$ N w_N f_N \wto 8\pi a \delta_0$$
as measures. This completes the proof of the GP energy lower bound, under a smallness assumption on $\lambda$. As in the previous subsection one can finely estimate the rate of convergence of the one-body density matrix to the (projector on the) GP minimizer, because~\eqref{eq:low Bog} has a term controlling the \index{Number operator}{number of excited particles} on the right-hand side. 

\begin{theindex}

  \item Berezin-Lieb inequalities, \hyperpage{76}, \hyperpage{84}
  \item Bogoliubov Hamiltonian, \hyperpage{34}, \hyperpage{89}, 
		\hyperpage{109}, \hyperpage{145}
  \item Bogoliubov theory, \hyperpage{32, 33}, \hyperpage{105}, 
		\hyperpage{109}, \hyperpage{111}, \hyperpage{138}, 
		\hyperpage{142}
  \item Bogoliubov transformation, \hyperpage{111, 112}, 
		\hyperpage{139}, \hyperpage{142}
  \item Boltzmann-Gibbs ensemble, \hyperpage{51, 52}, \hyperpage{54}, 
		\hyperpage{112}, \hyperpage{133}
  \item Born approximation, \hyperpage{101}, \hyperpage{105}
  \item Bose-Einstein condensate (BEC), \hyperpage{9}, \hyperpage{20}, 
		\hyperpage{29, 30}, \hyperpage{32, 33}, 
		\hyperpage{109, 110}, \hyperpage{144}

  \indexspace

  \item Canonical commutation relations (CCR), \hyperpage{18}, 
		\hyperpage{65}, \hyperpage{111}, \hyperpage{113}, 
		\hyperpage{140}
  \item Cell method, \hyperpage{127}, \hyperpage{130}
  \item Classical de Finetti theorem, \hyperpage{50}, \hyperpage{52}, 
		\hyperpage{57}
  \item Classical number substitution, \hyperpage{88}
  \item Classical statistical mechanics, \hyperpage{35}, \hyperpage{51}, 
		\hyperpage{54}, \hyperpage{57}
  \item Coherent states, \hyperpage{33}, \hyperpage{65}, 
		\hyperpage{73, 74}, \hyperpage{78}, \hyperpage{81}, 
		\hyperpage{85}, \hyperpage{91}, \hyperpage{110}, 
		\hyperpage{131}
  \item Cold atoms, \hyperpage{21}, \hyperpage{85}, \hyperpage{101}
  \item Concentration-compactness, \hyperpage{24}
  \item Correlation map, \hyperpage{139}
  \item Creation and annihilation operators, \hyperpage{16}, 
		\hyperpage{18}, \hyperpage{48}, \hyperpage{65}, 
		\hyperpage{82}, \hyperpage{90}, \hyperpage{111}, 
		\hyperpage{113}, \hyperpage{115}, \hyperpage{139}, 
		\hyperpage{145}
  \item Cwikel-Lieb-Rosenblum bound, \hyperpage{69}

  \indexspace

  \item Dilute limit, \hyperpage{22}, \hyperpage{85}, \hyperpage{94}, 
		\hyperpage{97}, \hyperpage{119}, \hyperpage{124}, 
		\hyperpage{136}
  \item Dyson Hamiltonian, \hyperpage{132}, \hyperpage{135}
  \item Dyson lemma, \hyperpage{101}, \hyperpage{119, 120}, 
		\hyperpage{131, 132}, \hyperpage{138}

  \indexspace

  \item Empirical measure, \hyperpage{53}
  \item Entropy, \hyperpage{50, 51}, \hyperpage{54}
  \item Excitation map, \hyperpage{115}, \hyperpage{139}

  \indexspace

  \item Feynman-Hellmann principle, \hyperpage{42}, \hyperpage{47}, 
		\hyperpage{81}, \hyperpage{91}, \hyperpage{135}
  \item Feynman-Kac formula, \hyperpage{36, 37}, \hyperpage{133}
  \item Fisher information, \hyperpage{50, 51}, \hyperpage{53}
  \item Fock space, \hyperpage{17}, \hyperpage{60}, \hyperpage{66}, 
		\hyperpage{70}, \hyperpage{78}, \hyperpage{86}, 
		\hyperpage{109}, \hyperpage{115}, \hyperpage{139}

  \indexspace

  \item Grand-canonical, \hyperpage{16, 17}, \hyperpage{110}, 
		\hyperpage{141}
  \item Gross-Pitaevskii energy functional, \hyperpage{25}, 
		\hyperpage{131}, \hyperpage{138}
  \item Gross-Pitaevskii limit, \hyperpage{22, 23}, \hyperpage{101}, 
		\hyperpage{104}, \hyperpage{119}, \hyperpage{126}, 
		\hyperpage{129}, \hyperpage{132}
  \item Ground state, \hyperpage{20}

  \indexspace

  \item Hartree energy functional, \hyperpage{23, 24}, \hyperpage{36}, 
		\hyperpage{43}, \hyperpage{50}, \hyperpage{57}, 
		\hyperpage{62}
  \item Heat kernel, \hyperpage{36}, \hyperpage{133}
  \item Heisenberg uncertainty principle, \hyperpage{11}
  \item Hewitt-Savage theorem, \hyperpage{50}, \hyperpage{52}, 
		\hyperpage{57}
  \item Hoffmann-Ostenhof$\,^2$ inequality, \hyperpage{38}, 
		\hyperpage{44}, \hyperpage{46}, \hyperpage{48}
  \item Homogeneous Bose gas, \hyperpage{102}, \hyperpage{124}, 
		\hyperpage{128}, \hyperpage{139}

  \indexspace

  \item Indistinguishability, \hyperpage{13--15}

  \indexspace

  \item L\'evy-Leblond's trick, \hyperpage{40}, \hyperpage{43}, 
		\hyperpage{46}, \hyperpage{48}
  \item Lieb-Oxford inequality, \hyperpage{43}
  \item Lieb-Thirring inequality, \hyperpage{69}, \hyperpage{80, 81}
  \item Local density approximation (LDA), \hyperpage{102}, 
		\hyperpage{125}, \hyperpage{128}, \hyperpage{131}
  \item Localization, \hyperpage{60}, \hyperpage{63}, \hyperpage{68}, 
		\hyperpage{72}, \hyperpage{85}, \hyperpage{88}, 
		\hyperpage{95}, \hyperpage{144}

  \indexspace

  \item Many-body Schr\"odinger equation, \hyperpage{14}, 
		\hyperpage{32}, \hyperpage{85}, \hyperpage{91}, 
		\hyperpage{93}
  \item Many-body Schr\"odinger Hamiltonian, \hyperpage{101}, 
		\hyperpage{141}
  \item Many-body Schr\"odinger operator, \hyperpage{7}, \hyperpage{19}
  \item Marginal density, \hyperpage{15}, \hyperpage{50}, 
		\hyperpage{52}, \hyperpage{130}
  \item Mean-field approximation, \hyperpage{19}
  \item Mean-field energy functional, \hyperpage{28}
  \item Mean-field limit, \hyperpage{21}, \hyperpage{77}, 
		\hyperpage{119}
  \item Moments estimates, \hyperpage{85}, \hyperpage{91}, 
		\hyperpage{93, 94}, \hyperpage{97}, \hyperpage{135}

  \indexspace

  \item Non-linear Schr\"odinger energy functional, \hyperpage{24}, 
		\hyperpage{36}, \hyperpage{63}, \hyperpage{86}
  \item Normal order, anti-normal order, \hyperpage{83}, \hyperpage{88}, 
		\hyperpage{113}, \hyperpage{143}
  \item Number operator, \hyperpage{17}, \hyperpage{78}, \hyperpage{87}, 
		\hyperpage{115}, \hyperpage{140}, \hyperpage{145}, 
		\hyperpage{147}

  \indexspace

  \item Onsager's inequality, \hyperpage{40}, \hyperpage{44}, 
		\hyperpage{47, 48}

  \indexspace

  \item Pairing matrix, \hyperpage{116, 117}
  \item Positivity improving, \hyperpage{35}, \hyperpage{37}, 
		\hyperpage{47}
  \item Pure state, mixed state, \hyperpage{12, 13}, \hyperpage{45}, 
		\hyperpage{58}, \hyperpage{115, 116}

  \indexspace

  \item Quantum de Finetti theorem, \hyperpage{48}, \hyperpage{55}, 
		\hyperpage{58}, \hyperpage{63, 64}, \hyperpage{66}, 
		\hyperpage{71}, \hyperpage{81}, \hyperpage{91}, 
		\hyperpage{94}, \hyperpage{131}, \hyperpage{136}
  \item Quantum statistics, bosons and fermions, \hyperpage{8}, 
		\hyperpage{15}
  \item Quasi-free state, \hyperpage{111}, \hyperpage{116}, 
		\hyperpage{118}

  \indexspace

  \item Reduced density matrix, \hyperpage{15}, \hyperpage{17}, 
		\hyperpage{29}, \hyperpage{48}, \hyperpage{57}, 
		\hyperpage{59}, \hyperpage{69}, \hyperpage{81}, 
		\hyperpage{110}, \hyperpage{129}, \hyperpage{137}

  \indexspace

  \item Scaling limits, \hyperpage{19}
  \item Scattering length, \hyperpage{23}, \hyperpage{101}, 
		\hyperpage{103}, \hyperpage{119}, \hyperpage{124}, 
		\hyperpage{128}, \hyperpage{132}, \hyperpage{138}
  \item Scattering solution, \hyperpage{23}, \hyperpage{103}, 
		\hyperpage{114}, \hyperpage{140}, \hyperpage{142}
  \item Schur's lemma, \hyperpage{65}, \hyperpage{75}, \hyperpage{87}
  \item Second quantization, \hyperpage{16}, \hyperpage{89}, 
		\hyperpage{109}, \hyperpage{115}, \hyperpage{139}
  \item Semiclassical analysis, \hyperpage{64}, \hyperpage{69}
  \item States, observables, \hyperpage{10}, \hyperpage{12}
  \item Symbol, upper and lower, \hyperpage{65}, \hyperpage{75, 76}, 
		\hyperpage{79}, \hyperpage{87}

  \indexspace

  \item Thermodynamic limit, \hyperpage{20}, \hyperpage{102}, 
		\hyperpage{124}, \hyperpage{126}, \hyperpage{144}
  \item Trapping potential, \hyperpage{24}, \hyperpage{63}, 
		\hyperpage{71}
  \item Trial state, Bogoliubov, \hyperpage{109}, \hyperpage{112}, 
		\hyperpage{117}
  \item Trial state, Dyson, \hyperpage{106}, \hyperpage{125}
  \item Trial state, Hartree, \hyperpage{20}, \hyperpage{28}, 
		\hyperpage{43}, \hyperpage{71}, \hyperpage{97}
  \item Trial state, Jastrow, \hyperpage{106, 107}, \hyperpage{118}
  \item Trotter product formula, \hyperpage{36, 37}, \hyperpage{133}

  \indexspace

  \item Vacuum vector, \hyperpage{17}, \hyperpage{110}, \hyperpage{113}

  \indexspace

  \item Weyl operator, \hyperpage{110}, \hyperpage{115}, 
		\hyperpage{139}
  \item Wick and anti-Wick quantization, \hyperpage{76}, \hyperpage{83}
  \item Wick's theorem, \hyperpage{112}, \hyperpage{116, 117}

\end{theindex}

%
%
%

\end{document}